\tikzstyle{none}=[inner sep=0pt]
\definecolor{hexcolor0x5235ff}{rgb}{0.322,0.208,1.000}
\tikzstyle{oval}=[oval,fill=hexcolor0x5235ff,draw=Black]
\tikzstyle{$2$}=[circle,fill=White,draw=Black]
\numberwithin{equation}{section}
\newtheorem{theorem}[equation]{Theorem}
\newtheorem{conjecture}{Conjecture}
\newtheorem{construction}[equation]{Construction}
\newtheorem{convention}{Convention}
\newtheorem{corollary}[equation]{Corollary}
\newtheorem{definition}[equation]{Definition}
\newtheorem{example}[equation]{Example}
\newtheorem{lemma}[equation]{Lemma}
\newtheorem{proposition}[equation]{Proposition}
\newtheorem{remark}[equation]{Remark}
\newtheorem{question}{Question}
\newenvironment{proof}[1][Proof]{\textbf{#1.} }{\ \rule{0.5em}{0.5em}}
\newcommand {\id}{\mathsf{id}}
\newcommand{\Imm}{\mbox{{\rm Im}}}
\newcommand{\Tor}{\mathrm{Tor}}
\newcommand{\supp}{\mathrm{supp}}
\newcommand{\Coker}{\mathrm{coKer}}
\newcommand{\Sym}{\mathrm{Sym}}
\newcommand{\rHom}{\mathrm{Hom}}
\newcommand {\Ker}{\mathrm{Ker}}
\newcommand {\tr}{\mathrm{tr}}
\newcommand{\dbar}{\bar \partial}
\newcommand{\so}{\mathfrak{so}}
\newcommand{\gl}{\mathfrak{gl}}
\renewcommand{\O}{\mathcal{O}}
\newcommand{\F}{\mathcal{F}}
\newcommand{\SO}{\mathrm{SO}}
\newcommand{\OGr}{\mathrm{OGr}}
\newcommand{\GL}{\mathrm{GL}}
\renewcommand{\P}{\mathbf{P}}
\newcommand{\Spin}{\mathrm{Spin}}
\newcommand{\M}{\mathpzc{M}}
\newcommand{\sd}{\partial}
\newcommand{\Pf}{\mathrm{Pf}}
\newcommand{\g}{\mathfrak{g}}
\newcommand{\Ext}{\mathrm{Ext}}
\newcommand{\Hom}{\mathrm{Hom}}
\newcommand{\halpha}{\alpha}
\newcommand{\hbeta}{\beta}
\newcommand{\hgamma}{\gamma}
\newcommand{\hdelta}{\delta}
\newcommand{\ralpha}{\upalpha}
\newcommand{\rbeta}{\upbeta}
\newcommand{\rgamma}{\upgamma}
\newcommand{\rdelta}{\updelta}
\newcommand{\repsilon}{\upepsilon}
\newcommand{\rvarepsilon}{\upvarepsilon}
\newcommand{\rmu}{\upmu}
\newcommand{\rnu}{\upnu}
\newcommand{\reflection}{\mathsf{\sigma}}
\newcommand{\shift}{\mathsf{\tau}}
\newcommand{\setP}{\mathrm{P}}
\newcommand{\setA}{\mathrm{A}}
\newcommand{\setB}{\mathrm{B}}
\newcommand{\setC}{\mathrm{C}}
\newcommand{\setK}{\mathrm{K}}
\newcommand{\setL}{\mathrm{L}}
\newcommand{\setN}{\mathrm{N}}
\newcommand{\setQ}{\mathrm{Q}}
\newcommand{\setE}{\mathrm{E}}
\newcommand{\setS}{\mathrm{S}}
\newcommand{\setU}{\mathrm{U}}
\newcommand{\setX}{\mathrm{X}}
\newcommand{\setY}{\mathrm{Y}}
\newcommand{\setCL}{\mathrm{CL}}
\newcommand{\sethE}{\hat{\mathrm{E}}}
\newcommand{\Aut}{Aut}
\newcommand{\inclusiono}{\mathsf{i}}
\newcommand{\inclusiont}{\mathsf{k}}
\newcommand{\inclusionth}{\mathsf{u}}
\newcommand{\projectiono}{\mathsf{pr}}
\newcommand{\C}{\mathbb{C}}
\newcommand{\R}{\mathbb{R}}
\newcommand{\Z}{\mathbb{Z}}
\newcommand{\Q}{\mathbb{Q}}
\newcommand{\Maps}{\mathpzc{Maps}}
\newcommand{\codim}{\mathrm{codim}}
\newcommand{\Cech}{\check{\mathrm{C}}\mathrm{ech}}
\newcommand{\s}{\bm{s}}
\newcommand{\Lie}{\mathrm{Lie}}
\newcommand{\rH}{\mathrm{H}}
\newcommand{\rG}{\mathrm{G}}
\newcommand{\bT}{\mathbf{T}}
\newcommand{\rk}{\mathrm{rk}}
\newcommand{\cZ}{\mathpzc{Z}}
\newcommand{\Arc}{\mathpzc{Arc}}
\newcommand{\cH}{\mathpzc{H}}
\DeclareMathAlphabet{\mathpzc}{OT1}{pzc}{m}{it}
\newcommand{\MI}{\mathcal{I}}
\newcommand{\rCap}{\mathrm{Cap}}
\newcommand{\rCircle}{\mathrm{Circle}}
\newcommand{\Th}{\mathsf{th}}
\newcommand{\Thl}{\mathsf{Th}}
\newcommand{\cL}{\mathcal{L}}
\newcommand{\rM}{\mathrm{M}}
\newcommand{\rL}{\mathrm{L}}
\newcommand{\fm}{\mathfrak{m}}
\newcommand{\fp}{\mathfrak{p}}
\newcommand{\rReg}{\mathrm{Reg}}
\newcommand{\rCore}{\mathrm{Core}}
\newcommand{\rA}{\mathrm{A}}
\newcommand{\rB}{\mathrm{B}}
\newcommand{\alt}{\mathrm{Alt}}
\newcommand{\Span}[1]{\mathrm{span}\left\langle{#1}\right\rangle}
\newcommand{\LSpan}[1]{\mathrm{span}_{\cL}\left\langle{#1}\right\rangle}
\newcommand{\Ann}{\mathrm{Ann}}
\newcommand{\Rad}{\mathrm{Rad}}
\newcommand{\bx}{\overline{x}}
\newcommand{\bl}{\overline{\lambda}}
\newcommand{\itwo}{\frac{\infty}{2}}
\newcommand{\fr}{\mathfrak{r}}
\newcommand{\functionl}{\mathsf{l}}
\newcommand{\functionf}{\mathsf{f}}
\newcommand{\fq}{\mathfrak{q}}
\newcommand{\fri}{\mathfrak{i}}
\newcommand{\fj}{\mathfrak{j}}
\newcommand{\fa}{\mathfrak{a}}
\newcommand{\fb}{\mathfrak{b}}
\newcommand{\ff}{\mathfrak{f}}
\newcommand{\fe}{\mathfrak{e}}
\newcommand{\fz}{\mathfrak{z}}
\newcommand{\fy}{\mathfrak{y}}
\newcommand{\fc}{\mathfrak{c}}
\newcommand{\res}{\mathrm{res}}
\newcommand{\ot}{\leftarrow}
\newcommand{\Mrg}{I}
\newcommand{\Frg}{\mathrm{Fock}}
\newcommand{\pl}{\underline{\mathsf{p}}}
\newcommand{\hA}{A}
\newcommand{\sA}{\mathring{A}}
\newcommand{\jA}{J}
\newcommand{\tjA}{J}
\newcommand{\bjA}{J}
\newcommand{\hRo}{R}
\newcommand{\hRt}{S}
\newcommand{\hRth}{Q}
\newcommand{\BV}{BV}
\newcommand{\moduleT}{T}
\newcommand{\smoduleT}{\mathring{T}}
\newcommand{\fT}{\mathpzc{T}}
\newcommand{\sfT}{\mathring{\mathpzc{T}}}
\newcommand{\sV}{\mathring{V}}
\newcommand{\moduleS}{S}
\newcommand{\grdif}{\delta}
\newcommand{\spinor}{S_+}
\newcommand{\sspinor}{\mathring{S}_+}
\newcommand{\sdspinor}{\mathring{S}_-}
\newcommand{\sP}{\mathring{P}}
\newcommand{\sD}{\mathring{D}}
\newcommand{\sLambda}{\mathring{\Lambda}}
\newcommand{\vectorrep}{\mathring{V}}
\newcommand{\cone}{\mathcal{C}}
\newcommand{\tspace}{\mathcal{X}}
\newcommand{\rmk}{\mathsf{n}}
\newcommand{\hd}{\mathrm{hd}}
\newcommand{\hcodim}{\mathrm{hcodim}}
\newcommand{\sfa}{\mathsf{a}}
\newcommand{\sfb}{\mathsf{b}}
\newcommand{\sfd}{\mathsf{d}}
\newcommand{\sfm}{\mathsf{m}}
\newcommand{\sfp}{\mathsf{p}}
\newcommand{\sfq}{\mathsf{q}}
\newcommand{\sfu}{\mathsf{u}}
\newcommand{\sfv}{\mathsf{v}}
\newcommand{\sfr}{\mathsf{r}}
\newcommand{\sfs}{\mathsf{w}}
\newcommand{\sfres}{\mathsf{res}}
\newcommand{\sfi}{\mathsf{i}}
\newcommand{\sfj}{\mathsf{j}}
\newcommand{\sfk}{\mathsf{k}}
\newcommand{\sff}{\mathsf{f}}
\newcommand{\sfg}{\mathsf{g}}
\newcommand{\sfU}{\mathsf{Y}}
\newcommand{\lu}{\underline{u}}
\newcommand{\uu}{\overline{u}}
\begin{document}
\title{Local algebra and string theory.} 
\author{M.V. Movshev\\
Mathematics Department, Stony Brook University,\\
 Stony Brook NY, 11794-3651, USA\\
\texttt{mmovshev@math.sunysb.edu}}
\date{\today}
\maketitle
\begin{abstract}
The $\beta\gamma$ system on the cone of pure spinors is an integral part of the string theory developed by N. Berkovits. 

This $\beta\gamma$ system offer a number of questions for pure mathematicians: what is a precise definition of the space of states of the theory? Is there a mathematical explanations for various dualities (or pairings) predicted by physicists? Can a formula for partition function be written? 
To help me to answer these questions I use local algebra in an essential way. 

\end{abstract}
\tableofcontents
\section{Introduction}
The notion of a $\beta\gamma$ system is not very familiar to mathematicians. Let us go over some of its basic aspect before we turn to detailed mathematical analysis of $\beta\gamma$ system, whose target is the cone of pure spinors. The interested reader can consult \cite{Nekrasovbetagamma} for a more comprehensive account. 

Let $\Lambda=\bigoplus_k \Lambda^k$ stands for the exterior algebra on some vector space or a bundle. I fix a smooth complex manifold $\tspace$ of real dimension $2n$, which will be the target of the $\beta\gamma$ system.

 Let $T_{\R}^*
 $ be the real cotangent bundle of $\tspace$. 
 Complexification of $T_{\R}^*$ splits into the direct sum $T_{\C}^*=T^*+\overline{T}^*$, which leads to  the $(p,q)$-decomposition 
 $\Lambda^k T^*_{\C}=\bigoplus_{p+q=k}\Lambda^p T^*\otimes \Lambda^q \overline{T}^*, k=0,\dots,2n$.
 The de Rham differential splits accordingly: $d=\sd+\dbar$.
 
Let $\Sigma$ be a complex one-dimensional manifold, considered as a $C^{\infty}$-surface. The fields of the $\beta\gamma$ system are: 
\begin{enumerate}
\item a smooth map $\beta:\Sigma\rightarrow \tspace$,
\item a smooth section $\gamma$ of the pullback $\beta^*T^*\otimes T^*_{\Sigma}$. 
\end{enumerate}
The anti holomorphic part $\dbar\beta$ of the differential $D\beta$ is a section of $\beta^*T^*\otimes \overline{T}^*_{\Sigma}$.
The pairing $\langle\dbar\beta,\gamma\rangle$ is a $(1,1)$-form on $\Sigma$. The action of the theory 
\[S(\beta,\gamma)=\int_{\Sigma}\langle\dbar\beta,\gamma\rangle\]
makes sense when, for example, $\gamma$ has a compact support.

Let $G$ be a group of holomorphic symmetries of $\tspace$.
Elements of the space $\Maps_{hol}(\Sigma,G)$ of holomorphic maps are symmetries of $S$. The functional $S$ is also invariant with respect to (infinitesimal) holomorphic reparametrizations of $\Sigma$.

Points of the space of solutions $\M_{\Sigma}$
of the Euler-Lagrange equations 
\[\begin{split}
&\dbar \beta=0\\
&\dbar \gamma=0
\end{split}\]
are a holomorphic map $\beta:\Sigma\rightarrow \tspace$ and a holomorphic section $\gamma$ of $\beta^*T^*\otimes T^*_{\Sigma}$.

The space $\M_{\C^{\times}}$, or the phase space, is equipped with a "$pdq$"-type form $\alpha$. It value on $\delta{\beta}$ at $(\beta,\gamma)\in \M_{\C^{\times}}$ is equal to 
\[\alpha(\delta\beta)=\oint \langle \gamma, \delta\beta \rangle.\] 

Exterior variational derivative of $\alpha$ is a closed two-form $(\delta\beta,\delta\gamma)=\oint \langle  \delta\beta,\delta\gamma\rangle.$ 

By the Noether theorem,  the semidirect product 
\begin{equation}\label{E:Lliealg}
Vect_{poly}(\C^{\times})\ltimes \Maps_{poly}(\C^{\times},\g),\quad \g=\mathrm{Lie}(G)
\end{equation}
of the Lie algebras is acting on the phase space $\M_{\C^{\times}}$. It is also should be acting projectively on the space of quantum states.

In the field theory, the space $\M_{\C^{\times}}$ plays the role of the cotangent bundle to 
\[\cZ_{\C^{\times}}=\{\beta:\C^{\times}\rightarrow \tspace|\dbar\beta=0\}\subset \M_{\C^{\times}}.\]
\subsection{Quantization}

Before I discuss quantization of the $\beta\gamma$-system, I need to remind the reader some of its general aspects.
Let $(Y,g_{ij})$ be a finite-dimensional Riemannian manifold, thought as a configuration space of a quantum mechanical system with constraints. The Hilbert space obtained by quantizing the phase space $T_{\R,Y}^*$ is the $L^2$ space. It is the closure of the space of the complex compactly supported smooth functions $C_{comp}^{\infty}(Y)$ with respect to the inner product
\begin{equation}\label{E:innerhilb}
(a,b)=\int_Ya\bar{b}\, dvol_g,\quad a,b\in C^{\infty}(Y).
\end{equation}
 The space $C_{comp}^{\infty}(Y)$ is a module over the algebra of differential operators on $Y$ (a D-module for short). There are other D-modules which could be candidates in quantum Hilbert spaces.
 An important example of a D-module is the space of distributions $D_U$ with the support on a submanifold 
$U\subset Y.$
 It is a common believe however, that distributions never form a space with an inner product. The reason - the integrand in (\ref{E:innerhilb}) for distributions $a,b$ is ill-defined. Still, there is an alternative way to define the  pairing.
Let $U_{+},U_{-}$ be two closed submanifolds of complementary dimension that intersect transversally at points $x_1,\dots,x_k$. $I(Y,U_{\pm})$ is the space of conormal distributions in the sense of H\"{o}rmander \cite{Hoermander}. There is a natural map $I(Y,U_{+})\otimes I(Y,U_{-})\overset{\sfm}{\rightarrow} I(Y,\{x_1,\dots,x_n\})$-the exterior product of distributions (\cite{Strohmaier}, Corollary 3, p. 113). I can use this map to define a pairing between spaces $I(Y,U_{+})\otimes I(Y,U_{-})$ by the formula \[\langle a,b\rangle=\int_Y \sfm(a,b)dx.\]
In the presence of a symmetry 
\[\reflection:Y\rightarrow Y\text{ such that }\reflection(U_{\pm})=U_{\mp},\] the pairing can be turned into an inner product \[( a,b)=\langle a,\reflection^*\overline{b}\rangle.\]

The theory of distributions of S. Sobolev and L. Schwartz has cohomological generalization in M.Sato's works on hyperfunctions (see e.g. \cite{Schapira} for the account). Hyperfunctions are defined on a real analytic space $Y$ embedded into its Grauert tube $Y^{\C}$. The elements of local cohomology $H^{\dim Y}_Y(Y^{\C},\O)$ of the sheaf of analytic functions $\O$ on $Y^{\C}$ with support on $Y$ are the Sato's hyperfunctions.
A typical example of a hyperfunction is Cauchy principal value $f(x)\to \mathrm{p.v.}\int \frac{f(x)}{x}dx$.
 In the same vein, one can define complex-analytic hyperfunctions on a complex manifold $Z$ with support on a complex submanifold 
\begin{equation}\label{E:complpair}
Z'\subset Z.
\end{equation}
The basic example of $Z'$  is a divisor  of a rational function $g:\C\to \C$. It define a  functional on the space of analytic functions by the formula
\[f\to \oint f(z)g(z)dz\]
The integration is taken over a circle of sufficiently large radius. The cohomological nature of hyperfunctions is already seen in this simple example. 
The integral is invariant under substitution $g(z)\to g(z)+h(z)$ where $h(z)$ is a polynomial. Thus the space of hyperfunctions is in fact the first cohomology of the Cousin complex $\O[\C]\hookrightarrow \O[\C\backslash Z']$. $\O[\C\backslash Z']$ is the space of rational functions with poles at $Z'$. 

The space of hyperfunctions is a D-module. If $\reflection$ is a holomorphic involution such that $\reflection(Z')\cap Z'$ is discrete and transversal, then the pairing on local cohomology is the composition
\[H^{i}_{Z'}(Z,\O)\otimes H^{\dim Z-i}_{Z'}(Z,\O)\overset{\sfm}{\longrightarrow} H^{\dim Z}_{\reflection(Z')\cap Z'}(Z,\O)\overset{\sfres}\longrightarrow \C,\quad i=\dim Z/2.\]
The map $\sfm$ is a cup product (see e.g. \cite{Iversen} Section II.10 ), $\sfres$ is the Poincar\'{e} residue. This construction
will be implemented in this paper. 
\subsection{Regularization}
 Let us return to $\beta\gamma$ systems. I would like to specialize the target to a conical set.
To be more precise, let $x^i, i=1,\dots,n$ be the coordinates of a vector $e$ in the standard basis for $\C^n$. Homogeneous algebraic equations 
\begin{equation}\label{E:manifoldeq}
r^p(x^1,\dots,x^n), p=1,\dots,k
\end{equation}
define a possibly nonsmooth algebraic cone $\tspace$ in $\C^n$. Though  $\dbar$ is undefined  on the whole   $\tspace$, the spaces $\cZ_{\Sigma}$ do make sense as 
\begin{equation}\label{E:holsing}
\cZ_{\Sigma}=\{\beta:\Sigma\to \C^n| \dbar \beta=0, \beta(z)\in \tspace\, \forall z\in \Sigma\}.
\end{equation}
It is a common practice in quantum field theory to approximate an infinite-dimensional $\cZ_{\Sigma}$ by more manageable finite-dimensional spaces. This procedure is called regularization.

The finite-dimensional approximations $\cZ_{\C^{\times}}(N,N')\subset \cZ_{\C^{\times}}, N\leq N'$ consists of  the maps 
\begin{equation}\label{E:coordFourier}
e(z)=\sum_{s=1}^nx^{s}(z)e_{s}=\sum_{N\leq k\leq N'}\sum_{i=1}^nx^{s^k}z^ke_{s}
\end{equation} from $\C^{\times}$ to $\tspace$ ($s^k$ is a multi-index 
) which are Laurent polynomials in $z$.
The algebra of polynomial functions on $\cZ(N,N')$ is the quotient of $\C[x^{s^k}], s=1,\dots,n, k=N,\dots,N'$. The ideal is generated by relations $R^{p^k}(x)$ whose generating function satisfies
\[\sum_{k=\deg(r^p)N}^{\deg(r^p)N'}R^{p^k}(x)z^k=r^p(x^1(z),\dots,x^n(z)).\]
In our application, the regularized pair (\ref{E:complpair}) is 
\[ \cZ(0,N')\subset \cZ(N,N').\]
The cone $\tspace$ has a singularity at the origin. This singularity propagates in $\cZ(N,N')$ and makes the latter highly singular. This makes the definition of conormal distributions $I(Z,Z')$ problematic. At this point the algebraic version of hyperfunctions $H_{Z'}^{\codim Z'}(Z,\O)$ comes to rescue because this sheaf-theoretic definition is less sensitive to singularities of the space. 

Technically it is more convenient to replace the space of analytic maps (\ref{E:holsing}) by the space of polynomial maps
\[\cZ^{poly}_{\C^{\times}}:=\bigcup_{N,N'}\cZ_{\C^{\times}}(N,N'), \]
\[\cZ^{poly}_{\C}:=\bigcup_{N'}\cZ_{\C}(0,N').\]
The symmetry group $\cZ^{poly}_{\C^{\times}}$ always contains the  product $\C^{\times}\times\C^{\times}$. The first factor 
corresponds to dilations of the cone $\cZ^{poly}_{\C^{\times}}$. The second factor, which will 
be denoted by $\bT$, corresponds to the loop rotation. Let $w=(a,u)$ be a $\C^{\times}\times\bT$ weight. In the following $V^w$ stands for weight $w$ subspace in representations $V$ of $\C^{\times}\times\bT$.
\begin{conjecture}\label{CON:main}
Let us assume that the algebra of global algebraic functions $\C[\tspace]$ is Gorenstein (
\ref{D:gorensteinserre}). Let $\P$ stands for projectivization of the cone. Suppose that $X=\P(\tspace)$ is smooth Fano such that 
$\mathrm{ind}X$\footnote{Index $\mathrm{ind}X$ of a projective Fano manifold $X$ is a maximal integer such that 
 $\mathcal{K}=\mathcal{L}^{\otimes \mathrm{ind}X}$ for the canonical $\mathcal{K}$ and some line bundle $\mathcal{L}$. $\mathrm{coind}\,X=\dim X- \mathrm{ind}X$. } is large (at least $\mathrm{ind}X>1$). Then
\begin{enumerate} 
\item \label{I:CONmain1} $\C[\cZ(N,N')]$ is Gorenstein for any $N<N'$.
\item \label{I:CONmain2} One can define a limit 
of weight spaces 
\begin{equation}\label{E:loccohini}
H_{\cZ^{poly}_{\C}}^{i+\itwo}(\cZ^{poly}_{\C^{\times}},\O)^w:= \underset{\underset{N}{\longrightarrow}}{\lim}\underset{\underset{N'}{\longleftarrow}}{\lim}H_{\cZ(0,N')}^{i+\codim \cZ(0,N')}(\cZ(N,N'),\O)^w,
\end{equation}
 when $N\to -\infty, N'\to \infty$. 
 $H_{\cZ^{poly}_{\C}}^{i+\itwo}(\cZ^{poly}_{\C^{\times}},\O):=\bigoplus_w H_{\cZ^{poly}_{\C}}^{i+\itwo}(\cZ^{poly}_{\C^{\times}},\O)^w$

\item \label{CON:main4}There is a nondegenerate pairing
\begin{equation}\label{E:pairinggen}
 H_{\cZ^{poly}_{\C}}^{i+\itwo}(\cZ^{poly}_{\C^{\times},},\O)\otimes H_{\cZ^{poly}_{\C}}^{k-i+\itwo}(\cZ^{poly}_{\C^{\times},},\O)\to \C
\end{equation}
for some $k$ depending on $\tspace$. In all known examples $k=\mathrm{coind}(\P(\tspace))+1$. 

\item \label{I:mainrep} The groups $H_{\cZ^{poly}_{\C}}^{i+\itwo}(\cZ^{poly}_{\C^{\times},},\O)$ are representations with weights bounded from below of a central extension of (\ref{E:Lliealg}).
The pairing (\ref{E:pairinggen}) is compatible with the symmetries. 
\item \label{I:CONmain45} 

I conjecture that  $\dim H_{\cZ(0,N')}^{i+\codim \cZ(0,N')}(\cZ(N,N'),\O)^w\leq C_w$. The constant $C_w$ doesn't depend on $N$ and $N'$.
\end{enumerate}
\end{conjecture}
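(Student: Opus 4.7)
The plan rests on three pillars: graded local duality for Gorenstein rings, controlled stabilization of weight spaces of local cohomology under the inclusions $\cZ(N,N') \hookrightarrow \cZ(N-1, N'+1)$, and a Noether--Tate central extension of (\ref{E:Lliealg}). The $\C^{\times}\times \bT$ grading is the organizing principle: a single weight $w$ involves only finitely many Fourier modes, which is what makes the infinite-dimensional limits well defined.

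For part (\ref{I:CONmain1}), I would present $\cZ(N,N')$ as the closed subscheme of the affine space with coordinates $x^{s^k}$ cut out by the Fourier coefficients $R^{p^k}$ of the defining equations $r^p$. In the complete-intersection case -- which covers the motivating example of the cone of pure spinors via Cartan's purity relations -- I would prove by induction on $N'-N$ that the $R^{p^k}$ form a regular sequence, using the $\bT$-weight grading to split the Koszul complex into finite-dimensional weight pieces and rule out stray syzygies; Gorenstein then follows from complete intersection. For a general Gorenstein target, I would identify the dualizing module of $\cZ(N,N')$ by pulling back $\omega_{\tspace}$ along the evaluation map and tracking the $\bT$-weights; the Fano condition $\mathrm{ind}\,X > 1$ supplies the positivity needed to conclude freeness of the dualizing module. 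This same positivity delivers part (\ref{I:CONmain45}): each weight space of a Koszul-type resolution computing local cohomology is a subquotient of a finite-dimensional piece, because only finitely many Fourier modes can contribute to a fixed weight.

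For parts (\ref{I:CONmain2}) and (\ref{CON:main4}), I would first verify that at each fixed weight $w$ the morphisms of inverse systems in $N'$ and direct systems in $N$ eventually become isomorphisms, since $\cZ(N-1, N'+1) \to \cZ(N,N')$ is affine of finite relative dimension and only modes within a bounded window can contribute to weight $w$. Grothendieck's local duality for the Gorenstein ring $\C[\cZ(N,N')]$ then furnishes, at finite level, a perfect pairing
\begin{equation*}
H^{i}_{\cZ(0,N')}\bigl(\cZ(N,N'),\O\bigr) \otimes H^{\codim \cZ(0,N') - i}_{\cZ(0,N')}\bigl(\cZ(N,N'),\omega\bigr) \longrightarrow \C.
\end{equation*}
Using the Gorenstein identification $\omega \simeq \O$ twisted by a character $\chi_{N,N'}$ of $\C^{\times}\times \bT$, this becomes a pairing between two local cohomology groups with a calculable weight shift. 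Passing to the limit along the relevant modes yields (\ref{E:pairinggen}); the exponent $k$ is read off $\chi_{N,N'}$, and the identification $k = \mathrm{coind}(\P(\tspace))+1$ reduces to matching $\omega_{\tspace} \simeq \mathcal{L}^{\otimes \mathrm{ind}\,X}$ via the Euler sequence on $X = \P(\tspace)$, together with a $+1$ from the loop-rotation direction.

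Part (\ref{I:mainrep}) then follows structurally: the Noether construction of the introduction realizes (\ref{E:Lliealg}) as vector fields on $\cZ^{poly}_{\C^{\times}}$, which act on local cohomology; a Tate-style cocycle arises from normal-ordering this action on infinitely many Fourier modes, as for the usual $\beta\gamma$ system on a smooth target. Compatibility of the pairing with the symmetries is forced by compatibility of cup product and residue with automorphisms, and the weights are bounded below because only modes with $k \geq N$ contribute. The \emph{main obstacle} I anticipate is the nondegeneracy in (\ref{CON:main4}): after taking $\Llim{N'}$ one risks losing classes to the boundary of the inverse system, and ruling this out requires a Mittag-Leffler condition essentially equivalent to (\ref{I:CONmain45}). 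A secondary obstacle is the Gorenstein property for Gorenstein targets that are not complete intersections, where the regular-sequence argument breaks down and one must instead invoke Ein--Mustata-type results on jet schemes or deform to the trivial arc space.
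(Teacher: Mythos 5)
The proposal misidentifies the fundamental mechanism in several places and omits the single most important technical device.

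\textbf{The complete-intersection shortcut is unavailable for the motivating example.} You write that the cone of pure spinors ``via Cartan's purity relations'' falls under the complete-intersection case, and propose to prove that the Fourier-coefficient relations $R^{p^k}$ form a regular sequence. But the cone of pure spinors in $\C^{16}$ has Krull dimension $11$, so codimension $5$, while it is cut out by $10$ quadrics --- it is emphatically \emph{not} a complete intersection, and neither are the $\cZ(N,N')$. The paper establishes the Gorenstein property (Proposition~\ref{P:GorensteinCL}) by an entirely different route: a flat toric degeneration of $\hA[\hdelta,\hdelta']$ to a Hibi algebra, reading off the Gorenstein property combinatorially from equidimensionality of maximal chains in the join-irreducible subposet of the Hasse diagram (\ref{P:kxccvgw13}), then transferring back via Stanley's numerical criterion (Proposition~\ref{P:stanley}). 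Nothing in your regular-sequence argument survives the absence of the complete-intersection hypothesis, and your fallback ``invoke Ein--Mustata-type results on jet schemes'' does not apply either, since $\cZ(N,N')$ for $N<0$ is not a jet/arc scheme (jet schemes would give $\jA[(0)^0,(1)^{N'}]$, a \emph{different} algebra with half as many relations, discussed in Section~\ref{S:formalmaps}).

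\textbf{The wrong-way maps are missing entirely.} Your plan for part (\ref{I:CONmain2}) asserts that one checks ``the morphisms of inverse systems in $N'$ and direct systems in $N$ eventually become isomorphisms.'' But you never explain where the structure maps in the $N$-direction come from. The scheme inclusion $\cZ(N,N')\hookrightarrow\cZ(N-1,N')$ induces a restriction map $H^i_{\cZ(0,N')}(\cZ(N-1,N'),\O)\to H^i_{\cZ(0,N')}(\cZ(N,N'),\O)$, which points the \emph{wrong way} for a direct limit over $N\to-\infty$. The paper's central construction is the Thom class $\Th_{\sfp}\in\Ext_P^{\codim}(\omega_{\hRt},\omega_{\hRo})$ of Section~\ref{S:CM}, built from dualizing the map of minimal free resolutions of the Gorenstein algebras and using the Gorenstein self-duality to reinterpret it as a degree-shifting map $H^i_{\fa}[\hdelta_2,\hbeta]\to H^{i+\codim}_{\fa}[\hdelta_1,\hbeta]$. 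The compatibility of these wrong-way maps with the restriction maps (the commuting square in Proposition~\ref{P:commutativity}) is what makes (\ref{E:loccohini}) a bidirect system at all, and verifying that compatibility is nontrivial --- it forces the paper to factor $\Th$ through elementary homomorphisms (\ref{E:group1}--\ref{E:group3}) and prove the square commutes case by case. Without this piece there is no bidirect system and no limit.

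\textbf{The uniform dimension bound needs more than finite Fourier windows.} For part (\ref{I:CONmain45}) you argue that each weight space is a subquotient of a finite-dimensional piece of a Koszul resolution. This shows each $H^i_{\fa}[\hdelta,\hdelta']^w$ is finite-dimensional, but the conjecture asserts the bound $C_w$ is \emph{independent of $N,N'$}. The paper's proof (Proposition~\ref{P:importantprop}, relying on Proposition~\ref{E:random} and the Stanley--Reisner path algebra estimates of Proposition~\ref{P:pathestimates}) replaces the Koszul resolution by the Chevalley--Eilenberg resolution $CE_{\bullet}$ over the Koszul-dual Lie algebra $L[\hdelta,\hdelta']$, degenerates to the Stanley--Reisner algebra, and counts paths of bounded $\bT$-amplitude on the graph (\ref{P:kxccvgw14}); this is where the interval-independence actually comes from. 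You correctly flag Mittag--Leffler as the danger point in the nondegeneracy of (\ref{E:pairinggen}), and the paper indeed deploys (\ref{I:CONmain45}) exactly for this in Proposition~\ref{E:kappaisomorphism} --- but you would need to prove (\ref{I:CONmain45}) first, and the argument you sketch does not do so.

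On item (\ref{I:mainrep}), note that Theorem~\ref{T:theoremmain} in the paper explicitly omits the affine Lie algebra action; what is actually proved is the weight boundedness (Corollary~\ref{C:limitchardim}), so your Noether/Tate-cocycle discussion, while plausible, addresses a part of the conjecture the paper defers.
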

There is a conjecture closely related to item (\ref{I:CONmain45}).
\begin{conjecture}\label{C:extweightbound}
I continue using assumptions of Conjecture \ref{CON:main}. 
 Fix a weight $w=(a,u)$.
I conjecture that \[\dim_{\C} \Ext^l_{\O[\cZ(N,N')]}(\C,\C)^{w}\leq C_{w,l}\] where $C_{w,l}$ doesn't depend on $N,N'$.
\end{conjecture}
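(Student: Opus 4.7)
The plan is to translate the $\Ext$-bound into a bound on $\Tor$ and then to reduce the resulting calculation to data at the origin of $\tspace$ via the arc/loop structure of $A=\O[\cZ(N,N')]$.

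First, since $A$ is $\C^\times\times\bT$-graded connected with residue field $\C$ at the cone vertex, duality of weight spaces gives $\Ext^l_A(\C,\C)^w\cong\rHom_\C(\Tor^A_l(\C,\C)^w,\C)$, so it is enough to bound $\dim_\C\Tor^A_l(\C,\C)^{(a,u)}$ uniformly in $N,N'$. I would compute $\Tor$ from a minimal $\C^\times\times\bT$-equivariant free resolution of $\C$ over $A$, equivalently via the Andr\'e--Quillen / homotopy Lie algebra $\pi^\bullet_{\C/A}$, so that $\Tor^A_\bullet(\C,\C)=U(\pi^\bullet_{\C/A})$ as bigraded vector spaces by PBW.

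Next, the algebra $A(N,N')$ is by construction a ``truncated loop'' of $\O[\tspace]$: its generators $x^{s^k}$ and relations $R^{p^k}$ are precisely the $z^k$-Fourier modes of the generators $x^s$ and the relations $r^p$ of $\O[\tspace]$. This arc structure should promote to a comparison between $\pi^\bullet_{\C/A}$ and a suitable truncation of $\pi^\bullet_{\C/\O[\tspace]}\otimes\C\langle z^N,\ldots,z^{N'}\rangle$. Under the Fano/Gorenstein hypotheses on $\P(\tspace)$ in Conjecture~\ref{CON:main}, the homotopy Lie algebra of $\O[\tspace]$ is concentrated in a bounded range of internal $\C^\times$-degrees at each homological level (this is where the ``large index'' condition enters). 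For a fixed weight $(a,u)$ and a fixed homological level $l$, the $(a,u)$-piece of $\Tor^A_l$ then decomposes into a sum over partitions $(a,u)=\sum_i(a_i,u_i)$ with the $(a_i,\ast)$-pieces drawn from the finite support of $\pi^\bullet_{\C/\O[\tspace]}$, and the remaining loop-weight data $(u_i)$ constrained enough to yield a count bounded by $C_{w,l}$, independent of $N,N'$.

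The main obstacle is the comparison in the second paragraph. A naive identification $\pi^\bullet_{\C/A}\cong\pi^\bullet_{\C/\O[\tspace]}\otimes\C\langle z^N,\ldots,z^{N'}\rangle$ overcounts, producing unboundedly many loop modes at each fixed $u$, so one must identify the cancellations -- new higher brackets in $\pi^\bullet_{\C/A}$ generated by the arc relations -- that trivialize the excess. A promising route is the Koszul case, including the pure-spinor cone where $\O[\tspace]$ is Koszul after Berkovits: one then works directly with the quadratic Koszul dual $A^!$ of $A(N,N')$, and the desired bound becomes a combinatorial statement about graded pieces of a finitely-presented quadratic algebra, amenable to an explicit PBW-type analysis.
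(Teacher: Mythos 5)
Your final paragraph correctly identifies the paper's route: use the Koszul property of $A=\hA[\hdelta,\hdelta']$ to identify $\bigoplus_l \Ext^l_A(\C,\C)$ with the quadratic dual $A^!$. But your proposal then stops at ``amenable to an explicit PBW-type analysis'' without supplying the argument, and this is exactly where the substance lies. The missing ingredient is Proposition~\ref{P:injection} and Remark~\ref{R:isorepresentations}: the degree-lexicographic filtration on $A$ degenerates it to the Stanley--Reisner algebra $SR[\hdelta,\hdelta']$, and a spectral-sequence/Koszulness argument (equal Hilbert series $A(t)=Gr\,A(t)$, both algebras Koszul) shows that $A^!$ and $SR^!$ are isomorphic as $\Aut$-representations. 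This reduces the problem to $SR^!$, which by Proposition~\ref{E:pathalg} is the path algebra of the quiver obtained from the Hasse diagram. Then the actual bound (Proposition~\ref{P:pathestimates}) comes from a very concrete combinatorial observation: a degree-$l$ monomial in $SR^!$ is a length-$l$ path in the graph, and along any edge the $\bT$-weight $u$ changes by at most $1$; hence a path of length $l$ with prescribed total $\bT$-weight visits only vertices with $u$-coordinate in the window $[u-l,\,u+l]$, a set of size bounded independently of $\hdelta,\hdelta'$, giving the $N,N'$-independent constant $C_{w,l}$.

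Your middle paragraphs -- the comparison of $\pi^\bullet_{\C/A}$ with a truncation of $\pi^\bullet_{\C/\O[\tspace]}\otimes\C\langle z^N,\dots,z^{N'}\rangle$, and the claim that the Fano-index hypothesis bounds internal degrees in the homotopy Lie algebra -- are both speculative and, as you note, overcount; they are not part of the paper's argument and I do not see how to make them work without essentially rediscovering the path-algebra count. In fact the Fano index plays no role in the paper's proof of this particular bound: what is used is the G-quadratic (hence Koszul) structure of $A$ and the shape of the Hasse diagram~(\ref{P:kxccvgw13}), nothing about the index. So the proposal is on the right track at the very end, but the step from ``reduce to $A^!$'' to ``bound weight spaces of $A^!$ uniformly in the interval'' is a real gap that the Stanley--Reisner/path-algebra degeneration is needed to fill.
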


\begin{remark}
It is worthwhile to indicate where the maps $H_{\cZ(0,N')}^{i}(\cZ(N,N'),\O)$ in (\ref{E:loccohini}) can come from. There is a pairing between cohomology $H_{\cZ(0,N')}^{i}(\cZ(N,N'),\O)$ and $H_{\cZ(N,-1)}^{i}(\cZ(N,N'),\O)$ when $\cZ(N,N')$ is Gorenstein. The maps $H_{\cZ(0,N')}^{i}(\cZ(N,N'),\O)\to H_{\cZ(0,N'-1)}^{i}(\cZ(N,N'-1),\O)$ that form the inductive part of the limit, are the restriction map. The "wrong way" maps are $H_{\cZ(0,N')}^{i}(\cZ(N,N'),\O)\to H_{\cZ(0,N')}^{i+\codim}(\cZ(N+1,N'),\O)$. They form the direct part of the limit, are the adjoint to the restriction maps $H_{\cZ(N+1,-1)}^{i}(\cZ(N+1,N'),\O)\to H_{\cZ(N,-1)}^{i}(\cZ(N,N'),\O)$. In my treatment I will use a different but equivalent construction of these maps.
\end{remark}
Local cohomology groups (\ref{E:loccohini}) can be computed by taking sub-complex of sections 
$\Gamma_{\cZ_{\C}(N')}(I^{\bullet})$ with the support on $\cZ_{\C}(N')$ in 
an injective resolution $I^{\bullet}$ of the sheaf $\O$. The total complex of the Koszul bicomplex \[\fT_I^k(N,N'):=\bigoplus_{i-j=k}B_{j}(\Gamma_{\cZ_{\C}(N')}(I^{i}),\{x^{s^k}\})\] conjecturally has its own pairing in cohomology $BV^i(N,N'):=H^i(\fT_I(N,N'))$.

\begin{conjecture}\label{CON:main2}
Let $n$ be as in (\ref{E:manifoldeq}).
\begin{enumerate}
\item \label{I:CONmain22} One can define a limit of linear spaces $BV^{i+\itwo}=\bigoplus_w BV^{i+\itwo,w}$
\[BV^{i+\itwo,w} :=\underset{\underset{N}{\longrightarrow}}{\lim}\underset{\underset{N'}{\longleftarrow}}{\lim} BV^{i+\codim \cZ(0,N')+Nn}(N,N')^w.\]
\item \label{I:CONmain23} There is a nondegenerate pairing
 \begin{equation}\label{E:fpar33}
 BV^{i+\itwo} \otimes BV^{k-i+\itwo} \to \C
 \end{equation}
 with $k$ the same as in Conjecture \ref{CON:main}.
\end{enumerate}
\end{conjecture}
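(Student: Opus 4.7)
The plan is to deduce Conjecture \ref{CON:main2} from Conjecture \ref{CON:main} and Conjecture \ref{C:extweightbound} by analyzing the Koszul bicomplex $\fT_I(N,N')$ via its two natural filtrations.

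For the existence of the limit in item (\ref{I:CONmain22}), I filter $\fT_I(N,N')$ by the injective resolution degree $i$; the resulting $E_2$-page computes the Bar (Koszul) homology with respect to the generators $\{x^{s^k}\}$ of the graded local cohomology $H^{\ast}_{\cZ(0,N')}(\cZ(N,N'),\O)$. Conjecture \ref{CON:main}(\ref{I:CONmain45}) bounds the latter uniformly on each weight space, and Conjecture \ref{C:extweightbound}, after identifying the Koszul/Bar homology of $\O[\cZ(N,N')]$ with $\Ext^\bullet_{\O[\cZ(N,N')]}(\C,\C)$ (up to the standard weight-space duality between $\Tor$ and $\Ext$ for a connected graded algebra), controls the Bar direction. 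Assuming the spectral sequence abuts in a bounded fashion, each weight space of $BV^{\bullet}(N,N')$ is uniformly bounded in $(N,N')$, so the double limit $\colim{N}\Llim{N'} BV^{i+\codim \cZ(0,N')+Nn}(N,N')^w$ stabilizes weight by weight. The shift $\codim \cZ(0,N')$ is inherited from (\ref{E:loccohini}); the shift $Nn$ is the bottom Bar degree after $|N|$ negative-mode steps on the $n$ embedding coordinates of (\ref{E:manifoldeq}), and selects the correct semi-infinite sector as $N \to -\infty$.

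For the pairing in item (\ref{I:CONmain23}), at each finite level I tensor two ingredients. The first is the Gorenstein-Serre pairing of Conjecture \ref{CON:main}(\ref{CON:main4}) on the local-cohomology factor; the second is the canonical Frobenius self-pairing $B_j \simeq B_{n(N'-N+1)-j}^{\vee}$ on the augmented Koszul complex in the generators $\{x^{s^k}\}$. The product, composed with the residue on one side and the Frobenius trace on the other, yields a pairing on $BV^{\bullet}(N,N')$ whose total degree, after cancelling the shifts $\codim \cZ(0,N') + Nn$ on both sides, equals $\mathrm{coind}(\P(\tspace))+1$, reproducing the $k$ of Conjecture \ref{CON:main}. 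Compatibility of this pairing with the restriction maps ($N' \to N'-1$) and their adjoints ($N \to N+1$) is inherited from the corresponding compatibility already underlying Conjecture \ref{CON:main}(\ref{CON:main4}), so the pairing descends to the limit, and nondegeneracy passes to the limit by the uniform dimensional bounds.

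The main obstacle is control of the spectral sequence beyond the $E_2$-page: uniform dimensional bounds on $E_2$ only transfer to uniform bounds on $BV^\bullet(N,N')^w$ if the higher differentials are controlled, which seems to require a formality or Koszul-regularity property for the pair $(\O[\cZ(N,N')], \{x^{s^k}\})$ that is plausible under the Fano/large-index hypothesis on $X=\P(\tspace)$ but is not automatic. A secondary but delicate issue is the sign/shift bookkeeping that pins the total pairing degree to $\mathrm{coind}(\P(\tspace))+1$; verifying this at the level of the explicit residue-Frobenius pairing, and checking strict compatibility of the adjoints with both dualities simultaneously, is likely to occupy a nontrivial share of the proof.
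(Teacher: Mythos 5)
There is a genuine gap, and it is precisely the one you flag at the end. Your plan filters $\fT_I(N,N')$ by the injective degree, takes local cohomology first, and then attempts to control the induced Koszul/Bar spectral sequence on $H^{\ast}_{\cZ(0,N')}(\cZ(N,N'),\O)$ using the weight-dimension bounds from Conjectures~\ref{CON:main}(\ref{I:CONmain45}) and \ref{C:extweightbound}. But dimension bounds on an $E_2$-page give no control over higher differentials or extensions, and no formality or Koszul-regularity assumption for the pair $(\O[\cZ(N,N')],\{x^{s^k}\})$ is established in the paper. Without that, the uniform bounds do not propagate to $BV^{\bullet}(N,N')^w$, the limit need not stabilize, and the pairing you build by tensoring the Gorenstein--Serre duality with a Frobenius pairing on the Koszul factor is only defined on the $E_2$-page, not on the abutment.

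The paper sidesteps this entirely. It first replaces the injective-resolution presentation of local cohomology with the $\Tor$-presentation of Proposition~\ref{P:localtorisom}, writing $\fT_{\bullet}(\fc)=F_{\bullet}\underset{P}{\otimes}\moduleT_{\fc}\otimes\Lambda$ for the $D$-modules $\moduleT_{\fc}$ of Section~\ref{S:modulesT}. With this model, the \emph{other} filtration (by the resolution degree of $F_{\bullet}$) is the useful one: by the K\"unneth computation $H_i(B(\moduleT_{\fc},\{\lambda^{\halpha}\}))=\delta_{s(\fc),i}$ the second spectral sequence of Proposition~\ref{P:fT} collapses on its first page, yielding $H_i(\fT(\fc))\cong V_{i-s(\fc)}=\Tor^{P}_{i-s(\fc)}(\hA,\C)$. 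Once the cohomology is identified with the $\Tor$ groups, nondegeneracy of (\ref{E:fpar33}) is not a Frobenius-pairing bookkeeping exercise but the Avramov--Golod duality on $V_{\bullet}$ (Proposition~\ref{C:PoincareGen}), fed through the explicit map $\sfU$, the residue $\sfres_{\fT}$, and the cocycle $\Theta(z)$ of Section~\ref{S:faf}; and the limit in item~(\ref{I:CONmain22}) is built not by stabilization-via-bounds but by the modified Thom classes $\Th'$ of (\ref{E:thmodalt}) giving a bidirect system with the compatibility of Proposition~\ref{P:pairingcomp2}. In short, the move you are missing is to switch to the $\Tor$-presentation and use the collapsing filtration; as written, your argument rests on an unestablished degeneration.
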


In the following sections, I will prove 
Conjecture \ref{CON:main}(with omission of the Lie algebra action ) and Conjecture \ref{CON:main2} for the cone over the isotropic Grassmannian, which I will define presently.

\subsection{The cone of pure spinors $\cone$}
In this section, I will briefly remind the reader the definition of the space of pure spinors. Let $\vectorrep$ be a ten-dimensional complex vector space, equipped with a nondegenerate complex-linear inner product $(\cdot,\cdot)$. The central object of study of this paper is an affine cone $\cone$ over $\OGr^+(5,10)$- a connected component of the Grassmannian of five-dimensional isotropic subspaces. I refer the reader to \cite{Cartan},\cite{ChevalleySpinors} and \cite{CortiReid} for details on isotropic grassmannians.

The cone of pure spinors 
\begin{equation}\label{E:purecone}\cone\subset \sspinor \end{equation}
 over $\OGr^+(5,10)$ is an algebraic variety over $\C$ in 16-dimensional linear space $\sspinor$ with coordinates 
 \begin{equation}\label{E:spinorcoord}
 \begin{split}
& \lambda,p_i,w_{ij},\\
&1\leq i,j\leq 5, w_{ij}=-w_{ji}.
 \end{split}
 \end{equation} $\cone$ is defined by equations 
\begin{equation}\label{E:relexpl0}
\begin{split}
&\lambda p_{i}-\Pf_{i}(w)=0\quad {i}=1,\dots,5,\\
&pw=0.\\
\end{split}
\end{equation}
$\Pf_{i}(w), 1\leq i\leq 5$ are the principal Pfaffians of $w$.
\begin{definition}\label{E:sAdef}
$\sA$ is the graded algebra generated by (\ref{E:spinorcoord}) that are subject to relations (\ref{E:relexpl0}) 
\end{definition}

\subsection{Formulation of the results}\label{SS:mainresult}
In this section I collected main theorems which are proved in this paper. In the end of the section the reader will find a brief synopsis of the remaining parts. 

Denote 
\begin{equation}\label{E:zdef}\begin{split}
&\cZ^{poly}_{\C^{\times},\cone}\text{ by }\cZ_{\C^{\times}},\quad \cZ^{poly}_{\C,\cone}\text{ by }\cZ_{\C},\\
& \cZ^{poly}_{\C^{\times},\cone}(N,N')\text{ by }\cZ(N,N').
\end{split}\end{equation}

 \begin{theorem}\label{T:theoremmain}
 Conjecture \ref{CON:main} (with the omission of the affine Lie algebra action) hold true for $\tspace=\cone$.
 \end{theorem}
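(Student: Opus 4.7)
The plan is to verify items (\ref{I:CONmain1})--(\ref{CON:main4}) and (\ref{I:CONmain45}) of Conjecture \ref{CON:main} in turn for $\tspace = \cone$, with item (\ref{I:mainrep}) omitted as the statement indicates. The key input throughout is that $\sA = \C[\cone]$ is a Gorenstein algebra of Krull dimension $11$, cut out of $\C^{16}$ by the ten Pfaffian-type relations (\ref{E:relexpl0}), and that its minimal free resolution has a rigid $\Spin(10)$-equivariant structure. The projective variety $\P(\cone) = \OGr^+(5,10)$ is a smooth Fano manifold with large $\mathrm{ind}$, so $\mathrm{coind}\,\P(\cone)$ is small and the target $k$ in (\ref{E:pairinggen}) is $\mathrm{coind}\,\P(\cone) + 1$.

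For item (\ref{I:CONmain1}), I would build an explicit bi-graded free resolution of $\C[\cZ(N,N')]$ and verify that it is self-dual up to a bi-degree shift, which is equivalent to the Gorenstein property. The coordinate ring is bi-graded by internal weight $a$ and by loop weight $u \in [N,N']$, and the Fourier expansion of the defining relations organizes the syzygies into a loop-graded version of the minimal resolution of $\sA$. Equivalently one can compute the canonical module $\omega_{\cZ(N,N')}$ by a Koszul-type formula and check that it is isomorphic to $\C[\cZ(N,N')]$ up to a bi-degree shift. This step is the main obstacle of the proof: the defining ideal is far from a complete intersection, so the self-dual resolution must be exhibited explicitly.

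For items (\ref{I:CONmain2}) and (\ref{CON:main4}) I compute the local cohomology $H^*_{\cZ(0,N')}(\cZ(N,N'),\O)$ via the \v{C}ech complex on the standard open covering, or equivalently as the relevant $\Ext^*$ groups. Gorenstein local duality at finite level then produces the pairing as the composition
\[ H^i \otimes H^{d-i} \longrightarrow H^d \overset{\sfres}{\longrightarrow} \C, \qquad d = \codim \cZ(0,N'). \]
The transition maps used to form the colim--lim in (\ref{E:loccohini}) are the restriction arrows $N' \to N'-1$ (the inductive direction) together with their Gorenstein duals in the $N$ variable (the projective direction), as sketched in the remark following Conjecture \ref{CON:main}. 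Passing to the limit weight-by-weight is legitimate once the uniform bound of item (\ref{I:CONmain45}) is in place.

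For item (\ref{I:CONmain45}), the bound $C_w$ follows from a stabilization argument. The Pfaffian relations (\ref{E:relexpl0}) are homogeneous of bounded internal and loop degree, so a Fourier mode of loop weight $k$ can contribute to the weight-$w=(a,u)$ subspace of $H^*_{\cZ(0,N')}(\cZ(N,N'),\O)$ only for $k$ lying in a finite window that depends on $w$ but not on $N,N'$. Combined with the self-dual resolution from item (\ref{I:CONmain1}), this expresses the weight-$w$ character as an Euler characteristic that stabilizes once $|N|$ and $|N'|$ are large enough, yielding simultaneously the existence of the semi-infinite limit and the $(N,N')$-uniform bound. The hardest single ingredient remains the Gorenstein property for $\cZ(N,N')$, whose proof requires the detailed structure of the $\Spin(10)$-equivariant syzygies of the Pfaffian ideal defining $\cone$ and motivates the local-algebra techniques developed in the remainder of the paper.
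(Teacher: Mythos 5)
Your outline identifies the right targets, but it leaves the two hardest steps without a working argument, and where it does gesture at a mechanism the mechanism is not the one that actually closes the gap.

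For item (\ref{I:CONmain1}) you propose to exhibit an explicit bi-graded self-dual free resolution of $\C[\cZ(N,N')]$ over the ambient polynomial ring and read off the Gorenstein property from it. You correctly note this is the main obstacle, but you do not say how such a resolution would be produced; the ideal is not a complete intersection, the number of variables grows with $N'-N$, and no $\Spin(10)$-equivariant minimal resolution of $\hA_N^{N'}$ is written anywhere. The paper sidesteps exactly this: it degenerates $\hA_N^{N'}$ via the flat $\C[[h]]$-family of Proposition~\ref{P:functionL} to the Hibi (toric) algebra $Hibi[\hdelta,\hdelta']$, then applies Hibi's combinatorial criterion (purity of the join-irreducible subposet, Remark~\ref{R:hibi}) to decide Gorensteinness, and finally transfers back to $\hA$ via Stanley's Hilbert-series criterion (Proposition~\ref{P:stanley}) since the Hilbert series is unchanged under flat degeneration. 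Without either this degeneration or a concretely constructed self-dual resolution, your item~(\ref{I:CONmain1}) is a statement of intent, not a proof.

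For item (\ref{I:CONmain45}) your ``finite window'' argument does not hold up. Boundedness of the degrees of the defining relations controls where the generators and first syzygies of $\hA_N^{N'}$ live, but the local cohomology weight spaces $H^*_{\cZ(0,N')}(\cZ(N,N'),\O)^w$ are built from infinitely many Fourier modes of the structure sheaf, and it is not a formal consequence of bounded relation degree that their weight-$w$ pieces have dimension bounded independently of $N,N'$. The paper's actual proof (Propositions~\ref{P:pathestimates}, \ref{E:random}, \ref{P:importantprop}) uses the Koszul property of $\hA[\hdelta,\hdelta']$, identifies the quadratic dual with the universal enveloping algebra of a Lie algebra, passes to the Stanley--Reisner degeneration whose Koszul dual is a path algebra on the graph~(\ref{P:kxccvgw14}), and bounds weight-space dimensions by counting paths of fixed length. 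That combinatorial control is what makes the bound uniform; your argument does not supply a substitute. Relatedly, once (\ref{I:CONmain45}) is in hand the passage from nondegeneracy at each finite level to nondegeneracy of~(\ref{E:pairinggen}) is still not automatic: the paper constructs the ``wrong-way'' transition maps via the Thom class $\Th$ of Section~\ref{S:CM} rather than by formal adjointness, proves compatibility of $\Th$ with the pairing (Propositions~\ref{C:adjoint}, \ref{P:adjointdiagran}), and invokes a bidirect-system interchange argument (Proposition~\ref{E:kappaisomorphism}) that needs Mittag--Leffler type control. Your sketch treats this as bookkeeping; the paper explicitly flags it as the subtle step that invalidated an earlier draft.
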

 \begin{proof}
  Item \ref{I:CONmain1} follows  from Proposition \ref{P:GorensteinCL}).
Item  \ref{I:CONmain2} follows from Definition \ref{E:semiinfdefsum}. 
 Item \ref{CON:main4} follows from Proposition \ref{P:shiftcong2}.
 Item \ref{I:mainrep} is item \ref{I:infinityweightpositivity} from Corollary \ref{C:limitchardim}, .
Item \ref{I:CONmain45} is item \ref{I:infinityweightfinitness}  from Corollary \ref{C:limitchardim} 
 . The constant $k$ is equal to $3$. All the structures  that appear are compatible with the $\Spin(10)$-action.
 \end{proof}
\begin{theorem}\label{T:boundExt}
Conjecture \ref{C:extweightbound} hold true for  for $\tspace=\cone$. 
 \end{theorem}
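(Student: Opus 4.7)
The plan is to transfer the uniform weight bounds on local cohomology already established for $\tspace=\cone$ (Corollary \ref{C:limitchardim}, i.e.\ item \ref{I:CONmain45} of Theorem \ref{T:theoremmain}) to the $\Ext$ side via Koszul and Gorenstein duality.

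First, I would exploit the quadratic Koszul structure of $\sA$. The defining relations (\ref{E:relexpl0}) in the spinor generators (\ref{E:spinorcoord}) are quadratic, and $\sA$ is known to be Koszul with finite-dimensional quadratic dual $\sA^{!}$. The loop algebras $\C[\cZ(N,N')]$ are generated in weight one in the $\C^{\times}$-direction and are cut out by the Fourier-mode coefficients of the quadratic relations of $\sA$, so they are themselves quadratic. I would verify Koszulness of $\C[\cZ(N,N')]$ by a distributive-lattice or Gr\"obner basis argument on the mode-indexed relations, using that the $\bT$-fixed subalgebra recovers $\sA$ and that the Koszul property propagates through the mode completion. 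Once Koszulness is in hand,
\[\Ext^{l}_{\C[\cZ(N,N')]}(\C,\C)=\bigl(\C[\cZ(N,N')]^{!}\bigr)^{l},\]
and the weight-$(a,u)$ piece of $\Ext^{l}$ is concentrated in cohomological degree $l=a$, where it is the $(a,u)$-graded component of the quadratic dual.

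Next, I would connect this Koszul dual to local cohomology. By Proposition \ref{P:GorensteinCL}, $\C[\cZ(N,N')]$ is Gorenstein, so its Koszul dual carries a Frobenius pairing, and local duality identifies weight-$(a,u)$ pieces of $\bigl(\C[\cZ(N,N')]^{!}\bigr)^{l}$ with weight-shifted pieces of local cohomology $H^{*}_{\fm}(\C[\cZ(N,N')])$ at the augmentation ideal $\fm$. The dimensions of the relevant local-cohomology weight spaces are uniformly bounded in $N,N'$ by Corollary \ref{C:limitchardim}, which produces the required constant $C_{w,l}$.

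The central technical obstacle is tracking the weight shift in the Gorenstein/Koszul duality: the socle weight of the Frobenius form depends on $N,N'$ through the canonical class of $\cZ(N,N')$, and one must show that this shift combines with the semi-infinite stabilization in a weight-preserving way so that the Ext constant inherits $N,N'$-independence from the local-cohomology constant. A subsidiary difficulty is proving Koszulness of $\C[\cZ(N,N')]$ for arbitrary $N<N'$ rather than merely asymptotically; the mode-indexed relations couple different Fourier modes nontrivially, so one cannot just tensor the Koszul resolution of $\sA$ with a loop factor, and the distributive lattice condition must be checked directly on the loop relations.
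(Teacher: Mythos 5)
Your proposal has a fundamental flaw at its central step: you claim that Gorenstein/local duality identifies weight pieces of the Koszul dual $\C[\cZ(N,N')]^{!}=\Ext^{\bullet}_{\C[\cZ(N,N')]}(\C,\C)$ with weight-shifted pieces of $H^{*}_{\fm}(\C[\cZ(N,N')])$ at the augmentation ideal. No such identification exists. By Proposition \ref{E:diality}, $H^{*}_{\fm}(\hA)$ is concentrated in a single degree and is isomorphic as a graded vector space to $\bigoplus_i \hA^{*}_i$, i.e.\ essentially the graded dual of the algebra itself (this is the Gorenstein local duality). That object has polynomial Hilbert growth and weight spaces whose dimensions \emph{do} grow without bound as $N,N'\to\infty$. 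In contrast, $\hA^{!}$ is the (noncommutative) quadratic dual — the Ext algebra of $\C$ over $\hA$ — and for $\tspace=\cone$ it is an infinite-dimensional algebra of exponential growth (it is the universal enveloping algebra $U(L)$ of a Lie algebra $L$, cf.\ (\ref{E:lalgdef})). You appear to have conflated $\Ext^{\bullet}_{\hA}(\C,\C)$ with the finite-dimensional Koszul homology $\Tor^{P}_{\bullet}(\hA,\C)$, which does carry a Poincar\'e/Frobenius pairing by Avramov--Golod (Proposition \ref{C:PoincareGen}), but the latter is Tor over the ambient polynomial ring $P$, not Ext over $\hA$ itself. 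As a sanity check, for $\hA=P$ a polynomial ring, $P^{!}$ is a finite exterior algebra while $H^{\dim}_{\fm}(P)$ is the infinite-dimensional module $\C[x^{-1}]$; they are not dual or shift-dual. In short, the proposed route through $H^{*}_{\fm}$ and Corollary \ref{C:limitchardim} (which concerns $H^{*}_{\fa}$, not $H^{*}_{\fm}$, and a different kind of bound) cannot produce the constants $C_{w,l}$.

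The paper's actual mechanism (Proposition \ref{P:boundextweight}) is entirely combinatorial and does not invoke local cohomology or Gorenstein duality at all. It uses Koszulness — already established in \cite{MovStr}, so your ``subsidiary difficulty'' is a non-issue — to identify $\bigoplus_l\Ext^{l}_{\hA}(\C,\C)$ with $\hA^{!}$. A Gr\"obner/filtration argument (Proposition \ref{P:injection}, Remark \ref{R:isorepresentations}) then shows that $\hA^{!}[\hdelta,\hdelta']$ and the Koszul dual $SR^{!}[\hdelta,\hdelta']$ of the Stanley--Reisner degeneration are isomorphic as $\Aut$-representations. Finally, $SR^{!}$ is a path algebra of an explicit quiver (Proposition \ref{E:pathalg}), and the uniform bound $C_{w,l}$ follows from Proposition \ref{P:pathestimates}: in any path of length $|a|$ the $\bT$-weight changes by at most $1$ per step, so the $\bT$-weight of every visited vertex is confined to an interval of width $2|a|$ about $u$, and the number of such paths is bounded by a constant depending only on $a$, independently of $[\hdelta,\hdelta']$. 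You should replace the Gorenstein-duality step with this path-counting estimate.
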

 \begin{proof}
 Follows from Proposition \ref{P:boundextweight}.
 \end{proof}

\begin{theorem}\label{T:fafteoremMain}
 Items \ref{I:CONmain22},\ref{I:CONmain23} of Conjecture \ref{CON:main2} hold true for $\cZ_{\C^{\times}}(N,N')$. The linear spaces are equipped with $\Spin(10)$-action.
 \end{theorem}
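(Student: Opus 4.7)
The plan is to deduce this from the already-established items of Theorem~\ref{T:theoremmain} together with the finiteness statement Theorem~\ref{T:boundExt}. Recall that by construction $BV^i(N,N')$ is the cohomology of the total complex $\fT_I(N,N')$ of a bicomplex whose vertical differential computes local cohomology $H^{\bullet}_{\cZ(0,N')}(\cZ(N,N'),\O)$ and whose horizontal differential is the bar/Koszul differential against the generating system $\{x^{s^k}\}$. Running the horizontal spectral sequence, one obtains a description of $BV^i(N,N')$ in terms of the local cohomology regarded as a module over $\O[\cZ(N,N')]$: in the $\C^{\times}\times\bT$ weight decomposition the second page is a finite sum of pieces of the form $\Tor^{\O[\cZ(N,N')]}_{j}\!\bigl(\C,\,H^{\bullet}_{\cZ(0,N')}(\cZ(N,N'),\O)\bigr)^{w}$.

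For item~\ref{I:CONmain22}, I would first verify that each weight space $BV^{i+\codim\cZ(0,N')+Nn}(N,N')^{w}$ is finite-dimensional and eventually stable in both $N$ and $N'$. Finite-dimensionality follows by combining Theorem~\ref{T:boundExt} (uniform bounds on the weight spaces of $\Ext^{l}_{\O[\cZ(N,N')]}(\C,\C)$) with the uniform bounds on weight spaces of local cohomology provided by item~\ref{I:CONmain45} of Theorem~\ref{T:theoremmain}. Stabilisation then follows because, in a fixed weight $w$, only finitely many Koszul generators $x^{s^k}$ can contribute, so the relevant portion of the bicomplex becomes independent of $N,N'$ once $|N|,|N'|$ are large enough. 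The shift $\codim\cZ(0,N')+Nn$ is inserted precisely to absorb the variation in the homological degree of the local cohomology complex and in the number of Koszul generators as $N,N'$ vary, so that cohomology matches across successive stages in each weight.

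For item~\ref{I:CONmain23}, the pairing (\ref{E:fpar33}) is built by combining two dualities: the Serre/Poincar\'{e}-type pairing on local cohomology used in the proof of item~\ref{CON:main4} of Theorem~\ref{T:theoremmain}, and the self-duality of the Koszul complex on the system $\{x^{s^k}\}$, which identifies the component $B_{j}$ with $B_{Nn-j}$ up to a one-dimensional twist. At each finite stage the Gorenstein property of $\O[\cZ(N,N')]$ (Proposition~\ref{P:GorensteinCL}) promotes these two ingredients into a nondegenerate pairing between the total complexes, with the expected degree shift $k=3$ inherited from the pairing on local cohomology. The passage to the limit preserves nondegeneracy because each graded weight space is finite-dimensional by the above, and the pairing at each stage is visibly compatible with the restriction maps defining the inverse limit in $N'$ and with the ``wrong way'' maps defining the direct limit in $N$.

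The main obstacle in this plan is verifying that the two pairings really do assemble into a nondegenerate pairing on $H^{\bullet}(\fT_I(N,N'))$: one must check that the bar/Koszul differential and the local cohomology differential commute in a way that is compatible with the two pairings, and then run a K\"{u}nneth-type argument within each $\C^{\times}\times\bT$ weight space. The latter is tractable precisely because the weight spaces are finite-dimensional, so that the bicomplex spectral sequence collapses without extension ambiguity; without the uniform bounds of Theorem~\ref{T:boundExt} such extension problems could obstruct nondegeneracy, and this is the step I would treat most carefully.
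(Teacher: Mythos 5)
Your route diverges from the paper's and has a genuine gap. The paper does not analyze $BV^i(N,N')$ through the first spectral sequence of the Koszul bicomplex (the one whose first page involves the local cohomology groups). Instead, the crucial reduction is Proposition~\ref{P:fT}, items~2 and~3: the \emph{second} spectral sequence collapses the total complex $\fT_\bullet(\fc)$ to the single row $V_\bullet=\Tor_\bullet^P(\hA,\C)$, the Koszul homology of $\hA$. After that, the Avramov--Golod theorem (Proposition~\ref{C:PoincareGen}) supplies the Poincar\'e pairing on $V_\bullet$. Your description of the first page as $\Tor^{\O[\cZ(N,N')]}_j(\C,H^\bullet_{\cZ(0,N')}(\cZ(N,N'),\O))$ is also not right on its face: the Koszul complex $B_\bullet(\cdot,\{x^{s^k}\})$ on the finitely many polynomial generators computes $\Tor$ over $P=\C[x^{s^k}]$, not over $\O[\cZ(N,N')]$, whose $\C$-resolution is infinite.

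The missing idea is the cocycle $\Theta(z)\in BV^0_\fp$ (Section~\ref{S:faf}). The field--antifield pairing is not a two-point combination of Serre duality with Koszul self-duality; it is the three-point expression $(a,b)_{\sfU}=\sfres_{\fT}\circ\sfU(a,\Theta(z),b)$ of equation~(\ref{E:threecomp}), where $\sfU$ is a trilinear map $\fT_{\bullet}(\fa)\otimes\fT_{\bullet}(\fp)\otimes\fT_{\bullet}(\fa')\to\fT_{\bullet}(\fm)$ and $\Theta(z)$ is a distinguished cocycle built from the element $\bar c$ dual to the socle of $B_\bullet(\sA)$ and pushed forward by $\sfq_*$ along $\lambda^\rbeta\mapsto\lambda^\rbeta(z)$. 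Nondegeneracy (Proposition~\ref{E:nondegeneracyf}) is proved by identifying the leading term $\Theta_{16}(z)=\varpi_{\fp}\theta$ in the bicomplex filtration and reducing to the Avramov--Golod pairing on $V_\bullet$. Your proposal gives no substitute for $\Theta(z)$, and without it no pairing is even defined. Moreover, the $*$-duality pairing you want to recycle couples $H^i_{\fa}$ with $H^j_{\fb}$ and has functional equation (\ref{E:stardual}), $Z(t,q)=-q^2t^{-4}Z(q/t,q)$, whereas the field--antifield pairing couples $\fT_\bullet(\fa)$ with $\fT_\bullet(\fa')$ ($\fa'\neq\fb$) and has functional equation (\ref{E:antifield}), $Z(t,q)=-t^{-8}Z(1/t,q)$; tensoring $*$-duality with Koszul self-duality would not change the ideal from $\fb$ to $\fa'$ and there is no verification that the $q$-weights cancel to give the $q$-independent $-t^{-8}$. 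The limit construction you sketch for item~\ref{I:CONmain22} is reasonable in outline but the paper actually sets up the bidirect system via the modified Thom maps $\Th'_\sfp$ of (\ref{E:thmodalt}) and the commuting diagram (\ref{E:diagramindpro2}), mirroring Section~\ref{SS:definitionh}, rather than appealing to $\Ext$-bounds for ``eventual stability''.
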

 \begin{proof}
See (\ref{E:semiinfinitecohdef1}), \ref{E:pairingferm} in Section \ref{S:bvdef}.
\end{proof}

One of the observation of this paper is that there is a nontrivial Thom class $\Th\in \Ext^8_P(\hA_N^{N'},\hA_{N-1}^{N'})$ for Gorenstein algebras $\hA_N^{N'}=\O[\cZ(N,N')]$
. $P$ is the graded polynomial algebra on the same set of generators as $\hA_N^{N'}$. 
Product with $\Th$ defines the maps
\begin{equation}\label{E:thini1}
\Th: H_{\cZ(0,N')}^{i}(\cZ(N,N'),\O)\to H_{\cZ(0,N')}^{i+8}(\cZ(N+1,N'),\O).
\end{equation}
Together with the pullbacks
\[H_{\cZ(0,N')}^{i}(\cZ(N,N'),\O)\to H_{\cZ(0,N'-1)}^{i}(\cZ(N,N'-1),\O)\]
these maps turn $\{H_{\cZ(0,N')}^{i}(\cZ(N,N'),\O)\}$ into a bidirect system (\ref{E:loccohini}).

\paragraph{Virtual characters}
 $\C^{\times}\times\bT\times\Spin(10)$ is a subgroup in the groups of symmetries of $H^{i+\itwo}$. Let $\widetilde{\bT}^5$ be the maximal torus of $\Spin(10)$. Denote 
 \begin{equation}\label{E:symmstries0}
\Aut:= \C^{\times}\times\bT\times\widetilde{\bT}^5.
\end{equation}
 $\widetilde{\bT}^5$ is the subgroup of diagonal matrices $\widetilde{\bT}^5\subset \widetilde{\GL}(5)\subset \Spin(10)$ ( " $\widetilde{ }$ " stands for two-sheeted cover).
 
The virtual character function $Z(t,q):=Z(t,q,1)$ 
 \[Z(t,q,z):=\sum_{i=0}^3(-1)^i\chi_{H^{i+\itwo}}(t,q,z),(t,q,z)\in \Aut,\] 
 is understood  as an element in $\Z((t))[[q]]\cap \Q(t)[[q]]$.
It satisfies a pair of equations
\begin{equation}\label{E:antifield}
Z(t,q)=-t^{-8}Z(1/t,q), \text{ field-antifield symmetry },
\end{equation}
\begin{equation}\label{E:stardual}
Z(t,q)=-q^2t^{-4}Z(q/t,q)\text{ $*$-conjugation symmetry}.
\end{equation}
Equations were written for the first time in \cite{AABN} (equations 4.40 and 4.41). 
I have an independent  verifications (\ref{E:stardual})  in Corollaries  \ref{C:equationZ} and \ref{E:ZprimeN}. Equation (\ref{E:hilbertini}
\ref{E:explicitform}) contain a formula for a function ${Z_{\fa}}_N^{N'}$ whose limit $N\to-\infty, N'\to \infty$ is $Z$. First few terms of $q$-expansion are given in (\ref{E:qexpansion}).

\paragraph{The space of states}
The space of states, as I define it in \ref{D:defloccoh}, might look rather mysterious. I give an elementary description of $H^{3+\itwo}$ in Proposition \ref{P:tensorproduct1}. 

The structure of the groups $H^{i+\itwo},i=1,2$ is obscure. Experimentations with {\it Macaulay2} show that $H_{\cZ(0,N')}^{i+\codim \cZ(0,N')}(\cZ(N,N'),\O),i=1,2$ are nontrivial for small values of $N,N'$.

{\it Missing state phenomenon} (see \cite{AABN} Section 3) in the space of states is an unresolved problem for the original formulation of this $\beta\gamma$ system.
 Theorems \ref{T:theoremmain}, \ref{T:fafteoremMain} and equation (\ref{E:qexpansion}) indicate that our formalism if free from this drawback.

\paragraph{The problem of denominators}
 The linear spaces $H^{i+\itwo}$ are modules over polynomials algebra in coefficients of the Laurent series $\lambda(z)=\sum\lambda^kz^k, w_{ij}(z)=\sum w_{ij}^kz^k$ and $p_i(z)=\sum p_{i}^kz^k$. In particular, it is a module over $P=\C[\lambda^0,w_{ij}^0,p_{i}^0]$ which I identify with $\C[\lambda,w_{ij},p_{i}]$. I can use these generators to define affine charts for $Spec P\backslash \{0\}=\sspinor \backslash \{0\}$: 
\begin{equation}
 \begin{split}
& U_0=Spec (\lambda)^{-1}P,\\
& U_i=Spec (p_i)^{-1}P,\\
& U_{ij}=Spec (w_{ij})^{-1}P.\\
\end{split}
\end{equation}
As usual, $(a_i)^{-1}R$ stands for localization of a ring $R$. By using the covering $\mathfrak{U} =\{U_0,U_i,U_{ij}\}$, I can define the \v{C}ech complex
$\Cech_e^{\bullet}(\mathfrak{U},H^{i+\itwo})$. The following problem arises in the theory of the $b$-ghost
(see Section 2.4 in \cite{BerkovitsNekrasovMultiloop}).
There is a map \[\sfr:H^{k+\itwo}\to \Cech^{0}(\mathfrak{U},H^{k+\itwo})\]
which is the diagonal localizations
\[H^{k+\itwo}\to (\lambda)^{-1}H^{k+\itwo}\oplus\bigoplus (p_i)^{-1}H^{k+\itwo}\oplus\bigoplus(w_{ij})^{-1}H^{k+\itwo}.\]
Fix $k$. The problem of denominators (of cochains in \v{C}ech complex) roughly is: Is it true that the map $\sfr$ defines an isomorphism of $H^{k+\itwo}$ with $\rH^0(H^{k+\itwo})$?
And also is it true that $\rH^m(H^{k+\itwo})=0$, for $m=1,2,3,4$? Here in the formulas, $\rH^m$ stands for \v{C}ech cohomology of the sheaf associated with the $P$-module $H^{k+\itwo}$. It is believed that the answer on this question is affirmative. 

I prove a weaker version of this conjecture. The groups $\rH^m(H^{k+\itwo})$ form the second page of some spectral sequence $\rH^m(H^{k+\itwo}) \Rightarrow H^{k+m}C $ (Proposition \ref{E:cechspectral}). 
It turns out that the natural map $H^{k+\itwo}\to H^{k}C$ is an isomorphism for $k=0,\dots,7$ (Proposition \ref{E:isomorphism1}) and $H^{k}C=\{0\},k=4,\dots,7$, which supports the conjecture. For the complete proof of the conjecture, it suffice to establish degeneration the spectral sequence in the second page. 

\paragraph{The structure of the paper}
Results of this work rely heavily on the local structure of $\cZ(N,N')$. The relevant analysis is done in Section \ref{S:localstructure}, which is independent from the rest of the paper. There I prove the theorem (Proposition \ref{P:GorensteinCL}) that $\cZ(N,N')$ is Gorenstein. Note that Gorenstein property is probably the only thing that is truly responsible for all the structures appearing in this paper. 

In Section \ref{S:CM}, I study what I call the Thom class $\Th$ (\ref{E:classt}). This class will enable me to define "wrong direction" maps.
My definition of $H^{i+\itwo}$ as a  limit of a bidirect system heavily relies on $\Th$. 
The section also contains some simple $\Tor$ functor construction for computation of local cohomology. 

I devote Section \ref{S:spaceofstates} to systematic study of groups $H_{\cZ(0,N')}^{i}(\cZ(N,N'),\O)$. This includes verification of the nondegeneracy of the pairing (\ref{E:pairinggen})
 for $H_{\cZ(0,N')}^{i}(\cZ(N,N'),\O)$ and the proof of the functional equation (\ref{E:stardual}). I introduce complexes $\moduleT_{\bullet}(\fa)$, $\moduleS_{\bullet}(\fa)$ and $\Frg^{\fa}_{\bullet}[\hdelta,\hdelta']$
 for computations of $H_{\cZ(0,N')}^{i}(\cZ(N,N'),\O)$. 

 I study $N$ and $N'$ dependencies in $H_{\cZ(0,N')}^{i}(\cZ(N,N'),\O)$, which  will be used for gluing these groups into $H^{i+\itwo}$. 

The limiting  procedure used to define $H^{i+\itwo}$ is rather delicate. Easy examples shows that nondegeneracy of the pairing between  $H^i_{\fa}[\hdelta,\hdelta']$ $H^j_{\fb}[\hdelta,\hdelta']$ for all $\hdelta,\hdelta'$ doesn't automatically imply it , as I mistakenly thought in the first draft of this paper,   for $H^{i+\itwo}$.  Section \ref{S:lowerbound} contains theorems which  enable me to overcome this difficulty.

In Section \ref{S:limiting}, I finally introduce $H^{i+\itwo}$ and prove Theorem \ref{T:theoremmain}.
 The definition of $H^{i+\itwo}$ involves several limiting procedures. I verify that the result doesn't depend on the order of limits. I also give an elementary construction of $H^{3+\itwo}$.

 Section \ref{S:faf} is devoted to verification of Theorem \ref{T:fafteoremMain}.

 I establish the denominator conjecture, discussed above, in Section \ref{S:localization}.
 
 Section \ref{S:formalmaps} contains some speculations about the relation of this work to the earlier works on $\beta\gamma$-systems \cite{GMSch},\cite{GMSch},\cite{Nekrasovbetagamma}.

 The paper contains several appendices. For the reader's convenience, Appendix \ref{S:local} contains basic facts about local cohomology, that are frequently used in the main text. Appendix \ref{A:hibi} contains short review of the theory of Hibi algebras that is essential for Section \ref{S:localstructure}. Some standard $D$-modules, that are used throughout the paper are introduced in Appendix \ref{S:Dmodulenoteconstr}. I moved a number of Lemmas whose proofs are not very illuminating into Appendix \ref{S:techlemmas}.
\paragraph{What was left out of scope}
To keep the manuscript within reasonable size limits, I put off discussion of some topics with a hope to return to them in future publications. Here are some of them.

The first topic is the structure of the vertex operator algebra on $H^{i+\itwo}$ and the action of the affine $\hat\so_{10}$ and Virasoro algebra on it. The definition of the action is delicate: $\hat\so_{10}$ doesn't act on $\O[\cZ(N,N')]$ nor on $H_{\cZ(0,N')}^{i}(\cZ(N,N'),\O)$. It emerges on $H^{i+\itwo}$ in the limit.

The second topic is the Cousin complex. 
 In representation theory Cousin complex appears under the name of the BGG resolution. Such BGG-like construction would certainly be relevant in my setup, since $\cZ_{\space}(N,N')$ is equipped with a suitable filtration. Even though the Kempf's algorithm \cite{Kempf} that defines the Cousin complex is compatible with my limiting procedure, the outcome of the algorithm resists at present to a simple characterization. 

The third topic is the extension of the results to more general spaces. The Gorenstein property of $\O[\cZ_{\tspace}(N,N')], \tspace=\cone$ is fundamental in all of my constructions. For cones over grassmannians, this property was established in \cite{SottileSturmfels}. For cones over isotropic symplectic grassmannians, it was proved in \cite{Ruffo}. For cones over the spaces of full flags in simply-laced case, it was verified in \cite{BF}. 
In all these cases, the class $\Th$ (\ref{E:thini1}) is defined, which enables us to carry out many of the constructions from this paper. The question is how far one can push the theory. An issue arising on this way is discussed in Remark \ref{R:mainremark}. It indicates that an upgrade of my method to a more systematic technique is desirable.

Fix a smooth Fano variety $X$ with $ind(X)\geq 2$. Let $L^{-\otimes ind(X)}\cong K_{X}$, where $K_{X}$ is the canonical line bundle over $X$.  $L$ can be used to defines an  affine cone  $\tspace$ over $X$.  I wonder if the spaces $\cZ_{\tspace}(N,N')$ are Gorenstein?

\paragraph{General conventions}
 
 For an algebraic variety $X$ over $\C$ $\O[X]$ will stand for the algebra of regular algebraic functions on $X$. 
 
 The tensor product $\underset{R}\otimes$ stands for the tensor product of $R$ modules and $\otimes$ for product of $\C$-vector spaces.
\paragraph{Acknowledgment}  
The author would like thank N.Berkovits, M. Finkelberg and A.S. Schwarz  for their  interest and  helpful conversations about this work.
 
\section{The local structure of $\cZ(N,N')$}\label{S:localstructure}
The space $\cZ(N,N')$ is singular. The goal of this section is to prove that its singularities are mild. This will be done by showing that $\O[\cZ(N,N')]$ Gorenstein, which will prove item \ref{I:CONmain1} of Conjecture \ref{CON:main}.
I will be establish this in Section \ref{S:singularities} by degenerating $\cZ(N,N')$ into a toric variety $\cH(N,N')$ of a special kind. Here $\cH$-stands for Hibi. He was the first who systematically studied algebras of homogeneous functions on such spaces. A Hibi algebra (see Appendix \ref{A:hibi} for a brief review) has combinatorial description in terms of a lattice (in the sense of Birkhoff). Gorenstein property of the algebra can be extracted directly from the underlying lattice. I do this for $\O[\cH(N,N')]$ in Section \ref{S:Gorenstein}. The lattice corresponding to $\O[\cH(N,N')]$ is described in Section \ref{S:alghat}.
In this section I continue to use notations from the Section \ref{SS:mainresult}.
\setcounter{subsection}{0}

\subsection{Basic definitions}\label{S:alghat}
\paragraph{Spinors and the cone $\cone$}It will be useful to have a more invariant description of the cone $\cone$ (\ref{E:purecone}).
The set 
\[\setE=\{(0),(ij),(k)|1\leq i<j\leq 5,1\leq k \leq 5\}\]
labels a weight basis for an irreducible spinor representation $\sspinor$ of the complex group $\Spin(10)$ with respect to the maximal torus $\bT^5$
Let $\vectorrep$ the fundamental representations of the complex $\SO(10)$ with a $\bT^5$-weight basis labelled by the set of symbols 
\[\rG=\{1,\dots,5, 1^*,\dots,5^*\}.\]
See \cite{MovStr} for details.
The direct sum of $\sspinor$ with its dual $\sdspinor$ comprise an irreducible representation of the Clifford algebra $Cl=Cl(\vectorrep,(\cdot,\cdot))$:
$Cl\otimes(\sspinor+\sdspinor)\rightarrow \sspinor+\sdspinor$.
Restriction to $\vectorrep\subset Cl$ defines maps $\vectorrep\otimes \sspinor\rightarrow \sdspinor$, $\vectorrep\otimes \sdspinor\rightarrow \sspinor$.
Their adjoints are the 
$\Gamma$-maps 
\[\begin{split}
&\Sym^2(\sspinor)\rightarrow \vectorrep,\quad \Sym^2(\sdspinor)\rightarrow \vectorrep.
\end{split}\]
In $\bT^5$-weight bases $\{\theta_{\rbeta},\rbeta\in \setE\}$ for $\sspinor$ and $\{v_{\s}, \s\in \rG\}$ for $\vectorrep$,
the $\Gamma$-matrices $\Gamma_{\ralpha\rbeta}^{\s}$ are defined by the formula $\Gamma(\theta_{\ralpha},\theta_{\rbeta})=\Gamma_{\ralpha\rbeta}^{\s}v_{\s}.$
 Summation over repeated indices will always be assumed. 
 Let us introduce a more uniform notations for coordinates on the space of spinors: $\theta=\lambda^{(0)}\theta_{0}+\lambda^{(ij)}\theta_{ij}+\lambda^{(i)}\theta_{i}\in \sspinor$.
Equations
\begin{equation}\label{E:pure}
\begin{split}
&\Gamma^{\s}_{\ralpha\rbeta}\lambda^{\ralpha}\lambda^{\rbeta}=0 ,\s \in \rG, \lambda^{\rbeta}\theta_{\rbeta}\in \sspinor\\
\end{split}
\end{equation}
after the identification 
 \[\lambda=\lambda^{(0)}, w_{ij}=\lambda^{(ij)}, p_i=\lambda^{(i)}\]
coincide with (\ref{E:relexpl0}).
More invariantly than in (\ref{E:sAdef}), I define $\sA$ as 
\[\sA=\C[\lambda^{\rbeta}]/(\Gamma^{\s}_{\ralpha\rbeta}\lambda^{\ralpha}\lambda^{\rbeta}).\]

In this description, the $\C^{\times}\times \Spin(10)$-symmetries of the cone $\cone$ becomes manifest. The factor $\C^{\times}$ stands for dilations of the cone.

\paragraph{Defining equations for $\cZ(N,N')$}The scheme $\cZ(N,N')$ (\ref{E:zdef}) 
 is an affine algebraic manifold. Occasionally, following \cite{SottileSturmfels}, I will call it {\it Quantum Isotropic Grassmannian}.
The algebra of polynomial functions $\hA_{N}^{N'}=\O[\cZ(N,N')]$ (\cite{MovStr}) is generated by the variables \[\{\lambda^{\rbeta^l}| N\leq l\leq N', \rbeta\in \setE\}\] 
where $\rbeta^l$ is a multi-index. Defining relations of $\hA_N^{N'}$ are 
\begin{equation}\label{E:equationsaf0}
\Gamma^{\bm{s}^k}=\sum_{l+l'=k}\Gamma^{\bm{s}}_{\ralpha\rbeta}\lambda^{\ralpha^l}\lambda^{\rbeta^{l'}}, \quad N \leq l,l' \leq N', 2N \leq k\leq 2N' \quad \bm{s}\in \rG,
\end{equation}
\begin{equation}\label{E:relgenfunction}
\sum_{2N\leq l\leq 2N'} \Gamma^{\bm{s}^k}z^k=\Gamma^{\bm{s}}_{\ralpha\rbeta}\lambda^{\ralpha}(z)\lambda^{\rbeta}(z), \quad
\lambda^{\rbeta}(z)=\sum_{N\leq l\leq N'}\lambda^{\rbeta^l}z^l.
\end{equation}
Coordinates $\lambda^{\rbeta^k}$ (\ref{E:relgenfunction}) are the decorated Fourier coefficients $x^{i^k}$ (\ref{E:coordFourier})
\begin{remark}\label{R:symmetries}
From this definition it is obvious that $\C^{\times}\times\Spin(10)$ is a groups of symmetries of the algebra $\hA_N^{N'}$.
\end{remark}
\paragraph{The fundamental Hasse diagram}
 The following diagram $\sethE$ is fundamental for Quantum Isotropic Grassmannians:
\begin{figure}[h]
\centering
\begin{tikzpicture}[ scale=0.8]
	\begin{pgfonlayer}{nodelayer}
		\node [circle, fill=black, draw, style=none,label=180:\tiny{$(25)^{r-1}$}] (0) at (-6, -5) {.}; 
		\node [circle, fill=black, draw, style=none,label=180:\tiny{$(35)^{r-1}$}] (1) at (-5, -4) {.};
 \node [circle, fill=black, draw, style=none,label=0:\tiny{$(34)^{r-1}$}] (2) at (-4, -5) {.};
 \node [circle, fill=black, draw, style=none,label=0:\tiny{$(5)^{r-1}$}] (3) at (-3, -4) {.};
 \node [circle, fill=black, draw, style=none,label=0:\tiny{$(4)^{r-1}$}] (4) at (-4, -3) {.};
 \node [circle, fill=black, draw, style=none,label=180:\tiny{$(45)^{r-1}$}] (5) at (-6, -3) {.}; 
 \node [circle, fill=black, draw, style=none,label=0:\tiny{$(3)^{r-1}$}] (6) at (-5, -2) {.};
 \node [circle, fill=black, draw, style=none,label=180:\tiny{$(12)^{r}$}] (7) at (-6, -1) {.};
 \node [circle, fill=black, draw, style=none,label=0:\tiny{$(2)^{r-1}$}] (8) at (-4, -1) {.};
 \node [circle, fill=black, draw, style=none,label=0:\tiny{$(1)^{r-1}$}] (9) at (-3, 0) {.};
 \node [circle, fill=black, draw, style=none,label=180:\tiny{$(13)^{r}$}] (10) at (-5, 0) {.};
 \node [circle, fill=black, draw, style=none,label=180:\tiny{$(0)^{r}$}] (11) at (-7, -2) {.};
 \node [circle, fill=black, draw, style=none,label=0:\tiny{$(23)^{r}$}] (12) at (-4, 1) {.};
 \node [circle, fill=black, draw, style=none,label=180:\tiny{$(14)^{r}$}] (13) at (-6, 1) {.};
 \node [circle, fill=black, draw, style=none,label=0:\tiny{$(24)^{r}$}] (14) at (-5, 2) {.};
 \node [circle, fill=black, draw, style=none,label=180:\tiny{$(15)^{r}$}] (15) at (-7, 2) {.};
 \node [circle, fill=black, draw, style=none,label=180:\tiny{$(25)^{r}$}] (16) at (-6, 3) {.};
 \node [circle, fill=black, draw, style=none,label=0:\tiny{$(34)^{r}$}] (17) at (-4, 3) {.};
 \node [circle, fill=black, draw, style=none,label=180:\tiny{$(35)^{r}$}] (18) at (-5, 4) {.};
 \node [circle, fill=black, draw, style=none,label=0:\tiny{$(5)^{r}$}] (19) at (-3, 4) {.};
		\node [style=none] (20) at (-7, -6) {\tiny $\cdots$};
		\node [style=none] (21) at (-5, -6) {\tiny $\cdots$};
		\node [style=none] (22) at (-6, 5) {\tiny $\cdots$};
		\node [style=none] (23) at (-4, 5) {\tiny $\cdots$};
	\end{pgfonlayer}
	\begin{pgfonlayer}{edgelayer}
		\draw [thick] (0) to (1);
		\draw [thick] (2) to (1);
		\draw [thick] (2) to (3);
		\draw [thick] (3) to (4);
		\draw [thick] (1) to (5);
		\draw [thick] (5) to (6);
		\draw [thick] (4) to (6);
		\draw [thick] (6) to (7);
		\draw [thick] (6) to (8);
		\draw [thick] (1) to (4);
		\draw [thick] (8) to (9);
		\draw [thick] (7) to (10);
		\draw [thick] (8) to (10);
		\draw [thick] (5) to (11);
		\draw [thick] (11) to (7);
		\draw [thick] (10) to (13);
		\draw [thick] (13) to (15);
		\draw [thick] (9) to (12);
		\draw [thick] (10) to (12);
		\draw [thick] (12) to (14);
		\draw [thick] (13) to (14);
		\draw [thick] (15) to (16);
		\draw [thick] (14) to (17);
		\draw [thick] (16) to (18);
		\draw [thick] (16) to (14);
		\draw [thick] (18) to (17);
		\draw [thick] (17) to (19);
		\draw [thick] (20) to (0);
		\draw [thick] (21) to (2);
		\draw [thick] (0) to (21);
		\draw [thick] (18) to (22);
		\draw [thick] (18) to (23);
		\draw [thick] (19) to (23);
	\end{pgfonlayer}
\end{tikzpicture}

\begin{equation}
\label{P:kxccvgw13}
\end{equation}
\end{figure}
\newpage 
Its vertices are decorated elements of the set $\setE$. I embed $\setE\to\sethE,\rbeta\to \rbeta^0$ as a full sub-diagram.
I will denote elements $\rbeta^r\in \hat \setE$ by $\halpha$. Then
\begin{equation}\label{E:udef}
u(\halpha):=r \text{ for }\halpha=\rbeta^r.
\end{equation}
It will be convenient to work with a slightly more general family of algebras than $\hA_N^{N'}$.
The Hasse graph $\sethE$ defines a {\it poset} (partially ordered set) structure on the set of its vertices (see Appendix \ref{A:hibi} for details). 
Suppose $\setL\subset \sethE$. I set
\[\begin{split}
&\C[\setL]=\mathbb{C}[\lambda^{\halpha}],\quad \Lambda[\setL]=\Lambda[\xi^{\halpha}],\quad \halpha\in \setL.
\end{split}\]
Fix $\setK\subset \setL$.
\[\fri(\setK)=\left\{\sum_{\halpha\in \setK} \lambda^{\halpha}\C[\setL]\right\}\subset \C[\setL]\]
is an ideal of $\C[\setL]$.

\paragraph{Algebraic variety $\cZ(\hdelta,\hdelta')$}

A {\it segment} or an {\it interval} $[\hdelta,\hdelta']$ is a subset $\{\halpha\in \sethE|\hdelta\leq \halpha\leq \hdelta'\}$. The set of generators of $\hA_N^{N'}$ is $\{\lambda^{\halpha}|\halpha\in [{(0)^N},(1)^{N'}]\}$.

Occasionally I will denote $\sethE$ by $[-\infty,\infty]$.

\begin{definition}\label{D:algAdelta}
Fix $\setX\subset [{(0)^N},(1)^{N'}]$ 
 for some $N,N'$. The algebra 
 $\hA[\setX]$
 is a quotient of $\C[{(0)^N},(1)^{N'}]$.
 The ideal of relations $\fr[\setX]$ is generated by $\Gamma^{\hat{\s}}$ (\ref{E:equationsaf0}) and $\{\lambda^{\halpha}|\halpha \notin \setX\}$.
 Assignment $\setX\Rightarrow \hA[\setX]$ is a contravariant functor from the category of finite subsets of $\sethE$ (morphisms are inclusions) to the category of algebras. When $\setX$ is the interval $[\hdelta,\hdelta']$, the algebra $A[\setX]$ will be denoted by ${\hA[\hdelta,\hdelta']}$.
By definition $A_N^{N'}:=\hA[{(0)^N},(1)^{N'}]$. The algebra ${\hA[\hdelta,\hdelta']}$ was denoted by $A_{\hdelta}^{\hdelta'}$ in \cite{MovStr}.
 Let $\setK$ be a subset of $\setX$. By abuse of notations, I will denote the ideal $\left\{\sum_{\halpha\in \setK} \lambda^{\halpha}\hA[\setX]\right\}\subset \hA[\setX]$ by $\fri[\setK]$.
\end{definition}
I define 
\[\cZ[\hdelta,\hdelta']:=Spec\, {\hA[\hdelta,\hdelta']}\]
\begin{equation}\label{E:spinordef}
\spinor[\hdelta,\hdelta']:= \Span{\theta_{\rbeta^k}=\theta_{\rbeta}z^k|\rbeta^k\in [\hdelta,\hdelta']}
\end{equation}
The shift operator $ \shift$ 
\begin{equation}\label{E:shift} \shift (\rbeta^r)=\rbeta^{r+1}\quad \rbeta^r\in \hat \setE
\end{equation}
is an automorphism of the graded lattice $\sethE$ (see Definition \ref{D:gradedlattice} and Example \ref{E:Egrposet}). 
Note that the map of the lattice (\ref{E:shift}) define homomorphism:
\begin{equation}\label{E:tisomorphism}
 \shift(\lambda^{\rbeta^n})=\lambda^{\rbeta^{n+1}}\text{ induces an isomorphism } \shift:{\hA[\hdelta,\hdelta']}\to \hA[ \shift(\hdelta), \shift(\hdelta')].
\end{equation}
The algebra $\hA[-\infty,\infty]:=\underset{\underset{\hdelta,\hdelta'}{\longleftarrow}}\lim {\hA[\hdelta,\hdelta']}$ admits an involution $\reflection$. This involutions defines an anti-involution of the poset 
\begin{equation}\label{E:involution}
\reflection:\sethE\to \sethE.
\end{equation}
 Geometrically $\reflection$ is a central symmetry about a center $O$ which is a bisector of the segment $|(25)^0,(34)^0|$ (\ref{P:kxccvgw13}) The map $\reflection$ is induced by isomorphisms 
\begin{equation}\label{E:mapsintr}
\reflection:{\hA[\hdelta,\hdelta']}\to \hA[\reflection(\hdelta'),\reflection(\hdelta)].
\end{equation} (see Appendix A in \cite{MovStr}).

\begin{convention}\label{C:mainconvention}{\rm
We will encounter many constructions that depend on the interval from $\sethE$, e.g $\hA[\hdelta,\hdelta']$, $\spinor[\hdelta,\hdelta']$, $\C[\hdelta,\hdelta']$. To avoid clutter in notations, I will often drop dependence on the interval in the case when the interval can be extracted from the context. In this case $\C[\hdelta,\hdelta']$ will be denoted by $P$
}\end{convention}

\paragraph{Algebras with the straightening law}
\begin{definition}
{\rm
Let $A,P$ be the algebras based on $[\hdelta,\hdelta']$.
A monomial $\lambda^{\halpha_1}\cdots\lambda^{\halpha_n}$ in $P$ is said to be standard if $\hdelta' \leq \halpha_1\leq \cdots\leq \halpha_n\leq\hdelta $ with respect to the partial order in $[\hdelta,\hdelta']\subset \sethE$. An element $x\in {\hA}$ is said to be a standard monomial if one of its pre-images in $P$ is standard.
}
\end{definition}

\begin{proposition}\label{E:degreenull}(See \cite{MovStr})
\renewcommand{\theenumi}{\roman{enumi}}
Fix $[\hdelta,\hdelta']$ on which algebra $A$ is based.
\begin{enumerate}
\item Each of the defining equations of ${\hA}$ contains a unique monomial $\lambda^{\halpha}\lambda^{\hbeta}$, which is called a {\it clutter}, such that $\halpha$ and $\hbeta$ are not comparable.
\item More precisely equations have the form 
\begin{equation}\label{E:reluniv}
\lambda^{\halpha}\lambda^{\hbeta}\pm\lambda^{\halpha\vee\hbeta}\lambda^{\halpha\wedge\hbeta}-\sum_{\hgamma< \halpha\wedge\hbeta, \hdelta > \halpha\vee\hbeta} \pm\lambda^{\hgamma}\lambda^{\hdelta}=0
\end{equation}
where $\lambda^{\halpha}\lambda^{\hbeta}$ is a clutter.
\item Standard monomials define a basis for ${\hA}$. 
\item $\hA$ is a Cohen-Macaulay algebra.
\end{enumerate}
\end{proposition}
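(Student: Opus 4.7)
The plan is to view $\hA$ as an algebra with straightening law on the subinterval $[\hdelta,\hdelta']$ of $\sethE$, and to deduce items (i)--(iv) from a combinatorial analysis of the defining relations together with standard results on ASL's and Hibi algebras. Throughout, the starting point is the explicit relations (\ref{E:equationsaf0}).

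For items (i) and (ii), I would fix $\bm{s}\in\rG$ and a level $k$, and list all pairs $(\ralpha^l,\rbeta^{l'})$ with $l+l'=k$ and $\Gamma^{\bm{s}}_{\ralpha\rbeta}\neq 0$. The structural claim is that, among these pairs, exactly one pair $(\halpha,\hbeta)$ is incomparable in $\sethE$ -- the clutter -- while every other pair $(\hgamma,\hdelta)$ occurring in the same relation satisfies $\hgamma<\halpha\wedge\hbeta$ and $\hdelta>\halpha\vee\hbeta$. This is verified by direct inspection of the $\Gamma$-matrices in the weight basis indexed by $\setE$, combined with the fact that the shift operator $\shift$ in (\ref{E:shift}) is an order automorphism of $\sethE$, so that the order relations on the level-$k$ pairs are determined by those on $\setE\times\setE$ shifted by the appropriate amount. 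Once this is established, collecting terms puts the relation into the canonical form (\ref{E:reluniv}).

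For item (iii), I would run the straightening algorithm. Choose a monomial order on $P=\C[\hdelta,\hdelta']$ that is lexicographic with respect to a linear extension of the partial order on $[\hdelta,\hdelta']$. By (i)--(ii), every nonstandard degree-two monomial $\lambda^{\halpha}\lambda^{\hbeta}$ with $\halpha,\hbeta$ incomparable can be rewritten modulo $\fr[\hdelta,\hdelta']$ as $\mp\lambda^{\halpha\vee\hbeta}\lambda^{\halpha\wedge\hbeta}$ plus strictly lower terms, so repeated application strictly decreases the leading monomial of any input; termination yields a representation of each element of $\hA$ as a $\C$-linear combination of standard monomials. Linear independence amounts to showing that the relations (\ref{E:reluniv}) form a Gr\"obner basis; for ASL's whose underlying poset is a distributive lattice this is exactly Hibi's criterion, so all S-polynomials reduce to zero.

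For item (iv), I would use the flat degeneration to a Hibi algebra. The initial ideal of $\fr[\hdelta,\hdelta']$ under the order chosen above is generated by the clutter monomials $\lambda^{\halpha}\lambda^{\hbeta}$ with $\halpha,\hbeta$ incomparable, and the corresponding initial algebra is the Hibi algebra $\O[\cH(N,N')]$ attached to the distributive lattice $[\hdelta,\hdelta']$ (see Appendix~\ref{A:hibi}). Hibi algebras on distributive lattices are known to be Cohen--Macaulay, e.g.\ by identifying them with the Stanley--Reisner ring of the order complex of the underlying poset, which is shellable. Since Cohen--Macaulayness is preserved under flat degeneration from the generic fibre $\hA$ to the special fibre $\O[\cH(N,N')]$, the proposition follows. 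The main obstacle in this plan is item (i): verifying that each $\Gamma^{\bm{s}^k}$-relation contains a unique incomparable pair and that the remaining pairs lie strictly below the meet and above the join. Everything else is a routine consequence of the ASL/Hibi machinery once this structural fact about the interaction of the $\Gamma$-matrices with the order on $\sethE$ is in hand.
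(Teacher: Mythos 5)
The paper does not give its own proof of this proposition: it is cited verbatim from \cite{MovStr}, and the Cohen--Macaulay claim of item (iv) is later re-invoked (in the proof of Proposition~\ref{C:dimalg}) by a direct reference to Corollary~4.2 of \cite{EisenbudStr} rather than by a degeneration argument. So your sketch is a reconstruction of what happens in \cite{MovStr}, not an alternative to anything visible in this text. With that caveat, your plan for (i)--(ii) by inspection of the $\Gamma$-matrices combined with $\shift$-periodicity of $\sethE$ is the right one, and your route to (iv) through the Hibi degeneration is legitimate, though you should state the implication in the correct direction: Cohen--Macaulayness of the \emph{special} fibre $Hibi[\hdelta,\hdelta']$ plus flatness of $\hA_h$ over $\C[[h]]$ gives Cohen--Macaulayness of the generic fibre $\hA$, not the reverse as your sentence literally reads.

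The genuine gap is in item (iii). You assert that for an ASL on a distributive lattice the Gr\"obner property of the straightening relations ``is exactly Hibi's criterion, so all S-polynomials reduce to zero.'' Hibi's theorem applies to the \emph{Hibi ring}, i.e.\ to the purely toric relations $\lambda^{\halpha}\lambda^{\hbeta}-\lambda^{\halpha\wedge\hbeta}\lambda^{\halpha\vee\hbeta}$, and does not automatically carry over to the deformed relations (\ref{E:reluniv}) with their lower-order tail. Your termination argument (leading monomial strictly drops under straightening) only proves that standard monomials \emph{span} $\hA$. Linear independence -- equivalently, that the relations (\ref{E:reluniv}) form a Gr\"obner basis in the order (\ref{E:orderaf}) -- is exactly the G-quadraticity established in \cite{MovStr} and requires either a direct reduction of the S-pairs of the actual $\Gamma^{\bm{s}^k}$-relations, or a Hilbert-series comparison against the Hibi algebra together with an independent proof of flatness of $\hA_h$. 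As written, your step (iii) assumes what it needs to prove; this is the one place where the cited computation cannot be waved away.
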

\begin{definition}{\rm
An algebra with a set of generators labeled by the elements of a lattice $F$ with relations (\ref{E:reluniv}) is said to be an algebra with {\it straightening laws
} (see e.g. \cite{EisenbudStr}) if standard monomials form a basis. Thus ${\hA[\hdelta,\hdelta']}$ and in particular $\hA_{N}^{N'}$ are algebras with straightening laws
.
}
\end{definition}
I will occasionally use algebras based on semi-intervals:
\begin{equation}\label{E:semiinterval}
\hA(\hdelta,\hdelta']:={\hA[\hdelta,\hdelta']}/(\lambda^{\hdelta}),\quad \hA[\hdelta,\hdelta'):={\hA[\hdelta,\hdelta']}/(\lambda^{\hdelta'}).
\end{equation}
\begin{proposition}\label{R:stbasis}
The 
 standard monomials defined in terms of the order on semi-interval form a basis for $\hA$ based on $(\hdelta,\hdelta']$ or $[\hdelta,\hdelta')$.
\end{proposition}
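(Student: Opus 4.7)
The plan is to deduce this from Proposition \ref{E:degreenull}, which already provides the basis of standard monomials for the full interval algebra $\hA[\hdelta,\hdelta']$. I will treat the case $\hA(\hdelta,\hdelta'] = \hA[\hdelta,\hdelta']/(\lambda^\hdelta)$ in detail; the case $\hA[\hdelta,\hdelta')$ is completely symmetric, and in fact reduces to it via the anti-involution $\reflection$ of (\ref{E:mapsintr}).

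The key observation is that $\hdelta$ is the maximum of the poset $[\hdelta,\hdelta']$, since by the definition of a standard monomial every chain terminates at an element $\leq \hdelta$. Hence for any standard monomial $m = \lambda^{\halpha_1}\cdots\lambda^{\halpha_n}$ with $\halpha_1 \leq \cdots \leq \halpha_n \leq \hdelta$, the product $\lambda^\hdelta \cdot m = \lambda^{\halpha_1}\cdots\lambda^{\halpha_n}\lambda^\hdelta$ is again a standard monomial, with $\lambda^\hdelta$ appended on the right. Multiplication by $\lambda^\hdelta$ therefore sends the standard basis of $\hA[\hdelta,\hdelta']$ bijectively onto the subset of standard monomials that contain at least one factor $\lambda^\hdelta$.

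From this it follows that the principal ideal $(\lambda^\hdelta) \subset \hA[\hdelta,\hdelta']$ is precisely the linear span of those standard monomials that contain $\lambda^\hdelta$. The inclusion $\supseteq$ is obvious. For $\subseteq$, an arbitrary element of $(\lambda^\hdelta)$ has the form $\lambda^\hdelta \cdot x$; expanding $x$ in the standard basis of $\hA[\hdelta,\hdelta']$ and applying the previous observation termwise rewrites $\lambda^\hdelta \cdot x$ as a linear combination of standard monomials involving $\lambda^\hdelta$. Consequently the complementary portion of the standard basis, namely the standard monomials of $\hA[\hdelta,\hdelta']$ none of whose factors is $\lambda^\hdelta$, maps to a basis of the quotient $\hA(\hdelta,\hdelta']$. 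But those are exactly the standard monomials with respect to the induced order on the semi-interval $(\hdelta,\hdelta']$, since a nondecreasing chain in $(\hdelta,\hdelta']$ is just a nondecreasing chain in $[\hdelta,\hdelta']$ that avoids $\hdelta$.

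I do not expect a serious obstacle: once one notices that $\hdelta$ (respectively $\hdelta'$) is an extremal element and that multiplying a standard monomial by the variable of an extremal element preserves standardness, the proof is essentially a bookkeeping argument. The only mild point to keep in mind is verifying that the quotient algebra really is an algebra with straightening laws on the sub-poset, but this is automatic since the defining relations (\ref{E:reluniv}) of $\hA[\hdelta,\hdelta']$ whose clutter $\lambda^\halpha\lambda^\hbeta$ has $\halpha,\hbeta$ both distinct from $\hdelta$ (the only ones that survive modulo $\lambda^\hdelta$) already express a clutter in the sub-poset, and the right-hand side has any terms that involve $\lambda^\hdelta$ annihilated in the quotient, yielding the straightening relations intrinsic to $(\hdelta,\hdelta']$.
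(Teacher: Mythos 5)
Your proof is correct, and it takes a genuinely different route from the paper's. The paper appeals to Gr\"obner basis theory: it observes that because $\hdelta$ is comparable with every element of $[\hdelta,\hdelta']$, the variable $\lambda^{\hdelta}$ cannot appear in any leading (clutter) term of the straightening relations, so appending $\lambda^{\hdelta}$ to a Gr\"obner basis of $\fr[\hdelta,\hdelta']$ still gives a Gr\"obner basis, and the surviving normal forms are exactly the standard monomials of the semi-interval. You instead argue directly at the level of the standard-monomial vector-space basis supplied by Proposition~\ref{E:degreenull}: you use the extremality of $\hdelta$ to see that multiplication by $\lambda^{\hdelta}$ carries the standard basis bijectively onto the standard monomials divisible by $\lambda^{\hdelta}$, deduce that the principal ideal $(\lambda^{\hdelta})$ is the span of those monomials, and conclude that the remaining standard monomials descend to a basis of the quotient. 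Both arguments hinge on the same observation — that $\hdelta$ (respectively $\hdelta'$) is an extremal, hence universally comparable, element of the interval — but yours avoids the Gr\"obner machinery altogether at the cost of a slightly longer, more hands-on bookkeeping argument. The trade is reasonable: the paper's version is compressed and delegates the work to the cited Gr\"obner-basis construction; your version is self-contained and uses only the linear-algebraic content of Proposition~\ref{E:degreenull}. One small remark: your concluding paragraph about checking the straightening-law structure on the sub-poset is not actually needed for the stated proposition, which only asserts that standard monomials form a basis; you had already finished the proof before that paragraph.
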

\begin{proof}
Let us work out the case of $(\hdelta,\hdelta']$
Elements $\hdelta,\hdelta'$ are comparable with all the elements of $[\hdelta,\hdelta']$. Thus the Gr\"{o}bner basis for the defining ideal $\fc=\fr[\hdelta,\hdelta']+(\lambda^{\hdelta})$ such that $ P/\fc\cong \hA$ besides (\ref{E:reluniv}) contains only $\lambda^{\hdelta}$ (see more on application of Gr\"{o}bner basis technique to in \cite{MovStr}). I conclude from this that the 
 standard monomials defined in terms of the order on $(\hdelta,\hdelta']$ form a basis for $\hA$. 
 
 The arguments for $[\hdelta,\hdelta')$ are similar.
\end{proof}

\subsection{The toric degeneration of $\cZ[\hdelta,\hdelta']$}\label{S:singularities}
The algebra of functions on the toric degeneration announced in the title of this section will be constructed combinatorially as a contraction of the algebra $\hA$ based on $[\hdelta,\hdelta']$. More precisely, I will define an algebra $\hA_{h}$ which is isomorphic to $\hA\otimes \C[[h]]$ as $\C[[h]]$-module. I will define \[Hibi[\hdelta,\hdelta']:=\hA_{h}[\hdelta,\hdelta']/(h).\] The formal family $\hA_{h}$ has the property that 
\[\hA_{h}\underset{\C[[h]]}{\otimes} \C((h))\cong \hA\otimes \C((h)).\]

In this section, I will use some notation of Appendix \ref{A:hibi}.

For construction of $\hA_h$, I use some ideas from \cite{SottileSturmfels}. They define a similar deformation in the context of ordinary Grassmannians $\mathrm{Gr}(n,m+n)$. As in my case, their algebra is controlled by a Hasse diagram. Sottile and Sturmfels's idea was to modify defining relations of the algebra by using a special function associated with the diagram.

\paragraph{The function $\functionl$}To define $\functionl:\sethE\rightarrow \mathbb{Z}_{>0}$, I embed the graph $\sethE$ into $\Z^2\subset \R^2$.
For this, I use the straightforward identification of the physical surface(e.g. a computer screen or a sheet of a paper ) on which the diagram \ref{P:kxccvgw13} is drawn with the plane $\R^2$. To be more precise,
I can characterize this embedding by a map 
\begin{equation}\label{E:fdef} 
\functionf:\sethE\rightarrow \R^2.
\end{equation} To describe it, I fix a sublattice $\setS\subset \Z^2$ generated by vectors $u=(2,2),v=(-2,2)$. I characterize $\functionf$ by the set of conditions:
\begin{enumerate}
\item $\functionf(\halpha)
\in (0,-1)+\setS$.
\item \label{I:proot} If $\halpha \gtrdot \hbeta$, then $\functionf(\halpha)-\functionf(\hbeta)\in \{u,v\}$;
\item $\functionf((34)^{0})=(2,0), \functionf((35)^{0})=(0,2),\functionf((25)^{0})=(-2,0),\functionf((24)^{0})=(0,-2)$.
\end{enumerate}
\[\functionl(\halpha)=||\functionf(\halpha)||^2\text{ where }||(x,y)||^2=x^2+y^2.\]

For finite subset $\setN\subset \sethE$ define
\begin{equation}\label{E:coredef}
\rCore(\setN):=\{\halpha\in \setN|\exists \hbeta\neq \halpha, \rho(\halpha)=\rho(\hbeta)\}.
\end{equation}
The function $\rho=\rho_{\sethE}$ is defined in (\ref{E:LdefE})
{\it Capacity} of $\setN$ is 
\begin{equation}\label{E:capdif}
\rCap \setN:=|\rCore(\setN)|.
\end{equation}
The next proposition underlies many results of this paper. 
 \begin{proposition}\label{P:functionL}
\begin{enumerate}
\item $\forall \halpha\in \sethE$ $\functionl(\halpha)>0$ 
\item 
Pick any rectangle in the diagram (\ref{P:kxccvgw13}):

\begin{figure}[h]
\centering
\begin{tikzpicture}[ scale=1]
	\begin{pgfonlayer}{nodelayer}
		\node [circle, fill=black, draw, style=none,label=90:\tiny{$\hgamma$}] (0) at (0, 1) {.};
		\node [circle, fill=black, draw, style=none,label=180:\tiny{$\halpha$}] (1) at (-1, 0) {.};
		\node [circle, fill=black, draw, style=none,label=-90:\tiny{$\hdelta$}] (2) at (0, -1) {.};
		\node [circle, fill=black, draw, style=none,label=0:\tiny{$\hbeta$}] (3) at (1, 0) {.};
		\node [style=none] (4) at (-0.5, 0.5) {$\cdots$};
		\node [style=none] (5) at (0.5, 0.5) {$\cdots$};
		\node [style=none] (6) at (0.5, -0.5) {$\cdots$};
		\node [style=none] (7) at (-0.5, -0.5) {$\cdots$};
	\end{pgfonlayer}
	\begin{pgfonlayer}{edgelayer}
		\draw (0) to (4);
		\draw (4) to (1);
		\draw (1) to (7);
		\draw (7) to (2);
		\draw (6) to (2);
		\draw (0) to (5);
		\draw (5) to (3);
		\draw (3) to (6);
	\end{pgfonlayer}
\end{tikzpicture}
\begin{equation}
\label{E:diagram}
\end{equation}
\end{figure}
Then 
\begin{equation}\label{E:Leq}
\functionl(\halpha)+\functionl(\hbeta)=\functionl(\hgamma)+\functionl(\hdelta).
\end{equation}
\item Pick $\halpha$,$\hbeta$ as in (\ref{E:diagram}). Suppose $\exists t\in \Z^2, \hgamma', \hdelta'\in \sethE$ such that 
\[\begin{split}
\functionf(\hgamma')=(\functionf(\halpha)+\functionf(\hbeta))/2+t,\functionf(\hdelta')=(\functionf(\halpha)+\functionf(\hbeta))/2-t.
\end{split}\] 
If 
\begin{equation}\label{E:inequlity}
\functionl(\halpha)+\functionl(\hbeta)\geq \functionl(\hgamma')+\functionl(\hdelta'),
\end{equation}
then 
\begin{equation}\label{E:ininterval}
\hgamma',\hdelta'\in [\halpha\wedge \hbeta,\halpha\vee \hbeta].
\end{equation}
Moreover, equality holds iff $\hgamma',\hdelta'$ coincides with opposite corners of the rectangle $[\halpha\wedge \hbeta,\halpha\vee \hbeta]$.
\end{enumerate}
\end{proposition}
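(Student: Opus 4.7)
Work in the lattice coordinates $(a,b)$ induced by $\functionf(\halpha) = o + au + bv$, where $o$ is the affine offset and $u,v$ are orthogonal with $\|u\| = \|v\|$. In these coordinates,
\[
\functionl((a,b)) \;=\; \|u\|^2(a^2+b^2) + 2\langle o, u\rangle\,a + 2\langle o, v\rangle\,b + \|o\|^2,
\]
which is additively separable in $a$ and $b$. Part (i) then reduces to checking that the image of $\sethE$ under $\functionf$ avoids the origin, which follows from a parity argument on the affine lattice $o + \setS$ (at least one coordinate of every element is forced to be odd, hence nonzero).

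For part (ii), let $\halpha = (x_1, y_1)$ and $\hbeta = (x_2, y_2)$ be the lattice coordinates in the incomparable case of the rectangle (\ref{E:diagram}). Then $\halpha \vee \hbeta$ and $\halpha \wedge \hbeta$ have lattice coordinates $(x_1, y_2)$ and $(x_2, y_1)$. The multisets of $a$- and $b$-coordinates occurring in $\{\halpha, \hbeta\}$ agree with those in $\{\halpha \vee \hbeta, \halpha \wedge \hbeta\}$, so by additive separability of $\functionl$ the sums agree, yielding (\ref{E:Leq}).

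For part (iii), put $\hgamma' = (c_1, d_1)$ and $\hdelta' = (c_2, d_2)$. The symmetry condition gives $c_1 + c_2 = x_1 + x_2$ and $d_1 + d_2 = y_1 + y_2$; after substitution into the formula above, the linear and constant parts cancel, so (\ref{E:inequlity}) reduces to
\[
(x_1 - x_2)^2 + (y_1 - y_2)^2 \;\geq\; (c_1 - c_2)^2 + (d_1 - d_2)^2.
\]
The rectangle condition (\ref{E:ininterval}) in lattice coordinates reads $|c_1 - c_2| \leq |x_1 - x_2|$ and $|d_1 - d_2| \leq |y_1 - y_2|$, which is strictly stronger than this scalar Euclidean inequality: imbalanced integer solutions with $|c_1 - c_2| > |x_1 - x_2|$ but $|d_1 - d_2|$ compensatingly small exist in general.

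\textbf{The main obstacle} is therefore that the Euclidean inequality alone is not sufficient; the argument must use the constraint that $\hgamma', \hdelta'$ actually lie in $\sethE$, whose image in the ambient affine lattice is the staircase depicted in (\ref{P:kxccvgw13}) and is a proper subset. My plan is to argue by a case analysis, partitioning the complement of the rectangle into finitely many ``outside'' regions and showing in each that either no element of $\sethE$ fits there or the inequality is violated strictly. The equality case then follows by direct optimization: within the rectangle the quantity $(c_1 - c_2)^2 + (d_1 - d_2)^2$ is maximized exactly at the four corners, where $\{c_1, c_2\} = \{x_1, x_2\}$ and $\{d_1, d_2\} = \{y_1, y_2\}$; the two symmetric-pair decompositions that arise correspond precisely to the two pairs of opposite corners $\{\halpha, \hbeta\}$ and $\{\halpha \vee \hbeta, \halpha \wedge \hbeta\}$.
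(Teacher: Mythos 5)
Your treatment of parts (i) and (ii) is sound and mirrors the paper's argument in content if not in presentation: the paper proves (ii) by the polarization identity $\|g\|^2 + \|g+a+b\|^2 - \|g+a\|^2 - \|g+b\|^2 = 2(a,b) = 0$ where $a, b$ are multiples of the orthogonal basis vectors $u, v$, while you exploit the same orthogonality by passing to lattice coordinates and invoking additive separability of $\functionl$. These are the same observation dressed differently. Your reduction of (iii), after cancellation of the linear and constant terms, to
\[
(x_1-x_2)^2 + (y_1-y_2)^2 \geq (c_1-c_2)^2 + (d_1-d_2)^2
\]
is also correct, and your diagnosis that a Euclidean ball condition does not by itself imply the box condition is exactly the right thing to notice.

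The genuine gap is that your part (iii) stops at a \emph{plan}. ``Partitioning the complement of the rectangle into finitely many outside regions and showing in each that either no element of $\sethE$ fits there or the inequality is violated strictly'' is not carried out, and as stated it would require a potentially unwieldy region-by-region check that depends on the capacity $c = \rCap[\halpha\wedge\hbeta, \halpha\vee\hbeta]$, which ranges over $\{1,2,3\}$. The paper avoids this by a sharper numerical observation: it computes $Q(c) := \|(\functionf(\halpha)-\functionf(\hbeta))/2\|^2$ and the squared circumradius $R^2(c)$ of $\functionf([\halpha\wedge\hbeta,\halpha\vee\hbeta])$ as functions of the capacity, and finds $Q(1)=R^2(1)=4$, $Q(2)=R^2(2)=10$, $Q(3)=R^2(3)=20$. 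With the identity $\|(m+n)/2+t\|^2+\|(m+n)/2-t\|^2-\|m\|^2-\|n\|^2 = 2(\|t\|^2-\|(m-n)/2\|^2)$, the inequality (\ref{E:inequlity}) becomes precisely $\|t\|\leq R(c)$, i.e.\ membership of $\functionf(\hgamma')$ in the (closed) circumscribed disk of the rectangle's image, from which interval membership and the equality characterization follow by inspection of $\sethE$. That single equality $Q(c)=R^2(c)$ is the missing ingredient that converts your observation into a finite and manageable check; your proposal as written does not supply it, so the conclusion of (iii) is not yet established.
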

\begin{proof}
\begin{enumerate}
\item $\functionl(\halpha)>0$ because $\functionf^{-1}(0)=\emptyset$.
\item Set $g=\functionf(\hdelta)$ $a=\functionf(\halpha)-\functionf(\hdelta),b=\functionf(\hbeta)-\functionf(\hdelta)$. By construction $\functionf(\hgamma)-\functionf(\hdelta)=a+b$. Equation (\ref{E:Leq}) is equivalent to
\[||g||^2+||g+a+b||^2-||g+a||^2-||g+b||^2=2(a,b)=0.\]

The equality holds true because vectors $a$ and $b$ are proportional to $u$ and $v$ (\ref{I:proot}) which are orthogonal by construction.
\item Suppose $(\halpha,\hbeta)$ is a clutter. The quantity $||(\functionf(\halpha)-\functionf(\hbeta))/2||^2=Q(c)$ depends only on the capacity $c=\rCap[\halpha\wedge \hbeta,\halpha\vee \hbeta]$ (\ref{E:capdif}). Under my assumptions capacity ranges from one to three and \[Q(1)=4,Q(2)=10,Q(3)=20.\] The center of symmetry of $\functionf([\halpha\wedge \hbeta,\halpha\vee \hbeta])$ is located at $(\functionf(\halpha)+\functionf(\hbeta))/2$. The square of the radius $R^2(c)$ of the circumscribed circle $\rCircle[\halpha\wedge \hbeta,\halpha\vee \hbeta]$ about $\functionf([\halpha\wedge \hbeta,\halpha\vee \hbeta])$ as a function of capacity $c$ is equal to \[R(1)=4,R(2)=10,R(3)=20.\] It implies that that $Q(c)=R^2(c)$.
I set $\functionf(\halpha)=m$, $\functionf(\hbeta)=n$. From the identity
\[\begin{split}&||(m+n)/2+t||^2+||(m+n)/2-t||^2-||m||^2-||n||^2=\\
&2(||t||^2-||(m-n)/2||^2)
\end{split}\]
I infer that if the inequality (\ref{E:inequlity}) holds, then the vector
 $(m+n)/2+t$ lies inside $\rCircle[\halpha\wedge \hbeta,\halpha\vee \hbeta]$ or on its boundary. Then (\ref{E:ininterval}) holds. When the inequality becomes equality, the vector $(m+n)/2+t$ lies on the circle $\rCircle[\halpha\wedge \hbeta,\halpha\vee \hbeta]$ and $\hgamma',\hdelta'$ the pair of opposite corners of $[\halpha\wedge \hbeta,\halpha\vee \hbeta]$.

\end{enumerate}
\end{proof}

\begin{remark}
Algebra ${\hA[\hdelta,\hdelta']}$ has a symmetry group containing $\Aut$ (\ref{E:symmstries0})
The action of $g=(t,q,z)=(t,q,z_1,\dots,z_5)\in \Aut
$ is 
\begin{equation}\label{E:weights}
\begin{split}
&\Pi(g)\lambda^{\halpha}=\hat{v}_{\halpha}(t,q,z)\lambda^{\halpha} =tq^{u(\halpha)}v_{\rbeta}(z)\lambda^{\halpha},\\
&\text{{\rm or in more details: }}\\
&\Pi(g)\lambda^{(0)^l} =tq^l{\det}^{-\frac{1}{2}}(z)\lambda^{(0)^l},\\
&\Pi(g)\lambda^{(ij)^l}=tq^l{\det}^{-\frac{1}{2}}(z)z_iz_j\lambda^{(ij)^l},\\
&\Pi(g)\lambda^{(k)^l}=tq^l{\det}^{\frac{1}{2}}(z)z^{-1}_k\lambda^{(k)^l}.
\end{split}
\end{equation}
See (\ref{E:udef}) for $u(\halpha)$.
\end{remark}

The following proposition gives an additional insight into the structure of the defining relations of ${\hA}$.
\begin{proposition}
Let $\hA$ be based on $[\hdelta,\hdelta']$. For any clutter $\halpha,\hbeta\in [\hdelta,\hdelta']\subset \sethE$, there is a unique defining relation (\ref{E:reluniv}) of ${\hA}$ that can be written in the form 
\[\lambda^{{\halpha}}\lambda^{{\hbeta}}\pm\lambda^{{\halpha}\vee{\hbeta}}\lambda^{{\halpha}\wedge{\hbeta}}=
\sum_{\functionf({\hgamma})=(\functionf({\halpha})+\functionf({\hbeta})/2+t, \functionf({\hdelta}) = \functionf({\halpha})+\functionf({\hbeta})-t} \pm\lambda^{{\hgamma}}\lambda^{{\hdelta}}\]
with $\functionf$ as in (\ref{E:fdef}). The summation is taken over $t\in \Z^2$ such that the corresponding $\hgamma',\hdelta'$ satisfy 
\begin{equation}\label{E:tinequality}
\functionl(\halpha)+\functionl(\hbeta)< \functionl(\hgamma')+\functionl(\hdelta').
\end{equation}
\end{proposition}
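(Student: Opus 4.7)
The plan is to derive the claimed form from the already-established relation (\ref{E:reluniv}) by reindexing its sum using the geometric data carried by $\functionf$ and $\functionl$.

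First I would take the relation given in Proposition~\ref{E:degreenull}(ii) as the starting point:
\[\lambda^{\halpha}\lambda^{\hbeta}\pm\lambda^{\halpha\vee\hbeta}\lambda^{\halpha\wedge\hbeta} \;=\; \sum_{\hgamma'<\halpha\wedge\hbeta,\ \hdelta'>\halpha\vee\hbeta}\pm\,\lambda^{\hgamma'}\lambda^{\hdelta'}.\]
The existence and uniqueness of such a relation for each clutter $(\halpha,\hbeta)$ is part of that proposition, so only the reformulation of the summation index has to be done.

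Next, the crucial step is to argue that every pair $(\hgamma',\hdelta')$ occurring in this sum satisfies $\functionf(\hgamma')+\functionf(\hdelta')=\functionf(\halpha)+\functionf(\hbeta)$. I would deduce this from weight equivariance: by Remark~\ref{R:symmetries} and formula (\ref{E:weights}), each monomial $\lambda^{\halpha}\lambda^{\hbeta}$ spans a one-dimensional weight space for the $\C^{\times}\times\bT\times\Spin(10)$-action, and the relation (\ref{E:reluniv}) is homogeneous, so all of its monomials carry the same full torus weight. The function $\functionf$ is designed (through conditions (ii)--(iii) of its definition) so that its increments $u$ and $v$ along covering edges of the Hasse diagram coincide with a fixed $\Z$-linear projection of the loop-rotation-and-spinor weight into $\R^2$; hence any weight-preserving identity on the $\lambda^{\halpha}$ also preserves the $\functionf$-sum. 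Granted this, I parametrize the admissible pairs by $t\in\Z^2$ via
\[\functionf(\hgamma')=\tfrac{1}{2}(\functionf(\halpha)+\functionf(\hbeta))+t,\qquad \functionf(\hdelta')=\tfrac{1}{2}(\functionf(\halpha)+\functionf(\hbeta))-t,\]
with admissibility of $t$ meaning only that both values lie in $\functionf(\sethE)$.

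Finally, I would read off the index set by invoking Proposition~\ref{P:functionL}(3): the hypothesis $\functionl(\halpha)+\functionl(\hbeta)\ge \functionl(\hgamma')+\functionl(\hdelta')$ is equivalent to $\hgamma',\hdelta'\in[\halpha\wedge\hbeta,\halpha\vee\hbeta]$, with equality exactly when $\{\hgamma',\hdelta'\}=\{\halpha\wedge\hbeta,\halpha\vee\hbeta\}$. By contraposition, the strict inequality (\ref{E:tinequality}) selects precisely the pairs with $\hgamma'<\halpha\wedge\hbeta$ and $\hdelta'>\halpha\vee\hbeta$, and these are exactly the pairs summed over in (\ref{E:reluniv}). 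Uniqueness of the resulting relation is inherited from Proposition~\ref{E:degreenull}(i), since the clutter $(\halpha,\hbeta)$ already pins it down.

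The main obstacle is the $\functionf$-additivity step. Making precise the identification of $\functionf$ with the $\R^2$-projection of the $\Spin(10)\times\bT$ weight requires choosing the right pair of linear functionals on the weight lattice and verifying they recover the prescribed values of $u$ and $v$ on every cover of the diagram (\ref{P:kxccvgw13}), including consistency with the anchor values of $\functionf$ on $(34)^0,(35)^0,(25)^0,(24)^0$. A more pedestrian alternative is a direct case-by-case check against the explicit Pfaffian and bilinear identities (\ref{E:equationsaf0}); this is mechanical but purely finite, and I expect either route to succeed.
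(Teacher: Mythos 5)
Your proposal is correct and follows the paper's own route: weight equivariance forces $\functionf(\hgamma')+\functionf(\hdelta')=\functionf(\halpha)+\functionf(\hbeta)$ for every monomial in the relation, and the contrapositive of Proposition \ref{P:functionL}(3) then yields the strict inequality for every pair $\hgamma<\halpha\wedge\hbeta$, $\hdelta>\halpha\vee\hbeta$ appearing in the straightening law. The one verification you flag as the main obstacle---realizing $\functionf_1,\functionf_2$ as torus weights---is exactly what the paper supplies by exhibiting the explicit one-parametric subgroups $x(z)=(1,1,z^2,z^4,z^2,1)$ and $y(w)=(w^{-16},1,w^2,w^4,w^6,w^8)$ of $\bT\times\bT^5$.
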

\begin{proof}

$\mathbf{T}\times \bT^5\subset \bT\times \SO(10)$ contains two one-parametric subgroups 
\[x:\C^{\times}\rightarrow \mathbf{T}\times \bT^5 \quad x(z) =(1,1,z^2,z^4,z^2,1),\]
\[y:\C^{\times}\rightarrow \mathbf{T}\times \bT^5 \quad y(w) =(w^{-16},1,w^2,w^4,w^6,w^8).\]
One-parametric subgroups $x(z)$ and $y(w)$ designed in a such a way that \[\Pi(x(z))\lambda^{\halpha}=z^{\functionf_1(\halpha)}\lambda^{\halpha},\Pi(y(w))\lambda^{\halpha}=w^{\functionf_2(\halpha)}\lambda^{\halpha}.\]
$\functionf_1(\halpha),\functionf_2(\halpha)$ are components of the map $\functionf(\halpha)$. Defining relations $\Gamma^{\bm{s}^k}$ (\ref{E:equationsaf0}
) are $\mathbf{T}\times \widetilde{\bT}^5$-weight vectors. Thus all monomials scale by the same factor under $\Pi(x(z)y(w))$-action. The weight of the clutter $\lambda^{\halpha}\lambda^{\hbeta}$ is $z^{\functionf_1(\halpha)+\functionf_1(\hbeta)}w^{\functionf_2(\halpha)+\functionf_2(\hbeta)}$. So $\functionf(\halpha)+\functionf(\hbeta)=\functionf(\hgamma')+\functionf(\hdelta')$. Let $t$ be such that 
\[\functionf(\hgamma')=(\functionf(\halpha)+\functionf(\hbeta))/2+t,\]
\[\functionf(\hdelta')=(\functionf(\halpha)+\functionf(\hbeta))/2-t.\]
I know that $\Gamma^{\bm{s}^k}$ contains a unique clutter $\lambda^{\halpha}\lambda^{\hbeta}$. It corresponds to a rectangle 
$[\halpha\wedge \hbeta,\halpha\vee \hbeta]$ in the diagram (\ref{P:kxccvgw13}). By Proposition \ref{P:functionL}, the inequality (\ref{E:tinequality}) is violated precisely at $\hgamma',\hdelta' \in [\halpha\wedge \hbeta,\halpha\vee \hbeta]$. Straitened relation forbid such $\hgamma',\hdelta' $.
\end{proof}

\paragraph{The family $\hA_h$} Let $\hA$ be based on $\hA[\hdelta,\hdelta']$.

I use the function $\functionl$ to define the flat family of algebras $\hA_h$. It is a quotient of $\C[[h]]$ by the ideal $\fr$ generated by 
\begin{equation}\label{E:relconvfam}
\begin{split}
&\lambda^{{\halpha}}\lambda^{{\hbeta}}\pm\lambda^{{\halpha}\vee{\hbeta}}\lambda^{{\halpha}\wedge{\hbeta}}=\\
&\sum_{\functionf({\hgamma})=\functionf({\halpha})+\functionf({\hbeta})/2+t, \functionf({\hdelta}) = \functionf({\halpha})+\functionf({\hbeta})-t} \pm h^{\functionl(\hgamma)+\functionl(\hdelta)-\functionl(\halpha)-\functionl(\hbeta)}\lambda^{{\hgamma}}\lambda^{{\hdelta}}.
\end{split}
\end{equation}
It is worth pointing that by Proposition \ref{P:functionL}, the exponents of $h$ in the relation are positive. Relations (\ref{E:relconvfam}) define a straightening law on $\hA_h$, whose proof is the same as in the case when $h=1$ (see Proposition \ref{E:degreenull}). This means that standard monomials define a $\C[[h]]$-basis for $\hA_h$. As a result, $\hA_h$ is flat over $\C[[h]]$.
After reduction $\mod h$, relations (\ref{E:relconvfam}) become
\begin{equation}\label{E:relunivred}
\lambda^{{\halpha}}\lambda^{{\hbeta}}\pm\lambda^{{\halpha}\vee{\hbeta}}\lambda^{{\halpha}\wedge{\hbeta}}=0.
\end{equation}
These equations define a toric variety announced in the title of the section.
The algebra $Hibi[\hdelta,\hdelta']$ is the quotient of $P$ by (\ref{E:relunivred}). 
Introduce a notation:
\[\cH[\hdelta,\hdelta']:=Spec\, Hibi[\hdelta,\hdelta'].\]
As the notation suggests, $Hibi$ is an example of a Hibi algebra. These algebras were studied in \cite{THibi}. See also Appendix \ref{A:hibi} for background information on Hibi algebras.

Triviality of the deformation $\hA_h$ on $Spec\, \C((h))$ is verified in the next proposition.
\begin{proposition}
 There is an isomorphism 
\[\sfu:\hA\underset{\C}{\otimes}\C((h))\rightarrow \hA_h\underset{\C[[h]]}{\otimes}\C((h)).\]
\end{proposition}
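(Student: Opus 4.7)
The plan is to exhibit the isomorphism explicitly by a generator-rescaling. Define
\[
\sfu(\lambda^{\halpha}) = h^{\functionl(\halpha)}\lambda^{\halpha}
\]
on generators of $\hA$ (thought of as generators of $\hA\otimes_{\C}\C((h))$), sending them to $h^{\functionl(\halpha)}$ times the corresponding generators of $\hA_h\otimes_{\C[[h]]}\C((h))$. Since $\functionl(\halpha)>0$ but $h$ is invertible in $\C((h))$, this is a valid substitution, and the inverse substitution is $\lambda^{\halpha}\mapsto h^{-\functionl(\halpha)}\lambda^{\halpha}$.

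To show $\sfu$ descends to a well-defined algebra map, I would check that it sends each defining relation of $\hA$ to a multiple of the corresponding relation of $\hA_h$. Apply $\sfu$ to the relation
\[
\lambda^{\halpha}\lambda^{\hbeta}\pm\lambda^{\halpha\vee\hbeta}\lambda^{\halpha\wedge\hbeta}-\sum_t \pm\lambda^{\hgamma'}\lambda^{\hdelta'}=0
\]
of $\hA$, obtaining
\[
h^{\functionl(\halpha)+\functionl(\hbeta)}\lambda^{\halpha}\lambda^{\hbeta}\pm h^{\functionl(\halpha\vee\hbeta)+\functionl(\halpha\wedge\hbeta)}\lambda^{\halpha\vee\hbeta}\lambda^{\halpha\wedge\hbeta}-\sum_t \pm h^{\functionl(\hgamma')+\functionl(\hdelta')}\lambda^{\hgamma'}\lambda^{\hdelta'}=0.
\]
By Proposition \ref{P:functionL}(ii) applied to the rectangle $[\halpha\wedge\hbeta,\halpha\vee\hbeta]$, one has $\functionl(\halpha)+\functionl(\hbeta)=\functionl(\halpha\vee\hbeta)+\functionl(\halpha\wedge\hbeta)$, so I can divide through by $h^{\functionl(\halpha)+\functionl(\hbeta)}$ (permissible in $\C((h))$) to recover exactly the defining relation (\ref{E:relconvfam}) of $\hA_h$. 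Hence $\sfu$ extends to a well-defined homomorphism $\hA\otimes_{\C}\C((h))\to \hA_h\otimes_{\C[[h]]}\C((h))$.

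Bijectivity is then a formal consequence. The same substitution $\lambda^{\halpha}\mapsto h^{-\functionl(\halpha)}\lambda^{\halpha}$ runs backward and, by the same argument using Proposition \ref{P:functionL}(ii), sends the defining relations of $\hA_h$ to those of $\hA$ after clearing an overall invertible power of $h$. This furnishes a two-sided inverse to $\sfu$. Alternatively, one may invoke Proposition \ref{E:degreenull} and its analogue for $\hA_h$: standard monomials form a basis for each side over the appropriate scalar ring, and $\sfu$ takes the standard monomial basis of $\hA\otimes_{\C}\C((h))$ bijectively onto a scaling of the standard monomial basis of $\hA_h\otimes_{\C[[h]]}\C((h))$ (with nonzero scalars in $\C((h))$), giving an isomorphism of free modules compatible with the algebra structure.

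There is no serious obstacle; the entire argument rests on the quadratic identity (\ref{E:Leq}) of the function $\functionl$, which is already established. The only delicate point, worth making explicit, is that $\functionl(\hgamma')+\functionl(\hdelta')>\functionl(\halpha)+\functionl(\hbeta)$ for the auxiliary terms in the sum, so the exponents of $h$ appearing in (\ref{E:relconvfam}) are strictly positive, ensuring $\hA_h$ really is defined over $\C[[h]]$; but this same positivity is exactly what makes the rescaling $\sfu$ non-invertible over $\C[[h]]$ and explains why one must pass to $\C((h))$.
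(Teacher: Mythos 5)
Your proof is correct and uses exactly the same construction as the paper: the paper's proof is the one-line statement ``The isomorphism is defined by the formula $\sfu(\lambda^{\halpha})=h^{\functionl(\halpha)}\lambda^{\halpha}$.'' You have simply supplied the verification the paper leaves implicit — checking that the quadratic identity (\ref{E:Leq}) makes the rescaling carry the defining relations of $\hA$ precisely to those of $\hA_h$, and that inverting $h$ over $\C((h))$ makes the map bijective.
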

\begin{proof}
The isomorphism is defined by the formula $\sfu(\lambda^{\halpha})=h^{\functionl(\halpha)}\lambda^{\halpha}$.
\end{proof}
\begin{remark}
The substitution $\lambda^{(0)^n}\rightarrow \sqrt{-1}\lambda^{(0)^n}$, $\lambda^{(j)^n}\rightarrow \sqrt{-1}\lambda^{(j)^n}, j=1,\dots,5$ transforms relations (\ref{E:relunivred}) to the Hibi form (\ref{E:Hibi}). To see this, the reader should look at the defining relations for $ A$ where terms (\ref{E:relunivred}) are written explicitly as in \cite{MovStr} (Eqs. 56).
\end{remark}

\subsection{Classification of $\cZ[\hdelta,\hdelta']$ with Gorenstein singularities}\label{S:Gorenstein}

Classification of Gorenstein $\cZ[\hdelta,\hdelta']$ can be formulated in terms of the geometry of ends of the interval $[\hdelta,\hdelta']\subset \sethE$.
\begin{proposition}\label{P:GorensteinCL}
The algebra $\hA$ based on $[\hdelta,\hdelta']$ is Gorenstein iff the neighborhoods of the ends of the Hasse diagram of $[\hdelta,\hdelta']$ don't look like those on the following picture.
\begin{figure}[h]
\centering
\begin{tikzpicture}[scale=0.6]
	\begin{pgfonlayer}{nodelayer}
		\node [circle, fill=black, draw, style=none] (0) at (-3, 2) {.};
		\node [circle, fill=black, draw, style=none] (1) at (-3.5, 1.5) {.};
		\node [circle, fill=black, draw, style=none] (2) at (-2.5, 1.5) {.};
		\node [circle, fill=black, draw, style=none] (3) at (-3, 1) {.};
		\node [circle, fill=black, draw, style=none] (4) at (-2.5, 0.5) {.};
		\node [circle, fill=black, draw, style=none] (5) at (-2, 1) {.};
		\node [circle, fill=black, draw, style=none] (6) at (-2, 0) {.};
		\node [circle, fill=black, draw, style=none] (7) at (-1.5, 0.5) {.};
		\node [style=none] (8) at (-3, 0) { $\cdots$};
		\node [circle, fill=black, draw, style=none] (9) at (2, 2) {.};
		\node [circle, fill=black, draw, style=none] (10) at (1.5, 1.5) {.};
		\node [circle, fill=black, draw, style=none] (11) at (1, 1) {.};
		\node [circle, fill=black, draw, style=none] (12) at (0.5, 0.5) {.};
		\node [circle, fill=black, draw, style=none] (13) at (1, 0) {.};
		\node [circle, fill=black, draw, style=none] (14) at (1.5, 0.5) {.};
		\node [circle, fill=black, draw, style=none] (15) at (2, 1) {.};
		\node [circle, fill=black, draw, style=none] (16) at (2.5, 1.5) {.};
		\node [style=none] (17) at (2, 0) { $\cdots$};
		\node [circle, fill=black, draw, style=none] (18) at (-2, -1.75) {.};
		\node [circle, fill=black, draw, style=none] (19) at (-1.5, -2.25) {.};
		\node [circle, fill=black, draw, style=none] (20) at (-2, -2.75) {.};
		\node [circle, fill=black, draw, style=none] (21) at (-2.5, -2.25) {.};
		\node [circle, fill=black, draw, style=none] (22) at (-2.5, -3.25) {.};
		\node [circle, fill=black, draw, style=none] (23) at (-3, -2.75) {.};
		\node [circle, fill=black, draw, style=none] (24) at (-3, -3.75) {.};
		\node [circle, fill=black, draw, style=none] (25) at (-3.5, -3.25) {.};
		\node [style=none] (26) at (-3, -1.75) { $\cdots$};
		\node [circle, fill=black, draw, style=none] (27) at (1, -1.75) {.};
		\node [circle, fill=black, draw, style=none] (28) at (0.5, -2.25) {.};
		\node [circle, fill=black, draw, style=none] (29) at (1, -2.75) {.};
		\node [circle, fill=black, draw, style=none] (30) at (1.5, -2.25) {.};
		\node [circle, fill=black, draw, style=none] (31) at (1.5, -3.25) {.};
		\node [circle, fill=black, draw, style=none] (32) at (2, -2.75) {.};
		\node [circle, fill=black, draw, style=none] (33) at (2, -3.75) {.};
		\node [circle, fill=black, draw, style=none] (34) at (2.5, -3.25) {.};
		\node [style=none] (35) at (2, -1.75) { $\cdots$};
		\node [style=none] (36) at (1.5, -0.5) { $\cdots$};
		\node [style=none] (37) at (-2.5, -0.5) { $\cdots$};
		\node [style=none] (38) at (-2.5, -1.25) { $\cdots$};
		\node [style=none] (39) at (1.5, -1.25) { $\cdots$};
		\node [style=none] (40) at (-0.75, 2) {\tiny Terminal segments};
		\node [style=none] (41) at (-0.75, -3.75) {\tiny Initial segments};
		\node [circle, fill=black, draw, style=none] (42) at (7.5, -1.25) {.};
		\node [circle, fill=black, draw, style=none] (43) at (6.5, -1.25) {.};
		\node [circle, fill=black, draw, style=none] (44) at (7.5, -0.25) {.};
		\node [style=none] (45) at (7.5, 0.25) { $\cdots$};
		\node [circle, fill=black, draw, style=none] (46) at (7, -1.75) {.};
		\node [circle, fill=black, draw, style=none] (47) at (8, -0.75) {.};
		\node [circle, fill=black, draw, style=none] (48) at (7, -0.75) {.};
		\node [circle, fill=black, draw, style=none] (49) at (7.5, 0.75) {.};
		\node [style=none] (50) at (7, -2.25) {$\cdots$};
		\node [circle, fill=black, draw, style=none] (51) at (7, -2.75) {.};
		\node [circle, fill=black, draw, style=none] (52) at (4.25, -2.75) {.};
		\node [circle, fill=black, draw, style=none] (53) at (3.75, -1.25) {.};
		\node [style=none] (54) at (3.75, 0.25) { $\cdots$};
		\node [circle, fill=black, draw, style=none] (55) at (4.75, -1.25) {.};
		\node [circle, fill=black, draw, style=none] (56) at (4.25, -0.75) {.};
		\node [circle, fill=black, draw, style=none] (57) at (3.25, -0.75) {.};
		\node [style=none] (58) at (4.25, -2.25) { $\cdots$};
		\node [circle, fill=black, draw, style=none] (59) at (4.25, -1.75) {.};
		\node [circle, fill=black, draw, style=none] (60) at (3.75, -0.25) {.};
		\node [circle, fill=black, draw, style=none] (61) at (3.75, 0.75) {.};
		\node [style=none] (62) at (5.75, 0.5) {\tiny Exceptional cases};
	\end{pgfonlayer}
	\begin{pgfonlayer}{edgelayer}
		\draw [thick] (0.center) to (2.center);
		\draw [thick] (2.center) to (5.center);
		\draw [thick] (5.center) to (7.center);
		\draw [thick] (0.center) to (1.center);
		\draw [thick] (2.center) to (3.center);
		\draw [thick] (5.center) to (4.center);
		\draw [thick] (7.center) to (6.center);
		\draw [thick] (1.center) to (3.center);
		\draw [thick] (3.center) to (4.center);
		\draw [thick] (4.center) to (6.center);
		\draw [thick] (9.center) to (16.center);
		\draw [thick] (16.center) to (15.center);
		\draw [thick] (15.center) to (14.center);
		\draw [thick] (14.center) to (13.center);
		\draw [thick] (12.center) to (13.center);
		\draw [thick] (12.center) to (11.center);
		\draw [thick] (11.center) to (10.center);
		\draw [thick] (10.center) to (9.center);
		\draw [thick] (10.center) to (15.center);
		\draw [thick] (11.center) to (14.center);
		\draw [thick] (18.center) to (19.center);
		\draw [thick] (19.center) to (20.center);
		\draw [thick] (20.center) to (22.center);
		\draw [thick] (22.center) to (24.center);
		\draw [thick] (25.center) to (24.center);
		\draw [thick] (25.center) to (23.center);
		\draw [thick] (23.center) to (22.center);
		\draw [thick] (23.center) to (21.center);
		\draw [thick] (21.center) to (20.center);
		\draw [thick] (21.center) to (18.center);
		\draw [thick] (27.center) to (28.center);
		\draw [thick] (28.center) to (29.center);
		\draw [thick] (29.center) to (31.center);
		\draw [thick] (31.center) to (33.center);
		\draw [thick] (33.center) to (34.center);
		\draw [thick] (32.center) to (34.center);
		\draw [thick] (30.center) to (32.center);
		\draw [thick] (27.center) to (30.center);
		\draw [thick] (30.center) to (29.center);
		\draw [thick] (32.center) to (31.center);
		\draw [thick] (4.center) to (8.center);
		\draw [thick] (14.center) to (17.center);
		\draw [thick] (26.center) to (21.center);
		\draw [thick] (35.center) to (30.center);
		\draw [thick] (8.center) to (37.center);
		\draw [thick] (6.center) to (37.center);
		\draw [thick] (13.center) to (36.center);
		\draw [thick] (17.center) to (36.center);
		\draw [thick] (38.center) to (26.center);
		\draw [thick] (38.center) to (18.center);
		\draw [thick] (39.center) to (27.center);
		\draw [thick] (39.center) to (35.center);
		\draw [thick] (49.center) to (45.center);
		\draw [thick] (45.center) to (44.center);
		\draw [thick] (44.center) to (47.center);
		\draw [thick] (47.center) to (42.center);
		\draw [thick] (42.center) to (46.center);
		\draw [thick] (43.center) to (48.center);
		\draw [thick] (48.center) to (44.center);
		\draw [thick] (48.center) to (42.center);
		\draw [thick] (43.center) to (46.center);
		\draw [thick] (46.center) to (50.center);
		\draw [thick] (50.center) to (51.center);
		\draw [thick] (57.center) to (53.center);
		\draw [thick] (53.center) to (59.center);
		\draw [thick] (59.center) to (58.center);
		\draw [thick] (58.center) to (52.center);
		\draw [thick] (59.center) to (55.center);
		\draw [thick] (56.center) to (55.center);
		\draw [thick] (60.center) to (56.center);
		\draw [thick] (60.center) to (57.center);
		\draw [thick] (56.center) to (53.center);
		\draw [thick] (61.center) to (54.center);
		\draw [thick] (54.center) to (60.center);
	\end{pgfonlayer}
	
\end{tikzpicture}\label{pict:two}\end{figure}
\end{proposition}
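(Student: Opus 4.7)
The plan is to reduce the Gorenstein question for $\hA[\hdelta,\hdelta']$ to a purely combinatorial condition on the Hasse diagram of $[\hdelta,\hdelta']\subset \sethE$, by exploiting the flat toric degeneration $\hA_h$ and Hibi's classical criterion.

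First, I would observe that the deformation $\hA_h$ constructed in Section~\ref{S:singularities} is a graded flat family over $\C[[h]]$ whose generic fiber is $\hA\otimes\C((h))$ and whose special fiber is the Hibi algebra $Hibi[\hdelta,\hdelta']$. Because standard monomials form a common basis on every fiber, all fibers have the same Hilbert series and the same Krull dimension. Combined with the Cohen--Macaulay property of algebras with straightening law (Proposition~\ref{E:degreenull}), one gets that $\hA$ is Gorenstein if and only if $Hibi[\hdelta,\hdelta']$ is Gorenstein: upper-semicontinuity of the Cohen--Macaulay type shows that Gorensteinness of the special fiber implies that of the generic fiber (hence of $\hA$ by faithfully flat descent along $\C \hookrightarrow \C((h))$), and the converse follows by comparing the Hilbert series of the canonical modules, which are preserved along the flat deformation.

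Next, I would invoke Hibi's theorem (reviewed in Appendix~\ref{A:hibi}): for a finite distributive lattice $L$, the associated Hibi algebra is Gorenstein if and only if $L$ is \emph{pure}, i.e.\ every maximal chain between the minimum and the maximum has the same length. Thus the problem reduces to determining for which intervals $[\hdelta,\hdelta']$ inside $\sethE$ the induced sub-lattice is pure.

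The last step is the combinatorial analysis of purity. Since the interior of $\sethE$ is manifestly graded by the rank function $\rho$, the only way $[\hdelta,\hdelta']$ can fail to be pure is through the way the interval is truncated near its two endpoints. The local shape of the Hasse diagram of $\sethE$ at a generic vertex is the rectangle pattern appearing in (\ref{E:diagram}); one checks that if the neighborhood of $\hdelta$ (resp.\ $\hdelta'$) in $[\hdelta,\hdelta']$ has one of the two forbidden ``initial'' (resp.\ ``terminal'') shapes of the figure, then the vertex $\hdelta$ (resp.\ $\hdelta'$) is covered by (or covers) a pair of diagonally opposite elements of a covering rectangle in such a way that one immediate chain through the endpoint is longer than another, producing two maximal chains in $[\hdelta,\hdelta']$ of different length. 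Conversely, in all other endpoint configurations, every maximal chain through the endpoint has the same length, so purity propagates from the interior and $[\hdelta,\hdelta']$ is pure. The two ``exceptional cases'' in the picture arise when $[\hdelta,\hdelta']$ is so short that the defects at the two endpoints interact and have to be listed separately. The periodicity of $\sethE$ under the shift $\shift$ (\ref{E:shift}) guarantees that only finitely many local patterns need to be checked.

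The main obstacle is this last step: a careful enumeration of local endpoint neighborhoods in (\ref{P:kxccvgw13}) and a verification, for each, of whether the sub-lattice $[\hdelta,\hdelta']$ admits two maximal chains of distinct length. Steps one and two are essentially black-box applications of flatness and Hibi's theorem, but the case analysis identifying exactly the forbidden shapes of the picture is unavoidable.
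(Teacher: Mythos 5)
Your first two steps are sound, and the transfer argument via the flat toric degeneration (step one) is morally what the paper does, although the paper prefers Stanley's Hilbert-series criterion (Proposition~\ref{P:stanley}), applied to the common Hilbert series $\hA(t)=Hibi(t)$ of the CM integral domains $\hA$ and $Hibi$, over your upper-semicontinuity-of-type argument; both routes are standard and close the Gorenstein equivalence $\hA \Leftrightarrow Hibi$.

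The last step, however, contains a genuine error that would make the conclusion false. You invoke Hibi's theorem as saying that $Hibi(\setL)$ is Gorenstein iff \emph{the lattice $\setL$ itself} is pure, and then you analyze purity of $[\hdelta,\hdelta']$ as a subposet of $\sethE$. But a finite distributive lattice is always graded (Example~\ref{E:Egrposet} exhibits the rank function $\rho$ on $\sethE$, hence on any interval), so $[\hdelta,\hdelta']$ is pure for \emph{every} choice of endpoints, and your criterion would say every $Hibi[\hdelta,\hdelta']$, and hence every $\hA[\hdelta,\hdelta']$, is Gorenstein — which is exactly the wrong conclusion, since the proposition lists configurations where it is \emph{not}. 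Hibi's actual criterion (Remark~\ref{R:hibi}) is that $Hibi(\MI(\setP))$ is Gorenstein iff the \emph{join-irreducible subposet} $\setP=\rB(\setL)$ is pure. The combinatorial analysis must therefore be carried out on $\rB[\hdelta,\hdelta']$, not on $[\hdelta,\hdelta']$; the paper does precisely this, showing that $\rho_{\rB}^{-1}([a,b])$ is equidimensional by the $\shift$-periodicity, so that impurity of $\rB[\hdelta,\hdelta']$ can only arise in the two ``ends'' $\rB(\hdelta)\cup\rB(\hdelta')$, and then classifying the finitely many end shapes (Figs.~1--16). Your enumeration of ``forbidden endpoint shapes of the Hasse diagram of $[\hdelta,\hdelta']$'' must be replaced by the enumeration of end shapes of the join-irreducible subposet; once you make that substitution, the remaining finite case-check is of the right nature.
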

\paragraph{Commutative algebra background} Before giving the proof of the proposition, I need to remind the reader some definition from commutative algebra (see \cite{BrunsandHerzog} and \cite{Matsumura} for details).

I will be using a definition of a Cohen-Macaulay and Gorenstein algebra that I borrowed from \cite{Stanley}. It is designed for the graded algebras. It has an advantage that it makes construction of a dualizing module more explicit.

\begin{definition}\label{D:gorensteinserre}{\em
\begin{equation}\label{E:presentation}
R=\C[x_1,\dots,x_n]/\fr
\end{equation} is a graded Cohen-Macaulay algebra of dimension $\dim$ iff the minimal graded free $P=\C[x_1,\dots,x_n]$-resolution 
\[
0\ot R \stackrel{\sfd_{0}}{\ot}F_0\stackrel{\sfd_{1}}{\ot} F_1\ot \cdots \ot F_{n-d-1}\stackrel{\sfd_{n-d}}{\ot}F_{n-\dim}\ot 0
\]
satisfies $H^i(\rHom_P(F_{\bullet},P)=\{0\}, i\neq n-\dim$. The $P$-module $\omega_{\hA}=H^{n-\dim}(\rHom_P(F_{\bullet},P)$ is in fact an $A$-module. It is called the canonical module associated with the presentation (\ref{E:presentation}).
 $F_{\bullet}$ 
is self-dual iff $R$ is Gorenstein. 
}
\end{definition}

The reader may wish to check with ( \cite{Serre2} and \cite{Bass} Proposition 5.1) for the proof of the equivalence to the standard definition (\cite{BrunsandHerzog} and \cite{Matsumura} ). 

There is yet another characterization of Gorenstein algebras due to Stanley.
Let $V=\bigoplus_{i\geq i_0} V_i$. The Poincar\'{e} series $V(t)$ is the formal function $\sum_{i\geq i_0}\dim V_it^i$. If $R$ is a graded algebra $R=\bigoplus_{i\geq 0}R_i$, then $R(t)$ is the Poincare series of the underlying vector space.
\begin{proposition} \cite{Stanley}\label{P:stanley}
Let $R$ be a graded algebra. Suppose $R$ is a Cohen-Macaulay integral domain of Krull dimension $d=\dim R$. Then $R$ is Gorenstein iff for some $p\in \Z$, 
\begin{equation}\label{E:goreneq}
R(1/t)=(-1)^dt^pR(t).
\end{equation}
\end{proposition}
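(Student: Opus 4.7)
The plan is to relate the Hilbert series of $R$ to that of its canonical module $\omega_R$ via the minimal graded free resolution over $P = \C[x_1,\dots,x_n]$.

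For the forward direction, suppose $R$ is Gorenstein. By Definition \ref{D:gorensteinserre}, the minimal $P$-free resolution $F_\bullet$ of $R$ is self-dual: there exists $c \in \Z$ such that $F_i \cong \Hom_P(F_{n-d-i}, P)(-c)$. Writing $F_i = \bigoplus_j P(-j)^{\beta_{ij}}$, this forces the Betti-number symmetry $\beta_{ij} = \beta_{n-d-i,\, c-j}$. Since $P(-j)$ has Hilbert series $t^j/(1-t)^n$, additivity in the exact sequence $F_\bullet \to R$ yields
\[
R(t) = \frac{K(t)}{(1-t)^n}, \qquad K(t) = \sum_{i,j} (-1)^i \beta_{ij}\, t^j.
\]
The symmetry implies $K(t) = (-1)^{n-d} t^{c} K(1/t)$, and substituting into $R(1/t) = K(1/t)/(1 - 1/t)^n$ produces $R(1/t) = (-1)^d t^{n-c} R(t)$, which is (\ref{E:goreneq}) with $p = n-c$.

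For the converse, the same bookkeeping applied to the dual complex $\Hom_P(F_\bullet, P)$ yields the graded local duality identity
\[
\omega_R(t) = (-1)^d R(1/t),
\]
valid for any graded Cohen-Macaulay $R$; here $\omega_R = \Ext_P^{n-d}(R, P(-n))$. Assuming (\ref{E:goreneq}) this becomes $\omega_R(t) = t^p R(t)$, so $\omega_R$ and $R(-p)$ have identical Hilbert series.

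The integral-domain hypothesis is what promotes this Hilbert-series equality to a graded isomorphism. Since $R$ is a Cohen-Macaulay graded domain and $\omega_R$ is a maximal Cohen-Macaulay $R$-module whose multiplicity equals $e(R)$, it is torsion-free of generic rank one over $R$. Choose a nonzero homogeneous element of $\omega_R$ to obtain a graded embedding $\omega_R \hookrightarrow \mathrm{Frac}(R)$, and clear denominators by a suitable homogeneous $f \in R$ to realize $\omega_R$, up to a degree shift, as a graded ideal $I \subset R$. Comparing Hilbert series forces $R/I$ to have vanishing Hilbert series, hence $I = R$ and $\omega_R \cong R(-p)$. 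This identifies $F_\bullet$ with its own $P$-dual up to shift, so $R$ is Gorenstein in the sense of Definition \ref{D:gorensteinserre}.

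The main obstacle is precisely this last step: raw equality of Hilbert series is not in general enough to conclude an isomorphism of graded modules. The domain hypothesis enters in two essential ways — it forces $\omega_R$ to be rank one rather than a more complicated extension with the same multiplicity, and it allows $\omega_R$ to be embedded as an honest graded ideal, after which the Hilbert-series comparison becomes rigid enough to force $I = R$. Without integrality one loses both ingredients and the converse genuinely fails.
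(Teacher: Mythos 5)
The paper itself gives no argument for this proposition (it is quoted from \cite{Stanley}), so your proof has to stand on its own. Your forward direction is fine: self-duality of the minimal resolution gives the Betti symmetry $\beta_{ij}=\beta_{n-d-i,\,c-j}$ and hence the functional equation, and the identity $\omega_R(t)=(-1)^dR(1/t)$ you use for the converse is the standard graded duality statement. The gap is in your final step. Clearing denominators by a homogeneous $f$ only yields $I\cong\omega_R(-e)$ with $e=\deg f$ determined by the (arbitrary) choice of $f$, so $I(t)=t^{e+p}R(t)$ and $(R/I)(t)=(1-t^{e+p})R(t)$; this vanishes only if $e=-p$, and nothing in the construction arranges that. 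In fact when $p>0$ (the case for a polynomial ring and for the cones in this paper, where $\omega_R$ is generated in positive degree) no homogeneous $f\in R$ of degree $-p$ exists, so every ideal produced by your recipe is proper. Nor is a proper $I$ contradictory: for $R=\C[x]$, which satisfies the hypothesis of the converse, such an embedding typically lands on $(x^c)\subsetneq R$, and $R/(x^c)$ is a perfectly good quotient of dimension $d-1$ with Hilbert series $(1-t^c)R(t)$. So ``comparing Hilbert series forces $R/I$ to vanish'' does not follow: equality of the Hilbert series of $\omega_R$ and $R(-p)$ constrains the abstract module, not the particular embedded ideal, and the domain hypothesis as you deploy it does not close this hole.

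The repair is short and is how Stanley argues. From $\omega_R(t)=t^pR(t)$ the lowest nonvanishing graded piece of $\omega_R$ sits in degree $p$ and is one-dimensional; choose $0\ne w\in(\omega_R)_p$ and consider the graded map $R(-p)\to\omega_R$, $r\mapsto rw$. Since $R$ is a domain and $\omega_R$ is torsion-free, this map is injective, and because the two modules have graded pieces of equal finite dimension it is surjective degree by degree; hence $\omega_R\cong R(-p)$ and $R$ is Gorenstein. Equivalently, normalize your embedding into $\mathrm{Frac}(R)$ so that $w\mapsto 1$: the image is then a fractional ideal containing $R$ with the same Hilbert series as $R$, hence equal to $R$. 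The domain hypothesis enters exactly where you expected--injectivity of multiplication by a nonzero element on a rank-one torsion-free module--but the comparison must be run in the direction $R(-p)\hookrightarrow\omega_R$ (or with the normalized fractional ideal), not with an arbitrary denominator-cleared ideal $I\subset R$.
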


\paragraph{The proof of Proposition \ref{P:GorensteinCL} and more}
To determine under what conditions on $\hdelta$, and $\hdelta'$ the algebra $\hA$ based on $[\hdelta,\hdelta']$ is Gorenstein, I will use the toric degeneration from Section \ref{S:singularities}. I will reduce the problem to the similar problem about Hibi algebra $Hibi$. For algebras based on distributive latices (this applies to us) the problem was completely solved in \cite{THibi}. The answer can be read off from the Hasse diagram $[\hdelta,\hdelta']$. By Remark \ref{R:hibi}, I have to analyze maximal chains (see Appendix \ref{A:hibi}) in the join-irreducible subposet $\rB[\hdelta,\hdelta']$ of $[\hdelta,\hdelta']$ (the letter $\rB$ is after Birkhoff). 

A poset $\setX$ is {\it equidimensional} if its maximal chains have equal length.
A lattice $\setX$ is {\it Gorenstein} if in the subposet $B(\setX)\subset \setX$ of join-irreducible elements is equidimensional.

The action $\shift$ (\ref{E:shift}) on $\sethE$ indices a periodic structure on $\rB[-\infty,\infty]$. $\rB[-\infty,\infty]$ is a graded poset (see Example \ref{E:Bgraded}). With the help of $\shift$-periodicity and direct inspection I verify that 
$\rho_{\rB}^{-1}([a,b])\subset \rB[-\infty,\infty]$ is an equidimensional poset ( see Fig 0 for $\rho_{\rB}^{-1}([-3,1])$). 
 Suppose that $a$ and $b$ are such that \[\emptyset\neq \rho_{\rB}^{-1}([a,b])\subset \rB[\hdelta,\hdelta']\] is maximal. The geometry of $\sethE$ dictates that $\rB[\hdelta,\hdelta']\backslash \rho_{\rB}^{-1}([a,b])$ is a union $\rB(\hdelta)\cup \rB(\hdelta')$ of two posets-{\it the ends}. They satisfy $\rB(\hdelta)<\rB(\hdelta')$.

If $\rB[\hdelta,\hdelta']$ is not Gorenstein, then at least one of the ends $\rB(\hdelta)$ or $ \rB(\hdelta')$ fails to be an equidimensional poset.

 The structure of $ \rB(\hdelta')$, the terminal part, depends only on $\hdelta'$. The list of Hasse diagrams $ \rB(\hdelta')$ 
 as functions of $\hdelta'$
 is given on Fig. 1-8. 
 
\begin{figure}[h]
\centering
\begin{tikzpicture}[level distance=50 mm, scale=0.75]
	\begin{pgfonlayer}{nodelayer}
		
		\node [circle, fill=black, draw, style=none,label=0:\tiny{$(1)^{r-1}$} ] (0) at (-4, 5) {.};

		\node [circle, fill=black, draw, style=none,label=0:\tiny{$(2)^{r-1}$} ] (1) at (-5, 4) {.};

		\node [circle, fill=black, draw, style=none,label=180:\tiny{$(14)^{r}$} ] (2) at (-6, 5) {.};
		
		\node [circle, fill=black, draw, style=none,label=180:\tiny{$(0)^{r}$} ] (3) at (-7, 4) {.};

		\node [circle, fill=black, draw, style=none,label=180:\tiny{$(45)^{r-1}$} ] (4) at (-6, 3) {.};

		\node [circle, fill=black, draw, style=none,label=0:\tiny{$(5)^{r-1}$} ] (5) at (-4, 3) {.};

		\node [circle, fill=black, draw, style=none,label=0:\tiny{$(34)^{r-1}$} ] (6) at (-5, 2) {.};

		\node [circle, fill=black, draw, style=none,label=180:\tiny{$(15)^{r-1}$} ] (7) at (-7, 2) {.};

		\node [circle, fill=black, draw, style=none,label=180:\tiny{$(14)^{r-1}$} ] (8) at (-6, 1) {.};

		\node [circle, fill=black, draw, style=none,label=0:\tiny{$(1)^{r-2}$} ] (9) at (-4, 1) {.};
		
		\node [style=none] (10) at (-5, 6) {\tiny $\cdots$};
		\node [style=none] (11) at (-7, 6) {\tiny $\cdots$};
		\node [style=none] (12) at (-7, 0) {\tiny $\cdots$};
		\node [style=none] (13) at (-5, 0) {\tiny $\cdots$};
		\node [style=none] (14) at (-5.5, -1) {\tiny Fig. 0};
		\node [style=none] (15) at (-2, 3.5) {\tiny$\cdots$};
		\node [circle, fill=black, draw, style=none] (16) at (-1.5, 5) {.};
		\node [style=none] (17) at (-1, 3.5) {\tiny $\cdots$};
		\node [circle, fill=black, draw, style=none] (18) at (-2, 5.5) {.};
		\node [circle, fill=black, draw, style=none] (19) at (-2.5, 5) {.};
		\node [circle, fill=black, draw, style=none] (20) at (-2.5, 4) {.};
		\node [circle, fill=black, draw, style=none] (21) at (-1.5, 4) {.};
		\node [circle, fill=black, draw, style=none] (22) at (-1, 4.5) {.};
		\node [circle, fill=black, draw, style=none] (23) at (-2, 4.5) {.};
		\node [circle, fill=black, draw, style=none] (24) at (-2, 6) {.};
		\node [circle, fill=black, draw, style=none] (25) at (0.5, 6) {.};
		\node [circle, fill=black, draw, style=none] (26) at (1, 5.5) {.};
		\node [circle, fill=black, draw, style=none] (27) at (0.5, 5) {.};
		\node [circle, fill=black, draw, style=none] (28) at (1.5, 5) {.};
		\node [circle, fill=black, draw, style=none] (29) at (1.5, 4) {.};
		\node [circle, fill=black, draw, style=none] (30) at (0.5, 4) {.};
		\node [circle, fill=black, draw, style=none] (31) at (1, 4.5) {.};
		\node [circle, fill=black, draw, style=none] (32) at (0, 5.5) {.};
		\node [circle, fill=black, draw, style=none] (33) at (0, 4.5) {.};
		\node [style=none] (34) at (1, 3.5) {\tiny $\cdots$};
		\node [style=none] (35) at (0, 3.5) {\tiny $\cdots$};
		\node [circle, fill=black, draw, style=none] (36) at (2.5, 4) {.};
		\node [circle, fill=black, draw, style=none] (37) at (3, 4.5) {.};
		\node [circle, fill=black, draw, style=none] (38) at (3, 5.5) {.};
		\node [circle, fill=black, draw, style=none] (39) at (2.5, 6) {.};
		\node [circle, fill=black, draw, style=none] (40) at (3.5, 5) {.};
		\node [circle, fill=black, draw, style=none] (41) at (2.5, 5) {.};
		\node [circle, fill=black, draw, style=none] (42) at (3.5, 4) {.};
		\node [circle, fill=black, draw, style=none] (43) at (4, 5.5) {.};
		\node [circle, fill=black, draw, style=none] (44) at (4, 4.5) {.};
		\node [circle, fill=black, draw, style=none] (45) at (3.5, 6) {.};
		\node [style=none] (46) at (3, 3.5) {\tiny $\cdots$};
		\node [style=none] (47) at (4, 3.5) {\tiny $\cdots$};
		\node [circle, fill=black, draw, style=none] (48) at (5.5, 5.5) {.};
		\node [circle, fill=black, draw, style=none] (49) at (6.5, 5.5) {.};
		\node [circle, fill=black, draw, style=none] (50) at (6, 6) {.};
		\node [style=none] (51) at (6.5, 3.5) {\tiny $\cdots$};
		\node [circle, fill=black, draw, style=none] (52) at (5, 5) {.};
		\node [circle, fill=black, draw, style=none] (53) at (5, 4) {.};
		\node [circle, fill=black, draw, style=none] (54) at (6, 5) {.};
		\node [circle, fill=black, draw, style=none] (55) at (5.5, 4.5) {.};
		\node [style=none] (56) at (5.5, 3.5) {\tiny $\cdots$};
		\node [circle, fill=black, draw, style=none] (57) at (6, 4) {.};
		\node [circle, fill=black, draw, style=none] (58) at (6.5, 4.5) {.};
		\node [circle, fill=black, draw, style=none] (59) at (-2, 2) {.};
		\node [style=none] (60) at (-2, -0.5) {\tiny $\cdots$};
		\node [circle, fill=black, draw, style=none] (61) at (-2.5, 1) {.};
		\node [circle, fill=black, draw, style=none] (62) at (-2, 1.5) {.};
		\node [circle, fill=black, draw, style=none] (63) at (-1.5, 0) {.};
		\node [circle, fill=black, draw, style=none] (64) at (-2, 0.5) {.};
		\node [circle, fill=black, draw, style=none] (65) at (-2.5, 0) {.};
		\node [style=none] (66) at (-1, -0.5) {\tiny $\cdots$};
		\node [circle, fill=black, draw, style=none] (67) at (-1.5, 1) {.};
		\node [circle, fill=black, draw, style=none] (68) at (-1, 0.5) {.};
		\node [circle, fill=black, draw, style=none] (69) at (-1, 1.5) {.};
		\node [circle, fill=black, draw, style=none] (70) at (1.5, 1) {.};
		\node [circle, fill=black, draw, style=none] (71) at (0, 1.5) {.};
		\node [circle, fill=black, draw, style=none] (72) at (1, 0.5) {.};
		\node [circle, fill=black, draw, style=none] (73) at (0.5, 1) {.};
		\node [circle, fill=black, draw, style=none] (74) at (1, 1.5) {.};
		\node [circle, fill=black, draw, style=none] (75) at (0.5, 0) {.};
		\node [circle, fill=black, draw, style=none] (76) at (0.5, 2) {.};
		\node [style=none] (77) at (0, -0.5) {\tiny $\cdots$};
		\node [style=none] (78) at (1, -0.5) {\tiny $\cdots$};
		\node [circle, fill=black, draw, style=none] (79) at (1.5, 0) {.};
		\node [circle, fill=black, draw, style=none] (80) at (0, 0.5) {.};
		\node [circle, fill=black, draw, style=none] (81) at (1.5, 2) {.};
		\node [style=none] (82) at (2.5, -0.5) {\tiny $\cdots$};
		\node [circle, fill=black, draw, style=none] (83) at (3.5, 0.5) {.};
		\node [circle, fill=black, draw, style=none] (84) at (2.5, 0.5) {.};
		\node [circle, fill=black, draw, style=none] (85) at (4, 0) {.};
		\node [circle, fill=black, draw, style=none] (86) at (2.5, 1.5) {.};
		\node [circle, fill=black, draw, style=none] (87) at (3, 0) {.};
		\node [circle, fill=black, draw, style=none] (88) at (4, 1) {.};
		\node [circle, fill=black, draw, style=none] (89) at (3.5, 1.5) {.};
		\node [circle, fill=black, draw, style=none] (90) at (3, 1) {.};
		\node [style=none] (91) at (3.5, -0.5) {\tiny $\cdots$};
		\node [circle, fill=black, draw, style=none] (92) at (3.5, 2) {.};
		\node [circle, fill=black, draw, style=none] (93) at (5.5, 1) {.};
		\node [circle, fill=black, draw, style=none] (94) at (6, 2) {.};
		\node [style=none] (95) at (5, -0.5) {\tiny $\cdots$};
		\node [circle, fill=black, draw, style=none] (96) at (5, 0.5) {.};
		\node [circle, fill=black, draw, style=none] (97) at (6.5, 1) {.};
		\node [style=none] (98) at (6, -0.5) {\tiny $\cdots$};
		\node [circle, fill=black, draw, style=none] (99) at (6, 1.5) {.};
		\node [circle, fill=black, draw, style=none] (100) at (6.5, 0) {.};
		\node [circle, fill=black, draw, style=none] (101) at (5.5, 0) {.};
		\node [circle, fill=black, draw, style=none] (102) at (6, 0.5) {.};
		\node [style=none] (103) at (-1.75, 2.5) {\tiny $(0)^r,(15)^r$};
		\node [style=none] (104) at (0.5, 2.5) {\tiny $(12)^r,(25)^r$};
		\node [style=none] (105) at (3.25, 2.5) {\tiny $(13)^r,(35)^r$};
		\node [style=none] (106) at (6, 2.5) {\tiny $(23)^r,(4)^r$};
		\node [style=none] (107) at (-1.75, -1.5) {\tiny $(14)^r,(45)^r$};
		\node [style=none] (108) at (0.75, -1.5) {\tiny $(24)^r,(3)^r$};
		\node [style=none] (109) at (3.25, -1.5) {\tiny $(34)^r,(2)^r$};
		\node [style=none] (110) at (5.75, -1.5) {\tiny $(1)^r,(5)^r$};
		\node [style=none] (111) at (-1.75, 3) {\tiny Fig. 1};
		\node [style=none] (112) at (0.5, 3) {\tiny Fig.2};
		\node [style=none] (113) at (5.75, -1) { \tiny Fig. 8};
		\node [style=none] (114) at (3.25, -1) {\tiny Fig. 7};
		\node [style=none] (115) at (0.75, -1) {\tiny Fig. 6};
		\node [style=none] (116) at (-1.75, -1) {\tiny Fig. 5};
		\node [style=none] (117) at (6, 3) {\tiny Fig. 4};
		\node [style=none] (118) at (3.25, 3) {\tiny Fig.3};
	\end{pgfonlayer}
	\begin{pgfonlayer}{edgelayer}
		\draw [thick] (2) to (3);
		\draw [thick] (3) to (4);
		\draw [thick] (2) to (1);
		\draw [thick] (1) to (5);
		\draw [thick] (5) to (6);
		\draw [thick] (4) to (6);
		\draw [thick] (4) to (1);
		\draw [thick] (1) to (0);
		\draw [thick] (4) to (7);
		\draw [thick] (7) to (8);
		\draw [thick] (6) to (8);
		\draw [thick] (6) to (9);
		\draw [thick] (11) to (2);
		\draw [thick] (10) to (0);
		\draw [thick] (10) to (2);
		\draw [thick] (8) to (12);
		\draw [thick] (8) to (13);
		\draw [thick] (9) to (13);
		\draw [thick] (18) to (19);
		\draw [thick] (19) to (23);
		\draw [thick] (18) to (16);
		\draw [thick] (16) to (22);
		\draw [thick] (22) to (21);
		\draw [thick] (23) to (21);
		\draw [thick] (23) to (16);
		\draw [thick] (23) to (20);
		\draw [thick] (20) to (15);
		\draw [thick] (21) to (15);
		\draw [thick] (21) to (17);
		\draw [thick] (24) to (18);
		\draw [thick] (25) to (32);
		\draw [thick] (32) to (27);
		\draw [thick] (25) to (26);
		\draw [thick] (26) to (28);
		\draw [thick] (28) to (31);
		\draw [thick] (27) to (31);
		\draw [thick] (27) to (26);
		\draw [thick] (27) to (33);
		\draw [thick] (33) to (30);
		\draw [thick] (31) to (30);
		\draw [thick] (31) to (29);
		\draw [thick] (30) to (35);
		\draw [thick] (30) to (34);
		\draw [thick] (29) to (34);
		\draw [thick] (39) to (38);
		\draw [thick] (45) to (43);
		\draw [thick] (43) to (40);
		\draw [thick] (38) to (40);
		\draw [thick] (38) to (45);
		\draw [thick] (38) to (41);
		\draw [thick] (41) to (37);
		\draw [thick] (40) to (37);
		\draw [thick] (40) to (44);
		\draw [thick] (37) to (36);
		\draw [thick] (37) to (42);
		\draw [thick] (44) to (42);
		\draw [thick] (36) to (46);
		\draw [thick] (42) to (47);
		\draw [thick] (42) to (46);
		\draw [thick] (50) to (49);
		\draw [thick] (49) to (54);
		\draw [thick] (48) to (54);
		\draw [thick] (48) to (50);
		\draw [thick] (48) to (52);
		\draw [thick] (52) to (55);
		\draw [thick] (54) to (55);
		\draw [thick] (54) to (58);
		\draw [thick] (55) to (53);
		\draw [thick] (55) to (57);
		\draw [thick] (58) to (57);
		\draw [thick] (53) to (56);
		\draw [thick] (57) to (51);
		\draw [thick] (57) to (56);
		\draw [thick] (62) to (61);
		\draw [thick] (61) to (64);
		\draw [thick] (62) to (67);
		\draw [thick] (67) to (68);
		\draw [thick] (68) to (63);
		\draw [thick] (64) to (63);
		\draw [thick] (64) to (67);
		\draw [thick] (64) to (65);
		\draw [thick] (65) to (60);
		\draw [thick] (63) to (60);
		\draw [thick] (63) to (66);
		\draw [thick] (59) to (62);
		\draw [thick] (67) to (69);
		\draw [thick] (76) to (71);
		\draw [thick] (71) to (73);
		\draw [thick] (76) to (74);
		\draw [thick] (74) to (70);
		\draw [thick] (70) to (72);
		\draw [thick] (73) to (72);
		\draw [thick] (73) to (74);
		\draw [thick] (73) to (80);
		\draw [thick] (80) to (75);
		\draw [thick] (72) to (75);
		\draw [thick] (72) to (79);
		\draw [thick] (75) to (77);
		\draw [thick] (75) to (78);
		\draw [thick] (79) to (78);
		\draw [thick] (74) to (81);
		\draw [thick] (86) to (90);
		\draw [thick] (89) to (88);
		\draw [thick] (88) to (83);
		\draw [thick] (90) to (83);
		\draw [thick] (90) to (89);
		\draw [thick] (90) to (84);
		\draw [thick] (84) to (87);
		\draw [thick] (83) to (87);
		\draw [thick] (83) to (85);
		\draw [thick] (87) to (82);
		\draw [thick] (87) to (91);
		\draw [thick] (85) to (91);
		\draw [thick] (92) to (89);
		\draw [thick] (99) to (97);
		\draw [thick] (97) to (102);
		\draw [thick] (93) to (102);
		\draw [thick] (93) to (99);
		\draw [thick] (93) to (96);
		\draw [thick] (96) to (101);
		\draw [thick] (102) to (101);
		\draw [thick] (102) to (100);
		\draw [thick] (101) to (95);
		\draw [thick] (101) to (98);
		\draw [thick] (100) to (98);
		\draw [thick] (94) to (99);
	\end{pgfonlayer}
\end{tikzpicture}
\end{figure}

In complete analogy with the terminal part the list of Hasse diagrams $ \rB(\hdelta)$ of initial parts as a function of $\hdelta$
is given on Fig. 9-16. 

\newpage

\begin{figure}[h]
\centering
\begin{tikzpicture}[level distance=50 mm, scale=0.7]
	\begin{pgfonlayer}{nodelayer}
		\node [circle, fill=black, draw, style=none] (0) at (-6.25, -1.25) {.};
		\node [circle, fill=black, draw, style=none] (1) at (-6.25, -0.25) {.};
		\node [style=none] (2) at (-6.75, 0.25) {\tiny $\cdots$};
		\node [circle, fill=black, draw, style=none] (3) at (-5.75, -0.75) {.};
		\node [circle, fill=black, draw, style=none] (4) at (-6.75, -0.75) {.};
		\node [circle, fill=black, draw, style=none] (5) at (-5.25, -0.25) {.};
		\node [circle, fill=black, draw, style=none] (6) at (-5.25, -1.25) {.};
		\node [style=none] (7) at (-5.75, 0.25) {\tiny $\cdots$};
		\node [circle, fill=black, draw, style=none] (8) at (-3.75, -0.25) {.};
		\node [circle, fill=black, draw, style=none] (9) at (-4.25, -0.75) {.};
		\node [circle, fill=black, draw, style=none] (10) at (-2.75, -0.25) {.};
		\node [circle, fill=black, draw, style=none] (11) at (-3.25, -1.75) {.};
		\node [style=none] (12) at (-3.25, 0.25) {\tiny $\cdots$};
		\node [circle, fill=black, draw, style=none] (13) at (-2.75, -1.25) {.};
		\node [circle, fill=black, draw, style=none] (14) at (-3.75, -1.25) {.};
		\node [circle, fill=black, draw, style=none] (15) at (-3.25, -0.75) {.};
		\node [style=none] (16) at (-11, 6) {\tiny $(5)^r,(1)^r$};
		\node [style=none] (17) at (-8.5, 6) {\tiny $(35)^r,(13)^r$};
		\node [style=none] (18) at (-6, 6) {\tiny $(4)^r,(23)^r$};
		\node [style=none] (19) at (-3.25, 6) {\tiny $(14)^r,(45)^r$};
		\node [style=none] (20) at (-11, 1) {\tiny $(24)^r,(3)^r$};
		\node [style=none] (21) at (-8.75, 1) {\tiny $(15)^r,(0)^r$};
		\node [style=none] (22) at (-6, 1) {\tiny $(2)^r,(34)^r$};
		\node [style=none] (23) at (-3.5, 1) {\tiny $(12)^r,(25)^r$};
		\node [style=none] (24) at (-11, 6.5) {\tiny Fig. 9};
		\node [style=none] (25) at (-8.5, 6.5) {\tiny Fig.10};
		\node [style=none] (26) at (-3.5, 1.5) {\tiny Fig. 16};
		\node [style=none] (27) at (-6, 1.5) {\tiny Fig. 15};
		\node [style=none] (28) at (-8.75, 1.5) {\tiny Fig. 14};
		\node [style=none] (29) at (-11, 1.5) {\tiny Fig. 13};
		\node [style=none] (30) at (-3.25, 6.5) {\tiny Fig. 12};
		\node [style=none] (31) at (-6, 6.5) {\tiny Fig.11};
		\node [circle, fill=black, draw, style=none] (32) at (-10.25, 4.25) {.};
		\node [circle, fill=black, draw, style=none] (33) at (-10.75, 4.75) {.};
		\node [circle, fill=black, draw, style=none] (34) at (-11.25, 3.25) {.};
		\node [circle, fill=black, draw, style=none] (35) at (-11.75, 4.75) {.};
		\node [circle, fill=black, draw, style=none] (36) at (-11.75, 3.75) {.};
		\node [circle, fill=black, draw, style=none] (37) at (-10.75, 3.75) {.};
		\node [circle, fill=black, draw, style=none] (38) at (-11.25, 4.25) {.};
		\node [style=none] (39) at (-11.25, 5.25) {\tiny $\cdots$};
		\node [circle, fill=black, draw, style=none] (40) at (-11.25, 2.75) {.};
		\node [style=none] (41) at (-10.25, 5.25) {\tiny $\cdots$};
		\node [circle, fill=black, draw, style=none] (42) at (-11.25, 2.25) {.};
		\node [circle, fill=black, draw, style=none] (43) at (-6.75, 4.75) {.};
		\node [style=none] (44) at (-5.25, 5.25) {\tiny $\cdots$};
		\node [circle, fill=black, draw, style=none] (45) at (-6.25, 3.25) {.};
		\node [circle, fill=black, draw, style=none] (46) at (-6.25, 4.25) {.};
		\node [circle, fill=black, draw, style=none] (47) at (-5.75, 4.75) {.};
		\node [style=none] (48) at (-6.25, 5.25) {\tiny $\cdots$};
		\node [circle, fill=black, draw, style=none] (49) at (-6.75, 3.75) {.};
		\node [circle, fill=black, draw, style=none] (50) at (-5.25, 4.25) {.};
		\node [circle, fill=black, draw, style=none] (51) at (-5.75, 3.75) {.};
		\node [circle, fill=black, draw, style=none] (52) at (-11.75, -0.25) {.};
		\node [circle, fill=black, draw, style=none] (53) at (-10.75, -1.25) {.};
		\node [circle, fill=black, draw, style=none] (54) at (-10.25, -0.75) {.};
		\node [style=none] (55) at (-10.25, 0.25) {\tiny $\cdots$};
		\node [circle, fill=black, draw, style=none] (56) at (-11.25, -0.75) {.};
		\node [circle, fill=black, draw, style=none] (57) at (-10.75, -0.25) {.};
		\node [circle, fill=black, draw, style=none] (58) at (-11.25, -1.75) {.};
		\node [style=none] (59) at (-11.25, 0.25) {\tiny $\cdots$};
		\node [circle, fill=black, draw, style=none] (60) at (-11.75, -1.25) {.};
		\node [circle, fill=black, draw, style=none] (61) at (-5.75, -1.75) {.};
		\node [circle, fill=black, draw, style=none] (62) at (-5.75, -2.25) {.};
		\node [style=none] (63) at (-9.25, 5.25) {\tiny $\cdots$};
		\node [circle, fill=black, draw, style=none] (64) at (-8.25, 3.25) {.};
		\node [circle, fill=black, draw, style=none] (65) at (-7.75, 4.75) {.};
		\node [circle, fill=black, draw, style=none] (66) at (-9.25, 4.25) {.};
		\node [circle, fill=black, draw, style=none] (67) at (-8.75, 4.75) {.};
		\node [circle, fill=black, draw, style=none] (68) at (-8.75, 3.75) {.};
		\node [style=none] (69) at (-8.25, 5.25) {\tiny $\cdots$};
		\node [circle, fill=black, draw, style=none] (70) at (-8.25, 4.25) {.};
		\node [circle, fill=black, draw, style=none] (71) at (-7.75, 3.75) {.};
		\node [circle, fill=black, draw, style=none] (72) at (-3.25, -2.25) {.};
		\node [style=none] (73) at (-4.25, 0.25) {\tiny $\cdots$};
		\node [circle, fill=black, draw, style=none] (74) at (-6.25, 2.75) {.};
		\node [style=none] (75) at (-2.75, 5.25) {\tiny $\cdots$};
		\node [circle, fill=black, draw, style=none] (76) at (-3.75, 2.75) {.};
		\node [circle, fill=black, draw, style=none] (77) at (-3.75, 3.25) {.};
		\node [style=none] (78) at (-3.75, 5.25) {\tiny $\cdots$};
		\node [circle, fill=black, draw, style=none] (79) at (-3.25, 4.75) {.};
		\node [circle, fill=black, draw, style=none] (80) at (-3.25, 3.75) {.};
		\node [circle, fill=black, draw, style=none] (81) at (-2.75, 4.25) {.};
		\node [circle, fill=black, draw, style=none] (82) at (-4.25, 4.75) {.};
		\node [circle, fill=black, draw, style=none] (83) at (-3.75, 4.25) {.};
		\node [circle, fill=black, draw, style=none] (84) at (-4.25, 3.75) {.};
		\node [circle, fill=black, draw, style=none] (85) at (-8.25, -0.75) {.};
		\node [style=none] (86) at (-9.25, 0.25) {\tiny $\cdots$};
		\node [circle, fill=black, draw, style=none] (87) at (-8.75, -0.25) {.};
		\node [style=none] (88) at (-8.25, 0.25) {\tiny $\cdots$};
		\node [circle, fill=black, draw, style=none] (89) at (-8.25, -2.25) {.};
		\node [circle, fill=black, draw, style=none] (90) at (-7.75, -1.25) {.};
		\node [circle, fill=black, draw, style=none] (91) at (-9.25, -0.75) {.};
		\node [circle, fill=black, draw, style=none] (92) at (-7.75, -0.25) {.};
		\node [circle, fill=black, draw, style=none] (93) at (-8.25, -1.75) {.};
		\node [circle, fill=black, draw, style=none] (94) at (-8.75, -1.25) {.};
		\node [circle, fill=black, draw, style=none] (95) at (-8.25, -2.75) {.};
		\node [style=none] (96) at (3.75, -2.5) {};
	\end{pgfonlayer}
	\begin{pgfonlayer}{edgelayer}
		\draw [thick] (2.center) to (1.center);
		\draw [thick] (7.center) to (5.center);
		\draw [thick] (5.center) to (3.center);
		\draw [thick] (1.center) to (3.center);
		\draw [thick] (1.center) to (7.center);
		\draw [thick] (1.center) to (4.center);
		\draw [thick] (4.center) to (0.center);
		\draw [thick] (3.center) to (0.center);
		\draw [thick] (3.center) to (6.center);
		\draw [thick] (12.center) to (10.center);
		\draw [thick] (10.center) to (15.center);
		\draw [thick] (8.center) to (15.center);
		\draw [thick] (8.center) to (12.center);
		\draw [thick] (8.center) to (9.center);
		\draw [thick] (9.center) to (14.center);
		\draw [thick] (15.center) to (14.center);
		\draw [thick] (15.center) to (13.center);
		\draw [thick] (14.center) to (11.center);
		\draw [thick] (13.center) to (11.center);
		\draw [thick] (39.center) to (35.center);
		\draw [thick] (35.center) to (38.center);
		\draw [thick] (39.center) to (33.center);
		\draw [thick] (33.center) to (32.center);
		\draw [thick] (32.center) to (37.center);
		\draw [thick] (38.center) to (37.center);
		\draw [thick] (38.center) to (33.center);
		\draw [thick] (38.center) to (36.center);
		\draw [thick] (36.center) to (34.center);
		\draw [thick] (37.center) to (34.center);
		\draw [thick] (34.center) to (40.center);
		\draw [thick] (40.center) to (42.center);
		\draw [thick] (33.center) to (41.center);
		\draw [thick] (48.center) to (43.center);
		\draw [thick] (43.center) to (46.center);
		\draw [thick] (48.center) to (47.center);
		\draw [thick] (47.center) to (50.center);
		\draw [thick] (50.center) to (51.center);
		\draw [thick] (46.center) to (51.center);
		\draw [thick] (46.center) to (47.center);
		\draw [thick] (46.center) to (49.center);
		\draw [thick] (49.center) to (45.center);
		\draw [thick] (51.center) to (45.center);
		\draw [thick] (47.center) to (44.center);
		\draw [thick] (59.center) to (52.center);
		\draw [thick] (52.center) to (56.center);
		\draw [thick] (59.center) to (57.center);
		\draw [thick] (57.center) to (54.center);
		\draw [thick] (54.center) to (53.center);
		\draw [thick] (56.center) to (53.center);
		\draw [thick] (56.center) to (57.center);
		\draw [thick] (56.center) to (60.center);
		\draw [thick] (60.center) to (58.center);
		\draw [thick] (53.center) to (58.center);
		\draw [thick] (57.center) to (55.center);
		\draw [thick] (6.center) to (62.center);
		\draw [thick] (0.center) to (61.center);
		\draw [thick] (61.center) to (62.center);
		\draw [thick] (69.center) to (65.center);
		\draw [thick] (65.center) to (70.center);
		\draw [thick] (69.center) to (67.center);
		\draw [thick] (67.center) to (66.center);
		\draw [thick] (66.center) to (68.center);
		\draw [thick] (70.center) to (68.center);
		\draw [thick] (70.center) to (67.center);
		\draw [thick] (70.center) to (71.center);
		\draw [thick] (68.center) to (64.center);
		\draw [thick] (67.center) to (63.center);
		\draw [thick] (11.center) to (72.center);
		\draw [thick] (73.center) to (8.center);
		\draw [thick] (45.center) to (74.center);
		\draw [thick] (78.center) to (82.center);
		\draw [thick] (82.center) to (83.center);
		\draw [thick] (78.center) to (79.center);
		\draw [thick] (79.center) to (81.center);
		\draw [thick] (81.center) to (80.center);
		\draw [thick] (83.center) to (80.center);
		\draw [thick] (83.center) to (79.center);
		\draw [thick] (83.center) to (84.center);
		\draw [thick] (80.center) to (77.center);
		\draw [thick] (79.center) to (75.center);
		\draw [thick] (84.center) to (76.center);
		\draw [thick] (76.center) to (77.center);
		\draw [thick] (88.center) to (92.center);
		\draw [thick] (92.center) to (85.center);
		\draw [thick] (87.center) to (85.center);
		\draw [thick] (87.center) to (88.center);
		\draw [thick] (87.center) to (91.center);
		\draw [thick] (91.center) to (94.center);
		\draw [thick] (85.center) to (94.center);
		\draw [thick] (85.center) to (90.center);
		\draw [thick] (94.center) to (93.center);
		\draw [thick] (90.center) to (93.center);
		\draw [thick] (93.center) to (89.center);
		\draw [thick] (86.center) to (87.center);
		\draw [thick] (95.center) to (89.center);
		\draw [thick] (71.center) to (64.center);
	\end{pgfonlayer}
\end{tikzpicture}\end{figure}

Note that $\rB[\hdelta,\hdelta']$ is non-Gorenstein iff it is having ends as in Fig.5 or 7 or 12 or 15.

\newpage
I can define the ends $\setU_{left}(\hdelta)$ and $\setU_{right}(\hdelta')$ of $[\hdelta,\hdelta']$ the same way as for $\rB[\hdelta,\hdelta']$. Indeed, $\sethE$ is a graded poset
(see Example \ref{E:Egrposet}). 
 Suppose that $a$ and $b$ are such that \[\emptyset\neq \rho_{\sethE}^{-1}([a,b])\subset [\hdelta,\hdelta']\] is maximal. Then 
 $[\hdelta,\hdelta']\backslash \rho_{\sethE}^{-1}([a,b])$ is a union $\setU_{left}(\hdelta)\cup \setU_{right}(\hdelta')$ of the ends which satisfy $\setU_{left}(\hdelta)<\setU_{right}(\hdelta')$.

It is a matter of finite check to see that $\setU_{left}(\hdelta)$ and $\setU_{right}(\hdelta')$ completely determine $\rB(\hdelta)$ and $\rB(\hdelta')$ respectively. $\setU_{right}(\hdelta')$ with $\hdelta'$ as on Fig. 5 and 7 are precisely the terminal segments on the diagram (\ref{pict:two}). Similarly, $\setU_{left}(\hdelta)$ with $\hdelta$ as on Fig. 12 and 15 are precisely the initial segments on the diagram (\ref{pict:two}).
$\blacksquare$

Introduce a pair of totally ordered subsets of $\sethE$:
\begin{equation}\label{E:Mdef}
\begin{split}
&\rM_1^+=\{\cdots \lessdot(35)^{r-1}\lessdot(4)^{r-1}\lessdot (3)^{r-1}\lessdot(12)^r\lessdot (13)^r\lessdot(23)^r\lessdot(24)^{r}\lessdot(25)^r\lessdot(35)^{r}\lessdot\cdots \quad r\in \Z\}\\
&\rM_1^-=\{\cdots \lessdot(24)^r\lessdot(34)^r\lessdot(35)^r\lessdot(45)^r\lessdot(3)^r\lessdot(2)^r\lessdot(13)^{r+1}\lessdot(14)^{r+1}\lessdot(24)^{r+1}\lessdot\cdots \quad r\in \Z\}
\end{split}
\end{equation}
For definition of $\rM^{\pm}_i$ see Lemma \ref{L:multstatM}.
I can sum up the above discussion in the following theorem.
\begin{theorem}
The interval $[\hdelta,\hdelta']$ is Gorenstein iff capacity $\rCap$ (\ref{E:capdif}) satisfies
\begin{equation}\label{E:purity}
\begin{split}
&\rCap[\hdelta,\hdelta']=0\text{ or }\rCap[\hdelta,\hdelta']=1 \text{, or }\\
&\rCap[\hdelta,\hdelta']\geq 3 \text{ and the end points satisfy :}\\
&\hdelta\in \sethE\backslash \rM_2^+=\rM_1^+\sqcup \rM_3^+ ,\\
&\rM_2^+=\{(14)^r,(45)^r,(34)^r,(2)^r|r\in\Z\}\text{ and }\\
&\hdelta'\in \sethE\backslash \rM_2^-=\rM_1^-\sqcup \rM_3^-,\\
&\rM_2^-=\{(12)^r,(25)^r,(23)^r,(4)^r |r\in \Z\}.
\end{split}
\end{equation}
\end{theorem}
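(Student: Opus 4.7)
The plan is to translate the geometric characterization already obtained in Proposition \ref{P:GorensteinCL} into the explicit combinatorial conditions on $\rCap[\hdelta,\hdelta']$ and on the endpoints $\hdelta, \hdelta'$ themselves. The structural backbone is already in place: Proposition \ref{P:GorensteinCL} says that $\hA[\hdelta,\hdelta']$ is Gorenstein iff the neighborhoods of the two ends of $[\hdelta,\hdelta']$ in the Hasse diagram do not match the four forbidden shapes (two terminal, two initial) drawn in the diagram there. What remains is a finite combinatorial check that reads off, for each possible endpoint, what the end looks like.

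First I dispose of the low-capacity cases. When $\rCap[\hdelta,\hdelta']=0$, the interval is totally ordered, so $\hA[\hdelta,\hdelta']$ is a polynomial ring and Gorenstein is trivial; when $\rCap[\hdelta,\hdelta']=1$, the join-irreducible subposet $\rB[\hdelta,\hdelta']$ has at most one non-chain level and is still easily seen to be equidimensional, so Hibi's criterion (Remark \ref{R:hibi}) combined with the toric degeneration of Section \ref{S:singularities} yields Gorenstein. The capacity $2$ case does not appear in the conclusion, and indeed a direct inspection of $\sethE$ shows that every interval of capacity exactly $2$ must have at least one end matching the ``Exceptional cases'' on the right of the diagram in Proposition \ref{P:GorensteinCL}, which are non-Gorenstein; this is consistent with the theorem's requirement $\rCap \geq 3$ in the nontrivial direction.

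For $\rCap[\hdelta,\hdelta']\geq 3$ I invoke Proposition \ref{P:GorensteinCL} directly: Gorenstein-ness is equivalent to saying that neither $\setU_{right}(\hdelta')$ nor $\setU_{left}(\hdelta)$ is a forbidden end. The $\shift$-periodicity of $\sethE$ (\ref{E:shift}) reduces the classification of possible $\rB(\hdelta')$'s and $\rB(\hdelta)$'s to finitely many orbit representatives, and the resulting shapes are already tabulated as Figs.~1--16 in the discussion preceding the theorem. Comparing the tabulation with the forbidden shapes, one sees that $\rB(\hdelta')$ has the shape of Fig.~5 or Fig.~7 precisely when $\hdelta'$ belongs to $\rM_2^-=\{(12)^r,(25)^r,(23)^r,(4)^r\}$, and dually $\rB(\hdelta)$ has the shape of Fig.~12 or Fig.~15 precisely when $\hdelta\in \rM_2^+=\{(14)^r,(45)^r,(34)^r,(2)^r\}$. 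Using the partition $\sethE=\rM_1^+\sqcup\rM_2^+\sqcup\rM_3^+=\rM_1^-\sqcup\rM_2^-\sqcup\rM_3^-$ from Lemma \ref{L:multstatM}, the Gorenstein condition rearranges into the stated form.

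The main obstacle, and essentially the only nontrivial work, is the bookkeeping in the orbit-by-orbit check: for each type of vertex one must carefully trace through the Hasse neighborhood in $\sethE$, record the corresponding subposet of join-irreducible elements, and verify whether or not it is equidimensional in the sense of \cite{THibi}. Because $\sethE$ has only a handful of distinct vertex types modulo $\shift$, the verification is long but entirely mechanical; once it is done, the identification of the ``bad'' vertices with the sets $\rM_2^\pm$ is just a matter of matching labels between the figures and the definitions in (\ref{E:Mdef}).
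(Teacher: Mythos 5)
Your proposal follows essentially the same route the paper takes: the theorem is explicitly presented there as a "summing up" of the discussion constituting the proof of Proposition \ref{P:GorensteinCL}, which proceeds exactly as you describe — toric degeneration to a Hibi ring, Hibi's purity criterion for Gorensteinness of distributive lattice rings, reduction to finitely many end-shapes via $\shift$-periodicity, tabulation of $\rB(\hdelta)$ and $\rB(\hdelta')$ in Figs.~1--16, identification of the forbidden shapes with $\rM_2^{\pm}$ via $\setU_{left},\setU_{right}$, and the explicit list (\ref{E:twointervalhigh}), (\ref{E:twointervalslow}) for the capacity-$2$ intervals. One small caution worth recording: when you say the identification of the forbidden endpoint types with $\rM_2^\pm$ is "just a matter of matching labels between the figures and the definitions," be aware that the labels printed on Figs.~5, 7 (terminal) read $(14)^r,(45)^r$ and $(34)^r,(2)^r$, which are elements of $\rM_2^{+}$ rather than $\rM_2^{-}$; this conflicts with the theorem's claim that the forbidden $\hdelta'$ lie in $\rM_2^{-}$, while the appendix list (\ref{E:twointervalhigh}) and the intuitive reading ($\hdelta'$ at the top should care about $\setCL^{-}$, i.e.\ clutter partners below) both support the theorem statement. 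So the figure captions appear to carry a transcription slip, and a careful proof should verify the matching directly from the $\setCL^{\pm}$ data rather than by trusting those captions.
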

The length (\ref{E:lenght}) of the maximal chain in $ [\hdelta,\hdelta']$ in terminology of Appendix \ref{A:hibi} is the rank 
\[\rk(\hdelta,\hdelta') \text{ of } [\hdelta,\hdelta'].\]
 I deduce from Remark \ref{R:hibi} a pair of corollaries.
\begin{corollary}\label{C:Hgorenstein}
The algebra $Hibi[\hdelta,\hdelta']$ is Gorenstein iff $\hdelta,\hdelta'$ satisfy (\ref{E:purity}).
\end{corollary}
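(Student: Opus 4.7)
The plan is to invoke Hibi's characterization (Remark \ref{R:hibi}) of when the Hibi algebra on a distributive lattice is Gorenstein, and then recycle the combinatorial analysis of the subposet $\rB[\hdelta,\hdelta']$ of join-irreducible elements that was already carried out inside the proof of Proposition \ref{P:GorensteinCL}. Indeed, for $Hibi[\hdelta,\hdelta']$ Hibi's theorem states that the algebra is Gorenstein iff $\rB[\hdelta,\hdelta']$ is equidimensional (pure). This reduction is direct in the Hibi case (in contrast to Proposition \ref{P:GorensteinCL}, where one first used the flat family $\hA_h$ of Section \ref{S:singularities} to deform $\hA[\hdelta,\hdelta']$ into $Hibi[\hdelta,\hdelta']$).

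First, I would recall that $\sethE$ is a graded poset (Example \ref{E:Egrposet}), and that $\shift$-periodicity (\ref{E:shift}) induces a periodic structure on $\rB[-\infty,\infty]$. Then, following exactly the decomposition used in Proposition \ref{P:GorensteinCL}, I would split $\rB[\hdelta,\hdelta']$ into a ``bulk'' part $\rho_{\rB}^{-1}([a,b])$ (which is automatically equidimensional by the $\shift$-periodic inspection) and the two ``ends'' $\rB(\hdelta)\cup \rB(\hdelta')$, and observe that only the ends can spoil equidimensionality.

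Next I would use the exhaustive enumeration of the possible Hasse diagrams of $\rB(\hdelta)$ and $\rB(\hdelta')$ already carried out in Fig.~1--8 and Fig.~9--16. This enumeration shows that the only diagrams in which maximal chains have different lengths are those labelled Fig.~5, 7 (terminal ends) and Fig.~12, 15 (initial ends). Translating the endpoint types back to $\sethE$ gives precisely the sets $\rM_2^{\pm}$ of excluded points in (\ref{E:purity}); equivalently the admissible endpoints are $\hdelta\in \rM_1^+\sqcup \rM_3^+$ and $\hdelta'\in \rM_1^-\sqcup \rM_3^-$, with the small-capacity cases $\rCap[\hdelta,\hdelta']\in\{0,1\}$ being trivially Gorenstein because $\rB[\hdelta,\hdelta']$ is then a single chain.

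The main (and essentially only) obstacle is conceptual rather than computational: one must be sure that the ``bulk plus ends'' decomposition used in Proposition \ref{P:GorensteinCL} is actually a decomposition of $\rB[\hdelta,\hdelta']$ rather than of $[\hdelta,\hdelta']$, and that the classification of end-diagrams in Fig.~1--16 really exhausts the possibilities. Once those two finite checks are verified, Hibi's criterion delivers the statement directly, and no additional work beyond that already present in the proof of Proposition \ref{P:GorensteinCL} is required. \rule{0.5em}{0.5em}
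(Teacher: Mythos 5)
Your proposal matches the paper's own argument: the corollary is deduced directly from Remark~\ref{R:hibi} (Gorensteinness of $Hibi(\MI(\setP))$ is equivalent to purity of $\setP$) together with the combinatorial classification of the end-posets $\rB(\hdelta)$, $\rB(\hdelta')$ in Figures~1--16 carried out inside the proof of Proposition~\ref{P:GorensteinCL}, and no toric degeneration is needed at this step. One minor slip worth noting: when $\rCap[\hdelta,\hdelta']=1$ the subposet $\rB[\hdelta,\hdelta']$ need not be a single chain, but it is still equidimensional, so the conclusion is unaffected.
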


I leave verification to the reader that $\rho$ (\ref{E:LdefE}) 
satisfies
\begin{equation}\label{E:htandsw}
\rho(\hdelta')-\rho(\hdelta)=\rk(\hdelta,\hdelta'),
\end{equation}
\begin{equation}\label{E:htands}
\rho \reflection (\halpha)=-\rho(\halpha)+10
\end{equation}
where $\reflection$ as in (\ref{E:involution}).
 \begin{corollary}
$\dim Spec\, Hibi[\hdelta,\hdelta']
=\rk(\hdelta,\hdelta')+1$. 
\end{corollary}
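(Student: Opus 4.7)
The plan is to combine the toric description of $Hibi[\hdelta,\hdelta']$ with two standard facts from the theory of Hibi algebras reviewed in Appendix \ref{A:hibi}. First I note that by Birkhoff's representation theorem, the finite distributive lattice $[\hdelta,\hdelta']$ is canonically isomorphic to $J(\rB[\hdelta,\hdelta'])$, the lattice of order ideals of its subposet of join-irreducible elements (the same subposet $\rB$ appearing in the proof of Proposition \ref{P:GorensteinCL}). Consequently $Hibi[\hdelta,\hdelta']$ is the Hibi algebra on $J(\rB[\hdelta,\hdelta'])$ in the classical sense of \cite{THibi}.

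Next I apply the standard Krull dimension formula for Hibi algebras: for any finite poset $P$, one has $\dim Hibi[J(P)] = |P|+1$. This can be read off either from the toric description, where $Spec\, Hibi[J(P)]$ is the affine toric variety whose dense torus has character lattice of rank $|P|+1$, or from the standard monomial basis, where a maximal chain in $J(P)$ provides a system of parameters of cardinality $|P|+1$. Then I invoke the elementary fact that in any finite distributive lattice $L = J(P)$ every maximal chain has length exactly $|P|$, since each cover relation in $J(P)$ corresponds to adjoining a single element of $P$ to an order ideal. Applying this to $L = [\hdelta,\hdelta']$ gives $\rk(\hdelta,\hdelta') = |\rB[\hdelta,\hdelta']|$, and combining the two identities yields
\[
\dim Spec\, Hibi[\hdelta,\hdelta'] \;=\; |\rB[\hdelta,\hdelta']|+1 \;=\; \rk(\hdelta,\hdelta')+1,
\]
as claimed.

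No serious obstacle is expected: each ingredient is either Birkhoff duality for finite distributive lattices or a textbook fact about Hibi algebras already recalled in Appendix \ref{A:hibi}, the latter being the same tool used moments earlier in Corollary \ref{C:Hgorenstein}. The only point to verify carefully is that the poset $\rB[\hdelta,\hdelta']$ defined in the preceding proof really coincides with the set-theoretic subposet of join-irreducibles of $[\hdelta,\hdelta']$, which is immediate from how $\rB$ was introduced (Remark \ref{R:hibi}) and from direct inspection of the Hasse diagram (\ref{P:kxccvgw13}).
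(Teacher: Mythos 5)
Your proof is correct and relies on the same underlying ingredient as the paper: the Krull dimension formula for Hibi algebras from Remark~\ref{R:hibi}. The detour through Birkhoff duality and the join-irreducible poset $\rB[\hdelta,\hdelta']$ is harmless but unnecessary, since Remark~\ref{R:hibi} already states the formula directly as $\dim Hibi(\setL)=\rk\setL+1$, and $\rk[\hdelta,\hdelta']$ is exactly what the paper defines $\rk(\hdelta,\hdelta')$ to be.
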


The next proposition is the central result of this section.
\begin{proposition}\label{C:dimalg}
\begin{enumerate}
\item The algebra $\hA[\hdelta,\hdelta']$ is Gorenstein iff $\hdelta,\hdelta'$ satisfy (\ref{E:purity}).
\item $\dim \hA[\hdelta,\hdelta']=\rk(\hdelta,\hdelta')+1$.
\end{enumerate}
\end{proposition}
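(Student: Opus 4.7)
The plan is to reduce both assertions to the analogous statements for the toric degeneration $Hibi[\hdelta,\hdelta']$, using Stanley's Hilbert series criterion (Proposition \ref{P:stanley}) as the bridge. The key observation is that $\hA[\hdelta,\hdelta']$ and $Hibi[\hdelta,\hdelta']$ share the same $\C$-basis, namely the standard monomials on $[\hdelta,\hdelta']$: for $\hA$ this is Proposition \ref{E:degreenull}(iii), and the corresponding fact for Hibi algebras is recalled in Appendix \ref{A:hibi}. Since the relations (\ref{E:relconvfam}) are homogeneous in the standard $\Z$-grading where $\lambda^{\halpha}$ has degree one (equivalently, they preserve the full $\Aut$-grading on generators), the two graded algebras have identical Hilbert series.

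For item (2), I would invoke the Hilbert--Serre theorem: for a finitely generated, positively graded, Cohen--Macaulay algebra, the Krull dimension equals the order of the pole of the Hilbert series at $t=1$. Cohen--Macaulayness of $\hA$ is Proposition \ref{E:degreenull}(iv), and that of $Hibi$ is classical. Because the two Hilbert series coincide,
\[
\dim \hA[\hdelta,\hdelta'] \;=\; \dim Hibi[\hdelta,\hdelta'] \;=\; \rk(\hdelta,\hdelta')+1,
\]
the last equality being the corollary preceding the proposition.

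For item (1), I would apply Stanley's criterion. Both algebras are Cohen--Macaulay; both are integral domains, with integrality of $Hibi[\hdelta,\hdelta']$ following from its toric description, and integrality of $\hA[\hdelta,\hdelta']$ inherited from the flat family $\hA_h$ of Section \ref{S:singularities}: its special fibre $\hA_h/(h)=Hibi$ is integral and the total space is a flat, finitely generated algebra over the regular base $\C[[h]]$ with integral closed fibre, so $\hA_h$ is a domain, hence so are the generic fibre $\hA\otimes\C((h))$ and therefore $\hA$ itself. Stanley's criterion (Proposition \ref{P:stanley}) then says each of $\hA$, $Hibi$ is Gorenstein iff its Hilbert series satisfies $R(1/t)=(-1)^{\dim R}t^{p}R(t)$ for some $p\in\Z$; since these series are equal, $\hA[\hdelta,\hdelta']$ is Gorenstein iff $Hibi[\hdelta,\hdelta']$ is. By Corollary \ref{C:Hgorenstein} the latter is equivalent to condition (\ref{E:purity}).

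The step I expect to be the only real obstacle is the integrality claim for $\hA[\hdelta,\hdelta']$: once this is rigorously in place (either by the flat-family argument sketched above or by an independent irreducibility argument for $\cZ[\hdelta,\hdelta']$), everything else is pure transcription from the Hibi side through the coincidence of Hilbert series. In particular, no direct computation of the minimal free resolution of $\hA$ is needed, and the nontrivial combinatorial content has already been packaged into Corollary \ref{C:Hgorenstein} and the classification of the ends of $[\hdelta,\hdelta']$.
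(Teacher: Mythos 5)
Your argument is correct and matches the paper's proof in all essential respects: both reduce to $Hibi[\hdelta,\hdelta']$ via equality of Hilbert series (the paper reads this off the flat family $\hA_h$, you read it off the shared standard-monomial basis, which is the same thing), both establish integrality of $\hA$ via flatness over $\C[[h]]$ from integrality of the special fibre, and both then conclude with Stanley's criterion together with Corollary \ref{C:Hgorenstein}, deducing the dimension formula from the coincidence of Hilbert series. You even anticipated correctly that the integrality of $\hA$ is the one lemma the paper needed to insert (its Lemma \ref{P:integraldomain}).
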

\begin{proof}
By Corollary 4.2 \cite{EisenbudStr} $A$ is a Cohen-Macaulay algebra. This implies that $\hA_{h}\underset{\C[[h]]}{\otimes}\C((h))$ has the Cohen-Macaulay property. From the line of equalities \[\dim_{\C}\hA_{i}=\dim_{\C((h))}{\hA_{hi}\underset{\C[[h]]}{\otimes}\C((h))} =\mathrm{rk}_{\C[[h]]}\hA_{hi}=\dim_{\C}Hibi_i, \] I conclude that $\hA(t)=Hibi(t)$. 

\begin{lemma}\label{P:integraldomain}
Algebra $A$ based on any interval $[\hdelta,\hdelta']$ is an integral domain.
\end{lemma}
\begin{proof}
The algebra $Hibi=\hA_{h}/(h)$ is an integral domain \cite{THibi}. Since $\hA_{h}$ is $\C[[h]]$-flat, $\hA_{h}$ is an integral domain. Thus $\hA_{h}\underset{\C[[h]]}{\otimes}\C((h))\cong \hA\underset{\C[[h]]}{\otimes}\C((h))$ and $\hA$ are integral domains.
\end{proof}

By Corollary \ref{C:Hgorenstein}, $Hibi$ is Gorenstein. By Proposition \ref{P:stanley}, $Hibi(t)$ satisfies (\ref{E:goreneq}). Thus $\hA(t)$ satisfies the same equation. By the same proposition $\hA$ is Gorenstein.
 The dimension formula follows from the comparison of degrees of Hilbert polynomials of $Hibi$ and $\hA$
\end{proof}
\begin{example}
\begin{enumerate}
\item Suppose $N,N'\geq0$. From Corollary \ref{C:dimalg} follows that $\dim_{Krull} \hA_{N}^{N'}=11+8(N'-N)$.
\item The algebra $\hA_{N}^{N'}=\hA[(0)^N,(1)^{N'}]$ is Gorenstein $\forall N<N'$. This verifies Conjecture \ref{CON:main} item \ref{I:CONmain1} for $\tspace=\cone$. 
\end{enumerate}
\end{example}
\paragraph{Filtration by Gorenstein sub-schemes}
Maps between local cohomology $H^i_{X\cap Z}(X,\O)$ and $H^i_{ Z}(Y,\O)$ of a pair of schemes $X\subset Y$ often become more tractable if $\codim\, X=1$. For a pair $\cZ[\hdelta',\hbeta]\supset \cZ[\hdelta,\hbeta]$ of Gorenstein schemes corresponding to a pair of intervals $[\hdelta',\hbeta]\supset [\hdelta,\hbeta]$, I will now construct a sequence of Gorenstein schemes $\cZ_i$ \[\cZ[\hdelta',\hbeta]=\cZ_0\supset \cZ_1\supset\cdots\supset \cZ_n= \cZ[\hdelta,\hbeta]\] such that $\dim \cZ_{i-1}-\dim \cZ_{i}=1$.
 \begin{proposition}\label{P:seqint}
Let $[\hdelta,\hbeta]\supset [\hdelta',\hbeta]$ be intervals satisfying (\ref{E:purity}) such that $\hdelta'\in \rM_1^+$. Then there is a sequence of intervals 
 \begin{equation}\label{E:intervalsneed}
 [\hdelta,\hbeta]=[\hdelta_{1},\hbeta]\supset [\hdelta_{2},\hbeta]\supset\cdots \supset [\hdelta_{n},\hbeta]=[\hdelta',\hbeta]
 \end{equation}
 $\hdelta_{i}\in \rM_1^+,i>1$ such that they satisfy (\ref{E:purity}) and
\begin{equation}\label{E:lessdot}
\hdelta_{i}\lessdot \hdelta_{i+1},\text{ and in particular }\rho( \hdelta_{i+1})=\rho(\hdelta_{i})+1,
\end{equation}
and 
$\hdelta_{1}=\hdelta$, $\hdelta_{n}=\hdelta'$. 
\end{proposition}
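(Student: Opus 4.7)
The strategy is to use the totally ordered set $\rM_1^+$ itself as the ``highway'' for the sequence. The definition (\ref{E:Mdef}) already records the covering relations $\lessdot$ between consecutive members of $\rM_1^+$, so once a starting element $\hdelta_2 \in \rM_1^+$ just above $\hdelta$ has been chosen, the remaining sequence $\hdelta_2 \lessdot \hdelta_3 \lessdot \cdots \lessdot \hdelta_n = \hdelta'$ is taken as the consecutive elements of $\rM_1^+$. Then the cover condition (\ref{E:lessdot}) and the $\rho$-increment are automatic, and the walk terminates after finitely many steps since $\rho$ is the grading on $\sethE$ and $\rho(\hdelta') - \rho(\hdelta)$ is finite.

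The construction of $\hdelta_2$ splits into cases according to where $\hdelta$ sits in $\sethE$. If $\hdelta \in \rM_1^+$, simply take $\hdelta_2$ to be the immediate successor of $\hdelta$ in the chain $\rM_1^+$. Otherwise, the purity hypothesis (\ref{E:purity}) for $[\hdelta,\hbeta]$ forces either $\rCap[\hdelta,\hbeta] \le 1$ or $\hdelta \in \rM_3^+$. In the low-capacity case $\hdelta$ already sits next to the Gorenstein ``core'' of the diagram and an inspection of a single rectangle in (\ref{P:kxccvgw13}) produces a cover $\hdelta \lessdot \hdelta_2$ with $\hdelta_2 \in \rM_1^+$. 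In the case $\hdelta \in \rM_3^+$, by $\shift$-periodicity it suffices to handle finitely many representatives, and for each one the Hasse diagram (\ref{P:kxccvgw13}) exhibits at least one element of $\rM_1^+$ covering it; pick any such and call it $\hdelta_2$.

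It remains to verify that each intermediate interval $[\hdelta_i,\hbeta]$ satisfies (\ref{E:purity}). The upper endpoint $\hbeta$ is fixed throughout the sequence, so the condition $\hbeta \in \rM_1^- \sqcup \rM_3^-$ is inherited from the hypothesis on $[\hdelta,\hbeta]$; similarly, the lower-endpoint condition $\hdelta_i \in \rM_1^+ \sqcup \rM_3^+$ is automatic for $i \ge 2$ because $\hdelta_i \in \rM_1^+$ by construction, and for $i = 1$ it holds by hypothesis on $\hdelta$.

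The principal obstacle is then the capacity constraint: (\ref{E:purity}) excludes the value $\rCap[\hdelta_i,\hbeta] = 2$, so I must show that the walk skips this value entirely. The plan is to track how $\rCore$ changes when one element is removed from the bottom of the interval. Removing $\hdelta_i$ from $[\hdelta_i,\hbeta]$ alters $\rCore$ according to how many other elements of $[\hdelta_i,\hbeta]$ share the $\rho$-level of $\hdelta_i$: if none, $\rCap$ is unchanged; if exactly one partner remains, both $\hdelta_i$ and that partner leave $\rCore$ and $\rCap$ drops by $2$; and if two or more partners remain, only $\hdelta_i$ leaves and $\rCap$ drops by $1$. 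Since $\rM_1^+$ runs along the ``central column'' of (\ref{P:kxccvgw13}) in which the elements on a given $\rho$-level come in $\reflection$-symmetric pairs, a finite check on the $\shift$-orbits of vertices of $\rM_1^+$ (together with the classification of the possible $[\hdelta_i,\hbeta]$-restricted $\rho$-fibers) shows that every drop from $\{k : k \ge 3\}$ to a lower value is by $2$, so the intermediate capacities land in $\{0,1\} \cup \{k : k \ge 3\}$ and never hit $2$. This combinatorial check, though routine once set up, is the step I expect to require the most bookkeeping.
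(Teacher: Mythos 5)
Your overall strategy — ride along the totally ordered chain $\rM_1^+$, split the first step into cases depending on where $\hdelta$ sits — matches the paper's proof and is correct. The gap is in the argument that the intermediate intervals avoid $\rCap = 2$, and it is a genuine one.

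First, the bookkeeping for how $\rCore$ changes is set up incorrectly. Since $\hdelta_i$ is the unique minimum of $[\hdelta_i,\hbeta]$, every other element at the level $\rho(\hdelta_i)$ is incomparable to $\hdelta_i$ and therefore outside the interval: the case ``exactly one partner remains'' or ``two or more partners remain'' never occurs, so your case analysis always lands on ``$\rCap$ unchanged,'' which contradicts your own conclusion. Moreover $[\hdelta_{i+1},\hbeta] \neq [\hdelta_i,\hbeta]\setminus\{\hdelta_i\}$: when $\hdelta_i \lessdot \hdelta_{i+1}$, the new interval also loses every $\halpha \in [\hdelta_i,\hbeta]$ with $\halpha \wedge \hdelta_{i+1} = \hdelta_i$, and that cascade (not a single removal) is what actually drives the drop in capacity. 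Second, even granting the claim that the drops from $\{k : k \ge 3\}$ are by $2$, the desired conclusion does not follow: a drop from $4$ would land squarely at $2$. Nothing in the proposal rules this out, so ``never hit $2$'' is not established.

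The paper sidesteps all of this with a direct classification argument rather than a step-size argument. It lists (in (\ref{E:twointervalhigh}) and (\ref{E:twointervalslow})) all intervals with $\rCap = 2$ up to $\shift$-translation, and observes that each one has either its lower endpoint in $\rM_2^+$ or its upper endpoint in $\rM_2^-$. Since every $\hdelta_i$ in the walk lies in $\rM_1^+$, and $\rM_1^+ \cap \rM_2^+ = \emptyset$, while the fixed upper endpoint $\hbeta$ lies in $\rM_1^- \sqcup \rM_3^-$ (by the purity hypothesis on $[\hdelta,\hbeta]$), none of the intermediate intervals can appear on the $\rCap = 2$ list. To repair your proof, replace the parity argument with this explicit check against (\ref{E:twointervalhigh}) and (\ref{E:twointervalslow}); that is the step your proposal is missing.
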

\begin{proof}
Suppose that the capacities (\ref{E:capdif}) satisfy
$\rCap[\hdelta,\hbeta]\geq 3$,
$\rCap[\hdelta',\hbeta]\geq 3$. 

By the assumption $\hdelta'\in \rM_1^+$.
Conditions (\ref{E:purity}) imply that $\hdelta\notin \rM_2^+
\Rightarrow \hdelta\in \sethE\backslash \rM_2^+= \rM_1^+\sqcup \rM_3^{+}=\rM_1^+\sqcup \rM_3^{-}$(Lemma \ref{L:multstatM} item (\ref{I:sixmultstatM})). 

If $\hdelta\in\rM_1^+$ I define \[\{\hdelta_i\}:=[\hdelta,\hdelta']\cap \rM_1^+\] with the order induced from $<$.
Property (\ref{E:lessdot}) of $\{\hdelta_i\}$ follows from the same property of $\rM_1^+$, which obviously holds. By construction $\hdelta_i\ \notin \rM_2^+$, $\rCap[\hdelta_i,\hbeta]\geq \rCap[\hdelta',\hbeta]\geq 3$. Hence condition (\ref{E:purity}) is satisfied.

If 
$\hdelta\in \rM_3^{-}$,
 then, by Lemma \ref{L:multstatM}, item (\ref{I:m3}), there is a unique $\hdelta''\in \rM_1^+$ such that 
 $\hdelta\lessdot \hdelta''$. 
 I define the sequence $\{\hdelta_i\}$ to be 
 $\{\hdelta\}\cup([\hdelta'',\hdelta']\cap \rM_1^+) $.
 (\ref{E:lessdot}) is automatically enforced as before.

Suppose that $\rCap[\hdelta,\hbeta]\geq 3$ and
$\rCap[\hdelta',\hbeta]\leq 1$. I still can construct $\{\hdelta_i\}$. Let us choose the greatest $i$ such that $\rCap[\hdelta_i,\hbeta]=2$. The list of intervals with $\rCap[\hdelta,\hdelta']=2$ is given in (\ref{E:twointervalhigh}), (\ref{E:twointervalslow}). All the intervals $[\hdelta,\hdelta']$ from (\ref{E:twointervalhigh}) have $\hdelta'\in \rM_2^{-}$ and from (\ref{E:twointervalslow}) have $\hdelta\in \rM_2^{+}$. Then $\hdelta_i\in \rM_1^{+}\cap \rM_2^{+}=\emptyset$ or $\hbeta\in \rM_2^{-}$ (impossible by assumptions). Thus $\rCap[\hdelta_i,\hbeta]=2$ is ruled out in our setup.

If $\rCap[\hdelta,\hbeta]\leq 1$, then any maximal totally ordered subset of 
$[\hdelta,\hdelta']$ can be used as $\{\hdelta_i\}$.
\end{proof}
 \begin{corollary}
 Let $[\hdelta,\hbeta]\supset [\hdelta',\hbeta]$ be intervals satisfying (\ref{E:purity}) such that 
 $\hdelta'\in \rM_1^+$. 
 Then there is a sequence of Gorenstein algebras and surjective maps
\[\hA[\hdelta' ,\hbeta]=\hA[\hdelta_{n},\hbeta]\overset{\sfp_{n-1}}{\longleftarrow} \hA[\hdelta_{n-1},\hbeta]\overset{\sfp_{n-2}}{\longleftarrow} \cdots \overset{\sfp_1}{\longleftarrow} \hA[\hdelta_{1},\hbeta]={\hA[\hdelta,\hbeta]}\]
 such that $\hdelta_{i}\lessdot \hdelta_{i+1}$,
 $\dim_{Krull}\, \hA[\hdelta_i,\hbeta]=\dim_{Krull}\,\hA[\hdelta_1,\hbeta]-i+1$ and $\lambda^{\hdelta_{i}}\in \Ker\, \sfp_i$.
\end{corollary}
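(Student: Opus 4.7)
The plan is to deduce the corollary from the three ingredients that have already been established right before it: Proposition \ref{P:seqint} (which constructs the sequence of endpoints), Proposition \ref{C:dimalg} (which characterizes Gorenstein-ness and dimension in terms of (\ref{E:purity})), and the contravariant functoriality of $\hA[\cdot]$ in its defining subset (Definition \ref{D:algAdelta}). There is essentially no new geometric content: the corollary is a packaging of Proposition \ref{P:seqint} into algebra language.

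First, I would apply Proposition \ref{P:seqint} directly to produce a chain
\[
[\hdelta,\hbeta]=[\hdelta_{1},\hbeta]\supset [\hdelta_{2},\hbeta]\supset\cdots\supset [\hdelta_{n},\hbeta]=[\hdelta',\hbeta],
\]
with each $\hdelta_i$ lying in $\rM_1^+$ for $i\geq 2$, each interval satisfying condition (\ref{E:purity}), and $\hdelta_i\lessdot \hdelta_{i+1}$. Because each interval $[\hdelta_i,\hbeta]$ satisfies (\ref{E:purity}), Proposition \ref{C:dimalg}(1) gives that $\hA[\hdelta_i,\hbeta]$ is Gorenstein, so the Gorenstein requirement is immediate.

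Next, I would construct the maps $\sfp_i$ from functoriality. By Definition \ref{D:algAdelta}, the inclusion of subsets $[\hdelta_{i+1},\hbeta]\subset[\hdelta_i,\hbeta]$ induces a surjective algebra homomorphism $\sfp_i:\hA[\hdelta_i,\hbeta]\twoheadrightarrow \hA[\hdelta_{i+1},\hbeta]$; concretely, $\sfp_i$ is the quotient by the ideal generated by those $\lambda^{\halpha}$ with $\halpha\in[\hdelta_i,\hbeta]\setminus[\hdelta_{i+1},\hbeta]$. Since $\hdelta_i\lessdot\hdelta_{i+1}$ forces $\hdelta_i\notin[\hdelta_{i+1},\hbeta]$, we get $\lambda^{\hdelta_i}\in\Ker\,\sfp_i$, as required.

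Finally, the dimension count follows from Proposition \ref{C:dimalg}(2) combined with (\ref{E:htandsw}): we have $\dim_{Krull}\hA[\hdelta_i,\hbeta]=\rk(\hdelta_i,\hbeta)+1=\rho(\hbeta)-\rho(\hdelta_i)+1$, and (\ref{E:lessdot}) tells us that $\rho(\hdelta_{i+1})=\rho(\hdelta_i)+1$, so the dimension drops by exactly one at each step. Telescoping yields $\dim_{Krull}\hA[\hdelta_i,\hbeta]=\dim_{Krull}\hA[\hdelta_1,\hbeta]-i+1$. There is no real obstacle here; the whole substance of the corollary sits inside Proposition \ref{P:seqint}, and the only thing one has to be careful about is checking that $\hdelta_i\lessdot\hdelta_{i+1}$ really does put $\hdelta_i$ outside the smaller interval, which is immediate from the definition of an interval in a poset.
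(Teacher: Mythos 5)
Your proof is correct and is precisely what the paper intends: the corollary is stated without a proof block precisely because it is a direct repackaging of Proposition \ref{P:seqint} through the lens of Definition \ref{D:algAdelta} and Proposition \ref{C:dimalg}. Your three-step assembly (invoke \ref{P:seqint} for the endpoint chain, \ref{C:dimalg}(1) for Gorenstein-ness, then functoriality plus \ref{C:dimalg}(2) and the rank formula (\ref{E:htandsw})/(\ref{E:lessdot}) for the kernel and dimension claims) is exactly the intended argument.
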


 \section{Digression to Cohen-Macaulay algebra}\label{S:CM}
In this section I collected some useful facts about Cohen-Macaulay and Gorenstein algebras. Though the subsequent sections rely strongly on the results presented here, the reader might want to skip this technical section at the first reading. 
\subsection{Definition of the Thom class $\Th$}
Let 
\begin{equation}\label{E:surjective}
\sfp:\hRo\to \hRt
\end{equation} be a surjective homomorphism of graded Cohen-Macaulay algebras. I assume that $\hRo$ and $\hRt$ are generated by elements in the first graded component as all the algebras that will appear in this paper. 
$\hRo$ and $\hRt$ are modules over the polynomial algebra $P$, whose first component $P_1$ is isomorphic to $\hRo_1$. More precisely, $\hRo=P/\fr_{\hRo}, {\hRt}=P/\fr_{\hRt}$. By Definition \ref{D:gorensteinserre}, the canonical modules $\omega_{\hRo}$ and $\omega_{\hRt}$ can be constructed by using the minimal free resolutions over $P$
\[\hRo \leftarrow F_0(\hRo)\leftarrow\cdots \leftarrow F_{s-\dim(\hRo)}(\hRo)\leftarrow0,\]
\begin{equation}\label{E:resolutionB}
\begin{split}
&\hRt\leftarrow F_0(\hRt)\leftarrow\cdots \leftarrow F_{s-\dim(\hRo)+ \codim}(\hRt)\leftarrow0, \\
& \codim(\hRo,\hRt):=\dim(\hRo)-\dim(\hRt).
\end{split}
\end{equation}
Here 
$\dim$ stands for the Krull dimension and 
$s=\dim_{\C} {\hRo}_1$.
I denote by $F^*$ the dual of a free finite rank $P$-module $F$. 
The cohomology of the dual complexes $F^{*i}(\hRo)$ and $F^{*i}(\hRt)$ are nonzero only in degrees $s-\dim(\hRo),s-\dim(\hRt)$ and coincide with the canonical modules $\omega_{\hRo}$ and $\omega_{\hRt}$. 
By construction $F^i(\omega_{\hRo})=F^{*(s-\dim(\hRo)-i)}(\hRo)$, $F^i(\omega_{\hRt})=F^{*(s-\dim(\hRt)-i)}(\hRt)$ are resolutions of the canonical modules.

The map of algebras $\sfp$ defines the map of $P$-modules. It induces the map of the free resolutions 
\begin{equation}\label{E:resmapind}
F^{\bullet}(\hRo)\overset{\sfp^{\bullet}}\to F^{\bullet}(\hRt)
\end{equation}
and the adjoint map between the dual complexes
\begin{equation}\label{E:classconstruction}
F^{\bullet}(\omega_{\hRt})[\codim]\overset{\sfp^{*\bullet}}\to F^{\bullet}(\omega_{\hRo}),
\end{equation} 
which I interpret as the element 
\begin{equation}\label{E:classt}
\Th_{\sfp}\in \Ext_{P}^{\codim}(\omega_{\hRt},\omega_{\hRo}).
\end{equation} 
In (\ref{E:classt}) $\hRt,\hRo$ appear in the opposite order than in (\ref{E:surjective}). This is why I call $\Th_{\sfp}$ a map in the "wrong direction".

\subsection{The map $\rmk$}\label{S:kos}
 The map $\rmk$ that will be described here, is another example of a "wrong direction" homomorphism. 
 
 Fix direct sum decomposition 
\[U+U'=V\]
and a generator $\rmk$ of $\Tor_{\dim U'}^{\C[U']}(\C,\C)\cong \Lambda^{\dim U'}(U')^{*}$. Let $M$ be a $\C[U]$-module. I will be interested in a map
 
\begin{equation}\label{E:kosmap}
 \rmk:\Tor_j^{\C[U]}(M,\C)\to \Tor_{j+\codim\, U}^{\C[V]}(M,\C)
\end{equation}

defined with a help of the K\"{u}nneth decomposition \[\Tor_k^{\C[V]}(M,\C)=\bigoplus_{k=i+j}\Tor_i^{\C[U]}(M,\C)\otimes \Tor_j^{\C[U']}(\C,\C).\]
The map is a composition of the embedding $\Tor_j^{\C[U]}(M,\C)\subset \Tor_j^{\C[V]}(M,\C)$ with the multiplication on $\rmk$-the generator of $\Tor^{\C[U']}_{\dim\, U'}(\C,\C)$. 
Construction extends in the standard way from the category of modules $Mod_{\C[U]}$ to the derived category $D(Mod_{\C[U]})$. 

\subsection{The general properties of $\Th$}
Besides the general properties of the Thom class, this section contains results of some elementary computations with $\Th$.

\begin{proposition}\label{P:composition}
Let ${\hRo}\overset{\sfp}\to {\hRt} \overset{\sfq}\to {\hRth}$ be surjective homomorphisms of Cohen-Macaulay graded algebras. The elements
$\Th_{\sfq}\in \Ext^{\codim({\hRth},\hRt)}(\omega_{\hRth},\omega_{\hRt}),\Th_{\sfp}\in \Ext^{\codim({\hRt},{\hRo})}(\omega_{\hRt},\omega_{\hRo}), \Th_{\sfq\circ \sfp}\in \Ext^{\codim({\hRth},{\hRo})}(\omega_{\hRth},\omega_{\hRo})$ satisfy
\[\Th_{\sfq\circ \sfp}=\Th_{\sfp}\cup \Th_{\sfq}.\]
\end{proposition}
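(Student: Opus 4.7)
The plan is to realize all three Thom classes at the chain level and to interpret the cup product as Yoneda composition, so that the identity becomes the statement that a lift of a composition is homotopic to the composition of lifts.

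First, I would fix minimal graded free $P$-resolutions $F^\bullet(\hRo), F^\bullet(\hRt), F^\bullet(\hRth)$ of the three Cohen--Macaulay quotients and lift the surjections to chain maps $\sfp^\bullet : F^\bullet(\hRo)\to F^\bullet(\hRt)$, $\sfq^\bullet : F^\bullet(\hRt)\to F^\bullet(\hRth)$, and $(\sfq\circ\sfp)^\bullet : F^\bullet(\hRo)\to F^\bullet(\hRth)$, using the standard comparison theorem for projective resolutions. Both $\sfq^\bullet\circ\sfp^\bullet$ and $(\sfq\circ\sfp)^\bullet$ are chain lifts of the algebra map $\sfq\circ\sfp$, hence they agree up to $P$-linear chain homotopy.

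Second, I would apply $\rHom_P(-,P)$ to each of these chain maps. By the Cohen--Macaulay hypothesis, the dual complexes $F^{*\bullet}(\hRo), F^{*\bullet}(\hRt), F^{*\bullet}(\hRth)$ have cohomology concentrated in the single degrees $s-\dim(\hRo), s-\dim(\hRt), s-\dim(\hRth)$, where they compute the canonical modules $\omega_{\hRo}, \omega_{\hRt}, \omega_{\hRth}$; thus the shifted duals of the free resolutions are themselves free resolutions of $\omega_{\hRo}, \omega_{\hRt}, \omega_{\hRth}$ as recorded in (\ref{E:classconstruction}). The dualized chain maps
\[
F^\bullet(\omega_{\hRt})[\codim(\hRo,\hRt)]\xrightarrow{\sfp^{*\bullet}}F^\bullet(\omega_{\hRo}), \quad
F^\bullet(\omega_{\hRth})[\codim(\hRt,\hRth)]\xrightarrow{\sfq^{*\bullet}}F^\bullet(\omega_{\hRt})
\]
are representatives of $\Th_\sfp$ and $\Th_\sfq$ respectively, while the dual of $(\sfq\circ\sfp)^\bullet$ represents $\Th_{\sfq\circ\sfp}\in \Ext_P^{\codim(\hRo,\hRth)}(\omega_{\hRth},\omega_{\hRo})$.

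Third, I would invoke the standard identification of the cup (Yoneda) product on $\Ext^\bullet_P(-,-)$ with composition of representing chain maps in the derived category: the class $\Th_\sfp\cup\Th_\sfq$ is represented by the composition $\sfp^{*\bullet}\circ\sfq^{*\bullet}[\codim(\hRo,\hRt)]$. Since dualization is functorial and $\sfq^\bullet\circ\sfp^\bullet$ is homotopic to $(\sfq\circ\sfp)^\bullet$, the dualized composition is homotopic to the representative of $\Th_{\sfq\circ\sfp}$, and hence represents the same Ext class. This gives the desired identity.

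The only genuine obstacle is bookkeeping of the degree shifts $\codim(\hRo,\hRt)$ and $\codim(\hRt,\hRth)$ when the dualization reverses arrows, and ensuring the sign conventions for the Yoneda product are consistent with the order in which the classes appear on the right-hand side. Once these conventions are aligned, the proof reduces to the uniqueness-up-to-homotopy of lifts between projective resolutions together with the Yoneda description of $\Ext$.
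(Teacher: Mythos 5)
Your proof is correct and takes essentially the same approach as the paper: the paper's one-line proof ("follows from functorial properties of dualization") is exactly the argument you unwind — uniqueness up to homotopy of chain lifts, functoriality of $\rHom_P(-,P)$, and the identification of the Yoneda product on $\Ext_P$ with composition of chain maps on resolutions.
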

\begin{proof}
Follows from functorial properties of dualization.
\end{proof}

The following theorem from \cite{Twenty} describes relation between canonical modules (Definition \ref{D:gorensteinserre}) for a pair Cohen-Macaulay graded
 rings. In the following, $\overline{x}M$ will stand for $\sum_{i=1}^m x_iM$ where $\overline{x}=(x_1,\dots,x_n)\subset {\hRo}$ is a finite sequence and $M$ is an $\hRo$-module. 
\begin{theorem}\label{T:Twenty}(\cite{Twenty} 11.35)
Let $\hRo$ be a Cohen-Macaulay local ring and $\overline x$ a regular sequence in $\hRo$. If $\omega_{\hRo}$ is a canonical module for $\hRo$, then $\hRo/(\overline{x})$ is Cohen-Macaulay and
\[\omega_{\hRo}/\overline{x}\omega_{\hRo}\cong \omega_{\hRo/(\overline{x})}.\]
\end{theorem}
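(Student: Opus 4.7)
The plan is to induct on the length $m$ of the regular sequence $\overline{x}=(x_1,\ldots,x_m)$, so the task reduces to treating a single nonzerodivisor $x\in \hRo$; the inductive step then follows by applying the single-element case to the Cohen-Macaulay algebra $\hRo/(x_1,\ldots,x_{m-1})$ with canonical module $\omega_{\hRo}/(x_1,\ldots,x_{m-1})\omega_{\hRo}$. Write $\hRo=P/\fr_{\hRo}$ with $P=\C[x_1,\ldots,x_n]$ as in (\ref{E:presentation}) and set $d=\dim \hRo$. The nonzerodivisor $x$ fits into a short exact sequence of $P$-modules
\[
0\to \hRo\xrightarrow{x}\hRo\to \hRo/(x)\to 0.
\]

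Applying $\rHom_P(-,P)$ produces a long exact sequence
\[
\cdots\to \Ext^{i-1}_P(\hRo,P)\to \Ext^{i}_P(\hRo/(x),P)\to \Ext^{i}_P(\hRo,P)\xrightarrow{x}\Ext^{i}_P(\hRo,P)\to \Ext^{i+1}_P(\hRo/(x),P)\to\cdots.
\]
By Definition \ref{D:gorensteinserre} the Cohen-Macaulay hypothesis on $\hRo$ gives $\Ext^{i}_P(\hRo,P)=0$ for $i\neq n-d$ and $\omega_{\hRo}=\Ext^{n-d}_P(\hRo,P)$. Reading the sequence immediately yields $\Ext^{i}_P(\hRo/(x),P)=0$ for $i\notin\{n-d,n-d+1\}$, together with a four-term exact sequence
\[
0\to \Ext^{n-d}_P(\hRo/(x),P)\to \omega_{\hRo}\xrightarrow{x}\omega_{\hRo}\to \Ext^{n-d+1}_P(\hRo/(x),P)\to 0.
\]
Since $\dim \hRo/(x)=d-1$ and $n-(d-1)=n-d+1$, if $x$ acts injectively on $\omega_{\hRo}$ then these computations simultaneously establish the Cohen-Macaulay property of $\hRo/(x)$ via Definition \ref{D:gorensteinserre} and the identification $\omega_{\hRo/(x)}\cong \omega_{\hRo}/x\omega_{\hRo}$.

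\textbf{Main obstacle.} The key input I would need is that $x$ is a nonzerodivisor on $\omega_{\hRo}$, i.e.\ $\depth(\omega_{\hRo})\geq 1$ (indeed equal to $d$). In the graded setting of Definition \ref{D:gorensteinserre} the cleanest route is to dualize the minimal free resolution of $\hRo$: this resolution has length $n-d$, so $\rHom_P(F_\bullet,P)$ is a length $n-d$ free $P$-resolution of $\omega_{\hRo}$. The Auslander-Buchsbaum formula then gives $\depth_P(\omega_{\hRo})=n-(n-d)=d$, and the $\hRo$-module and $P$-module depths of $\omega_{\hRo}$ with respect to the irrelevant ideal coincide, so any $\hRo$-regular element is $\omega_{\hRo}$-regular. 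An alternative route, which I would mention for the local (non-graded) version, is local duality: $\omega_{\hRo}$ is dualizing, hence $H^i_{\fm}(\omega_{\hRo})=0$ for $i<d$, and in particular $H^0_{\fm}(\omega_{\hRo})=0$, which is exactly the $x$-torsion-freeness required.

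\textbf{Conclusion.} Granted $x$-regularity of $\omega_{\hRo}$, the four-term sequence collapses to the short exact sequence
\[
0\to \omega_{\hRo}\xrightarrow{x}\omega_{\hRo}\to \omega_{\hRo}/x\omega_{\hRo}\to 0,
\]
giving $\Ext^{n-d}_P(\hRo/(x),P)=0$ and $\Ext^{n-d+1}_P(\hRo/(x),P)\cong \omega_{\hRo}/x\omega_{\hRo}$, while $\Ext^{i}_P(\hRo/(x),P)=0$ for all other $i$. By Definition \ref{D:gorensteinserre} this is precisely the Cohen-Macaulay property of $\hRo/(x)$ together with the identification $\omega_{\hRo/(x)}\cong \omega_{\hRo}/x\omega_{\hRo}$, completing the single-element case; the induction on $m$ then finishes the theorem.
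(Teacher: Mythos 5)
The paper does not prove this result; it is cited verbatim from \cite{Twenty} (Theorem 11.35), so there is no in-paper proof to compare against. Your proof proposal is the standard $\Ext$--long--exact--sequence argument and is essentially correct: dualize $0\to\hRo\xrightarrow{x}\hRo\to\hRo/(x)\to 0$ over $P$, use the concentration of $\Ext^{\bullet}_P(\hRo,P)$ in degree $n-d$ guaranteed by Definition~\ref{D:gorensteinserre}, and read off the conclusion from the resulting four-term sequence once $x$ is known to act injectively on $\omega_{\hRo}$. This is exactly how the result is proved in the cited reference and in \cite{BrunsandHerzog}.

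The one step you should tighten is the passage from ``$\depth(\omega_{\hRo})=d$'' (or ``$H^0_{\fm}(\omega_{\hRo})=0$'') to ``any $\hRo$-regular element is $\omega_{\hRo}$-regular.'' Both of your suggested arguments only exclude $\fm$ from $\mathrm{Ass}(\omega_{\hRo})$; they do not, by themselves, rule out $x$ lying in some other associated prime of $\omega_{\hRo}$. The missing input is that $\mathrm{Ass}(\omega_{\hRo})=\mathrm{Ass}(\hRo)$: since $\hRo$ is Cohen--Macaulay, $\omega_{\hRo}$ is a maximal Cohen--Macaulay $\hRo$-module with $\supp(\omega_{\hRo})=\Spec\hRo$, hence its associated primes are exactly the minimal primes of $\hRo$, which coincide with $\mathrm{Ass}(\hRo)$. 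With that added (a standard fact, e.g.\ \cite{BrunsandHerzog} Prop.~3.3.3 / Ex.~3.3.24), an $\hRo$-regular $x$ avoids $\mathrm{Ass}(\omega_{\hRo})$ and is therefore $\omega_{\hRo}$-regular, and the rest of your argument goes through. The inductive reduction to a single element is fine as stated.
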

The theorem obviously remain valid for graded rings.

The next two proposition describes $\Th_{\sfp}$ in some elementary situations. In the first proposition the homomorphism $\sfp$ is induced by the restriction on a hypersurface. In the second $\sfp$ is an embedding of algebras of a special kind. Maps ${\sfp}$ that will appear in this paper will be factored into such elementary maps.

\begin{proposition}\label{P:classofextreg}
\begin{enumerate}
\item {\hRo} regular homogeneous element $x$ in a graded Cohen-Macaulay algebra $\hRo$
defines the 
homomorphism $\sfp:{\hRo}\to {\hRt}={\hRo}/x{\hRo}$. Then $\Th_{\sfp}$ (\ref{E:classt}) is the class of extension
\[0\to\omega_{\hRo}\to\omega_{\hRo}\to\omega_{\hRt}\to0\]
of the dualizing modules.
The map $\omega_{\hRo}\to \omega_{\hRo}$
is multiplication on $x$.
\item If $\hRo$ is Gorenstein and $\bar{x}=(x_1,\dots,x_n)$ is regular, then ${\hRo}/\bar{x}{\hRo}$ is Gorenstein.
\end{enumerate}
\end{proposition}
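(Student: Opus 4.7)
My first move is to verify that the short exact sequence advertised in the statement actually exists, and then identify its extension class with $\Th_\sfp$. Since $\hRo$ is Cohen-Macaulay, its canonical module $\omega_\hRo$ is a maximal Cohen-Macaulay $\hRo$-module, so $\depth(\omega_\hRo)=\dim \hRo$; in particular any element that is $\hRo$-regular (such as $x$) is also $\omega_\hRo$-regular. Combined with the isomorphism $\omega_\hRo/x\omega_\hRo\cong \omega_\hRt$ supplied by Theorem \ref{T:Twenty} (applied to the singleton regular sequence $(x)$), this yields the short exact sequence
\begin{equation*}
0\to \omega_\hRo(-\deg x)\xrightarrow{\,\cdot x\,}\omega_\hRo\to \omega_\hRt\to 0, \qquad (\ast)
\end{equation*}
which after dropping grading shifts is the one in the statement. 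Denote its extension class by $\xi\in \Ext^1_P(\omega_\hRt,\omega_\hRo)$.

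\textbf{Plan for identifying $\xi$ with $\Th_\sfp$.} I will apply $\rHom_P(-,P)$ to the short exact sequence
\begin{equation*}
0\to \hRo(-\deg x)\xrightarrow{\,\cdot x\,}\hRo\xrightarrow{\sfp}\hRt\to 0
\end{equation*}
and read off the long exact sequence of $\Ext^\bullet_P(-,P)$. Since $\hRo$ and $\hRt$ are Cohen-Macaulay of Krull dimensions $d$ and $d-1$ respectively, each of $\Ext^\bullet_P(\hRo,P)$ and $\Ext^\bullet_P(\hRt,P)$ is concentrated in a single cohomological degree, so the long exact sequence collapses to a short exact sequence which, up to the grading shift, is precisely $(\ast)$, with connecting homomorphism $\delta$ representing the extension class of $(\ast)$. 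On the other hand, by construction (\ref{E:classconstruction})--(\ref{E:classt}), $\Th_\sfp$ is the image of $\sfp$ in the derived category under $\rHom_P(-,P)$, shifted into degree $\codim=1$. The standard identification of the connecting homomorphism with the image of the ``outer'' map under $\mathrm{R}\rHom_P(-,P)$ then gives $\xi=\Th_\sfp$.

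\textbf{Part (2).} If $\hRo$ is Gorenstein then $\omega_\hRo\cong \hRo$ as graded $\hRo$-modules (up to shift). Iterating Theorem \ref{T:Twenty} along the regular sequence $\bar x=(x_1,\dots,x_n)$, each successive quotient $\hRo/(x_1,\dots,x_i)\hRo$ remains Cohen-Macaulay and its canonical module is $\omega_\hRo/(x_1,\dots,x_i)\omega_\hRo\cong \hRo/(x_1,\dots,x_i)\hRo$. So $\omega_{\hRo/\bar x\hRo}$ is free of rank one over $\hRo/\bar x\hRo$ (up to shift), i.e., $\hRo/\bar x\hRo$ is its own dualizing module, which is the definition of Gorenstein.

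\textbf{Main obstacle.} The routine-looking but actually subtle step is the identification of $\xi$ with $\Th_\sfp$ in part (1): one must reconcile the extension-level description of $\xi$ coming from $(\ast)$ with the resolution-level definition of $\Th_\sfp$ via (\ref{E:classconstruction}). The cleanest path is the homological bookkeeping sketched above, namely checking that for a SES of $P$-modules $0\to A\to B\to C\to 0$ whose terms each have $\Ext_P^\bullet(-,P)$ concentrated in a single degree, the connecting map in the long exact sequence agrees with the morphism obtained by applying $\mathrm{R}\rHom_P(-,P)$ to the quotient map $B\to C$ and shifting into the appropriate cohomological degree. Once this is in hand, everything else is a direct consequence of Theorem \ref{T:Twenty} and the maximal Cohen-Macaulay property of $\omega_\hRo$.
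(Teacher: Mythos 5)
Your overall approach is the same as the paper's: both proofs dualize the short exact sequence $0\to \hRo(-\deg x)\xrightarrow{\,x\,}\hRo\xrightarrow{\sfp}\hRt\to 0$ along $\mathrm{R}\rHom_P(-,P)$, use Cohen--Macaulayness to concentrate the dual complexes in a single cohomological degree, apply Theorem \ref{T:Twenty}, and read off the extension class from the resulting distinguished triangle; the paper just carries this out concretely with the cone $C^{\bullet}(\sfs^{\bullet})$ of multiplication-by-$x$ on the minimal resolution, and its $\grdif$ is exactly the boundary map you need. For part (2) your iteration of Theorem \ref{T:Twenty} is a self-contained substitute for the paper's citation of Proposition 3.1.19 in \cite{BrunsandHerzog}.

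One misstatement in your ``main obstacle'' paragraph is worth repairing, since it is the step you single out as subtle. The connecting homomorphism $\partial$ in the long exact sequence of $\Ext^{\bullet}_P(-,P)$ is the map $\Ext^{c_0}_P(\hRo(-\deg x),P)\to\Ext^{c_0+1}_P(\hRt,P)$, i.e.\ the surjection $\omega_{\hRo}(\deg x)\to\omega_{\hRt}$ appearing \emph{inside} $(\ast)$; it is a map between modules, not an element of $\Ext^1_P(\omega_{\hRt},\omega_{\hRo})$, so it cannot literally ``agree with'' the shift of $\mathrm{R}\rHom_P(\sfp,P)$. The correct identity you need is that after applying the contravariant exact functor $\mathrm{R}\rHom_P(-,P)$ to the triangle $\hRo\xrightarrow{x}\hRo\xrightarrow{\sfp}\hRt\to\hRo[1]$ and rotating, the image of $\sfp$ becomes the boundary map of the rotated triangle $\omega_{\hRo}\to\omega_{\hRo}\to\omega_{\hRt}\to\omega_{\hRo}[1]$, and that boundary map is by definition the extension class of $(\ast)$; this is the content of the paper's identification $\grdif=\Th_{\sfp}$. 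With that adjustment the rest of your plan goes through.
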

\begin{proof}
\begin{enumerate}
\item By abuse of notations, I denote the lift of $x\in {\hRo}$ to $P$ by the same symbol. The operator of multiplication on $x$ in $\hRo$ will be denoted by $\sfs$.
The complex (\ref{E:resolutionB}) is quasi isomorphic to the cone $C^{\bullet}(\sfs^{\bullet})$.
 Dualization transforms the canonical map $F^{\bullet}(\hRo)\to C^{\bullet}(\sfs^{\bullet})$ to $\grdif:C^{\bullet}(\sfs^{*\bullet})\to F^{\bullet}(\omega_{\hRo})[1]$, where $\sfs^{*\bullet}$ is the operator of multiplication on $x$ in $F^{\bullet}(\omega_{\hRo})$. The map $\grdif$ is the boundary map in the distinguished triangle 
\[F^{\bullet}(\omega_{\hRo})\to F^{\bullet}(\omega_{\hRo}) \to C^{\bullet}(\sfs^{*\bullet})\to F^{\bullet}(\omega_{\hRo})[1].\] By definition, the element \[\grdif\in \Hom_P(C^{\bullet}(\sfs^{*\bullet}),F^{\bullet}(\omega_{\hRo})[1] )=\Ext_P^1(\omega_{\hRt},\omega_{\hRo})\] is equal to $\Th_{\sfp}$. In the last equality I used that $\hRo$ and $\hRt$ are Cohen-Macaulay. I also used Theorem \ref{T:Twenty}. The triangle is quasi isomorphic to 
\[ \omega_{\hRo}\to\omega_{\hRo}\to\omega_{\hRt}\overset{\grdif}\to\omega_{\hRo}[1].\] In such description interpretation of $\grdif$ as a class of extension becomes tautologous.
\item Follows from Proposition 3.1.19 \cite{BrunsandHerzog}.
\end{enumerate}
\end{proof}

\begin{proposition}\label{P:extmaintheorem}
Let $\hRo$ be a graded Gorenstein algebra. Suppose that annihilator $\Ann(x)\neq 0$ for a homogenous $x\in {\hRo}$. Suppose also that ${\hRt}={\hRo}/\Ann(x)$ is Gorenstein, ${\hRth}={\hRo}/(x)$ is Cohen-Macaulay, and $ \dim(\hRo)=\dim(\hRt)=\dim({\hRth})$. 
 The map $\sfs:{\hRo}\to {\hRo}, a\to ax$ induces a homomorphism of $\hRo$-modules which is a part of the exact sequence 
\begin{equation}\label{E:Binclusionexact}
0\to {\hRt}\overset{\sfs}\to {\hRo}\to {\hRth}\to 0.
\end{equation}
 Denote by $\sfp:{\hRo}\to {\hRt}={\hRo}/\Ann(x)$ the homomorphism of algebras.

\begin{enumerate}
\item Under such assumptions, there is a short exact sequence of canonical modules
\begin{equation}\label{E:omegaexact}
0\leftarrow \omega_{\hRt}\leftarrow \omega_{\hRo}\leftarrow \omega_{\hRth}\leftarrow 0,
\end{equation}
which is isomorphic to 
\begin{equation}\label{E:omegaexacttwo}
0\leftarrow {\hRt}\overset{\sfp}{\leftarrow} {\hRo}\leftarrow \Ann(x) \leftarrow 0.
\end{equation}
\item Conversely, the short exact sequence of $\hRo$-modules (\ref{E:omegaexacttwo}) gives rise to the short exact sequence of the modules
\[0\to \omega_{\hRt}\overset{\Th_{\sfp}}{\to} \omega_{\hRo}\to \omega_{\hRo}/\omega_{\hRt}\to 0\]
isomorphic to (\ref{E:Binclusionexact}).
\end{enumerate}
\end{proposition}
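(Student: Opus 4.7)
The plan is to prove both parts simultaneously by exploiting the dualization functor
\[D(-) := \Ext_P^{s-d}(-, P),\qquad s := \dim_{\C}\hRo_1,\quad d := \dim \hRo = \dim \hRt = \dim \hRth.\]
By Definition~\ref{D:gorensteinserre}, for any Cohen--Macaulay graded $P$-module $M$ of dimension $d$ one has $D(M) \cong \omega_M$, and on this subcategory $D$ is contravariant exact (the long $\Ext_P(-, P)$ sequence of any short exact sequence collapses, because $\Ext^i_P(M, P) = 0$ for $i \neq s-d$), with $D \circ D \cong \id$. The main technical hurdle will be confirming that all three modules appearing in each sequence sit inside this subcategory; this is exactly where the hypothesis $\dim \hRo = \dim \hRt = \dim \hRth$ is indispensable.

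For part (1), I will first verify (\ref{E:Binclusionexact}): the map $\sfs(a) = ax$ factors through $\hRt = \hRo/\Ann(x)$ to give an injective $\hRo$-linear map $\hRt \hookrightarrow \hRo$ with image $x\hRo$, whose cokernel is $\hRo/x\hRo = \hRth$. Since $\hRt, \hRo, \hRth$ are all Cohen--Macaulay of dimension $d$, applying $D$ yields the short exact sequence (\ref{E:omegaexact}) of canonical modules. To match it with (\ref{E:omegaexacttwo}), I will invoke the Gorenstein hypotheses on $\hRo$ and $\hRt$ to identify $\omega_{\hRo} \cong \hRo$ and $\omega_{\hRt} \cong \hRt$ as graded $\hRo$-modules, with the appropriate degree shifts absorbed into the isomorphism. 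Under these identifications the surjection $\omega_{\hRo} \twoheadrightarrow \omega_{\hRt}$ becomes a nonzero $\hRo$-linear surjection from $\hRo$ onto the cyclic $\hRo$-module $\hRo/\Ann(x)$; any such map must coincide, up to a unit scalar, with the canonical quotient $\sfp$, and its kernel is then $\Ann(x)$. This produces the identification $\omega_{\hRth} \cong \Ann(x)$ and the claimed isomorphism of the two short exact sequences.

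For part (2), I will apply $D$ to (\ref{E:omegaexacttwo}). By the part~(1) identification $\Ann(x) \cong \omega_{\hRth}$, all three terms are again Cohen--Macaulay of dimension $d$, so exactness of $D$ produces a short exact sequence; by $D \circ D \cong \id$ it coincides with (\ref{E:Binclusionexact}). By the very definition (\ref{E:classconstruction}) of $\Th_{\sfp}$ as the dual of the induced map of resolutions, the injective homomorphism on canonical modules produced by $D$ is $\Th_{\sfp}$, and under $\omega_{\hRt} \cong \hRt$, $\omega_{\hRo} \cong \hRo$ it becomes multiplication by $x$, i.e.\ $\sfs$, completing the identification.
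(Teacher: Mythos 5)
Your proposal is correct and takes essentially the same route as the paper: both dualize the exact sequence (\ref{E:Binclusionexact}) via $\Ext_P^{s-d}(-,P)$, using that this functor is exact on Cohen--Macaulay $P$-modules of the common dimension $d$, then use the Gorenstein identifications $\omega_{\hRo}\cong\hRo$, $\omega_{\hRt}\cong\hRt$ together with a degree/cyclicity argument to pin the surjection $\omega_{\hRo}\twoheadrightarrow\omega_{\hRt}$ down to $\sfp$ up to a scalar, forcing $\omega_{\hRth}\cong\Ann(x)$. Part (2) in both treatments is the same duality argument run in reverse (the paper phrases it as dualizing a lift to minimal free resolutions; your phrasing via $D\circ D\cong\id$ packages the identical computation).
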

\begin{proof}
Under isomorphisms $\hRo=P/\fr_{\hRo}$, ${\hRt}=P/\fr_{\hRt}$, ${\hRth}=P/\fr_{\hRth}$ algebras ${\hRo},{\hRt}$ and ${\hRth}$ become modules over $P$.
By using the fact that $\omega_X=\Ext^{s-\dim(X)}_P(X,P)$ for Cohen-Macaulay $X={\hRo},{\hRt},{\hRth}$, I derive (\ref{E:omegaexact}) from the segment of the long exact sequence of $\Ext$ groups. 

The map $\omega_{\hRo}\to \omega_{\hRt}$ in (\ref{E:omegaexact}) is compatible with the grading. It must, in the view of identifications $\omega_{\hRo}\cong {\hRo}$ and $\omega_{\hRt}\cong {\hRt}$, coincide (up to a multiplicative constant $c\in \C$) with $\sfp$. Thus $\omega_{\hRth}\cong \Ann(x)$.

Maps in (\ref{E:Binclusionexact}) can be extended to maps of the minimal free $P$-resolutions 
\[0\to F^{\bullet}(\hRt)\overset{\sfs^{\bullet}}\to F^{\bullet}(\hRo)\to F^{\bullet}({\hRth})\to 0.\]
 As $\hRo$ and $\hRt$ are Gorenstein $F^{*\bullet}(\hRt)$ and $ F^{*\bullet}(\hRo)$ are resolutions of $\hRt$ and $\hRo$ respectively. The map of complexes $\sfs^{*\bullet}$ in 
\[ 0\leftarrow F^{*\bullet}(\hRt)\overset{\sfs^{*\bullet}}\leftarrow F^{*\bullet}(\hRo)\leftarrow F^{*\bullet}({\hRth})\leftarrow 0\]
up to a constant should be equal to $\sfp^{\bullet}$. By symmetry $\sfp^{*\bullet}=\sfs^{\bullet}$, which proves the last statement.
\end{proof}

 Let us fix isomorphisms 
\begin{equation}\label{E:duality}
{\hRo}\cong \omega_{\hRo}
\end{equation}
 ${\hRt}\cong \omega_{\hRt}$ for Gorenstein $R,S$ and interpret $\Th_{\sfp} $ (\ref{E:classt}) as 
\begin{equation}\label{E:thasext}
\Th_{\sfp} \in \Ext_{P}^{\codim}({\hRt},{\hRo}).
\end{equation}
 \begin{proposition}\label{P;rightchoice}
Let a pair of algebras ${\hRo},{\hRt}$ satisfy assumptions of Proposition \ref{P:classofextreg} or Proposition \ref{P:extmaintheorem}. For any choice of isomorphism $\sfq:{\hRo}\cong \omega_{\hRo}$, there is a unique isomorphism $\sfq':{\hRt}\cong \omega_{\hRt}$ which is a part of a commutative diagram in the derived category of $\hRo$-modules
\[\begin{CD}
{\hRt}@>\sfq'>>\omega_{{\hRt}} \\
@VV\Th_{\sfq} V @VV\Th_{\sfq'} V \\
 {\hRo}[\dim(\hRo)-\dim(\hRt)] @>\sfq>> \omega_{{\hRo}}[\dim(\hRo)-\dim(\hRt)].
\end{CD}\]
The choice of an isomorphism ${\hRt}\cong \omega_{\hRt}$ determines an isomorphism $\hRo\cong \omega_{\hRo}$ with the same compatibility properties.
\end{proposition}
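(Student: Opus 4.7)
The plan is to construct $\sfq'$ by pulling $\sfq$ through the explicit short exact sequences that realise $\Th_\sfp$ in Propositions \ref{P:classofextreg} and \ref{P:extmaintheorem}, and to extract uniqueness from the injectivity of those sequences. In both cases the Thom class is encoded by a pair of isomorphic exact sequences of $\hRo$-modules---one on the $\omega$-side (realising $\Th_\sfp$) and one on the $R$-side---and the pair $(\sfq, \sfq')$ is to be an isomorphism of these two sequences.

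For Proposition \ref{P:extmaintheorem} ($\codim = 0$, with $\Th_\sfp$ an honest injection $\omega_\hRt \hookrightarrow \omega_\hRo$) I would observe that $\sfq(\sfs(\hRt)) = \sfq(x\hRo) = x\omega_\hRo = \Th_\sfp(\omega_\hRt)$, the last equality holding by the isomorphism of exact sequences supplied by Proposition \ref{P:extmaintheorem}. Hence $\sfq\circ\sfs$ factors through $\Th_\sfp$, and the formula $\sfq' := \Th_\sfp^{-1}\circ\sfq\circ\sfs$ defines a canonical $\hRo$-linear isomorphism $\hRt\to\omega_\hRt$, with uniqueness automatic from the injectivity of $\Th_\sfp$. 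For Proposition \ref{P:classofextreg} ($\codim = 1$, with $\Th_\sfp$ the boundary of $0\to\omega_\hRo\xrightarrow{x}\omega_\hRo\to\omega_\hRt\to 0$) the $\hRo$-linearity of $\sfq$ means it commutes with multiplication by $x$, and therefore descends to an isomorphism of quotients $\hRt = \hRo/x\hRo \xrightarrow{\sim} \omega_\hRo/x\omega_\hRo \cong \omega_\hRt$ (using Theorem \ref{T:Twenty}); this $\sfq'$ is precisely the third component completing $(\sfq,\sfq)$ to a morphism of the two distinguished triangles, so the required diagram commutes in $D(\mathrm{Mod}_\hRo)$.

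The main subtlety is uniqueness in the $\codim = 1$ case: the third morphism in a map of triangles is a priori determined only up to an element of $\Hom_\hRo(\hRt,\omega_\hRo)$ coming from the long exact sequence obtained by applying $\Hom_\hRo(\hRt,-)$ to the $\omega$-side triangle. To kill this ambiguity I would invoke that $\omega_\hRo\cong\hRo$ is $x$-torsion-free while $\hRt$ is annihilated by $x$, so $\Hom_\hRo(\hRt,\omega_\hRo)=0$ and the only compatible lift is zero. The concluding sentence of the proposition---that a choice of isomorphism $\hRt\cong\omega_\hRt$ determines an isomorphism $\hRo\cong\omega_\hRo$ with the same properties---is the symmetric assertion obtained by exchanging the roles of $\hRo$ and $\hRt$ in the above argument, since the pair of parallel exact sequences is structurally symmetric and the same construction runs in reverse.
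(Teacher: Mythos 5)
Your construction of $\sfq'$ from $\sfq$ tracks the paper's proof: in the $\codim=0$ case you restrict to the submodules $x\hRo$ and $x\omega_\hRo$, and in the $\codim=1$ case you reduce modulo $x$ (the paper phrases the latter as $\hRt=\hRo\otimes_\hRo\hRt\cong\omega_\hRo\otimes_\hRo\hRt=\omega_\hRt$, which is the same thing). Your explicit uniqueness argument via $\Hom_\hRo(\hRt,\omega_\hRo)=0$ --- from $x$-torsion versus $x$-torsion-freeness --- is a welcome addition that the paper leaves implicit.

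The weak point is your final paragraph. You claim the converse ($\sfq'$ determines $\sfq$) follows because ``the pair of parallel exact sequences is structurally symmetric and the same construction runs in reverse.'' That is not right: in $0\to\hRo\to\hRo\to\hRt\to0$ the algebras $\hRo$ and $\hRt$ do not play interchangeable roles, and there is no tautological way to lift an isomorphism of quotients (or of submodules) to an isomorphism of the ambient modules. The argument actually needed is the one the paper gives: for a graded Gorenstein ring generated in degree one, the set of graded $\hRo$-module isomorphisms $\hRo\cong\omega_\hRo$ is a $\C^\times$-torsor (the generator of $\omega_\hRo$ is unique up to scalar), the same is true for $\hRt\cong\omega_\hRt$, and your map $\sfq\mapsto\sfq'$ is evidently $\C^\times$-equivariant, hence a bijection between these torsors. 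Equivalently: pick any $\sfq_0$, form $\sfq_0'$ by the forward construction, and rescale $\sfq_0$ by the unique constant making $\sfq_0'$ agree with the given $\sfq'$. Replace the ``symmetry'' claim with this scaling argument and the proof is complete.
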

\begin{proof}
Suppose ${\hRo},{\hRt}$ satisfy conditions of Proposition \ref{P:classofextreg}. The isomorphism (\ref{E:duality})
 induces an isomorphism 
\begin{equation}\label{E:seqiso}
{\hRt}={\hRo}\underset{{\hRo}}{\otimes}{\hRt}\cong \omega_{\hRo}\underset{{\hRo}}{\otimes}{\hRt}=\omega_{\hRt}
\end{equation} 
with the required properties because it identifies $0\to {\hRo}\to {\hRo}\to {\hRt}\to0$ with $0\to \omega_{{\hRo}}\to \omega_{{\hRo}}\to \omega_{{\hRt}}\to0$. 

The isomorphism (\ref{E:duality}) in general is defined up to a multiplicative constant. If I alternatively fix an isomorphism ${\hRt}\cong \omega_{\hRt}$, I can determine the constant by applying (\ref{E:seqiso}) to (\ref{E:duality}). 

Suppose that now ${\hRo},{\hRt}$ satisfy conditions of Proposition \ref{P:extmaintheorem}. I can read off the isomorphism ${\hRt}\cong \omega_{\hRt}$ as the isomorphism of submodules in $\hRo$ and $\omega_{\hRo}$ induced by (\ref{E:duality}). Conversely, if I am given ${\hRt}\cong \omega_{\hRt}$, I can fix the ambiguity constant in (\ref{E:duality}) by comparing restriction of (\ref{E:duality}) on ${\hRt}\subset {\hRo}, \omega_{\hRt}\subset \omega_{\hRo}$ with ${\hRt}\cong \omega_{\hRt}$.
\end{proof}

Let $\fb\subset {\hRt}$ be a graded ideal, $\fc$ its preimage in $P$. Composition with $\Th_{\sfp} $ (\ref{E:classt}),
defines the map of local cohomology
\begin{equation}\label{E:THdef}
 \Th_{{\hRt}\leftarrow {\hRo}}:H_{\fb}^{i}(\hRt)\cong \lim_{\overset{n}{\longrightarrow}}\Ext_P^i(P/\fc^n,\hRt)\overset{\Th_{\sfp}}\to \lim_{\overset{n}{\longrightarrow}}\Ext_P^{i+\codim}(P/\fc^n,{\hRo})\cong H_{\sf\sfp^{-1}(\fb)}^{\codim+i}(\hRo).
 \end{equation}
The isomorphisms $\cong$ have been justified in the Theorem 7.11 \cite{Twenty}.
\begin{remark}\label{R:nullaction}
The map $\Th_{\sfp}$ commutes with the action of the algebra $P$. Moreover the action of $P$ on $\Ext_P^i(P/\fc^n,\hRt)$ factors through $\hRt$. Let $\fr$ be the kernel of projection map ${\hRo}\to {\hRt}$. Then the image of $\Th_{{\hRt}\leftarrow {\hRo}}$ in $H_{\sfp^{-1}(\fb)}^{\codim+i}(\hRo)$ is annihilated by $\fr$.
\end{remark}

In our applications, I will also be interested in the cohomology of the double complex (see Appendix \ref{S:local} for notations ) \[\fT_{\hRo}^{i}(M):=\bigoplus_{k=j-i} B_i(\Gamma_{\fa}I_{\hRo}^{j}(M),\{x^s\})\] where $I^{\bullet}(M)$ is an injective resolution, $\{x^s\}$ are the homogeneous degree one generators of $\hRo$. Presentation $\hRo=P/\fr_{\hRo}$ and isomorphism (\ref{E:THdef}) lets to identify cohomology $H^i(\fT_{R}(M))$ with the cohomology of
\begin{equation}\label{E:ferm}
\fT_{P}^{k}(\fc,M)=\bigoplus_{k=j-i} B_i(\Gamma_{\fc}I_{P}^{j}(M),\{x^s\}).
\end{equation}
Now $\{x^s\}$ are the generators of $P$ and $\fc$ is a preimage of $\fa$ in $P$. Let $\mathrm{R}\Gamma_{\fc}$ be the right derived functor of $\Gamma_{\fc}$ and $\underset{L}{\otimes}\,\C$ be the left derived functor of $\otimes\C$ in the bounded from the left derived category of $D(Mod_{P})$. The complex (\ref{E:ferm}) is a representative of $\mathrm{R}\Gamma_{\fc}(M)\underset{L}{\otimes}\C$.

A surjective graded homomorphism $\hRo=P_1/\fr_{\hRo}\overset{\sfp}{\to} \hRt=P_2/\fr_{\hRt}$ of minimal presentations can be lifted to a surjective graded map $\sfp:P_1\to P_2$ of polynomial algebras. Let us choose an isomorphism $P_1\cong P_2\otimes P'_2$. Following Section \ref{S:kos} I use construction (\ref{E:kosmap}) to define
\[\rmk:H^k\fT_{P_2}(\hRt)\to H^{k-(\dim P_1-\dim P_2)}\fT_{P_1}(\hRt).\]
Note that now I am using cohomological grading, which is responsible for the negative shift.
The map \[\Th_{\sfp}:H^{k}\fT_{P_1}(\hRt)\to H^{k+\dim\hRo-\dim\hRt}\fT_{P_1}(\hRo)\]
is induced by (\ref{E:thasext}) if I think about it as a map in derived category.
A composition of the last two maps 
\begin{equation}\label{E:thprime}
H^k\fT_{P_2}(\hRt)\to H^{k+(\dim\hRo-\dim\hRt)-(\dim P_1-\dim P_2)}\fT_{P_1}(\hRo)
\end{equation}
will be denoted by $\Th'_{\sfp}$.

\subsection{Thom class and the group action}\label{S:groupaction}
In our application, the algebras will be equipped with the action of some symmetry group. In this section, I will discuss commutation relations of $\Th$ with this group action.
The reader may wish to consult \cite{Thomason} about the general theory of resolutions of sheaves equivariant with respect to a group scheme action. In our application, when the group $\Aut$ is reductive and the ground scheme is $Spec\, \C$ much of the theory becomes trivial. 
 If we assume that the map (\ref{E:surjective}) 
 commutes with the $\Aut$-action, the general theory immediately implies that the map of complexes (\ref{E:resmapind}) can be chosen to be $\Aut$-equivariant and $\Th_{\sfp}$ (\ref{E:classt}) is $\Aut$-invariant.
 \begin{equation}\label{E:chiadef}
 \begin{split}
&\text{ In the graded Gorenstein case, the generating space }\\
&\text{of the canonical module $\omega_{\hRo}$ is a one-dimensional representation $\chi'_{\hRo}$ of $\Aut$.}
\end{split}
 \end{equation} 
Note that $\chi'_{{\hRo},P}$ depends on the presentations $\hRo=P/\fr_{\hRo}$. To make the definition of $\chi'_{\hRo}$ less presentation-dependent, I define
\begin{equation}\label{E:chireddef}
\chi_{{\hRo},P}:=\chi'_{{\hRo},P}/\det P, 
\end{equation}
\begin{equation}\label{E:det}
\det P:=\chi'_{\C,P}.
\end{equation}
\begin{proposition}
The $P$-module structure on the algebra $\hRo=P/\fr$ can be extended to the structure of $P\otimes P'$-module, where variables $x_1,\dots,x_n\in P'=\C[x_1,\dots,x_n]$ act trivially on $\hRo$.
Then 
\begin{equation}\label{E:chindep}
\chi_{{\hRo},P}=\chi_{{\hRo},P\otimes P'}.
\end{equation}
\end{proposition}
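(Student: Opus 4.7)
The plan is to construct a minimal $\Aut$-equivariant free $(P\otimes P')$-resolution of $\hRo$ by tensoring a minimal $\Aut$-equivariant free $P$-resolution $F_\bullet \to \hRo$ with a Koszul resolution $K_\bullet \to \C$ over $P'$, and then to compare the two resulting descriptions of $\omega_{\hRo}$. Write $V = P_1$ and $V' = P'_1$ as $\Aut$-representations, so that $K_j = \Lambda^j V' \otimes_\C P'$. Since $P'$ acts on $\hRo$ through the augmentation $P' \to \C$, the total complex $G_\bullet := F_\bullet \otimes_\C K_\bullet$ is a minimal $\Aut$-equivariant graded free $(P\otimes P')$-resolution of $\hRo$.

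Next I would dualize and compute $\omega_{\hRo,P\otimes P'}$ as the top cohomology of $G_\bullet^{*} = F_\bullet^{*} \otimes_\C K_\bullet^{*}$ in the sense of Definition \ref{D:gorensteinserre}. The cohomology of each factor is concentrated in a single degree: $H^{s-\dim \hRo}(F_\bullet^{*}) = \omega_{\hRo,P}$ by construction, and a direct Koszul computation gives $H^n(K_\bullet^{*}) = \det(V'^{*})$ with $n = \dim V'$. The $\Aut$-equivariant K\"unneth spectral sequence therefore degenerates on the $E_2$ page, yielding an $\Aut$-equivariant identification
\[\omega_{\hRo,P\otimes P'} \;\cong\; \omega_{\hRo,P} \otimes_\C \det(V'^{*}).\]
Reading off the generating $\Aut$-character produces $\chi'_{\hRo,P\otimes P'} = \chi'_{\hRo,P}\cdot \det(V'^{*})$.

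Specializing the same identity to $\hRo = \C$, the Koszul computation over $P$ itself gives $\chi'_{\C,P} = \det(V^{*}) = \det P$, and hence $\det(P\otimes P') = \chi'_{\C,P\otimes P'} = \det P \cdot \det(V'^{*})$. Substituting both equalities into the definition (\ref{E:chireddef}) of the reduced character, the factor $\det(V'^{*})$ cancels from numerator and denominator:
\[\chi_{\hRo,P\otimes P'} \;=\; \frac{\chi'_{\hRo,P}\cdot \det(V'^{*})}{\det P \cdot \det(V'^{*})} \;=\; \frac{\chi'_{\hRo,P}}{\det P} \;=\; \chi_{\hRo,P}.\]

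The only delicate step is ensuring that the K\"unneth identification is $\Aut$-equivariant; this reduces to the fact that both $F_\bullet$ and $K_\bullet$ can be realized inside the category of $\Aut$-equivariant graded modules (using reductivity of $\Aut$, as already tacitly invoked in (\ref{E:chiadef})). The remainder of the argument is bookkeeping of $\Aut$-characters under tensor products and dualization, together with the observation that both dualized complexes have cohomology supported in a single degree so that the K\"unneth spectral sequence degenerates trivially.
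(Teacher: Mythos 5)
Your argument is correct and follows the same route as the paper: both tensor the minimal $P$-resolution $F_\bullet(\hRo)$ with the Koszul resolution of $\C$ over $P'$ to produce a minimal $(P\otimes P')$-resolution, read off the multiplicativity $\chi'_{\hRo,P\otimes P'} = \chi'_{\hRo,P}\cdot\chi'_{\C,P'}$ of the top-degree generating characters, and cancel the $\det P'$ factor against its contribution to $\det(P\otimes P')$. Your version merely spells out the dualization and K\"unneth degeneration more explicitly than the paper's one-line proof.
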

\begin{proof}
The $P\otimes P'$-resolution of $\hRo$ is the tensor product of the $P$-resolution $F_{\bullet}(\hRo)$ and the Koszul complex $K(P')$. Thus $\chi'_{{\hRo},P\otimes P'}=\chi'_{{\hRo},P}\chi'_{\C,P'}$, which implies (\ref{E:chindep}).
\end{proof}

 From the point of view of representation theory, we have an isomorphism \[\omega_{\hRo}={\hRo}\otimes \chi_{\hRo}'.\]
\begin{proposition}\label{R:equiv}
Let us assume that $p:R\to S$ is a surjective map of degree-one generated Gorenstein algebras and $x\in \hRo$ has degree one.
\begin{enumerate}
\item \label{I:equiv1}After identification of $\Ext_{P}^{\codim}(\omega_{\hRt},\omega_{\hRo})$ with $\Ext_{P}^{\codim}({\hRt},{\hRo})$ the group $\Aut$ acts on $\Th_{\sfp}$ through the character 
\begin{equation}\label{E:absrel}
\chi_{{\hRt}\leftarrow {\hRo}}:=
\chi_{\hRt}\chi_{\hRo}^{-1}.
\end{equation}
\item \label{I:equiv2}
If \begin{equation}\label{E:xeq}gx=\chi_x(g)x\end{equation} for degree-one $x$ from Proposition \ref{P:classofextreg}, then $\chi_{{\hRt}\leftarrow {\hRo}}=\chi^{-1}_x.$
\item \label{I:equiv3}
If $x$  from Proposition \ref{P:extmaintheorem}, $\deg x=1$ satisfies (\ref{E:xeq}), then $\chi_{{\hRt}\leftarrow {\hRo}}=\chi_x.$
\end{enumerate}
\end{proposition}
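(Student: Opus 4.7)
As already observed above the statement, $\Th_{\sfp}\in\Ext^{\codim}_P(\omega_{\hRt},\omega_{\hRo})$ is $\Aut$-invariant, because $\sfp$ is equivariant and, $\Aut$ being reductive, the minimal free $P$-resolutions of $\hRo,\hRt$ together with the maps (\ref{E:resmapind}) and (\ref{E:classconstruction}) may be chosen equivariantly. For item (\ref{I:equiv1}), the Gorenstein identifications $\omega_{\hRo}\cong \hRo\otimes \chi'_{\hRo}$ and $\omega_{\hRt}\cong \hRt\otimes \chi'_{\hRt}$ from (\ref{E:chiadef}) yield a canonical isomorphism
\[\Ext^{\codim}_P(\omega_{\hRt},\omega_{\hRo})\;\cong\;\Ext^{\codim}_P(\hRt,\hRo)\otimes \chi'_{\hRo}(\chi'_{\hRt})^{-1}.\]
Transporting the invariant class $\Th_{\sfp}$, its representative in $\Ext^{\codim}_P(\hRt,\hRo)$ is an $\Aut$-semi-invariant of character $\chi'_{\hRt}(\chi'_{\hRo})^{-1}$. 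Since the $\det P$ factors cancel in this ratio by (\ref{E:chireddef}), the resulting character equals $\chi_{\hRt}\chi_{\hRo}^{-1}$.

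For item (\ref{I:equiv2}), I compute $\chi'_{\hRt}/\chi'_{\hRo}$ directly from an equivariant $P$-free resolution of $\hRt=\hRo/x\hRo$. Since $x$ transforms by $\chi_x$, multiplication by $x$ is equivariant only as a morphism $F_{\bullet}(\hRo)\otimes \chi_x\to F_{\bullet}(\hRo)$; its mapping cone provides an equivariant $P$-free resolution of $\hRt$ whose top module is $F_{s-\dim\hRo}(\hRo)\otimes \chi_x$. After $P$-duality, the generator of $\omega_{\hRt}$ picks up a factor $\chi_x^{-1}$ relative to the generator of $\omega_{\hRo}$, so $\chi'_{\hRt}=\chi'_{\hRo}\chi_x^{-1}$; combined with item (\ref{I:equiv1}), this gives $\chi_{\hRt\leftarrow\hRo}=\chi_x^{-1}$.

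For item (\ref{I:equiv3}), Proposition \ref{P:extmaintheorem}(2) identifies the equivariant short exact sequence $0\to\omega_{\hRt}\to\omega_{\hRo}\to\omega_{\hRth}\to 0$ with (\ref{E:Binclusionexact}), under which the inclusion $\omega_{\hRt}\hookrightarrow\omega_{\hRo}$ becomes multiplication by $x$. Writing this inclusion in the form $\hRt\otimes\chi'_{\hRt}\to\hRo\otimes\chi'_{\hRo}$, $a\otimes v\mapsto xa\otimes v'$, equivariance forces $\chi'_{\hRt}=\chi_x\chi'_{\hRo}$; combined with item (\ref{I:equiv1}), this gives $\chi_{\hRt\leftarrow\hRo}=\chi_x$. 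The main subtlety is keeping straight the direction of the $\chi_x$-twist: in (\ref{I:equiv2}) it sits on the source of multiplication-by-$x$ inside the mapping cone (so $\chi'_{\hRt}$ is \emph{decreased} by $\chi_x$), while in (\ref{I:equiv3}) it enters through the image of multiplication-by-$x$ realizing $\omega_{\hRt}$ as a submodule of $\omega_{\hRo}$ (so $\chi'_{\hRt}$ is \emph{increased} by $\chi_x$).
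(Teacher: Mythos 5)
Your proof is correct and follows essentially the same strategy as the paper: use $\Aut$-equivariant minimal free resolutions and compare the characters of their top terms. The only real variation is in item~(\ref{I:equiv2}): you stay over $P$ and read off the twist from the mapping cone of multiplication by $x$, obtaining $\chi'_{\hRt,P}=\chi'_{\hRo,P}\chi_x^{-1}$, whereas the paper passes to the $P/(x)$-resolution $F_{\bullet}/(x)$, observes $\chi'_{\hRt,P/(x)}=\chi'_{\hRo,P}$, and absorbs the $\chi_x$ into $\det(P/(x))$; by (\ref{E:chindep}) these are the same computation.
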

\begin{proof}
Item (\ref{I:equiv1}) is obvious. 
To prove item (\ref{I:equiv2}) I choose  a minimal free $P$-resolution $\hRo\ot F_{\bullet}$. I also assume that the maps commute with $\Aut$-action.  Under my assumptions  $F_{\bullet}/(x)$ is a minimal $P/(x)$ resolution of $S$. 
Let $c$ be the generator of the top degree group $F_d$. $c$ also generated  $F_d/(x)$. From this I conclude that $\chi'_S=\chi'_R$. Also  $\det(P/(x))=\chi_x\det(P)$ and  
\[\chi_S=\chi'_S/\det(P/(x))=\chi^{-1}_x\chi'_R/\det(P)=\chi^{-1}_x\chi_R\]
Item \ref{I:equiv3}  follows from Proposition \ref{P:extmaintheorem}.
\end{proof}

\subsection{An interpretation of the local cohomology as a $\Tor$ functor}
In this section, I give a reformulation of local cohomology in terms of $\Tor$-functors. Such reformulation has an advantage because projective resolutions are much more accessible than their injective analogs.
\begin{proposition}\label{P:localtorisom}
\begin{enumerate}
\item Let $\fc\subset {\hRo}$ be an ideal in a finitely generated graded commutative algebra over $\C$. Fix a polynomial algebra $P=\C[Y\oplus Z]$ and a graded surjective homomorphism
\[\sfs:P\to {\hRo}\]
such that $\sfs(ZP)=\fc$. 
 The homomorphism $\sfs$ makes $\hRo$ into a $P$-module. Let 
\begin{equation}\label{E:minres}
\{F_i=P\otimes V_i| i=0,\dots,d\}
\end{equation}
 be its minimal $P$-resolution. Then in the notations of Appendix \ref{S:Dmodulenoteconstr} , there is an isomorphism
\begin{equation}\label{E:isolocaltor}
H^i_{\fc}(\hRo)\cong H_{\dim Z-i}(F\underset{P}{\otimes} \C[Y]\otimes \C[Z]^{-1} ).
\end{equation}
In particular 
\[H^i_{\fc}(\hRo)\cong\Tor_{\dim Z-i}^P(\hRo,\C[Y]\otimes \C[Z]^{-1}).\]
\item Let $\Aut$ be an algebraic reductive group of automorphisms of $\hRo$ and $P$. Suppose that $\sfs$ is $\Aut$-map 
and $Z\subset Y+Z$ is a pair of $\Aut$-invariant subspaces inside of $P$. Then (\ref{E:isolocaltor}) induces an isomorphism of $\Aut$-representations.

\end{enumerate} 
\end{proposition}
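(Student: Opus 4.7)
The strategy is to reduce $H^i_\fc(\hRo)$ to a derived tensor product, and then recognize that derived tensor product as a $\Tor^P$ functor via the stable Koszul / extended \v{C}ech complex on the generators of $ZP$. The first step is to identify $\mathrm{R}\Gamma_\fc$ with $\mathrm{R}\Gamma_{ZP}$ on the category of $\hRo$-modules, regarded as $P$-modules via $\sfs$. The preimage ideal is $\sfs^{-1}(\fc) = ZP + \ker(\sfs)$, but $\ker(\sfs)$ annihilates every $\hRo$-module, so the $\fc$-torsion subfunctor on $\hRo$-modules coincides with the $ZP$-torsion subfunctor on their $P$-restrictions. Deriving both sides yields $\mathrm{R}\Gamma_\fc^{\hRo}(M) \simeq \mathrm{R}\Gamma_{ZP}^{P}(M)$, applied in particular to $M = \hRo$.

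Next I would compute $\mathrm{R}\Gamma_{ZP}^{P}$ using the extended \v{C}ech complex $C^\bullet = C^\bullet(z_1, \ldots, z_n; P)$ on a basis $z_1, \ldots, z_n$ of $Z$. This is a length-$n$ complex of flat $P$-modules (iterated localizations); a direct calculation via the K\"unneth decomposition $P = \C[Y] \otimes \C[Z]$ shows $H^i(C^\bullet) = 0$ for $i < n$ while $H^n(C^\bullet) = \C[Y] \otimes \C[Z]^{-1}$. Hence $C^\bullet$ is a flat resolution of $(\C[Y] \otimes \C[Z]^{-1})[-n]$ (with $n = \dim Z$), and for any $P$-module $M$,
\[\mathrm{R}\Gamma_{ZP}^P(M) \simeq M \otimes^L_P \bigl(\C[Y] \otimes \C[Z]^{-1}\bigr)[-\dim Z].\]

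Taking $M = \hRo$ and computing the derived tensor product using the minimal free resolution $F_\bullet$ gives
\[H^i_\fc(\hRo) \cong H^{i - \dim Z}\bigl(F_\bullet \otimes_P \C[Y] \otimes \C[Z]^{-1}\bigr) \cong H_{\dim Z - i}\bigl(F_\bullet \otimes_P \C[Y] \otimes \C[Z]^{-1}\bigr),\]
the last identification being passage from cohomological to homological indexing on $F_\bullet$. The $\Tor^P$ reformulation is then immediate from the definition of $\Tor$.

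For the equivariant statement, complete reducibility of the reductive group $\Aut$ lets one choose every $V_i$ in the minimal resolution as an $\Aut$-submodule, so $F_\bullet$ can be taken $\Aut$-equivariant. Since $Y$ and $Z$ are $\Aut$-invariant subspaces, the \v{C}ech complex and the modules $\C[Y]$, $\C[Z]^{-1}$ carry compatible $\Aut$-actions, and every quasi-isomorphism above is equivariant, giving an isomorphism of $\Aut$-representations. The step that requires the most care is the first: the identification $\mathrm{R}\Gamma_\fc^{\hRo} \simeq \mathrm{R}\Gamma_{ZP}^{P}$ must be verified as an equality of derived functors on the subcategory of $\hRo$-modules and not merely at the level of the single object $\hRo$. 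This reduces to the coincidence of the underlying left-exact torsion subfunctors, but it is the one place where the restriction-of-scalars interchange must be handled with care.
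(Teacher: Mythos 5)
Your proposal is correct and takes essentially the same route as the paper: both compute $H^i_\fc(\hRo)$ by replacing $\hRo$ with its minimal free $P$-resolution $F_\bullet$ and then applying the (direct limit of) Koszul complexes on a basis of $Z$, which is exactly the extended \v{C}ech complex you invoke — your observation that this is a flat resolution of $(\C[Y]\otimes\C[Z]^{-1})[-\dim Z]$ is the derived-functor repackaging of the paper's spectral-sequence collapse. The one thing you flag as delicate (the identification of $\mathrm{R}\Gamma_\fc$ over $\hRo$ with $\mathrm{R}\Gamma_{ZP}$ over $P$) is a legitimate subtlety that the paper handles implicitly by working throughout with the Koszul complex, which computes the same object in either module category.
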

\begin{proof}
I will use the complex $K(\hRo,\sfs(\bar{x}),n)$ ($\bar{x}$ is a basis for $Z$), justified by Proposition \ref{P:locKoszul}, for computation of the local cohomology.
For this purpose I replace $\hRo$ with $F_{\bullet}$. This way I get a $F_i$-valued Koszul complex $F_i\otimes \Lambda^j[Z]$. It is a bi-complex.
The canonical map
\[\underset{\overset{n}{\longrightarrow}}\lim K^{\dim Z}(F_i,\bar{x},n)\to \C[Y]\otimes \C[Z]^{-1}\underset{P}{\otimes} F_i=\C[Y]\otimes \C[Z]^{-1}{\otimes} V_i\]
( an obvious generalization of (\ref{E:quasi1}) ) defines a map of complexes
\begin{equation}\label{E:isofisrtspage}
 B^k=\bigoplus_{k=j-i} \underset{\overset{n}{\longrightarrow}}\lim K^{j}(F_i,\bar{x},n)\to \C[Y]\otimes \C[Z]^{-1}{\otimes} V_{\dim Z-k}.
 \end{equation}
The spectral sequence of $B$ collapses in the first page, 
 which contains only one row of groups
\[\underset{\overset{n}{\longrightarrow}}\lim H^{\dim Z} K(F_i,\bar{x},n)=
\C[Y]\otimes \C[Z]^{-1}{\otimes} V_i\]
(see Example \ref{E:polyloc}).
It proves that (\ref{E:isofisrtspage}) is a quasi-isomorphism.

\begin{lemma}\label{E:eqres}
In the above assumptions $\hRo$ admits a finite $\Aut$-equivariant $P$-resolution. Moreover any $\Aut$- equivariant map $M\to N$ between   finitely generated $\hRo$ modules admits an $\Aut$-equivariant lift to a map between  $P$-resolutions.
\end{lemma}
\begin{proof}
It can be proven directly or deduced from the  infinitely stronger results \cite{Thomason} 
\end{proof} 

If we use $\Ext$ definition (\ref{E:loccohextdef}) of local cohomology the previous proof can be repeated. The advantage of (\ref{E:loccohextdef}) is that the limiting groups are representations of the symmetry group of the pair $\fc\subset {\hRo}$. This automatically implies that all the identifications used in my proof are compatible with the $\Aut$-action. Hence (\ref{E:isolocaltor}) is a $\Aut$-map.

\end{proof}
\begin{remark}\label{R:groupaction}
The action of group of automorphisms $\Aut$ of the algebra $\hRo$ typically lifts to a formal group action on $H_{\fc}^i(\hRo)$. In zero characteristics, it is encoded by the Lie algebra $\Lie(G)$ representation in $H_{\fc}^i(\hRo)$. If, however, a subgroup $H\subset G$ is a symmetry of the pair $\fc\subset {\hRo}$, then restriction of representation of $\Lie(G)$ on $\Lie(H)$ becomes integrable (see \cite{Kempf} for details). 

\end{remark}

\subsection{Some remarks on the Poincar\'{e} pairing}
The class of Gorenstein algebras is a source of various cohomology groups with the nondegenerate Poincar\'{e} pairing. All the results in this paper are the expansions of this observation. This section contains some general results concerning such pairings. It starts with the theorem of Avramov and Golod. 

In this section, I assume that $\hRo=P/\fr$,$\fr$ is graded, $\hRo$ is Gorenstein, $P=\C[U]$. 
 \begin{proposition}\label{C:PoincareGen}
The cohomology of the Koszul complex $B(\hRo)$ (see notation convention \ref{C:simpl}) are nonzero in the range from $0$ to $d$
\begin{equation}\label{E:ddef}
d:= \dim_{\C} U-\dim R,
\end{equation}
$H_d(B)\cong \C$. The product 
\[V_i \otimes V_{d-i}\to V_{d}, V_i=H_i(B)\]
 defines a nondegenerate pairing. In fact, nondegeneracy of the pairing is equivalent to Gorenstein property of $\hRo$. 
\begin{equation}\label{E:toriso}
H_i(B)\cong\Tor_i^P(\hRo,\C)
\end{equation}

\end{proposition}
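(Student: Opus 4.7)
The plan is to split the statement into three tasks: the identification (\ref{E:toriso}) of $H_i(B)$ with $\Tor_i^P(\hRo,\C)$, the computation of the vanishing range and of $H_d(B)$, and finally the Poincar\'{e} pairing together with its equivalence to the Gorenstein property.

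First, I would interpret $B(\hRo)$ as the Koszul complex $\hRo\otimes \Lambda^\bullet U^*$ obtained by applying $\hRo\underset{P}{\otimes}(-)$ to the Koszul resolution $K_\bullet(P)=P\otimes \Lambda^\bullet U^*$ of $\C$ over $P=\C[U]$. Since $K_\bullet(P)$ is a free $P$-resolution of $\C$, this identifies $H_i(B)\cong \Tor_i^P(\hRo,\C)$ and establishes (\ref{E:toriso}). Alternatively, one can use any minimal graded free resolution $F_\bullet=P\otimes V_\bullet\to \hRo$: minimality forces the differentials to vanish after applying $-\underset{P}{\otimes}\C$, so $V_i\cong \Tor_i^P(\hRo,\C)$.

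Next, I would control the range. Cohen-Macaulayness of $\hRo$ together with the Auslander-Buchsbaum formula yields $\mathrm{pd}_P(\hRo)=\dim_\C U-\dim \hRo = d$, so $V_i=0$ for $i>d$ and $H_i(B)=0$ outside $[0,d]$. For the Gorenstein part, Definition \ref{D:gorensteinserre} gives a quasi-isomorphism $\Hom_P(F_\bullet,P)[d]\otimes \chi\simeq F_\bullet$ where $\chi$ is the character (\ref{E:chiadef}). Tensoring with $\C$ over $P$ and using minimality, this degenerates into isomorphisms $V_{d-i}^*\otimes \chi \cong V_i$; in particular $V_d\cong V_0^*\otimes \chi \cong \C$, since $V_0=\C$ because $\hRo$ is generated in degree one.

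The genuinely subtle point is establishing that the multiplicative pairing $V_i\otimes V_{d-i}\to V_d$, induced by the graded-commutative DGA structure on $B(\hRo)=\hRo\otimes \Lambda^\bullet U^*$, actually coincides with the duality pairing $V_{d-i}\cong V_i^*\otimes \chi$ coming from Gorenstein self-duality, and is therefore perfect. This is essentially the theorem of Avramov-Golod. The strategy I would follow is to lift the DGA structure from $B(\hRo)$ to a Tate-type DG-algebra resolution $\widetilde F_\bullet$ of $\hRo$ over $P$ (possible since $\hRo$ is commutative and $\Char=0$), then observe that multiplication by a chosen cycle representing the top class $[V_d]\in H_d(\widetilde F_\bullet\otimes_P\C)$ realizes the self-duality quasi-isomorphism $\widetilde F_\bullet^*[d]\otimes\chi\simeq \widetilde F_\bullet$ up to a nonzero scalar, which after tensoring with $\C$ becomes precisely the statement that cup product with $[V_d]$ identifies $V_{d-i}$ with $V_i^*\otimes\chi$. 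The converse implication (nondegeneracy $\Rightarrow$ Gorenstein) is the easier direction: a perfect pairing $V_i\otimes V_{d-i}\to V_d\cong \C$ together with $\dim V_d=1$ forces the Betti numbers of $F_\bullet$ to satisfy $\dim V_i=\dim V_{d-i}$, and one checks this implies self-duality of $F_\bullet$, which is the definition of Gorenstein.

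The main obstacle is reconciling the two pairings in the previous paragraph, i.e.\ proving that the Koszul/DGA product on $\Tor_\bullet^P(\hRo,\C)$ equals the pairing extracted from the self-dualizing isomorphism of $F_\bullet$. Everything else (ranges of vanishing, identification with Tor, the isomorphism $V_d\cong\C$) follows cleanly from minimality of $F_\bullet$, Auslander-Buchsbaum, and Definition \ref{D:gorensteinserre}.
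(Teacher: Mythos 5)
Your proposal takes essentially the same route as the paper: the paper's proof is a one-line citation of Avramov--Golod (with the remark that the local proof transfers verbatim to the graded setting), plus the observation that the Tor identification follows by resolving $\C$ with the Koszul resolution over $P$ and tensoring with $\hRo$. Your identification of $H_i(B)$ with $\Tor_i^P(\hRo,\C)$, the Auslander--Buchsbaum computation of $\mathrm{pd}_P(\hRo)=d$ for the vanishing range, and the deduction $V_d\cong V_0^*\otimes\chi\cong\C$ from self-duality of the minimal resolution are all correct, and you rightly recognize that the genuinely hard step --- matching the Koszul/DGA product on $\Tor$ with the self-duality pairing --- is exactly the content of Avramov--Golod. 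Your Tate-resolution sketch for the forward direction is in the spirit of their argument.

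The one place where your reasoning goes wrong is the converse direction. You argue that a perfect pairing forces $\dim V_i=\dim V_{d-i}$, ``and one checks this implies self-duality of $F_\bullet$.'' That inference is false: symmetry of the graded Betti numbers of a Cohen--Macaulay ring does not imply that the minimal free resolution is self-dual, and there are Cohen--Macaulay non-Gorenstein rings with perfectly symmetric Betti tables. The correct converse argument is shorter and does not pass through Betti symmetry at all: nondegeneracy of the pairing together with $V_0\cong\C$ forces $\dim V_d=1$, so $F_d$ has rank one and hence $\omega_{\hRo}=H^d(\Hom_P(F_\bullet,P))$ is a cyclic $\hRo$-module; a Cohen--Macaulay ring whose canonical module is cyclic (equivalently, of Cohen--Macaulay type one) is Gorenstein. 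Once you know $\hRo$ is Gorenstein, Definition \ref{D:gorensteinserre} gives self-duality of $F_\bullet$ --- the implication runs in that direction, not the one you wrote.
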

\begin{proof}
The proof is a trivial application of the main theorem in \cite{AvramovGolod}.
The only additional work that is required, and which I omit, is a consistent insertion of the word "graded" in the original proof because Avramov and Golod proved their theorem 
for Gorenstein local rings. Isomorphism $H_i(B)\cong\Tor_i^P(\hRo,\C)$ immediately follows if I resolve $\C$ with the standard Koszul resolution and apply ${\hRo}\underset{P}{\otimes}?$.
\end{proof}

Multiplication ${\hRo}\otimes {\hRo}\to {\hRo}$ induces a $P$-bilinear map on the minimal resolution (\ref{E:minres})
\begin{equation}\label{E:resprod}
\times:F_{\bullet}\otimes F_{\bullet}\to F_{\bullet}.
\end{equation} 
\begin{remark}
 The induced map on $V_i=F_i\underset{P}{\otimes} \C$:
$V_i\otimes V_j\to V_{i+j}$
is identified by (\ref{E:toriso}) with the product $H_i(B)\otimes H_j(B)\to H_{i+j}(B)$.
 \end{remark}
 By definition (\ref{E:minres}), $V_i$ is a linear subspace $F_i$. It is not true in general that $\times(V_i\otimes V_j)$ contains in $V_{i+j}\subset F_{i+j}$.
 Suppose, however, that $i+j=d$ (\ref{E:ddef}). Then 
 \begin{equation}\label{E:muisodef}
 \begin{split}
 &\times(V_i\otimes V_j)\subset V_d\subset F_d\\
 &V_d\overset{\mu}{\widetilde{\longrightarrow}} \C.
 \end{split}
 \end{equation}
 This immediately follows from the $\C^{\times}$-grading considerations and the fact that $F_i$ are a free graded cyclic modules. For this reason the pairing 
 \begin{equation}\label{E:pairingchains}
 \begin{split}
&\times:{V}_i\otimes {V}_{d-i}\to {V}_{d}\\
&\text{ defined on the level of chains coincides with the nondegenerate}\\
&\text{(Proposition \ref{C:Poincare}) cohomological pairing.} 
\end{split} \end{equation}

I will use the following proposition as a rudimental dictionary between the languages of local algebra and $\Tor$-functors.
\begin{proposition} \label{P:pairing}
 $\fc,\fe\subset {\hRo}$ be two graded ideals such that $\sfs(ZP)=\fc$, $\sfs(WP)=\fe$,$U=Y \oplus Z\oplus W$. Under identification (\ref{E:isolocaltor}), the product map 
 
\begin{equation}\label{E:product}
 H^i_{\fc}(\hRo)\otimes H^{i}_{\fe}(\hRo)\to H^{i+j}_{\fc+\fe}(\hRo)
\end{equation}
 coincides with the map
\[\begin{split}
&\Tor^P_{\dim Z-i}(\hRo,\C[Y\oplus W]\otimes \C[Z]^{-1})\otimes \Tor^P_{\dim W-j}(\hRo,\C[Y\oplus Z]\otimes \C[W]^{-1})\to \\
&\to \Tor^P_{\dim Z+\dim W-i-j}(\hRo,\C[Y]\otimes\C[ Z\oplus W]^{-1})
.\end{split}
\]
It is induced by (cf. (\ref{E:prodelem}))
\begin{equation}\label{E:multsimple}
\begin{split}
& \times: \left(V_i\otimes \C[Y\oplus W]\otimes \C[Z]^{-1}\right)\otimes \left(V_j\otimes\C[Y\oplus Z]\otimes \C[W]^{-1}\right) \to \\
&V_{i+j}\otimes\C[Y]\otimes\C[ Z\oplus W]^{-1}
.\end{split}
\end{equation}

\end{proposition}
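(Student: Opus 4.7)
The strategy is to lift the isomorphism of Proposition \ref{P:localtorisom} to a pairing-compatible identification and then to trace the cup product through the chain-level models. The cup product on local cohomology is induced, at the chain level on the Koszul-type complexes $K(\hRo,\sfs(\bar{x}),n)$ (with $\bar{x}$ a basis of $Z$) and $K(\hRo,\sfs(\bar{y}),n)$ (with $\bar{y}$ a basis of $W$), by wedge product on the Koszul directions together with multiplication in $\hRo$. The stabilized complexes computing $H^\bullet_\fc(\hRo)$, $H^\bullet_\fe(\hRo)$, and $H^\bullet_{\fc+\fe}(\hRo)$ use Koszul directions $Z$, $W$, and $Z\oplus W$ respectively, so on the Koszul side the three products fit together via the obvious wedge on $\Lambda[Z]\otimes\Lambda[W]\to\Lambda[Z\oplus W]$.

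The first step is to replace $\hRo$ by its minimal $P$-resolution $F_\bullet=\bigoplus_i P\otimes V_i$ from (\ref{E:minres}), and to apply the argument from the proof of Proposition \ref{P:localtorisom} simultaneously to the three ideals $\fc$, $\fe$, $\fc+\fe$. Concretely, the quasi-isomorphism (\ref{E:isofisrtspage}) identifies the Koszul model of $H^\bullet_\fc(\hRo)$ with $V_\bullet\otimes\C[Y\oplus W]\otimes\C[Z]^{-1}$ (with the appropriate homological shift), and analogously for $\fe$ and $\fc+\fe$.

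The second step is to introduce on $F_\bullet$ a $P$-bilinear lift $\times\colon F_\bullet\otimes F_\bullet\to F_\bullet$ of the product on $\hRo$, which exists because $F_\bullet$ is a projective resolution. Tensoring the two resolutions and combining $\times$ on the $V$-factors with the evident contraction $\C[Y\oplus W]\otimes_P\C[Y\oplus Z]\to\C[Y]$ and the wedge-type pairing $\C[Z]^{-1}\otimes\C[W]^{-1}\to\C[Z\oplus W]^{-1}$ yields a chain-level map to the complex computing $H^\bullet_{\fc+\fe}(\hRo)$. The assertion of the proposition is then the statement that this chain-level map induces the cup product in local cohomology; this is checked by comparing both products at the level of the Koszul-type models, where both are given, after the quasi-isomorphism of Proposition \ref{P:localtorisom}, by the same formula (\ref{E:multsimple}).

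The main obstacle is the compatibility verification: one must show that the classical wedge-plus-multiplication description of the cup product on the $K(\hRo,\sfs(\bar{x}),n)$-type Koszul models agrees up to chain homotopy with the product induced on $F_\bullet\otimes_P(\C[Y]\otimes\C[Z\oplus W]^{-1})$ via $\times$. This is a standard fact about commutative DG-algebra structures on projective resolutions of commutative algebras (any two such multiplications are chain-homotopic and induce the same map on Tor), but it must be stated carefully with the K\"unneth decomposition $\C[Z\oplus W]^{-1}\cong\C[Z]^{-1}\otimes\C[W]^{-1}$ and with consistent sign conventions for the Koszul shifts $\dim Z$, $\dim W$, $\dim Z+\dim W$ appearing in (\ref{E:isolocaltor}).
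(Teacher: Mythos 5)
Your proposal is correct and takes essentially the same route as the paper: replace $\hRo$ by its minimal resolution $F_\bullet$, use the product $\times$ on $F_\bullet$ from (\ref{E:resprod}) together with the Koszul-complex tensor isomorphism $K(P,\bar x,n)\otimes K(P,\bar y,n)\to K(P,(\bar x,\bar y),n)$ to build a chain-level product, and identify it with (\ref{E:multsimple}) via the quasi-isomorphism (\ref{E:isofisrtspage}). The ``main obstacle'' you flag (chain-homotopy agreement with the intrinsic cup product) is exactly what the paper dispatches in one line by appealing to the Koszul-complex definition of local cohomology from Appendix \ref{S:local}; your proposal is simply more explicit about that step.
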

\begin{proof}
Fix a basis $\overline{x}$ for $Z$ and $\overline{y}$ for $W$. An isomorphism 
\[K(P,\bar{x},n)\otimes K(P,\bar{y},n)\to K(P,(\bar{x},\bar{y}),n)\] 
can be used to define the product 
\begin{equation}\label{E:prodinterm}
\times:K(F_{\bullet},\bar{x},n)\otimes K(F_{\bullet},\bar{y},n)\to K(F_{\bullet},(\bar{x},\bar{y}),n).
\end{equation}

By the definition of local cohomology which uses Koszul complexes Appendix \ref{S:local}, the map (\ref{E:prodinterm}) induces (\ref{E:product}). The map (\ref{E:isofisrtspage}) gives identification of (\ref{E:prodinterm}) with (\ref{E:multsimple}).
\end{proof}
\section{Finite approximations to the space of states}\label{S:spaceofstates}
In this section, the cohomology group $H^{i}_{\cZ[(0)^0,\hdelta']}(\cZ[\hdelta,\hdelta'],\O)$, the building block for the space of states, will be examined in some details.
 
 $\cZ[\hdelta,\hdelta']$ is an affine scheme. For such schemes most of the sheaf-theoretic constructions reduce to constructions from the commutative algebra. 

Here is the precise statement concerning local cohomology of sheaves. Fix a module $M$ over a ring $\hRo$. Denote by $\widetilde{M}$ the corresponding quasi-coherent sheaf on $Spec\, {\hRo}$. 
\begin{theorem}(\cite{Twenty} Theorem 12.47). Let $\fc$ be an ideal in a Noetherian ring $\hRo$. Denote $Spec\, {\hRo}$ by $X$
and the closed set $V (\fc)\subset X$ by $Z$.
 For each $\hRo$-module $M$ and an integer $j$ \[H_{\fc}^j (M ) \cong H_{Z}^j ( X , \widetilde{M}).\]
\end{theorem}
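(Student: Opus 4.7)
The plan is to realize both sides as the cohomology of a common complex, and my preferred route goes through the Čech/stable Koszul description rather than through derived functors abstractly.

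First, I would identify the zeroth functors. The submodule $\Gamma_{\mathfrak{c}}(M)=\{m\in M\mid \mathfrak{c}^n m=0 \text{ for some } n\}$ agrees with $\Gamma_{Z}(X,\widetilde{M})$. Indeed, for a global section $m\in \widetilde{M}(X)=M$, its support equals $V(\mathrm{Ann}(m))$, which is contained in $Z=V(\mathfrak{c})$ exactly when $\mathfrak{c}\subset \sqrt{\mathrm{Ann}(m)}$, i.e.\ when $\mathfrak{c}^n m=0$ for some $n$. This also globalizes to an isomorphism of functors $\Gamma_{\mathfrak{c}}(-)\cong \Gamma_{Z}(X,\widetilde{(-)})$ on the category of $R$-modules.

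Next I would pick generators $f_{1},\dots,f_{r}$ of $\mathfrak{c}$ and form the stable Koszul complex $C^{\bullet}(\underline{f};M)=\bigotimes_{i}\bigl(M\to M_{f_{i}}\bigr)$. On the algebraic side, Proposition \ref{P:locKoszul} (or Theorem 7.11 of \cite{Twenty} referenced earlier) identifies the cohomology of this complex with $H^{\bullet}_{\mathfrak{c}}(M)$. On the geometric side, the same complex in positive degrees is precisely the augmented Čech complex for the open cover $\{D(f_{i})\}$ of the open set $U=X\setminus Z$ with coefficients in $\widetilde{M}$; since $\widetilde{M}$ is quasi-coherent and each $D(f_{i})$ is affine, this Čech complex computes $H^{\bullet}(U,\widetilde{M})$. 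Combining with the exact sequence $\Gamma_{Z}(X,\widetilde{M})\to \Gamma(X,\widetilde{M})\to \Gamma(U,\widetilde{M})$ and its derived continuation $\cdots\to H^{j}_{Z}(X,\widetilde{M})\to H^{j}(X,\widetilde{M})\to H^{j}(U,\widetilde{M})\to \cdots$, together with the vanishing $H^{j}(X,\widetilde{M})=0$ for $j>0$ on the affine scheme $X$, gives the desired identification $H^{j}_{Z}(X,\widetilde{M})\cong H^{j}C^{\bullet}(\underline{f};M)\cong H^{j}_{\mathfrak{c}}(M)$.

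The main obstacle is the compatibility of these two interpretations of $C^{\bullet}(\underline{f};M)$ in degree zero: the zeroth term of the stable Koszul complex is $M$, whereas in the Čech picture one wants the cokernel computing sections on $U$. I would handle this by treating the Čech computation as an augmentation and carefully splicing in $\Gamma_{Z}$ versus its derived functors, exactly as in the proof of the local cohomology long exact sequence. Once this bookkeeping is done the isomorphism $H^{j}_{\mathfrak{c}}(M)\cong H^{j}_{Z}(X,\widetilde{M})$ is forced on all $j$. The Noetherian hypothesis enters only to ensure that $\widetilde{M}$ is quasi-coherent with exact sheafification and that $\mathfrak{c}$ admits a finite generating set, so the Čech complex is finite.
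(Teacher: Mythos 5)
The paper does not actually prove this statement: it is quoted verbatim as Theorem~12.47 of the reference \cite{Twenty} and invoked as a black box, so there is no internal argument to compare against. That said, your sketch is correct and is one of the two standard routes. Your degree-zero identification $\Gamma_{\fc}(M)\cong\Gamma_{Z}(X,\widetilde M)$ via supports is right (Noetherianity is exactly what makes $\supp(m)=V(\Ann(m))$ and the finite generation of $\fc$ work), and the comparison of the stable Koszul complex with the augmented \v{C}ech complex for the cover $\{D(f_i)\}$ of $U=X\setminus Z$ together with the vanishing of higher quasi-coherent cohomology on the affine $X$ does force the isomorphism in all degrees, once the two five-term sequences
\[
0\to H^0_{\fc}(M)\to M\to H^0(U,\widetilde M)\to H^1_{\fc}(M)\to 0,\qquad
0\to H^0_Z(X,\widetilde M)\to M\to H^0(U,\widetilde M)\to H^1_Z(X,\widetilde M)\to 0
\]
are compared and the higher degrees are matched through $H^{j-1}(U,\widetilde M)$. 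The only thing I would flag is that you should make explicit that the middle maps $M\to H^0(U,\widetilde M)$ in the two sequences agree (they do, both being restriction of global sections), since that is what makes the comparison an isomorphism rather than just an abstract coincidence of dimensions. The alternative, slightly cleaner route, which is the one \cite{Twenty} essentially uses, is to observe that $\Gamma_{\fc}(-)$ and $\Gamma_Z(X,\widetilde{(-)})$ agree as functors, that sheafification is exact, and that injective $R$-modules sheafify to flasque (hence $\Gamma_Z$-acyclic) sheaves over a Noetherian ring, so the two $\delta$-functors agree by universality; this avoids the degree-zero bookkeeping you were worried about. Either way the argument is sound.
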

From this theorem I infer that 
\[H^{i}_{\cZ[(0)^0,\hdelta']}(\cZ[\hdelta,\hdelta'],\O)=H^{i}_{\fa}[\hdelta,\hdelta']\]
where 
\[H^{i}_{\fa}[\hdelta,\hdelta']:=H^{i}_{\fa}(\hA[\hdelta,\hdelta']),\quad \fa=\fri[\hdelta,(1)^{-1}]\]
 (see Definition \ref{D:algAdelta} for notations). 

\begin{remark}\label{R:groupactionloccoh}
By Remarks \ref{R:symmetries} and  \ref{R:groupaction} the group $\C^{\times}\times\Spin(10)$ acts on $H^{i}_{\fa}[(0)^N,(1)^{N'}], N<N'$.
\end{remark}

Let $\hA$ be the algebra based on $[\hdelta,\hdelta']$. 

\begin{convention}{\rm
It is time to extend Convention \ref{C:mainconvention} slightly and add to the list of abbreviations $P^{-1}=\C[\hdelta,\hdelta']^{-1}$ (see Appendix \ref{S:Dmodulenoteconstr} equation \ref{E:inverse} for explanation of the notation).
}\end{convention}

The pairing (\ref{E:pairinggen}) in my finite algebraic setup becomes
\begin{equation}\label{E:multmap}
\begin{split}
&\sfm:H_{\fa}^i(\hA)\otimes H_{\fb}^j(\hA)\to H_{\fa+\fb}^{i+j}(\hA) \cong H^{i+j}_{\fm}(\hA)\\
&\bigoplus_{i}H^{i}_{\fm}(\hA)\overset{\sfres_{\hA}}{\longrightarrow}\C.
\end{split}
\end{equation}
The ideal 
\[\fb=\fri[(0)^0,\hdelta']\]
defines the closed subscheme $\cZ[\hdelta,(1)^{-1}]\subset \cZ$. The sum \[\fm:=\fa+\fb\] is the maximal homogeneous ideal in $\hA$. 
Ideal
\[\fa'=\fri[(0)^1, \hdelta'].\]
will be used for defining field-antifield pairing. For  technical needs I will  be using ideals
\[\ff=\fri[\hdelta,\hdelta']^{\leq-1},\quad \ff'=\fri[\hdelta,\hdelta']^{\geq 0}\]
and the pairing
\begin{equation}\label{E:multmapi}
\begin{split}
&\sfm:H_{\fri}^i(\hA)\otimes H_{\fri'}^j(\hA)\to H_{\fri+\fri'}^{i+j}(\hA) \cong H^{i+j}_{\fm}(\hA)\\
&\bigoplus_{i}H^{i}_{\fm}(\hA)\overset{\sfres_{\hA}}{\longrightarrow}\C.
\end{split}
\end{equation}

Here are some observations about $H^{i}_{\fa}(\hA)$. The maps (\ref{E:tisomorphism}), (\ref{E:mapsintr}) induce isomorphisms
\begin{equation}\label{E:sigmaTcoh}
\begin{split}
&\shift:H^{i}_{\fa}[\hdelta,\hdelta']\cong H^{i}_{\shift(\fa)}[\shift(\hdelta),\shift(\hdelta')]\\
&\reflection: H^{i}_{\fa}[\hdelta,\hdelta']\cong H^{i}_{\fa'}[\reflection(\hdelta'),\reflection(\hdelta)]\cong H^{i}_{\fb}[\shift^{-1}\reflection(\hdelta'),\shift^{-1}\reflection(\hdelta)].
\end{split}
\end{equation}
I used that $\reflection(\fa)=\fa'$.

The simplest question that might be asked about the pairing (\ref{E:multmap}) are:
\begin{enumerate}
 \item \label{I:q1} What is "size" of the groups $H^{i}_{\fm}(\hA)$. 
\item \label{I:q2} The space $\cZ$ is equipped with the group action, that multiplies the functional $\sfres$ on the character. What is this character? 
\item \label{I:q3} What properties of $\cZ$ and its subschemes ensure nondegeneracy of the pairing $\sfres(\sfm(a,b))$? 
\end{enumerate}

There are gradings on $\hA$ and the derived objects associated with $\Aut$ (\ref{E:symmstries0}).
\begin{equation}\label{E:cdeg}
\begin{split}
&\text{
The $\C^{\times}$-weight of a homogeneous element 
will be denoted by $\deg_{\C^{\times}}$. }\\
&\text{I will also refer to $\C^{\times}$-weight decomposition as to $\C^{\times}$-grading. }\\
&\text{The $\bT$-weight will be denoted by $\deg_{\bT}$ and there will be the $\bT$-grading.}
\end{split}
\end{equation}
Let us introduce notations that we will use throughout the paper.

The space of formal linear combinations of elements $\{\lambda^{\halpha}| \halpha \in \setN\}, \setN\subset \sethE$ with coefficients in $\C$ will be denoted by $\Span{\setN}$. To simplify the exposition,
I equip $\Span{\setN}$ with the dot product. 
\begin{equation}\label{E:dotprdef}
\text{By definition, $\{\lambda^{\halpha}\}$ is an orthonormal basis for $\Span{\setN}$.}
\end{equation}
By abuse of notations, the linear span of the set
\begin{equation}\label{E:regseqgenverynew}
\begin{split}
& \alt \setN :=\left\{
\lambda_-^i= \lambda^{\halpha^+}- \lambda^{\halpha^-}| \rho(\halpha^{\pm})=i\in \Z ,\halpha^{\pm}\in \setN
\right\}
\end{split}
\end{equation}
will be also denoted by $\alt \setN $.
Introduce a notation for the quotient:
\begin{equation}\label{E:regconstr}
\rReg \setN:=\Span{ \setN}/\alt(\setN)\cong \alt(\setN)^{\perp}=\Span{\lambda_+^i= \sum_{\rho(\halpha)=i,\halpha\in \setN }\lambda^{\halpha}|i\in \rho(\setN) }.
\end{equation}
Inclusion $\setN\subset \setN'$ defines a linear inclusion $\mathsf{inc}:\Span{ \setN}\subset \Span{ \setN}$. Let $\mathsf{pr}$ be the orthogonal projection $\Span{ \setN}\to \rReg \setN$.
The maps $\mathsf{inc}$ and $\mathsf{pr}$ define inclusions
\begin{equation}\label{E:functorialpr}
\begin{split}
&\mathsf{inc}:\alt \setN\to \alt \setN',\\
&\mathsf{pr}\circ \mathsf{inc}: \rReg \setN\to \rReg \setN' \\
&\text{ and }\Span{ \setN}= \alt \setN+\rReg \setN\overset{\inclusiono}{\to} \alt \setN'+\rReg \setN'=\Span{\setN'},\\
& \inclusiono:=\mathsf{inc}\oplus \mathsf{pr}\circ \mathsf{inc}.
\end{split}
\end{equation}
I have to alert the reader that the map $\inclusiono:\Span{ \setN}\to \Span{ \setN'}$ in general {\it is not} equal to $\mathsf{inc}:\Span{ \setN}\to \Span{ \setN'}$
(c.f. Example (\ref{EX:regalt}) item (\ref{I:spanexample})).

 Throughout this section, I will assume that all the intervals, if not mentioned otherwise, satisfy (\ref{E:purity}). 
 In particular, $\hA$ will be always assumed Gorenstein if not mentioned otherwise.

\subsection{The structure of the groups $H^{i}_{\fm}({\hA})$}
In this section I use notations from Section \ref{S:groupaction}. 
Let $\hA$ be an algebra based on the interval $[\hdelta,\hdelta']$.

I start my analysis of the structure of the group $H^{i}_{\fm}({\hA})$ with the answer on the question (\ref{I:q1}) page (\pageref{I:q1}). 
 \begin{proposition}\label{E:diality}
 \begin{enumerate}
\item \[H_{\fm}^i({\hA})=\{0\}\text{ if }i\neq \rk(\hdelta,\hdelta')+1,\]
\begin{equation}\label{E:loccohnew}
H_{\fm}^i({\hA})\cong \bigoplus_{i\geq 0} \hA^*_i\text{ if } i=\rk(\hdelta,\hdelta')+1.
\end{equation}
There is the integer $a\in \Z_{>0}$ (the a-invariant of $\hA$) such that the $\C^{\times}$-grading of $\hA^*_i$ is equal to $-a-i$.
\item The previous statement also holds for $\hA[\hdelta,(1)^{-1}]^{\leq-1}$.
\end{enumerate}
\end{proposition}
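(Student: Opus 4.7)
The plan is to apply graded local duality to the Gorenstein algebra $\hA$. By Proposition \ref{C:dimalg}, $\hA = P/\fr$ is a graded Gorenstein $\C$-algebra of Krull dimension $d := \rk(\hdelta,\hdelta')+1$, with $P = \C[U]$ polynomial on $n := \dim_{\C} U$ generators of degree one, and with $\fm$ the unique maximal homogeneous ideal (using also that $\hA$ is a domain by Lemma \ref{P:integraldomain}). Since $\hA$ is Cohen--Macaulay, $H_{\fm}^{i}(\hA) = 0$ for $i < d$ by the depth characterization of local cohomology, while Grothendieck vanishing gives $H_{\fm}^{i}(\hA) = 0$ for $i > d$. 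This disposes of all degrees $i \neq d$ in the first bullet.

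For $i = d$, I would use Proposition \ref{P:localtorisom} with $Z = U$ and $Y = 0$ to rewrite
\[
H_{\fm}^{d}(\hA) \;\cong\; \Tor_{n-d}^{P}\bigl(\hA,\;\C[U]^{-1}\bigr).
\]
Choose a minimal graded free $P$-resolution $F_{\bullet}\to \hA$ of length $n-d$. Because $\hA$ is Gorenstein (Definition \ref{D:gorensteinserre}), this resolution is self-dual up to a degree shift: $F_{\bullet}^{*}\cong F_{n-d-\bullet}(a)$, which is equivalent to the graded isomorphism $\omega_{\hA} \cong \hA(a)$ of $P$-modules, with $a\in\Z$ the $a$-invariant. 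The module $\C[U]^{-1}$ is, up to a shift by $-n$, the graded dual of $P$ (cf.\ the identification used in Example \ref{E:polyloc} and Appendix \ref{S:Dmodulenoteconstr}). Tensoring the self-dual resolution with $\C[U]^{-1}$ and bookkeeping the shifts produces the desired graded isomorphism
\[
H_{\fm}^{d}(\hA)_{-a-j} \;\cong\; \hA_{j}^{*}, \qquad j \geq 0,
\]
which is precisely the formula $\bigoplus_{j\geq 0}\hA_{j}^{*}$ with the stated degree offset.

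Positivity $a > 0$ follows from Stanley's functional equation (Proposition \ref{P:stanley}): one has $\hA(1/t) = (-1)^{d} t^{-(a+d)} \hA(t)$. Since $\hA_{0} = \C$ and $\hA$ is generated in degree one with nontrivial higher graded pieces under the hypotheses on $[\hdelta,\hdelta']$, comparing leading terms in this functional equation forces $a \geq 1$. For the second bullet, $\hA[\hdelta,(1)^{-1}]^{\leq -1}$ is the negative-$\bT$-weight part of the Gorenstein algebra based on $[\hdelta,(1)^{-1}]$, and it splits off as a $\bT$-graded direct summand. Every step of the duality argument above respects the $\bT$-grading, so the same computation applies verbatim to this summand.

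The main obstacle is not conceptual but bookkeeping: tracking shifts through the self-duality $\omega_{\hA}\cong \hA(a)$, through the graded-dual identification $\C[U]^{-1}\cong {}^{*}\Hom_{\C}(P,\C)(-n)$, and through Proposition \ref{P:localtorisom}, so as to land on the precise exponent $-a-j$. Establishing $a > 0$ strictly (rather than $a \geq 0$) is a small numerical check and can alternatively be read off from the Hibi degeneration of Section \ref{S:singularities}, where the canonical module has an explicit combinatorial description.
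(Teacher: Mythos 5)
Your treatment of the first bullet is essentially the same as the paper's: the paper invokes graded local duality for Gorenstein rings (Theorem 18.7 of \cite{Twenty}) directly, whereas you unfold that theorem by hand via the self-dual minimal free resolution and Proposition \ref{P:localtorisom}. That part is sound, and your argument for $a>0$ via Stanley's functional equation is a reasonable supplement.

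The second bullet has a genuine gap. You describe $\hA[\hdelta,(1)^{-1}]^{\leq -1}$ as ``the negative-$\bT$-weight part'' of $\hA[\hdelta,(1)^{-1}]$ that ``splits off as a $\bT$-graded direct summand.'' This misreads the notation: by Definition \ref{D:lessdef} the superscript $\leq -1$ filters by the height function $\rho$ on $\sethE$, not by the $\bT$-weight $\deg_{\bT}\lambda^{\rbeta^k}=k$, and the two are different (for instance, the removed generators $(1)^{-1},(2)^{-1},(3)^{-1}$ have the same $\bT$-weight $-1$ as elements like $(45)^{-1}$ that remain). More fundamentally, $\hA[\hdelta,(1)^{-1}]^{\leq -1}$ is a \emph{quotient algebra}, not a graded vector-space summand of $\hA[\hdelta,(1)^{-1}]$ to which the duality computation transfers ``verbatim''; one must establish that this quotient is itself Gorenstein before applying local duality to it, and that is exactly what your argument omits. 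The paper does this by identifying $\hA[\hdelta,(1)^{-1}]^{\leq -1}$ with $\hA[\hdelta,(3)^{-1}]/(\lambda^{(3)^{-1}})$, using Proposition \ref{P:GorensteinCL} to see that $\hA[\hdelta,(3)^{-1}]$ is Gorenstein, Lemma \ref{L:regularity} to see that $\lambda^{(3)^{-1}}$ is a regular element, and Proposition \ref{P:classofextreg} to conclude the quotient is Gorenstein. Without that chain of reasoning (or a substitute), the second statement does not follow.
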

\begin{proof}

By Corollary \ref{C:Hgorenstein}, ${\hA}$ is Gorenstein.
The proof follows from Theorem 18.7 \cite{Twenty}.

The algebra $\hA[\hdelta,(1)^{-1}]^{\leq-1}$ is a quotient $\hA[\hdelta,(3)^{-1}]/(\lambda^{(3)^{-1}})$. By Proposition \ref{P:GorensteinCL}
$\hA[\hdelta,(3)^{-1}]$ is Gorenstein. 
\begin{lemma}\label{L:regularity}(Proposition 35 \cite{MovStr})
For any $\hdelta<\hdelta'\in \sethE$ $\rReg([\hdelta,\hdelta'])$ (\ref{E:regconstr}) is a regular sequence in ${\hA[\hdelta,\hdelta']}$.
The statement remains valid if $[\hdelta,\hdelta']$ is replaced by a (semi-) open interval.
 \end{lemma}

By Lemma \ref{L:regularity} $\lambda^{(3)^{-1}}$ is regular in $\hA[\hdelta,(3)^{-1}]$. By Proposition \ref{P:classofextreg}
$\hA[\hdelta,(3)^{-1}]/(\lambda^{(3)^{-1}})$ is Gorenstein. The second statement of the proposition  also follows from Theorem 18.7 \cite{Twenty}.
\end{proof}
\begin{remark}\label{E:indexa}
{\rm
In the case of algebra $\sA$ the integer $a$ coincides with the index $\mathrm{ind}\, \OGr^{+}(5,10)$
}\end{remark}

The map $\sfres_{\hA}$ is an isomorphism 
\begin{equation}\label{E:varthetadef0}
\sfres_{\hA}:\hA^{*}_0\to \C 
\end{equation} in (\ref{E:diality}).
Thus the $\C^{\times}$-grading of $\sfres$ coincides with the $a$-invariant. My next objective will be to answer question (\ref{I:q2}) and to compute 
the character of the $\Aut$-action (\ref{E:symmstries0}) on $\hA^{*}_0$. 

The $\Aut$-character associated with $\hA^{*}_0$ will be denoted by $a_{\hA}(t,q,z)$ and will be called the generalized $a$-invariant. $a_{\hA}$ contains more information than the $a$-invariant:
\begin{equation}\label{E:sigma}
a_{\hA}(t,q,z)=t^{-a}q^{-u}z^{-r}.
\end{equation}
A vector $r$ is an element of weight lattice of $\widetilde{\bT}^5$.
This action on $\hA^{*}_0$ is closely related to the action of $\Aut$ on the generator of the dualizing module $\omega_{\hA}$ (\ref{E:chiadef}).
\begin{proposition}\label{C:Poincare}
The cohomology $H_i(B)$ of the Koszul complex $B_{\bullet}(\hA)$ (see notation convention \ref{C:simpl}) are zero outside of the range $0\leq i\leq d=|[\hdelta,\hdelta']|-(\rk(\hdelta,\hdelta')+1)$. The product \[H_i(B) \otimes H_{d-i}(B)\to H_d(B)\cong \C\] defines a nondegenerate pairing. The group $H_i(B)$ is isomorphic to $\Tor_i^P(\hA,\C)$.
\end{proposition}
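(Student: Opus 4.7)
The plan is to derive this proposition as a direct specialization of Proposition \ref{C:PoincareGen}, whose hypotheses are met by $\hA$ under our standing assumption that $[\hdelta,\hdelta']$ satisfies (\ref{E:purity}). First I would verify each hypothesis: $\hA$ is generated in degree one by the variables $\lambda^{\halpha}$, and $P = \C[\hdelta,\hdelta']$ is a polynomial algebra whose degree-one piece is identified with $\hA_1$, so the minimal presentation $\hA = P/\fr$ of Section \ref{S:CM} is realized in our setting. Gorenstein-ness of $\hA$ is exactly Proposition \ref{C:dimalg}, which is applicable because $[\hdelta,\hdelta']$ satisfies (\ref{E:purity}) by assumption.

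Next, the integer $d$ of Proposition \ref{C:PoincareGen} is $\dim_{\C}U - \dim \hA$, where $U \cong \hA_1$ has $\C$-dimension equal to $|[\hdelta,\hdelta']|$ (the cardinality of the interval), and $\dim \hA = \rk(\hdelta,\hdelta') + 1$ by the second item of Proposition \ref{C:dimalg}. Thus $d = |[\hdelta,\hdelta']| - (\rk(\hdelta,\hdelta') + 1)$, matching the range in the statement.

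With these identifications, Proposition \ref{C:PoincareGen} immediately gives: $H_i(B) = 0$ outside $[0,d]$, $H_d(B) \cong \C$, and the multiplicatively induced pairing $H_i(B)\otimes H_{d-i}(B) \to H_d(B)$ is nondegenerate (this is precisely where Gorenstein-ness enters, via Avramov--Golod). The last assertion, $H_i(B) \cong \Tor_i^P(\hA,\C)$, is the standard computation: resolve $\C$ over $P$ by the Koszul complex on the generators $\{\lambda^{\halpha}\}$ and tensor with $\hA$; the resulting complex is exactly $B_{\bullet}(\hA)$.

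There is no real obstacle here beyond bookkeeping, since the heavy lifting (Gorenstein-ness, the Avramov--Golod theorem, and the dimension count) is already in place. The only minor point worth flagging is consistency of the grading conventions so that the pairing lands in $H_d(B) \cong \C$ in degree zero, but this follows from the same $\C^\times$-grading argument used in (\ref{E:muisodef})--(\ref{E:pairingchains}).
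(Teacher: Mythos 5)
Your proof is correct and follows the same route as the paper: establish Gorenstein-ness of $\hA$ (the paper cites Corollary \ref{C:Hgorenstein}, effectively Proposition \ref{C:dimalg}), then invoke Proposition \ref{C:PoincareGen}. Your version merely spells out the dimension count $d = |[\hdelta,\hdelta']| - (\rk(\hdelta,\hdelta')+1)$ and the $\Tor$ identification, both of which the paper leaves implicit.
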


\begin{proof}
By Corollary \ref{C:Hgorenstein} , $\hA$ is Gorenstein. The proof follows from Proposition \ref{C:PoincareGen}.
\end{proof}

\begin{proposition}\label{P:charmatch}
Let 
\begin{equation}\label{E:relacharnot}
\chi:=\chi_{{\hA},P}
\end{equation} be the relative character (\ref{E:chireddef}) of $\Aut$-action on the generating space of the dualizing module $\omega_A$. Then the generalized $a$-invariant $a:=a_{{\hA}}$ is equal
\[a=\chi^{-1}.\] 
\end{proposition}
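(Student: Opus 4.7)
The plan is to compute $a_\hA$ by identifying $H^D_\fm(\hA)$ with a Tor group via Proposition~\ref{P:localtorisom} and then reading off its highest $\C^\times$-degree piece (which is $\hA^*_0$, sitting in degree $-a$) directly from the minimal graded $P$-resolution $F_\bullet$ of $\hA$. The dualization of that same resolution computes $\chi'_{\hA,P}$, and I compare the two characters term by term.

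Set $n=\dim_\C P_1$ and $c=n-\dim\hA$. First I apply Proposition~\ref{P:localtorisom} with ideal $\fm$, $Z=U=P_1$, and $Y=0$ to obtain an $\Aut$-equivariant isomorphism
\[H^D_\fm(\hA)\cong \Tor^P_c(\hA,\C[U]^{-1}).\]
Since $\hA$ is Gorenstein, the top of the minimal $P$-resolution is $F_c=P\otimes V_c$ with $\dim V_c=1$ and $F_{c+1}=0$, so this Tor sits inside $V_c\otimes \C[U]^{-1}$. The module $\C[U]^{-1}=H^n_\fm(P)$ is concentrated in $\C^\times$-degrees $\leq -n$ with unique top-degree element $(x_1\cdots x_n)^{-1}$. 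Hence the highest $\C^\times$-degree element of $V_c\otimes \C[U]^{-1}$ is $v\otimes (x_1\cdots x_n)^{-1}$, where $v$ spans $V_c$. Each $x_i$ annihilates $(x_1\cdots x_n)^{-1}$ in $\C[U]^{-1}$, so this element is a cycle, and since $F_{c+1}=0$ it is not a boundary; it therefore represents a generator of $\hA^*_0$. Its $\Aut$-character is $\chi_{V_c}\cdot\det(P_1)^{-1}$, which gives
\[a_\hA=\chi_{V_c}\cdot\det(P_1)^{-1}.\]

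To finish, I compute $\chi$ from the same data. By Definition~\ref{D:gorensteinserre}, $\omega_\hA=H^c(\Hom_P(F_\bullet,P))$ is generated modulo $\fm$ by $V_c^*$, so $\chi'_{\hA,P}=\chi_{V_c}^{-1}$. Running the analogous computation for $\C$ with its Koszul $P$-resolution, whose top is $P\otimes \Lambda^n P_1$, gives $\det P=\chi'_{\C,P}=\det(P_1)^{-1}$. Therefore $\chi=\chi'_{\hA,P}/\det P=\chi_{V_c}^{-1}\cdot\det(P_1)$, and $a_\hA\cdot\chi=1$, as required. The only delicate step is verifying that $v\otimes (x_1\cdots x_n)^{-1}$ really represents a nonzero class in $\Tor^P_c$ and identifies with the generator of $\hA^*_0$; this is guaranteed by minimality of $F_\bullet$ together with the explicit $P$-module structure on $\C[U]^{-1}=H^n_\fm(P)$, and by matching dimensions and $\C^\times$-degrees with Proposition~\ref{E:diality}.
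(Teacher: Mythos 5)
Your proof is correct and follows essentially the same route as the paper: both use Proposition~\ref{P:localtorisom} to identify $H^{D}_{\fm}(\hA)$ with $\Tor_{d}^{P}(\hA,P^{-1})$, exhibit the generator of $\hA_0^{*}$ as $v\otimes\lambda^{-1}$ where $v$ spans the one-dimensional top Betti space $V_d$ and $\lambda^{-1}=\prod(\lambda^{\halpha})^{-1}$, and then match characters via $\chi_{V_d}$ and $\det P$. The only difference is notational (you write $\det(P_1)^{-1}$ where the paper uses $\det P$, and $c$ for $d$); your explicit appeal to minimality for the cocycle/nonboundary argument matches the paper's characterization of $\lambda^{-1}$ as the unique element annihilated by all $\lambda^{\halpha}$.
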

\begin{proof}
By Proposition \ref{P:localtorisom}, $H_{\fm}^i({\hA})\cong \Tor_{|[\hdelta,\hdelta']|-i}^P(\hA,P^{-1})$.
Let $F_{\bullet}$ (\ref{E:minres}), $d= |[\hdelta,\hdelta']|-\rk(\hdelta,\hdelta')+1$,
 be the minimal $P$-resolution of $\hA$, which I will be using for computation of $\Tor$-groups.
The group $\Tor_{d}^P(\hA,P^{-1})$ is a subgroup in the tensor product $P^{-1}\otimes V_{d}$. By Proposition \ref{C:Poincare}, the group $V_d=\Tor_{d}^P(\hA,\C)=H_d(B)$ is one-dimensional. Denote its generator by $v$. The element $\lambda^{-1}\otimes v$, where \[\lambda^{-1}:=\prod_{\halpha\in [\hdelta,\hdelta']} \frac{1}{\lambda^{\halpha}}\in P^{-1}, \] is a cocycle in $F_{\bullet}\underset{P}{\otimes} \hA$. It corresponds to the generator of $\hA_0^{*}$ in (\ref{E:loccohnew}). This is true because $\lambda^{-1}$ is characterized by the property that it is the only element that is annihilated by all $\lambda^{\halpha}$. 
The element $\lambda^{-1}$ spans a one dimensional $\Aut$-representation (see formula (\ref{E:trace})). 
 It follows from (\ref{E:trace}) that under $\Aut$ action $\lambda^{-1}$ scales by the character $\det P$ (\ref{E:det}). By Definition \ref{D:gorensteinserre}, representation of $\Aut$ in $V_d$ is dual to the representation spanned by the generator of $\omega_{\hA}$. Thus 
\[a=\chi'^{-1}\det P=\chi^{-1}.\] 
\end{proof}

It is clear now that it is essential for determining scaling properties of $\sfres$ to know the action of $\Aut$ on the generator of the dualizing module $\omega_{\hA}$.

\subsubsection{The action of $\bT$ on the dualizing module $\omega_{\hA}$}
 The plan for this section is to derive a formula for the character $\chi$ (\ref{E:relacharnot}).
 The idea is to compute $\chi[\hdelta,\hdelta']$ inductively. Corollary \ref{R:equiv} will be used as an inductive step. To put this idea to work,
I have to produce a sequence of surjective homomorphisms of Gorenstein algebras
\[\hA_{n+1} \leftarrow \hA_n\leftarrow \hA_{n-1}\leftarrow \cdots \leftarrow \hA_1\]
such that $\hA_{i+1}\leftarrow \hA_{i}$ satisfies assumptions of Corollary \ref{R:equiv}, $\hA_1={\hA[\hdelta,\hdelta']}$ $\hA_n=\hA[\hdelta',\hdelta']\cong \C[\lambda^{\hdelta'}]$ and $\hA_{n+1}=\C$.

I will give a construction of such a sequence in the series of propositions. The reader should consult Appendix \ref{S:techlemmas} for the definition of $\rM_i^{\pm}$ and $\setCL^{\pm}$. 

In this paper, we will mostly encounter algebra homomorphisms $\sfp:\hA[\setX]\to \hA[\setX']$ whose domain and codomain are labeled by intervals $\setX'\subset \setX\subset \sethE$. Sometimes, however, it will be useful to factor $\sfp$ into simpler homomorphisms $\hA[\setX]\overset{\sfp'}\to \hA[\setY]\overset{\sfp''}{\to} \hA[\setX']$ with $\setX'\subset \setY\subset \setX$, such that $\setY$ is a semi-interval.

Here is an example of such factorization. Fix
\begin{equation}\label{E:proceduregammaconditions}
 \hgamma\lessdot \hgamma' <\hbeta\text{ and }|\setCL^{-}(\hgamma)|=1.
 \end{equation}
 I factor the embedding 
 
\begin{equation}\label{E:proceduregamma}
[\hgamma,\hbeta]\overset{\sfp}{\supset} [\hgamma',\hbeta]\text{ into }[\hgamma,\hbeta]\overset{\sfp'}{\supset}(\hgamma,\hbeta]\overset{\sfp''}{\supset} [\hgamma',\hbeta].\end{equation}
I generalize this in the following construction.
 \begin{construction}\label{E:mainconstruction}{\rm

Fix a system of embedded intervals
\[[\hdelta_{1},\hbeta]\overset{\sfp_1}{\supset} [\hdelta_{2},\hbeta]\supset\cdots \overset{\sfp_{n-1}}{\supset} [\hdelta_{n},\hbeta].\]

I expand it by applying (\ref{E:proceduregamma}) to intervals based on $\hgamma=\hdelta_{i},\hgamma'=\hdelta_{i+1}$ repeatedly whenever (\ref{E:proceduregammaconditions}) is satisfied. In notations (\ref{E:proceduregamma}) $\sfp=\sfp_i$, $\sfp'=\sfp'_i,\sfp''=\sfp''_i$.
This way I get a sequence of (semi)-intervals
\begin{equation}\label{E:modseq}
[\hdelta,\hbeta]=\Delta_1\supset \cdots\supset\Delta_{n'}=[\hdelta',\hbeta]. 
\end{equation}
}\end{construction}
\begin{lemma}\label{L:saturate0}
\begin{enumerate}
\item \label{I:saturate0first} 
The homomorphism
\begin{equation}\label{E:group1}
\begin{split}
&\sfp:{\hA[\hdelta,\hbeta]}\to {\hA[\hdelta',\hbeta]}\text{ such that } \hdelta\lessdot \hdelta' , \setCL^{-}(\hdelta)=2,3, \text{ and } \\
&\hdelta,\hdelta'\in \rM_1^+\sqcup \rM_3^+, \hbeta\in \rM_1^-\sqcup \rM_3^-
\end{split}
\end{equation}
 satisfies conditions of Proposition (\ref{P:classofextreg}). $\Ker\ \sfp:=(\lambda^{\hdelta})$.
 
\item \label{I:saturate0second} 
The homomorphism
\begin{equation}\label{E:group2}
\begin{split}
&\sfp':{\hA[\hdelta,\hbeta]}\to {\hA(\hdelta,\hbeta]}\text{ such that } |\setCL^{-}(\hdelta)|=1 \text{ and } \\
&\hdelta,\in \rM_1^+\sqcup \rM_3^+, \hbeta\in \rM_1^-\sqcup \rM_3^-
\end{split}
\end{equation}
satisfies conditions of Proposition (\ref{P:classofextreg}). $\Ker\ \sfp:=(\lambda^{\hdelta})$.

The homomorphism
\begin{equation}\label{E:group3}
\begin{split}
&\sfp'':\hA(\hdelta,\hbeta]\to \hA[\hdelta',\hbeta]\text{ such that } |\setCL^{-}(\hdelta)|=1,\hdelta\lessdot \hdelta', \text{ and } \\
&\hdelta,\hdelta'\in \rM_1^+\sqcup \rM_3^+, \hbeta\in \rM_1^-\sqcup \rM_3^-
\end{split}
\end{equation}
satisfies conditions of Proposition (\ref{P:extmaintheorem}).
\end{enumerate}
In the following I will refer to  (\ref{E:group1},\ref{E:group1}) as regular  and to (\ref{E:group1}) as irregular homomorphisms.
\end{lemma}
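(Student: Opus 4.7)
The strategy is to verify the hypotheses of Propositions \ref{P:classofextreg} and \ref{P:extmaintheorem} directly, exploiting the straightening law of $\hA$ from Proposition \ref{E:degreenull} together with the Gorenstein classification obtained in Section \ref{S:Gorenstein}. Under the assumptions $\hdelta,\hdelta' \in \rM_1^+ \sqcup \rM_3^+$ and $\hbeta \in \rM_1^- \sqcup \rM_3^-$, both source and target of each homomorphism satisfy the Gorenstein criterion (\ref{E:purity}), so Corollary \ref{C:Hgorenstein} and Proposition \ref{C:dimalg} place us in the graded Gorenstein setting where the cited propositions apply.

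For the regular cases (\ref{E:group1}) and (\ref{E:group2}), I first establish that $\lambda^{\hdelta}$ is a non-zero-divisor in $\hA[\hdelta,\hbeta]$. Using the standard-monomial basis from Proposition \ref{E:degreenull}(iii), multiplication by $\lambda^{\hdelta}$ sends a basis element $\lambda^{\halpha_1}\cdots\lambda^{\halpha_n}$ with $\hdelta \leq \halpha_1 \leq \cdots \leq \halpha_n$ to $\lambda^{\hdelta}\lambda^{\halpha_1}\cdots\lambda^{\halpha_n}$, which is again standard and distinct for distinct inputs, so the operator is injective. For (\ref{E:group2}) the quotient is $\hA(\hdelta,\hbeta]$ by the definition (\ref{E:semiinterval}), and it is Cohen-Macaulay (in fact Gorenstein, via Proposition \ref{P:classofextreg}(ii)) by Proposition \ref{R:stbasis}. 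For (\ref{E:group1}) one must verify $\hA[\hdelta,\hbeta]/(\lambda^{\hdelta}) \cong \hA[\hdelta',\hbeta]$: setting $\lambda^{\hdelta}=0$ kills all standard monomials containing $\hdelta$ while preserving the rest, and the conditions $\hdelta \lessdot \hdelta'$ and $|\setCL^-(\hdelta)|\in\{2,3\}$ together with the explicit form of $\sethE$ near $\hdelta$ (diagram (\ref{P:kxccvgw13}) and the local classifications of Figs.~1--16) force $[\hdelta,\hbeta]\setminus\{\hdelta\}=[\hdelta',\hbeta]$. This reduces (\ref{E:group1}) to Proposition \ref{P:classofextreg}.

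The homomorphism (\ref{E:group3}) is not a quotient by a regular element, because $(\hdelta,\hbeta] \supsetneq [\hdelta',\hbeta]$: the complement $(\hdelta,\hbeta]\setminus[\hdelta',\hbeta]$ consists of those $\halpha$ with $\hdelta<\halpha$ that are incomparable to $\hdelta'$. To apply Proposition \ref{P:extmaintheorem} I must produce a homogeneous $x\in\hA(\hdelta,\hbeta]$ whose annihilator is precisely the ideal generated by $\{\lambda^{\halpha}:\halpha\in(\hdelta,\hbeta]\setminus[\hdelta',\hbeta]\}$ and for which $\hA(\hdelta,\hbeta]/\Ann(x)\cong \hA[\hdelta',\hbeta]$. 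The assumption $|\setCL^-(\hdelta)|=1$ pins down the local geometry at $\hdelta$, and the natural candidate is $x=\lambda^{\hdelta'}$ (or a short product involving the covers of $\hdelta$ complementary to $\hdelta'$); using the straightening relations (\ref{E:reluniv}), each product $x\lambda^{\halpha}$ reduces to an expression whose clutter term carries a factor $\lambda^{\halpha\wedge\hdelta'}=\lambda^{\hdelta}$, and this vanishes in the semi-interval algebra. The equality $\dim\hA(\hdelta,\hbeta]=\dim\hA[\hdelta',\hbeta]$ follows from the rank formula of Proposition \ref{C:dimalg}(2) combined with (\ref{E:htandsw}), and Cohen-Macaulay-ness of $\hRth=\hA(\hdelta,\hbeta]/(x)$ is checked via the toric degeneration of Section \ref{S:singularities}, reducing the question to the corresponding Hibi algebra where it is combinatorial.

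The principal obstacle is the annihilator computation in the last case: one must show that no spurious elements enter $\Ann(x)$ and that all generators $\lambda^{\halpha}$ with $\halpha\in(\hdelta,\hbeta]\setminus[\hdelta',\hbeta]$ are captured. The hypothesis $|\setCL^-(\hdelta)|=1$ is essential precisely here, as it rules out the multi-cover configurations at $\hdelta$ under which $\Ann(x)$ would be strictly larger than required, and it is what separates the irregular case from the regular ones. Once this combinatorial check is completed for the finite list of local configurations at $\hdelta$ allowed by $\hdelta\in\rM_1^+\sqcup\rM_3^+$, both items of the lemma follow from Propositions \ref{P:classofextreg} and \ref{P:extmaintheorem} respectively.
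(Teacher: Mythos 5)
The overall architecture of your argument matches the paper's --- reduce the regular cases to Proposition \ref{P:classofextreg}, and the irregular case to Proposition \ref{P:extmaintheorem} with $x=\lambda^{\hdelta'}$, checking $\Ann(x)$ and dimensions along the way. However, there are two genuine problems.

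First, a conceptual error about the hypothesis $|\setCL^{-}(\hdelta)|=1$. You write that this hypothesis ``rules out the multi-cover configurations at $\hdelta$''; it does exactly the opposite. Elements with $|\setCL^-(\hdelta)|\in\{2,3\}$, i.e.\ $\hdelta\in \rM_2^-\sqcup\rM_3^-$, are the ones with a \emph{unique} cover in $\sethE$, which is why $(\hdelta,\hbeta]=[\hdelta',\hbeta]$ and the homomorphism (\ref{E:group1}) is just a quotient by the regular element $\lambda^\hdelta$. By contrast, $|\setCL^-(\hdelta)|=1$ means $\hdelta\in\rM$, and it is precisely these elements that have \emph{two} covers (check, e.g.\ $(3)^{r-1}$, which is covered by both $(12)^r$ and $(2)^{r-1}$); this is why $(\hdelta,\hbeta]\supsetneq[\hdelta',\hbeta]$ and the intermediate semi-interval step plus the irregular Proposition \ref{P:extmaintheorem} are needed. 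Since you invoke this misread hypothesis as the key reason that $\Ann(x)$ is no larger than required, this is not a cosmetic issue: the logic of your irregular case rests on a picture of the Hasse diagram that is backwards.

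Second, two steps are asserted but not carried out, and one of them is routed through machinery that does not obviously apply. You identify the reverse inclusion $\Ann(\lambda^{\hdelta'})\subset\Ker\,\sfp''$ as ``the principal obstacle'' but do not close it; the paper does this cheaply via the integral-domain property of $\hA[\hdelta',\hbeta]$ (Lemma \ref{P:integraldomain}): if $a\lambda^{\hdelta'}=0$ then $\sfp''(a)\sfp''(\lambda^{\hdelta'})=0$, and since $\sfp''(\lambda^{\hdelta'})\neq 0$, $\sfp''(a)=0$. For the Cohen--Macaulayness of $\hRth=\hA(\hdelta,\hbeta]/(\lambda^{\hdelta'})$ you appeal to the toric degeneration of Section \ref{S:singularities}, but that machinery is set up for the interval algebras $\hA[\hdelta,\hdelta']$, not for this quotient, and making it work here would require separate justification. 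The paper instead uses Lemma \ref{L:degrel}(\ref{I:seven}) and the standard monomial basis to identify $\hA(\hdelta,\hbeta]/(\lambda^{\hdelta'})\cong\hA[\hgamma,\hbeta]$, which is Cohen--Macaulay by Proposition \ref{E:degreenull}. This identification also simultaneously pins down $\Ker\,\sfp''=(\lambda^\hgamma,\lambda^{\hgamma'})$ and the dimension count, so it is the natural key step and should be made explicit rather than bypassed.

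Incidentally, for (\ref{E:group1}), the statement $(\hdelta,\hbeta]=[\hdelta',\hbeta]$ needs a citation; the paper uses Lemma \ref{L:multstatM}(\ref{I:m3}) (via the uniqueness of the cover in $\rM_1^+$ combined with the structure of the diagram near $\rM_2^-\cup\rM_3^-$), rather than the local classifications in Figs.~1--16, which concern the Birkhoff poset used in the Gorenstein classification, not the cover structure of $\sethE$ itself.
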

\begin{proof}
As ${\hA(\hdelta,\hbeta]}\cong {\hA[\hdelta,\hbeta]}/(\lambda^{\hdelta})$ where $\lambda^{\hdelta}$ is regular (Proposition \ref{P:integraldomain}) and ${\hA[\hdelta,\hbeta]}$ is Gorenstein by assumption
, then, by Proposition \ref{P:classofextreg} $, {\hA(\hdelta,\hbeta]}$ is Gorenstein. I conclude that:
\begin{equation}\label{E:gorimplication}
\text{If } {\hA[\hdelta,\hbeta]}\text{ is Gorenstein, then so is } {\hA(\hdelta,\hbeta]}.
\end{equation}
\begin{enumerate}
\item By item (\ref{I:m3}) of Lemma \ref{L:multstatM} , $(\hdelta,\hbeta]=[\hdelta',\hbeta]$. By (\ref{E:gorimplication}), the map $\sfp:{\hA[\hdelta,\hbeta]}\to {\hA[\hdelta',\hbeta]}$ satisfies conditions of Proposition (\ref{P:classofextreg}).
The same way I prove the statement about $\sfp':{\hA[\hdelta,\hbeta]}\to {\hA(\hdelta,\hbeta]}$ when $\setCL^{-}(\hdelta)=1$.
\item 

From items (\ref{I:four}) and (\ref{I:five}) of Lemma \ref{L:degrel}, I deduce that 
 the are two elements $\lambda^{\hgamma},\lambda^{\hgamma'}$ in $\Ker\, \sfp''$ belong to $\Ann(\lambda^{\hdelta'})\subset {\hA(\hdelta,\hbeta]}$ and
$\hgamma,\hgamma'$ are comparable. I can assume that $\hgamma<\hgamma'$. By using item (\ref{I:six}) and the standard monomial basis in ${\hA(\hdelta,\hbeta]}$, I conclude that $\lambda^{\hgamma},\lambda^{\hgamma'}$ generate $\Ker\, \sfp''$.
By using the standard monomial basis for ${\hA(\hdelta,\hbeta]}$ (Proposition \ref{R:stbasis}) and item (\ref{I:seven}) from Lemma \ref{L:degrel} , I conclude that ${\hA(\hdelta,\hbeta]}/\lambda^{\hdelta'}\cong {\hA[\hgamma,\hbeta]}$. By Proposition \ref{E:degreenull}, $ {\hA[\hgamma,\hbeta]}$ is Cohen-Macaulay. Thus $\sfp''$ satisfies assumptions of Proposition \ref{P:extmaintheorem}.
\end{enumerate}
\end{proof}

The following lemma is a powerful instrument. It will enables me to reduce many difficult questions related to the map $\sfp:\hA[\hdelta',\hbeta]\to \hA[\hdelta,\hbeta]$ to a series of easy questions about the factors $\sfp_i$ in the factorization of $\sfp$.

 \begin{lemma}\label{L:saturate}
Fix pair of intervals 
\begin{equation}\label{E:twointrvals}
[\hdelta,\hbeta]\supset [\hdelta',\hbeta]
\end{equation}
 $\hdelta'\in \rM_1^+$
 (\ref{E:Mdef}). By using Proposition \ref{P:seqint}, it can be completed to the sequence of embedded intervals (\ref{E:intervalsneed}). With the help of Construction \ref{E:mainconstruction} , I expand it to the sequence of sets (\ref{E:modseq}). Inclusions of sets (\ref{E:modseq}) define a series of homomorphisms of algebras
 \begin{equation}\label{E:refignseq}
\hA[\Delta_{n'}]\overset{\sfp_{n'-1}}{\ot} \cdots \overset{\sfp_1}{\ot} \hA[\Delta_{1}].
\end{equation} 
 
I claim that any such homomorphism $\hA[\Delta_i]\to \hA[\Delta_{i+1}]$
 satisfies assumptions of Proposition \ref{P:classofextreg} or Proposition \ref{P:extmaintheorem}.
\end{lemma}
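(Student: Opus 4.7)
The plan is to verify, for each consecutive pair in the refined sequence (\ref{E:modseq}) produced by Construction~\ref{E:mainconstruction}, that the induced homomorphism matches one of the three templates (\ref{E:group1})--(\ref{E:group3}) catalogued by Lemma~\ref{L:saturate0}, so that Proposition~\ref{P:classofextreg} or Proposition~\ref{P:extmaintheorem} applies directly.

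First I would invoke Proposition~\ref{P:seqint} on the given pair to produce the chain (\ref{E:intervalsneed}) of intervals $[\hdelta_i,\hbeta]$, all satisfying (\ref{E:purity}), with consecutive covers $\hdelta_i \lessdot \hdelta_{i+1}$ and with $\hdelta_i \in \rM_1^+$ for $i \geq 2$. Because $[\hdelta',\hbeta]$ satisfies (\ref{E:purity}) with $\hdelta' \in \rM_1^+$ and $\rCap \geq 3$, the upper endpoint is forced into $\sethE \setminus \rM_2^- = \rM_1^- \sqcup \rM_3^-$; similarly (\ref{E:purity}) applied to $[\hdelta,\hbeta]$ puts $\hdelta \in \rM_1^+ \sqcup \rM_3^+$. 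These are precisely the endpoint constraints appearing in (\ref{E:group1})--(\ref{E:group3}), and they persist throughout the chain since $\hbeta$ is fixed and each $\hdelta_i$ stays in $\rM_1^+ \sqcup \rM_3^+$.

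Next I would examine Construction~\ref{E:mainconstruction} on each covering step $[\hdelta_i,\hbeta] \supset [\hdelta_{i+1},\hbeta]$, splitting on $|\setCL^{-}(\hdelta_i)|$. When $|\setCL^{-}(\hdelta_i)| \in \{2,3\}$, item (\ref{I:m3}) of Lemma~\ref{L:multstatM} identifies the semi-interval $(\hdelta_i,\hbeta]$ with $[\hdelta_{i+1},\hbeta]$ set-theoretically, so no factoring occurs and $\sfp_i$ matches template (\ref{E:group1}); Lemma~\ref{L:saturate0}(\ref{I:saturate0first}) gives the conclusion. When $|\setCL^{-}(\hdelta_i)| = 1$, Construction~\ref{E:mainconstruction} factors $\sfp_i$ through $(\hdelta_i,\hbeta]$ as $\sfp_i'' \circ \sfp_i'$; here $\sfp_i'$ matches template (\ref{E:group2}) and $\sfp_i''$ matches template (\ref{E:group3}), and Lemma~\ref{L:saturate0}(\ref{I:saturate0second}) handles both. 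The Gorenstein-ness of the intermediate $\hA(\hdelta_i,\hbeta]$ is delivered by the implication (\ref{E:gorimplication}) recorded in the proof of Lemma~\ref{L:saturate0}.

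The main obstacle is the boundary step $i=1$, where Proposition~\ref{P:seqint} only guarantees $\hdelta_1 = \hdelta \in \rM_1^+ \sqcup \rM_3^+$ rather than $\rM_1^+$. If $\hdelta \in \rM_3^+$, then by Lemma~\ref{L:multstatM} one has $|\setCL^{-}(\hdelta)| = 1$, which routes the first step through the factoring branch of the preceding paragraph and still produces maps of the desired templates. The remaining degenerate regime $\rCap[\hdelta,\hbeta] \leq 1$ is handled in passing: in the proof of Proposition~\ref{P:seqint} one takes any maximal totally ordered chain, and every cover in such a chain again has $|\setCL^{-}| = 1$, so the same factoring analysis closes the argument.
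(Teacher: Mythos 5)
Your overall route is the one the paper takes: decompose the pair $[\hdelta,\hbeta]\supset[\hdelta',\hbeta]$ into consecutive cover steps via Proposition~\ref{P:seqint}, refine via Construction~\ref{E:mainconstruction}, and then observe that every refined step matches one of the three templates of Lemma~\ref{L:saturate0}, which is what the paper's one-line proof also does (it just notes that $|\setCL^{-}(\hdelta_i)|\in\{1,2,3\}$ always, by Lemma~\ref{L:multstatM} item~\ref{I:first}, so the hypotheses of Lemma~\ref{L:saturate0} are met).

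However, two of your side-remarks misstate the combinatorics, even though both errors happen to be harmless for the conclusion. First, you claim that $\hdelta\in\rM_3^+$ forces $|\setCL^{-}(\hdelta)|=1$; in fact item~\ref{I:sixmultstatM} of Lemma~\ref{L:multstatM} gives $\rM_3^+=\rM_3^-$, so $\hdelta\in\rM_3^+$ forces $|\setCL^{-}(\hdelta)|=3$, and the $i=1$ step is therefore a \emph{regular} step of type~(\ref{E:group1}), not a factored one. Second, in the degenerate regime $\rCap[\hdelta,\hbeta]\leq1$ the totally ordered chain used in Proposition~\ref{P:seqint} need not consist of elements of $\rM_1^-$; the $\hdelta_i$ there can have $|\setCL^{-}(\hdelta_i)|\in\{1,2,3\}$, with Construction~\ref{E:mainconstruction} inserting the semi-interval only at those $i$ with $|\setCL^{-}(\hdelta_i)|=1$. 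Neither slip invalidates the argument --- Lemma~\ref{L:saturate0} covers all three values uniformly, which is precisely the paper's point --- but as written your boundary-case analysis asserts the wrong branch of that trichotomy.
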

\begin{proof}
By reading carefully conditions of Lemma \ref{L:saturate0}, I conclude that $|\setCL^{-}(\hdelta_i)|$ has to be $1,2$ or $3$, which are automatically satisfied.
Thus, the statement of lemma follows from Lemma \ref{L:saturate0}.

\end{proof}
\begin{definition}\label{D:thdeffora}{\rm
Fix $ [\hdelta,\hbeta]\subset [\hdelta',\hbeta']$ such that $ \hA[\hdelta,\hbeta],\hA[\hdelta',\hbeta']$ are Gorenstein (Theorem \ref{P:GorensteinCL}). Let $\sfp:\hA[\hdelta',\hbeta']\to \hA[\hdelta,\hbeta]$ be projection the projection (Definition \ref{D:algAdelta}). Equation (\ref{E:classconstruction}) defines the element
\begin{equation}\label{E:initdefThom}
\Th_{\sfp}\in \Ext_{P}^{\codim}( \hA[\hdelta,\hbeta], \hA[\hdelta',\hbeta']), \codim=\rho(\hbeta')-\rho(\hbeta) -\rho(\hdelta')+\rho(\hdelta),P=\C[\hdelta',\hbeta']
\end{equation}
By (\ref{E:THdef}) $\Th_{\sfp}$ defines a map of local cohomology
\[H_{\fc}^{i}[\hdelta,\hbeta]\to  H_{\fc}^{\codim+i}[\hdelta',\hbeta']\]
$\fc\subset \hA[\hdelta',\hbeta']$ is an ideal. 
By abuse of notations I denote $\sfp(\fc)$ by $\fc$.
}\end{definition}
Introduce a notation:
\[\chi'[\halpha,\hbeta]:= \chi'_{{\hA[\halpha,\hbeta]},\C[\halpha,\hbeta]}, \quad \chi[\halpha,\hbeta]:= \chi_{{\hA[\halpha,\hbeta]}}\]
The results of the following proposition will be used heavily in the
 computation of the $\Aut$-weight of the pairing (\ref{E:pairinggen}).
\begin{proposition}
Fix pair of intervals $[\hdelta,\hbeta]\supset [\hdelta',\hbeta]$. The group $\Aut$ commutes with the homomorphism $\sfp:{\hA[\hdelta,\hbeta]}\to {\hA[\hdelta',\hbeta]}$. 
\begin{enumerate}
\item The element \[\Th_{\sfp}\in \Ext_{P}^{\codim}( {\hA[\hdelta',\hbeta]}, {\hA[\hdelta,\hbeta]}), \codim=\rho(\hdelta')-\rho(\hdelta),P=\C[\hdelta,\hbeta]\] scales under the action of $\Aut$. More precisely if $g=(t,q,z)$, then $\Th_{\sfp}$ satisfies
\begin{equation}\label{E:chath}
\Th_{\sfp}^g=\Th_{\sfp}\chi[\hdelta',\hbeta](g){\chi[\hdelta,\hbeta]}^{-1}(g),g\in \Aut.
\end{equation}
\item If $\Th_{\sfp}$ corresponds to the elementary homomorphisms of algebras (\ref{E:group1},\ref{E:group2},\ref{E:group2}),
then in notations (\ref{E:weights}) 
\begin{equation}\label{E:elcharacters}
\Th_{\sfp}^g=
\begin{cases}
\hat{v}_{\halpha}^{-1}\Th_{\sfp}& \sfp \text{ is the map } {\hA[\halpha,\hbeta]}\to {\hA(\halpha,\hbeta]} (\ref{E:group1},\ref{E:group2})
 \\
\hat{v}_{\hgamma}\Th_{\sfp}& \sfp \text{ is the map } {\hA(\halpha,\hbeta]}\to {\hA[\hgamma,\hbeta]} (\ref{E:group3}).
 \\
\end{cases}\end{equation}
 See formula (\ref{E:weights}) for the character $\hat{v}_{\hdelta}$.
In particular, the products $\chi'(\halpha,\hbeta]{\chi'[\halpha,\hbeta]}^{-1}$, $\chi'[\hgamma,\hbeta]{\chi'(\halpha,\hbeta]}^{-1}$ in (\ref{E:elcharacters}) do not depend on $\hbeta$.

\item The character $\det \C[\hdelta,\hbeta]$ (\ref{E:chiadef}) 
satisfies
\begin{equation}\label{E:trivmod}
\left(\det \C[\hdelta,\hbeta]\right)^{-1}=\prod_{\halpha\in [\hdelta,\hbeta]}\hat{v}_{\halpha}.
\end{equation}
\end{enumerate}

\end{proposition}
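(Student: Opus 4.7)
The plan is to invoke the general results of Section \ref{S:groupaction}, most importantly Proposition \ref{R:equiv}, after matching the commutative algebra data of each elementary map to the corresponding hypotheses. Part (1) is immediate: the surjection $\sfp:\hA[\hdelta,\hbeta]\to \hA[\hdelta',\hbeta]$ is $\Aut$-equivariant (Remark \ref{R:symmetries}), so item (\ref{I:equiv1}) of Proposition \ref{R:equiv} with $\hRo=\hA[\hdelta,\hbeta]$ and $\hRt=\hA[\hdelta',\hbeta]$ yields (\ref{E:chath}) directly from (\ref{E:absrel}).

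For part (2), I handle the three elementary maps (\ref{E:group1}), (\ref{E:group2}), (\ref{E:group3}) using Lemma \ref{L:saturate0}. In the two regular cases (\ref{E:group1}) and (\ref{E:group2}), the map is a quotient by the regular element $x=\lambda^{\halpha}$; by (\ref{E:weights}), $gx=\hat{v}_{\halpha}(g)x$, so item (\ref{I:equiv2}) of Proposition \ref{R:equiv} yields $\chi_{\hRt\leftarrow\hRo}=\hat{v}_{\halpha}^{-1}$. In the irregular case (\ref{E:group3}), that is $\sfp'':\hA(\halpha,\hbeta]\to\hA[\hgamma,\hbeta]$, the kernel analysis in the proof of Lemma \ref{L:saturate0}(\ref{I:saturate0second}) identifies $\hA[\hgamma,\hbeta]$ as $\hA(\halpha,\hbeta]/\Ann(\lambda^{\hgamma})$, so Proposition \ref{P:extmaintheorem} applies with $x=\lambda^{\hgamma}$; item (\ref{I:equiv3}) of Proposition \ref{R:equiv} then gives $\chi_{\hRt\leftarrow\hRo}=\hat{v}_{\hgamma}$. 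Together with part (1), these give both lines of (\ref{E:elcharacters}). The claimed $\hbeta$-independence of the two products is immediate, since (\ref{E:weights}) shows that $\hat{v}_{\halpha}$ and $\hat{v}_{\hgamma}$ depend only on the elements removed or added, not on the right endpoint $\hbeta$.

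For part (3), I compute $\det\C[\hdelta,\hbeta]=\chi'_{\C,P}$ directly from the Koszul resolution of $\C$ as a $P=\C[\hdelta,\hbeta]$-module. Letting $V=P_1=\Span{\lambda^{\halpha}\mid \halpha\in[\hdelta,\hbeta]}$ and $n=|[\hdelta,\hbeta]|$, the Koszul complex $\Lambda^{\bullet}V\otimes P$ is an $\Aut$-equivariant free resolution of $\C$. Applying $\Hom_P(-,P)$ and taking top cohomology gives $\omega_{\C,P}\cong \Lambda^n V^{*}$ as a one-dimensional $\Aut$-representation on which $\Aut$ acts by $\det V^{*}=\prod_{\halpha\in[\hdelta,\hbeta]}\hat{v}_{\halpha}^{-1}$. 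Hence $\det\C[\hdelta,\hbeta]$, defined in (\ref{E:det}) as this character, equals $\prod_{\halpha}\hat{v}_{\halpha}^{-1}$, and inversion yields (\ref{E:trivmod}).

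The main obstacle is the correct identification of the distinguished element $x$ in Proposition \ref{P:extmaintheorem} for the irregular case (\ref{E:group3}); once the kernel analysis in the proof of Lemma \ref{L:saturate0}(\ref{I:saturate0second}) is used to pin down $x=\lambda^{\hgamma}$, the remaining steps reduce to direct applications of Proposition \ref{R:equiv}.
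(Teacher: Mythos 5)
Your proof is correct and follows exactly the route the paper indicates, namely reducing to Proposition~\ref{R:equiv} for parts (1)–(2) and doing a direct Koszul computation for part (3). Your careful matching of the distinguished element $x$ to $\lambda^{\hgamma}$ in the irregular case (\ref{E:group3}) via the kernel analysis in Lemma~\ref{L:saturate0}(\ref{I:saturate0second}) is precisely what makes item~(\ref{I:equiv3}) of Proposition~\ref{R:equiv} apply, and this is the only step the paper's terse proof leaves implicit.
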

\begin{proof}
Follows from Corollary \ref{R:equiv}. The last item follow from a simple computation with the Koszul resolution for $\C$.
\end{proof}

I define quantities $(a_{\rL}, u_{\rL}, r_{\rL})$ by the formula $t^{-a_{\rL}}q^{-u_{\rL}}z^{-r_{\rL}}:=\det \C[\hdelta,\hbeta]$.
In more details
\[a_S[\hdelta,\hbeta]=|[\hdelta,\hbeta]|,\quad u_S[\hdelta,\hbeta]=\sum_{\halpha\in [\hdelta,\hbeta]}u(\halpha),\quad z^{r_S[\hdelta,\hbeta]}=\prod_{\halpha\in [\hdelta,\hbeta]}v_{\halpha}(z)\]

The next proposition reduces computation of $\chi[\hdelta,\hbeta]$ to the problem of combinatorics.
\begin{proposition}\label{P:charmaincomp}
Fix an interval $[\hdelta,\hbeta]$. 
\begin{equation}\label{E:Lset}
 \rL[\hdelta,\hbeta]=\rM\cap\mathrm{Core}[\hdelta,\hbeta],\quad \overline{\mathrm{Core}}[\hdelta,\hbeta]:= [\hdelta,\hbeta]\backslash \mathrm{Core}[\hdelta,\hbeta].
\end{equation}

See Lemma \ref{L:multstatM} for definition of $\rM$ and (\ref{E:coredef}) for $\mathrm{Core}$.
Then
\begin{equation}\label{E:sigmaalt0}
\chi[\hdelta,\hbeta]=\prod_{\halpha\in \rL[\hdelta,\hbeta]}\hat{v}_{\halpha} \prod_{\halpha\in \overline{\mathrm{Core}}[\hdelta,\hbeta]}\hat{v}_{\halpha}
\end{equation}
\end{proposition}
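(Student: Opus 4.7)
My plan is to prove (\ref{E:sigmaalt0}) by induction on $|[\hdelta,\hbeta]|$, peeling off the bottom element $\hdelta$ at each step and tracking the character via the factorization supplied by Lemma \ref{L:saturate}. For the base case $\hdelta=\hbeta$ the algebra $\hA[\hbeta,\hbeta]\cong \C[\lambda^{\hbeta}]$ is its own dualizing module, so $\chi'_{\hA,P}$ is trivial and by (\ref{E:chireddef}) together with (\ref{E:trivmod}) we obtain $\chi[\hbeta,\hbeta]=\hat{v}_{\hbeta}$; since $\mathrm{Core}[\hbeta,\hbeta]=\emptyset$ so that $\overline{\mathrm{Core}}[\hbeta,\hbeta]=\{\hbeta\}$ and $\rL[\hbeta,\hbeta]=\emptyset$, the product formula also yields $\hat{v}_{\hbeta}$, matching.

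For the inductive step I choose $\hdelta^{*}$ covering $\hdelta$ along a chain as in Proposition \ref{P:seqint}, factor the surjection $\hA[\hdelta,\hbeta]\to \hA[\hdelta^{*},\hbeta]$ by Lemma \ref{L:saturate}, and apply (\ref{E:chath}) together with (\ref{E:elcharacters}) to each elementary factor to obtain the recursion
\[
\chi[\hdelta,\hbeta] \;=\; \begin{cases} \hat{v}_{\hdelta}\,\chi[\hdelta^{*},\hbeta], & |\setCL^{-}(\hdelta)|\geq 2,\\ \hat{v}_{\hdelta}\,\hat{v}_{\hdelta^{*}}^{-1}\,\chi[\hdelta^{*},\hbeta], & |\setCL^{-}(\hdelta)|=1. \end{cases}
\]
In the first (regular) case the factorization is a single step of type (\ref{E:group1}); in the second (irregular) case it is a step of type (\ref{E:group2}) followed by one of type (\ref{E:group3}), giving the product $\hat{v}_{\hdelta}\cdot\hat{v}_{\hdelta^{*}}^{-1}$ of elementary characters. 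Under the inductive hypothesis the right-hand side becomes a product of $\hat{v}$-factors indexed by $\rL[\hdelta^{*},\hbeta]\cup \overline{\mathrm{Core}}[\hdelta^{*},\hbeta]$, modified by adjoining the factor at $\hdelta$ in both cases and deleting the factor at $\hdelta^{*}$ in the irregular case.

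It then remains to verify that this set-theoretic update reproduces $\rL[\hdelta,\hbeta]\cup \overline{\mathrm{Core}}[\hdelta,\hbeta]$. When $|\setCL^{-}(\hdelta)|\geq 2$ the element $\hdelta$ lies in $\mathrm{Core}[\hdelta,\hbeta]\cap \rM$, hence in $\rL[\hdelta,\hbeta]$; when $|\setCL^{-}(\hdelta)|=1$ the element $\hdelta$ instead lies in $\overline{\mathrm{Core}}[\hdelta,\hbeta]$ while $\hdelta^{*}$ is forced, by Lemma \ref{L:multstatM} together with the uniqueness of $\hdelta^{*}$ as the sole cover of $\hdelta$, to lie in $\mathrm{Core}[\hdelta,\hbeta]\setminus \rL[\hdelta,\hbeta]=\mathrm{Core}\setminus (\mathrm{Core}\cap \rM)$, exactly matching a cancellation in the formula. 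The main obstacle is precisely this combinatorial bookkeeping: one must inspect the Hasse diagram (\ref{P:kxccvgw13}) and use Lemmas \ref{L:multstatM} and \ref{L:degrel} to confirm that the elements cancelled by irregular transitions are exactly the members of $\mathrm{Core}[\hdelta,\hbeta]$ that lie outside $\rM$, so that the surviving contributing set is indeed $\rL[\hdelta,\hbeta]\cup \overline{\mathrm{Core}}[\hdelta,\hbeta]$.
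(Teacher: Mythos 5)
Your overall strategy --- peel off the bottom element via the factorization of Lemma \ref{L:saturate}, apply the three elementary character rules (\ref{E:chtthcompatibility1})--(\ref{E:chtthcompatibility3}), and induct down to the single-point interval --- is the same as the paper's (the paper inducts on $|\mathrm{Core}[\hdelta,\hbeta]\cap\rM_1^+|$ rather than $|[\hdelta,\hbeta]|$, but this is cosmetic), and your recursion
$\chi[\hdelta,\hbeta]=\hat{v}_{\hdelta}\chi[\hdelta',\hbeta]$ (regular) and
$\chi[\hdelta,\hbeta]=\hat{v}_{\hdelta}\hat{v}_{\hdelta'}^{-1}\chi[\hdelta',\hbeta]$ (irregular)
is correct. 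However, the combinatorial bookkeeping you offer to match this recursion with the set $\rL\cup\overline{\mathrm{Core}}$ is wrong in a concrete way, and the error has the two cases essentially inverted.

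You assert that when $|\setCL^{-}(\hdelta)|\geq 2$ the element $\hdelta$ lies in $\mathrm{Core}[\hdelta,\hbeta]\cap\rM$. Both claims fail. First, $\hdelta$ is the minimum of the interval, so no other $\halpha\in[\hdelta,\hbeta]$ can share its rank: any such $\halpha$ satisfies $\halpha\geq\hdelta$, and in a graded poset $\halpha>\hdelta$ forces $\rho(\halpha)>\rho(\hdelta)$. Hence $\hdelta\in\overline{\mathrm{Core}}[\hdelta,\hbeta]$ \emph{always}, in both the regular and the irregular case (and symmetrically $\hbeta$ always lies in $\overline{\mathrm{Core}}$). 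Second, $|\setCL^{-}(\hdelta)|\geq 2$ means $\hdelta\in\rM_2^{-}\sqcup\rM_3^{-}$, which is disjoint from $\rM=\rM_1^{+}\cap\rM_1^{-}$ since $\rM_1^{-},\rM_2^{-},\rM_3^{-}$ partition $\sethE$ (Lemma \ref{L:multstatM} item \ref{I:sixmultstatM}); it is the \emph{irregular} case $|\setCL^{-}(\hdelta)|=1$, i.e.\ $\hdelta\in\rM_1^{-}\cap\rM_1^{+}$, in which $\hdelta\in\rM$. The correct bookkeeping runs as follows. In the regular step one removes only $\hdelta$, which contributed as an $\overline{\mathrm{Core}}$ element, so $\rL\cup\overline{\mathrm{Core}}$ shrinks by exactly $\{\hdelta\}$, matching the factor $\hat{v}_{\hdelta}$. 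In the irregular step one removes $\hdelta$ and its rank-sibling $\hdelta''$ (Lemma \ref{L:degrel}); $\hdelta''\in\mathrm{Core}[\hdelta,\hbeta]\setminus\rM$ so it was not contributing, while $\hdelta'\in\rM_2^{-}$ was in $\mathrm{Core}[\hdelta,\hbeta]\setminus\rM$ and hence also not contributing, but after the step $\hdelta'$ becomes the new minimum of $[\hdelta',\hbeta]$, moving into $\overline{\mathrm{Core}}[\hdelta',\hbeta]$ and thus into the contributing set. The net change is therefore $\{\hdelta\}$ added and $\{\hdelta'\}$ removed, which is exactly the $\hat{v}_{\hdelta}\hat{v}_{\hdelta'}^{-1}$ in the recursion. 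You need to replace your last paragraph by this (or equivalent) argument; as written it asserts the opposite of what Lemma \ref{L:multstatM} gives and does not correctly identify which elements change their $\mathrm{Core}$-status.
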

\begin{proof}
I leave as an exercise the trivial verifications based on Corollary \ref{R:equiv} and formulas
 (\ref{E:chath}), (\ref{E:elcharacters}) that:
\begin{enumerate}
\item If $\hdelta,\hdelta'$ satisfy conditions (\ref{E:group1}), then 
\begin{equation}\label{E:chtthcompatibility1}
\chi[\hdelta,\hbeta]=\hat{v}_{\hdelta}\chi[\hdelta',\hbeta].
\end{equation}
\item If $\hdelta,\hdelta'$ satisfy conditions (\ref{E:group2}), then 
\begin{equation}\label{E:chtthcompatibility2}
\chi[\hdelta,\hbeta]=\hat{v}_{\hdelta}\chi(\hdelta,\hbeta].
\end{equation}
\item If $\hdelta,\hdelta'$ satisfy conditions (\ref{E:group3}), then 
\[\begin{split}
&(\hdelta,\hbeta]=[\hdelta',\hbeta]\cup\{\hdelta''\}, \text{ where }\hdelta''\neq \hdelta', \rho(\hdelta'')=\rho(\hdelta'),
\end{split}\]
\begin{equation}\label{E:chtthcompatibility3}
\chi(\hdelta,\hbeta]=\hat{v}^{-1}_{\hdelta'}\chi[\hdelta',\hbeta] \text{ and } \chi[\hdelta,\hbeta]=\hat{v}_{\hdelta} \hat{v}^{-1}_{\hdelta'}\chi[\hdelta',\hbeta].
\end{equation}
\end{enumerate}
The rest of the proof is the induction on the cardinality of $\mathrm{Core}[\hdelta,\hbeta]\cap \rM_1^+$, which I omit.  During induction I  get a presentation of $\chi[\hdelta,\hbeta]$ as a product of $\hat{v}^{\pm}_{\halpha}$. The key moment is  that $\hat{v}_{\halpha}$, $\halpha\in \rM_{1}^{+}\backslash \rM$ appears in this product twice, one time in the numerator and the other in the denominator.

\end{proof}

It often happens that an equation satisfied by a function is more illuminating than an explicit formula for the function. In the next proposition, I derive such an equation for $\chi[\hdelta,\hdelta']$. 
\begin{proposition}\label{P:funequation} Let as assume hypothesis of Proposition \ref{P:charmaincomp}.
\begin{enumerate}
\item 
Let $\rL=\rL[\hdelta,\hbeta]$ be as in (\ref{E:Lset}). Then the exponents of the character $\chi[\hdelta,\hbeta]
=t^{-a}q^{-u}z^{-r}$ are
\begin{equation}\label{E:sigmaalt}
\begin{split}
&a[\hdelta,\hbeta]=-\left|\rL\cup\overline{\mathrm{Core}}\right| ,\\
&u[\hdelta,\hbeta]=-\sum_{\halpha\in \rL\cup\overline{\mathrm{Core}}}u(\halpha),\\
&z^{r[\hdelta,\hbeta]}=\prod_{\halpha\in \rL\cup\overline{\mathrm{Core}}}v^{-1}_{\halpha}(z),
\end{split}
\end{equation}

which coincide with the components of the $\Aut$-weight of $\sfres$.

\item \label{I:chareq2} Suppose 
that $\hdelta<(0)^l,(1)^{l-1}<\hbeta$.
Then 
\[\chi[\hdelta,\hbeta]=\chi[\hdelta,(1)^{l-1}]\chi[(0)^l,\hbeta]q^{2-4l} t^{-4}.\]
In particular, if $l=0$, then 
\begin{equation}\label{E:chiprop}
\chi[\hdelta ,\hdelta']=t^{-4}q^{2}\chi[\hdelta ,(1)^{-1}]\chi[(0)^0 ,\hdelta']. 
\end{equation}
\item
\[\chi[\hdelta,\hbeta]=\chi[\hdelta,\hbeta]^{\leq l}\chi[\hdelta,\hbeta]^{\geq l+1}.\]

\item \label{I:chareq3}
\begin{equation}
\begin{split}
&a[\hdelta,\hbeta]=a[\reflection(\hbeta), \reflection(\hdelta)],\quad a[\shift(\hdelta),\shift(\hbeta)]=a[\hdelta,\hbeta],\\
&u[\hdelta,\hbeta]=-u[\reflection(\hbeta), \reflection(\hdelta)],\quad u[\shift(\hdelta),\shift(\hbeta)]=u[\hdelta,\hbeta]+a[\hdelta,\hbeta],\\
&r[\hdelta,\hbeta]=S(r[\reflection(\hbeta)), \reflection(\hdelta)],\quad r[\shift(\hdelta),\shift(\hbeta)]=r[\hdelta,\hbeta].
\end{split}
 \end{equation}

\end{enumerate}
\end{proposition}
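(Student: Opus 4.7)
The plan is to derive every item from the closed-form expression supplied by Proposition \ref{P:charmaincomp},
\[
\chi[\hdelta,\hbeta] \;=\; \prod_{\halpha\in \rL[\hdelta,\hbeta]\,\cup\,\overline{\mathrm{Core}}[\hdelta,\hbeta]} \hat v_{\halpha},
\]
together with the explicit one-parameter formulas $\hat v_{\halpha}(t,q,z)=tq^{u(\halpha)}v_{\rbeta}(z)$ from (\ref{E:weights}). Item 1 is then a direct substitution, and the identification of the exponents with the $\Aut$-weight of $\sfres$ follows from Proposition \ref{P:charmatch}, which asserts $a_{\hA}=\chi^{-1}$, combined with the fact that $\sfres$ spans the one-dimensional top component of $H^{\rk+1}_{\fm}(\hA)$ via (\ref{E:varthetadef0}) and Proposition \ref{E:diality}.

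Item 3 is essentially bookkeeping: the decomposition $[\hdelta,\hbeta]=[\hdelta,\hbeta]^{\leq l}\sqcup[\hdelta,\hbeta]^{\geq l+1}$ is a disjoint union of full sub-intervals, and membership of an element $\halpha$ in $\rL$ or in $\overline{\mathrm{Core}}$ is determined by the fiber $\rho^{-1}(\rho(\halpha))\cap[\hdelta,\hbeta]$, which does not mix across the cut. Therefore the products split and multiply out to give the claimed factorization.

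For item 4, the anti-involution $\reflection$ sends the interval $[\hdelta,\hbeta]$ bijectively onto $[\reflection(\hbeta),\reflection(\hdelta)]$ and exchanges the subsets $\rL$ and $\overline{\mathrm{Core}}$ with their images under $\reflection$; since $\rho\circ\reflection=10-\rho$ by (\ref{E:htands}) and $u(\reflection(\halpha))=-u(\halpha)$ up to an additive constant absorbed in the shift convention, one reads off directly that $a$ is invariant, $u$ changes sign, and $r$ is mapped by the appropriate Weyl reflection $S$. The shift $\shift$ is an order-preserving automorphism of the graded poset $\sethE$ which adds $1$ to $u$ on every vertex; summing over the $a[\hdelta,\hbeta]$ elements of $\rL\cup\overline{\mathrm{Core}}$ gives $u[\shift(\hdelta),\shift(\hbeta)]=u[\hdelta,\hbeta]+a[\hdelta,\hbeta]$, with $a$ and $r$ invariant.

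The main obstacle is item 2, which is the only place where one cannot simply split a disjoint union. Under the assumption $\hdelta<(0)^l,(1)^{l-1}<\hbeta$, the elements $(0)^l$ and $(1)^{l-1}$ form a neck in the diagram (\ref{P:kxccvgw13}); as sets $[\hdelta,\hbeta]=[\hdelta,(1)^{l-1}]\cup[(0)^l,\hbeta]$, but the passage from the local unions $\rL\cup\overline{\mathrm{Core}}$ on the two halves to the global one is disturbed by a small controlled set of boundary vertices whose $\rM$-type or core-type changes when the interval is enlarged. The plan is to show by direct inspection of (\ref{P:kxccvgw13}) that this discrepancy is concentrated on exactly four vertices located at the neck, carrying total $u$-weight $4l-2$ and total $\widetilde{\bT}^5$-weight $0$ (the four spinor weights add to zero because the neck elements form a symmetric configuration under the diagonal torus of $\SO(10)$). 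Substituting into (\ref{E:weights}) the contribution of these four vertices reads off as $q^{2-4l}t^{-4}$, matching the claimed correction factor. The special case $l=0$ in (\ref{E:chiprop}) is then immediate.
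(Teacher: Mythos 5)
Your proof is correct and follows essentially the same route as the paper's: items 1 and 3 read off directly from Proposition~\ref{P:charmaincomp} by level-wise bookkeeping of $\rL\cup\overline{\mathrm{Core}}$, and item 4 rests on the $\reflection$- and $\shift$-invariance of $\rM$ (Lemma~\ref{L:multstatM}, item~\ref{I:sixmultstatM}), which is exactly the content of the paper's one-line proof. For item 2, which the paper dismisses without comment, your four neck vertices are $(0)^l$, $(12)^l$, $(2)^{l-1}$, $(1)^{l-1}$: the discrepancy is confined to $\rho$-levels $8l$, $8l+1$, $8l+2$, where the two sub-intervals $[\hdelta,(1)^{l-1}]$ and $[(0)^l,\hbeta]$ are in fact disjoint and each picks exactly one of the two elements of the $\rho$-fiber, so the ratio of characters is $\hat v_{(0)^l}\hat v_{(12)^l}\hat v_{(2)^{l-1}}\hat v_{(1)^{l-1}}=t^4q^{4l-2}$ with trivial $\widetilde{\bT}^5$-weight, confirming the claimed factor $t^{-4}q^{2-4l}$; one can also sharpen your ``$u(\reflection(\halpha))=-u(\halpha)$ up to an additive constant'' to an exact equality, with no change to the conclusion.
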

\begin{proof}
Only the last item in the proposition requires a comment. By Lemma \ref{L:multstatM}, item \ref{I:sixmultstatM}, the set $\rM$
 is invariant under $\reflection$ and $\shift$. Thus $\reflection$ induces a bijection between $\rL[\hdelta,\hbeta]$ and $\rL[\reflection(\hbeta), \reflection(\hdelta)]$ and $\shift$ between $\rL[\hdelta,\hbeta]$ and $\rL[\shift(\hdelta),\shift(\hbeta)]$.
\end{proof}

Here is a result of computation of $\chi$ for the most elementary sets.
\begin{proposition}\label{P:cheleminterval}
\[\chi[(0)^r,(1)^r]=t^{8}q^{8r}.\]

\end{proposition}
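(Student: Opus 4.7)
My plan is first to reduce to the base case $r=0$ via the shift symmetry, and then to compute the three $\Aut$-components of $\chi[(0)^0,(1)^0]$ separately. By Proposition \ref{P:funequation}(\ref{I:chareq3}), iterating the shift $\shift$ yields $a[(0)^r,(1)^r]=a[(0)^0,(1)^0]$, $u[(0)^r,(1)^r]=u[(0)^0,(1)^0]+r\cdot a[(0)^0,(1)^0]$, together with preservation of the $\widetilde{\bT}^5$-weight. In view of Proposition \ref{P:charmatch} and (\ref{E:sigma}), writing $\chi=t^{a}q^{u}z^{w}$, the identity $\chi[(0)^r,(1)^r]=t^{8}q^{8r}$ for all $r$ reduces to the three equalities $a[(0)^0,(1)^0]=8$, $u[(0)^0,(1)^0]=0$, and $w[(0)^0,(1)^0]=0$.

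I would then dispose of the $\bT$- and $\widetilde{\bT}^5$-weights at $r=0$ by abstract symmetry. Every $\halpha\in[(0)^0,(1)^0]$ satisfies $u(\halpha)=0$ by (\ref{E:udef}), so both $\sA=\hA[(0)^0,(1)^0]$ and $P=\C[(0)^0,(1)^0]$ are concentrated in $\bT$-weight zero, whence the generating space of $\omega_{\sA}$ and the module $\det P$ both inherit trivial $\bT$-weight, and formula (\ref{E:chireddef}) gives $u[(0)^0,(1)^0]=0$. For the $\widetilde{\bT}^5$-weight, by Remark \ref{R:symmetries} the presentation $\sA=P/\fr$ is $\Spin(10)$-equivariant, so the generating space of $\omega_{\sA}$ is a one-dimensional $\Spin(10)$-representation and $\det P=\Lambda^{16}\sspinor^{*}$ is a one-dimensional $\Spin(10)$-character. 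Both are trivial since $\Spin(10)$ is semisimple and simply connected, so $\chi[(0)^0,(1)^0]=\chi'/\det P$ has $w[(0)^0,(1)^0]=0$.

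It remains to show $a[(0)^0,(1)^0]=8$. This integer is the a-invariant of $\sA$ in the sense of Proposition \ref{E:diality}, and by Remark \ref{E:indexa} it equals the Fano index $\mathrm{ind}\,\OGr^{+}(5,10)=8$. A self-contained combinatorial verification via Proposition \ref{P:charmaincomp} reduces to the identity $|\rL[(0)^0,(1)^0]\cup\overline{\mathrm{Core}}[(0)^0,(1)^0]|=8$: enumerating the Schubert cells of $\OGr^{+}(5,10)$ by even-cardinality subsets of $\{0,1,2,3,4\}$ produces the palindromic rank distribution $(1,1,1,2,2,2,2,2,1,1,1)$, so $|\overline{\mathrm{Core}}|=6$ and $|\mathrm{Core}|=10$; one then checks directly from the description of $\rM$ in Lemma \ref{L:multstatM} that exactly two elements of $\mathrm{Core}$ lie in $\rM$, yielding $|\rL|=2$ and $|\rL\cup\overline{\mathrm{Core}}|=8$.

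The substantive step is the $\C^{\times}$-weight computation: either one invokes the classical value $\mathrm{ind}\,\OGr^{+}(5,10)=8$ via Remark \ref{E:indexa}, or one carries out the case-by-case combinatorial intersection of $\mathrm{Core}[(0)^0,(1)^0]$ with the set $\rM$ from Lemma \ref{L:multstatM}. The $q$- and $z$-components, by contrast, fall out of abstract $\bT$- and $\Spin(10)$-symmetry and require no calculation.
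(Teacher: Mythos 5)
Your proof is correct and follows essentially the same route as the paper: dispose of the $\widetilde{\bT}^5$-component by $\Spin(10)$-equivariance and semisimplicity, then pin down the $\C^{\times}\times\bT$-component combinatorially via Proposition~\ref{P:charmaincomp}, with the only organizational difference being that the paper applies (\ref{E:sigmaalt0}) directly at general $r$ (where $\hat{v}_{\halpha}\big|_{\C^{\times}\times\bT}=tq^{r}$ is constant on $[(0)^r,(1)^r]$), rather than reducing to $r=0$ by the shift. One caution: your suggested shortcut via Remark~\ref{E:indexa} and the known value $\mathrm{ind}\,\OGr^{+}(5,10)=8$ is circular in the paper's internal logic — the later remark in Section~\ref{S:limiting} \emph{derives} $\mathrm{ind}=8$ from this very proposition — so the combinatorial count $|\rL\cup\overline{\mathrm{Core}}|=8$, which you correctly carry out, is the argument to keep.
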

\begin{proof}
$\C^{\times}\times\bT\times\Spin(10)$ is the symmetry group of $A[(0)^r,(1)^r]$. It means that the $\widetilde{\bT}^5$-character $\chi[(0)^r,(1)^r]$ is a restriction of one-dimensional representation of $\Spin(10)$. As $\Spin(10)$ is semisimple, any one-dimensional representation is trivial. The character of $\C^{\times}\times\bT$ is computed directly with (\ref{E:sigmaalt0}).

\end{proof}

The following corollary is a combination of the results of Propositions \ref{P:funequation} and \ref{P:cheleminterval}.
\begin{corollary}\label{C:specialcharacter}
Suppose $N<N'$. Then 
\[\chi[(0)^N,(1)^{N'}]=
t^{8+4(N'-N)}q^{2 {N'}^2+4 {N'}-2 {N}^2+4 {N}}
\]
\end{corollary}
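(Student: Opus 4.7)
The plan is to deduce the formula from a short induction on $N'-N$, using Proposition \ref{P:cheleminterval} as the base case and Proposition \ref{P:funequation}(\ref{I:chareq2}) as the recursion.

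First I would note that when $N'=N$ the hypothesis of Corollary \ref{C:specialcharacter} degenerates, but the formula agrees at $N'=N$ with Proposition \ref{P:cheleminterval}: indeed $t^{8+4(N-N)}q^{2N^2+4N-2N^2+4N}=t^{8}q^{8N}=\chi[(0)^{N},(1)^{N}]$. For the inductive step with $N<N'$, I would apply Proposition \ref{P:funequation}(\ref{I:chareq2}) to the interval $[\hdelta,\hbeta]=[(0)^{N},(1)^{N'}]$ with $l=N'$, so that $\hdelta=(0)^{N}<(0)^{N'}$ and $(1)^{N'-1}<(1)^{N'}=\hbeta$; this yields
\[
\chi[(0)^{N},(1)^{N'}] \;=\; \chi[(0)^{N},(1)^{N'-1}]\,\chi[(0)^{N'},(1)^{N'}]\,q^{2-4N'} t^{-4}.
\]
Substituting $\chi[(0)^{N'},(1)^{N'}]=t^{8}q^{8N'}$ from Proposition \ref{P:cheleminterval} simplifies the factor attached to $\chi[(0)^{N},(1)^{N'-1}]$ to $t^{4}q^{4N'+2}$.

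Now I would run the induction. Assuming inductively that
\[
\chi[(0)^{N},(1)^{N'-1}]=t^{8+4(N'-1-N)}q^{2(N'-1)^{2}+4(N'-1)-2N^{2}+4N},
\]
multiplication by $t^{4}q^{4N'+2}$ produces the $t$-exponent $8+4(N'-N)$ immediately, and collects the $q$-exponent into
\[
2(N'-1)^{2}+4(N'-1)-2N^{2}+4N + 4N'+2 \;=\; 2N'^{2}+4N'-2N^{2}+4N,
\]
where the last equality is the routine expansion $2(N'-1)^{2}+4(N'-1)+4N'+2 = 2N'^{2}+4N'$. This closes the induction and yields the claimed formula.

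There is no real obstacle here; the content of the corollary is entirely the combinatorics of telescoping the recursion (\ref{E:chiprop}) shifted by $\shift^{l}$, and the only thing to verify is the bookkeeping of the $q$- and $t$-exponents. (Equivalently, one could induct on $N'-N$ via the symmetric recursion $\chi[(0)^{N},(1)^{N'}]=t^{4}q^{4N-2}\chi[(0)^{N+1},(1)^{N'}]$ obtained by taking $l=N+1$ in Proposition \ref{P:funequation}(\ref{I:chareq2}); both directions give the same closed-form answer, which is consistent with the shift-invariance $a[\shift(\hdelta),\shift(\hbeta)]=a[\hdelta,\hbeta]$ and $u[\shift(\hdelta),\shift(\hbeta)]=u[\hdelta,\hbeta]+a[\hdelta,\hbeta]$ of Proposition \ref{P:funequation}(\ref{I:chareq3}).)
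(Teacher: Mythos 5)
Your proof is correct and matches the paper's (implicit) approach: the paper merely notes the corollary is a combination of Propositions \ref{P:funequation} and \ref{P:cheleminterval}, and your telescoping induction via Proposition \ref{P:funequation}(\ref{I:chareq2}) with base case Proposition \ref{P:cheleminterval} is exactly how that combination is meant to be carried out. The exponent bookkeeping is verified and correct.
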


\begin{remark}
If I choose $[\hdelta, \hdelta']$ such that $\reflection[\hdelta, \hdelta']=[\hdelta, \hdelta']$ for $\reflection$ as in (\ref{E:involution}), then $\chi[\hdelta, \hdelta'](t,q,z)=\chi[\hdelta, \hdelta'](t,q^{-1},S(z))$ and $ \chi[\hdelta, \hdelta'](t,q,z)$ doesn't depend on $q$ and $z$.
\end{remark}

\subsection{The groups $H_{\fa}^i[\hdelta,\hdelta']$}
In this section, I will answer question \ref{I:q3} from the page \pageref{I:q3} about nondegeneracy of the pairing $\sfres(\sfm(a,b))$.
To do this I need to explore the groups $H_{\fa}^i[\hdelta,\hdelta']$ more closely. My exposition will becomes more accessible with the use of certain $D$-modules, which I introduce in the next section.

 In Sections \ref{SS:Computation}, \ref{SS:thestardualitypairing}, \ref{S:fourterm}, \ref{S:virtchar},
all the intervals $[\hdelta,\hdelta']$ satisfy $\hdelta\leq (0)^0,(1)^{-1}\leq \hdelta'$ and the purity condition (\ref{E:purity}).

\subsubsection{The auxiliary D-modules}\label{E:dmod}
Proposition \ref{P:localtorisom} enables me to identify local cohomology of $\hA$ with the $\Tor$ functor over the free commutative algebra $P$. 
 In the isomorphism $H^i_{\fc}(\hA)=\Tor_j^P(\hA,M)$ (\ref{E:isolocaltor}), the non-finitely generated $P$-module $M$ depends on the presentation $\hA=P/\fr$ and the ideal $\fc$. 
In order to describe $M$ in finite terms, I will introduce the algebra of differential operators $D\supset P$. The module $M$ will become a cyclic $D$-module. My present goal is to describe a number of $D$ modules that will be used in the outlined above computation of local cohomology. I will use non uniqueness of the choice of $M$ to my advantage: one choice (type T) of $M$ will illuminate the gradings on $H^i_{\fc}(\hA)$, the other (type S) will make $H^i_{\fc}(\hA)$ more accessible for computations. The reader may check from time to time with Appendix \ref{S:Dmodulenoteconstr} for notations and some elementary constructions that will be used in the rest of the paper. I fix the interval $[\hdelta,\hdelta']$ which satisfy $\hdelta<(0)^0, (1)^{-1}<\hdelta'$, for the rest of the section so that all the constructions will be based on this choice.

 The differential operators based on $\Span{[\hdelta,\hdelta']}$ (I use notations (\ref{E:dotprdef}, \ref{E:regseqgenverynew}, \ref{E:regconstr})) will be denoted by $D[\hdelta,\hdelta']$. With the help of the dot product 
 (\ref{E:dotprdef}) I identify $\lambda^{\halpha}$ with coordinates on $\Span{[\hdelta,\hdelta']}$. 
 
 It is customary in the present context to denote the dual basis to $\{\lambda^{\halpha}\}$ for $\Span{[\hdelta,\hdelta']}^*$ by $\{w_{\reflection(\halpha)}| \halpha \in [\hdelta,\hdelta']\}$ (the involution $\reflection$ was introduced in (\ref{E:involution}). In the presence of the dot product, $w_{\reflection(\halpha)}$ has an interpretation of the partial derivative $\frac{\sd}{\sd \lambda^{\halpha}}$.
The algebra $ D[\hdelta,\hdelta']$ is generated by $\{\lambda^{\halpha},w_{\reflection(\halpha)}| \halpha \in [\hdelta,\hdelta']\}$ that are subject to relations
\[\begin{split}
&[\lambda^{\halpha},\lambda^{\halpha'}]=[w_{\hbeta},w_{\hbeta'}]=0\\
&[\lambda^{\halpha},w_{\hbeta}]=\delta_{\halpha,\reflection(\hbeta)}.
\end{split}\]

Introduce the following subsets of $[\hdelta,\hdelta']$.
\begin{equation}\label{E:Ndef}
\setN(\fc)=\begin{cases}
\emptyset&\text{ if } \fc=\{0\}\\
[\hdelta,(1)^{-1}]&\text{ if } \fc=\fa\\
[(0)^1,\hdelta']&\text{ if } \fc=\fa'\\
[(0)^{0},\hdelta']&\text{ if } \fc=\fb\\
[\hdelta,\hdelta']^{\leq-1}&\text{ if } \fc=\ff\\
[\hdelta,\hdelta']^{\geq-1}&\text{ if } \fc=\ff'\\
[\hdelta,\hdelta']&\text{ if } \fc=\fm\\
\setE&\text{ if } \fc=\fp.\\
\end{cases}
\end{equation}
Sets $\setN(\fc)$ satisfy
\[\setN(\fm)=[\hdelta,\hdelta']=\setN(\fa)\cup \setN(\fp)\cup \setN(\fa'),\quad \setN(\fp)\cup \setN(\fa')=\setN(\fb).\]

Components of the orthogonal sum decomposition 
\[\Span{ [\hdelta,\hdelta' ]}= \alt[\setN(\fa)]+\rReg[\setN(\fa)]+\rReg[\setN(\fb)]+\alt[\setN(\fb)]\]
were defined in (\ref{E:regseqgenverynew}) and (\ref{E:regconstr}).
Maps (\ref{E:functorialpr})
induce inclusion of algebras 
\begin{equation}\label{E:kappadef}
\begin{split}
&\sfk^{\alt}:D(\alt[\hdelta,\hgamma])\to D(\alt[\hdelta',\hgamma]),\\
& \sfk^{\rReg}:D(\rReg[\hdelta,\hgamma])\to D(\rReg[\hdelta',\hgamma]),\\
&\hdelta'<\hdelta<\hgamma.
\end{split}
\end{equation}
To become more familiar with functorial properties of $\alt$ and $\rReg$ (\ref{E:functorialpr}), let us consider an example:
\begin{example}\label{EX:regalt}
\begin{enumerate}
\item $\alt[(0)^0,(13)^{0}]=0$, $\rReg [(0)^0,(13)^{0}]=\Span{ [(0)^0,(13)^{0}]}$.
\item The map (\ref{E:functorialpr}) of $\rReg [(0)^0,(13)^{0}]$ to $\rReg[(45)^{-1},(13)^{0}]$ 
 is 
\[\inclusiono(\lambda^{(0)^0})=\frac{1}{\sqrt{2}}( \lambda^{(0)^0}+\lambda^{(3)^{-1}}), \quad \inclusiono(\lambda^{(12)^0})= \frac{1}{\sqrt{2}}(\lambda^{(12)^0}+\lambda^{(2)^{-1}}),\quad \inclusiono(\lambda^{(13)^0})= \lambda^{(13)^0}.\]
\item \label{I:spanexample} The map $\inclusiono:\Span{[(4)^{-1},(13)^{0}]}\to \Span{((35)^{-1},(13)^{0}]}$ (\ref{E:functorialpr}) transforms
$\lambda^{(4)^{-1}}$ to $ \frac{1}{\sqrt{2}}(\lambda^{(4)^{-1}}+\lambda^{(45)^{-1}})$.
\end{enumerate}
\end{example}

\paragraph{Modules $\moduleT_{\fc}$}\label{S:modulesT}
 \bigskip

By using notations (\ref{E:Ndef}) and (\ref{E:algdistrnot}), I define a family $\moduleT_{\fc},\fc=(0),\fa,\fa',\fb,\fp,\fm$
of $P$-modules 
 \begin{equation}\label{E:TIJdef}
 \begin{split}
 &\moduleT_{(0)}:=\C[\hdelta,\hdelta']\\
 &\moduleT_{\fa}:=\C[\setN(\fa)]^{-1}\otimes \C[ \setN(\fp)]\otimes \C[\setN(\fa')]
 \\
 &\moduleT_{\fa'}:=\C[\setN(\fa)]\otimes \C[\setN(\fp)]\otimes \C[\setN(\fa')]^{-1}
 \\
 &\moduleT_{\fb}:= \C[\setN(\fa)]\otimes \C[\setN(\fb)]^{-1}
 \\
 &\moduleT_{\ff}:=\C[\setN(\ff)]^{-1}\otimes \C[\setN(\ff')]
 \\
 &\moduleT_{\ff'}:=\C[\setN(\ff)]\otimes \C[\setN(\ff')]^{-1}
 \\
 &\moduleT_{\fm}:=\C[\setN(\fm)]^{-1}.
 \end{split}\end{equation}
 In the following, I occasionally will use an abbreviation
 \[\cL:=\C[z,z^{-1}]\] 
 for the ring of Laurent polynomials. If $N$ is a $\C$ vector space, $N[z,z^{-1}]$ will stand for a free $\C[z,z^{-1}]$-module $N\otimes \C[z,z^{-1}]$. A span of a set of elements $\{x_i\}$ in a $\C[z,z^{-1}]$-module $M$ will be denoted by $\LSpan{x_i}$.
 A free $\C[z,z^{-1}]$-module $\LSpan{[\hdelta,\hdelta']}$ contains a submodule spanned over $\C[z,z^{-1}]$ by 
 \begin{equation}\label{E:lambdaz}
 \lambda^{\rbeta}(z):=\sum_{\rbeta^k\in [\hdelta,\hdelta']} \lambda^{\rbeta^k}z^{k},\quad \rbeta\in \setE.
 \end{equation} 
 \begin{lemma}\label{L:sumdecomposition}
 There is a direct sum decomposition of free $\C[z,z^{-1}]$ modules.
 \begin{equation}\label{E:decomposition}
 \LSpan{[\hdelta,\hdelta']}\cong \LSpan{\lambda^{\rbeta}(z)}\oplus\LSpan{ \setN(\fa)}\oplus \LSpan{\setN(\fa')}.
 \end{equation}
 \end{lemma}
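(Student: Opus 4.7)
The plan is to exhibit the decomposition explicitly by analyzing how the generators $\lambda^{\rbeta^k}$ of $\LSpan{[\hdelta,\hdelta']}$ distribute among the three summands according to the sign of $k$.

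First, I will set up the natural $\cL$-linear map
\[
\Phi:\LSpan{\lambda^{\rbeta}(z)}\oplus\LSpan{\setN(\fa)}\oplus \LSpan{\setN(\fa')}\longrightarrow \LSpan{[\hdelta,\hdelta']}
\]
which is the direct sum of the three obvious inclusions: $\setN(\fa)=[\hdelta,(1)^{-1}]$ and $\setN(\fa')=[(0)^1,\hdelta']$ embed as subsets of $[\hdelta,\hdelta']$, and each $\lambda^{\rbeta}(z)$ is by definition (\ref{E:lambdaz}) already an element of $\LSpan{[\hdelta,\hdelta']}$.

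The next step is surjectivity. Any generator $\lambda^{\halpha}$ with $\halpha=\rbeta^k$ and $k<0$ lies in $\LSpan{\setN(\fa)}$, and any generator with $k>0$ lies in $\LSpan{\setN(\fa')}$, since the hypothesis $\hdelta\leq(0)^0,(1)^{-1}\leq\hdelta'$ ensures $[\hdelta,\hdelta']=\setN(\fa)\sqcup\{\rbeta^0:\rbeta\in\setE\}\sqcup\setN(\fa')$ once one inspects which $\rbeta^k$ lie in the interval. For the remaining generators $\lambda^{\rbeta^0}$ (where $\rbeta\in\setE$), I will use the identity
\[
\lambda^{\rbeta^0}=\lambda^{\rbeta}(z)-\!\!\!\sum_{\substack{k<0\\ \rbeta^k\in[\hdelta,\hdelta']}}\!\!\! z^k\lambda^{\rbeta^k}-\!\!\!\sum_{\substack{k>0\\ \rbeta^k\in[\hdelta,\hdelta']}}\!\!\! z^k\lambda^{\rbeta^k},
\]
in which the first subtraction lies in $\LSpan{\setN(\fa)}$ and the second in $\LSpan{\setN(\fa')}$.

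Finally, injectivity is the cleanest step. Suppose $\sum_{\rbeta}a_{\rbeta}(z)\lambda^{\rbeta}(z)+y+y'=0$ with $y\in\LSpan{\setN(\fa)}$ and $y'\in\LSpan{\setN(\fa')}$. For each $\rbeta\in\setE$, extract the $\cL$-coefficient of $\lambda^{\rbeta^0}$: the $y$ and $y'$ terms contribute nothing (their supports avoid $\setN(\fp)$), while $\lambda^{\rbeta}(z)$ contributes with coefficient $z^0=1$ to $\lambda^{\rbeta^0}$ and $\lambda^{\rbeta'}(z)$ contributes nothing for $\rbeta'\neq\rbeta$. Hence $a_{\rbeta}(z)=0$ for all $\rbeta$, so the first summand vanishes; then $y+y'=0$ with $y,y'$ supported in disjoint subsets of $[\hdelta,\hdelta']$ forces $y=y'=0$. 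The map $\Phi$ is therefore an isomorphism of free $\cL$-modules. There is no substantial obstacle here; the only thing to verify carefully is the case analysis on $k$ in relation to the standing hypothesis $\hdelta\leq(0)^0,(1)^{-1}\leq\hdelta'$, which guarantees that $\rbeta^0$ lies in $[\hdelta,\hdelta']$ for every $\rbeta\in\setE$ and so the extraction-of-coefficient argument above makes sense.
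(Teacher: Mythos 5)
Your proof is correct and uses essentially the same idea as the paper's: the paper's one-line proof appeals to the projection $\lambda^{\rbeta^k}\mapsto\delta_{0,k}\lambda^{\rbeta}$ and notes that it restricts to an isomorphism on $\LSpan{\lambda^{\rbeta}(z)}$, which is precisely the coefficient-extraction you invoke for injectivity; your surjectivity step merely spells out what the paper declares "obvious."
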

 \begin{proof}The proof becomes obvious if I use the fact that the map
\[\begin{split} 
&\LSpan{[\hdelta,\hdelta']}\to \LSpan{[\lambda^{\rbeta}]} \quad \lambda^{\rbeta^k}\to \delta_{0,k}\lambda^{\rbeta}
\end{split}\]
after restriction on $\LSpan{\lambda^{\rbeta}(z)}$ induces an isomorphism \[\LSpan{\lambda^{\rbeta}(z)}\cong \LSpan{\lambda^{\rbeta}}.\]
 \end{proof}
\begin{corollary} 
 The $P[z,z^{-1}]$ module 
 \[\moduleT_{\fp}:=\C[\lambda^{\rbeta}(z)]^{-1}\otimes \C[\setN(\fa)]\otimes \C[\setN(\fa')]\]
is free over $\C[z,z^{-1}]$.
\end{corollary}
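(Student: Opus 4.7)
The plan is to read off the claim directly from Lemma \ref{L:sumdecomposition} together with the observation that both the polynomial and the ``inverse polynomial'' constructions preserve freeness over $\cL = \C[z,z^{-1}]$.

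By Lemma \ref{L:sumdecomposition}, the $\cL$-module $\LSpan{[\hdelta,\hdelta']}$ splits as a direct sum
\[
\LSpan{[\hdelta,\hdelta']} \;\cong\; \LSpan{\lambda^{\rbeta}(z)}\oplus\LSpan{\setN(\fa)}\oplus\LSpan{\setN(\fa')}
\]
of free $\cL$-submodules, where the first summand has $\cL$-basis $\{\lambda^{\rbeta}(z):\rbeta\in\setE\}$ and the other two have the evident $\cL$-bases indexed by $\setN(\fa)$ and $\setN(\fa')$. The module $\moduleT_{\fp}$ is constructed by applying $\C[\,\cdot\,]^{-1}$ to the first summand and $\C[\,\cdot\,]$ to the other two, and then taking the tensor product.

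Now I would check freeness factor by factor. For $\C[\setN(\fa)]$ and $\C[\setN(\fa')]$ the generators $\lambda^{\halpha}$ do not involve $z$, so these are ordinary $\C$-polynomial algebras and their base change to $\cL$ is free over $\cL$ with a basis of ordinary monomials. For $\C[\lambda^{\rbeta}(z)]^{-1}$, the crucial point is that the $\lambda^{\rbeta}(z)$ form a free $\cL$-basis of their $\cL$-span; setting $y_{\rbeta}:=\lambda^{\rbeta}(z)$, we can identify $\C[\lambda^{\rbeta}(z)]$ with the $\cL$-polynomial ring $\cL[y_{\rbeta}:\rbeta\in\setE]$, and hence, by the conventions of Appendix \ref{S:Dmodulenoteconstr}, the module $\C[\lambda^{\rbeta}(z)]^{-1}$ acquires an $\cL$-basis given by the inverse monomials in the $y_{\rbeta}$.

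Since each of the three tensor factors is free over $\cL$, the tensor product $\moduleT_{\fp}$ is free over $\cL$. The expected obstacle — namely ensuring that the $\cL$-structure induced from $P[z,z^{-1}]$ on $\moduleT_{\fp}$ matches the one coming from the factor-wise $\cL$-structures — is removed by the fact that the decomposition in Lemma \ref{L:sumdecomposition} is already a decomposition of $\cL$-modules, so the splitting and all three constructions are $\cL$-linear.
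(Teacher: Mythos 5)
Your proof is correct and follows the approach the paper clearly intends (the Corollary is stated with no proof, immediately after Lemma \ref{L:sumdecomposition}, precisely because the decomposition in that lemma is meant to yield the claim). You correctly isolate the only delicate point — that $\LSpan{\lambda^{\rbeta}(z)}$ is a free $\cL$-module with basis $\{\lambda^{\rbeta}(z)\}$, so that $\C[\lambda^{\rbeta}(z)]^{-1}$ may be treated as an inverse-monomial module over the $\cL$-polynomial ring in the $y_{\rbeta}=\lambda^{\rbeta}(z)$ — and the remaining observations (the $\setN(\fa)$ and $\setN(\fa')$ factors do not involve $z$, and a tensor product of a free $\cL$-module with $\C$-vector spaces is free over $\cL$) are routine but correctly stated.
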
 
 A $P$-modules $\moduleT_{\fc}$ is a $D$-modules. The elements $w_{\reflection(\halpha)}$ act by $\frac{\sd }{\sd \lambda^{\halpha}}$.
 The products 
 \[\begin{split}
 &\varpi_{\fc}:=\prod_{\halpha\in \setN(\fc)}(\lambda^{\halpha})^{-1}\fc\neq \fp, \\
 &\varpi_{\fp}:=\prod_{\rbeta\in \setE}(\lambda_{[\hdelta,\hdelta']}^{\rbeta}(z))^{-1}
 \end{split}\] are $D$ generators for $\moduleT_{\fc}$ and $\moduleT_{\fp}$ respectively.
Infinitesimal symmetries of the algebra ${\hA}$ act on $\moduleT_{\fc}$. Some of them can be integrated to an algebraic action of the corresponding Lie group. In particular, this applies to $\C^{\times}\times\bT$. 
Introduce a function
\begin{equation}\label{E:functionsdef}
s(\fc):=|\setN(\fc)|.
\end{equation}
Degree $\deg_{\C^{\times}}$ (\ref{E:cdeg}) and $\bT$-weight $\deg_{\bT}$ of the generators $\varpi_{\fc}$ $\fc\neq \fp$ is equal to 
\[\begin{split}
&\deg_{\C^{\times}} \varpi_{\fc}=-s(\fc), \\
&\deg_{\bT} \varpi_{\fc} =-\sum_{\rbeta^k\in \setN(\fc)}k. 
\end{split}\]

The isomorphism described in the next proposition will be used in the definition of the field - antifield pairing.
 \begin{proposition}\label{P:tensorproduct}
 There is an isomorphism
 \begin{equation}\label{E:tensorproduct1}
 \moduleT_{\fm}\overset{\phi}{\longrightarrow}
\moduleT_{\fa}\underset{P}{\otimes}\moduleT_{\fp}\underset{P}{\otimes} \moduleT_{\fa'}.
 \end{equation}
 In the above equation I extended scalars from $\C$ to $\C[z,z^{-1}]$ where it was necessary.
 \end{proposition}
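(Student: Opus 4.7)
The plan is to exhibit $\phi$ via a change of $\cL$-algebra presentations of the extended polynomial ring $P\otimes_{\C}\cL$, driven by Lemma~\ref{L:sumdecomposition}. After extending scalars to $\cL$, that lemma gives the direct-sum decomposition
\[
\LSpan{[\hdelta,\hdelta']}=\LSpan{\setN(\fa)}\oplus\LSpan{\lambda^{\rbeta}(z)}\oplus\LSpan{\setN(\fa')}
\]
of free $\cL$-modules. Hence the standard set $\{\lambda^{\halpha}:\halpha\in\setN(\fm)\}$ and the alternative set $\setN(\fa)\sqcup\{\lambda^{\rbeta}(z):\rbeta\in\setE\}\sqcup\setN(\fa')$ are two $\cL$-bases of the same free $\cL$-module of rank $|[\hdelta,\hdelta']|$. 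Consequently the $\cL$-linear bijection that is the identity on $\setN(\fa)\cup\setN(\fa')$ and sends $\lambda^{\rbeta^{0}}\mapsto\lambda^{\rbeta}(z)$ extends uniquely to an $\cL$-algebra automorphism $\psi$ of $P\otimes_{\C}\cL$.

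Next I would realize both sides of the claimed isomorphism as localizations of $P\otimes_{\C}\cL$. On the left, directly, $\moduleT_{\fm}\otimes_{\C}\cL=(P\otimes_{\C}\cL)[\setN(\fm)^{-1}]$. On the right, the definitions (\ref{E:TIJdef}) together with the corollary to Lemma~\ref{L:sumdecomposition} give $\moduleT_{\fa}=P[\setN(\fa)^{-1}]$, $\moduleT_{\fa'}=P[\setN(\fa')^{-1}]$ and $\moduleT_{\fp}=(P\otimes_{\C}\cL)[\{\lambda^{\rbeta}(z):\rbeta\in\setE\}^{-1}]$. Since the tensor product over $P$ of localizations is the iterated localization,
\[
\moduleT_{\fa}\underset{P}{\otimes}\moduleT_{\fp}\underset{P}{\otimes}\moduleT_{\fa'}\;\cong\;(P\otimes_{\C}\cL)\bigl[\setN(\fa)^{-1},\,\{\lambda^{\rbeta}(z)\}^{-1},\,\setN(\fa')^{-1}\bigr].
\]
The automorphism $\psi$ carries the multiplicative set $\setN(\fm)=\setN(\fa)\sqcup\setN(\fp)\sqcup\setN(\fa')$ bijectively onto $\setN(\fa)\sqcup\{\lambda^{\rbeta}(z)\}\sqcup\setN(\fa')$, so by the universal property of localization it lifts to the desired $\cL$-algebra isomorphism
\[
\phi:\;(P\otimes_{\C}\cL)[\setN(\fm)^{-1}]\;\xrightarrow{\;\sim\;}\;(P\otimes_{\C}\cL)\bigl[\setN(\fa)^{-1},\{\lambda^{\rbeta}(z)\}^{-1},\setN(\fa')^{-1}\bigr],
\]
which one checks sends the cyclic generator $\varpi_{\fm}$ to $\varpi_{\fa}\otimes\varpi_{\fp}\otimes\varpi_{\fa'}$. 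Compatibility with the $D$-module structure of Section~\ref{E:dmod} follows once the derivations $w_{\reflection(\halpha)}$ are transported along $\psi$ by the chain rule.

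The main subtle point is that $\phi$ is not $P$-linear for the naive $P$-module structures on the two sides: $\psi$ substitutes $\lambda^{\rbeta^{0}}$ by $\lambda^{\rbeta}(z)=\lambda^{\rbeta^0}+\sum_{k\neq 0}\lambda^{\rbeta^{k}}z^{k}$, a move valid only after the $\cL$-basis change of Lemma~\ref{L:sumdecomposition}. This twist is precisely why the extension of scalars from $\C$ to $\cL$ is indispensable for the statement, and it is the only delicate step—once $\psi$ is in hand, the identification of the two localizations is formal.
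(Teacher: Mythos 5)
Your proof rests on a misidentification that breaks it at the outset. In the paper's conventions (Appendix~\ref{S:Dmodulenoteconstr}), $\C[V]^{-1}$ is the ``Dirac delta'' $D$-module $x^{-1}\C[x^{-1}]:=\C[x^{\pm 1}]/\C[x]$, with $\C$-basis the monomials of strictly negative exponent; it is \emph{not} the localization $\C[x][x^{-1}]$. These two objects differ: the localization contains $1$, the delta module does not, and the $P$-module structures disagree (in particular, the delta module is annihilated by sufficiently high powers of the coordinates, while nothing of the sort holds in the localization). Consequently the equalities you write, namely $\moduleT_{\fa}=P[\setN(\fa)^{-1}]$, $\moduleT_{\fa'}=P[\setN(\fa')^{-1}]$, $\moduleT_{\fp}=(P\otimes_{\C}\cL)[\{\lambda^{\rbeta}(z)\}^{-1}]$ and $\moduleT_{\fm}\otimes_{\C}\cL=(P\otimes_{\C}\cL)[\setN(\fm)^{-1}]$, are all false: the modules on the left are tensor products in which one tensor factor is a delta module of the form $\C[\,\cdot\,]^{-1}$, and none of them is a localization of $P$. (One quick sanity check: if $\moduleT_{\fa}$ were a localization of $P$ it would be $P$-flat, so $\Tor_{>0}^{P}(\hA,\moduleT_{\fa})$ would vanish, contradicting Proposition~\ref{P:identification}, which computes $H^{i}_{\fa}(\hA)$ from these $\Tor$ groups.) Because the two sides of (\ref{E:tensorproduct1}) are not localizations, the universal property of localization and the ``iterated localization of a tensor product'' argument are unavailable, and the automorphism $\psi$ — though perfectly fine as an $\cL$-algebra automorphism of $P\otimes_{\C}\cL$ — has nothing to act on.

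The part of your reasoning that \emph{is} on the right track is the observation, via Lemma~\ref{L:sumdecomposition}, that $\{\lambda^{\halpha}\}_{\halpha\in\setN(\fa)}\cup\{\lambda^{\rbeta}(z)\}_{\rbeta\in\setE}\cup\{\lambda^{\halpha}\}_{\halpha\in\setN(\fa')}$ is an alternative $\cL$-basis of $\LSpan{[\hdelta,\hdelta']}$. The paper uses exactly this, but at the level of $D$-modules rather than of rings: it checks directly that the element $\varpi_{\fa}\varpi_{\fp}\varpi_{\fa'}$ on the right-hand side is annihilated by every $\lambda^{\halpha}$, $\halpha\in[\hdelta,\hdelta']$ (using the decomposition to handle the $\lambda^{\rbeta^0}$), and that it is a cyclic generator of the tensor product — this last point is made transparent by replacing the generators $w_{\reflection(\rbeta^{k})}$ of $D$ by $w_{\reflection(\rbeta^{0})}$ together with $w_{\reflection(\rbeta^{k})}-z^{k}w_{\reflection(\rbeta^{0})}$, $k\neq 0$. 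Since $\moduleT_{\fm}$ is cyclic with defining relations precisely $\lambda^{\halpha}\varpi_{\fm}=0$, the assignment $\varpi_{\fm}\mapsto\varpi_{\fa}\varpi_{\fp}\varpi_{\fa'}$ extends to a well-defined $D[z,z^{-1}]$-module map, which is then seen to be an isomorphism. To repair your proof you would need to replace the localization framework by this cyclic-$D$-module argument.
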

 \begin{proof}
 It follows from decomposition (\ref{E:decomposition}) that the right-hand-side of (\ref{E:tensorproduct1}) is nonzero. Both sides of (\ref{E:tensorproduct1}) are modules over $D[z,z^{-1}]$. The generators $w_{\reflection(\halpha)}$ act 
diagonally. The product $\varpi_{\fa}\varpi_{\fp}\varpi_{\fa'}$ is annihilated by $\lambda^{\rbeta}(z)$ and $\lambda^{\halpha},\halpha\in \setN(\fa)\cup \setN(\fa')
$. By Lemma \ref{L:sumdecomposition}, it is also annihilated by $\lambda^{\halpha},\halpha\in [(0)^0,(1)^{0}]$. By construction, $\moduleT_{\fm}$ is a $D$-cyclic module. The easiest way to see that $\varpi_{\fa}\varpi_{\fp}\varpi_{\fa'}$ is a generator of the tensor product is to replace $\{w_{\reflection{\rbeta^k}}\}\subset D$ with a different set of generators $\{w_{\reflection{\rbeta^0}},w_{\reflection{\rbeta^k}}-z^kw_{\reflection{\rbeta^0}},k\neq 0\}$. For this new set verification is trivial.
 The map $\phi$ of cyclic $D[z,z^{-1}]$-modules which sends $\varpi_{\fm}$ to $\varpi_{\fp}\varpi_{\fa}\varpi_{\fa'}$ is self-consistent because $\lambda^{\halpha}\varpi_{\fm}=0$ are the defining relation for $\moduleT_{\fm}$. 
 \end{proof}
 \begin{remark}\label{R:lowest}
 
\begin{enumerate}
\item The $\bT$-weight subspaces of $\moduleT_{\fa}$ and $\moduleT_{\fb}$ are finite-dimensional. 
 $\bT$-weights of $\moduleT_{\fa}$ are bounded from below by the weight of $\varpi_{\fa}$. In $\moduleT_{\fb}$ case the weights are bounded from above by the weight of $\varpi_{\fb}$.
\item 
If $\hdelta<(1)^{-1},(0)^0<\hdelta'$ the $\bT$-weight subspaces of $\moduleT_{\fm}$ are infinite-dimensional. 
 If $\hdelta'=\rdelta'^k,k<0$ then $\varpi_{\fm}$ is the lowest $\bT$-weight vector in $\moduleT_{\fm}$.
\end{enumerate}
 \end{remark}
 
\begin{definition}\label{D:upperlower}
{\em
 Fix a linear space $M=\bigoplus_{u\in \Z} M_{u}, \dim M_{u}<\infty$. $M$ is a {\it positive energy} space if $\exists u_0$ such that $M_{u_0}\neq \{0\}, M_{u}=\{0\}, u<u_0$. I denote $u_0$ by $\lu(M)$. The {\it negative energy} space is defined similarly. The weights upper bound is denoted by $\uu(M)$.
 }
\end{definition}

For example, $\moduleT_{\fa}$ is a positive energy space with respect to $\deg_{\bT}$-grading. $\moduleT_{\fb}$ is a negative energy space.
The isomorphism 
\[\moduleT_{\fa}\underset{P}{\otimes} \moduleT_{\fb} \cong \moduleT_{\fm}\]
follows easily from the isomorphisms $x^{-1}\C[x]\underset{\C[x]}\otimes \C[x]\cong x^{-1}\C[x]$, (\ref{E:algdistrnot}) and (\ref{E:TIJdef}). 
 I use it to define map 
 \begin{equation}\label{E:Tmap}
 \begin{split}
&\moduleT_{\fa}{\otimes} \moduleT_{\fb} \to \moduleT_{\fm},\quad \moduleT_{\ff}{\otimes} \moduleT_{\ff'} \to \moduleT_{\fm}\text{, and }
\end{split}
 \end{equation}
\[\moduleT_{(0)}{\otimes} \moduleT_{\fm} \to \moduleT_{\fm}.\]

 The residue functional 
 \begin{equation}\label{E:rhodef}
\sfres_{\moduleT_{\fm}}:\moduleT_{\fm}\to \C
 \end{equation}
 is 
 \[\quad \sfres_{\moduleT_{\fm}}(a):=\oint\cdots\oint a \prod_{\halpha\in [\hdelta,\hdelta'] } d\lambda^{\halpha}.\]
 
 $\sfres_{\moduleT}$ and the product (\ref{E:Tmap}) define the pairing:
 \begin{equation}\label{E:pairingT}
\moduleT_{\fa}{\otimes} \moduleT_{\fb}\to \C,\quad \moduleT_{(0)}{\otimes} \moduleT_{\fm}\to \C,\quad (a,b)_{\moduleT_{\fm}}=
 \sfres_{\moduleT_{\fm}}(ab).
 \end{equation}
 
 \begin{proposition}\label{P:Tpairingnondeg}
 The pairings (\ref{E:pairingT}) are not degenerate.
 \end{proposition}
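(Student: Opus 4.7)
The plan is to reduce both pairings in (\ref{E:pairingT}) to a tensor product of elementary one-variable residue pairings
\[
\C[x]\otimes \C[x]^{-1}\longrightarrow \C,\qquad (x^m,x^{-n-1})\longmapsto \oint x^{m-n-1}\,dx \;=\; \delta_{mn},
\]
whose nondegeneracy on the monomial bases is manifest.

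First I would exploit the partition $\setN(\fa)\sqcup \setN(\fb) = \setN(\fm) = [\hdelta,\hdelta']$ read off from (\ref{E:Ndef}) to rewrite $P=\C[\hdelta,\hdelta']$ as an external tensor product $\C[\setN(\fa)]\otimes_{\C} \C[\setN(\fb)]$, and, correspondingly, to present each of $\moduleT_{(0)}$, $\moduleT_{\fa}$, $\moduleT_{\fb}$, $\moduleT_{\fm}$ as an external tensor product over $\halpha\in[\hdelta,\hdelta']$ whose $\halpha$-factor is the one-variable D-module $\C[\lambda^{\halpha}]$ or $\C[\lambda^{\halpha}]^{-1}$, according as $\halpha\notin \setN(\fc)$ or $\halpha\in \setN(\fc)$. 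Bringing $\moduleT_{\fa}=\C[\setN(\fa)]^{-1}\otimes \C[\setN(\fp)]\otimes \C[\setN(\fa')]$ into this uniform shape is the only real bookkeeping step; it follows by the same disjointness reasoning that underpins the isomorphism in Proposition \ref{P:tensorproduct}, together with the $z$-extended change of basis furnished by Lemma \ref{L:sumdecomposition}.

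Next I would observe that the multiplication maps (\ref{E:Tmap}) are diagonal across these tensor factors and that the residue functional $\sfres_{\moduleT_{\fm}}=\prod_{\halpha\in[\hdelta,\hdelta']}\oint d\lambda^{\halpha}$ factors accordingly. Hence each pairing in (\ref{E:pairingT}) decomposes as the tensor product, over $\halpha\in [\hdelta,\hdelta']$, of one-variable pairings of the form above: in $\moduleT_{(0)}\otimes \moduleT_{\fm}\to\C$ every coordinate contributes a polynomial-versus-negative-Laurent pairing, while in $\moduleT_{\fa}\otimes \moduleT_{\fb}\to\C$ the partition $\setN(\fa)\sqcup \setN(\fb)=[\hdelta,\hdelta']$ guarantees that at each coordinate exactly one factor is polynomial and the other is negative Laurent.

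Since a tensor product of nondegenerate bilinear forms is itself nondegenerate on the induced monomial basis, both pairings in (\ref{E:pairingT}) are nondegenerate. The step demanding the most care, and where I expect to spend the bulk of the effort, is the preliminary repackaging of $\moduleT_{\fa}$ (and symmetrically $\moduleT_{\fa'}$) from the three-factor form in (\ref{E:TIJdef}) into the uniform $\halpha$-indexed product used in the reduction; once that is in place, the remainder is the one-line residue computation $\oint x^{m-n-1}\,dx=\delta_{mn}$ applied in each coordinate.
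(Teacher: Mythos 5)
Your proposal is correct and takes essentially the same approach as the paper, which exhibits explicit dual monomial bases $\prod_{\halpha\in\setN(\fb)}(\lambda^{\halpha})^{k_{\halpha}}\prod_{\hbeta\in\setN(\fa)}(\lambda^{\hbeta})^{-1-n_{\hbeta}}$ in $\moduleT_{\fa}$ and their inverses in $\moduleT_{\fb}$; your tensor-factorization phrasing is the same computation. One small simplification: the repackaging of $\moduleT_{\fa}$ needs neither Lemma \ref{L:sumdecomposition} nor Proposition \ref{P:tensorproduct} (those concern the $z$-extended module $\moduleT_{\fp}$); since $\setN(\fp)\sqcup\setN(\fa')=\setN(\fb)$ are disjoint subsets of $[\hdelta,\hdelta']$, the identity $\C[\setN(\fp)]\otimes\C[\setN(\fa')]=\C[\setN(\fb)]$ is immediate, so $\moduleT_{\fa}=\C[\setN(\fa)]^{-1}\otimes\C[\setN(\fb)]$ on sight.
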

 \begin{proof}
 Elements 
 \[\begin{split}
 &\prod_{\halpha\in \setN(\fb) }\left(\lambda^{\halpha}\right)^{k_{\halpha}} \prod_{\hbeta\in \setN(\fa) }\left(\lambda^{\hbeta}\right)^{-1-n_{\hbeta}}\in \moduleT_{\fa}, \\
 &\prod_{\halpha\in \setN(\fb) }\left(\lambda^{\halpha}\right)^{-1-k_{\halpha}} \prod_{\hbeta\in \setN(\fa) }\left(\lambda^{\hbeta}\right)^{n_{\hbeta}}\in \moduleT_{\fb}
 \end{split}\]
 form a pair of dual bases in modules $\moduleT_{\fa}$ and $\moduleT_{\fb}$ respectively. Similar bases exist in $\moduleT_{(0)}$ and $\moduleT_{\fm}$.
 \end{proof}
\paragraph{Modules $\moduleS_{\fa}$, $\moduleS_{\fb}$
}
Description of modules of type $\moduleS
$ is less straightforward than of type $\moduleT$.
Besides giving the definition, 
I will devote some efforts exploring properties $\moduleS[\hdelta,\hdelta']$ as a function of the interval.
\begin{definition}\label{D:lessdef}{\em
$\setK^{\leq n}:=\{\halpha\in \setK|\rho(\halpha)\leq n\}$. This construction will also be useed with other binary relations $=, <, >, \geq$. Here $\rho$ is the function (\ref{E:LdefE}).
I will also use partial order in $\hat{\setE}$ to define $\setK^{\leq \hbeta}:=\{\halpha\in \setK|\halpha\leq \hbeta\}$ and similar constructs based on $ <, >, \geq$.
}\end{definition}

By using $\alt$ (\ref{E:regseqgenverynew}) and $\rReg$ (\ref{E:regconstr}) constructions and a finite subset $\setK\subset \sethE$, I define
\begin{equation}\label{E:DCdef}
\begin{split}
&D_{-}[\setK]:=D(\alt\, \setK),\\
&D_{+}[\setK]:=D(\rReg\, \setK),\\
&\C_{-}[\setK]:=\C[\alt\, \setK],\\
&\C_{+}[\setK]:=\C[\rReg\, \setK],\\
&\C_{+}[\setK]^{-1}:=\C[\rReg\, \setK]^{-1}.\\
\end{split}
\end{equation}
The modules $\moduleS_{\fa}[\setK],\moduleS_{\fa'}[\setK]$ over 
\[D[\setK]\cong D_{-}[\setK^{\leq (1)^{-1}}]\otimes D_{+}[\setK^{\leq (1)^{-1}}]\otimes D[\setK^{\geq (0)^{0}}]\] 
are the tensor products
\begin{equation}\label{E:FIdef}
\begin{split}
&\moduleS_{\fa}[\setK]:=\C_{-}[\setK^{\leq (1)^{-1}}]\otimes \C_{+}[\setK^{\leq (1)^{-1}}]^{-1}\otimes \C[\setK^{\geq (0)^{0}}],\\
&\moduleS_{\fa'}[\setK]:=\C[\setK^{\leq (1)^{0}}]\otimes \C_{-}[\setK^{\geq (0)^{1}}]\otimes \C_{+}[\setK^{\geq (0)^{1}}]^{-1}. \\
\end{split}
\end{equation}

Fix finite $\setQ\subset \sethE$ such that $\setQ^{\geq (0)^{0}}={\setQ'}^{\geq (0)^{0}}$. Let us apply construction (\ref{E:inclusion1}) to spaces $V=\alt\, \setQ^{\leq (1)^{-1}}, V'=\alt\, {\setQ'}^{\leq (1)^{-1}}$, $U=\rReg\, \setQ^{\leq (1)^{-1}}, U'=\rReg\, \setQ'^{\leq (1)^{-1}}$.
 The maps $\sfa=\mathsf{inc}$,$\sfb=\mathsf{pr}\circ \mathsf{inc}$ are taken from \ref{E:functorialpr}.
After taking isomorphisms (\ref{E:FIdef}) into account, I assemble a map
\begin{equation}\label{E:inclFI}
\begin{split}
&\sfi:\moduleS_{\fa}[\setQ]\cong \moduleS_{\fa}[\setQ^{\leq (1)^{-1}}]\otimes \moduleS_{\fa}[\setQ^{\geq (0)^{0}}]{ \longrightarrow }\moduleS_{\fa}[\setQ'^{\leq (1)^{-1}}]\otimes \moduleS_{\fa}[\setQ'^{\geq (0)^{0}}]\cong \moduleS_{\fa}[\setQ'],\\
&\sfi:=\mathsf{j}\otimes \id.
\end{split}
\end{equation}
The component $\mathsf{j}$ is $\mathsf{inc}\otimes (\mathsf{pr}\circ \mathsf{inc})$.

In the notations of Appendix \ref{S:Dmodulenoteconstr} the $D[\setK]$ -generator $\varpi_{\fa}(\setK)$ of $\moduleS_{\fa}[\setK]$ is 
\[\varpi_{\fa}(\setK)=\prod _{\lambda_+^i \in \rReg \setK^{\leq (1)^{-1}} } \frac{1}{\lambda_+^i}.\]

From now on to the end of the section I assume that $\hdelta,\hdelta' \in \rM_1^{+}$ (\ref{E:Mdef}) and 
$\hdelta,\hdelta'<(0)^0,(1)^{-1}<\hgamma$.
My next plan is to modify inclusions (\ref{E:inclFI}).

\begin{definition}\label{D:tau}{\em
Suppose $\hdelta' \lessdot \hdelta$ and $
[\hdelta',\hgamma]\supset [\hdelta,\hgamma]
$
\begin{enumerate}
\item 
If $\hdelta'\in 
\rM_2^{-}\subset \rM_1^{+}$ ( Definition \ref{D:Mpmidef0})

\begin{equation}\label{E:inlnonreg}
\inclusiont(a):= \lambda^{\hdelta'}\inclusiono(a), 
\end{equation}
(c.f. the group of homomorphisms (\ref{E:group3}) and the irregular map $\Thl$ (\ref{E:irregmap})).
\item If $\hdelta'\in \rM_1^{+}\backslash \rM_2^{-}
$
 \begin{equation}\label{E:inlreg}
\inclusiont(a):= \inclusiono(a)
\end{equation}
(c.f. the group of homomorphisms (\ref{E:group1},\ref{E:group2}) and the regular map $\Thl$ (\ref{E:regmap})).
\end{enumerate}
}\end{definition}
Define the element
\[
m_{\hdelta}:=\prod_{\halpha\in 
\rM_2^{-}\cap \setN(\fa)} \lambda^{\halpha}.
\]

\begin{definition}{\em
 $\sfk$-maps (\ref{E:kappadef}) and inclusions $[\hdelta,\hgamma]\subset [\hdelta',\hgamma]$ $\hdelta,\hdelta'\in \rM_1^{+}$ define inclusions of algebras $D(\hdelta,\hgamma]\subset D(\hdelta',\hgamma]$
\[a\to \sfk(a).\]
}\end{definition}

Inclusion $[(0)^0,\hgamma]\subset [(0)^0,\hgamma']$ define a homomorphism
\[\C[(0)^0,\hgamma']\to \C[(0)^0,\hgamma]\]
which is the identity on $\lambda^{\halpha}, \halpha\in [(0)^0,\hgamma]$ and zero on $\lambda^{\halpha}, \halpha\in [(0)^0,\hgamma']\backslash [(0)^0,\hgamma]$.
I use it to define the maps 
\begin{equation}\label{E:Fres}
\projectiono:\moduleS_{\fa}[\hdelta,\hgamma']\to \moduleS_{\fa}[\hdelta,\hgamma].
\end{equation}
Note that maps $\inclusiono$ and $\projectiono$ commute and I can define the double limit
\begin{equation}\label{E:directinv}\moduleS_{\fa}[\infty,\infty]=\underset{\underset{\hdelta}{\longrightarrow}}{\lim}\underset{\underset{\hgamma}{\longleftarrow}}{\lim}\moduleS_{\fa}[\hdelta,\hgamma].
\end{equation}
$\moduleS_{\fa}[\infty,\infty]$ is a module over $D(\infty,\infty)$.

\begin{remark}\label{R:dualssystem}{\rm
The modules 
\begin{equation}\label{E:FJdef}
\moduleS_{\fb}[\hdelta,\hgamma]:= \C[\hdelta ,(1)^{-1}] \otimes \C_{-}[(0)^0,\hgamma]\otimes \C_{+}[(0)^0,\hgamma]^{-1}
\end{equation}
and $\moduleS_{\fa'}[\hdelta,\hgamma]$ form a bidirect system similar to (\ref{E:directinv}). To define the structure involved in its definition, we have to conjugate all the constructions from Definition \ref{D:tau} with the automorphism $\shift\reflection$ (\ref{E:shift},\ref{E:involution}) in case of $\moduleS_{\fb}$ and with $\reflection$ in case of $\moduleS_{\fa'}$. In particular the pair $
\rM_2^{-}\subset \rM_1^{+}$, that appear in Definition \ref{D:tau}, has to be replaced by $
\rM_2^{+}\subset \rM_1^{-}$. 
}
\end{remark} 
Finally 
\begin{equation}\label{E:FIdefF}
\begin{split}
&\moduleS_{\ff}[\setK]:=\C_{-}[\setK^{\leq-1}]\otimes \C_{+}[\setK^{\leq-1}]^{-1}\otimes \C[\setK^{\geq 0}],\\
&\moduleS_{\ff'}[\setK]:=\C[\setK^{\leq-1}]\otimes \C_{-}[\setK^{\geq 0}]\otimes \C_{+}[\setK^{\geq 0}]^{-1}. \\
\end{split}
\end{equation}
There is an obvious map
\[\moduleS_{\ff}[\hdelta,\hgamma]{\otimes} \moduleS_{\ff'}[\hdelta,\hgamma]\to \moduleT_{\fm}[\hdelta,\hgamma]\] 
Modules  $\moduleS_{\ff}[\hdelta,\hgamma],\moduleS_{\ff'}[\hdelta,\hgamma]$ forma bidirect system with structure maps the same as $\inclusiont$ (Definition \ref{D:tau}) and $\projectiono$ (\ref{E:Fres}). See also  Remark \ref{R:dualssystem}.

There is a straightforward generalization of the above constructions for semi-closed and open intervals $(\hdelta,\hgamma],[\hdelta,\hgamma),(\hdelta,\hgamma)$, which I will use freely.

\subsubsection{Computation of $H^i_{\fc}(A)$ with complexes $\moduleT_{\bullet}(\fc)$ and $\moduleS_{\bullet}(\fc)$}\label{SS:Computation}
I will implement the idea of Proposition \ref{P:localtorisom} to use $\Tor$-functors and modules from Section \ref{E:dmod} for computation of $H_{\fa}^i({\hA})$ for the algebra $\hA$ based on the interval $[\hdelta,\hdelta']$.

 \begin{definition}
 {\em 
 With the help of the minimal $P$-resolution $F_{\bullet}(\hA)$ (\ref{E:minres}), I define the complexes 
\[\moduleT_{\bullet}(\fc):=F_{\bullet}\underset{P}{\otimes} \moduleT_{\fc}\quad \moduleS_{\bullet}(\fc)=F_{\bullet}\underset{P}{\otimes} \moduleS_{\fc}\]
$\fc=(0),\fa,\fa',\fb,\fp, \ff,\ff'$ or $\fm$. See (\ref{E:TIJdef}), (\ref{E:FIdef}), (\ref{E:FJdef}) and
for definition of $\moduleT$ and $\moduleS$.
}\end{definition}

Before I formulate precise relation of the cohomology $H_i(\moduleT(\fc))$ and $H_i(\moduleS(\fc))$ to $H^i_{\fc}(\hA)$, I will exhibit a system of parameters in $\fc$, which will justify construction of $\moduleS_{\bullet}(\fc)$.

Recall that a sequence of elements $\{x_1,\dots,x_n\}$ in the ideal $\fc\subset R$ is a {\it system of parameters} if $\Rad(x_1,\dots,x_n)=\fc$

\begin{proposition}\label{P:parameters}
Let  $\fz$  be an ideal of ${\hA[\hgamma,\halpha]}$ generated by

\begin{equation}\label{E:parametrs}
\rReg([\hgamma,\halpha]\backslash [\hbeta,\halpha]), \hbeta\leq \halpha
 \end{equation}
See (\ref{E:regconstr}) for definition of $\rReg$. $\fz$ contains in the ideal $\fc=\fri([\hgamma,\halpha]\backslash [\hbeta,\halpha])\subset {\hA[\hgamma,\halpha]}$. 
There is  also an ideal $\fy=\rReg([\hgamma,\halpha]^{\leq-1})$, which  contains in $\ff=\fri([\hgamma,\halpha]^{\leq-1})$. Then $\Rad\, \fz=\fc$, $\Rad\, \fy=\ff$.
\end{proposition}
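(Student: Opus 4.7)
The containments $\fz\subset\fc$ and $\fy\subset\ff$ are immediate: each generator $\lambda_+^i\in\rReg(\setN)$, where $\setN:=[\hgamma,\halpha]\setminus[\hbeta,\halpha]$, is a $\C$-linear combination of the generators $\lambda^{\hdelta}$, $\hdelta\in\setN$, of $\fc$, and likewise for $\fy\subset\ff$ with $\setN$ replaced by $\setN':=[\hgamma,\halpha]^{\leq-1}$. The ideal $\fc$ is already prime, since $\hA[\hgamma,\halpha]/\fc\cong \hA[\hbeta,\halpha]$ is a domain by Lemma~\ref{P:integraldomain}, so $\Rad\,\fc=\fc$. The content of the proposition is therefore the reverse inclusion $\fc\subset\Rad\,\fz$, i.e.\ that each $\lambda^{\hdelta}$ with $\hdelta\in\setN$ is nilpotent modulo $\fz$; the proof of $\Rad\,\fy=\ff$ runs along identical lines with $\setN'$ in place of $\setN$, so I will concentrate on $\fz$.

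My plan is to work in the toric degeneration $Hibi[\hgamma,\halpha]$ of Section~\ref{S:singularities}. The ideals $\fz$ and $\fc$ are specified by combinatorial data (linear combinations of generators indexed by the poset) and lift canonically to the $\C[[h]]$-flat family $\hA_h$; reduction modulo $h$ carries them to the analogous ideals in $Hibi[\hgamma,\halpha]$. Because the defining relations of $\hA$ and of $Hibi$ agree up to terms of lower order in the straightening law~(\ref{E:reluniv}) and these lower-order terms only assist the nilpotence argument (they involve even more extreme elements of the lattice), the essential computation can be carried out in $Hibi$, where the straightening collapses to the monomial identity
\begin{equation*}
\lambda^{\hdelta}\lambda^{\hepsilon}=\lambda^{\hdelta\vee\hepsilon}\lambda^{\hdelta\wedge\hepsilon}.
\end{equation*}

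The nilpotence in $Hibi$ I would prove by induction on the $\rho$-level. For the base case, $\hgamma$ is the unique element of $[\hgamma,\halpha]$ at the minimal level $\rho(\hgamma)$ (since $\rho$ grades the poset), and $\hgamma\in\setN$, so the single generator of $\rReg(\setN)$ at that level is $\lambda^{\hgamma}$ itself; hence $\lambda^{\hgamma}\in\fz$. Inductively, assume $\lambda^{\hepsilon}$ is nilpotent modulo $\fz$ for every $\hepsilon\in\setN$ with $\rho(\hepsilon)<i$, and fix $\hdelta\in\setN$ at level $i$. Setting $S_i:=\{\hdelta'\in\setN\mid\rho(\hdelta')=i\}$, the relation $\lambda_+^i\in\fz$ gives $\lambda^{\hdelta}\equiv-\sum_{\hdelta'\in S_i\setminus\{\hdelta\}}\lambda^{\hdelta'}\pmod{\fz}$; multiplying by $\lambda^{\hdelta}$ and straightening each incomparable pair yields
\begin{equation*}
(\lambda^{\hdelta})^{2}\equiv-\sum_{\hdelta'\in S_i\setminus\{\hdelta\}}\lambda^{\hdelta\wedge\hdelta'}\lambda^{\hdelta\vee\hdelta'}\pmod{\fz}.
\end{equation*}
The meet $\hdelta\wedge\hdelta'$ lies at level $i-1$ and is still in $\setN$: if $\hdelta\wedge\hdelta'\in[\hbeta,\halpha]$ then $\hbeta\leq\hdelta\wedge\hdelta'\leq\hdelta$ would force $\hdelta\in[\hbeta,\halpha]$, contradicting $\hdelta\in\setN$. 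Iterating, $(\lambda^{\hdelta})^{2k}$ modulo $\fz$ is a sum of monomials each containing $k$ factors at level $i-1$; by the inductive hypothesis and the pigeonhole principle applied to the finitely many elements of $S_{i-1}$, a sufficiently large $k$ forces one such factor to appear to its nilpotency exponent, so $(\lambda^{\hdelta})^{2k}\in\fz$.

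The principal obstacle is the bookkeeping at the inductive step: the joins $\lambda^{\hdelta\vee\hdelta'}$ may escape $\setN$ into $[\hbeta,\halpha]$ and accumulate during iteration, and one must quantify the exponents uniformly in the level. This is a finite combinatorial check, controlled by the multiplicity data of Lemma~\ref{L:multstatM} and the classification of ends of $[\hgamma,\halpha]$ in Section~\ref{S:Gorenstein}. For the parallel statement $\Rad\,\fy=\ff$, the same induction applies verbatim with $\setN'=[\hgamma,\halpha]^{\leq-1}$, the unique minimum $\hgamma$ at level $\rho(\hgamma)$ again providing the base case and the meet of any two incomparable elements of $\setN'$ again landing in $\setN'$ (since $\setN'$ is a $\rho$-downward-closed subset).
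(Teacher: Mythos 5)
Your argument for $\fc\subset\Rad\,\fz$ is essentially the same induction on $\rho$-level that the paper carries out, only the paper works with preimages $\fz'\subset\fc'$ in the polynomial ring $P=\C[\hgamma,\halpha]$ (so that the straightening relations become honest generators of $\fz'$) and phrases the step as a Gr\"obner reduction of the clutter $\lambda^{\hdelta}\lambda^{\hdelta'}$ to $(\lambda^{\hdelta})^2$ using $\lambda^{k+i}$. Two remarks on your version. First, the detour through the Hibi degeneration is a red herring: radicals do not obviously transfer along the deformation $\hA_h\to Hibi$, and in fact you never use the degeneration --- the straightening relation of $\hA$ itself suffices, since every lower-order monomial $\lambda^{\hgamma'}\lambda^{\hepsilon}$ in~(\ref{E:reluniv}) has $\hgamma'<\hdelta\wedge\hdelta'$, which forces $\hgamma'\in\setN$ by the same argument you give for the meet, so those terms are already handled by your inductive hypothesis. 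Second, the pigeonhole iteration at the end is unnecessary: once you know $\lambda^{\hdelta}\lambda^{\hdelta'}$ is (modulo $\fz$) a finite sum of terms each divisible by some $\lambda^{\hepsilon}$ with $\hepsilon\in\setN$ of lower level, it is a sum of nilpotents in the commutative ring $\hA/\fz$, hence nilpotent, hence $(\lambda^{\hdelta})^{2}\in\Rad\,\fz$ --- no uniform exponent accounting is needed.

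The genuine gap is the sentence ``the proof of $\Rad\,\fy=\ff$ runs along identical lines.'' Your argument for the forward inclusion $\ff\subset\Rad\,\fy$ does indeed transfer (the downward-closedness of $[\hgamma,\halpha]^{\leq -1}$ keeps the meets in $\setN'$, as you note). But your reduction starts by observing that $\Rad\,\fc=\fc$ because $\hA[\hgamma,\halpha]/\fc\cong\hA[\hbeta,\halpha]$ is a domain, and this premise fails for $\ff$: the quotient $\hA[\hgamma,\halpha]/\ff\cong\hA[\hgamma,\halpha]^{\geq 0}$ is \emph{not} of the form $\hA[\hbeta,\halpha]$ --- the level $\rho=0$ contains two incomparable elements $(0)^0$ and $(3)^{-1}$, so $[\hgamma,\halpha]^{\geq 0}$ has no unique minimum and is not an interval, and the quotient is not obviously a domain. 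You therefore cannot conclude $\Rad\,\ff=\ff$ for free. The paper closes this by using the standard monomial basis to embed $\hA[\hgamma,\halpha]^{\geq 0}$ into the direct sum $\hA[(0)^0,\halpha]\oplus\hA[(3)^{-1},\halpha]$; each summand is a domain, so the quotient is reduced, giving $\Rad\,\ff=\ff$. Without some such argument your proof of the second equality is incomplete.
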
 
\begin{proof}
I set $k=\min_{[\hgamma,\halpha]\backslash [\hbeta,\halpha]}\rho(\hgamma)$.
Denote by $\fz'$ the preimage of $\fz$ in $\C[\hgamma,\halpha]$. $\fz'$ is generated by $(\lambda^l)$ and the defining relations (\ref{E:equationsaf0}). Consider a filtration $\fz'_0\subset \fz'_1\subset \cdots \fz'$, where $\fz'_i$ is generated by $\lambda^{k},\dots,\lambda^{i+k}$ and (\ref{E:equationsaf0}). 

 Let $\fc'$ be the preimage of $\fc$ in $\C[\hgamma,\halpha]$. Define the filtration $\fc'_0\subset \fc'_1\subset \cdots \subset \fc'$. $\fc'_i$ is generated by 
\[G_i=\{\lambda^{\halpha}|\halpha\in [\hgamma,\halpha]\backslash [\hbeta,\halpha], k\leq \rho(\halpha)\leq k+i\}.\]
For the proof it suffice to show that $\Rad\, \fz'=\fc'$. Note that $\Rad\, \fc'=\fc'$ because $P/\fc'={\hA[\hgamma,\halpha]}/\fc=\hA[\hbeta,\halpha]$ has no zero divisors \cite{MovStr}.

The proposition will follow if I prove that $\Rad\, \fz'_i\supset G_i$ for all $i$. Note that if $G_{i+1}\backslash G_{i}$ consists of one element, then this element is $\lambda^{k+i}$. This is true, for example, when $i=0$. If, under assumption $|G_{i+1}\backslash G_{i}|=1$, I have already proven that $\Rad\, \fz'_i\supset G_i$, then $\Rad\,\fz'_{i+1}\supset G_{i+1}$ will follow trivially. The remaining nontrivial case is $|G_{i+1}\backslash G_{i}|=2$. Note that in this case there is one relation in (\ref{E:equationsaf0}) that contains a clutter $\lambda^{\hdelta}\lambda^{\hdelta'}$ with $\lambda^{\hdelta},\lambda^{\hdelta'}\in G_{i+1}\backslash G_{i}$. By using straightening relations and working under inductive assumptions, I see that all other terms in this relation contain in $\Rad\,\fz'_i$. By using $\lambda^{k+i}$ for the Gr\"{o}bner reductions, I transform the cluttering monomial $\lambda^{\hdelta}\lambda^{\hdelta'}$ to the forms $(\lambda^{\hdelta})^2$ or $(\lambda^{\hdelta'})^2$. This implies that $\lambda^{\hdelta},\lambda^{\hdelta'}\in \Rad\, \fz'_{i+1}$.

The proof of the second equality is similar. $\hA[\hgamma,\halpha]/\ff=\hA[\hgamma,\halpha]^{\geq 0}$. The algebra $\hA[\hgamma,\halpha]^{\geq 0}$ inherits standard basis from $\hA[\hgamma,\halpha]$. By using this basis I construct inclusion $\hA[\hgamma,\halpha]^{\geq 0}\to \hA[(0)^0,\halpha]+ \hA[(3)^{-1},\halpha]$. The summand have no zero devisors \cite{MovStr} thus the radical of $\hA[\hgamma,\halpha]^{\geq 0}$ is trivial and $\ff=\Rad\, \ff$. The remainder of the proof is similar to the previous case and omitted. 
\end{proof}

The following proposition explains me how to manipulate with the ideal in local cohomology group without changing the content of the cohomology. 
 \begin{proposition}\label{P:radeq} (\cite{Twenty},Proposition 7.2 )
Let $M$ be an $\hRo$-module.
If radicals of two ideals $\fa,\fb\subset {\hRo}$ coincide $\Rad\, \fa=\Rad\, \fb=\fc$, then
\[H^i_{\fa}(M)=H^i_{\fb}(M)=H^i_{\fc}(M).\]
\end{proposition}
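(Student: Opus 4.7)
The plan is to reduce the statement to the familiar fact that the torsion functor $\Gamma_{\fa}(-)$ depends only on the radical of $\fa$, and then invoke the definition of local cohomology as the right derived functor of $\Gamma_{\fa}$. Since $\hRo$ is Noetherian (this is tacit throughout the paper), I would first check that $\Gamma_{\fa}(M)=\Gamma_{\fc}(M)$ as submodules of $M$, for every $\hRo$-module $M$.

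For this, recall $\Gamma_{\fa}(M)=\{m\in M : \fa^n m=0 \text{ for some } n\}$. The containment $\Gamma_{\fc}(M)\subseteq \Gamma_{\fa}(M)$ is immediate from $\fa\subseteq \fc$: if $\fc^n m=0$ then $\fa^n m\subseteq \fc^n m=0$. For the reverse containment, I would use that $\fc=\Rad\,\fa$ is a finitely generated ideal whose generators each satisfy $x_i^{k_i}\in \fa$; taking $k=\max k_i$ and expanding yields $\fc^{kN}\subseteq \fa^N$ for a suitable $N$ bounded in terms of the number of generators. Thus $\fa^n m=0$ implies $\fc^{kn\cdot \dots} m=0$, so $m\in \Gamma_{\fc}(M)$. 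Exactly the same argument applied to $\fb\subseteq \fc=\Rad\,\fb$ gives $\Gamma_{\fb}(M)=\Gamma_{\fc}(M)$.

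Having established the equality $\Gamma_{\fa}=\Gamma_{\fc}=\Gamma_{\fb}$ as subfunctors of the identity on the category of $\hRo$-modules, the conclusion is formal: local cohomology is defined by $H^i_{\fa}(-)=R^i\Gamma_{\fa}(-)$, and two naturally isomorphic left exact functors have naturally isomorphic right derived functors. Hence $H^i_{\fa}(M)\cong H^i_{\fc}(M)\cong H^i_{\fb}(M)$ for all $i$ and all $M$.

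There is no real obstacle here; the only step that requires a little care is the bookkeeping in the inclusion $\fc^N\subseteq \fa^n$ for some $N=N(n)$, which uses Noetherianity in an essential way. If instead one prefers the alternative description $H^i_{\fa}(M)=\varinjlim_n \Ext^i_{\hRo}(\hRo/\fa^n,M)$ used earlier in the paper (cf.\ (\ref{E:THdef})), the same nesting of powers $\fa^n\supseteq \fc^{N(n)}\supseteq \fa^{M(n)}$ produces cofinal subsystems of the two direct systems and yields the desired isomorphism on the colimit.
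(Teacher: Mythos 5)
Your proof is correct and is the standard argument for this fact. Note, however, that the paper itself does not supply a proof of Proposition \ref{P:radeq}: the proposition is stated with a direct citation to \cite{Twenty}, Proposition 7.2, and the burden of proof is delegated to that reference. So there is no internal argument to compare against; what you have done is fill in the standard proof that the reference supplies. Your two main steps are the right ones: (i) the torsion functor $\Gamma_{\fa}$ depends only on $\Rad\,\fa$, proved via the inclusions $\fa\subseteq\fc$ and, using Noetherianity, $\fc^k\subseteq\fa$ for some $k$ (so $\fc^{kn}\subseteq\fa^n$), and (ii) naturally isomorphic left exact functors have naturally isomorphic right derived functors. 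One small notational wobble: the phrase ``$\fc^{kN}\subseteq\fa^N$ for a suitable $N$ bounded in terms of the number of generators'' is cleaner if stated as: once $\fc^{k}\subseteq\fa$ (a single exponent $k$ suffices, by Noetherianity), then $\fc^{kn}\subseteq\fa^{n}$ for \emph{all} $n$; no dependence of $k$ on $n$ is needed. Your concluding remark about the $\Ext$/colimit description via cofinal subsystems of the direct systems $\{\fa^n\}$ and $\{\fc^n\}$ is a valid alternative route and matches how the paper actually manipulates local cohomology elsewhere (cf.\ (\ref{E:THdef}) and (\ref{E:loccohextdef})).
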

Here is an example of how this theorem can be used.
\begin{proposition}\label{P:identification}
Let $\fc$ be $\fa,\fa',\fb,\ff,\ff' \text{ or }\fm$.
\begin{enumerate}
\item
Let 
 $\moduleT_{\fc}$ be as in (\ref{E:TIJdef})
. Then 
\begin{equation}\label{E:loctor}
 H^{i}_{\fc}(A)\cong\Tor^{P}_{s(\fc)-i}(A,\moduleT_{\fc})
,\quad \Tor^P_{i}(A,\moduleT_{\fc})=H_{i}(\moduleT(\fc))
\end{equation}
where 
\[s(\fc)=|\setN(\fc)|.\]
 
The isomorphisms are compatible with the $\Aut$ action.
\item Let $\moduleS_{\fc}$ be as in (\ref{E:FIdef}),(\ref{E:FJdef}), 
 or (\ref{E:FIdefF}).
Then
\[H^{i}_{\fc}(\hA)\cong\Tor^{P}_{s'(\fc)-i}(\hA,\moduleS_{\fc})
, \quad \Tor_i^P(\hA,\moduleS_{\fc})=H_{i}(\moduleS(\fc))\]
where 
\begin{equation}\label{E:sschiftdef}
s'(\fc)=\begin{cases}
3-\rho(\hdelta)&\text{ if } \fc=\fa\\
\rho(\hdelta')-8&\text{ if } \fc=\fa'\\
\rho(\hdelta')-2&\text{ if } \fc=\fb\\
-\rho(\hdelta)&\text{ if } \fc=\ff\\
\rho(\hdelta')+1&\text{ if } \fc=\ff'\\
\end{cases}
\end{equation}

\end{enumerate}
\end{proposition}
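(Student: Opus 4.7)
Both parts of Proposition~\ref{P:identification} are instances of the single formula $H^i_{\fc}(\hRo)\cong \Tor^P_{\dim Z-i}(\hRo, \C[Y]\otimes\C[Z]^{-1})$ of Proposition~\ref{P:localtorisom}; the difference between the $\moduleT$-version and the $\moduleS$-version is merely the choice of a direct-sum decomposition $\Span{[\hdelta,\hdelta']}=Y\oplus Z$ of the space of generators of $P$. Part~1 uses the ``tautological'' decomposition; Part~2 uses the orthogonal decomposition into the alternating and regularized pieces of~(\ref{E:regseqgenverynew})--(\ref{E:regconstr}), after a preliminary radical-reduction of the ideal. The final identification $\Tor^P_i(\hA,M)=H_i(F_\bullet\underset{P}{\otimes}M)$ is automatic from the minimality of $F_\bullet$, and the $\Aut$-equivariance is built into Proposition~\ref{P:localtorisom} via Lemma~\ref{E:eqres}.

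For Part~1, I would set $Z=\Span{\setN(\fc)}$ and $Y=\Span{[\hdelta,\hdelta']\setminus\setN(\fc)}$; the table (\ref{E:Ndef}) was designed precisely so that $\sfs(ZP)=\fc$ in each case $\fc=\fa,\fa',\fb,\ff,\ff',\fm$. Comparing $\C[Y]\otimes\C[Z]^{-1}$ with (\ref{E:TIJdef}) case-by-case, one reads off $\C[Y]\otimes\C[Z]^{-1}=\moduleT_{\fc}$, whence Proposition~\ref{P:localtorisom} gives $H^i_{\fc}(\hA)\cong \Tor^P_{s(\fc)-i}(\hA,\moduleT_{\fc})$ with $s(\fc)=\dim Z=|\setN(\fc)|$, matching~(\ref{E:functionsdef}).

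For Part~2, the obstacle is that the ideal $\fc$ is not in general generated by a subset of the coordinates after a linear change of basis. Here I invoke Proposition~\ref{P:parameters}: for $\fc=\fa$ (with $\setK=[\hdelta,(1)^{-1}]$), for $\fc=\fa'$ (with $\setK=[(0)^1,\hdelta']$), and analogously for the other cases, the set $\rReg\setK$ is a system of parameters inside $\fc\cdot\hA$, so that by Proposition~\ref{P:radeq} we may replace $\fc$ by the ideal generated by $\rReg\setK$ without changing the local cohomology. Now use the orthogonal decomposition $\Span{[\hdelta,\hdelta']}=\alt\setK\oplus\rReg\setK\oplus\Span{[\hdelta,\hdelta']\setminus\setK}$ induced by (\ref{E:functorialpr}) as the change of variables; setting $Z=\rReg\setK$ and $Y$ equal to the remaining two summands puts Proposition~\ref{P:localtorisom} into play with $\sfs(ZP)$ exactly the regular-parameter ideal. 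Inspection of the definitions (\ref{E:FIdef}), (\ref{E:FJdef}), (\ref{E:FIdefF}) shows $\C[Y]\otimes\C[Z]^{-1}=\moduleS_{\fc}$, and $\dim Z=|\rReg\setK|$ equals the number of distinct $\rho$-values in $\setK$, which by~(\ref{E:htandsw}) matches the integer $s'(\fc)$ tabulated in~(\ref{E:sschiftdef}).

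\textbf{Main obstacle.} The substantive point is the case-by-case bookkeeping in Part~2: one must verify, separately on $\setK^{\le(1)^{-1}}$ (where the alt/reg split truly enters) and on $\setK^{\ge(0)^0}$ (where only the polynomial factor $\C[\setK^{\ge(0)^0}]$ appears), that the tensor-product shape of $\moduleS_{\fc}$ in (\ref{E:FIdef})--(\ref{E:FIdefF}) is reproduced by the above decomposition, and that for $\fc=\fb$ the role of the ``alt/reg'' subinterval is shifted to $\setK^{\ge(0)^0}$. Once this is accepted, everything else---the passage from $\Tor$-groups to the complexes $\moduleT_\bullet(\fc)$, $\moduleS_\bullet(\fc)$, as well as $\Aut$-equivariance---follows from the general statements already established.
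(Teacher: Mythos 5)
Your proposal follows essentially the same route as the paper: Part~1 is a direct reading-off from Proposition~\ref{P:localtorisom}, and Part~2 first replaces the ideal $\fc$ by the parameter ideal generated by $\rReg\setK$ (via Proposition~\ref{P:parameters} together with Proposition~\ref{P:radeq}), then applies Proposition~\ref{P:localtorisom} to the orthogonal decomposition $\alt\setK\oplus\rReg\setK\oplus\Span{[\hdelta,\hdelta']\setminus\setK}$. The only difference is cosmetic: the paper isolates the radical-reduction step as Lemma~\ref{L:idealreduction} and treats only the case $\fc=\fa$ explicitly, while you spell out the remaining cases.
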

\begin{proof}
The proof of the first item follow from Proposition \ref{P:localtorisom}. The functions $s$ give the number of generators of the ideal.

I prove the second item only for $\fa$.
 \begin{lemma}\label{L:idealreduction}
 $H^i_{\fa}(\hA)=H^i_{(\bar{\lambda})}(\hA)$, where $\bar{\lambda}=\rReg[\hdelta,(1)^{-1}]$,(see (\ref{E:regconstr}) for the definition of $\rReg$).
\end{lemma}
\begin{proof}
 The ideal $\fa$ contains a system of parameters $(\bar{\lambda})=\rReg[\hdelta,(1)^{-1}]$ (see (\ref{E:parametrs}) and Proposition \ref{P:parameters}). By Proposition \ref{P:radeq} $H^i_{\fa}(\hA)=H^i_{(\bar{\lambda})}(\hA)$.
\end{proof}
 
 To finish the proof, it remains to apply Proposition \ref{P:localtorisom}. Note that $s'(\fa)=|\rReg[\hdelta,(1)^{-1}]|$.
\end{proof}

\subsubsection{The $*$-duality pairing}\label{SS:thestardualitypairing}
In this section, I will study the pairing between the groups $H_{\fa}^i(\hA)$ and $H_{\fb}^j(\hA)$. It is a composition of the multiplication $\sfm$ (\ref{E:multmap}), isomorphism (\ref{E:loccohnew}) and the residue map $\sfres_{\hA}$ (\ref{E:varthetadef0}):
\begin{equation}\label{E:fundpairingdef}
(a,b):=\sfres_{\hA}(\sfm(a,b)).
\end{equation} 

I will also discuss the pairing $(a,b)$ between the groups $H_{\ff}^i(\hA)$ and $H_{\ff'}^j(\hA)$ defined by composition of (\ref{E:multmapi}) and $\sfres_{\hA}$.
\begin{proposition}\label{P:pairingdegree}
\begin{enumerate}
\item The bilinear form $(a,b)$ (\ref{E:fundpairingdef}) defines a perfect pairing between $H_{\fa}^i(\hA)$ and $H_{\fb}^j(A)$ of degree $\rk(\hdelta,\hdelta')+1$. 
The $\Aut$-weight of the pairing is equal to 
\begin{equation}\label{E:awr}
(a[\hdelta,\hdelta'],u[\hdelta,\hdelta'], r[\hdelta,\hdelta']).\quad (\text{For definition see }\ref{E:sigmaalt}).
\end{equation}
\item  $(a,b)$ defines a perfect pairing between $H_{\ff}^i(\hA)$ and $H_{\ff'}^j(\hA)$ of degree $\rk(\hdelta,\hdelta')+1$. It has weight (\ref{E:awr}).
\item \label{I:pairingdegree3} Suppose that the interval $[\hdelta,\hdelta']=[(0)^N,(1)^{N'}]$. By Remark \ref{R:groupactionloccoh} the groups  $H_{\fa}^i[(0)^N,(1)^{N'}], H_{\fb}^i[(0)^N,(1)^{N'}]$ are equipped with $\Spin(10)$-action. The pairing is compatible with this action.
\end{enumerate}
\end{proposition}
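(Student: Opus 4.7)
The plan is to identify the pairing $(a,b)$ with a tensor product of two classical, manifestly nondegenerate pairings, and then to read off the weight and equivariance from that identification.

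First I would translate the whole statement into $\Tor$-language via Proposition \ref{P:identification}. Fixing the minimal $P$-resolution $F_{\bullet}(\hA)$ from (\ref{E:minres}), this identifies $H_{\fa}^{i}(\hA) \cong H_{s(\fa)-i}(\moduleT(\fa))$ and $H_{\fb}^{j}(\hA) \cong H_{s(\fb)-j}(\moduleT(\fb))$, where $\moduleT(\fc)=F_{\bullet}\underset{P}{\otimes}\moduleT_{\fc}$. Then Proposition \ref{P:pairing} applied to the orthogonal decomposition $\Span{[\hdelta,\hdelta']}=\Span{\setN(\fa)}\oplus\Span{\setN(\fb)}$ realizes the cup product $\sfm$ on the chain level as the map induced by the product $\times:F_{\bullet}\otimes F_{\bullet}\to F_{\bullet}$ (see (\ref{E:resprod})) tensored with the multiplication $\moduleT_{\fa}\otimes\moduleT_{\fb}\to\moduleT_{\fm}$ from (\ref{E:Tmap}). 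In other words, the pairing $(a,b)$ becomes the composition
\[
H_{s(\fa)-i}(\moduleT(\fa))\otimes H_{s(\fb)-j}(\moduleT(\fb))\overset{\times\otimes\sfm}{\longrightarrow} H_{s(\fm)-i-j}(\moduleT(\fm))\overset{\mu\otimes\sfres_{\moduleT_{\fm}}}{\longrightarrow}\C,
\]
where $\mu$ is the Poincar\'e form on the top of $F_{\bullet}\underset{P}{\otimes}\C$ from (\ref{E:muisodef}) and $\sfres_{\moduleT_{\fm}}$ is the residue (\ref{E:rhodef}).

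Second, I would argue nondegeneracy factor by factor. On the Koszul side, Proposition \ref{C:Poincare} (together with (\ref{E:pairingchains})) says that the product $V_i\otimes V_{d-i}\to V_d\cong\C$ is a perfect Poincar\'e pairing precisely because $\hA$ is Gorenstein (Proposition \ref{P:GorensteinCL}). On the $D$-module side, Proposition \ref{P:Tpairingnondeg} gives nondegeneracy of $\moduleT_{\fa}\otimes\moduleT_{\fb}\to\C$ via the explicit dual bases of monomials. Tensoring two perfect pairings yields a perfect pairing, which proves part 1; the bidegree $\rk(\hdelta,\hdelta')+1$ is read off from the identification $H^{\rk+1}_{\fm}(\hA)\cong\bigoplus_{i\geq 0}\hA^{*}_{i}$ of Proposition \ref{E:diality}. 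For part 2 the argument is identical, the only change being that the orthogonal decomposition is now $\Span{\setN(\ff)}\oplus\Span{\setN(\ff')}$; the modules $\moduleT_{\ff},\moduleT_{\ff'}$ are defined in (\ref{E:TIJdef}) and the same Poincar\'e duality and dual-basis arguments apply.

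For the $\Aut$-weight I would invoke Proposition \ref{P:charmatch}, which identifies the generalized $a$-invariant $a_{\hA}=\chi^{-1}$ with the inverse of the dualizing character. The weight of the residue functional $\sfres_{\hA}:\hA^{*}_{0}\to\C$ is therefore $\chi^{-1}$, and the explicit combinatorial formula (\ref{E:sigmaalt}) of Proposition \ref{P:funequation} gives the triple $(a[\hdelta,\hdelta'],u[\hdelta,\hdelta'],r[\hdelta,\hdelta'])$ in (\ref{E:awr}). Since the isomorphisms of Proposition \ref{P:identification} and the multiplications in (\ref{E:Tmap}) are all $\Aut$-equivariant (as noted in Remark \ref{R:groupactionloccoh} and the $\Aut$-equivariance of minimal resolutions, cf.\ Lemma \ref{E:eqres}), the pairing carries exactly this weight.

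Finally, item \ref{I:pairingdegree3} is automatic: for the symmetric interval $[(0)^N,(1)^{N'}]$ the algebra $\hA_{N}^{N'}$ is $\Spin(10)$-invariant by Remark \ref{R:symmetries}, so one can choose a $\Spin(10)$-equivariant minimal resolution (Lemma \ref{E:eqres}), making every step of the identification above $\Spin(10)$-equivariant. The only subtle point I anticipate is the coherence of the various sign/equivariance choices when combining Proposition \ref{P;rightchoice} (compatible normalization of $\hA\cong\omega_{\hA}$ with $\hRt\cong\omega_{\hRt}$) with the chain-level formula (\ref{E:pairingchains}); this is where I would be careful to fix a single normalization of $\sfres_{\hA}$ compatible with both the Koszul Poincar\'e duality $\mu$ and the residue $\sfres_{\moduleT_{\fm}}$, so that the equality $(a,b)=\mu(a\cup b)\,\sfres_{\moduleT_{\fm}}$ holds on the nose rather than up to a nonzero scalar.
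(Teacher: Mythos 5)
Your proposal matches the paper's own argument essentially line for line: translate to $\Tor$ via Proposition \ref{P:identification}, realize $\sfm$ on chains as the tensor product of the Koszul product (\ref{E:pairingchains}) with the $D$-module multiplication (\ref{E:Tmap}) using Proposition \ref{P:pairing}, conclude nondegeneracy as a tensor product of the Gorenstein Poincar\'e pairing (Proposition \ref{C:Poincare}) and the dual-basis pairing on $\moduleT_{\fa}\otimes\moduleT_{\fb}$ (Proposition \ref{P:Tpairingnondeg}), read off the weight from $a_{\hA}=\chi^{-1}$ (Propositions \ref{P:charmatch}, \ref{P:funequation}), and get $\Spin(10)$-equivariance from equivariant resolutions (Lemma \ref{E:eqres}). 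The normalization subtlety you flag at the end is handled in the paper simply by observing that $\mu\otimes\sfres_T$ is proportional to $\sfres_{\hA}$, which suffices since the weight is what is being computed.
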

\begin{proof}

By using Proposition 
\ref{P:identification}, I identify local cohomology with the $\Tor$ groups, which I compute with the complexes $\moduleT_{\bullet}(\fa)$ and $\moduleT_{\bullet}(\fb)$.

By Proposition \ref{P:pairing}, the lift of the map $\sfm$ (\ref{E:multmap}) on the level of chains $\moduleT_{i}(\fa)$ and $\moduleT_{i}(\fb)$ 
is the product 
\begin{equation}\label{E:Dproduct}
{V}_i\otimes \moduleT_{\fa} \otimes {V}_j\otimes \moduleT_{\fb}\to {V}_{i+j}\otimes \moduleT_{\fm}
\end{equation}
of maps (\ref{E:pairingchains}) and (\ref{E:Tmap}). 

 \begin{lemma}\label{L:pairinge}
There are nondegenerate pairings 
\[\begin{split}
&\moduleT_{i}(\fa)\otimes \moduleT_{d-i}(\fb)\to \C\\
&\moduleT_{i}((0))\otimes \moduleT_{d-i}(\fm)\to \C\\
& d:=s(\fm)-s'(\fm).
\end{split}
\]
\end{lemma}
\begin{proof}
The analogue of (\ref{E:Dproduct}) for $\moduleT_{\bullet}((0))$ and $\moduleT_{\bullet}(\fm)$ is
\begin{equation}\label{E:oDproduct}
{V}_i\otimes \moduleT_{(0)} \otimes {V}_j\otimes \moduleT_{\fm}\to {V}_{i+j}\otimes \moduleT_{\fm}.
\end{equation}

The pairing between the pair $\moduleT_{\bullet}(\fa)$ and $\moduleT_{\bullet}(\fb)$ is the composition of (\ref{E:Dproduct}) with 
\begin{equation}\label{E:functional}
{V}_{d}\otimes \moduleT_{\fm}\overset{\mu\otimes \sfres_T}{\longrightarrow} \C.
\end{equation}
For the pair $\moduleT_{\bullet}((0))$ and $\moduleT_{\bullet}(\fm)$ I have to compose (\ref{E:oDproduct}) with (\ref{E:functional}).
The map $\sfres_T$ is defined in (\ref{E:rhodef}) and $\mu$ in (\ref{E:muisodef}). 
The resulting pairings are the tensor product of two nondegenerate pairings 
(see \ref{E:pairingchains} and Proposition \ref{P:Tpairingnondeg}).

\end{proof}

The map (\ref{E:functional}) defined on cohomology $H_d(\moduleT(\fm))$ is proportional to (\ref{E:varthetadef0}). Indeed, $\mu\otimes \sfres_T$ is homogeneous. It is nonzero only on the graded component of ${V}_{d}\otimes \moduleT_{\fm}$ spanned, in notations of the proof of Proposition \ref{P:charmatch}, by $\lambda^{-1}\otimes v$. By the same proposition, 
$\deg_{\C^{\times}}\lambda^{-1}\otimes v=-a$ -the grading of $\hA_0^{*}\subset H_{\fm}^{\rk(\hdelta,\hdelta')+1}({\hA})$.

The groups $\Aut$ acts on $\hA_0^{*}$ through the character $a_{\hA}(t,q,z)$ (\ref{E:sigma}). By Proposition \ref{E:relacharnot} , $a_{\hA}(t,q,z)=\chi^{-1}$. Exponents of the monomial in $t,q,z$ that define $\chi$ are tabulated in Corollary \ref{P:funequation}. From it I deduce the weight of $\sfres_{\hA}$, which is equal to the weight of the pairing.

The proof of the second statement is similar and I omit it.

To prove the  third statement I observe that by Lemma \ref{E:eqres} the maps (\ref{E:pairingchains}) are $\Spin(10)$-equivariant (I don't have to work with minimal resolutions). The map (\ref{E:Tmap}) is $\Spin(10)$-equivariant equivariant by construction. Thus the map $\sfm(a,b)$ is $\Spin(10)$-equivariant on the level of cohomology. The group $\Spin(10)$ is simple. It must  act trivially on  $\hA_0^{*}\subset H^{8(N'-N)+11}_{\fm}[(0)^N,(1)^{N'}]$. Hence $\res$ is $\Spin(10)$-equivariant map. From this I conclude that $\res\sfm(a,b)$ is $\Spin(10)$-equivariant pairing.
\end{proof}
\begin{remark}\label{R:degreeshift1}{\rm
The cohomology pairing (\ref{E:fundpairingdef}) and the map $\sfres_{{\hA[\hdelta,\hdelta']}}$ transform under $\Aut$ by the character $\chi[\hdelta,\hdelta'] $ (see \ref{E:sigmaalt}
).
 It is convenient to twist the action of $\Aut$ on $H^i_{\fa}(\hA)$ by the character 
 $\chi^{-1}[\hdelta,(1)^{-1}]$,
  $H^i_{\fb}(\hA)$ by $\chi^{-1}[(0)^{0},\hbeta]$, 
 $H^i_{\fri}(\hA)$ by  $\chi^{-1}[\hdelta,\hdelta']^{\leq −1}$,
  $H^i_{\fri'}(\hA)$ by $\chi^{-1}[\hdelta,\hdelta']^{\geq 0}$, and
  $H^i_{\fm}(\hA)$ by $\chi^{-1}[\hdelta,\hdelta']$.
Then according to  (\ref{E:chiprop})  after the twist the $\Aut$-weight  of the pairing (\ref{E:multmap}) is $t^{-4}q^{2}$. The pairing (\ref{E:multmapi}) is is equivariant with respect to the twisted action. I will also refer to this twist as  to {\it renormalized action} of $\Aut$. The grading by weight spaces of the renormalized action will be called {\it the renormalized grading}.
}\end{remark}

\subsubsection{The four-term complex $\Frg^{\fa}_{\bullet}[\hdelta,\hdelta']$}\label{S:fourterm}
 In this section, I will define a very compact four-term complex $\Frg^{\fa}_{\bullet}$ that computes $H_{\fa}^i(\hA)$ for algebra $\hA$ based on an interval $[\hdelta,\hdelta']$.
 Let us fix some notations and recall the old (\ref{D:lessdef}). Denote the subset of $\sethE$
$\sethE^{\leq 2}\cap \sethE^{\geq 0}\text{ by } \rA.$
\[\rA_1\cup \rA_2:=\{(0)^0,(12)^0,(13)^0\}\cup\{(3)^{-1},(2)^{-1},(1)^{-1}\}=\rA\]
\[\rA^{\fc}:=\begin{cases}
\rA_2&\text{ if } \fc=\fa\\
\rA_1&\text{ if } \fc=\fb
\end{cases}\]
Introduce a tensor factorization of $P=\C[\hdelta,\hdelta']=R^{\fc}\otimes Q^{\fc},\fc=\fa,\fb:$
\[
R^{\fa}:=\C[[\hdelta,\hdelta']\backslash \rA] \otimes \C[\rA_1], Q^{\fa}:= \C[\rA^{\fa}],\quad 
 R^{\fb}:=\C[[\hdelta,\hdelta']\backslash \rA] \otimes \C[ \rA_2], Q^{\fb}:= \C[\rA^{\fb}].\]
There will be  two modifications of $\Frg^{\fc}_{\bullet}[\hdelta,\hdelta']$. All of them are Koszul complexes based on $Q^{\fc}$-modules $\Mrg_{\fc}$: 
\begin{equation}\label{E:Mrgdef}
\Mrg_{\fc}:=\moduleS_{\fc}\underset{R^{\fc}}{\otimes} {\hA}.
\end{equation}
The generators $\lambda^{\halpha},\halpha\in \rA^{\fc}$
act on $\Mrg_{\fc}$ by multiplication on $\underline{\lambda}^{\halpha}$ (see (\ref{E:underlinenotation}) for notations).

In the next proposition, I show that the four-term 
Koszul complexes 
\begin{equation}\label{E:Rdef}
\Frg^{\fc}_{\bullet}:=B_{\bullet}(\Mrg_{\fc},\underline{\lambda}^{\halpha}|\halpha\in \rA^{\fc}\})
\end{equation}
compute $H_{\fc}^i(\hA)$.

\begin{proposition}\label{P:complexreduction}

There is an isomorphism
\begin{equation}\label{E:redisomorphism}
H_{\fc}^{s'(\fc)-i}[\hdelta,\hdelta']\cong\Tor_i^{P}(\hA[\hdelta,\hdelta'],\moduleS_{\fc}[\hdelta,\hdelta'])\cong H_i(\Frg^{\fc}[\hdelta,\hdelta'])
\end{equation}
$
s'(\fc),\fc=\fa,\fb$(see (\ref{E:sschiftdef}) for the definition of $s'$).
\end{proposition}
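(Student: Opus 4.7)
The first isomorphism is precisely Proposition \ref{P:identification} (second item), so the work lies entirely in identifying $\Tor^P_i(\hA,\moduleS_\fc)$ with the homology of the four-term Koszul complex $\Frg^\fc_\bullet = B_\bullet(\Mrg_\fc,\{\underline{\lambda}^\halpha:\halpha\in\rA^\fc\})$ on the module $\Mrg_\fc = \moduleS_\fc \otimes_{R^\fc}\hA$.

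\textbf{Strategy.} I would exploit the tensor-product factorization $P = R^\fc \otimes Q^\fc$, in which $Q^\fc = \C[\rA^\fc]$ is a polynomial algebra in three variables (the generators indexed by $\rA^\fc$, $|\rA^\fc|=3$). The first step is to check that $\moduleS_\fc$ is flat (indeed free) as an $R^\fc$-module. This should be read off directly from the explicit presentation (\ref{E:FIdef}): each tensor factor of $\moduleS_\fc$ is free over the portion of $R^\fc$ that acts on it, and the $\rReg$-directions being localized lie in $[\hdelta,(1)^{-1}]$, while $R^\fc = \C[[\hdelta,\hdelta']\setminus \rA^\fc]$ is built from the complementary generators. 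Once this flatness is in hand, the standard derived identity
\[
\hA \otimes_P^L \moduleS_\fc \;\simeq\; \bigl(\hA \otimes_{R^\fc} \moduleS_\fc\bigr) \otimes^L_{Q^\fc \otimes Q^\fc} Q^\fc
\]
combined with the length-three Koszul resolution of the diagonal $Q^\fc$ inside $Q^\fc\otimes Q^\fc$ reduces $\Tor^P_i(\hA,\moduleS_\fc)$ to the homology of a Koszul complex on $\Mrg_\fc$ with three differentials indexed by $\rA^\fc$. This Koszul complex has precisely four terms and, after the identifications below, coincides with $\Frg^\fc_\bullet$, giving (\ref{E:redisomorphism}).

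\textbf{Main obstacle.} The delicate point is that the Koszul differentials produced by the diagonal resolution are of the form $\lambda^\halpha\otimes 1 - 1\otimes \lambda^\halpha$ on $\hA\otimes_{R^\fc}\moduleS_\fc$, whereas $\Frg^\fc_\bullet$ uses only the single-sided operator $\underline{\lambda}^\halpha$ coming from the $\hA$-factor. Reconciling this requires the alt/reg decomposition $\lambda^\halpha = \tfrac12(\lambda_+^{\rho(\halpha)} + \lambda_-^{\rho(\halpha)})$ together with the fact that the companion element of each $\rho$-level pair (the one paired with $\halpha \in \rA^\fc$ in the Core) lies in $R^\fc$. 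On $\moduleS_\fc$, the $\lambda_+^i$-component acts by the distributional shift on the $\C_+[\hdelta,(1)^{-1}]^{-1}$ factor, while $\lambda_-^i$ is expressible through the $R^\fc$-action using the companion generator. After passing to $\otimes_{R^\fc}\hA$, the $R^\fc$-part of the $\moduleS_\fc$-action is absorbed into the tensor-product identifications, so that only the $\hA$-side operator $\underline{\lambda}^\halpha$ survives in the Koszul differential. Verifying this combinatorial-algebraic matchup, level by level in $\rho(\rA^\fc)$, is the essential technical step; the remainder is a formal change-of-rings and Koszul resolution argument, and the parallel case $\fc=\fb$ follows by the involution $\reflection$ from (\ref{E:involution}).
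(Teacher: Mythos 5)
Your strategy has the right overall shape — rewrite $\Tor^P_i(\hA,\moduleS_\fc)$ as a Koszul complex on all the variables of $P$, absorb the $R^\fc$-variables, and be left with the three variables indexed by $\rA^\fc$ — and this matches the paper's bicomplex spectral-sequence argument. But the step on which everything hangs is wrong: $\moduleS_\fc$ is \emph{not} flat (let alone free) over $R^\fc$. Take $\fc=\fa$. The variables of $R^\fa=\C[[\hdelta,\hdelta']\setminus\rA]\otimes\C[\rA_1]$ are all of $[\hdelta,\hdelta']$ except the three elements of $\rA_2$; in particular, for every $\rho$-level $i$ with $\rho(\hdelta)\le i\le -1$, the element $\lambda_+^i\in\rReg[\hdelta,(1)^{-1}]$ is a linear combination of variables that belong to $R^\fa$. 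But by construction $\lambda_+^i\cdot\varpi_\fa=0$ in the factor $\C_+[[\hdelta,(1)^{-1}]]^{-1}$ of $\moduleS_\fa$, so $R^\fa$ has a nonzero annihilator of the generator of $\moduleS_\fa$. A module over a polynomial ring with a nonzero annihilator cannot be flat. Your reading ``$R^\fc$ is built from the complementary generators'' to the localized $\rReg$-directions is the error: $R^\fa$ is missing only $\rA_2$, and the $\rReg$-directions at negative $\rho$-levels are combinations of $R^\fa$-variables. Consequently the change-of-rings identity you invoke fails at its hypothesis; without it you have not shown that $\Tor^{R^\fa}_i(\hA,\moduleS_\fa)=0$ for $i>0$, which is exactly the content needed to collapse the spectral sequence.

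The paper supplies the missing ingredient with a finer decomposition: it splits $R^\fa=H_1\otimes H_2$ with $H_1=\C_-[[\hdelta,\hdelta']^{\leq -1}]\otimes\C[\rA_1]\otimes\C_-[[\hdelta,\hdelta']^{\geq 3}]$ and $H_2=\C_+[[\hdelta,\hdelta']^{\leq -1}]\otimes\C_+[[\hdelta,\hdelta']^{\geq 3}]$. Then $\moduleS_\fa$ is free over $H_1$ (which avoids all the $\lambda_+^i$), while $\hA$ is free over $H_2$ — this is where Lemma \ref{L:regularity} (regularity of $\rReg$ in $\hA$) enters. Lemma \ref{L:stopiat} item (3) then yields $R^\fa$-freeness of $\hA\otimes\moduleS_\fa$ and hence the vanishing of higher $\Tor^{R^\fa}$. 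Your ``Main obstacle'' paragraph is also misdirected: $\underline{\lambda}^\halpha$ is defined in (\ref{E:underlinenotation}) as $\lambda^\halpha\otimes 1-1\otimes\lambda^\halpha$, i.e.\ it \emph{is} the two-sided diagonal operator, so $\Frg^\fc_\bullet$ already agrees on the nose with the Koszul resolution of $Q^\fc$ over $Q^\fc\otimes Q^\fc$; there is no one-sided/two-sided reconciliation needed.
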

\begin{proof}
I prove the statement only for $\fc=\fa$. By Proposition \ref{P:identification}, $H_{\fa}^{-\rho(\hdelta)+3-i}\cong\Tor_i^P(\hA,\moduleS_{\fa})$. The constant $-\rho(\hdelta)+3$ is the number of elements in the sequence $\rReg[\hdelta,(1)^{-1}]$. For computation of $\Tor$ I use the Koszul resolution
 \begin{equation}\label{E:diagres}
 B_{\bullet}(\hA\otimes P,\{\underline{\lambda}^{\halpha}\}),\halpha\in[\hdelta,\hdelta'].
\end{equation}
It immediately implies that the cohomology of 
\begin{equation}\label{E:complexfortor}
B_{\bullet}(\hA\otimes \moduleS_{\fa},\{\underline{\lambda}^{\halpha}\})
\end{equation} computes $\Tor_i^P(\hA,\moduleS_{\fa})$. 

My plan is to apply construction (\ref{C:bicomplex}) to (\ref{E:complexfortor}): 
\[\begin{split}
&\sd_1a=\sum_{\halpha\in [\hdelta,\hdelta']\backslash \rA}\underline{\lambda}^{\halpha}\frac{\sd a}{\sd\theta^{\halpha}}+\sum_{\halpha\in \rA_1}\underline{\lambda}^{\halpha}\frac{\sd a}{\sd\theta^{\halpha}}\\
&\sd_2a=\sum_{\halpha\in \rA_2}\underline{\lambda}^{\halpha}\frac{\sd a}{\sd\theta^{\halpha}}.\\
\end{split}\]
See (\ref{E:underlinenotation}) for notations.
The proof is based on consideration of the spectral sequence of the bicomplex $B_{\bullet,\bullet}$. 
\begin{lemma}
The cohomology of $(B_{\bullet,j},\sd_1)$ is the first page $E_{i,j}^1$ of the spectral sequence of the bicomplex. 
I claim that $E_{i,j}^1=0, i>0$. $E_{0,j}^1=\Frg^{\fa}_{j}$.
\end{lemma}
\begin{proof}
I will use the third item of Lemma \ref{L:stopiat}. I have the following identifications $R^{\fa}=H=H_1\otimes H_2$, \[H_1=
\C_-[[\hdelta,\hdelta']^{\leq -1}]\otimes \C[\rA_1] \otimes \C_-[[\hdelta,\hdelta']^{\geq 3}],\] 
\[H_2=\C_+[[\hdelta,\hdelta']^{\leq -1}] \otimes \C_+[[\hdelta,\hdelta']^{\geq 3}]=H'_2\otimes H''_2.
\]

\begin{equation}\label{E:Sdecomp}
\moduleS_{\fa}\cong H_1\otimes H_2^{'-1}\otimes H_2^{''}\otimes \C[\rA_2]^{-1}.
\end{equation}

 $\moduleS_{\fa}$ is by construction a free $H_1$-module.
Lemma \ref{L:regularity} implies that ${\hA}$ is a free $H_2$ module. Conditions of Lemma \ref{L:stopiat} are satisfied. I conclude that $E_{i,j}^1=0, i\geq 1$. Equality $E_{0,j}^1=\Frg^{\fa}_{j}$ trivially verifies.

\end{proof}

This finishes the proof.

\end{proof}

The next proposition gives an interpretation to space of chains of $\Frg_i^{\fc}$ in terms of local cohomology.

\begin{proposition}\label{P:fockasloc}
There is an isomorphism 
\begin{equation}\label{E:IHidentification}
\Mrg_{\fa}\cong H_{\ff}^{-\rho(\hdelta)}({\hA})\otimes \C[\rA^{\fa}]^{-1},\quad \Mrg_{\fb}\cong H_{\ff'}^{\rho(\hdelta')+1}({\hA})\otimes \C[\rA^{\fb}]^{-1}
\end{equation}

\end{proposition}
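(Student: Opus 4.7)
The plan is to derive both isomorphisms by unfolding the definitions \ref{E:FIdef}, \ref{E:FJdef}, \ref{E:FIdefF} of the auxiliary modules and performing a base-change computation, using Proposition \ref{P:identification} in the top degree $\Tor_0$ to express the right-hand side local cohomology as an ordinary tensor product. First I would partition $\setK=[\hdelta,\hdelta']$ into the three regions $\setK^{\leq-1}$, the ``middle'' $\rA=\rA_1\sqcup\rA_2$ at ranks $0,1,2$, and $\setK^{\geq 3}$. The key combinatorial observation is that $\rA_2=\{(3)^{-1},(2)^{-1},(1)^{-1}\}$ is a chain containing one element per rank $\in\{0,1,2\}$, so enlarging $\setK^{\leq-1}$ by $\rA_2$ to obtain $\setK^{\leq(1)^{-1}}$ adds one new $\rReg$-generator per rank but leaves $\alt$ unchanged; dually, restricting $\setK^{\geq 0}$ to $\setK^{\geq(0)^0}$ simply removes the polynomial factor $\C[\rA_2]$.

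For the $\fa/\ff$ case this yields a common factorization
\[\moduleS_{\fa}\cong N\otimes\C[\rA_2]^{-1},\qquad \moduleS_{\ff}\cong N\otimes\C[\rA_2],\]
where $N=\C_-[\setK^{\leq-1}]\otimes\C_+[\setK^{\leq-1}]^{-1}\otimes\C[\rA_1]\otimes\C[\setK^{\geq 3}]$ is an $R^{\fa}$-module (with $R^{\fa}=\C[\setK\setminus\rA]\otimes\C[\rA_1]$) and $Q^{\fa}=\C[\rA_2]$ acts on the second factor. By Proposition \ref{P:identification} in degree $s'(\ff)=-\rho(\hdelta)$, one has $H^{-\rho(\hdelta)}_{\ff}(\hA)=\hA\otimes_P\moduleS_{\ff}$; the standard base-change identity $\hA\otimes_{R^{\fa}\otimes Q^{\fa}}(N\otimes Q^{\fa})=\hA\otimes_{R^{\fa}}N$ collapses this to $\hA\otimes_{R^{\fa}}N$. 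On the other hand, $\Mrg_{\fa}=\moduleS_{\fa}\otimes_{R^{\fa}}\hA=(\hA\otimes_{R^{\fa}}N)\otimes\C[\rA_2]^{-1}$, since $\C[\rA_2]^{-1}$ carries no $R^{\fa}$-action and can be pulled out of the tensor product. Combining gives $\Mrg_{\fa}\cong H^{-\rho(\hdelta)}_{\ff}(\hA)\otimes\C[\rA^{\fa}]^{-1}$.

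The $\fb/\ff'$ case proceeds along parallel lines, with $R^{\fb}=\C[\setK\setminus\rA]\otimes\C[\rA_2]$ playing the role of the base ring: both $\moduleS_{\fb}$ and $\moduleS_{\ff'}$ share a common $L$-module factor $M'=\C[\setK^{\leq-1}]\otimes\C_-[\setK^{\geq 3}]\otimes\C_+[\setK^{\geq 3}]^{-1}$ (here $L=\C[\setK\setminus\rA]$), and an analogous base-change will yield $\Mrg_{\fb}\cong(M'\otimes_L\hA)\otimes\C[\rA_1]^{-1}$ and $H^{\rho(\hdelta')+1}_{\ff'}(\hA)\cong M'\otimes_L\hA$. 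The main obstacle is that, unlike in the $\fa/\ff$ case, the middle-region factors of $\moduleS_{\fb}$ and $\moduleS_{\ff'}$ are structurally different: $\moduleS_{\fb}$ has the ``pure'' middle factor $\C[\rA_2]\otimes\C[\rA_1]^{-1}$, supported on the locus $\{\lambda^{\rA_1^i}=0\}\subset\mathrm{Spec}\,\C[\rA]$, while $\moduleS_{\ff'}$ has the ``rotated'' factor $\C[\lambda^{\rA_1^i}-\lambda^{\rA_2^i}]\otimes\C[(\lambda^{\rA_1^i}+\lambda^{\rA_2^i})^{-1}]$, supported on the different codimension-three subvariety $\{\lambda^{\rA_1^i}+\lambda^{\rA_2^i}=0\}$. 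The resolution must come from the pure spinor relations: the straightening laws \ref{E:reluniv} applied to the clutters $\lambda^{\rA_1^i}\lambda^{\rA_2^j}$ (whose joins and meets lie outside $\rA$) will force the two middle-region tensor products to become equivalent after passing to $\hA$, completing the base-change computation and establishing the claimed isomorphism.
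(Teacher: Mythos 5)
Your $\fa/\ff$ argument is correct and is essentially the paper's proof: your $N$ is precisely the $X=H_1\otimes H_2^{'-1}\otimes H_2^{''}$ of (\ref{E:Sdecomp}), and your two cancellations (removing $\C[\rA_2]^{-1}$ from $\Mrg_{\fa}$, and collapsing $\hA\otimes_P(N\otimes\C[\rA_2])$ to $\hA\otimes_{R^{\fa}}N$) are the base-change step, after which Proposition \ref{P:localtorisom}, Proposition \ref{P:parameters} and Proposition \ref{P:radeq} finish the identification with $H^{-\rho(\hdelta)}_{\ff}(\hA)$.

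For $\fb/\ff'$, however, there is a gap. You correctly locate the obstruction: on ranks $0,1,2$ the subspace $\rReg[(0)^0,\hgamma]$ entering $\moduleS_{\fb}$ is spanned by $\lambda^{\rA_1^i}$ alone, while $\rReg(\setK^{\geq 0})$ entering $\moduleS_{\ff'}$ is spanned by $\lambda_+^i=\lambda^{\rA_1^i}+\lambda^{\rA_2^i}$; so, unlike the $\fa$ case, the two $Y,Z$-decompositions differ by a genuine linear change of basis of $\Span{\rA}$, not merely by which three-dimensional factor is inverted. After the $\C[\rA_1]^{-1}$ factor is peeled off, what remains of $\Mrg_{\fb}$ is naturally $\hA\otimes_{\C_+[\setK^{\geq 3}]}\C_+[\setK^{\geq 3}]^{-1}$, whereas $H^{\rho(\hdelta')+1}_{\ff'}(\hA)=\hA\otimes_{\C_+[\setK^{\geq 0}]}\C_+[\setK^{\geq 0}]^{-1}$, and these are local cohomology groups with respect to ideals of different radicals $\bigl(\Rad(\rReg(\setK^{\geq 3}))\text{ versus }\ff'\bigr)$. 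Your proposed resolution — that the straightening relations among the clutters $\lambda^{\rA_1^i}\lambda^{\rA_2^j}$ ``will force the two middle-region tensor products to become equivalent after passing to $\hA$'' — is a conjecture, not an argument: you do not identify which $\Tor$ groups are being compared, nor how a relation of the form $\lambda^{\rA_1^i}\lambda^{\rA_2^j}\pm\lambda^{\hgamma}\lambda^{\hdelta}-\cdots=0$ would yield the required equality of radicals or of the two tensor products. Note also that this cannot be swept under the rug by symmetry: the involution $\shift^{-1}\reflection$ that carries $\fa$ to $\fb$ and $\moduleS_{\fa}$ to $\moduleS_{\fb}$ (Remark \ref{R:dualssystem}) carries $\ff=\fri(\setK^{\leq -1})$ to $\fri(\setL^{\geq 3})$, which is strictly contained in $\ff'=\fri(\setL^{\geq 0})$, so the second isomorphism is not the mirror image of the first. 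The paper itself dismisses this case with a single word, so you are not alone in underestimating it, but as it stands the second isomorphism of (\ref{E:IHidentification}) is not established.
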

\begin{proof}
I prove only for $\fa$. I denote $H_1\otimes H_2^{'-1}\otimes H_2^{''}$ in  (\ref{E:Sdecomp})  by $X$.
The tensor product $\hA$ and $X$ over $R^{\fa}$ coincides with the tensor product over $\C[\hdelta,\hdelta']$ of $\hA$ and $S_{\ff}[\hdelta,\hdelta']$ (\ref{E:FIdefF}).
By Proposition \ref{P:localtorisom} there is an isomorphism 
\begin{equation}\label{E:IeqHtensprod}
\hA[\hdelta,\hdelta']\underset{P}{\otimes} S_{\ff}[\hdelta,\hdelta']= H_{\fy}^{-\rho(\hdelta)}({\hA}),
\end{equation} where $\fy$ is generated by the regular sequence $\rReg[\hdelta,\hdelta']^{\leq -1}$. The proof follows from Proposition \ref{P:parameters}, Proposition \ref{P:radeq}.
\end{proof}

Here is an immediate corollary of the proof.
\begin{corollary}\label{C:Ionedegree}
\[H_{\ff}^i({\hA})\neq 0\text{ only for } i=-\rho(\hdelta),\]
\[H_{\ff'}^i({\hA})\neq 0\text{ only for } i=\rho(\hdelta')+1.\]
\end{corollary}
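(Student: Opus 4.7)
The plan is to deduce both vanishing statements as immediate consequences of the ingredients already collected for Proposition \ref{P:fockasloc}, via a standard Koszul--radical argument. The point is that although the ideals $\ff$ and $\ff'$ are not themselves generated by regular sequences, their radicals are, and local cohomology depends only on the radical.

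I would argue for $\ff$ first. By Proposition \ref{P:parameters}, the ideal $\fy\subset \hA$ generated by the sequence $\rReg[\hdelta,\hdelta']^{\leq -1}$ satisfies $\Rad\,\fy=\ff$, hence Proposition \ref{P:radeq} yields $H_{\ff}^{i}(\hA)=H_{\fy}^{i}(\hA)$ for every $i$. By Lemma \ref{L:regularity} this sequence is regular in $\hA$, so its Koszul complex resolves $\hA/\fy$ and the local cohomology $H_{\fy}^{i}(\hA)$ vanishes outside the single degree equal to the length of the sequence. Under the standing hypothesis $\hdelta\leq (0)^0,(1)^{-1}\leq \hdelta'$ the cardinality of $\rReg[\hdelta,\hdelta']^{\leq -1}$ equals the number of values taken by $\rho$ on $[\hdelta,\hdelta']^{\leq -1}$, namely the integers $\rho(\hdelta),\rho(\hdelta)+1,\dots,-1$, and there are $-\rho(\hdelta)$ of them. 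This forces $H_{\ff}^{i}(\hA)=0$ for $i\neq -\rho(\hdelta)$.

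The second statement follows by the symmetric argument. The analogue of Proposition \ref{P:parameters} applied to the semi-interval $[\hdelta,\hdelta']^{\geq 0}$ shows that the ideal generated by $\rReg[\hdelta,\hdelta']^{\geq 0}$ has radical $\ff'$, and Lemma \ref{L:regularity} (whose statement explicitly covers semi-intervals) gives that the sequence is regular in $\hA$. Its length equals the number of values of $\rho$ on $[\hdelta,\hdelta']^{\geq 0}$, namely $0,1,\dots,\rho(\hdelta')$, so it has cardinality $\rho(\hdelta')+1$. Invoking Proposition \ref{P:radeq} and Koszul vanishing once more concentrates $H_{\ff'}^{i}(\hA)$ in degree $\rho(\hdelta')+1$.

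I do not anticipate any substantive obstacle: every ingredient has been prepared. The only point requiring any care is the length count for the two $\rReg$-sequences, and this is a direct consequence of the definition of $\rho$ together with the shape of the semi-intervals; both lengths have, in fact, already appeared in the proof of Proposition \ref{P:identification} via the function $s'$ defined in \eqref{E:sschiftdef}, so one could also simply read the result off from that formula combined with Proposition \ref{P:radeq}.
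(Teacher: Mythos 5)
Your proof is correct, and it uses the same two ingredients the paper does (Proposition \ref{P:parameters} for the radical identity $\Rad\,\fy=\ff$ together with Proposition \ref{P:radeq}, and Lemma \ref{L:regularity} for regularity of $\rReg[\hdelta,\hdelta']^{\leq -1}$), so the arguments are close in spirit. The one genuine difference is how the concentration in a single cohomological degree is obtained. The paper deduces the corollary by inspecting the proof of Proposition \ref{P:fockasloc}: there the vanishing of higher $\Tor^P_j(\hA,\moduleS_\ff)$ is established via the tensor-factorization of $\moduleS_\ff$ and the freeness criterion of Lemma \ref{L:stopiat}, so that $\hA\otimes_P\moduleS_\ff$ is exhibited outright as the only surviving group. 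You instead appeal directly to the classical local-algebra fact that an ideal generated by a regular sequence of length $n$ has $H^i$-cohomology concentrated in degree $n$ (depth $\geq n$ gives vanishing below, $n$ generators gives vanishing above), which bypasses the $\moduleS$-module and $\Tor$ machinery entirely. This is somewhat more elementary and self-contained, at the small cost of not illustrating the $\Tor$-identification that the surrounding section is building toward. Your length counts ($-\rho(\hdelta)$ values of $\rho$ on $[\hdelta,\hdelta']^{\leq -1}$, and $\rho(\hdelta')+1$ on $[\hdelta,\hdelta']^{\geq 0}$) agree with the shifts $s'(\ff),s'(\ff')$ in \eqref{E:sschiftdef}, as you note. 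The only thing to be slightly careful about is that for $\ff'$ you need the analogue of the radical computation in Proposition \ref{P:parameters} for $\fri[\hdelta,\hdelta']^{\geq 0}$; the paper's Proposition \ref{P:parameters} only writes out the $\ff$ case explicitly, so you would either invoke the reflection $\reflection$ of \eqref{E:involution} to transport the statement, or repeat the Gr\"obner reduction argument — you flag this as ``the analogue,'' which is fine, but it does deserve a one-line justification.
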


The fact that local cohomology $H_{\fa}^i({\hA})$ are nontrivial only for four consecutive values of the cohomological index is established in the following proposition.
\begin{proposition}\label{P:vanishingmain}
\[H_{\fa}^i({\hA})\neq 0\text{ only for } -\rho(\hdelta)\leq i \leq -\rho(\hdelta)+3,\]
\[H_{\fb}^i({\hA})\neq 0\text{ only for } \rho(\hdelta')-2\leq i \leq \rho(\hdelta')+1.\]
\end{proposition}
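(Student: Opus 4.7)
The plan is to obtain the $\fa$-vanishing directly from the four-term Koszul description in Proposition~\ref{P:complexreduction} and then to deduce the $\fb$-vanishing by transport along the reflection--shift isomorphism (\ref{E:sigmaTcoh}).

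For the $\fa$-case, Proposition~\ref{P:complexreduction} provides the identification $H_{\fa}^{s'(\fa)-i}[\hdelta,\hdelta'] \cong H_{i}(\Frg^{\fa}_{\bullet}[\hdelta,\hdelta'])$. By construction (\ref{E:Rdef}), $\Frg^{\fa}_{\bullet}$ is the Koszul complex on the three generators $\{\underline{\lambda}^{\halpha}:\halpha\in\rA^{\fa}=\rA_{2}\}$; hence it is concentrated in homological degrees $0,1,2,3$ and $H_{i}(\Frg^{\fa}_{\bullet})$ vanishes outside this range. Substituting $s'(\fa)=3-\rho(\hdelta)$ from (\ref{E:sschiftdef}) and letting $j=s'(\fa)-i$ as $i$ runs over $\{0,1,2,3\}$ yields precisely the claimed interval $-\rho(\hdelta)\le j\le -\rho(\hdelta)+3$.

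For the $\fb$-case, I invert the reflection isomorphism in (\ref{E:sigmaTcoh}). Solving for the source interval gives
\[
H_{\fb}^{i}[\hdelta,\hdelta'] \;\cong\; H_{\fa}^{i}\bigl[\reflection\shift(\hdelta'),\,\reflection\shift(\hdelta)\bigr].
\]
Since $\reflection$ is an order-reversing involution and $\shift$ an order-preserving automorphism of $\sethE$, one has $\reflection\shift(\hdelta')<\reflection\shift(\hdelta)$, and by the $\reflection$- and $\shift$-invariance of the sets $\rM_{j}^{\pm}$ (a consequence of item~\ref{I:sixmultstatM} of Lemma~\ref{L:multstatM}) the new interval again satisfies the purity condition (\ref{E:purity}), so the $\fa$-case applies on the right-hand side. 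Using the identities $\rho\shift=\rho+8$ (read off from any maximal chain $(0)^{r}\lessdot\cdots\lessdot(0)^{r+1}$, which has length $8$) and $\rho\reflection=10-\rho$ (equation (\ref{E:htands})), one computes $-\rho\bigl(\reflection\shift(\hdelta')\bigr)=-\bigl(10-\rho(\hdelta')-8\bigr)=\rho(\hdelta')-2$, so the nonvanishing range becomes $\rho(\hdelta')-2\le i\le \rho(\hdelta')+1$, as required.

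I foresee no serious obstacle. The only points requiring care are (i) that $\reflection\shift$ preserves the class of intervals to which Proposition~\ref{P:complexreduction} applies, which is routine from the symmetry properties of $\rM_{j}^{\pm}$ already recorded in Lemma~\ref{L:multstatM}, and (ii) the small arithmetic identifying $-\rho(\reflection\shift(\hdelta'))$ with $\rho(\hdelta')-2$. The rest is a direct quotation of the four-term Koszul complex description of local cohomology.
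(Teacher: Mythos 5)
Your proof is correct. For the $\fa$-case you take exactly the paper's route: Proposition~\ref{P:complexreduction} identifies $H_{\fa}^{s'(\fa)-i}$ with the homology of the Koszul complex $\Frg^{\fa}_{\bullet}$ on the three generators $\rA_2$, which is concentrated in homological degrees $0,\dots,3$, and the arithmetic with $s'(\fa)=3-\rho(\hdelta)$ does the rest.

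For the $\fb$-case you depart from the paper. The paper's one-line proof applies Proposition~\ref{P:complexreduction} to $\fc=\fb$ as well — the complex $\Frg^{\fb}_{\bullet}$ is likewise a four-term Koszul complex on $\rA_1$ — and then reads off the range from $s'(\fb)$. You instead apply it only to $\fa$ and then transport to $\fb$ via the isomorphism $H^{i}_{\fb}[\hdelta,\hdelta']\cong H^{i}_{\fa}[\reflection\shift(\hdelta'),\reflection\shift(\hdelta)]$ coming from (\ref{E:sigmaTcoh}), checking that the target interval still satisfies purity (\ref{E:purity}) by the $\shift$-invariance of the $\rM_j^{\pm}$ and the fact that $\reflection$ interchanges $\rM_j^{+}$ with $\rM_j^{-}$, and then doing the index bookkeeping with $\rho\shift=\rho+8$ and (\ref{E:htands}). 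Both routes are sound; yours has the side benefit of never invoking the explicit value of $s'(\fb)$ tabulated in (\ref{E:sschiftdef}), which as printed (namely $\rho(\hdelta')-2$) is inconsistent with the claimed range — it would have to equal $\rho(\hdelta')+1=|\rReg[(0)^0,\hdelta']|$ for the direct argument to reproduce the stated bounds, exactly as the analogue $s'(\fa)=|\rReg[\hdelta,(1)^{-1}]|$ in the proof of Proposition~\ref{P:identification} would dictate. Your symmetry transport gives the correct interval $\rho(\hdelta')-2\leq i\leq\rho(\hdelta')+1$ directly and so avoids the need to notice and correct this.
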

\begin{proof}
The 
statement  is a corollary of Proposition \ref{P:complexreduction}.
\end{proof}

\subsubsection{Computations of the virtual character of $H^i_{\fa}(\hA)$}\label{S:virtchar}
Fix an algebra $\hA$ based on the interval $[\hdelta,\hdelta']$.
The plan for this section is to derive functional equations satisfied by the virtual character 
\[Z^{bare}_{\fc}(t,q,z):=\sum_{i\geq 0} (-1)^{i}\chi_{H_{\fc}^i(\hA)}(t,q,z),\fc=\fa,\fb\]
of $\Aut$ (\ref{E:symmstries0}).
The character $\chi_{H_{\fc}^i(\hA)}(t,q,z)$ is a generating function of dimensions of weight subspaces $\sum_{j,u,r}\dim H_{\fc}^{i,j,r}(A)t^jq^uz^r$.
By using sets $\setN(\fc)$ (\ref{E:Ndef}) and functions (\ref{E:sigmaalt}),I define
\[a(\fc):=a(\setN(\fc)),u(\fc):=u(\setN(\fc)),r(\fc):=r(\setN(\fc)). \]
Together with $Z^{bare}_{\fa}$ I also study its renormalized version
\begin{equation}\label{E:renormchar}
 Z_{\fa}:=(-1)^{s'(\fa)}t^{a(\fa)}q^{u(\fa)}z^{r(\fa)}Z^{bare}_{\fa}.
\end{equation}
The function $s'$ is as in (\ref{E:sschiftdef}). This virtual character corresponds to the twisted action described in Remark \ref{R:degreeshift1}.

Here is the functional equation for $Z$ in the most basic form.
\begin{proposition}\label{P:eqderivation0}
$Z^{bare}_{\fa}$ and $Z_{\fa}$ satisfy
\begin{equation}\label{E:fbarebasic2}
\begin{split}
 & Z^{bare}_{\fa}(t,q,z)=\\
&=(-1)^{s'(\fm)}t^{-a(\fm)}q^{-u(\fm)}z^{-r(\fm)} Z^{bare}_{\fb}(t^{-1},q^{-1},z^{-1})
\end{split}
\end{equation}
and
\begin{equation}\label{E:frenorm3}
\begin{split}
Z^{bare}_{\fb}[\hdelta,\hdelta'](t,q,z)= Z^{bare}_{\fa}[\shift^{-1}\reflection(\hdelta'),\shift^{-1}\reflection(\hdelta)](q^{-1}t,q^{-1},Sz).
\end{split}
\end{equation}
\end{proposition}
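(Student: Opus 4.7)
The plan is to derive both equations from structural results already established: (\ref{E:fbarebasic2}) from the $*$-duality pairing of Proposition~\ref{P:pairingdegree}, and (\ref{E:frenorm3}) from the lattice-level symmetry recorded in (\ref{E:sigmaTcoh}).

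For (\ref{E:fbarebasic2}), Proposition~\ref{P:pairingdegree} supplies a perfect pairing $H^i_\fa(\hA)\otimes H^{s'(\fm)-i}_\fb(\hA)\to\C$, where $s'(\fm):=s'(\fa)+s'(\fb)=\rk(\hdelta,\hdelta')+1$ is the total cohomological degree, of prescribed $\Aut$-weight expressible in terms of the invariants $a(\fm),u(\fm),r(\fm)$. Perfection on graded components yields an equality of weight-space dimensions between $H^{i,w}_\fa(\hA)$ and $H^{s'(\fm)-i,w^*}_\fb(\hA)$, where $w+w^*$ is fixed by the pairing weight. Substituting this into the definition of $Z^{bare}_\fa(t,q,z)$, reindexing the dummy variable $w$ and the cohomological index $i$, and collecting the overall sign $(-1)^{s'(\fm)}$ coming from $(-1)^i=(-1)^{s'(\fm)}(-1)^{s'(\fm)-i}$ together with the prefactor $t^{-a(\fm)}q^{-u(\fm)}z^{-r(\fm)}$ coming from the weight shift, produces (\ref{E:fbarebasic2}).

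For (\ref{E:frenorm3}), the composition $\shift^{-1}\reflection$ of the shift (\ref{E:shift}) and involution (\ref{E:involution}) gives, via (\ref{E:tisomorphism}) and (\ref{E:mapsintr}), an algebra isomorphism $\hA[\hdelta,\hdelta']\to\hA[\shift^{-1}\reflection(\hdelta'),\shift^{-1}\reflection(\hdelta)]$ carrying the ideal $\fa$ of the source to the ideal $\fb$ of the target (since $\reflection(\fa)=\fa'$ and $\shift^{-1}(\fa')=\fb$ after relabeling). Passing to local cohomology yields the isomorphism (\ref{E:sigmaTcoh}). This algebra map is not $\Aut$-equivariant in the literal sense, but it does intertwine the two $\Aut$-actions after the substitution $(t,q,z)\mapsto(q^{-1}t,q^{-1},Sz)$: indeed, from (\ref{E:weights}) one verifies that $\hat v_{\shift^{-1}\reflection(\halpha)}(t,q,z)=\hat v_\halpha(q^{-1}t,q^{-1},Sz)$ for every $\halpha$, which reduces in turn to the two identities $u(\reflection(\halpha))=-u(\halpha)$ and $v_{\reflection(\halpha)}(z)=v_\halpha(Sz)$, both immediate from the description of $\reflection$ given just after (\ref{E:involution}). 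Taking weight-space dimensions on both sides of (\ref{E:sigmaTcoh}) and summing the alternating sum over $i$ then yields (\ref{E:frenorm3}).

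The main obstacle is the verification of the twist identity $\hat v_{\shift^{-1}\reflection(\halpha)}(t,q,z)=\hat v_\halpha(q^{-1}t,q^{-1},Sz)$. Although the underlying combinatorics of $\reflection$ is elementary, it must be checked stratum by stratum on the three types of vertices $\{(0)^r\},\{(ij)^r\},\{(k)^r\}$ of $\sethE$; the factor $q^{-1}$ appearing alongside $t$ encodes the unit shift in the $u$-grading caused by $\shift^{-1}$, while the $Sz$ slot captures the fact that $\reflection$ exchanges $\rbeta$ with its dual under the involution of the spinor labeling $\setE$. Matching the sign $(-1)^{s'(\fm)}$ in (\ref{E:fbarebasic2}) is then routine sign-bookkeeping based on cohomological parity.
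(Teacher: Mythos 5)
Your proposal is correct and follows essentially the same two-step route as the paper: equation (\ref{E:fbarebasic2}) is extracted from the perfect pairing of Proposition~\ref{P:pairingdegree} by matching weight spaces and cohomological degrees, and equation (\ref{E:frenorm3}) comes from the isomorphism of local cohomology in (\ref{E:sigmaTcoh}) together with the way the composition $\shift^{-1}\reflection$ twists the $\Aut$-action. The only difference is presentational: where the paper simply quotes the commutation rules $\reflection((t,q,z)a)=(t,q^{-1},Sz)\reflection(a)$ and $\shift((t,q,z)a)=(tq^{-1},q,z)\shift(a)$ from [MovStr], you re-express them as the single intertwining identity $\hat v_{\shift^{-1}\reflection(\halpha)}(t,q,z)=\hat v_\halpha(q^{-1}t,q^{-1},Sz)$ and reduce it to $u(\reflection(\halpha))=-u(\halpha)$ and $v_{\reflection(\rbeta)}(z)=v_\rbeta(Sz)$; these two facts are equivalent to the quoted rules, though calling them ``immediate'' from the one-sentence geometric description of $\reflection$ after (\ref{E:involution}) slightly understates the bookkeeping, since the explicit action of $\reflection$ on $\setE$ and the specific $S\in D_5$ are only worked out in Appendix A of [MovStr].
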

\begin{proof}
The formula (\ref{E:fbarebasic2}) follows from Proposition \ref{P:pairingdegree} and formula (\ref{E:htands}).

The composition $\shift^{-1}$ and $\reflection$ (\ref{E:sigmaTcoh}) induce an isomorphism
\begin{equation}\label{E:tsigmaiso}
\shift^{-1}\reflection:H^i_{\fb}[\hdelta,\hdelta']\to H^i_{\fa}[\shift^{-1}\reflection(\hdelta'),\shift^{-1}\reflection(\hdelta)].
\end{equation}

Here are the rules of commuting elements of $\Aut$ and $\reflection$ and $\shift$. 
\begin{equation}
 \begin{split}
& \reflection((t,q,z)a)=(t,q^{-1},Sz)\reflection(a)\\
&\shift((t,q,z)a)=(tq^{-1},q,z)T(a)\\
&(t,q,z)\in \Aut,a\in A\\
&S\in D_5 \text{ is a certain automorphism of $\widetilde{\bT}^5$}\\
&\text{ (see Appendix A in \cite{MovStr} for details)}.
\end{split}
\end{equation}
These facts imply the formula (\ref{E:frenorm3}).
\end{proof}

Here is an alternative way to write the same set of equations.
\begin{proposition}\label{P:eqderivation}
The virtual characters $Z^{bare}_{\fa}$ and $Z_{\fa}$ are solutions of the functional equations
 \begin{equation}\label{E:fbare}
\begin{split}
 & Z^{bare}_{\fa}[\hdelta,\hdelta'](t,q,z)=\\
&=(-1)^{s'(\fm)}t^{-a(\fm)}q^{-u(\fm)}z^{-r(\fm)} Z^{bare}_{\fa}[\shift^{-1}\reflection(\hdelta'),\shift^{-1}\reflection (\hdelta)](qt^{-1},q,Sz^{-1})
\end{split}
\end{equation}
and
\begin{equation}\label{E:frenorm}
\begin{split}
Z_{\fa}[\hdelta,\hdelta'](t,q,z)=-t^{-4}q^{2}Z_{\fa}[\shift^{-1}\reflection(\hdelta'),\shift^{-1}\reflection(\hdelta)](qt^{-1},q,Sz^{-1}).
\end{split}
\end{equation}
\end{proposition}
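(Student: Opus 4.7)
\textbf{Proof plan for Proposition \ref{P:eqderivation}.} The strategy is to eliminate the auxiliary character $Z^{bare}_{\fb}$ from the two equations of Proposition \ref{P:eqderivation0} and then pass to the renormalized version using the functional relation satisfied by $\chi$. Nothing genuinely new is needed beyond (\ref{E:fbarebasic2}), (\ref{E:frenorm3}), (\ref{E:renormchar}), (\ref{E:chiprop}), and the $\shift$-$\reflection$-covariance of $(a,u,r)$ recorded in item \ref{I:chareq3} of Proposition \ref{P:funequation}; the real work is symbolic bookkeeping.

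For (\ref{E:fbare}), I would first replace $(t,q,z)$ by $(t^{-1},q^{-1},z^{-1})$ in (\ref{E:frenorm3}) to get
\[
Z^{bare}_{\fb}[\hdelta,\hdelta'](t^{-1},q^{-1},z^{-1}) = Z^{bare}_{\fa}[\shift^{-1}\reflection(\hdelta'),\shift^{-1}\reflection(\hdelta)](qt^{-1},q,Sz^{-1}),
\]
using that $S$ is an automorphism of $\widetilde{\bT}^5$ independent of the sign inversion. Substituting this into the right-hand side of (\ref{E:fbarebasic2}) produces (\ref{E:fbare}) immediately, with no extra factors.

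For (\ref{E:frenorm}), I would start from (\ref{E:fbare}) and rewrite both sides in terms of the renormalized characters via (\ref{E:renormchar}). On the left I pick up $(-1)^{s'(\fa)}t^{-a(\fa)}q^{-u(\fa)}z^{-r(\fa)}$, and on the right, after substituting $(t,q,z)\mapsto(qt^{-1},q,Sz^{-1})$ into the prefactor of $Z_{\fa}[\shift^{-1}\reflection(\hdelta'),\shift^{-1}\reflection(\hdelta)]$, I pick up $(-1)^{s'(\fa)[I]}(qt^{-1})^{-a(\fa)[I]}q^{-u(\fa)[I]}(Sz^{-1})^{-r(\fa)[I]}$, where $I=[\shift^{-1}\reflection(\hdelta'),\shift^{-1}\reflection(\hdelta)]$. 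The key point is that $\shift^{-1}\reflection$ interchanges the $\fa$-subinterval of $I$ with the $\fb$-subinterval of $[\hdelta,\hdelta']$, so item \ref{I:chareq3} of Proposition \ref{P:funequation} yields
\[
a(\fa)[I]=a(\fb)[\hdelta,\hdelta'], \quad u(\fa)[I] = -u(\fb)[\hdelta,\hdelta']-a(\fb)[\hdelta,\hdelta'], \quad r(\fa)[I] = S(r(\fb)[\hdelta,\hdelta']).
\]
Collecting all exponents in $t$, $q$, $z$ and applying (\ref{E:chiprop}), i.e.
\[
a(\fm) = a(\fa)+a(\fb)+4,\qquad u(\fm)=u(\fa)+u(\fb)-2,\qquad r(\fm)=r(\fa)+r(\fb),
\]
causes every exponent coming from the renormalization to cancel except the universal $t^{-4}q^{2}$ dictated by the $(-4,+2,0)$ shift in (\ref{E:chiprop}). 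The sign $-1$ comes from combining $(-1)^{s'(\fm)}$, $(-1)^{s'(\fa)}$ and $(-1)^{s'(\fa)[I]}$ via the evident identity $s'(\fm)\equiv s'(\fa)+s'(\fa)[I]+1\pmod{2}$, which is a small finite check using the definition of $s'$ in (\ref{E:sschiftdef}) together with (\ref{E:htandsw}) and (\ref{E:htands}).

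The only real obstacle is the mechanical but error-prone check of exponent and sign cancellations; structurally there is no new geometric input beyond Poincar\'{e} duality and the $\shift^{-1}\reflection$ symmetry already established. A clean presentation would introduce shorthand for the three characters (initial, dual, reflected) and display the two substitutions as commuting with a single diagram, so that equality (\ref{E:frenorm}) becomes visibly the statement that the universal part of $\chi[\hdelta,\hdelta']$ not absorbed by the renormalization is precisely $t^{-4}q^{2}$.
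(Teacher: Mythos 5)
Your proposal reproduces the paper's argument: (\ref{E:fbare}) is obtained by substituting (\ref{E:frenorm3}) into (\ref{E:fbarebasic2}), and (\ref{E:frenorm}) then follows by inserting (\ref{E:renormchar}) and cancelling exponents via the identities $a(\fm)=a(\fa)+a(\fb)+4$, $u(\fm)=u(\fa)+u(\fb)-2$, $r(\fm)=r(\fa)+r(\fb)$, which are exactly the relations the paper quotes from Proposition \ref{P:funequation} and (\ref{E:htands}). The only added value in your write-up is spelling out the sign parity $s'(\fm)\equiv s'(\fa)+s'(\fa)[I]+1\pmod{2}$, which is consistent and correct, so no further comment is needed.
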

\begin{proof}
The first formula is a direct corollary of Proposition \ref{P:eqderivation0}.

Derivation of the second formula uses formula (\ref{E:fbare}), items (\ref{I:chareq2}, \ref{I:chareq3}) from Proposition \ref{P:funequation} and equation (\ref{E:htands}), which sum up to
\[
\begin{split}
 &a(\fm)=a(\fa)+a(\fb)+4,\\
 &u(\fm)=u(\fa)+u(\fb)-2,\\
 &r(\fm)=r(\fa)+r(\fb).\\
\end{split}
\]
\end{proof}

Equation (\ref{E:frenorm}) simplifies if I set $[\hdelta,\hdelta']=[(0)^{-N-1},(1)^N]$. Denote $Z_{\fa}[(0)^{-N-1},(1)^N]$ by $Z[N]$. Then (\ref{E:frenorm}) becomes (\ref{E:ZN}).
Operator $S$ disappears in $Sz^{-1}$ because representation of $\widetilde{\bT}^5$ in $H_{\fa}^i(A[(0)^{-N-1},(1)^N])$ is a restriction of $\Spin(10,\C)$-representation and $S(z)=SzS^{-1}, S\in \Spin(10,\C)$ is an inner automorphism.
\paragraph{An explicit formula for $Z_{\fa}$} Isomorphism (\ref{E:loctor}) can be used for derivation of an explicit formula for $Z_{\fa}$:
\[Z^{bare}_{\fc}(t,q,z)=(-1)^{|[\hdelta,(1)^{-1}]|}\sum (-1)^i\Tor^{P}_{i}({\hA},\moduleT_{\fc})(t,q,z).\]
For computation of $\Tor$ groups I can use resolution (\ref{E:diagres}). The corresponding complex that computes $\Tor$ groups is
$\{{\hA}\otimes \moduleT_{\fc}\otimes \Lambda^i\spinor\}$. I conclude that 
\begin{equation}\label{E:formulapartition}
Z^{bare}_{\fa}(t,q,z)=(-1)^{|[\hdelta,(1)^{-1}]|}{\hA}(t,q,z)\moduleT_{\fa}(t,q,z)\Lambda \spinor(-t,q,z).
\end{equation}
I computed ${\hA}(t,q,z)$ in \cite{MovStr}.
\[
\begin{split}
&\moduleT_{\fa}(t,q,z)=\prod_{\halpha\in \setN(\fa)}\frac{\hat{v}_{\halpha}^{-1}}{1-\hat{v}_{\halpha}^{-1}}
\prod_{\halpha\in \setE}\frac{1}{1-\hat{v}_{\halpha}}
\prod_{\halpha\in\setN(\fa')}\frac{1}{1-\hat{v}_{\halpha}}\\
&\Lambda \spinor(t,q,z)=\prod_{\halpha\in \setN(\fm)}(1+\hat{v}_{\halpha})
\end{split}
,\]
(see notation
(\ref{E:weights})). 
One has to use caution with the formula \ref{E:formulapartition}. The problem is that the left-hand-side is  defined as a rational function for any ideal $\fc=\fri[\hdelta,(1)^k]$ its components, specifically $\moduleT_{\fri}(t,q,z)$,    makes sense as a formal power series in $q$ only when $k=-1$.
In this case $\moduleT_{\fa}(t,q,z)\Lambda \spinor(-t,q,z)=1$ in the ring of formal power series and
\[Z^{bare}_{\fa}(t,q,z)=(-1)^{|[\hdelta,(1)^{-1}]|}{\hA}(t,q,z)\]

The formula for $\hA_N^{N'}:=\hA[(0)^N,(1)^{N'}](t,q,1), B_N^{N'}:=\hA[(0)^N,(0)^{N'}](t,q,1), N<0<N'$ can be simplified (see \cite{MovStr} for details).
In \cite{MovStr} I established that
\begin{equation}\label{E:hilbertini}
\begin{split}
&B_0^1=\frac{1+3t+t^2}{(1-t)^{8}(1-qt)}, \quad  \hA_0^0=\frac{1+5t+5t^2+t^3}{(1-t)^{11}} ,\\
&K(t,q)=\left(
\begin{array}{cc}
 \frac{t \left(t^2+3 t+1\right)}{(t-1)^7 (q t-1)} & \frac{\left(t^2+3 t+1\right) (t^3+q^2)-5 q (t+1) t^2}{q^2 (t-1)^7 (q t-1)} \\
 \frac{t (t+1) \left(t^2+4 t+1\right)}{(t-1)^{10}} & \frac{\left(t^3+5 t^2+5 t+1\right) (t^3+q^2)-q \left(5 t^2+14 t+5\right) t^2}{q^2 (t-1)^{10}} \\
\end{array}
\right)
\end{split}
\end{equation}

\[\left(
\begin{array}{c}
 B_0^{r+1} \\
 \hA_0^r \\
\end{array}
\right)=
K(q^{r}t,q) \cdots K(qt,q) 
\left(
\begin{array}{c}
 B_0^1 \\
 \hA_0^0 \\
\end{array}
\right),\]
\[ \hA_N^{N'}(t,q)=\hA_0^{N'-N}(tq^{N},q).\]
The partition function 
\begin{equation}\label{E:explicitform}
{Z_{\fa}}_N^{N'}= \hA_N^{N'}(t,q)\chi[(0)^{N},(1)^{-1}]=\hA_N^{N'}(t,q)t^{4-4N}q^{-2+4N-2N^2}, N<0
\end{equation}
The character $\chi[(0)^{N},(1)^{-1}]$ is taken from Corollary \ref{C:specialcharacter}.

Here are some terms of the $q$-expansion of $Z_{\fa}=Z_{-2}q^{-2}+Z_{-1}q^{-1}+Z_{0}q^{0}+Z_{1}q^{1}+\cdots$
\begin{equation}\label{E:qexpansion}
\begin{split}
&\frac{t^4 \left(1 - 10 t^2 + 16 t^3 - 16 t^5 + 10 t^6 - t^8\right)}{q^2 (1-t)^{16}}+\\
&+\frac{t^4 \left(46 - 144 t + 116 t^2 + 16 t^3 - 16 t^5 - 116 t^6 + 144 t^7 - 46 t^8\right)}{q (1-t)^{16}}+\\
&+\frac{1}{(1-t)^{16}}\left(-1 + 16 t - 120 t^2 + 576 t^3 - 1003 t^4 + 528 t^5 - 214 t^6 + 
 592 t^7 \right.\\
& \left.- 592 t^9 + 214 t^{10}+
 - 528 t^{11} + 1003 t^{12} - 576 t^{13} + 120 t^{14} - 16 t^{15} + t^{16}
 \right)+\\
&+\frac{q \left(-16 + 210 t - 1200 t^2 + 3696 t^3 - 4704 t^4 + 2630 t^5 - 3312 t^6 + 
 3148 t^7 + 1328 t^8 - 1328 t^{10} - 3148 t^{11}+\cdots
 \right)}{t(1-t)^{16}}+\cdots
 \end{split}
 \end{equation}
 See Remark \ref{R:interpretationofcoefficients} for interpretation of the coefficients of this expansion.

\subsection{$H^i_{\fa}[\hdelta,\hdelta']$ as a function of $\hdelta$, and $\hdelta'$}
To define the limiting space of states $H^{i+\itwo}_{\fa}(\hA)$, 
I have to glue together spaces $H^i_{\fa}[\hdelta,\hdelta']$. This is not an obvious procedure because that range $-\rho(\hdelta)\leq i\leq -\rho(\hdelta)+3$, for which the groups are nontrivial, change with $\hdelta$ (Proposition \ref{P:vanishingmain}). The space $H^{i+\itwo}_{\fa}(\hA)$ will be a limit of bidirect system formed by $H^i_{\fa}[\hdelta,\hdelta']$. My plan for this section is to define the structure maps of this system and study their basic properties.

In this section all intervals $[\hdelta_i,\hdelta_j]$ satisfy $\hdelta_i\leq (0)^0,(1)^{-1}\leq \hdelta_j$.

 Let us consider a sequence $\bl =\rReg([\hdelta,(1)^{-1}])(\ref{E:regconstr}).$
By Lemma \ref{L:idealreduction} and equation (\ref{E:directlimitkoszul}) 
\begin{equation}\label{E:localviaKoszul}
H^i_{\fa}[\hdelta,\hdelta']=H^i_{(\bl)}[\hdelta,\hdelta']=\underset{\underset{n}{\longrightarrow}}{\lim}H^i(K({\hA[\hdelta,\hdelta']},\bl^n)).
\end{equation} 

Results of Subsection \ref{S:CM} will enable me to define bidirect system of linear spaces $\{H^i(K({\hA[\hdelta,\hdelta']},\bl^n))\}$ with varying $\hdelta,\hdelta',n$. Let us see how this can be implemented. 

In the following proposition I will suppress $\bl^n$-dependence in cohomology of Koszul complexes, which will be denoted by $HK^i[\hdelta,\hdelta']$.
\begin{proposition}\label{P:commutativity}
Fix $\hdelta_1<\hdelta_2<(0)^{-1},(1)^{-1}<\hdelta_3<\hdelta_4$.

There is a commutative diagram 
\begin{equation}\label{E:diagramindprob}
\begin{CD}
HK^i[\hdelta_2,\hdelta_4] @>\sfp'>> HK^i[\hdelta_2,\hdelta_3]\\
@VV\Th_{\sfq} V @VV\Th_{\sfq'} V \\
HK^{i+\codim}[\hdelta_1,\hdelta_4] @>\sfp>> HK^{i+\codim}[\hdelta_1,\hdelta_3]. 
\end{CD}
\end{equation}
The maps are induced from the commutative diagram of algebras
\begin{equation}\label{E:diagramalg}
\begin{CD}
\hA[\hdelta_2,\hdelta_4] @>\sfp'>> \hA[\hdelta_2,\hdelta_3] \\
@AA\sfq A @AA\sfq' A \\
\hA[\hdelta_1,\hdelta_4] @>\sfp>> \hA[\hdelta_1,\hdelta_3]
\end{CD}
\end{equation}
 and maps of resolutions $\Th_{\sfq},\Th_{\sfq'}$ (\ref{E:classt},Definition \ref{D:thdeffora}). \[\codim=\rho(\hdelta_2)-\rho(\hdelta_1)=\rk(\hdelta_1,\hdelta_2).\]

\end{proposition}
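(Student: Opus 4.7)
The plan is to lift the entire diagram (\ref{E:diagramalg}) to a homotopy-commutative square of projective resolutions over a single common ambient polynomial algebra, and then derive (\ref{E:diagramindprob}) by dualizing and tensoring with the appropriate Koszul complex. Throughout, I take $P = \C[\hdelta_1,\hdelta_4]$ and view each of the four algebras in (\ref{E:diagramalg}) as a $P$-module via the obvious surjection; by (\ref{E:chindep}), the Thom classes $\Th_{\sfq}$ and $\Th_{\sfq'}$ (see Definition \ref{D:thdeffora}) can equivalently be computed from the minimal $P$-resolutions of these algebras.

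First, I choose minimal graded free $P$-resolutions $F^{\bullet}_{ij}$ of $\hA[\hdelta_i,\hdelta_j]$ for each of the four corners $(i,j) \in \{(1,3),(1,4),(2,3),(2,4)\}$. The standard comparison theorem for projective resolutions lifts each algebra homomorphism in (\ref{E:diagramalg}) to a $P$-linear chain map between the corresponding resolutions, uniquely up to chain homotopy; two different routes around the square then produce the same map up to chain homotopy. Dualizing by $\rHom_P(-,P)$ and shifting by $\codim$ yields, as in the construction (\ref{E:classconstruction}), the vertical Thom-class maps and a homotopy-commutative square
\[
\begin{CD}
F^{*\bullet}_{24}[\codim] @>>> F^{*\bullet}_{23}[\codim] \\
@VV \Th_{\sfq} V @VV \Th_{\sfq'} V \\
F^{*\bullet}_{14} @>>> F^{*\bullet}_{13}
\end{CD}
\]
in the category of $P$-chain complexes, whose image in the derived category $D(P)$ is genuinely commutative.

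Second, I tensor the four resolutions with the Koszul complex $K(P,\bl^n)$, a finite complex of finite-rank free $P$-modules. Tensoring with a bounded complex of free modules preserves chain-homotopy-commutativity, so the induced square of bicomplexes still commutes up to homotopy. Passing to cohomology of the total complexes gives strict commutativity of the induced maps between the Koszul cohomology groups $H^i(K(\hA[\hdelta_i,\hdelta_j],\bl^n))$: the horizontal maps are the restriction maps induced by the algebra surjections $\sfp,\sfp'$, while the vertical maps, by the very definition (\ref{E:THdef}), are the Thom-class maps $\Th_{\sfq},\Th_{\sfq'}$. Taking the direct limit over $n$ and invoking (\ref{E:localviaKoszul}) produces the asserted commutative square (\ref{E:diagramindprob}).

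The only real technical point is the compatibility of the Thom classes with the two horizontal pullbacks; this is automatic once everything is staged over the single ring $P = \C[\hdelta_1,\hdelta_4]$, because then every morphism in sight is a genuine map of $P$-complexes and the commutativity reduces to the Comparison Lemma. The main obstacle one might anticipate, namely reconciling the different polynomial rings naturally attached to the four corners, is sidestepped by (\ref{E:chindep}), which guarantees that enlarging the ambient polynomial ring does not alter the Thom classes nor their action on local cohomology.
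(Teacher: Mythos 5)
There is a genuine gap: after dualizing, the horizontal arrows of your square are not the pullback maps $\sfp,\sfp'$ but rather the Thom classes $\Th_{\sfp},\Th_{\sfp'}$.

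To see this, note that lifting the four algebra surjections in (\ref{E:diagramalg}) to chain maps between the resolutions $F^{\bullet}_{ij}$ and then applying $\rHom_P(-,P)$ reverses \emph{all} four arrows. The dual of the lift of $\sfp'$ is a map $F^{*\bullet}_{23}\to F^{*\bullet}_{24}$, and by (\ref{E:classconstruction}) this is precisely $\Th_{\sfp'}$ (up to shift), not the restriction map $\sfp'$. So the homotopy-commutative square you obtain after dualizing expresses the relation $\Th_{\sfq}\cup\Th_{\sfp'}=\Th_{\sfp}\cup\Th_{\sfq'}$, which already follows from the functoriality of duals (essentially Proposition \ref{P:composition}). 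The proposition, however, asserts the \emph{mixed} relation $\sfp\circ\Th_{\sfq}=\Th_{\sfq'}\circ\sfp'$, where the horizontal arrows are the ordinary pullback maps on local cohomology. In the derived category $D(P)$ the morphism $\sfp\colon\hA[\hdelta_1,\hdelta_4]\to\hA[\hdelta_1,\hdelta_3]$ is not the same morphism as $\Th_{\sfp}\colon\hA[\hdelta_1,\hdelta_3]\to\hA[\hdelta_1,\hdelta_4][\codim']$, so you cannot substitute one for the other. The step where you ``identify'' the horizontal maps in the dualized square with the restriction maps is exactly where the argument breaks.

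This is not a minor elision. Remark \ref{R:mainremark} in the paper points out that the mixed commutativity would follow immediately if one could produce an element of $\Ext^{\codim}(\hA[\hdelta_2,\hdelta_3],\hA[\hdelta_1,\hdelta_4])$ that restricts both to $\Th_{\sfq}$ and to $\Th_{\sfq'}$, but such a ``common parent'' is not supplied by the Comparison Lemma alone. The paper circumvents this by factoring $\sfq$ and $\sfq'$ into elementary surjections via Lemma \ref{L:saturate}, then verifying the mixed commutativity for each elementary piece: in the regular case (\ref{E:group1},\ref{E:group2}) the Thom class is the connecting map of a concrete short exact sequence (\ref{E:extsimple}) which manifestly specializes compatibly under $\sfp,\sfp'$, and in the irregular case (\ref{E:group3}) the Thom class is multiplication by an explicit element (\ref{E:inclusionofA}), again visibly compatible with specialization. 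To repair your argument you would either need to construct the compatible lift described in Remark \ref{R:mainremark} directly, or fall back on a decomposition of this type.
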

\begin{proof}

First, I assume that $\sfq$ and $\sfq'$ is defined by the formulas (\ref{E:group1}, \ref{E:group2}). 
By Lemma \ref{L:saturate0}, items \ref{I:saturate0first}, \ref{I:saturate0second} and Proposition \ref{P:classofextreg} , $\Th_{\sfq}$ is encoded by the class of the extension 
\begin{equation}\label{E:extsimple}
\{0\}\to \hA[\hdelta_1,\hdelta_4]\overset{\lambda^{\hdelta_1}\times ?}\longrightarrow \hA[\hdelta_1,\hdelta_4]\overset{\sfp}\to \hA(\hdelta_1,\hdelta_4]\to \{0\}.
\end{equation}
$\lambda^{\hdelta_1}\times ?$ stands for operator of multiplication on $\lambda^{\hdelta_1}$.
In (\ref{E:diagramindprob}) $\Th_{\sfq}$ is the boundary homomorphism in the Koszul cohomology corresponding to this extension.
After applying homomorphisms 
\begin{equation}\label{E:specialization}
\sfp:\hA[\hdelta_1,\hdelta_4]\to \hA[\hdelta_1,\hdelta_3], \sfp':\hA(\hdelta_1,\hdelta_4]\to \hA(\hdelta_1,\hdelta_3]
\end{equation} (\ref{E:extsimple}) transforms to \[\{0\}\to \hA[\hdelta_1,\hdelta_3]\overset{\lambda^{\hdelta_1}\times ?}\longrightarrow \hA[\hdelta_1,\hdelta_3]\to \hA(\hdelta_1,\hdelta_3]\to \{0\},\] which implies commutativity of (\ref{E:diagramindprob}).

Let us do the case when $\sfq$ and $\sfq'$ belong to the class (\ref{E:group3}). By Lemma \ref{L:saturate0}, item \ref{I:saturate0second}, conditions of 
Proposition \ref{P:extmaintheorem} are satisfied. By this proposition, the maps $\Th_{\sfq}$, $\Th_{\sfq'}$ (\ref{E:classt}) are 
inclusions 
\begin{equation}\label{E:inclusionofA}
\sfs:\hA[\hdelta_2,\hdelta_i]\overset{\lambda^{\hdelta_2}\times ?}\longrightarrow \hA(\hdelta_1,\hdelta_i],
\end{equation}
 which are obviously compatible with the specialization (\ref{E:specialization}). See Proposition \ref{P:extmaintheorem} for details on $\sfs$. This implies commutativity of (\ref{E:diagramindprob}) in the case when maps $\sfq$ and $\sfq'$ are defined by (\ref{E:group3}).

By my assumptions, conditions of Lemma \ref{L:saturate} for the pair $[\hdelta_2,\hdelta_4]\subset [\hdelta_1,\hdelta_4]$ are satisfied. It remains to apply my previous arguments to elementary homomorphisms (\ref{E:refignseq}) and use Proposition \ref{P:composition}.

\end{proof}
\begin{remark}\label{R:mainremark}
The proof of the Proposition would have been easier, had I known that in the commutative diagram 
\[\begin{CD}
x\in \Ext^{\codim}(\hA[\hdelta_2,\hdelta_3],\hA[\hdelta_1,\hdelta_4]) @>\id\otimes \sfp >> \Ext^{\codim}(\hA[\hdelta_2,\hdelta_3],\hA[\hdelta_1,\hdelta_3]) \ni \Th_{\sfq'} \\
@VV{\sfp'}^*\otimes \id V @VV{\sfp'}^*\otimes \id V \\
\Th_{\sfq}\in \Ext^{\codim}(\hA[\hdelta_2,\hdelta_4],\hA[\hdelta_1,\hdelta_4]) @>\id\otimes \sfp>> \Ext^{\codim}(\hA[\hdelta_2,\hdelta_4],\hA[\hdelta_1,\hdelta_3]) \ni y
\end{CD}\]
exist elements $x, y$ such that \[\Th_{\sfq}={\sfp'}^*\otimes \id (x) \text{ and }\Th_{\sfq'}=\id\otimes \sfp (x).\]
Then
\[y={\sfp'}^*\otimes \id(\Th_{\sfq'})=\id\otimes \sfp(\Th_{\sfq}).\]
In my formulas $\codim=\rk(\hdelta_1,\hdelta_2)$. This would imply commutativity of (\ref{E:diagramindprob}) and (\ref{E:diagramindprodual}).
\end{remark}
\begin{corollary}\label{C:comdiagramdir} Under assumptions of Proposition \ref{P:commutativity} there is a commutative diagram
\begin{equation}\label{E:diagramindprolocal}
\begin{CD}
H_{\fa}^i[\hdelta_2,\hdelta_4] @>\sfp'>> H_{\fa}^i[\hdelta_2,\hdelta_3] \\
@VV\Th_{\sfq} V @VV\Th_{\sfq'} V \\
H_{\fa}^{i+\codim}[\hdelta_1,\hdelta_4] @>\sfp>> H_{\fa}^{i+\codim}[\hdelta_1,\hdelta_3]. 
\end{CD}
\end{equation}
\end{corollary}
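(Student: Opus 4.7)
The plan is to deduce the corollary from Proposition \ref{P:commutativity} by passing to the direct limit in the Koszul parameter $n$. By equation (\ref{E:localviaKoszul}), for each choice of endpoints one has
\[
H^i_{\fa}[\hdelta_k,\hdelta_l]\;\cong\;\underset{\underset{n}{\longrightarrow}}{\lim}\,H^i\bigl(K(\hA[\hdelta_k,\hdelta_l],\bl^n)\bigr),
\]
where $\bl=\rReg([\hdelta_k,(1)^{-1}])$. So the diagram (\ref{E:diagramindprolocal}) is obtained from (\ref{E:diagramindprob}) as a colimit over $n$, provided all four arrows come from morphisms of the underlying directed systems.

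First I would check that the horizontal maps $\sfp$, $\sfp'$ are compatible with the Koszul direct systems. Since $\sfp,\sfp'$ in (\ref{E:diagramalg}) are surjective algebra homomorphisms that kill the generators $\lambda^{\halpha}$ for $\halpha\in [\hdelta_k,\hdelta_4]\setminus[\hdelta_k,\hdelta_3]$ and fix $\bl=\rReg([\hdelta_k,(1)^{-1}])$ element-wise (the sequence lives entirely in $[\hdelta_k,(1)^{-1}]\subset[\hdelta_k,\hdelta_3]$), they induce chain maps $K(\hA[\hdelta_k,\hdelta_4],\bl^n)\to K(\hA[\hdelta_k,\hdelta_3],\bl^n)$ for every $n$ that are compatible with the transition $n\mapsto n+1$. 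Hence they pass to local cohomology.

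Next I would handle the vertical Thom maps $\Th_{\sfq},\Th_{\sfq'}$. By Lemma \ref{L:saturate}, the map $\sfq$ factors through a chain of elementary homomorphisms of the two types (\ref{E:group1}--\ref{E:group2}) and (\ref{E:group3}), and by Proposition \ref{P:composition} it suffices to verify compatibility for each elementary factor. In the regular case Proposition \ref{P:classofextreg} identifies the elementary Thom class with the connecting homomorphism of the short exact sequence obtained from multiplication by $\lambda^{\hdelta_1}$; in the irregular case Proposition \ref{P:extmaintheorem} identifies it with the inclusion $\sfs$ of (\ref{E:inclusionofA}). In either case the operator is induced by multiplication by an element of $P$ that commutes with the Koszul differentials and with raising $n$, so it descends to a morphism of the Koszul direct systems and hence to a map $\Th_{\sfq}:H^i_{\fa}[\hdelta_2,\hdelta_4]\to H^{i+\codim}_{\fa}[\hdelta_1,\hdelta_4]$ in the limit. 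The same argument works for $\Th_{\sfq'}$.

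Finally, since Proposition \ref{P:commutativity} gives commutativity of (\ref{E:diagramindprob}) for each $n$, and all four maps have been checked to be morphisms of directed systems in $n$, the direct limit diagram is also commutative, which is precisely (\ref{E:diagramindprolocal}). The only potential obstacle is the $\hdelta$-dependence of the regular sequence $\bl$: if one wished to pass between systems indexed by $[\hdelta_1,\cdot]$ and $[\hdelta_2,\cdot]$ at fixed $n$ this would be an issue, but here all four corners of (\ref{E:diagramindprolocal}) use the same sequence $\bl=\rReg([\hdelta_1,(1)^{-1}])$ (the sequence for the smaller interval being obtained by inclusion followed by orthogonal projection (\ref{E:functorialpr})), so the limits are compatible and the argument goes through.
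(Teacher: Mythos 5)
Your proof is correct and follows the same route the paper intends: the corollary is stated without explicit proof because it is meant to follow from Proposition \ref{P:commutativity} by passing to the direct limit over the Koszul parameter $n$ via (\ref{E:localviaKoszul}). Your proposal fills in the verification that all four arrows are morphisms of the directed systems — surjections in the right column/horizontal direction, and for $\Th$ either a connecting homomorphism of a short exact sequence or multiplication by a fixed $\lambda^{\hdelta'}$, each of which is natural in $n$ — and that the choice of regular sequence is immaterial in the limit by radical invariance (Proposition \ref{P:radeq}). One small caveat: in the regular case $\Th_{\sfq}$ is the boundary homomorphism of (\ref{E:extsimple}), not literally ``multiplication by an element of $P$''; what matters, and what you implicitly use, is that connecting maps commute with the direct-limit structure because the long exact sequence in local cohomology is functorial. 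With that phrasing tightened, the argument matches the paper.
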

\begin{remark}\label{R:weightcompatibility}{\rm
The formulas (\ref{E:absrel}), (\ref{E:chath}), and (\ref{E:sigmaalt0}) imply that arrows in (\ref{E:diagramindprolocal}) commute with the renormalized $\Aut$ action defined in Remark \ref{R:degreeshift1}. 
}\end{remark}
\subsubsection{Compatibility of the $*$-duality pairing and $\Th$}
If we wish to extend pairing which exits on $H_{\fa}^i[\hdelta,\hdelta']$ to $H_{\fa}^{i+\itwo}$, we have to verify consistency of the system of maps $\Th$ and the pairing. This is what is done in this section. 

The pairing (\ref{E:fundpairingdef}) is the composition of $\sfm$ and $\sfres$. I check compatibility of $\Th$ with $\sfm$ is the next proposition.
 \begin{proposition}\label{P:pairingcomp}
Fix $[\hdelta_2,\hdelta_4]\subset [\hdelta_1,\hdelta_4]$.
 There is a commutative diagram
\[\begin{CD}
H_{\fa}^i[\hdelta_2,\hdelta_4]\otimes H_{\fb}^j[\hdelta_1,\hdelta_4] @>\sfm\circ (\id\otimes\, \sfp)>> H_{\fm}^{i+j}[\hdelta_2,\hdelta_4] \\
@VV\Th_{\sfp}\otimes\, \id V @VV\Th_{\sfp} V \\
H_{\fa}^{i+\codim}[\hdelta_1,\hdelta_4]\otimes H_{\fb}^j[\hdelta_1,\hdelta_4] @>\sfm>> H_{\fm}^{i+j+\codim}[\hdelta_1,\hdelta_4] 
\end{CD}
\]
where $\sfp$ is the map induced by the restriction homomorphism $\hA[\hdelta_1,\hdelta_4]\to \hA[\hdelta_2,\hdelta_4]$,\\ $\Th_{\sfp}\in \Ext_{\C[\hdelta_1,\hdelta_4]}^{\codim}(\hA[\hdelta_2,\hdelta_4],\hA[\hdelta_1,\hdelta_4])$ is the corresponding Thom class (Definition \ref{D:thdeffora}),
 $\sfm$ is the product map (\ref{E:multmap}).
\end{proposition}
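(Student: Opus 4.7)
The plan is to reduce the general statement to the elementary building blocks of $\sfp$ provided by Lemma \ref{L:saturate}, and then to verify the compatibility directly in each case. By that lemma, I can factor
\[
\sfp:\hA[\hdelta_1,\hdelta_4]\twoheadrightarrow \hA[\hdelta_2,\hdelta_4]
\]
as a composition $\sfp=\sfp_{n-1}\circ\cdots\circ \sfp_1$ where each $\sfp_k$ is one of the elementary surjections (\ref{E:group1}), (\ref{E:group2}), (\ref{E:group3}). Proposition \ref{P:composition} gives $\Th_{\sfp}=\Th_{\sfp_1}\cup\cdots\cup\Th_{\sfp_{n-1}}$, so it suffices to verify the diagram for a single elementary step, after which the general statement follows by stacking the resulting commutative squares vertically.

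For the regular elementary case (maps of type (\ref{E:group1}) or (\ref{E:group2})), Proposition \ref{P:classofextreg} identifies $\Th_{\sfp}$ with the class of the short exact sequence of $\hA[\hdelta_1,\hdelta_4]$-modules
\[
0\to \hA[\hdelta_1,\hdelta_4]\overset{\lambda^{\hdelta_1}\times ?}{\longrightarrow} \hA[\hdelta_1,\hdelta_4]\overset{\sfp}{\longrightarrow} \hA[\hdelta_2,\hdelta_4]\to 0,
\]
and the induced map on local cohomology is the connecting homomorphism $\partial$ of the associated long exact sequence. Tensoring the sequence with the $\hA[\hdelta_1,\hdelta_4]$-module $H_{\fb}^j[\hdelta_1,\hdelta_4]$ (on both sides of the cup product) is a functor of $\hA[\hdelta_1,\hdelta_4]$-modules, and the cup product with $b\in H_{\fb}^j[\hdelta_1,\hdelta_4]$ is a natural transformation that commutes with $\partial$ in the standard way. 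Concretely, one checks on \v{C}ech or Koszul representatives (invoking Proposition \ref{P:pairing} and the factorization of $\sfm$ through the multiplication on the minimal $P$-resolution) that $\partial(\sfm(a,\sfp(b)))=\sfm(\partial a,b)$, because multiplication by $\lambda^{\hdelta_1}$ in the first factor corresponds, modulo the $P$-action, to the identity on $b$.

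For the irregular elementary case (maps of type (\ref{E:group3})), Proposition \ref{P:extmaintheorem} shows that $\Th_{\sfp}$ is induced by the injection
\[
\sfs:\hA[\hdelta_2,\hdelta_4]\overset{\lambda^{\hdelta_2}\times?}{\longrightarrow}\hA(\hdelta_1,\hdelta_4],
\]
which is an honest map of $\hA(\hdelta_1,\hdelta_4]$-modules (and hence of $\hA[\hdelta_1,\hdelta_4]$-modules via the projection). On cohomology $\Th_{\sfp}$ is therefore the map induced by multiplication by $\lambda^{\hdelta_2}$, and the required commutativity is the associativity identity $\lambda^{\hdelta_2}\cdot(a\cdot \sfp(b))=(\lambda^{\hdelta_2}\cdot a)\cdot b$, where again one uses that the image of $\Th_{\sfp}$ is annihilated by $\Ker\,\sfp$ (Remark \ref{R:nullaction}), so the two factorizations of the multiplication coincide.

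The main technical obstacle is bookkeeping: at each elementary step one must verify that the projection $\sfp_k$ applied to $b\in H_{\fb}^j$ is compatible with the module structure used to form the cup product, and that the chain-level representatives of $\Th_{\sfp_k}$ chosen in Propositions \ref{P:classofextreg} and \ref{P:extmaintheorem} can be lifted through the bicomplex computing $H_{\fm}^{\bullet}$ (equivalently, through the Koszul resolution (\ref{E:diagres}) used in Proposition \ref{P:complexreduction}). Once this bookkeeping is carried out for one step, the general case follows by juxtaposing the resulting commutative squares along the factorization produced by Lemma \ref{L:saturate} and invoking Proposition \ref{P:composition}.
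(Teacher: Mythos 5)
Your proposal takes essentially the same route as the paper's proof: factor $\sfp$ into the elementary surjections supplied by Lemma \ref{L:saturate}, use Proposition \ref{P:composition} to split $\Th_{\sfp}$ into the corresponding cup product of elementary classes, and then verify commutativity one elementary step at a time, distinguishing the regular cases (\ref{E:group1},\ref{E:group2}) from the irregular case (\ref{E:group3}). The one place where your argument is weaker than the paper's is the heart of the regular case. You assert that cup product with a fixed class $b$ ``is a natural transformation that commutes with $\partial$ in the standard way,'' but this is not automatic here: the product $\sfm$ mixes ideals ($\fa$, $\fb$, $\fm$), so the connecting homomorphism in the $\fa$-column and the one in the $\fm$-column belong to different long exact sequences and must be shown compatible with the external multiplication. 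The paper handles this precisely by working with $\Tor$ groups, forming the external product over $P\otimes P$, invoking Cartan--Eilenberg, Prop.~2.5 of Ch.~XI, for compatibility of the external product with boundary homomorphisms, and then applying the change-of-rings map $\Tor^{P\otimes P}_i(-,-)\to\Tor^P_i(-\otimes_{P\otimes P}P,-\otimes_{P\otimes P}P)$, which is natural with respect to boundary maps; your sentence about ``checking on Koszul representatives via Proposition \ref{P:pairing}'' gestures at exactly this, but the check is not carried out. Similarly, in the irregular case, the paper's point is that since $\Th_{\sfp}$ is here an honest map of $P$-modules rather than a boundary class, only the functoriality of the change-of-rings map is needed, which your ``associativity'' identity conflates with the $P$-module compatibility --- morally correct, and Remark \ref{R:nullaction} is the right ingredient, but you should make the change-of-rings step explicit (and the multiplier should be $\lambda^{\hdelta'}$, the element realizing the inclusion $\sfs$ from Proposition \ref{P:extmaintheorem}, not $\lambda^{\hdelta_2}$). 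In short, your decomposition and case analysis agree with the paper; what is missing is the precise justification that the cup product passes through the boundary map --- supply the Cartan--Eilenberg citation or carry out the chain-level computation you allude to, and the proof closes.
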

\begin{proof}
By using Proposition \ref{P:identification}, I identify local cohomology with the $\Tor$ groups. 
It suffice to prove commutativity of 
\[\begin{CD}
\Tor^{P}_i(\hA[\hdelta_2,\hdelta_4], \moduleT_{\fa})\otimes \Tor_j^{P}(\hA[\hdelta_1,\hdelta_4],\moduleT_{\fb}) @>\sfm\circ (\id\otimes\, \sfp)>> \Tor_{i+j}^{P}(\hA[\hdelta_2,\hdelta_4],\moduleT_{\fm}) \\
@VV\Th_{\sfp}\otimes\, \id V @VV\Th_{\sfp} V \\
\Tor^{P}_{i-\codim}(\hA[\hdelta_1,\hdelta_4], \moduleT_{\fa})\otimes \Tor_j^{P}(\hA[\hdelta_1,\hdelta_4],\moduleT_{\fb}) @>\sfm>> \Tor_{i+j-\codim}^{P}(\hA[\hdelta_1,\hdelta_4],\moduleT_{\fm}) 
\end{CD}\]
where $\moduleT_{\fa}= \moduleT_{\fa}[\hdelta_1,\hdelta_4],\moduleT_{\fb}= \moduleT_{\fb}[\hdelta_1,\hdelta_4],\moduleT_{\fm}= \moduleT_{\fm}[\hdelta_1,\hdelta_4]$ as in Section \ref{S:modulesT} and $P=\C[\hdelta_1,\hdelta_4]$.
Note that by Proposition \ref{P:localtorisom}, the groups $\{\Tor^{\C[\hdelta_1,\hdelta_4]}_i(\hA[\hdelta_2,\hdelta_4], \moduleT_{\fa}[\hdelta_1,\hdelta_4])\}$ are equal to $\{\Tor^{\C[\hdelta_2,\hdelta_4]}_i(\hA[\hdelta_2,\hdelta_4], \moduleT_{\fa}[\hdelta_2,\hdelta_4])\}$ with some index shift.

As in the proof of Proposition \ref{P:commutativity}, I first verify the statement for $\Th_{\sfp}$ which belongs to one of the groups (\ref{E:group1}, \ref{E:group2} or \ref{E:group3}). $\Th_{\sfp}$ from (\ref{E:group1}, \ref{E:group2}) are the boundary homomorphisms. By \cite{CE} Proposition 2.5 Chap XI, I have a commutative diagram that is related to the boundary homomorphism corresponding to the extension (\ref{E:extsimple})

\begin{equation}\label{E:diagramproductCE}
\begin{CD}
\Tor^{P}_i(\hA(\hdelta_1,\hdelta_4], \moduleT_{\fa})\otimes \Tor_j^{P}(\hA[\hdelta_1,\hdelta_4],\moduleT_{\fb}) @>>> \Tor_{i+j}^{P\otimes P}(\hA(\hdelta_1,\hdelta_4]\otimes \hA[\hdelta_1,\hdelta_4],\moduleT_{\fa}\otimes \moduleT_{\fb} ) \\
@VV\Th_{\sfp}\otimes\, \id V @VV\widetilde{\Th}_{\sfp} V \\
\Tor^{P}_{i-1}(\hA[\hdelta_1,\hdelta_4], \moduleT_{\fa})\otimes \Tor_j^{P}(\hA[\hdelta_1,\hdelta_4],\moduleT_{\fb}) @>>> \Tor_{i+j-1}^{P\otimes P}(\hA[\hdelta_1,\hdelta_4]\otimes \hA[\hdelta_1,\hdelta_4],\moduleT_{\fa}\otimes \moduleT_{\fb}) 
\end{CD}
\end{equation}
$\widetilde{\Th}_{\sfp}$ is boundary map corresponding to extension (\ref{E:extsimple}) tensored over $\C$ on $\hA[\hdelta_1,\hdelta_4]$. Observe that for any homomorphism $C\to B$ of commutative algebras the natural map 
\begin{equation}\label{E:functmap}
 \Tor^C_i(M,N)\to \Tor^B_i(M\underset{C}{\otimes} B,N \underset{C}{\otimes} B)
\end{equation}
 is compatible with the boundary homomorphisms (whenever $?\underset{A}{\otimes} B$ transforms an exact sequence to an exact sequence). I apply this comment to $C=P\otimes P$, $B=P$, and the right column of \ref{E:diagramproductCE}. This way obtain the claim for $\Th_{\sfp}$ from (\ref{E:group1}, \ref{E:group2}). For $\sfp$ from (\ref{E:group3}) the proof is similar but relies only on the fact that (\ref{E:functmap}) is a map of functors.
 
I finish the proof as Proposition \ref{P:commutativity} by applying Lemma \ref{L:saturate} for the pair $[\hdelta_2,\hdelta_4]\subset [\hdelta_1,\hdelta_4]$, decomposing $\Th_{\sfp}$ into the Yoneda product of elementary $\Th_{\sfp_i}$ I already analyzed. By using associativity of the product, I prove by induction $\Th_{\sfp_n}\cdots \Th_{\sfp_{i+1}}\sfm((\Th_{\sfp_{i}}\cdots \Th_{\sfp_{1}}a)\sfp(b))=\Th_{\sfp_n}\cdots \Th_{\sfp_{i+2}}\sfm((\Th_{\sfp_{i+1}}\Th_{\sfp_{i}}\cdots \Th_{\sfp_{1}}a)\sfp(b))$.

\end{proof}
\begin{proposition}\label{P:injecttop}
In the assumptions of Proposition \ref{P:pairingcomp}, the map \[\Th_{\sfp}:H_{\fm}^{\rk(\hdelta_2,\hdelta_4)+1}[\hdelta_2,\hdelta_4]\to H_{\fm}^{\rk(\hdelta_1,\hdelta_4)+1}[\hdelta_1,\hdelta_4]\]
 is injective.
\end{proposition}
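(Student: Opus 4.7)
The plan is to reduce the claim to a composition of injective maps on top local cohomology, indexed by elementary factors of $\sfp$ provided by Lemma \ref{L:saturate}. First I would apply that lemma to write $\sfp:\hA[\hdelta_1,\hdelta_4]\to \hA[\hdelta_2,\hdelta_4]$ as a finite composition of the three elementary surjections of types (\ref{E:group1}), (\ref{E:group2}), (\ref{E:group3}). By Proposition \ref{P:composition}, $\Th_{\sfp}$ is then the Yoneda product of the Thom classes of these factors, so the map it induces on local cohomology is the composition of the elementary maps, and it suffices to prove injectivity on top local cohomology in each of the three elementary cases.

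For the two regular cases (\ref{E:group1}) and (\ref{E:group2}), Proposition \ref{P:classofextreg} identifies the elementary Thom class with the connecting homomorphism in the long exact sequence of local cohomology associated to the short exact sequence
\[
0 \to \hA[\hdelta,\hbeta] \xrightarrow{\lambda^{\hdelta}\cdot} \hA[\hdelta,\hbeta] \to \hA[\hdelta'',\hbeta] \to 0,
\]
where $\hA[\hdelta'',\hbeta]$ (either $\hA(\hdelta,\hbeta]$ or $\hA[\hdelta',\hbeta]$) has Krull dimension one less than $\hA[\hdelta,\hbeta]$. Writing $d=\rk(\hdelta'',\hbeta)+1$ for the top degree of the quotient, the relevant segment of the long exact sequence reads
\[
H^{d-1}_{\fm}(\hA[\hdelta,\hbeta]) \to H^{d-1}_{\fm}(\hA[\hdelta'',\hbeta]) \xrightarrow{\Th} H^{d}_{\fm}(\hA[\hdelta,\hbeta]),
\]
and the leftmost group vanishes by Proposition \ref{E:diality}, since $\hA[\hdelta,\hbeta]$ is Cohen--Macaulay and $d-1$ is strictly below its top degree. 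Hence $\Th$ is injective.

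For the irregular case (\ref{E:group3}), Proposition \ref{P:extmaintheorem} presents $\Th$ as the embedding $\omega_{\hA[\hdelta',\hbeta]}\hookrightarrow \omega_{\hA(\hdelta,\hbeta]}$ coming from multiplication by $\lambda^{\hdelta'}$, and the cokernel is the canonical module of the Cohen--Macaulay quotient $\hA[\hgamma,\hbeta]$, which has the same Krull dimension $d$ as the two outer terms. The long exact sequence of local cohomology gives
\[
H^{d-1}_{\fm}(\hA[\hgamma,\hbeta]) \to H^{d}_{\fm}(\hA[\hdelta',\hbeta]) \xrightarrow{\Th} H^{d}_{\fm}(\hA(\hdelta,\hbeta]),
\]
and the left group again vanishes by Cohen--Macaulayness, yielding injectivity. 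Composition of injective maps is injective, which finishes the argument.

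No step looks like a real obstacle: the argument reduces to the standard long exact sequence of local cohomology applied to the elementary extensions provided by Lemma \ref{L:saturate0}, combined with the basic vanishing $H^{<\dim}_{\fm}=0$ for Cohen--Macaulay modules. The only point that requires care is keeping track of dimensions in the irregular case (\ref{E:group3}), where the codomain, domain, and cokernel all have the \emph{same} Krull dimension and the vanishing of $H^{d-1}_{\fm}$ of the cokernel is what forces injectivity at the top degree; this dimensional bookkeeping is precisely what is certified by the setup of Proposition \ref{P:extmaintheorem} together with the verifications already carried out in the proof of Lemma \ref{L:saturate0}.
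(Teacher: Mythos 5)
Your argument is correct, but it takes a genuinely different route from the paper's. You reduce the claim to the elementary factorization of $\sfp$ given by Lemma \ref{L:saturate} (via Proposition \ref{P:composition}), and for each elementary factor you invoke the long exact sequence of local cohomology attached to the relevant short exact sequence (from Propositions \ref{P:classofextreg} and \ref{P:extmaintheorem}) together with the vanishing $H^{<\dim}_{\fm}=0$ for Cohen--Macaulay algebras (Proposition \ref{E:diality}). The paper instead gives a one-shot duality argument with no decomposition at all: it computes $H^{\mathrm{top}}_{\fm}$ via the complex $\moduleT_\bullet(\fm)$, observes that $\Th_{\sfp}$ on chains is the graded $\C$-adjoint (with respect to the $\deg_{\C^{\times}}$-grading) of the map of resolutions induced by $\sfp$, and hence on top cohomology $\Th_{\sfp}$ is the graded dual of the surjective algebra map $\sfp:\hA[\hdelta_1,\hdelta_4]\to\hA[\hdelta_2,\hdelta_4]$; the adjoint of an onto map is injective. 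What your approach buys is that it only needs the Cohen--Macaulay vanishing, not the full identification $\Th_{\sfp}=\sfp^{*}$; what the paper's approach buys is brevity and independence of the elementary-factorization machinery. One small correction in your writeup of the regular case: with $d=\rk(\hdelta'',\hbeta)+1$ the \emph{top} degree of the quotient, the connecting map to analyze is $H^{d}_{\fm}(\hA[\hdelta'',\hbeta])\to H^{d+1}_{\fm}(\hA[\hdelta,\hbeta])$, whose injectivity follows from the vanishing of $H^{d}_{\fm}(\hA[\hdelta,\hbeta])$ (since $\dim\hA[\hdelta,\hbeta]=d+1$); you wrote the same segment shifted down by one, which is not the segment containing the top-degree map, though the underlying Cohen--Macaulay vanishing you cite is still what does the work once the indices are corrected.
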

\begin{proof}
I will use the complex $\moduleT_{\bullet}(\fm)$ for computation of $H_{\fm}^{i}(A)$ (Proposition \ref{P:identification}).
The map $\Th_{\sfp}:\moduleT_{i}(\fm)[\hdelta_2,\hdelta_4]\to \moduleT_{i-\codim}(\fm)[\hdelta_1,\hdelta_4]$ is the the $\C$-adjoint to the map $\sfp:F_{\bullet}(\hA[\hdelta_1,\hdelta_4])\to F_{\bullet}(\hA[\hdelta_2,\hdelta_4])$, which is understood in the sense of the graded duality relative to $\deg_{\C^{\times}}$-grading. Then on the level of cohomology $\Th_{\sfp}$ is $\C$-adjoint to the onto map $\sfp:\hA[\hdelta_1,\hdelta_4]\to \hA[\hdelta_2,\hdelta_4]$. 
The adjoint must be injective.
\end{proof}

The central result of this section is the formula for the adjoint for $\Th$. It appears in the next proposition. 
 \begin{proposition}\label{C:adjoint}
In the assumptions of Proposition \ref{P:pairingcomp} the pairing (\ref{E:fundpairingdef}) satisfies
\[(\Th_{\sfp} (a),b)=(a,\sfp(b))\]
where $a\in H_{\fa}^{i}[\hdelta_2,\hdelta_4]$,$b\in H_{\fb}^{j}[\hdelta_1,\hdelta_4]$, $i+j+\rho(\hdelta_2)-\rho(\hdelta_1)=\rk(\hdelta_1,\hdelta_4)+1$.

The weight  of $\Th_{\sfp}$ with respect to the renormalized $\Aut$ action (see Remark \ref{R:degreeshift1}) is equal to zero.
\end{proposition}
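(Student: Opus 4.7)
The plan is to reduce the identity $(\Th_{\sfp}(a), b) = (a, \sfp(b))$ to a compatibility between residue functionals on top local cohomology, and then to handle the weight statement by a direct character computation using (\ref{E:chath}) and (\ref{E:chiprop}).

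First I would invoke Proposition \ref{P:pairingcomp} to rewrite
\[\sfm(\Th_{\sfp}(a), b) \;=\; \Th_{\sfp}\bigl(\sfm(a, \sfp(b))\bigr) \quad \text{in } H_{\fm}^{\rk(\hdelta_1,\hdelta_4)+1}[\hdelta_1,\hdelta_4],\]
so the claim becomes
\[\sfres_{\hA[\hdelta_1,\hdelta_4]} \circ \Th_{\sfp} \;=\; \sfres_{\hA[\hdelta_2,\hdelta_4]}\]
as linear functionals on $H_{\fm}^{\rk(\hdelta_2,\hdelta_4)+1}[\hdelta_2,\hdelta_4]$. By Proposition \ref{E:diality}, the target of the residue lives in the degree $-a$ component of the top cohomology, which is one-dimensional. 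By Proposition \ref{P:injecttop}, $\Th_{\sfp}$ is injective on the top cohomology, so $\sfres_{\hA[\hdelta_1,\hdelta_4]}\circ \Th_{\sfp}$ is a nonzero functional of the correct $\Aut$-weight and agrees with $\sfres_{\hA[\hdelta_2,\hdelta_4]}$ up to a scalar. To fix that scalar to $1$, I would use Proposition \ref{P;rightchoice}: starting from the identification $\hA[\hdelta_1,\hdelta_4]\cong \omega_{\hA[\hdelta_1,\hdelta_4]}$ that defines $\sfres_{\hA[\hdelta_1,\hdelta_4]}$, there is a unique choice of $\hA[\hdelta_2,\hdelta_4]\cong \omega_{\hA[\hdelta_2,\hdelta_4]}$ making the canonical square involving $\Th_{\sfp}$ commute. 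With this consistent choice the two functionals literally coincide; one then checks that in our setup the residues $\sfres_{\hA[\hdelta_i,\hdelta_4]}$ are precisely these canonical ones (equivalently, the class $\lambda^{-1}\otimes v$ described in the proof of Proposition \ref{P:charmatch} is compatible with the factorization of $\sfp$ into elementary steps given by Lemma \ref{L:saturate}).

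For the weight statement, by (\ref{E:chath}) the class $\Th_{\sfp}$ transforms under $\Aut$ by the character $\chi[\hdelta_2,\hdelta_4]\,\chi[\hdelta_1,\hdelta_4]^{-1}$. In the renormalized action (Remark \ref{R:degreeshift1}), the $\Aut$-structure on $H^{\bullet}_{\fa}[\hdelta_i,\hdelta_4]$ is twisted by $\chi^{-1}[\hdelta_i,(1)^{-1}]$, so the renormalized weight of $\Th_{\sfp}$ is
\[\chi[\hdelta_1,(1)^{-1}]\,\chi[\hdelta_2,(1)^{-1}]^{-1}\cdot \chi[\hdelta_2,\hdelta_4]\,\chi[\hdelta_1,\hdelta_4]^{-1}.\]
Applying (\ref{E:chiprop}) to each $\chi[\hdelta_i,\hdelta_4]$ shows that the $t^{-4}q^{2}\chi[(0)^0,\hdelta_4]$ factors cancel between $i=1$ and $i=2$, and what remains telescopes to the trivial character. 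This gives weight zero.

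The main obstacle is the scalar normalization in the first paragraph: the commutative diagram of Proposition \ref{P:rightchoice} is formulated for a single surjection satisfying the hypotheses of Proposition \ref{P:classofextreg} or \ref{P:extmaintheorem}, whereas our $\sfp:\hA[\hdelta_1,\hdelta_4]\to \hA[\hdelta_2,\hdelta_4]$ typically must be factored through the chain produced by Lemma \ref{L:saturate}. I expect to handle this by induction along the factorization, combining Proposition \ref{P:composition} (multiplicativity $\Th_{\sfq\sfp}=\Th_{\sfp}\cup\Th_{\sfq}$) with the elementary statements (\ref{E:chtthcompatibility1})--(\ref{E:chtthcompatibility3}) to propagate the compatible choice of dualizing isomorphism step by step. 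Once the normalization is pinned down at each elementary step, the adjointness identity is forced on the one-dimensional top cohomology and the full statement follows.
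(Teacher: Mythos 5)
Your proposal is correct, and the main line of argument—rewriting $(\Th_{\sfp}(a),b)$ via Proposition \ref{P:pairingcomp}, then identifying $\sfres_{\hA[\hdelta_1,\hdelta_4]}\circ\Th_{\sfp}$ with $\sfres_{\hA[\hdelta_2,\hdelta_4]}$ using Propositions \ref{P;rightchoice} and \ref{P:injecttop}—is exactly the paper's. You are also right that the paper's one-line citation of Proposition \ref{P;rightchoice} implicitly requires factoring $\sfp$ through the chain of Lemma \ref{L:saturate} and propagating the dualizing isomorphisms step by step with Proposition \ref{P:composition}; the paper glosses over this, and spelling it out as you do is a useful elaboration, not a gap. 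The one genuine difference is the weight computation: the paper reduces to elementary maps $\sfp_i$ from Lemma \ref{L:saturate} and reads off the renormalized weight of each from (\ref{E:absrel}) together with (\ref{E:chtthcompatibility1})--(\ref{E:chtthcompatibility3}), whereas you compute the global character of $\Th_{\sfp}$ directly from (\ref{E:chath}), apply the renormalization twist, and show the result telescopes to the trivial character using the factorization identity (\ref{E:chiprop}). Both computations are valid and use the same underlying character formulas; your global route avoids the reduction to elementary pieces entirely and is arguably cleaner, since (\ref{E:chiprop}) already encapsulates the cancellation, while the paper's route has the advantage of reusing the elementary-step machinery already set up for the residue normalization.
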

\begin{proof}It follows from Propositions \ref{P:pairingcomp}, \ref{P:injecttop} because
\[\begin{split}
&(\Th_{\sfp} (a),b)=\sfres_{\hA[\hdelta_1,\hdelta_4]}\circ \sfm(\Th_{\sfp} (a),b)=\sfres_{\hA[\hdelta_1,\hdelta_4]}\circ\Th_{\sfp}\circ \sfm( a,\sfp(b))=\\
&=\sfres_{\hA[\hdelta_2,\hdelta_4]}\circ \sfm( a,\sfp(b))=(a,\sfp(b)).
\end{split}\]
By Propositions \ref{P;rightchoice}, \ref{P:injecttop} , I can normalize $\sfres_{\hA[\hdelta_2,\hdelta_4]}$ so that it is equal to $\sfres_{\hA[\hdelta_1,\hdelta_4]}\circ\Th_{\sfp}$.

It is suffice to prove the second statement for elementary maps $\sfp_i$ from Lemma \ref{L:saturate}. By (\ref{E:absrel}) and  (\ref{E:chtthcompatibility1},\ref{E:chtthcompatibility2},\ref{E:chtthcompatibility3}) the renormalized weight of $\Th_{\sfp_i}$ is equal to zero.
\end{proof}

The following statement is a modification of Corollary \ref{C:adjoint}. It could be interpreted as the statement of compatibility of the bidirect system defined by (\ref{E:diagramindprolocal}) and the pairing.

\begin{proposition}\label{P:adjointdiagran}
The graded dual (with respect to $\Aut$-weight subspaces decomposition) of the diagram (\ref{E:diagramindprolocal}) is isomorphic to
 \begin{equation}\label{E:diagramindprodual}
\begin{CD}
H_{\fb}^{i'}[\hdelta_2,\hdelta_4] @>\Th_{\sfp'}>> H_{\fb}^{i'+\codim'}[\hdelta_2,\hdelta_3] \\
@AA\sfq A @AA\sfq'A \\
H_{\fb}^{i'}[\hdelta_1,\hdelta_4] @>\Th_{\sfp}>> H_{\fb}^{i'+\codim'}[\hdelta_1,\hdelta_3]. 
\end{CD}
\end{equation}
 The maps are induced from the commutative diagram of algebras (\ref{E:diagramalg})
\[\codim'=
\rho( \hdelta_4)-\rho( \hdelta_3),i+i'=\rho (\hdelta_4)-\rho( \hdelta_2).\]
\end{proposition}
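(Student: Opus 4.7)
The plan is to apply the $*$-duality pairing from Proposition~\ref{P:pairingdegree} and the adjointness relation from Corollary~\ref{C:adjoint} in order to transfer the commutative square~(\ref{E:diagramindprolocal}), established in Corollary~\ref{C:comdiagramdir}, into its graded dual. Proposition~\ref{P:pairingdegree} furnishes a nondegenerate $\Aut$-equivariant identification
\[
H_{\fa}^{i}[\hdelta,\hdelta']^{*}\cong H_{\fb}^{\rk(\hdelta,\hdelta')+1-i}[\hdelta,\hdelta'],
\]
so applying it to every vertex of~(\ref{E:diagramindprolocal}) produces a diagram whose vertices sit in various $H_{\fb}^{?}[\hdelta_{i},\hdelta_{j}]$. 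The first routine check is the bookkeeping of cohomological indices: using $\rk(\hdelta_{i},\hdelta_{j})=\rho(\hdelta_{j})-\rho(\hdelta_{i})$ together with $\codim=\rho(\hdelta_{2})-\rho(\hdelta_{1})$ and $\codim'=\rho(\hdelta_{4})-\rho(\hdelta_{3})$, one sees that the duals of the left-hand column of~(\ref{E:diagramindprolocal}) live in a common cohomological degree, while the right-hand column is shifted from the left by $\codim'$. This matches the shape of~(\ref{E:diagramindprodual}) after setting $i'$ compatibly, modulo the conventional shift by one coming from the $\rk+1$ offset in the pairing.

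The second step is to identify the dualized arrows with those of~(\ref{E:diagramindprodual}). For the vertical maps Corollary~\ref{C:adjoint} gives directly $(\Th_{\sfq}(a),b)=(a,\sfq(b))$, so the transpose of $\Th_{\sfq}$ under the pairing is the restriction $\sfq$, and similarly for $\sfq'$. For the horizontal maps I need the analogous statement $(\Th_{\sfp}(a),b)=(a,\sfp(b))$ with $\sfp$ now the second-endpoint restriction $\hA[\hdelta_{1},\hdelta_{4}]\to\hA[\hdelta_{1},\hdelta_{3}]$. This is not literally the form stated in Corollary~\ref{C:adjoint}, but the proof there---factorization into elementary regular and irregular steps via Lemma~\ref{L:saturate}, the Yoneda product formula for $\Th$, and the Cartan--Eilenberg compatibility of $\sfm$ with boundary homomorphisms used in Proposition~\ref{P:pairingcomp}---is formally symmetric in the two endpoints of an interval; alternatively one may invoke the involution $\shift^{-1}\reflection$ of~(\ref{E:sigmaTcoh}), which interchanges $\fa$ with $\fb$ and swaps the two endpoints, to reduce the second-endpoint case to the first.

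Once both adjointness statements are in hand, the dualized diagram is precisely~(\ref{E:diagramindprodual}) after the identifications $\Th_{\sfp}^{*}=\sfp$, $\Th_{\sfp'}^{*}=\sfp'$, $\Th_{\sfq}^{*}=\sfq$, $\Th_{\sfq'}^{*}=\sfq'$. Commutativity of the dualized square is automatic, since graded dualization is a contravariant functor and sends commutative squares to commutative squares. The main obstacle I anticipate is not conceptual but bookkeeping: one must verify carefully that Corollary~\ref{C:adjoint} transfers faithfully to second-endpoint restrictions, and track the index shifts through the pairing without being confused by the $\rk+1$ offset. Once that is granted, the proof is essentially a diagram chase of length one.
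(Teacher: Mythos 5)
Your approach is essentially the one the paper uses: pass through the $*$-duality pairing (Proposition \ref{P:pairingdegree}) and the adjointness relation $(\Th_{\sfp}(a),b)=(a,\sfp(b))$ from Corollary \ref{C:adjoint}, then dualize the commutative square of Corollary \ref{C:comdiagramdir}. However, there is a genuine gap: nothing in your argument justifies passing from the statement that the pairing is perfect to the claim that the graded dual of $H_{\fa}^{i}[\hdelta,\hdelta']$ \emph{is} $H_{\fb}^{\rk(\hdelta,\hdelta')+1-i}[\hdelta,\hdelta']$. For a pairing between two spaces to become a graded-dual identification in the $\Aut$-weight decomposition, one needs the weight spaces on both sides to be finite-dimensional and to pair off perfectly weight by weight; otherwise the dual of the diagram is simply not the object you claim it is. The paper devotes the first part of its proof exactly to this: it proves Lemma \ref{L:weights} (that $H_{\fa}^{i}$ is positive-energy and $H_{\fb}^{i}$ is negative-energy, using Remark \ref{R:lowest} about $\moduleT_{\fa}$ and $\moduleT_{\fb}$), and only then concludes that the pairing splits into nondegenerate pairings between finite-dimensional weight spaces. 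Without that step, "applying the duality to every vertex" is not licensed.

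The second concern you flag yourself---that Corollary \ref{C:adjoint} as stated covers first-endpoint restrictions only---is real, and your two proposed remedies are both plausible, but neither is worked out. If you go via $\shift^{-1}\reflection$, it is not enough that the isomorphisms of local cohomology (\ref{E:sigmaTcoh}) exist and that $\Th$ commutes with $\reflection$ and $\shift$ (Proposition \ref{E:morecompatibility}); you must also verify that the residue functional $\sfres_{\hA}$ (and hence the pairing) transforms correctly under the involution, since the pairing is defined relative to a choice of interval and $\sfres$ is normalized via Proposition \ref{P;rightchoice}. If instead you re-run the proof of Corollary \ref{C:adjoint} for second-endpoint restrictions, you should say so explicitly, because that proof depends on the factorization of Lemma \ref{L:saturate} and on Proposition \ref{P:pairingcomp}, both of which are formulated for the first-endpoint case and need to be mirrored. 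Neither point is fatal, but as written the proposal asserts the conclusion before the two nontrivial steps---finiteness of weight spaces and the second-endpoint adjunction---have actually been established.
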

\begin{proof}
I start the proof with a lemma.
 \begin{lemma}\label{L:weights}
\begin{enumerate}
\item $H_{\fa}^{i}(\hA)$ is a positive energy $\bT$-space.
\item $H_{\fb}^{i}(\hA)$ is a negative energy $\bT$-space.
\item \label{I:weights3} $H_{\fm}^{i}[\hdelta,(1)^{-1}]$ is a positive energy $\bT$-space with the lowest $\bT$ weight $u[\hdelta,(1)^{-1}]$ (\ref{E:sigmaalt}).
\end{enumerate}
\end{lemma}
\begin{proof}
 By Lemma \ref{E:eqres}, $F_{i}(\hA[\hgamma,\hgamma'])=V_i\otimes \C[\hgamma,\hgamma']$ where $V_i$ are finite-dimensional $\bT$-representations. By Remark \ref{R:lowest}), $\moduleT_{\fa}$ ($\moduleT_{\fb}$) is a positive (negative) energy space with respect to the $\deg_{\bT}$-grading.
 From this I conclude that $\moduleT_i(\fa)$ and $ H_{\fa}^{i}(\hA)$ are positive energy spaces. Spaces $\moduleT_i(\fb)$ and $H_{\fb}^{i}(\hA)$ have negative energy. 
 
 The proof of the third item follows from Proposition \ref{E:diality} and Proposition \ref{P:charmatch}.
\end{proof}

 I conclude that the pairing (\ref{E:fundpairingdef}) splits into a sum of nondegenerate pairings between finite-dimensional weight spaces.

The statement follow from Corollary \ref{C:comdiagramdir} and Corollary \ref{C:adjoint}.

\end{proof}

Commutation rules for $\Th$, $\reflection$ and $\shift$ are given below.
 \begin{proposition}\label{E:morecompatibility}
Fix $\hdelta<\hdelta'<\hgamma<\hgamma'$.

Let \[\sfq:\hA[\reflection(\hgamma'),\reflection(\hdelta)]\to \hA[\reflection(\hgamma'),\reflection(\hdelta)],\quad q=\reflection\circ p\circ \reflection\]
 where $\sfp:\hA[\hdelta,\hgamma']\to \hA[\hdelta',\hgamma']$. Then
\[\reflection \circ\Th_{\sfp} =\Th_{\sfq}\circ\reflection.\]
 The map $\shift$ (\ref{E:tisomorphism}) satisfies
\[\shift \circ\Th_{\sfp} =\Th_{\sfp}\circ \shift.\]
There are similar identities for projections $\sfp:\hA[\hdelta,\hgamma']\to \hA[\hdelta,\hgamma]$.
\end{proposition}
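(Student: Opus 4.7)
The plan is to factor $\sfp$ into elementary homomorphisms using Construction~\ref{E:mainconstruction} and Lemma~\ref{L:saturate}, verify the commutation with $\reflection$ and $\shift$ separately for the regular and irregular factors, and then assemble the global statement via Proposition~\ref{P:composition} which asserts $\Th_{\sfq\circ\sfp}=\Th_{\sfp}\cup\Th_{\sfq}$. First I would observe that the maps $\reflection$ and $\shift$ are anti-automorphism/automorphism of the poset $\sethE$ which, by Lemma~\ref{L:multstatM}(\ref{I:sixmultstatM}), preserve each of the subsets $\rM$, $\rM_i^{\pm}$. Consequently, if $\sfp:\hA[\hdelta,\hgamma']\to \hA[\hdelta',\hgamma']$ is decomposed into a sequence of elementary homomorphisms $\sfp_1,\dots,\sfp_n$ via the procedure of Construction~\ref{E:mainconstruction}, then $\sfq=\reflection\circ\sfp\circ\reflection$ decomposes analogously as $\reflection\circ\sfp_n\circ\reflection,\dots,\reflection\circ\sfp_1\circ\reflection$, and each factor is again of the same elementary type (regular (\ref{E:group1},\ref{E:group2}) or irregular (\ref{E:group3})). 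Multiplicativity of $\Th$ then reduces the proof to the case where $\sfp$ is elementary.

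Next I would handle the two elementary cases. In the regular case (Proposition~\ref{P:classofextreg}), $\Th_{\sfp}$ is the class of the short exact sequence
\[
0\to \hA[\halpha,\hbeta]\xrightarrow{\lambda^{\halpha}\cdot}\hA[\halpha,\hbeta]\to \hA(\halpha,\hbeta]\to 0,
\]
and applying the ring isomorphism $\reflection:\hA[\halpha,\hbeta]\to\hA[\reflection(\hbeta),\reflection(\halpha)]$ carries this to the corresponding sequence with $\lambda^{\halpha}$ replaced by $\lambda^{\reflection(\halpha)}$, which is precisely the extension representing $\Th_{\sfq}$. Since the boundary map on local cohomology is natural with respect to isomorphisms of extensions, $\reflection\circ\Th_{\sfp}=\Th_{\sfq}\circ\reflection$. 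In the irregular case (Proposition~\ref{P:extmaintheorem}), the map $\Th_{\sfp}$ is realized by the inclusion $\lambda^{\halpha}\cdot:\hA[\hgamma,\hbeta]\hookrightarrow\hA(\halpha,\hbeta]$, and again $\reflection$ sends this inclusion to the inclusion $\lambda^{\reflection(\halpha)}\cdot$ defining $\Th_{\sfq}$, so naturality gives the same identity. The verification for $\shift$ is identical (and in fact slightly simpler, since $\shift$ preserves order, so the decompositions on the two sides are literal translates of one another).

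The statement for projections $\sfp:\hA[\hdelta,\hgamma']\to\hA[\hdelta,\hgamma]$ that shorten the upper endpoint follows by the same argument, using Proposition~\ref{P:seqint} and the remark that Construction~\ref{E:mainconstruction} applies symmetrically to the right end of the interval (with $\rM_1^+$ and $\rM_2^-$ playing dual roles), together with the fact that $\reflection$ swaps the two ends of the interval, so the ``left end'' and ``right end'' factorizations are exchanged by $\reflection$.

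The main obstacle I anticipate is the bookkeeping in showing that the factorization of $\sfp$ produced by Construction~\ref{E:mainconstruction} is genuinely carried to a factorization of $\sfq$ of the same combinatorial type. This is really a lemma about $\sethE$: that the sets $\setCL^{\pm}$, $\rM_1^{\pm}$, $\rM_2^{\pm}$, $\rM_3^{\pm}$ entering the defining conditions (\ref{E:group1})--(\ref{E:group3}) are all interchanged among themselves by $\reflection$ and stabilized by $\shift$. Once this combinatorial compatibility is checked (which follows from the explicit description of $\rM_i^{\pm}$ in (\ref{E:Mdef}) together with the fact that $\reflection$ is a central symmetry interchanging the two halves of $\sethE$), the rest of the argument is the naturality of the elementary constructions used in Propositions~\ref{P:classofextreg} and \ref{P:extmaintheorem}.
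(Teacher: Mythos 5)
The paper does not give a proof of this proposition — it states only ``I leave the proof as an exercise.'' Your proposal fills that gap using exactly the machinery the paper develops for the analogous statements (Proposition~\ref{P:commutativity}, Proposition~\ref{P:pairingcomp}): factor~$\sfp$ into elementary pieces via Lemma~\ref{L:saturate} and Construction~\ref{E:mainconstruction}, check the elementary cases via the extension-class description in Proposition~\ref{P:classofextreg} (regular case) and the module-inclusion description in Proposition~\ref{P:extmaintheorem} (irregular case), and assemble with Proposition~\ref{P:composition}. That is the natural route, and the argument is sound.

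Two points you flag as ``bookkeeping'' deserve to be stated as the essential content, since they are exactly where a proof could slip. First, that $\reflection$ is order-reversing forces it to interchange $\setCL^-$ and $\setCL^+$, hence to interchange $\rM_i^+$ with $\rM_i^-$ for each~$i$; combined with the explicit lists of Definition~\ref{D:Mpmidef0} this verifies that $\reflection$ carries the elementary type conditions (\ref{E:group1})--(\ref{E:group3}) for left-endpoint projections precisely onto the dual conditions for right-endpoint projections (as used in Remark~\ref{R:dualssystem}). Second, the naturality you invoke is really compatibility of $\Th$ with base change by a ring isomorphism: $\reflection$ is an isomorphism of graded algebras (\ref{E:mapsintr}), so it carries a minimal $P$-resolution to a minimal $\reflection(P)$-resolution and the short exact sequence~(\ref{E:extsimple}) (resp.\ the inclusion~(\ref{E:inclusionofA})) to the corresponding extension (resp.\ inclusion) on the other side. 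Stating this explicitly removes any doubt that $\reflection$-twisting the data is the same as replacing $\sfp$ by $\sfq$.

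One minor remark: what Proposition~\ref{P:composition} furnishes is a Yoneda (composition) product of $\Ext$-classes rather than a cup product; this does not affect the argument, but the terminology is worth keeping consistent with the paper. With that adjustment and the two points above made explicit, the proof is complete.
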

\begin{proof}
I leave the proof as an exercise.
\end{proof}

 \section{Bounds on weights and on dimensions of weight spaces of $H_{\fa}^{i}[\hdelta,\hdelta']$
 }\label{S:lowerbound}
The spaces of states $H_{\fa}^{i+\itwo}$ which I will define in Section \ref{S:limiting} carry the $\deg_{\bT}$-grading. The finite approximations $H_{\fa}^{i}[\hdelta,\hdelta']$ are positive energy $\bT$-representations (Lemma \ref{L:weights}). Does $H_{\fa}^{i+\itwo}$ share this property? It should, if we hope that $H_{\fa}^{i+\itwo}$ is a representation of the Virasoro algebra with weights bounded from below, as discussed in the introduction.
In this section I will do all the technical work needed to answer this question.

Let $\hA$ be the algebra based on $[\hdelta,\hdelta']$. 
I will recast, as usual, my local cohomology computations to computations of certain $\Tor$-functors (Proposition \ref{P:localtorisom}). It will be convenient to do computations of $\Tor$ groups by using a special non-minimal free resolution of ${\hA}$. Algebra ${\hA}$ has  Koszul property \cite{MovStr}. I will use a special $P$-resolutions of ${\hA}$ which comes with any Koszul algebra. 
I will spend the next section reviewing this.

\subsection{The quadratic dual}
In this section, I will describe the Koszul resolution mentioned  in the previous paragraph.
Recall (see e.g. \cite{RoccoSturmfels}) that an algebra $B$ is {\it G-quadratic} if its defining ideal  has a Gr\"{o}bner basis of quadrics with respect to some
system of generators and some term order.
In \cite{MovStr} I showed that ${\hA}$ is G-quadratic in the order (\ref{E:orderaf}).

The quadratic dual $\hA^{!}$ (see e.g. \cite{PP},p. 5 for the definition )has the following description.
Let \[T:=\bigoplus_{n\geq 0} \spinor[\hdelta,\hdelta']^{\otimes n}\] be the free graded algebra on $\spinor[\hdelta,\hdelta']$ (\ref{E:spinordef}) in degree minus one. I choose negative grading to make it compatible with Chevalley-Eilenberg construction.

Let $\Gamma(5)$ be the $\Spin(10)$-intertwiner $\Lambda^5V\to \Sym^2\sspinor$ (see \cite{BVinbergALOnishchik}). In the base $\{v_{s}\},\{\theta_{\rbeta}\}$ it defines the tensor $\Gamma^{\ralpha\rbeta}_{s_1,\dots,s_5}$.
Define a set of relations 
\begin{equation}\label{E:mainrelations}
r^{l,l'}_{s_1,\dots,s_5}=\sum_{\ralpha\rbeta\in E}\Gamma^{\ralpha\rbeta}_{s_1,\dots,s_5}[\theta_{\ralpha^l},\theta_{\rbeta^{l'}}]\sim 0\quad 1\leq s_1<\dots<s_5\leq 10, \ralpha^l,\rbeta^{l'}\in [\hdelta,\hdelta']
\end{equation}
$[\cdot,\cdot]$ stands for the graded commutator.

Here is the homological interpretation of $\hA^{!}$.
\begin{proposition}\label{P:Koszul}\cite{MovStr}
The algebra $\Ext_{A}(\C,\C)$ is isomorphic to the Koszul dual $\hA^{!}=T/\fr$, where ideal $\fr$ is generated by relations (\ref{E:mainrelations}).
\end{proposition}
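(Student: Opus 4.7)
The plan is to invoke Koszulness of $A$ together with the standard identification, for Koszul algebras, of $\Ext_A(\C,\C)$ with the quadratic dual, and then to pin down the quadratic dual explicitly as $T/\fr$. Since $A$ is G-quadratic with respect to the term order (\ref{E:orderaf}) recalled from \cite{MovStr}, the Gr\"{o}bner basis of quadrics forces $A$ to be Koszul (e.g.\ Fr\"{o}berg's criterion). Once Koszulness is in hand, the Koszul complex
\[
\cdots\to A\otimes (A^!_n)^*\to A\otimes (A^!_{n-1})^*\to\cdots\to A\otimes \C\to \C\to 0
\]
is a minimal free resolution of $\C$ over $A$, and applying $\Hom_A(-,\C)$ recovers $\Ext_A^{\bullet}(\C,\C)\cong A^!$ as graded algebras. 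So the real content is to check that the quadratic dual, defined abstractly via orthogonal complements of the defining quadratic relations, coincides with the explicit presentation $T/\fr$ by the relations (\ref{E:mainrelations}).

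First I would set up the orthogonality carefully. Let $W=\Span{[\hdelta,\hdelta']}$ and let $W^{*}=\spinor[\hdelta,\hdelta']$ be its dual, with $\{\theta_{\rbeta^l}\}$ dual to $\{\lambda^{\rbeta^l}\}$. The defining ideal of $A$ inside $\Sym(W)$ is generated in degree two by the image of the map
\[
\Gamma_{\Sym}:V\otimes \C[z,z^{-1}]^{\leq 2N'}_{\geq 2N}\longrightarrow \Sym^2 W,\qquad v_{\s}\otimes z^k\longmapsto \sum_{l+l'=k}\Gamma^{\s}_{\ralpha\rbeta}\lambda^{\ralpha^l}\lambda^{\rbeta^{l'}},
\]
as in (\ref{E:equationsaf0})--(\ref{E:relgenfunction}). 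Call this image $R\subset \Sym^2 W$. By definition of the quadratic dual (with the convention placing $W^{*}$ in degree $-1$ and using the anti-symmetric tensor algebra), one has
\[
A^{!}\;=\;T(W^{*})\big/\bigl(R^{\perp}\bigr),
\]
where $R^{\perp}\subset \Lambda^2 W^{*}$ is the annihilator of $R$ under the natural pairing $\Sym^2 W\otimes \Lambda^2 W^{*}\to \C$ (which is nondegenerate because the symmetric relations defining $A$ correspond to antisymmetric relations in the Koszul dual when passing through the Chevalley--Eilenberg sign convention that the author has already installed by placing $W^{*}$ in negative degree).

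The heart of the argument is the identification $R^{\perp}=\Span{r^{l,l'}_{s_1,\dots,s_5}}$. Here I would use $\Spin(10)$-equivariance: both $R$ and $R^{\perp}$ are $\Spin(10)\times\bT$-submodules, and after Fourier expansion in the loop variable they decompose compatibly with (\ref{E:relgenfunction}). At the level of constant loops one is reduced to the classical statement about pure spinors in ten dimensions: the kernel of $\Sym^2\sspinor\to V$ given by the $\Gamma$-map is the $\Spin(10)$-irreducible with highest weight $2\omega_5$, and its annihilator in $\Lambda^2\sspinor^{*}$ is the image of the intertwiner $\Gamma(5):\Lambda^5 V\to \Sym^2\sspinor\hookrightarrow\cdots$ used in (\ref{E:mainrelations}). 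This is exactly the content of the standard duality $\Gamma\leftrightarrow \Gamma(5)$ between the two $\Spin(10)$-maps, and it extends to the loop setting by inserting the appropriate powers of $z$: the relation indexed by $(l,l')$ annihilates $\Gamma_{\Sym}(v_{\s}\otimes z^{l+l'})$ precisely because the Clifford contraction of $\Gamma(5)$ with $\Gamma$ vanishes.

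I expect the main obstacle to be the loop-theoretic bookkeeping in this orthogonality check: one must verify that, for each range of indices allowed in $[\hdelta,\hdelta']$, the subspace $R^{\perp}\cap\Lambda^2 W^{*}$ is exactly spanned by the $r^{l,l'}_{s_1,\dots,s_5}$ with $\ralpha^l,\rbeta^{l'}\in[\hdelta,\hdelta']$---no spurious relations appear from pairs that would have been ``allowed'' without the truncation, and no genuine relations get dropped. Once this equivariant, Fourier-graded dimension count is verified (via the standard $\Spin(10)$-decompositions of $\Sym^2\sspinor$ and $\Lambda^2\sspinor^{*}$ into $V\oplus\Lambda^5_{+}V$ and $\Lambda^5 V\oplus \text{(vector-type)}$ respectively), the identification $A^{!}=T/\fr$ is immediate, and combining with Koszulness gives $\Ext_A(\C,\C)\cong T/\fr$ as required.
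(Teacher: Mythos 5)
The paper itself contains no proof of this proposition: it is imported wholesale from \cite{MovStr}, so your argument has to be judged on its own terms. Your global strategy (Koszulness of $\hA$ from the quadratic Gr\"{o}bner basis, then $\Ext_{\hA}(\C,\C)\cong\hA^!$, then an explicit identification of the quadratic dual) is certainly the intended route. The gap is in the step you flag and then wave through: the claim that the annihilator of the relation space is \emph{exactly} the span of the elements $r^{l,l'}_{s_1,\dots,s_5}$ of (\ref{E:mainrelations}). Once the interval contains at least two loop levels a dimension count does not give this. Concretely, fix $l\neq l'$ and any matrix $M_{\ralpha\rbeta}$ antisymmetric in the spinor indices. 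The element $\sum_{\ralpha,\rbeta}M_{\ralpha\rbeta}[\theta_{\ralpha^l},\theta_{\rbeta^{l'}}]$ is nonzero (the two levels occupy different tensor slots), it is orthogonal to the commutativity relations because it is a super-symmetric tensor, and it is orthogonal to every quadric (\ref{E:equationsaf0}) because $\Gamma^{\bm{s}}_{\ralpha\rbeta}$ is symmetric in $(\ralpha,\rbeta)$; so it lies in the annihilator, yet it is not in the span of the $r^{l,l'}_{\bullet}$, which for each level pair is the $126$-dimensional image of $\Gamma(5)$ and consists of symmetric-in-$(\ralpha,\rbeta)$ combinations only. Likewise, since the relation $\Gamma^{\bm{s}^k}$ constrains only the \emph{sum} over decompositions $l+l'=k$, differences $\Gamma^{\bm{s}}_{\ralpha\rbeta}[\theta_{\ralpha^{l_1}},\theta_{\rbeta^{l_1'}}]-\Gamma^{\bm{s}}_{\ralpha\rbeta}[\theta_{\ralpha^{l_2}},\theta_{\rbeta^{l_2'}}]$ with $l_1+l_1'=l_2+l_2'$ also annihilate all relations. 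Already for two levels the numbers are: annihilator of the $30$ quadrics inside the super-symmetric square has dimension $\binom{33}{2}-30=498$, while the span of (\ref{E:mainrelations}) is $3\cdot126=378$, a deficit of exactly $120=\dim\Lambda^2\sspinor$. So the ``Fourier-graded dimension count'' you defer to is precisely where the proof breaks; equivalently, you would need the classes $[\theta_{\ralpha^l},\theta_{\rbeta^{l'}}]$ in $\Ext_{\hA}(\C,\C)$ to depend only on $l+l'$ and symmetrically on $(\ralpha,\rbeta)$, and this is not a formal consequence of the listed degree-two relations.

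Two related slips feed into this. You decompose $\Lambda^2\sspinor^{*}$ as ``$\Lambda^5V\oplus(\text{vector-type})$''; in fact $\Lambda^2\sspinor\cong\Lambda^3V$ is irreducible of dimension $120$, and it is exactly these $\Lambda^3V$-type pieces sitting in the mixed-level blocks that produce the deficit above. Also, the pairing you invoke between $\Sym^2 W$ and $\Lambda^2 W^{*}$ is zero; what is actually paired against the quadrics is the super-symmetric square of the odd dual space (the span of anticommutators). To repair the argument you must either account for the extra annihilator classes --- which cannot be done in degree two if they genuinely lie outside the span of (\ref{E:mainrelations}) --- or work with the description of the Koszul dual that \cite{MovStr} actually establishes (a truncated current-type Lie superalgebra, whose quadratic relations include the mixed-level antisymmetric identifications and the cross-level vector-type identifications), rather than with the orthogonality computation as you set it up.
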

It is a standard fact about Koszul duality (see \cite{PP}) that if $\hA$ is commutative, then $\hA^!$ is a universal enveloping of a graded Lie algebra . In my case, I denote this Lie algebra by $L$. It is generated by the odd vector space $\spinor[\hdelta,\hdelta']$. Then 
\begin{equation}\label{E:lalgdef}
\hA^!=U(L),
\end{equation} where $U$ stands for the universal enveloping algebra.

Fix a graded vector space $V=\bigoplus_{i\leq 0} V_i$. I use standard notations in homological algebra.
$V[1]_i=V_{i+1}$
Denote by 
\[ \begin{split}
&\Sym(V[1])
= k \oplus (V_0) \oplus (V_{-1} \oplus V_0 \wedge V_0) \oplus (V_{-2} \oplus V_{-1} \otimes V_0 \oplus V_0 \wedge V_0 \wedge V_0)
 \oplus \cdots\\
&=\Sym(V[1])_0\oplus\Sym(V[1])_{-1}\oplus\cdots
\end{split}\]
the graded exterior algebra. Chevalley-Eilenberg differential graded algebra is
$\Hom(\Sym( L[1]),\C)$. Its differential satisfies graded Leibniz rule. On generators $t^a\in\Hom( L,\C) $ it is defined as\[d t^a = - \frac{1}{2} C^a_{b c} t^b \wedge t^c.\] 
By construction, the differential in such grading has the degree $+1$. If I follow this practice, the linear space $\rHom(L_1,\C)\cong {\hA}_1$ will have cohomological degree two. For me, it will be convenient to make the suspension shift in the opposite direction 
\begin{equation}\label{E:CEdef}
CE:=\rHom(\Sym(L[-1]),\C).
\end{equation}
 $CE$ and $\Hom(\Sym( L[1]),\C)$ are isomorphic as $\Z_2$-graded algebras. The differential in $CE_{\bullet}=\{CE_{\bullet}\}_{i\geq 0}$ has degree $-1$ and ${\spinor[\hdelta,\hdelta']^*}_1\subset P$ acquires degree zero. Components of $CE_{\bullet}$ are free modules over $P$. 
 \begin{proposition}
The cohomology of $CE_{\bullet}$ is isomorphic to ${\hA}$. 
 \end{proposition}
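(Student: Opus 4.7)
The plan is to identify $CE_{\bullet}$ with a Koszul-type free $P$-resolution of $\hA$ and invoke the Koszul property of $\hA$. By \cite{MovStr} the algebra $\hA$ is G-quadratic, hence Koszul, and Proposition \ref{P:Koszul} identifies its quadratic dual as $\hA^{!}=U(L)$. For a Koszul algebra $\hA$ with dual $\hA^{!}$, general Koszul duality (see e.g.\ \cite{PP}, Ch.~2) produces a canonical resolution of $\hA$ by free $P$-modules with terms $P\otimes (\hA^{!}_{n})^{*}$ and the Koszul differential; the cohomology of this complex is concentrated in degree $0$ and equals $\hA$ precisely because $\hA$ is Koszul.

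The first step is therefore to identify $CE_{\bullet}$ with this Koszul resolution. The Poincar\'{e}--Birkhoff--Witt theorem applied to the super Lie algebra $L$ yields $U(L)\cong \Sym(L)$ as graded vector spaces, so $(\hA^{!}_{n})^{*}\cong \Sym^{n}(L)^{*}$. The suspension $L[-1]$ turns the odd generators of $L$ into even ones, making $\Sym(L[-1])$ an ordinary symmetric algebra; dualizing gives an identification
\[CE_{n}=\rHom(\Sym^{n}(L[-1]),\C)\cong P\otimes(\hA^{!}_{n})^{*},\]
with the $P$-action arising from the identification of $P_{1}$ with $\spinor[\hdelta,\hdelta']^{*}=L_{-1}^{*}$ in cohomological degree zero. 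Next I would verify that, under this identification, the Chevalley--Eilenberg differential $dt^{a}=-\tfrac{1}{2}C^{a}_{bc}t^{b}\wedge t^{c}$ of (\ref{E:CEdef}) transported to $P\otimes(\hA^{!})^{*}$ coincides up to sign with the Koszul differential determined by the quadratic relations (\ref{E:mainrelations}). This amounts to matching the structure constants $C^{a}_{bc}$ of $L$ (which by construction are dual to the coefficients of the defining relations (\ref{E:mainrelations}) of $\hA^{!}$) with the Koszul boundary coming from the pairing $\hA_{1}\otimes \hA_{1}^{*}\to \C$. Combining these two identifications with Koszul duality then gives $H^{i}(CE_{\bullet})=0$ for $i>0$ and $H^{0}(CE_{\bullet})\cong \hA$, which is the desired statement.

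The main obstacle is the bookkeeping of parities and signs: $\spinor[\hdelta,\hdelta']$ is odd (fermionic), $L$ is a super Lie algebra with generators in degree $-1$, and the suspension $[-1]$ swaps parity so that $\Sym(L[-1])$ is a genuine polynomial algebra. One must show that the Koszul sign rule under this suspension reproduces exactly the CE formula, and that the graded PBW isomorphism respects the internal weight grading so that the $0$-th cohomology of $CE_{\bullet}$ is $\hA=\bigoplus_{k\geq 0}\hA_{k}$ rather than a dual, shifted, or opposite copy. Once these conventions are pinned down, the rest of the argument is a direct application of the standard Koszul-duality machinery and does not require any input specific to the cone of pure spinors beyond the already established Koszul property of $\hA$.
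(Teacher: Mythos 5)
Your proposal is correct and follows essentially the same route as the paper: the paper's own proof is the one-line observation that this is the standard statement that the Chevalley--Eilenberg complex of $L$ computes the Koszul resolution of a commutative Koszul algebra, citing Proposition \ref{P:Koszul} (that $\hA^{!}=U(L)$) together with \cite{PP}. You have simply unpacked that citation—identifying $CE_{n}$ with $P\otimes(\hA^{!}_{n})^{*}$ via PBW, matching the CE differential with the Koszul differential, and flagging the parity bookkeeping under the suspension—which is a reasonable way of supplying the details the paper leaves to the reference.
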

 \begin{proof}
This is a standard fact in the theory of commutative Koszul algebras. In my case, it follows from Proposition \ref{P:Koszul}. 
 For details see \cite{PP}.
 \end{proof}
\subsection{Stanley-Reisner algebra $SR$}
The estimate of the lowest $\bT$-weight in $H^i_{\fa}(\hA)$ will rely on the following result.
\begin{proposition}\label{P:injection}
Fix $[\hgamma,\hgamma' ]\subset [\hdelta,\hdelta' ]$. Let $\sfp^!:\hA^![\hgamma,\hgamma' ]\to \hA^![\hdelta,\hdelta' ]$ be the map induced by projection $\sfp:\hA[\hdelta,\hdelta' ]\to \hA[\hgamma,\hgamma' ]$. Then $\sfp^!$ is injective.
\end{proposition}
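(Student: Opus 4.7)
The approach is to deform the pair $\hA[\hdelta,\hdelta']\twoheadrightarrow \hA[\hgamma,\hgamma']$ to a pair of Stanley--Reisner algebras via a flat Gr\"obner degeneration, check injectivity of the induced map on quadratic duals at the monomial fiber by inspection, and transport the conclusion back by lower semicontinuity of rank in the resulting flat family of Koszul duals.

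First, recall that $\hA$ is G-quadratic \cite{MovStr}: the defining ideal admits a quadratic Gr\"obner basis whose leading monomials are the clutters of (\ref{E:reluniv}). The initial algebra is therefore the Stanley--Reisner algebra $SR[\hdelta,\hdelta']$ of the simplicial complex on $[\hdelta,\hdelta']$ whose minimal non-faces are exactly these clutters. The usual Rees interpolation yields a flat one-parameter family $\hA_{t}[\hdelta,\hdelta']$ of quadratic algebras with $\hA_{1}=\hA$ and $\hA_{0}=SR$, and the restriction map $\sfp$ deforms compatibly into a map of families $\sfp_{t}\colon \hA_{t}[\hdelta,\hdelta']\to \hA_{t}[\hgamma,\hgamma']$, since setting $\lambda^{\halpha}=0$ for $\halpha\notin [\hgamma,\hgamma']$ is compatible with the Gr\"obner degeneration.

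Second, both fibers are Koszul: the generic one by \cite{MovStr}, the special one because any quadratic square-free monomial algebra is Koszul. Gr\"obner flatness keeps the Hilbert series of $\hA_{t}$ constant, and the Koszul duality identity $\hA_{t}(z)\hA_{t}^{!}(-z)=1$ then shows that $\hA_{t}^{!}$ is itself flat with Hilbert series independent of $t$; in particular $\dim_{\C}\hA_{t}^{!}[\hdelta,\hdelta']_{k}$ and $\dim_{\C}\hA_{t}^{!}[\hgamma,\hgamma']_{k}$ are constant. At $t=0$ the algebra $SR^{!}[\hdelta,\hdelta']$ is generated by $\{\xi_{\halpha}\mid \halpha\in[\hdelta,\hdelta']\}$ with the transparent relations $\xi_{\halpha}^{2}=0$ and $\xi_{\halpha}\xi_{\hbeta}+\xi_{\hbeta}\xi_{\halpha}=0$ for every pair of \emph{comparable} elements of $[\hdelta,\hdelta']$; it admits an obvious monomial $\C$-basis, and the map $\sfp_{0}^{!}\colon SR^{!}[\hgamma,\hgamma']\to SR^{!}[\hdelta,\hdelta']$ sends this basis into the corresponding basis for $[\hdelta,\hdelta']$, since the defining relations for the smaller interval are a subset of those for the larger. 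Hence $\sfp_{0}^{!}$ is injective in every graded degree.

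Finally, lower semicontinuity of the rank of $\sfp_{t}^{!}$ in the flat family of $\C$-linear maps between finite-dimensional graded components of constant dimension forces the generic rank to be at least the rank at $t=0$, which is full; thus $\sfp_{t}^{!}$ is injective for all $t\neq 0$ in some Zariski neighborhood of $0$, and since $\hA_{t}\cong\hA$ for all $t\neq 0$ the same conclusion holds at $t=1$. The main obstacle is step two, preservation of Koszulness (equivalently, flatness of $\hA_{t}^{!}$) throughout the degeneration; the cleanest route avoids a general openness statement and instead uses the universal upper bound $\dim\hA_{t}^{!}_{k}\leq\dim SR^{!}_{k}$ available for any quadratic algebra with fixed presentation, which combined with the equality $\hA(z)=SR(z)$ of Hilbert series forces $\hA^{!}(z)=SR^{!}(z)$ and hence flatness of the dual family.
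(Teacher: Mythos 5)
Your argument is correct, and it takes the same degeneration as the paper but a different transfer mechanism. Both proofs begin identically: the G-quadratic structure of $\hA$ (from \cite{MovStr}) degenerates $\hA[\hdelta,\hdelta']$ to the Stanley--Reisner algebra $SR[\hdelta,\hdelta']$, both $\hA$ and $SR$ are Koszul, and the equality of Hilbert series $\hA(t)=SR(t)$ coupled with $A(t)A^!(-t)=1$ pins down $\hA^!(t)=SR^!(t)$. Where they diverge is in how the injectivity statement is moved from the monomial fiber back to $\hA$. The paper dualizes to the assertion that $\sfp^{!*}\colon\Tor^A_\bullet(\C,\C)\to\Tor^B_\bullet(\C,\C)$ is onto, produces the wrong-direction splitting $\sfp'\colon SR[\hgamma,\hgamma']\hookrightarrow SR[\hdelta,\hdelta']$ so that the associated-graded map is a retraction, and then transports this via the filtration spectral sequences on bar complexes, whose collapse at $E_1$ is forced by the Hilbert-series match. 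You instead stay on the primal side, read injectivity of $\sfp_0^!$ directly off the path-monomial basis of $SR^!$ (which is dual information to the paper's retract of $SR$), and transfer by lower semicontinuity of rank in a flat Rees family whose graded components have constant dimension — taking care, as you correctly do, to move from a Zariski neighborhood of $t=0$ to $t=1$ through the rescaling isomorphisms $\hA_t\cong\hA$ for $t\neq 0$. Your version is arguably a bit more elementary in that it avoids any homological spectral sequence argument, at the cost of spelling out the semicontinuity bookkeeping; the paper's version is more self-contained within a filtration framework and dispenses with an explicit description of $SR^!$, relying only on the retract of $SR$.

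One place where you are slightly breezy is the assertion that "the defining relations for the smaller interval are a subset of those for the larger" immediately gives injectivity of $\sfp_0^!$. In general, fewer relations in a presentation of a subalgebra of a quotient does not by itself preclude hidden relations; what actually closes the gap is the monomial nature of $SR^!$ — the admissible path words in $[\hgamma,\hgamma']$ sit as a genuine subset of the admissible path basis for $[\hdelta,\hdelta']$ (cf.\ Proposition \ref{E:pathalg}). That is exactly the content you are invoking, so the step is sound, but it is worth making the reliance on the path-algebra basis explicit rather than appealing to a count of relations.
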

\begin{proof}
The total order (\ref{E:orderaf})
 on $\sethE$ induces a total order on $[\hdelta,\hdelta']\subset \sethE$. It defines degree-lexicographic filtration $\{F^i\}$ on $\hA[\hdelta,\hdelta' ]$. It follows directly from straightening law in ${\hA[\hdelta,\hdelta']}$ that $Gr{\hA[\hdelta,\hdelta']}$ is the Stanley-Reisner algebra $SR[\hdelta,\hdelta']$ (see \cite{MillerandSturmfels} for details). It is the quotient of $P$ by the ideal generated by 
\begin{equation}\label{E:SRrelations}
\lambda^{\halpha}\lambda^{\hbeta}=0\quad \forall\halpha,\hbeta\in [\hdelta,\hdelta' ]| \halpha \nless \hbeta \text{ and } \halpha \ngtr \hbeta.
\end{equation}
 Suppose that 
 \begin{equation}\label{E:inclusion}
 [\hgamma,\hgamma' ]\subset [\hdelta,\hdelta' ].
 \end{equation}
 The homomorphism $\sfp$ induces a homomorphism
 \[\sfp:SR[\hdelta,\hdelta' ]\to SR[\hgamma,\hgamma' ].\]
Inclusion (\ref{E:inclusion}) induces a wrong direction emedding of algebras \[\sfp':SR[\hgamma,\hgamma' ]\to SR[\hdelta,\hdelta' ]\] such that $\sfp\circ \sfp'=\id$. Thus $SR[\hgamma,\hgamma' ]$ is a retract of $SR[\hdelta,\hdelta' ]$. 

Denote $\hA[\hdelta,\hdelta' ]$ by $A$ and $\hA[\hgamma,\hgamma' ]$ by $B$. We know from \cite{MovStr} that the algebras $A$ and $B$ are Koszul, that is $A^!\cong\bigoplus_{i\geq 0} \Ext_A^i(\C,\C)$ and $B^!\cong \bigoplus_{i\geq 0} \Ext_B^i(\C,\C)$. 
I will prove the statement which is dual to injectivity. It is the statement that the map $\sfp^{!*}:\Tor^A_i(\C,\C)\to \Tor^B_i(\C,\C)$ is onto. Algebras $Gr A$, $Gr B$ that  defined with respect to filtrations $\{F_A^i\},\{F_B^i\}$ coincide with $SR[\hdelta,\hdelta' ]$ and $ SR[\hgamma,\hgamma' ]$. Filtration $\{F_A^i\}$ and $\{F_B^i\}$ defines filtrations on the bar-complexes that compute 
$\Tor$-groups. These filtrations lead to spectral sequences. By Proposition 7.1 \cite{PP}, $Gr A$ and $Gr B$ are also Koszul. Straightening law in the algebra and its $Gr$ implies equality of the Hilbert series $A(t)=Gr A(t)$, $B(t)=Gr B(t)$. Thus 
\begin{equation}\label{E:compgen}
A^!(t)=Gr A^!(t)\text{ and }B^!(t)=Gr B^!(t).
\end{equation}
The first pages of the spectral sequences are equal to $\Tor^{Gr A}(\C,\C)$ and $\Tor^{Gr B}(\C,\C)$ respectively.
By Koszulness, the first pages are also equal to $Gr A^!$ and $Gr B^!$. Equalities of  the generating functions (\ref{E:compgen}) imply that spectral sequences collapse on the first page.

The map $Gr\, \sfp^{!*}:\Tor^{Gr A}(\C,\C)\to\Tor^{Gr B}(\C,\C)$ has the left inverse $\sfp'^{!*}:\Tor^{Gr B}(\C,\C)\to \Tor^{Gr A}(\C,\C)$. Thus the map $\sfp^{!*}:\Tor^{A}(\C,\C)\to\Tor^{B}(\C,\C)$ must be surjective.
\end{proof}
\begin{remark}
The proof of Proposition \ref{P:injection} goes through if I replace the pair of closed intervals by (semi)-open intervals.
\end{remark}

The group $\Aut$ acts on $ \spinor[\hdelta,\hdelta']$ and $L[\hdelta,\hdelta']$. The action on generators in the notations (\ref{E:weights}) is given by
\[\Pi^*(g)\theta_{\halpha}:=\hat{v}^{-1}_{\halpha}(t,q,z)\theta_{\halpha}\]

It is fairly easy to describe Koszul dual $SR^![\hdelta,\hdelta']$ by  using the general definition of $A^!$ from \cite{PP} p.6.

\begin{proposition}\label{E:pathalg}
Algebra $SR^![\hdelta,\hdelta']$ has a basis that consists of monomials $\theta_{\halpha_1}\dots\theta_{\halpha_n}$ such that $(\halpha_i, \halpha_{i+1})$ is a clutter in $\sethE$. The product of two monomials $\theta_{\halpha_1}\dots\theta_{\halpha_n}$ ,$\theta_{\hbeta_1}\dots\theta_{\hbeta_{n'}}$ is  concatenation of monomials  if $(\halpha_n,\hbeta_1)$ is a clutter and is zero if  $(\halpha_n,\hbeta_1)$ is not.
 The algebra $SR^![-\infty,\infty]$ is a path algebra (without  local units) of the quiver obtained from the graph (\ref{P:kxccvgw14}) by replacing each edge by a pair of oppositely oriented edges. $SR^![\hdelta,\hdelta']$ is a subalgebra generated by $\theta_{\halpha}, \halpha\in [\hdelta,\hdelta']$
\end{proposition}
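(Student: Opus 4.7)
The plan is to start from the explicit quadratic presentation of the Stanley--Reisner ring $SR[\hdelta,\hdelta']$ given in (\ref{E:SRrelations}) and compute its Koszul dual by elementary linear algebra. Inside the tensor algebra on $V=\Span{\lambda^{\halpha}}$, the space of quadratic relations $R$ defining $SR$ is spanned by all antisymmetric tensors $\lambda^{\halpha}\otimes\lambda^{\hbeta}-\lambda^{\hbeta}\otimes\lambda^{\halpha}$ (encoding commutativity) together with the symmetric tensors $\lambda^{\halpha}\otimes\lambda^{\hbeta}+\lambda^{\hbeta}\otimes\lambda^{\halpha}$ for every incomparable pair. A direct orthogonality computation in $V^{*}\otimes V^{*}$ shows that $R^{\perp}$ is spanned by $\theta_{\halpha}\otimes\theta_{\halpha}$ for all $\halpha$ and by $\theta_{\halpha}\otimes\theta_{\hbeta}+\theta_{\hbeta}\otimes\theta_{\halpha}$ for comparable distinct pairs. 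Hence $SR^{!}[\hdelta,\hdelta']$ is the associative algebra on generators $\theta_{\halpha}$ subject to $\theta_{\halpha}^{2}=0$ and $\theta_{\halpha}\theta_{\hbeta}+\theta_{\hbeta}\theta_{\halpha}=0$ for every comparable distinct pair.

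Next I would extract a basis via a Bergman Diamond Lemma argument. Fixing a linear extension of the partial order on $\sethE$, install the reduction rules $\theta_{\halpha}^{2}\rightsquigarrow 0$ and $\theta_{\hbeta}\theta_{\halpha}\rightsquigarrow -\theta_{\halpha}\theta_{\hbeta}$ whenever $\halpha<\hbeta$ in the partial order. The only overlap ambiguities occur at triples $\theta_{\hgamma}\theta_{\hbeta}\theta_{\halpha}$ in which both $(\halpha,\hbeta)$ and $(\hbeta,\hgamma)$ are comparable with $\halpha<\hbeta<\hgamma$; these resolve uniquely to $-\theta_{\halpha}\theta_{\hbeta}\theta_{\hgamma}$ regardless of the reduction order. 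The Diamond Lemma then yields that the irreducible (normal-form) monomials form a basis: a monomial $\theta_{\halpha_{1}}\cdots\theta_{\halpha_{n}}$ survives reduction precisely when no consecutive pair $(\halpha_{i},\halpha_{i+1})$ is comparable with $\halpha_{i}>\halpha_{i+1}$ and no two consecutive indices coincide, which is the meaning of ``$(\halpha_{i},\halpha_{i+1})$ is a clutter in $\sethE$.''

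The product rule follows directly from this basis description: concatenating two normal-form monomials $\theta_{\halpha_{1}}\cdots\theta_{\halpha_{n}}$ and $\theta_{\hbeta_{1}}\cdots\theta_{\hbeta_{n'}}$ yields another normal-form monomial exactly when $(\halpha_{n},\hbeta_{1})$ is a clutter; otherwise either $\halpha_{n}=\hbeta_{1}$, in which case $\theta^{2}=0$ annihilates the product, or $(\halpha_{n},\hbeta_{1})$ is comparable with $\halpha_{n}>\hbeta_{1}$, and reduction propagates the transposition into the interior of the two factors. To identify $SR^{!}[-\infty,\infty]$ with the path algebra (without local units) of the doubled Hasse quiver obtained from (\ref{P:kxccvgw13}), assign to each normal-form monomial $\theta_{\halpha_{1}}\cdots\theta_{\halpha_{n}}$ the sequence of arrows $\halpha_{1}\to\halpha_{2}\to\cdots\to\halpha_{n}$; the clutter condition coincides with the condition that consecutive vertices be connected by an oriented arrow of the doubled quiver, and concatenation-or-vanish matches quiver path composition. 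The subalgebra statement for $[\hdelta,\hdelta']$ follows from Proposition~\ref{P:injection} combined with the fact that both algebras share the same generators.

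The main obstacle is the verification of the Diamond Lemma hypotheses: one must confirm that all overlap ambiguities among the mixed ``$\theta^{2}=0$'' and ``pairwise anticommutation for comparable pairs'' relations resolve coherently, which requires examining the triple overlap in each of its mixed forms and tracking signs carefully. A secondary subtlety is matching the combinatorial clutter condition with the arrow structure of the doubled Hasse quiver so that the identification with the path algebra is a literal bijection on bases rather than just an abstract isomorphism.
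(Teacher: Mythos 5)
Your derivation of $R^{\perp}$ is correct: from (\ref{E:SRrelations}) one gets $\theta_{\halpha}^{2}=0$ and $\theta_{\halpha}\theta_{\hbeta}+\theta_{\hbeta}\theta_{\halpha}=0$ only for comparable distinct pairs, with no relation on incomparable pairs, and your Diamond Lemma reduction then yields a normal-form basis consisting of words with no consecutive repetition and no comparable strictly decreasing consecutive pair. The error is in the final identification step: you assert that this normal-form condition ``is the meaning of `$(\halpha_{i},\halpha_{i+1})$ is a clutter in $\sethE$','' but by Proposition \ref{E:degreenull} a clutter is an \emph{incomparable} pair, whereas your normal-form basis also contains words $\theta_{\halpha}\theta_{\hbeta}$ with $\halpha<\hbeta$ \emph{comparable}. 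These are not clutter monomials and are not walks in the graph (\ref{P:kxccvgw14}). Concretely, for a two-element chain such as $[(0)^{0},(12)^{0}]$ (which satisfies the purity hypothesis since its capacity is $0$) the Stanley--Reisner ring is a polynomial ring on two variables, so $SR^{!}=\Lambda[\theta_{1},\theta_{2}]$ has dimension $4$ with $\theta_{1}\theta_{2}\neq 0$; the clutter path algebra on two vertices with no edges has dimension only $3$. So the basis you correctly computed is strictly larger than the one the proposition asserts. Your own product analysis exposes the same mismatch: when $\halpha_{n}>\hbeta_{1}$ is a comparable junction the product equals $-1$ times a reduced word, not zero, so ``concatenation-or-vanish'' is not what your relations give. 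You should not paper over these discrepancies by silently redefining ``clutter''; you should either find the error in your computation of $R^{\perp}$ (I do not see one), or flag that the commutative Stanley--Reisner ring's quadratic dual is not the path algebra of (\ref{P:kxccvgw14}).

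There is also a smaller error worth noting: you build your quiver from the Hasse diagram (\ref{P:kxccvgw13}), whereas the proposition refers to (\ref{P:kxccvgw14}). These graphs differ --- (\ref{P:kxccvgw13}) records cover relations, (\ref{P:kxccvgw14}) records incomparable pairs --- and walks in a doubled Hasse quiver give yet a third condition (consecutive cover pairs) matching neither your normal form nor the clutter monomials. For comparison: the quadratic dual that \emph{is} literally the clutter path algebra is that of the \emph{noncommutative} monomial algebra $T(V)/(\lambda^{\halpha}\otimes\lambda^{\hbeta}:\halpha\ \mathrm{incomparable}\ \hbeta)$, but that algebra is not the associated graded of the commutative $\hA[\hdelta,\hdelta']$ used in the proof of Proposition \ref{P:injection}, so one cannot just substitute it.
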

\begin{figure}[h]
\centering
\begin{tikzpicture}[ scale=0.8]
	\begin{pgfonlayer}{nodelayer}
		\node [circle, fill=black, draw, style=none,label=180:\tiny{$(25)^{r-1}$}] (0) at (-7.5, -3.5) {.};
		\node [circle, fill=black, draw, style=none,label=0:\tiny{$(35)^{r-1}$}] (1) at (-4.5, -3) {.};
		\node [circle, fill=black, draw, style=none,label=180:\tiny{$(34)^{r-1}$}] (2) at (-7.5, -4.5) {.};
		\node [circle, fill=black, draw, style=none,label=180:\tiny{$(5)^{r-1}$}] (3) at (-6, -3) {.};
		\node [circle, fill=black, draw, style=none,label=180:\tiny{$(4)^{r-1}$}] (4) at (-7.5, -1.5) {.};
		\node [circle, fill=black, draw, style=none,label=180:\tiny{$(45)^{r-1}$}] (5) at (-7.5, -2.5) {.};
		\node [circle, fill=black, draw, style=none,label=0:\tiny{$(3)^{r-1}$}] (6) at (-4.5, -1) {.};
		\node [circle, fill=black, draw, style=none,label=180:\tiny{$(12)^{r}$}] (7) at (-7.5, 0.5) {.};
		\node [circle, fill=black, draw, style=none,label=180:\tiny{$(2)^{r-1}$}] (8) at (-7.5, -0.5) {.};
		\node [circle, fill=black, draw, style=none,label=180:\tiny{$(1)^{r-1}$}] (9) at (-6, 1) {.};
		\node [circle, fill=black, draw, style=none,label=0:\tiny{$(13)^{r}$}] (10) at (-4.5, 1) {.};
		\node [circle, fill=black, draw, style=none,label=180:\tiny{$(0)^{r}$}] (11) at (-6, -1) {.};
		\node [circle, fill=black, draw, style=none,label=180:\tiny{$(23)^{r}$}] (12) at (-7.5, 2.5) {.};
		\node [circle, fill=black, draw, style=none,label=180:\tiny{$(14)^{r}$}] (13) at (-7.5, 1.5) {.};
		\node [circle, fill=black, draw, style=none,label=0:\tiny{$(24)^{r}$}] (14) at (-4.5, 3) {.};
		\node [circle, fill=black, draw, style=none,label=180:\tiny{$(15)^{r}$}] (15) at (-6, 3) {.};
		\node [circle, fill=black, draw, style=none,label=180:\tiny{$(25)^{r}$}] (16) at (-7.5, 4.5) {.};
		\node [circle, fill=black, draw, style=none,label=180:\tiny{$(34)^{r}$}] (17) at (-7.5, 3.5) {.};
		\node [circle, fill=black, draw, style=none,label=0:\tiny{$(35)^{r}$}] (18) at (-4.5, 5) {.};
		\node [circle, fill=black, draw, style=none,label=180:\tiny{$(5)^{r}$}] (19) at (-6, 5) {.};
		\node [ style=none] (20) at (-6, -5) {\tiny $\cdots$};
		\node [ style=none] (21) at (-7.5, -5.5) {\tiny $\cdots$};
		\node [style=none] (22) at (-6, 6) {\tiny $\cdots$};
		\node [style=none] (23) at (-7.5, 6) {\tiny $\cdots$};
		\node [style=none] (24) at (-4.5, -5) {\tiny $\cdots$};
	\end{pgfonlayer}
	\begin{pgfonlayer}{edgelayer}
		\draw [thick] (19.center) to (23.center);
		\draw [thick] (15.center) to (14.center);
		\draw [thick] (16.center) to (17.center);
		\draw [thick] (18.center) to (19.center);
		\draw [thick] (13.center) to (12.center);
		\draw [thick] (10.center) to (9.center);
		\draw [thick] (7.center) to (8.center);
		\draw [thick] (11.center) to (6.center);
		\draw [thick] (5.center) to (4.center);
		\draw [thick] (1.center) to (3.center);
		\draw [thick] (0.center) to (2.center);
		\draw [thick] (15.center) to (12.center);
		\draw [thick] (13.center) to (9.center);
		\draw [thick] (7.center) to (9.center);
		\draw [thick] (11.center) to (8.center);
		\draw [thick] (11.center) to (4.center);
		\draw [thick] (5.center) to (3.center);
		\draw [thick] (0.center) to (3.center);
		\draw [thick] (15.center) to (17.center);
		\draw [thick] (16.center) to (19.center);
		\draw [thick] (22.center) to (19.center);
		\draw [thick] (15.center) to (19.center);
		\draw [thick] (15.center) to (9.center);
		\draw [thick] (9.center) to (11.center);
		\draw [thick] (11.center) to (3.center);
		\draw [thick] (20.center) to (2.center);
		\draw [thick] (3.center) to (20.center);
		\draw [thick] (20.center) to (24.center);
		\draw [thick] (21.center) to (20.center);
	\end{pgfonlayer}
\end{tikzpicture}

\begin{equation}
\label{P:kxccvgw14}
\end{equation}
\end{figure}

\begin{proof}
Follows directly form (\ref{E:SRrelations}), definition of $A^!$ and Koszul property of $SR^![\hdelta,\hdelta']$. I leave derivation of  the  graph \ref{P:kxccvgw14} form \ref{P:kxccvgw13} to the reader.
\end{proof}

 Any directed path on the diagram \ref{P:kxccvgw14} is characterized by a sequence of vertices $\setP:=(\hdelta_1,\dots,\hdelta_n)$ it traverses.  The weight $w=(a,u,r)$ of the corresponding monomial $\theta_{\hdelta_1}\cdots,\theta_{\hdelta_{n}}, n=-a$ in $SR^![\hdelta,\hdelta']$ is  \[\left(-n,\sum_{\hdelta\in \setP}-u(\hdelta),\sum_{\hdelta\in \setP}-r(\hdelta)\right).\] 
 \begin{proposition}\label{P:pathestimates}
 Dimension of $\Aut$-weight space $SR^![\hdelta,\hdelta']^w,w=(a,u,r)$ is bounded by a constant $C_a$ that doesn't depend on  $u$ and $r$.
 
In addition to that,  the number   of basis monomials in $SR^![\hdelta,\hdelta']$ of degree $a$ that contain  $\theta_{\halpha}\in \spinor[\hdelta,(1)^{-1}]$ and   $\theta_{\halpha}\in \spinor[(0)^0,\hdelta']$ is bounded by the constant $C_a$.

All the constants do not depend on $[\hdelta,\hdelta']$.
 \end{proposition}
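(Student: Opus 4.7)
By Proposition \ref{E:pathalg}, a basis monomial of $SR^![\hdelta,\hdelta']$ of degree $a=-n$ corresponds to a directed path $\setP = (\hdelta_1,\ldots,\hdelta_n)$ of length $n$ in the quiver of that proposition, all of whose vertices lie in $[\hdelta,\hdelta']$. My plan is to exploit the free action of the shift automorphism $\shift$ (\ref{E:shift}) on $\sethE$: since $\sethE/\shift$ is a finite graph, the set of length-$n$ directed paths in the quiver on all of $\sethE$, taken modulo $\shift$, is finite, and I denote its cardinality by $C'_n$. Every length-$n$ path in $\sethE$ can then be written uniquely as $\shift^k\setP_0$ for some representative $\setP_0$ of a $\shift$-orbit and some $k\in\Z$. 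Since the paths allowed in $SR^![\hdelta,\hdelta']$ form a subset of all paths in $\sethE$, any count performed in the full quiver will give a bound valid for arbitrary $[\hdelta,\hdelta']$.

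For the first bound I would use that $u(\shift\halpha)=u(\halpha)+1$ and that $\shift$ commutes with the $\Spin(10)$-action (manifest from (\ref{E:weights})). Consequently, for a length-$n$ path the $\bT$-weight shifts by $-n$ under each application of $\shift$, while the $\widetilde{\bT}^5$-weight $r$ is $\shift$-invariant. Hence within each $\shift$-orbit at most one representative lies in a prescribed $\Aut$-weight space of weight $(-n,u,r)$, which yields $\dim SR^![\hdelta,\hdelta']^w \leq C'_n$, independent of $(u,r)$ and of $[\hdelta,\hdelta']$.

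For the second bound the input is that every edge of the quiver changes the $u$-coordinate by a bounded amount: inspection of (\ref{P:kxccvgw14}) provides an absolute constant $K$ with $|u(\halpha)-u(\hbeta)|\leq K$ for every edge $\halpha\to\hbeta$, and in fact $K=1$ suffices. Hence a length-$n$ path has $u$-amplitude $\max_i u(\hdelta_i)-\min_i u(\hdelta_i)\leq (n-1)K$. Any vertex in $[\hdelta,(1)^{-1}]$ has $u\leq -1$ and any vertex in $[(0)^0,\hdelta']$ has $u\geq 0$, so a path meeting both containment conditions must satisfy $\min_i u(\hdelta_i)\leq -1$ and $\max_i u(\hdelta_i)\geq 0$. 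Fixing an orbit representative $\setP_0$, the set of $k\in\Z$ for which $\shift^k\setP_0$ satisfies both constraints is therefore an interval of length at most $(n-1)K+1$. Summing over the $C'_n$ orbits gives a bound of $C'_n\bigl((n-1)K+1\bigr)$ on the number of such basis monomials, again uniform in $[\hdelta,\hdelta']$. Taking $C_a$ to be the larger of the two bounds completes both statements; the only real technical step is verifying the uniform edge bound $K$, which reduces to a finite inspection of (\ref{P:kxccvgw14}).
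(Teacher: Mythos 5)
Your proof is correct and rests on the same two structural facts the paper uses—the shift automorphism $\shift$ makes $\sethE$ periodic, and each edge of the quiver (\ref{P:kxccvgw14}) changes the $u$-coordinate by at most $1$—but it packages them differently. The paper works directly: given the weight $w=(a,u,r)$, it shows the range $[\lu(\setP),\uu(\setP)]$ of $u$-values visited by a path of length $|a|$ is pinned to an interval whose length and location are controlled by $|a|$ and $u$, and then appeals to finiteness of the corresponding piece of $\sethE$. You instead quotient by the $\shift$-action up front, so the path count $C'_n$ on $\sethE/\shift$ is manifestly finite and $[\hdelta,\hdelta']$-independent, and the bound on each weight space falls out of the observation that within a $\shift$-orbit the $\bT$-weight is an injective affine function of $k$ (shifting by $-n$ per step) while the $\widetilde{\bT}^5$-weight is $\shift$-invariant. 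For the first statement your argument is actually a bit sharper and cleaner than the paper's, since the per-orbit contribution to a given weight space is exactly one path (when $n\neq 0$), with no inequalities to unwind. For the second statement the two arguments essentially coincide: both reduce to the fact that a bridging path must straddle $u\approx 0$ and hence live in a bounded $u$-window, which leaves at most $O(n)$ admissible shifts per orbit. One small inaccuracy worth fixing: vertices of $[(0)^0,\hdelta']$ need not all satisfy $u\geq 0$—e.g., elements like $(3)^{-1}$, $(2)^{-1}$ sitting at small positive rank have $u=-1$—so the correct universal bound is $u\geq -1$; this shifts your interval of admissible $k$'s by a constant but does not affect the conclusion.
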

 \begin{proof}
 By Proposition \ref{E:pathalg} $SR^![\hdelta,\hdelta']^w$ has a basis labelled by paths, whose vertices belong to $[\hdelta,\hdelta']$. Let us fix such a path $\setP$.   The choice of $w$ determines the length $|a|$ (by construction $a$ is negative) of  $\setP$. 
 Denote $\lu(\setP):=\min_{\setP} u(\hdelta)$,$\uu(\setP):=\max_{\setP} u(\hdelta)$.
  it follows from the structure of the graph (\ref{P:kxccvgw14}) that $|u(\hdelta_i)-u(\hdelta_{i+1})|\leq 1$. From this I conclude that $\uu(\setP)-u\leq  |a|,$ and $ u-\lu(\setP)\leq |a|$. Thus \[u-|a| \leq \lu(\setP) \leq \uu(\setP)\leq u+|a|.\]
There is a finite number  $C_{|a|}$ of sequences $\setP\subset \sethE$ that with $|a|$ elements that satisfy these conditions.

To prove the second statement I notice that in this case $\lu(\setP)\leq 0\leq \uu(\setP)$. Thus  $-|a|<\lu(\setP)\leq 0\leq \uu(\setP)<|a|$. The rest of the proof repeats the previous arguments.
\end{proof}
\begin{remark}\label{R:isorepresentations}
It follows from the proof of Proposition \ref{P:injection} that there is an isomorphism  of $\Aut$ representations $\hA^![\hdelta,\hdelta']$ and  $SR^![\hdelta,\hdelta']$. There an an isomorphism $\Aut$ representations in $L[\hdelta,\hdelta']$ and  the $LSR^![\hdelta,\hdelta']$.
\end{remark}
\begin{proposition}\label{E:random}
 Let $L^u$ be $\bT$-weight subspace of $L[\hdelta,\hdelta']. $There is a direct sum decomposition $L=L^{\geq 0}+L^{< 0}=\bigoplus_{u\geq 0} L^u+\bigoplus_{u< 0}L^u$.
 By Proposition \ref{P:injection}  the Lie subalgebra in $L=L[\hdelta,\hdelta']$ that is generated by $\spinor[\hdelta,(1)^{-1}]$ is isomorphic to $L[\hdelta,(1)^{-1}]$. 
 By construction $L[\hdelta,(1)^{-1}]\subset L^{> 0}, L[(0)^0,\hdelta]\subset L^{\leq 0}$. 
 Let $L=\bigoplus_{a\leq 0}L_a$ be $\C^{\times}$-grading. 

\begin{enumerate}
\item $\dim\left(L_a^{> 0}/L[\hdelta,(1)^{-1}]_a\right)\leq C_a$  $\uu\left(L_a^{> 0}/L[\hdelta,(1)^{-1}]_a\right)\leq C'_a $for some  $C_a,C'_a\geq 0$. 
\item Let $w=(a,u,r)$ be an $\Aut$-weight.
$\dim_{\C}L^{w}\leq C''_a$ .
\item $\dim_{\C}\left(L_a^{\leq 0}/L[(0)^0,\hdelta]_a\right)\leq D_a $ and  $|\uu\left(L_a^{\leq 0}/L[(0)^0,\hdelta]_a\right)|\leq D'_a$
for some $D_a,D'_a\geq 0$
\end{enumerate}
All the constants do not depend on $[\hdelta,\hdelta']$.
\end{proposition}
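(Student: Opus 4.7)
The plan is to reduce all three statements to the combinatorial estimates for $SR^![\hdelta,\hdelta']$ established in Proposition \ref{P:pathestimates}. By Remark \ref{R:isorepresentations} there is an isomorphism of $\Aut$-representations $L[\hdelta,\hdelta']\cong LSR^![\hdelta,\hdelta']$, and the right-hand side embeds into $SR^![\hdelta,\hdelta']$ as a graded $\Aut$-subrepresentation, namely the space of primitive elements of the cocommutative Hopf algebra $SR^!=U(LSR^!)$. Consequently every $\Aut$-weight space of $L[\hdelta,\hdelta']$ injects into the corresponding weight space of $SR^![\hdelta,\hdelta']$, and any bound on the latter transfers at once to the former.

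Claim 2 is then immediate: the first assertion of Proposition \ref{P:pathestimates} gives $\dim SR^![\hdelta,\hdelta']^{w}\leq C_a$ for $w=(a,u,r)$, independently of $u$, $r$, and the interval, so one may take $C''_a:=C_a$.

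For the dimensional bound in claim 1, combine Proposition \ref{P:injection} (whose proof extends to sub-intervals) with Remark \ref{R:isorepresentations} to identify $L[\hdelta,(1)^{-1}]$ with the Lie subalgebra of $L[\hdelta,\hdelta']$ generated by $\spinor[\hdelta,(1)^{-1}]$. Under the embedding $L\hookrightarrow SR^!$ this subalgebra lies inside the span of those path monomials of Proposition \ref{E:pathalg} whose vertices all belong to $[\hdelta,(1)^{-1}]$. A path whose vertices all lie in $[(0)^0,\hdelta']$ has every $u(\hdelta_i)\geq 0$, and so its associated monomial has $\bT$-weight $\sum_i-u(\hdelta_i)\leq 0$, contributing only to $L^{\leq 0}$. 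Hence an element of $L_a^{>0}$ not accounted for by $L[\hdelta,(1)^{-1}]_a$ is a combination of \emph{mixed} path monomials containing at least one vertex from each of $[\hdelta,(1)^{-1}]$ and $[(0)^0,\hdelta']$; the second assertion of Proposition \ref{P:pathestimates} bounds the number of such monomials of degree $a$ by a constant depending only on $a$, furnishing the required $C_a$. Claim 3 follows symmetrically with the roles of the two sub-intervals interchanged: paths entirely inside $[\hdelta,(1)^{-1}]$ have $\bT$-weight $>0$, so $L_a^{\leq 0}/L[(0)^0,\hdelta']_a$ is also spanned by classes of mixed monomials, and the same counting bound applies.

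The main obstacle is the weight estimate $\uu\!\left(L_a^{>0}/L[\hdelta,(1)^{-1}]_a\right)\leq C'_a$, together with its analog for claim 3. For any path $\setP=(\hdelta_1,\dots,\hdelta_{|a|})$ appearing in Proposition \ref{E:pathalg}, consecutive vertices are joined by a cover relation of $\sethE$, and inspection of the Hasse diagram shows $|u(\hdelta_i)-u(\hdelta_{i+1})|\leq 1$. For a mixed path some $\hdelta_j$ satisfies $u(\hdelta_j)\geq 0$ and some $\hdelta_{j'}$ satisfies $u(\hdelta_{j'})\leq -1$, forcing $-|a|\leq u(\hdelta_i)\leq |a|$ for every $i$; the associated $\bT$-weight $\sum_i-u(\hdelta_i)$ is therefore bounded in absolute value by $|a|^{2}$, and one may take $C'_a=D'_a:=a^2$.
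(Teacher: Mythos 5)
Your argument is correct and follows the same route the paper takes; the paper's proof of Proposition \ref{E:random} is just the two citations of Remark \ref{R:isorepresentations} and Proposition \ref{P:pathestimates}, and your write-up makes those reductions explicit. One small terminological slip: consecutive vertices $\hdelta_i,\hdelta_{i+1}$ in a basis monomial of $SR^{!}$ (Proposition \ref{E:pathalg}) form a clutter of $\sethE$ (an incomparable pair), not a cover relation; the bound $|u(\hdelta_i)-u(\hdelta_{i+1})|\leq 1$ is nonetheless exactly what the paper establishes for clutters in the proof of Proposition \ref{P:pathestimates}, so your estimates go through unchanged.
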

\begin{proof}
Follows from Remark \ref{R:isorepresentations} and Proposition \ref{P:pathestimates}.
\end{proof}
\begin{proposition}\label{P:boundextweight}
\[\dim_{\C} \Ext^l_{\hA[\hdelta,\delta']}(\C,\C)^{w}\leq C_{w,l}\] where $C_{w,l}$ doesn't depend on $\hdelta,\delta'$.
\end{proposition}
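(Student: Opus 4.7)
The plan is to identify the $\Ext$-algebra with the Koszul dual and then transport the problem to the Stanley-Reisner side, where the weight-space bounds of Proposition \ref{P:pathestimates} apply uniformly in $[\hdelta,\hdelta']$.

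First, by Proposition \ref{P:Koszul}, there is an isomorphism of graded algebras
\[
\Ext^{\bullet}_{\hA[\hdelta,\hdelta']}(\C,\C) \;\cong\; \hA^![\hdelta,\hdelta'],
\]
and this isomorphism is $\Aut$-equivariant because the quadratic relations defining $\hA^!$ are $\Aut$-stable (the generators $\theta_{\halpha}$ carry the dual $\Aut$-weights to $\lambda^{\halpha}$, and the cohomological degree $l$ corresponds to the $\C^{\times}$-degree $-l$ under the grading convention of (\ref{E:CEdef})). In particular, the $\Aut$-weight $w=(a,u,r)$ component of $\Ext^{l}$ is matched with the $w$-weight space of $\hA^![\hdelta,\hdelta']_{-l}$, so it suffices to bound $\dim_{\C}\hA^![\hdelta,\hdelta']^{w}$ by a constant depending only on $w$ (equivalently on $a=-l$, $u$, $r$).

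Next, I will invoke Remark \ref{R:isorepresentations}, which gives an isomorphism of $\Aut$-representations
\[
\hA^![\hdelta,\hdelta'] \;\cong\; SR^![\hdelta,\hdelta'].
\]
This remark is a direct consequence of the filtration argument in the proof of Proposition \ref{P:injection}: the degree-lexicographic filtration on $\hA$ has associated graded the Stanley-Reisner algebra $SR[\hdelta,\hdelta']$, both algebras are Koszul by \cite{MovStr}, and the equality of Hilbert series $A(t)=SR(t)$ (which preserves all $\Aut$-weights because the filtration is $\Aut$-stable) forces the spectral sequence computing $\Ext^{\bullet}_A(\C,\C)$ to degenerate, giving a graded $\Aut$-equivariant isomorphism of the two Koszul duals.

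Finally, Proposition \ref{P:pathestimates} provides the required uniform bound: a basis for $SR^![\hdelta,\hdelta']$ is indexed by directed paths in the quiver (\ref{P:kxccvgw14}) with vertices in $[\hdelta,\hdelta']$, and the proof there shows that for a fixed $\C^{\times}$-degree $a$ the number of such paths of any specified $\bT$- and $\widetilde{\bT}^5$-weight is bounded by a constant $C_{w}$ that depends only on $w=(a,u,r)$ and not on the ambient interval $[\hdelta,\hdelta']$. Composing with the two isomorphisms above yields
\[
\dim_{\C}\Ext^{l}_{\hA[\hdelta,\hdelta']}(\C,\C)^{w} \;=\; \dim_{\C}\hA^![\hdelta,\hdelta']_{-l}^{w} \;=\; \dim_{\C}SR^![\hdelta,\hdelta']_{-l}^{w} \;\leq\; C_{w,l},
\]
with $C_{w,l}$ independent of $\hdelta,\hdelta'$. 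The only non-formal ingredient is Proposition \ref{P:pathestimates}, which is where the combinatorics of the graph (\ref{P:kxccvgw14})---in particular that the $\bT$-weight changes by at most one along each edge---does the work; everything else in this proof is a transport-of-structure argument, so I do not expect any serious obstacle.
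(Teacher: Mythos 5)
Your proof is correct and follows the same route as the paper's own proof: Koszul duality identifies the $\Ext$-algebra with $\hA^![\hdelta,\hdelta']$, Remark \ref{R:isorepresentations} transports this to $SR^![\hdelta,\hdelta']$ as an $\Aut$-representation, and Proposition \ref{P:pathestimates} supplies the uniform weight-space bound. You simply spell out the $\Aut$-equivariance considerations that the paper leaves implicit, but there is no substantive difference.
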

By Koszul property of $\hA[\hdelta,\delta']$ (\cite{MovStr}) $\bigoplus_l \Ext^l_{\hA[\hdelta,\delta']}(\C,\C)\cong \hA^![\hdelta,\delta']$. The theorem follows from Remark \ref{R:isorepresentations} and Proposition \ref{P:pathestimates}.
\subsection{Estimates of weights}
In this section I will establish a universal bounds on renormalized $\bT$-weights of $H_{\fa}^{i}[\hdelta,\hdelta']$ . 
To do this, I would like interpret the complex $CE_{\bullet}$ (\ref{E:CEdef}) as a non minimal free resolution of the $P$-module ${\hA}$. 
 It will be the main technical tool which enable me to find the lower bound on weights of $\bT$-action in $H_{\fa}^{i}[\hdelta,\hdelta']$. 
 
I will  start with discussion of dimensions of $\C^{\times}\times\bT$-weight spaces of  local cohomology of the free commutative  algebra 
 $\C[\setX]$, $\setX\subset \sethE$. Fix an ideal $\fa=\fri[[-\infty,(1)^{-1}]\cap \setX]$. Obviously $H_{\fa}^i(\C[X])\neq \{0\}, i\neq 
 |[-\infty,(1)^{-1}]\cap \setX|$. The cohomology can be computed by using K\"{u}neth formula and an isomorphism (\ref{E:loccohaffine}). I conclude that the character of $\C^{\times}\times\bT$ action is equal to
 \[\theta_{\fa}^{bare}[\setX]=\prod_{\ralpha^i\in [-\infty,(1)^{-1}]\cap \setX} \frac{t^{-1}q^{-i}}{(1-t^{-1}q^{i})}\prod_{\ralpha^i\in [(0)^{0},\infty]\cap \setX} \frac{1}{(1-tq^{i})} \]
 The renormalized character $\theta_{\fa}[\setX](t,q):=\theta_{\fa}^{bare}[\setX](t,q) \prod_{\ralpha^i\in [-\infty,(1)^{-1}]\cap \setX} tq^{i}$ has a limit when $\setX=[\hdelta,\hdelta']$ and  $\hdelta\to-\infty, \hdelta'\to\infty$:
 \[\theta_{\fa}(t,q)=\prod_{i=1}^{\infty} \frac{1}{(1-t^{-1}q^{i})^{16}}\prod_{i=0}^{\infty} \frac{1}{(1-tq^{i})^{16}}\in \Z((t))[[q]]\]

Fix a notation \[Chs_{\bullet}^{\fc}:=CE_{\bullet}[\hdelta,\hdelta'] \underset{P}{\otimes} \moduleT_{\fc}[\hdelta,\hdelta'], \fc=\fa,\fb,\fm.\]
\begin{proposition}\label{P:importantprop}
Let $b_{\fc}:=s(\fc)-s'(\fc)$ ( see Proposition \ref{P:identification} for notations). 
\begin{enumerate}
\item 
\[\begin{split}
&H^{i}_{\fc}({\hA})=H_{i'}(Chs^{\fc}), i+i'=s(\fc)\\
&H_{b_{\fc}+i}(Chs^{\fc})= 0, i\neq 0 ,\dots,3, \fc=\fa,\fb, i\neq 0 \text{ for }\fc=\ff,\ff',\fm.
\end{split}\]
\item \label{I:lowerandupperbounds} Define 
\[r(i)=\begin{cases}
0&i\leq 0\\
1&i=1,2\\
2&i\geq 3
\end{cases}\]
In the notations of Definition \ref{D:upperlower}
$\lu\left(H^{3-\rho(\hdelta)-i}_{\fa}({\hA})\right)\geq -r(i), $ where I consider  the renormalized $\bT$-action (see Remark \ref{R:degreeshift1}).
Likewise, $\uu\left( H^{\rho(\hdelta')+1-i}_{\fb}({\hA})\right) \leq-r(i).$

 More generally, if only $\hdelta\in \rM_1^+\sqcup \rM_3^+$ and $\hdelta'$ is arbitrary, then there is $i, \hdelta,\hdelta'$-independent constant $C$ such that   $\lu\left(H^{i}_{\fa}({\hA})\right)\geq C$.
Similar in case    only $\hdelta'\in \rM_1^-\sqcup \rM_3^-, \exists C'$ such that  $\uu\left(H^{\rho(\hdelta)-2+i}_{\fb}({\hA})\right)\leq C'$.
\item \label{I:Pimportantprop} 
Let $H^{i}_{\fc}[\hdelta,\hdelta']^w$ be the weight subspace   for the renormalized action of $\Aut$. Then
\begin{equation}\label{E:dimbound}
\exists C_w\geq 0\text{ such that }\dim H^i_{\fc}[\hdelta,\hbeta]^{w}\leq C_w,  \fc=\fa,\fb  \text{ for any } [\hdelta,\hbeta].
\end{equation}

\end{enumerate}
\end{proposition}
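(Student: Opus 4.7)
Since $\hA$ is a commutative Koszul algebra (\cite{MovStr}), the Chevalley--Eilenberg complex $CE_\bullet$ attached to the Koszul-dual Lie algebra $L$ of (\ref{E:lalgdef}) is a (non-minimal) free $P$-resolution of $\hA$. Hence
\[
\Tor^P_i(\hA,\moduleT_\fc)\cong H_i\!\left(CE_\bullet\underset{P}{\otimes}\moduleT_\fc\right)=H_i(Chs^\fc_\bullet),
\]
and comparison with the identification $H^i_\fc(\hA)\cong\Tor^P_{s(\fc)-i}(\hA,\moduleT_\fc)$ of Proposition \ref{P:identification} gives $H^i_\fc(\hA)=H_{s(\fc)-i}(Chs^\fc)$, which is the relation $i+i'=s(\fc)$. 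The vanishing $H_{b_\fc+k}(Chs^\fc)=0$ outside the stated range is, under the re-indexing $i=s'(\fc)-k$, just the translation of Proposition \ref{P:vanishingmain} for $\fc=\fa,\fb$ and of Corollary \ref{C:Ionedegree} for $\fc=\ff,\ff',\fm$.

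\textbf{Plan for item (2).} I would work with the renormalized $\Aut$-action of Remark \ref{R:degreeshift1} and reduce the analysis to the four-term complex $\Frg^\fa_\bullet$ of Proposition \ref{P:complexreduction}, which also computes $H^i_\fa$. By Proposition \ref{P:fockasloc} its chain spaces are built from a single module $H^{-\rho(\hdelta)}_\ff(\hA)$. The key point is that the renormalization factor $\chi^{-1}[\hdelta,(1)^{-1}]$ is designed precisely to cancel the $\hdelta$-dependent bulk of the $\bT$-weight of the generator $\varpi_\fa$ of $\moduleS_\fa$; the residual contribution coming from $\rCore\setminus\rM$ and from the quotient $L^{>0}/L[\hdelta,(1)^{-1}]$ is uniformly bounded in $[\hdelta,\hdelta']$ by Proposition \ref{E:random}(1) combined with the injection $L[\hdelta,(1)^{-1}]\hookrightarrow L$ provided by Proposition \ref{P:injection}. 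The Koszul differential of $\Frg^\fa_\bullet$ is built from the three elements of $\rA_2=\{(3)^{-1},(2)^{-1},(1)^{-1}\}$, each of $\bT$-weight $-1$, so its homological action shifts renormalized $\bT$-weights by a bounded amount per Koszul step, producing the explicit values $r(i)\in\{0,1,2\}$. The upper-bound statement for $H^\cdot_\fb$ follows from the $\shift^{-1}\reflection$-symmetry (\ref{E:sigmaTcoh}), or equivalently from the $*$-duality pairing of Proposition \ref{P:pairingdegree} which interchanges $\lu$ and $\uu$ up to a universal shift. The general-case bound (when only one of $\hdelta,\hdelta'$ satisfies the $\rM$-condition) is obtained by the same argument, losing only the sharp form of $r(i)$ but keeping the existence of a universal constant $C$.

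\textbf{Plan for item (3).} By Proposition \ref{E:random}(2) every $\Aut$-weight space of $L[\hdelta,\hdelta']$ has dimension bounded by a constant depending only on the $\C^\times$-weight $a$, and independent of $[\hdelta,\hdelta']$. This bound transfers to each degree of $\Sym^\bullet L^*$, hence to the free $P$-module generators of $CE_n$, and after tensoring with $\moduleT_\fc$ and passing to cohomology yields $\dim H^i_\fc[\hdelta,\hdelta']^w\leq C_w$.

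\textbf{Main obstacle.} The hard part is item (2). On the level of chains in $Chs^\fa$ the renormalized $\bT$-weights are not uniformly bounded as $\hdelta\to-\infty$, so the uniform bound on cohomology is a genuine cancellation phenomenon rather than a naive estimate. The cleanest way to see it is to replace $Chs^\fa$ by the much smaller complex $\Frg^\fa$, where only four chain degrees survive and every weight estimate reduces to an analysis of the single module $H^{-\rho(\hdelta)}_\ff(\hA)$ together with the three-step Koszul complex on $\rA_2$. Once this replacement is made, the precise integers $r(i)=0,1,2$ fall out of elementary Koszul bookkeeping, and the remaining input is the structural information about $L$ contained in Propositions \ref{P:injection} and \ref{E:random}.
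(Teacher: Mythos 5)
There is a genuine gap in your treatment of item~(2). You propose to replace the Chevalley--Eilenberg complex $Chs^{\fa}_{\bullet}$ by the four-term complex $\Frg^{\fa}_{\bullet}$ and then argue that ``the precise integers $r(i)=0,1,2$ fall out of elementary Koszul bookkeeping,'' but the bookkeeping on the three Koszul steps over $\rA_2$ is not the hard part. The hard part is establishing a uniform lower bound on the renormalized $\bT$-weights of the chain space $\Mrg_{\fa}\cong H^{-\rho(\hdelta)}_{\ff}(\hA)\otimes\C[\rA^{\fa}]^{-1}$ itself, and you do not actually prove this: you assert that ``the residual contribution coming from $\rCore\setminus\rM$ and from the quotient $L^{>0}/L[\hdelta,(1)^{-1}]$ is uniformly bounded,'' but neither Proposition~\ref{P:injection} nor Proposition~\ref{E:random} says anything about $H^{-\rho(\hdelta)}_{\ff}(\hA)=\hA\underset{R^{\fa}}{\otimes}\moduleS_{\fa}$ directly. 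That module is a nontrivial tensor product involving all of $\hA[\hdelta,\hdelta']$; controlling its weights requires separating the part controlled by $L[\hdelta,(1)^{-1}]$ from the part controlled by the quotient $L/L[\hdelta,(1)^{-1}]$, which is exactly what the paper does with a Serre--Hochschild filtration of $Chs^{\fa}$ by powers of $\Ker(CE[\hdelta,\hdelta']\to CE[\hdelta,(1)^{-1}])$, followed by identifying the $E^1$-page as $H^{3-\rho(\hdelta)}_{\fm}[\hdelta,(1)^{-1}]\otimes B^k_{\bullet}$ and factoring $B_j\cong\bigoplus_{j_1+j_2=j}M^{<0}_{j_1}\otimes M^{\geq 0}_{j_2}\otimes\rHom(\Sym\spinor[(0)^0,\hdelta'],\C)$ so that Proposition~\ref{E:random} and the path-algebra estimates of Proposition~\ref{P:pathestimates} can be brought to bear. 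Nothing analogous appears in your proposal, so the uniform lower bound (and hence the explicit values of $r(i)$, obtained in the paper by combining the $j=0,1$ cases with Poincar\'e duality) is simply asserted.

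Your items~(1) and~(3) are closer to the mark. Item~(1) is correct and essentially identical to the paper's argument. In item~(3) you are again too fast: bounded weight multiplicities of $L$ do not automatically give bounded weight multiplicities of $\Sym^{\bullet}L^{*}$ unless you also know that weights are bounded on one side, and the paper handles this by first embedding $H^{3-\rho(\hdelta)}_{\fm}[\hdelta,(1)^{-1}]$ into $\C[\hdelta,(1)^{-1}]^{-1}$ (via the standard-monomial basis), then dominating all the relevant characters by $[\hdelta,\hdelta']$-independent formal series in $\Z((t))[[q]]$. So the mechanism in item~(3) is the same as in item~(2): the generating-function bound is what makes the dimension estimate work, and your proposal does not set it up.
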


\begin{proof}
\begin{enumerate}
\item The first statement is a corollary of Proposition \ref{P:localtorisom}. It is different form Proposition \ref{P:identification} only in the choice of resolution. 
\item By Proposition \ref{P:injection}, the map ${\hA[\hdelta,\hdelta']}\to \hA[\hdelta,(1)^{-1}]$ induces inclusion of universal enveloping algebras $\hA^![\hdelta,(1)^{-1}]\to \hA^![\hdelta,\hdelta']$ and of the Lie algebras $L[\hdelta,(1)^{-1}]\subset L[\hdelta,\hdelta']$. Let $\Ker$ be the kernel of the homomorphism $CE[\hdelta,\hdelta']\to CE[\hdelta,(1)^{-1}]$. The powers $\{\Ker^{\times k}\}$ define decreasing filtration of $CE_{\bullet}[\hdelta,\hdelta']$ (of Serre-Hochschild type). It defines a filtration in $Chs_{\bullet}^{\fa}[\hdelta,\hdelta']$. 
Introduce a notation
\[B_{j}^{k}:=\rHom_j\left(\Sym^k\left((L[\hdelta,\hdelta']/L[\hdelta,(1)^{-1}])[-1]\right),\C\right).\]
Gradation $j$ is defined in equation (\ref{E:CEdef}). $L[\hdelta,\hdelta']/L[\hdelta,(1)^{-1}]$ and $B^k:=\oplus_jB_{j}^{k}$ are  $L[\hdelta,(1)^{-1}]$-module, where the generators are acting by commutations with   $ \theta_{\halpha}, \halpha\in [\hdelta,(1)^{-1}]$.   

By the first item of this proposition and Theorem \ref{E:diality}, the only nontrivial cohomology group in $Chs^{\fa}[\hdelta,(1)^{-1}]$ is 
\[\begin{split}
& H_{|[\hdelta,(1)^{-1}]|+\rho(\hdelta)-3}(Chs^{\fa}[\hdelta,(1)^{-1}])\cong H^{3-\rho(\hdelta)}_{\fa}[\hdelta,(1)^{-1}].\\
\end{split}\]

The initial page of the spectral sequence $E^{1}_{sk}=H_{s+k}(Chs^{\fa}[\hdelta,(1)^{-1}]\otimes B^{k})\Rightarrow H_{s+k}(Chs^{\fa}[\hdelta,\hdelta'])$ is equal  to
cohomology of the complex \[  H_{|[\hdelta,(1)^{-1}]|+\rho(\hdelta)-3}(Chs^{\fa}[\hdelta,(1)^{-1}])\otimes B_{\bullet}^k.\]\
 The differential is of Koszul type $\sum_{\halpha\in [\hdelta,(1)^{-1}]}\lambda^{\halpha}\otimes \theta_{\halpha}$.

By Lemma \ref{L:weights} item \ref{I:weights3}, the lower bound on (nonrenormalized) weights of $H^{3-\rho(\hdelta)}_{\fa}[\hdelta,(1)^{-1}]=H^{3-\rho(\hdelta)}_{\fm}[\hdelta,(1)^{-1}]$ is $u[\hdelta,(1)^{-1}]$.

 Let \[K
 \subset Chs_{|[\hdelta,(1)^{-1}]|+\rho(\hdelta)-3}^{\fa}[\hdelta,(1)^{-1}]
 \] be a $\bT$ - sub-representation, consisting of cycles, 
that projects isomorphically $H_{|[\hdelta,(1)^{-1}]|+\rho(\hdelta)-3}(Chs^{\fa})$. 

The group 
$H_{\fa}^i[\hdelta,\hdelta']$must be isomorphic to a sub-quotient of $ K\otimes B^{k}$.

Denote by $TL$ the Lie subalgebra $\bigoplus_{i\leq-2}L_i\subset L$. Notations $L^{<}$ is explained in Proposition \ref{E:random}. Denote 
\[N:=TL[\hdelta,\hdelta']/TL[\hdelta,(1)^{-1}][-1], \quad M_j:=\bigoplus_{t_s|\sum_{s\geq 1} st_s=-j} \bigotimes \rHom(\Sym^{t_s} N_s,\C). \]
The groups $M^{< 0}_j$ is constructed by the same formula with $N_s$  replaced by $N_s^{>0}$(taking $\rHom$ changes signs of weights). Likewise $M^{\geq 0}_j$ is constructed from $N_s^{\leq0}$. There is an isomorphism
\[B_j\cong\bigoplus_{j_1+j_2=j}M^{< 0}_{j_1}\otimes M^{\geq 0}_{j_2}\otimes \rHom(\Sym \spinor[(0)^0,\hdelta'],\C)\]
By Proposition \ref{E:random} dimensions of $N_s^{>0}$ and of $M^{< 0}_{j}$ are bounded from above by the constants that don't depend on $[\hdelta,\hdelta']$. By the same proposition  the $\bT$-weights of $N_s^{>0}$( and automatically the weights of $M^{< 0}_{j}$) are bounded from below by $[\hdelta,\hdelta']$-independent constant $C_j$. 
 In addition  $\lu\left( M^{\geq 0}_{j_2}\otimes \rHom(\Sym \spinor[(0)^0,\hdelta'],\C)\right)\geq 0$ . From this I derive  that $C_j\leq \lu\left(B_j\right)$. This implies that for the  renormalized $\bT$-action   $\lu\left(H_{\fa}^{-\rho(\hdelta)+3-j}[\hdelta,\hdelta']\right)\geq C_j$.  The cohomology groups are nontrivial only in the finite range of $j$. The universal bound is $\min C_j$. Similar arguments work for $\uu\left(H_{\fb}^{\rho(\hdelta)-2+j}[\hdelta,\hdelta']\right)$.

One can obtain a more accurate bounds on the weight of  renormalized $\bT$-action  in assumptions that $[\hdelta,\hdelta']$ is Gorenstein (\ref{E:purity}). As $\lu\left(B_0\right)=\lu\left(\rHom(\Sym \spinor[(0)^0,\hdelta'],\C)\right)=0$  the previous arguments imply that $\lu\left(H_{\fa}^{-\rho(\hdelta)+3}[\hdelta,\hdelta']\right)\geq 0$. 
$B_1=(M^{< 0}_{1}+M^{\geq 0}_{1})\otimes \rHom(\Sym \spinor[(0)^0,\hdelta'],\C)$. Computation with the path algebra from Proposition \ref{E:pathalg} shows that $\dim_{\C}M^{< 0}_{1}=10$ and all the elements in $M^{< 0}_{1}$ have $\bT$-weight $-1$. This verifies the bound for $H_{\fa}^{-\rho(\hdelta)+2}[\hdelta,\hdelta']$. Similar arguments establish the upper bounds for $H_{\fb}^{\rho(\hdelta')+1}[\hdelta,\hdelta']$ and $H_{\fb}^{\rho(\hdelta')}[\hdelta,\hdelta']$

Finally, I use Poincar\'{e} duality between pairs $(H_{\fb}^{\rho(\hdelta')+1},  H_{\fa}^{(-\rho(\hdelta)+3)-3})$ and  $(H_{\fb}^{\rho(\hdelta')},  H_{\fa}^{(-\rho(\hdelta)+3)-2})$, Proposition \ref{P:pairingdegree} and Remark \ref{R:degreeshift1} to verify the bounds for $ \lu\left(H_{\fa}^{(-\rho(\hdelta)+3)-i}\right),$ $i=2,3$.
\item I will prove the statement only for $\fc=\fa$. By the previous item the group $H_{\fa}^{-\rho(\hdelta)+3}(\hA)$ is a sub-quotient of the tensor product $H^{3-\rho(\hdelta)}_{\fm}[\hdelta,(1)^{-1}]\otimes B_0$. 
The linear space $H^{3-\rho(\hdelta)}_{\fm}[\hdelta,(1)^{-1}]$ is dual to $\hA[\hdelta,(1)^{-1}]$ (\ref{E:loccohnew}). The later space has a basis formed by standard monomials. It determines a dual weight basis $H^{3-\rho(\hdelta)}_{\fm}[\hdelta,(1)^{-1}]$.
After taking renormalization of $\Aut$ action into account I use  the dual basis to define an $\Aut$-equivariant  embedding of $H^{3-\rho(\hdelta)}_{\fm}[\hdelta,(1)^{-1}]$ into $\C[\hdelta,(1)^{-1}]^{-1}$. I conclude that $\chi_{H^{3-\rho(\hdelta)}_{\fm}[\hdelta,(1)^{-1}]}(t,q)\in \Z[t,t^{-1}][[q]]$.  By  combining this observation  with the results of  the previous item I get $\Aut$-equivariant  linear embedding of $H_{\fa}^{-\rho(\hdelta)+3}(\hA)$ into $\C[\hdelta,(1)^{-1}]^{-1}\otimes \C[(0)^0,\hdelta']$.
Thus $\chi_{H_{\fa}^{-\rho(\hdelta)+3}(\hA)}(t,q)\in \Z((t))[[q]]$.
\begin{definition}
I will be using a partial order on  series with real coefficients. It is  defined by the rule $f(t,q)\leq g(t,g)$ iff $g(t,g)-f(t,q)$ has positive series coefficients. Obviously, if $f(t,q)=\sum_{ij=-\infty}^{\infty}c_{ij}t^iq^j, c_{ij}\in \R^{\geq 0}$, $g\in \R((t))[[q]]$ and $f\leq g$ then  $f\in \R((t))[[q]]$.
\end{definition}
I conclude   $ \theta_{\fa}(t,q)\geq \chi_{H_{\fa}^{-\rho(\hdelta)+3}(\hA)}(t,q)$.  As $\theta(t,q)$ doesn't depend on $[\hdelta,\hdelta']$ this  proves the claim for $H_{\fa}^{-\rho(\hdelta)+3}$. 
The combination of the previous argument with the the proof of the last item leads to inequality. 
\[\chi_{H_{\fa}^{-\rho(\hdelta)3-i}}(t,q)\leq \theta_{\fa}(t,q)\sum_{j_i+j_2=i} \chi_{M^{< 0}_{j_1}}(t,q) \chi_{M^{\geq 0}_{j_2}}(t,q) \]
  $M^{\geq  0}_{j_2}$ is a graded components  in a free graded commutative algebra. Though dimension of the  generators is infinite, by  Proposition \ref{E:random} the generating function is bounded by $\sum_{i=2}^j \frac{C_{i}t^i}{1-q}$. From this I derive a bound  
\[\chi_{M^{\geq  0}_{j}}\leq  \prod_{s=2}^j \prod_{k\geq0}(1-(-1)^st^iq^k)^{(-1)^{s+1}C_s}=\theta''_j(t,q)\] 
It follows from from the same proposition  that $M^{< 0}_{j_1}$ is a graded subspace in a free graded commutative algebra on a finite number of generators. Proposition \ref{E:random}  gives a $[\hdelta,\hdelta']$-independent bound on this number.
Thus  $\chi_{M^{< 0}_{j}}\leq \theta'_j(t,q)\in \Z[t,t^{-1}, q,q^{-1}]$. Finally \[\chi_{H_{\fa}^{-\rho(\hdelta)3-i}}(t,q)\leq \theta_{\fa}(t,q)\sum_{j_i+j_2=i}\theta'_{j_1}(t,q)\theta''_{j_2}(t,q)\in \Z((t))[[q]]\] where the right-hand-side doesn't depend on $[\hdelta,\hdelta']$.

\end{enumerate}
\end{proof}
\begin{remark}\label{R:interpretationofcoefficients}{\rm
Here some interpretation of coefficients of the series (\ref{E:qexpansion}).
The $-1$ in the numerator of the coefficient of $q^0$ accounts for the generator $\omega_3$ in $H_{\fa}^{3-\rho(\hdelta)}[\hdelta,\hdelta']$ with the smallest $\bT$-weight. In my choice of renormalization the  weight of $\omega_3$ is set to zero. The group $H_{\fa}^{-\rho(\hdelta)}[\hdelta,\hdelta']$ contains a homogeneous element $\omega_0$ that pairs nontrivially with $\omega_3$. By taking renormalized degree of the pairing into account I conclude that $\omega_3$  has $\C^{\times}\times \bT$ weight equal to $(4,-2)$.  I interpret $\omega_0$ as being responsible for the unit in the denominator of the coefficient of $q^{-2}$.

Note that up to division  on $t^4$ and shift $Z_{i}\to Z_{i-2}$ the coefficients that I found coincide with $Z_0(t)$, $Z_{1}(t)$ and  $Z_{2}(t)$  formulas  (3.12),(3.22) and (3.32) \cite{AABN}. 
}\end{remark}

Here is the final result.
\begin{proposition}\label{P:surjectivity9}
For every $\Aut$-weight $w=(a,u,r)$ 
the maps 
\begin{equation}\label{E:maponweightspaces}
\begin{split}
&\sfp:H_{\fb}^i[\hdelta,\hbeta]^w\to H_{\fb}^i[\hdelta',\hbeta]^w,\hdelta \leq \hdelta'\leq \hbeta , \text{ is onto if }u(\hdelta')< u\\
&\sfp:H_{\fa}^i[\hdelta,\hbeta']^w\to H_{\fa}^i[\hdelta,\hbeta]^w,\hdelta \leq \hbeta \leq \hbeta' , \text{ is onto if }u(\hbeta)> u
\end{split}
\end{equation}
\end{proposition}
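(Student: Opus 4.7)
My plan is to prove the first statement; the second is entirely symmetric, with the roles of the lower and upper ends of the interval exchanged and the negative-energy upper bound of Proposition~\ref{P:importantprop}(\ref{I:lowerandupperbounds}) replaced by the positive-energy lower bound. The surjection $\sfp:\hA[\hdelta,\hbeta]\twoheadrightarrow \hA[\hdelta',\hbeta]$ has kernel the ideal $\fr=(\lambda^{\halpha}\mid \halpha\in[\hdelta,\hdelta'))$, so from the $\fb$-local cohomology long exact sequence of
$$0\to \fr\to \hA[\hdelta,\hbeta]\to \hA[\hdelta',\hbeta]\to 0$$
surjectivity of $\sfp$ on the weight-$w$ component is reduced to the vanishing $H_{\fb}^{i+1}(\fr)^w=0$ whenever $u(\hdelta')<u$.

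To obtain this vanishing I would run the Serre--Hochschild-style spectral sequence from the proof of Proposition~\ref{P:importantprop}(\ref{I:lowerandupperbounds}), but now comparing two intervals rather than restricting to a half-line. By Proposition~\ref{P:injection} the Koszul dual of $\sfp$ is a Lie-algebra inclusion $L[\hdelta',\hbeta]\hookrightarrow L[\hdelta,\hbeta]$, with quotient $\Aut$-module $N=L[\hdelta,\hbeta]/L[\hdelta',\hbeta]$. Filtering $CE_\bullet[\hdelta,\hbeta]$ by total degree in the $N^{\vee}[-1]$-factor and tensoring with $\moduleT_{\fb}[\hdelta,\hbeta]$ produces a convergent spectral sequence whose $E^1$-page factors as $H_{\fb}^{\ast}[\hdelta',\hbeta]\otimes \rHom(\Sym^k N[-1],\C)$ and whose edge homomorphism at $k=0$ is $\sfp$ itself. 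Surjectivity on weight $w$ follows once I show that all $E^1$-contributions with $k\ge 1$ vanish on that weight.

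The core of the argument is the weight bookkeeping. By the path-algebra description (Proposition~\ref{E:pathalg}) together with Remark~\ref{R:isorepresentations}, every weight-basis element of $N$ must contain at least one factor $\theta_{\halpha}$ with $\halpha\in [\hdelta,\hdelta')$; since the $u$-grading is weakly monotone along the Hasse order of $\sethE$, such $\halpha$ satisfies $u(\halpha)\le u(\hdelta')$. Dualising flips the sign, so each monomial generator of $\rHom(\Sym^k N[-1],\C)$, $k\ge 1$, acquires at least one slot of $u$-weight $\ge -u(\hdelta')$. Combined with the renormalised upper bound $\uu(H_{\fb}^{\ast}[\hdelta',\hbeta])\le 0$ from Proposition~\ref{P:importantprop}(\ref{I:lowerandupperbounds}) and Remark~\ref{R:degreeshift1}, this caps the total $u$-weight of every $k\ge 1$ contribution by $u(\hdelta')$, contradicting $u>u(\hdelta')$ and forcing the $k\ge 1$ strands to vanish on the weight-$w$ component.

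The main obstacle, which I expect to absorb most of the technical effort, is that a monomial of $N$ of path-algebra length $\ge 2$ can pair a single small-$u$ factor with several large-$u$ factors in $[\hdelta,\hbeta]$, so the naive slot-by-slot bound above is not literal. To handle this I plan to refine the filtration by the \emph{number} of constrained factors in $[\hdelta,\hdelta')$ rather than by total symmetric degree, and to combine it with the structural bounds of Proposition~\ref{E:random} on the dimensions and $u$-spread of $L^{>0}/L[\hdelta,(1)^{-1}]$ (applied verbatim to the quotient $N$), which control how many large-$u$ slots can populate $N$ at a given $\Aut$-weight. The proof of the second statement then proceeds by replacing $N$ with $L[\hdelta,\hbeta']/L[\hdelta,\hbeta]$, whose generators $\theta_{\halpha}$ satisfy $u(\halpha)\ge u(\hbeta)$, and invoking the positive-energy lower bound on $\lu(H_{\fa}^{\ast}[\hdelta,\hbeta])$ in place of the upper bound.
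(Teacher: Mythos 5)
Your approach differs substantially from the paper's, and I think it has a real gap that you yourself flag but don't resolve.

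The paper does not attempt to handle the full kernel of $\sfp$ at once. Instead (Lemma~\ref{L:saturate}, Construction~\ref{E:mainconstruction}) it factors $\sfp$ into a chain of \emph{elementary} projections, each of which falls into one of the three simple types of Lemma~\ref{L:saturate0}. For a regular step the kernel is the principal ideal $(\lambda^{\hdelta_i})$, with $\lambda^{\hdelta_i}$ a nonzerodivisor, and the short exact sequence $0\to\hA[\hdelta_i,\hbeta]\overset{\lambda^{\hdelta_i}}{\to}\hA[\hdelta_i,\hbeta]\to\hA(\hdelta_i,\hbeta]\to 0$ together with the bound $\uu(H^i_{\fb}[\hdelta_i,\hbeta])\le 0$ (Proposition~\ref{P:importantprop}) gives at once that the weight-$w$ component $H^i_{\fb}[\hdelta_i,\hbeta]^{u-u(\hdelta_i)}$ vanishes, so $\sfp_i$ is an isomorphism on that component. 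For an irregular step the kernel is a two-generated ideal, and the paper shows (using Proposition~\ref{E:CMcohcomp}) that the connecting map in the local-cohomology long exact sequence is zero --- not that the relevant $H^{i+1}_{\fb}$ of the kernel vanishes. This is a strictly weaker, but sufficient, conclusion, and it would \emph{not} be visible in your setup because you aim to prove $H^{i+1}_{\fb}(\fr)^w=0$ for the whole kernel $\fr$, which is a strictly stronger statement than what is needed and has no reason to hold once irregular pieces are mixed in.

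Two concrete problems with your proposal. First, the kernel of $\sfp$ is $\fri\bigl[[\hdelta,\hbeta]\setminus[\hdelta',\hbeta]\bigr]$ and this is \emph{not} the same as $\fri[\hdelta,\hdelta')$ in $\sethE$ in general: the complement can contain vertices incomparable to $\hdelta'$, for which there is no a priori relation between $u(\halpha)$ and $u(\hdelta')$. Your monotonicity of $u$ along the order of $\sethE$ is correct, but it only constrains the factors that happen to lie \emph{below} $\hdelta'$, not the ones that lie off to the side. Second --- and this is the gap you yourself name --- your $E^1$-page weight estimate is not slot-by-slot: a degree-$k$ monomial of $\rHom(\Sym^k N[-1],\C)$ can combine one factor with $u$-weight constrained as you describe with many factors carrying arbitrarily high dualized $u$-weight, so the total weight is not bounded by $u(\hdelta')$ without additional structural input. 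The proposed fix (filter by the number of constrained factors rather than by symmetric degree) is not obviously a filtration of $CE_\bullet[\hdelta,\hbeta]$ compatible with the differential: the $CE$ differential contains bracket terms that change that count, so the filtration does not descend to the complex as written. You would essentially have to redo the analysis of Proposition~\ref{P:importantprop} with the two-parameter data $(\hdelta,\hdelta')$ in place of $(\hdelta,(1)^{-1})$, and at that point the paper's reduction to elementary steps is both simpler and more robust, since for each such step the kernel's local cohomology is controlled by the known weight bounds for the Gorenstein algebras $\hA[\hdelta_i,\hbeta]$ themselves rather than by bounds on an unstructured ideal.

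One more point worth making explicit: even in the regular case, the condition you would actually need for an elementary step $\hdelta_i\lessdot\hdelta_{i+1}$ is $u(\hdelta_i)<u$, and the hypothesis $u(\hdelta')<u$ suffices only because $u$ is weakly increasing along the chain $\hdelta_1\lessdot\cdots\lessdot\hdelta_n=\hdelta'$. This bookkeeping appears automatically in the factored argument but is invisible in the all-at-once spectral-sequence version.
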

\begin{proof}
I will prove the statement only for ideal $\fb$. First I assume that $\hdelta\in \rM_1^+\sqcup \rM_3^+$ and $\hdelta'\in \rM_1^+$.
 Lemma \ref{L:saturate} allows to decompose $\sfp$ into a product of elementary projections $\sfp_i$. 
It is sufficient to  prove the statement for  $\sfp=\sfp_i$. Possible $\sfp_i$ are described in Lemma \ref{L:saturate0}.
If $\sfp$ is a regular  homomorphism (equations (\ref{E:group1}) and (\ref{E:group2})) then $\sfp$ is a map in the short exact sequence
\[\{0\}\to \hA[\hdelta,\hbeta]\overset{\lambda^{\hdelta}\times ?}\longrightarrow \hA[\hdelta,\hbeta]\overset{\sfp}\to \hA(\hdelta,\hbeta]\to \{0\}.\]
It leads to the long exact sequence of  $\bT$-weight components of  local cohomology 
\begin{equation}\label{EPlongexact}
\cdots \to H^i_{\fb}[\hdelta,\hbeta]^{u-u(\hdelta)}\overset{\lambda^{\hdelta}\times ?}\longrightarrow H^i_{\fb}[\hdelta,\hbeta]^{u}\overset{\sfp}\to H^i_{\fb}(\hdelta,\hbeta]^{u}\to H^{i+1}_{\fb}[\hdelta,\hbeta]^{u-u(\hdelta)}\to\cdots
\end{equation}
By Proposition \ref{P:importantprop}  $\uu(H^i_{\fb}[\hdelta,\hbeta])\leq 0$
 . The  (renormalized) $\bT$ weight of $\lambda^{\hdelta}$ is $u(\hdelta)$.
If the condition (\ref{E:maponweightspaces}) is satisfied 
the group $H^i_{\fb}[\hdelta,\hbeta]^{u-u(\hdelta)}$ is zero and 
$\sfp$ in (\ref{EPlongexact}) is an isomorphism.

An  irregular  homomorphism (\ref{E:group3}) $\sfp$ defines  a short exact sequence
 \[\{0\}\to \fe(\hdelta,\hbeta] \to  \hA(\hdelta,\hbeta]\overset{\sfp}{\to}  \hA[\hdelta',\hbeta]\to\{0\},\quad \rho(\hdelta')=\rho(\hdelta'')=\rho(\hdelta)+1,\hdelta''\in \rM_2^+,\hdelta,\hdelta''\in \rM_1^+ \]
of $\C(\hdelta,\hbeta]$-modules. It is explained in the proof of the second part of Lemma \ref{L:saturate0} that  the ideal $I(\hdelta,\hbeta]$ is generates by $\lambda^{\hgamma},\lambda^{\hgamma'}, \hgamma\lessdot\hgamma'\in \setCL^-(\hdelta')$. There a projection $\sfq:\hA(\hdelta,\hbeta]\to \hA[\hgamma,\hbeta]$. In the same lemma  it is also explained 
that $\fe(\hdelta,\hbeta]$  maps isomorphically by $\sfq$ to ideal $\fe[\hgamma,\hbeta]$ in $\hA[\hgamma,\hbeta]$. $\fe[\hgamma,\hbeta]$ is  generated by  $\lambda^{\hgamma},\lambda^{\hgamma'}$. The isomorphism $H^i_{\fb}[\hgamma,\hbeta]\cong H^i_{\fb}(\fe[\hgamma,\hbeta])$ is established in Proposition \ref{E:CMcohcomp} item (\ref{E:idealloccohcomp}). 
Thus the composition   $H^i_{\fb}(\fe(\hdelta,\hbeta])\overset{\sfj}{\to} H^i_{\fb}(\hdelta,\hbeta]\to H^i_{\fb}[\hgamma,\hbeta]$ is an isomorphism. It implies that $\sfj$ is an inclusion, the boundary differential in the  long exact sequence 
\[
\cdots \to  H^i_{\fb}(\fe(\hdelta,\hbeta]){\longrightarrow} H^i_{\fb}(\hdelta,\hbeta]\overset{H(\sfp)}{\longrightarrow} H^i_{\fb}[\hdelta',\hbeta]\to \cdots
\]
 is trivial and  $H(\sfp)$ is surjective. 
 
Suppose that now  $\hdelta\in \rM_1^+\sqcup \rM_3^+$ and $\hdelta'\in \rM_3^+$. The diagram (\ref{P:kxccvgw13}) helps to find  $\hdelta''\in \rM_1^+$ such that $ \hdelta'<\hdelta''$. The composition of the maps
$H^i_{\fb}[\hdelta,\hbeta]\overset{\sfp}{\to} H^i_{\fb}[\hdelta',\hbeta]\overset{\sfq}\to H^i_{\fb}[\hdelta'',\hbeta]$
coincides with $H^i_{\fb}[\hdelta,\hbeta]\overset{\sfr}{\to} H^i_{\fb}[\hdelta'',\hbeta]$. Maps $\sfp$ and $\sfr$ are surjective. Then so is $\sfq$.

\end{proof}
\section{The limiting space of states}\label{S:limiting}
Closer to the end this section, I will define the limiting groups $H_{\fa}^{i+\itwo}$ and establish its basic properties. 
It is also desirable to have a simple model for practical computations with $H_{\fa}^{i}[\hdelta,\hdelta']$ and $H_{\fa}^{i+\itwo}$. 

The bidirect  system of complexes  $\Frg^{\fa}_{\bullet}[\hdelta,\hdelta']$ whose cohomology coincide with $H_{\fa}^{i}[\hdelta,\hdelta']$ serves this purpose.
 I will devote the next subsection to this complex.
Throughout this section all the intervals satisfy purity condition (\ref{E:purity}).

\subsection{$\Frg^{\fa}_{\bullet}[\hdelta,\hdelta']$ as a function of $\hdelta$ and $\hdelta'$}
 
In Section \ref{S:fourterm}, I introduced the complex $\Frg^{\fa}_{\bullet}[\hdelta,\hdelta']$ (\ref{E:Rdef}). My plan is to define a structure bidirect system on $\{\Frg^{\fa}_{\bullet}[\hdelta,\hdelta']\}$ whose limit will be $\Frg^{\fa}_{\bullet}$. The important difference from the system $H_{\fa}^{i}[\hdelta,\hdelta']$ (\ref{E:diagramindprolocal}), that I already have, is that the bidirect system will now be defined on the level of chains.  This explains my interest in functorial properties of the correspondence 
$[\hdelta,\hdelta']\Rightarrow \Frg^{\fc}_{\bullet}[\hdelta,\hdelta'].$

Recall that there is identification of cohomology of $\Frg^{\fc}_{\bullet}[\hdelta,\hdelta']$ and $H_{\fa}^{i}[\hdelta,\hdelta']$ (\ref{E:redisomorphism}). 
One can think about the correspondence 
\begin{equation}
[\hdelta,\hdelta']\Rightarrow H_{\fa}^{i}[\hdelta,\hdelta']
\end{equation}
as a functor from the category of of intervals $\{[\hdelta,\hdelta']\}$($\hdelta'$ is fixed) to linear spaces.
$\Th$ (\ref{E:classt})are the structure maps of this functor.
The question I will address now is how to see the lift $\Thl_p$ of $\Th_{\sfp}$ 
 to the level of chains of $\Frg^{\fa}_{\bullet}[\hdelta,\hdelta']$.
 
I am going to use one more time the familiar idea that $\Th$ can be decomposed into product of elementary $\Th_{\sfp_i}$. This time I will lift $\Th_{\sfp_i}$ on the level of chains.  Presently I will define such a lifting, which I denote by $\Thl_{\sfp_i}$. For this purpose I will applied Lemma \ref{L:saturate} to the pair $ [\hdelta,\hbeta]\supset [\hdelta',\hbeta]$ to construct maps $\sfp_i$ (\ref{E:refignseq}). From this lemma I know that there are three kinds of homomorphisms $\sfp_i$ (\ref{E:group1}, \ref{E:group2}, \ref{E:group3}). I will deal with the regular cases (\ref{E:group1}, \ref{E:group2}) first.

\paragraph{ The regular case} Let us assume that $\hdelta$ is the same as one of the left-end points of the intervals (\ref{E:group1},\ref{E:group2}). This means that $\lambda^{\hdelta}$ is not a zero divisor in ${\hA[\hdelta,\hdelta']}$ and ${\hA[\hdelta,\hdelta']}/(\lambda^{\hdelta})\cong \hA(\hdelta,\hdelta']$. I define the map $\Thl:\Mrg_{\fa}(\hdelta,\hdelta']\to \Mrg_{\fa}[\hdelta,\hdelta']$ on the decomposable tensors $a\underset{R^{\fa}}{\otimes} f\in \hA\underset{R^{\fa}}{\otimes} \moduleS_{\fa}$ by the formula
\begin{equation}\label{E:regmap}
\Thl(a\underset{R^{\fa}}{\otimes} f):= \tilde{a}\underset{R^{\fa}}{\otimes} \sfk(f)= \tilde{a}\underset{R^{\fa}}{\otimes} \frac{1}{\lambda^{\hdelta}}f
\end{equation}
where the element $\tilde{a}$ is any preimage $a$ in ${\hA[\hdelta,\hdelta']}$ with respect to $\sfp$ (\ref{E:extsimple}), $\sfk$ is taken from Definition \ref{D:tau}. $\frac{1}{\lambda^{\hdelta}}=\varpi_{\Imm\, (\mathsf{pr}\circ \mathsf{inc})^{\perp}}$
in the formula (\ref{E:elemmaps}), that describes the map $\sfk$. As $\lambda^{\hdelta}\frac{1}{\lambda^{\hdelta}}f=0\in \moduleS_{\fa}[\hdelta,\hdelta']$, it follows 
that $\Thl$ doesn't depend on the possible choices of $\tilde{a}$. I extend this to a map of free $\Lambda[\theta^{0},\theta^{1},\theta^{2}]$-modules
$\Frg^{\fa}_i(\hdelta,\hdelta']\to \Frg_i^{\fa}[\hdelta,\hdelta']$. 

\paragraph{ The irregular case}
Let us assume that $\hdelta\in \rM_2^{-}$ (Definition \ref{D:Mpmidef0}), which means that $\hdelta$ is one of the left-end points of the closed intervals (\ref{E:group3}). $\hgamma$ is the corresponding left-end point of the open interval.

The map $\Thl:\Mrg_{\fa}[\hdelta,\hdelta']\to \Mrg_{\fa}(\hgamma,\hdelta']$ on a decomposable element $a\underset{P}{\otimes} f$ is defined as 
\begin{equation}\label{E:irregmap}
\Thl(a\otimes f):= \sfv(a) \underset{R^{\fa}}{\otimes} \inclusiono(f) = \tilde{a}\underset{R^{\fa}}{\otimes} \inclusiont(f) 
\end{equation}
where $\sfv$ and $\inclusiont$ are introduced in equation (\ref{E:inclusionofA}) and 
 Definition \ref{D:tau} respectively. 
 As in the regular case, I extend this to a map of free $\Lambda[\theta^{0},\theta^{1},\theta^{2}]$-modules
$\Frg^{\fa}_i(\hgamma,\hdelta']\to \Frg_i^{\fa}[\hdelta,\hdelta'].$ 

\paragraph{$\Thl$ in the general case}
\begin{definition}\label{D:thldef}{\rm
I assume that $ [\hdelta,\hbeta]\supset [\hdelta',\hbeta]$.
\begin{enumerate}
\item I define the map 
\begin{equation}\label{E:inddef}
\Thl:\Frg_i^{\fa}[\hdelta',\hbeta]\to \Frg_i^{\fa}[\hdelta,\hbeta]
\end{equation} as a composition
\[\Frg_i^{\fa}[\Delta_{n'}]\overset{\Thl_{\sfp_{n'-1}}}{\longrightarrow} \cdots \overset{\Thl_{\sfp_1}}{\longrightarrow} \Frg_i^{\fa}[\Delta_{1}].\]
where $\sfp_i$ are taken from (\ref{E:refignseq}). To construct (semi)intervals $\Delta_i$ I applied Lemma \ref{L:saturate} to $ [\hdelta,\hbeta]\supset [\hdelta',\hbeta]$.
\item Suppose $\hbeta'<\hbeta$. The map 
\begin{equation}\label{E:resdef}
\pl:\Frg_i^{\fa}[\hdelta,\hbeta]\to \Frg_i^{\fa}[\hdelta,\hbeta']
\end{equation} is induced by the maps of the tensor components of the module $\Mrg_{\fa}$ \[\sfp: {\hA[\hdelta,\hbeta]}\to \hA[\hdelta,\hbeta']\text{ and }\projectiono:\moduleS_{\fa}[\hdelta,\hbeta]\to \moduleS_{\fa}[\hdelta,\hbeta'] (\ref{E:Fres}).\]
The map (\ref{E:resdef}) is also defined for semi-intervals $(\hdelta,\hbeta]\supset (\hdelta,\hbeta']$.
\end{enumerate}
}
\end{definition}
\begin{remark}
The maps $\Thl$ and $\pl$ commute.
\end{remark}
Compatibility of $\Thl$ and $\Th$ is verified in the following statement.
\begin{proposition}\label{P:colimitmaps}
 In the assumptions of Proposition \ref{P:commutativity} ($\hbeta=\hdelta_3,\hdelta_4$) and after identification (\ref{E:redisomorphism}),(\ref{E:IHidentification})
\begin{enumerate}
\item The map $\Thl_{\sfq}$ on $\Mrg_{\fa}$ coincides with
\[\Th_{\sfq}\otimes \id:H_{\ff}^{-\rho(\hdelta_2)}[\hdelta_2,\hbeta]\otimes \C[\rA^{\fa}]^{-1}\to H_{\ff}^{-\rho(\hdelta_1)}[\hdelta_1,\hbeta]\otimes \C[\rA^{\fa}]^{-1}\]
\item The induced map \[\Thl_{\sfq}:H_j(\Frg^{\fa}[\hdelta_2,\hbeta])\to H_{j}(\Frg^{\fa}[\hdelta_1,\hbeta]), \hdelta_1< \hdelta_2\]
coincides with the map
\[\Th_{\sfq}:H_{\fa}^{3-\rho(\hdelta_2)-j}[\hdelta_2,\hbeta]\to H_{\fa}^{3-\rho(\hdelta_1)+j}[\hdelta_1,\hbeta].\] 

\item The map \[\pl:H_j(\Frg^{\fa}[\hdelta,\hbeta'])\to H_{j}(\Frg^{\fa}[\hdelta,\hbeta]), \hbeta<\hbeta'(\hbeta=\hdelta_3,\hbeta'=\hdelta_4)\]
 coincides with the map
\[\sfp:H_{\fa}^{3-\rho(\hdelta)-j}[\hdelta,\hbeta']\to H_{\fa}^{3-\rho(\hdelta)+j}[\hdelta,\hbeta].\] 

\end{enumerate}
\end{proposition}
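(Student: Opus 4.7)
The plan is to reduce everything to elementary homomorphisms and check compatibility case by case. By Lemma \ref{L:saturate}, any projection $\sfq : \hA[\hdelta_1,\hbeta] \to \hA[\hdelta_2,\hbeta]$ factors as a composition of elementary maps of the regular types (\ref{E:group1}), (\ref{E:group2}) and the irregular type (\ref{E:group3}). By Proposition \ref{P:composition} the Thom class $\Th_{\sfq}$ is the Yoneda product of the elementary $\Th_{\sfp_i}$, and by Definition \ref{D:thldef} the chain-level lift $\Thl_{\sfq}$ is by construction the composition of the elementary $\Thl_{\sfp_i}$. Thus it is enough to verify items (1)--(2) separately for each regular $\sfp_i$ and each irregular $\sfp_i$.

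For a regular elementary $\sfp : \hA[\hdelta,\hbeta] \to \hA(\hdelta,\hbeta]$, Proposition \ref{P:classofextreg} identifies $\Th_{\sfp}$ with the boundary of the short exact sequence (\ref{E:extsimple}) given by multiplication on $\lambda^{\hdelta}$. Under the identification (\ref{E:IHidentification}), a chain in $\Mrg_{\fa}(\hdelta,\hbeta]$ is represented by a decomposable tensor $a \underset{R^{\fa}}{\otimes} f$; by (\ref{E:regmap}) the lift $\Thl$ replaces $f$ by $\frac{1}{\lambda^{\hdelta}} f$, which is exactly how the connecting homomorphism of (\ref{E:extsimple}) acts on representatives of $H_{\ff}^{-\rho(\hdelta)}$ described via Koszul complexes (cf.\ Lemma \ref{L:idealreduction} and the proof of Proposition \ref{P:fockasloc}). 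The $\C[\rA^{\fa}]^{-1}$-factor is carried along untouched, which accounts for the $\otimes\,\id$ in item (1). The passage to cohomology and the comparison with the local cohomology map yielding item (2) then follows from Proposition \ref{P:pairing}, which already identifies the chain-level product of (\ref{E:prodinterm}) with the $\Tor$-level product used to define $\Th_{\sfp}$.

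For an irregular elementary $\sfp : \hA(\hdelta,\hbeta] \to \hA[\hdelta',\hbeta]$, Proposition \ref{P:extmaintheorem} identifies $\Th_{\sfp}$ (after the canonical choices of dualizing isomorphisms supplied by Proposition \ref{P;rightchoice}) with the inclusion $\sfv : \hA[\hdelta',\hbeta] \hookrightarrow \hA(\hdelta,\hbeta]$ given by multiplication on $\lambda^{\hdelta'}$, so the induced map on local cohomology is pairing with $\lambda^{\hdelta'}$. The lift (\ref{E:irregmap}) is $\Thl(a \otimes f) = \sfv(a) \underset{R^{\fa}}{\otimes} \inclusiono(f) = \tilde a \underset{R^{\fa}}{\otimes} \inclusiont(f)$, where $\inclusiont$ differs from $\inclusiono$ by the factor $\lambda^{\hdelta'}$ according to Definition \ref{D:tau}. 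The two presentations match precisely because $\lambda^{\hdelta'} \cdot \tilde a = \sfv(a)$ after the identifications, so $\Thl$ visibly realizes multiplication by $\lambda^{\hdelta'}$ at the chain level and therefore agrees with $\Th_{\sfp}$ after passing to cohomology. Item (3) is immediate from Definition \ref{D:thldef} of $\pl$: the tensor factor map $\sfp : \hA[\hdelta,\hbeta] \to \hA[\hdelta,\hbeta']$ is by construction the restriction homomorphism, and $\projectiono$ is its obvious companion on $\moduleS_{\fa}$; functoriality of $\Tor$ with respect to the change of the first argument in Proposition \ref{P:localtorisom} identifies the induced map on $H_j(\Frg^{\fa})$ with $\sfp$ on $H^{3-\rho(\hdelta)-j}_{\fa}$.

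The main obstacle is the irregular case: one must verify that the specific chain-level formula (\ref{E:irregmap}), phrased in terms of $\inclusiont$ and $\inclusiono$ on the $\moduleS_{\fa}$ factor, reproduces the abstract map $\Th_{\sfp}$ that is defined via Proposition \ref{P:extmaintheorem} only up to the choice of dualizing isomorphism. This is where Proposition \ref{P;rightchoice} is needed, together with a careful bookkeeping of how the factor $m_{\hdelta}$ (appearing in Definition \ref{D:tau}) interacts with $\mathsf{pr}\circ\mathsf{inc}$ under localization; once this bookkeeping is done, the remaining verifications are routine applications of functoriality of Koszul and $\Tor$ constructions together with Proposition \ref{P:pairing}.
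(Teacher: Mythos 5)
Your proposal follows essentially the same strategy as the paper: decompose $\sfq$ into elementary homomorphisms via Lemma \ref{L:saturate}, use Propositions \ref{P:classofextreg} and \ref{P:extmaintheorem} to characterize the elementary $\Th_{\sfp_i}$, and match these characterizations against the chain-level formulas (\ref{E:regmap}) and (\ref{E:irregmap}). The overall architecture, and the recognition that the irregular case is the delicate one and that Proposition \ref{P;rightchoice} is what pins down the dualizing isomorphisms, are all in line with the paper.

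Two comparative remarks. First, your invocation of Proposition \ref{P:pairing} to pass from the chain-level comparison to the cohomological comparison in the regular case is misplaced: that proposition identifies \emph{product} maps under the $\Tor$-identification, not connecting homomorphisms. What the paper actually does is replace $\hA[\hdelta,\hbeta]$ by a free resolution $F_\bullet$, build the Koszul bicomplex $B_\bullet(\Frg^{\fa}(F))$ from the exact triangle lifting (\ref{E:extsimple}), observe that $H_0$ of the bicomplex vanishes because $\lambda^{\hdelta}\times?$ is surjective on $\moduleS_{\fa}[\hdelta,\hbeta]$, and then explicitly trace the boundary map on representatives of the form $\tfrac{1}{\lambda^{\hdelta}}b$ to see it matches (\ref{E:regmap}). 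Your ``this is exactly how the connecting homomorphism acts'' is the right intuition but compresses what is in fact a nontrivial spectral-sequence/bicomplex verification. Second, in the irregular case the paper factors the comparison into two distinct steps, $\sfi$ (an isomorphism of $\Tor$ groups induced by $\inclusiono$ at the level of polynomial rings and $\moduleS_{\fa}$) followed by $\sfv$ (induced by $\lambda^{\hdelta'}\times?$), and verifies each separately at the chain level via (\ref{E:iotafr}); your account collapses this to ``routine bookkeeping,'' which underestimates the care needed to see that $\sfv\circ\inclusiono$ reproduces (\ref{E:irregmap}) exactly. Neither point is a fatal gap, but both are places where the paper does real work that your proposal declares finished prematurely.
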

\begin{proof}
I start with the proof of the second statement.  For this purpose , I replace $[\hdelta',\hbeta]\supset [\hdelta,\hbeta]$ by elementary extensions of sets $\Delta_i\supset \Delta_{i+1}$ from Definition \ref{D:thldef}. 
If $\sfq:\hA(\Delta_i)\supset \hA(\Delta_{i+1})$
belongs to the groups (\ref{E:group1}),(\ref{E:group2}), then, by Proposition \ref{P:classofextreg}, $\Th_{\sfq}$ is the class of the extension 
(\ref{E:extsimple}). 

By (\ref{E:THdef}), $\Th$ is the boundary homomorphism $H_{\fa}^i[\hdelta,\hbeta]\to H_{\fa}^{i+1}(\hdelta,\hbeta]$ corresponding to (\ref{E:extsimple}). 
I replace ${\hA[\hdelta,\hbeta]}$ in (\ref{E:extsimple}) by its $\C[\hdelta,\hbeta]$ resolution $F_{\bullet}[\hdelta,\hbeta]$. The extension lifts to an exact triangle
\[F_{\bullet}\overset{\lambda^{\hdelta}\times?}{\longrightarrow} F_{\bullet}\to B_{\bullet}(F, \lambda^{\hdelta})\to F_{\bullet}[-1].\]
$B_{\bullet}(F, \lambda^{\hdelta})$ is the diagonal complex of the Koszul (bi)-complex $B_{\bullet}(F_{\bullet}, \lambda^{\hdelta})$ which coincides with the cone of the operator $\lambda^{\hdelta}\times ?$ on $F_{\bullet}$. 
The diagonal complex $\Frg^{\fa}_{\bullet}(F)$ of the bicomplex $\Frg^{\fa}_{\bullet}(F_{\bullet})$ is a modification of $\Frg^{\fa}_{\bullet}$ in which $\hA$ in (\ref{E:Mrgdef}) is replaced $F_{\bullet}$. Evidently, $\Frg^{\fa}_{\bullet}$ and $\Frg^{\fa}_{\bullet}(F)$ are quasi-isomorphic.
My task is to trace the action of the boundary map 
\[\Frg_{\bullet}^{\fa}(B(F))=B_{\bullet}(\Frg^{\fa}(F))\to \Frg_{\bullet}^{\fa}(F[-1]).\] 
The Koszul construction $B_{\bullet}(\Frg^{\fa}(F))=\Frg^{\fa}(F)\otimes \Lambda[\theta^{\hdelta}]$ is built on the operator $\lambda^{\hdelta}\times ?$ on $\Mrg_{\fa}(F_i)=F_i\underset{R}{\otimes}\moduleS_{\fa}$, which is then extended to $\Frg_{\bullet}^{\fa}(F)$. Originally, it is induced from the action of $\lambda^{\hdelta}\times ?$ on $F_i$. Notice that $\lambda^{\hdelta}\times ?$ acting on $\moduleS_{\fa}$ defines the same operator in $\Mrg_{\fa}(F_i)$. In the spectral sequence of the bicomplex $B_i((\Frg^{\fa} F)_j)$, the zero cohomology $H_0B((\Frg^{\fa} F)_{\bullet})$ is zero, for $\lambda^{\hdelta}\times ?$ is surjective in $\moduleS_{\fa}[\hdelta,\hbeta]$. The kernel of $\lambda^{\hdelta}\times ?$ in $\Frg_{\bullet}^{\fa} F[\hdelta,\hbeta]$ consists of elements of the form $\frac{1}{\lambda^{\hdelta}}b,b\in \Frg_{\bullet}^{\fa} F(\hdelta,\hbeta]$. The assignment $b\to \frac{1}{\lambda^{\hdelta}}b$ allows me to identify the cohomology of $B_{\bullet}(\Frg^{\fa}(F[\hdelta,\hbeta]))$ with the cohomology of $\Frg_{\bullet}^{\fa}(F(\hdelta,\hbeta])$.
The boundary map takes $\frac{1}{\lambda^{\hdelta}}b\xi\in B_{\bullet}(\Frg^{\fa}(F))$ to $\frac{1}{\lambda^{\hdelta}}b\in \Frg_{\bullet}^{\fa}(F)$. Now it becomes evident that the boundary map will match with (\ref{E:regmap}) after I compress resolutions $F_{\bullet}[\hdelta,\hbeta]$ and $F_{\bullet}(\hdelta,\hbeta]$ back to ${\hA[\hdelta,\hbeta]}$ and ${\hA(\hdelta,\hbeta]}$.

It remains to understand the structure of the map in local cohomology induced by inclusion $[\hdelta',\hbeta] \subset (\hdelta,\hbeta]$ (\ref{E:inclusionofA}). I study it by replacing local cohomology by more manageable $\Tor$ groups. I factor the map into a composition of two maps. The first is
\[H_{\fa}^{3-\rho(\hdelta')-i}[\hdelta',\hbeta]=\Tor_{i}^{\C[\hdelta',\hbeta]}({\hA[\hdelta',\hbeta]}, \moduleS_{\fa}[\hdelta',\hbeta])\overset{\sfi}{\cong}\Tor_i^{\C(\hdelta,\hbeta]}({\hA[\hdelta',\hbeta]}, \moduleS_{\fa}(\hdelta,\hbeta]).\]
The isomorphism is produced by maps $\C[\hdelta',\hbeta]\to \C(\hdelta,\hbeta]$, $\moduleS_{\fa}[\hdelta',\hbeta]\to \moduleS_{\fa}(\hdelta,\hbeta]$ which are induced by $\inclusiono$ (\ref{E:functorialpr}).

Arguing as in the proof of Proposition \ref{E:redisomorphism} and taking into account that $[\hdelta',\hbeta] \subset (\hdelta,\hbeta]$ belong to the class (\ref{E:group3}),
I verify that the isomorphism $\sfm$ is induced by the map of the close relative of the complex $\Frg_{\bullet}^{\fa}$:
\begin{equation}\label{E:iotafr}
\inclusiono:B_{\bullet}(\moduleS_{\fa}[\hdelta',\hbeta]\underset{R[\hdelta',\hbeta]}{\otimes} {\hA[\hdelta',\hbeta]},\{\underline{\lambda}_-^0,\underline{\lambda}_-^1, \underline{\lambda}_-^2\})\to B_{\bullet}(\moduleS_{\fa}(\hdelta,\hbeta]\underset{R(\hdelta,\hbeta]}{\otimes} {\hA[\hdelta',\hbeta]},\{\underline{\lambda}_-^0,\underline{\lambda}_-^1, \underline{\lambda}_-^2\}).
\end{equation}

The second map in the factorisation 
\[\Tor_i^{\C(\hdelta,\hbeta]}({\hA[\hdelta',\hbeta]}, \moduleS_{\fa}(\hdelta,\hbeta])\overset{\sfv}{\to} \Tor_i^{\C(\hdelta,\hbeta]}({\hA(\hdelta,\hbeta]}, \moduleS_{\fa}(\hdelta,\hbeta])\cong H_{\fa}^{3-\rho(\hdelta')-i}(\hdelta,\hbeta]\]
is induced by the map of $\C(\hdelta,\hbeta]$-modules $\lambda^{\hdelta'}\times ?$ (\ref{E:inclusionofA}). 
Again, repeating the arguments of the proof of Proposition \ref{E:redisomorphism}, we 
see that $\sfv$ lifted to he level of map of chains complexes 
\[B_{\bullet}(\moduleS_{\fa}(\hdelta,\hbeta]\underset{R(\hdelta,\hbeta]}{\otimes} {\hA[\hdelta',\hbeta]},\{\underline{\lambda}_-^0,\underline{\lambda}_-^1, \underline{\lambda}_-^2\})\overset{\sfv}{\to} B_{\bullet}(\moduleS_{\fa}(\hdelta,\hbeta]\underset{R(\hdelta,\hbeta]}{\otimes} {\hA(\hdelta,\hbeta]},\{\underline{\lambda}_-^0,\underline{\lambda}_-^1, \underline{\lambda}_-^2\})\]
is equal to $\lambda^{\hdelta'}\times ?$. 

We see that $\sfv\circ \inclusiono$ coincides with right hand side of (\ref{E:irregmap}).
The first statement uses isomorphism (\ref{E:IeqHtensprod}) and follows the same lines as the second.

I leave the proof of the third statement as an exercise.

\end{proof}
\begin{example}
 To make an illustration I suppose $k=\rho(\lambda^{\hdelta_2}),k-1=\rho(\lambda^{\hdelta'_2})$. Then the map (\ref{E:iotafr}) does the following transformation
\[a\otimes \varpi[\hdelta_2,\hbeta]=a\otimes \frac{1}{\lambda^{\hdelta_2}}\frac{1}{\lambda^{\hdelta'_2}}\prod_{k-1<i\leq -1}\frac{1}{\lambda^i}\overset{\inclusiono}\to
a\otimes \frac{1}{\lambda^k} \frac{1}{\lambda^{k-1}}\prod_{k-1<i\leq -1}\frac{1}{\lambda^i}=a\otimes \varpi(\hdelta_1,\hbeta]. \]
Elements $\lambda^{i}$ are defined in (\ref{E:regconstr}).
The values of the composition of $\inclusiono$ and $\lambda^{\hdelta_2}\times ?$ is 
\[a\otimes \frac{1}{\lambda^{\hdelta_2}}\frac{1}{\lambda^{\hdelta'_2}}\prod_{k-1<i\leq -1}\frac{1}{\lambda^i} \to \tilde{a}\otimes \frac{\lambda^{\hdelta_2}}{\lambda^k} \frac{1}{\lambda^{k+1}}\prod_{k+1<i\leq -1}\frac{1}{\lambda^i}.\]
\end{example}

\begin{lemma}\label{E:injective}
The maps (\ref{E:regmap}) and (\ref{E:irregmap}) and their compositions (\ref{E:inddef}) are injective. The map \ref{E:resdef} is surjective.
\end{lemma}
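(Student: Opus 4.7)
The plan is to check injectivity of the two elementary forms of $\Thl$ separately---the regular case (\ref{E:regmap}) and the irregular case (\ref{E:irregmap})---after which injectivity of the general map (\ref{E:inddef}) follows because a composition of injections is injective. Surjectivity of $\pl$ will be handled separately and reduces to surjectivity of each of its two tensor factors.

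For the regular case, my key observation will be that the element $\frac{1}{\lambda^{\hdelta}}f\in\moduleS_{\fa}[\hdelta,\hbeta]$ satisfies $\lambda^{\hdelta}\cdot\frac{1}{\lambda^{\hdelta}}f=0$, so every tensor $\tilde{a}\otimes\frac{1}{\lambda^{\hdelta}}f$ is annihilated by $\lambda^{\hdelta}$ and the image of $\Thl$ lies in the $\lambda^{\hdelta}$-torsion submodule of $\Mrg_{\fa}[\hdelta,\hbeta]$. Using the identifications $\hA[\hdelta,\hbeta]/(\lambda^{\hdelta})=\hA(\hdelta,\hbeta]$ from (\ref{E:semiinterval}) together with $R^{\fa}[\hdelta,\hbeta]/(\lambda^{\hdelta})=R^{\fa}(\hdelta,\hbeta]$ (valid in the regular case because $\hdelta\notin\rA$ and $\lambda^{\hdelta}$ is one of the polynomial generators of $R^{\fa}$), this torsion submodule rewrites as
\[
\hA(\hdelta,\hbeta]\underset{R^{\fa}(\hdelta,\hbeta]}{\otimes}\left(\tfrac{1}{\lambda^{\hdelta}}\moduleS_{\fa}(\hdelta,\hbeta]\right)\cong\Mrg_{\fa}(\hdelta,\hbeta].
\]
Under this identification $\Thl$ becomes the identity on $\Mrg_{\fa}(\hdelta,\hbeta]$, hence injective; tensoring with $\Lambda[\theta^{\halpha}]$ to obtain the map of Koszul chain spaces preserves the injectivity.

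For the irregular case, Proposition \ref{P:extmaintheorem}, which is applicable by Lemma \ref{L:saturate0}(\ref{I:saturate0second}), gives the short exact sequence (\ref{E:Binclusionexact}) whose leftmost nontrivial arrow is $\sfv:\hA[\hdelta,\hbeta]\hookrightarrow\hA(\hgamma,\hbeta]$, so $\sfv$ is injective. The second factor $\inclusiono$ is injective directly from (\ref{E:functorialpr}), since each of $\mathsf{inc}$ and $\mathsf{pr}\circ\mathsf{inc}$ embeds the underlying coordinate spaces and this lifts to an injection of the corresponding tensor-product $D$-modules. The hard part will be to deduce injectivity of the tensor product $\sfv\otimes\inclusiono$ despite the change of coefficient ring from $R^{\fa}[\hdelta,\hbeta]$ to $R^{\fa}(\hgamma,\hbeta]$. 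I expect to resolve this by appealing to the standard monomial basis of $\hA[\hdelta,\hbeta]$ supplied by Proposition \ref{E:degreenull} together with the tensor factorization (\ref{E:Sdecomp}) of $\moduleS_{\fa}$: the images of these basis vectors will form a linearly independent family in $\Mrg_{\fa}(\hgamma,\hbeta]$, which can be verified directly using the freeness of $\hA$ over the $H_2$-part of $R^{\fa}$ guaranteed by Lemma \ref{L:regularity}.

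The compositions (\ref{E:inddef}) are then injective as composites of the elementary injective arrows already handled. For $\pl$ in (\ref{E:resdef}), both defining ingredients are evidently surjective: $\sfp:\hA[\hdelta,\hbeta]\twoheadrightarrow\hA[\hdelta,\hbeta']$ is the quotient of Definition \ref{D:algAdelta}, and $\projectiono:\moduleS_{\fa}[\hdelta,\hbeta]\twoheadrightarrow\moduleS_{\fa}[\hdelta,\hbeta']$ from (\ref{E:Fres}) is induced by the surjection of polynomial algebras $\C[(0)^0,\hbeta]\twoheadrightarrow\C[(0)^0,\hbeta']$. Their tensor product over $R^{\fa}$ is therefore surjective, and further tensoring with the Koszul factor $\Lambda[\theta^{\halpha}]$ preserves surjectivity on each chain space $\Frg^{\fa}_i$.
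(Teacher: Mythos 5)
The paper proves injectivity by a cohomological argument, not a chain-level one: using Propositions~\ref{P:colimitmaps} and~\ref{P:fockasloc} it identifies the generating space of $\Mrg_{\fa}$ with $H_{\ff}^{-\rho(\hdelta)}(\hA)\otimes\C[\rA^{\fa}]^{-1}$, so that $\Thl$ becomes the boundary map in the long exact sequence of local cohomology $H_{\ff}^{\bullet}$ attached to the short exact sequence of algebras, and then invokes Corollary~\ref{C:Ionedegree} --- $H_{\ff}^{i}$ is concentrated in a \emph{single} degree --- to force the boundary to be an inclusion. Your approach is genuinely different: you try to argue directly at the chain level. For the regular case you want to identify the image of $\Thl$ with the $\lambda^{\hdelta}$-torsion submodule of $\Mrg_{\fa}[\hdelta,\hbeta]=\hA[\hdelta,\hbeta]\underset{R^{\fa}}{\otimes}\moduleS_{\fa}[\hdelta,\hbeta]$ and then rewrite that torsion submodule as $\Mrg_{\fa}(\hdelta,\hbeta]$. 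That identification is stated but not justified: torsion does not in general commute with $\underset{R^{\fa}}{\otimes}$, and the passage $\hA[\hdelta,\hbeta]/(\lambda^{\hdelta})\otimes(\cdots)\cong(\text{torsion of } \hA[\hdelta,\hbeta]\otimes(\cdots))$ requires an argument (e.g.\ Tor-vanishing, or freeness in the relevant variable), which you do not supply. The paper's route sidesteps exactly this issue by never decomposing the tensor product.

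More seriously, your treatment of the irregular case is not a proof. You explicitly write ``The hard part will be to deduce injectivity of the tensor product $\sfv\otimes\inclusiono$ despite the change of coefficient ring\dots\ I expect to resolve this by appealing to the standard monomial basis\dots'' --- this is precisely where the difficulty sits, and it is left unresolved. Note that in the irregular case the map $\sfv$ is multiplication by $\lambda^{\hdelta'}$, whose cokernel $\hA[\hdelta'',\hbeta]$ is a nontrivial algebra (Proposition~\ref{P:extmaintheorem}), so injectivity of $\sfv\otimes\inclusiono$ after tensoring over a changing coefficient ring is not formal. The paper again bypasses this: it observes that (\ref{E:irregmap}) is induced by the \emph{inclusion} arrow of the short exact sequence $0\to \hA[\hdelta',\hbeta]\to\hA(\hdelta,\hbeta]\to\hA[\hdelta'',\hbeta]\to 0$, passes to local cohomology where Corollary~\ref{C:Ionedegree} makes the long exact sequence collapse in degree $-\rho(\hdelta')=-\rho(\hdelta'')$, and reads off injectivity. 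Your surjectivity argument for $\pl$ --- right exactness of $\otimes$ applied to a pair of surjections --- is correct and is in fact more elementary than the paper's (which again uses the collapsing long exact sequence). But for the statement as a whole you have a genuine gap: the irregular case is not proved, and the regular case rests on an unjustified torsion computation.
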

\begin{proof}
Let us consider the regular case (\ref{E:regmap}) first.
By Propositions \ref{P:colimitmaps} and \ref{P:fockasloc} the map (\ref{E:regmap}) is a map of free $\Lambda[\theta^{0},\theta^{1},\theta^{2}]$-modules  induced on the generating spaces by  the boundary differential in the exact sequence
\[\cdots \to H_{\ff}^{-\rho(\hdelta)-1}[\hdelta,\hdelta']\to H_{\ff}^{-\rho(\hdelta)-1}(\hdelta,\hdelta']\to H_{\ff}^{-\rho(\hdelta)}[\hdelta,\hdelta']\to \cdots\]
By Corollary \ref{C:Ionedegree}  $H_{\ff}^{i}[\hdelta,\hdelta']$ is nonzero only  in degree $-\rho(\hdelta)$ and $H_{\ff}^{i}(\hdelta,\hdelta']$ is nonzero only  in degree $-\rho(\hdelta)-1$. Thus the boundary differential must be an inclusion.

Consider now the irregular case. Arguing as in the regular case we see that the map (\ref{E:irregmap}) is induced by the map
\[H_{\fc}^{-\rho(\hdelta)-1}[\hdelta',\hbeta]\to H_{\fc}^{-\rho(\hdelta)-1}(\hdelta,\hbeta]\]
It originates from the  inclusion map in the short exact sequence $0\to \hA[\hdelta',\hbeta]\overset{\lambda^{\hdelta_2}\times ?}\longrightarrow \hA(\hdelta,\hbeta]\to \hA[\hdelta'',\hbeta]\to0$. Note that $\rho(\hdelta')=\rho(\hdelta'')$. We see  that the argument of the regular case can be repeated. The long exact sequence of local cohomology is in fact a short exact sequence nontrivial in degree $-\rho(\hdelta')$. Thus the map in question is an inclusion.

Similar arguments explain surjectivity of the map $\pl$. The only difference is that  the long exact sequences reduce to short exact sequences of local cohomology in degree $-\rho(\hdelta)$.
\end{proof}

\subsection{The complex $\Frg_{\bullet}^{\fa}$}\label{S:fockinf}
My present goal is to construct a complex that would compute cohomology $H_{\fa}^{3-i+\itwo}$ in one step.
This complex will be denoted by $\Frg_{\bullet}^{\fa}$.

The  maps (\ref{E:inddef}) and (\ref{E:resdef}) define a bidirect system on $\Frg_{\bullet}^{\fa}[\hdelta,\hgamma]$ parametrized by  $\hdelta\in  \rM_1^{+}$,$\hgamma\in  \rM_1^{-}$. I use   reductive action of  group $\Aut$  to decompose this complex into a direct sum  
\[\Frg_{\bullet}^{\fc}[\hdelta,\hgamma]=\bigoplus_{w} \Frg_{\bullet}^{\fc}[\hdelta,\hgamma]^{w},\fc=\fa,\fb\]
Here  $w$ stands for the weight of $\Aut$.

Introduce notations for   limits:
\begin{equation}
\begin{split}
& \overline{\Frg}^{\fc,w}_{\bullet}:=\underset{\underset{\hdelta}\longrightarrow}{\lim}\underset{\underset{\hgamma}\longleftarrow}{\lim}\Frg_{\bullet}^{\fc}[\hdelta,\hgamma]^w,\\
&\underline{\Frg}^{\fc,w}_{\bullet}:=\underset{\underset{\hgamma}\longleftarrow}{\lim}\underset{\underset{\hdelta}\longrightarrow}{\lim}\Frg_{\bullet}^{\fc}[\hdelta,\hgamma]^w.
\end{split}
\end{equation}
\begin{proposition}
The map 
\begin{equation}\label{E:kappaf}
\kappa:\overline{\Frg}^{\fc,w}_{\bullet}\to \underline{\Frg}^{\fc,w}_{\bullet},\fc=\fa,\fb
\end{equation} is an embedding.
\end{proposition}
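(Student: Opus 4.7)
The plan is to construct $\kappa$ using the universal property of the direct limit and then derive injectivity directly from the degree-wise injectivity of the structure maps $\Thl$ established in Lemma~\ref{E:injective}.

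First, I would verify that $\kappa$ is canonically defined. For each fixed $\hdelta \in \rM_1^+$, the tautological maps $\Frg^{\fc}_\bullet[\hdelta,\hgamma]^w \to \underset{\underset{\hdelta}\longrightarrow}{\lim}\,\Frg^{\fc}_\bullet[\hdelta,\hgamma]^w$ are compatible with $\pl$ as $\hgamma$ varies over $\rM_1^-$, because $\Thl$ and $\pl$ commute (the remark following Definition~\ref{D:thldef}). They therefore assemble into a chain map $\underset{\underset{\hgamma}\longleftarrow}{\lim}\,\Frg^{\fc}_\bullet[\hdelta,\hgamma]^w \to \underline{\Frg}^{\fc,w}_\bullet$. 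The same commutation makes these maps compatible with $\Thl$ as $\hdelta$ decreases, so by the universal property of the direct limit they descend to the desired chain map $\kappa \colon \overline{\Frg}^{\fc,w}_\bullet \to \underline{\Frg}^{\fc,w}_\bullet$.

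For injectivity, I would fix a weight $w$, a homological degree, and an element $x \in \overline{\Frg}^{\fc,w}_\bullet$ with $\kappa(x) = 0$. By the definition of the direct limit over the directed set $\rM_1^+$, $x$ is represented by some compatible family $\tilde{x} = (\tilde{x}_\hgamma)_{\hgamma \in \rM_1^-} \in \underset{\underset{\hgamma}\longleftarrow}{\lim}\,\Frg^{\fc}_\bullet[\hdelta_0,\hgamma]^w$ for a single $\hdelta_0 \in \rM_1^+$. The hypothesis $\kappa(x) = 0$ means precisely that for every $\hgamma \in \rM_1^-$ the class of $\tilde{x}_\hgamma$ in $\underset{\underset{\hdelta}\longrightarrow}{\lim}\,\Frg^{\fc}_\bullet[\hdelta,\hgamma]^w$ vanishes; equivalently, there exists $\hdelta(\hgamma) \leq \hdelta_0$ with $\Thl(\tilde{x}_\hgamma) = 0$ in $\Frg^{\fc}_\bullet[\hdelta(\hgamma),\hgamma]^w$. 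Since $\Thl$ is injective in each degree by Lemma~\ref{E:injective} (and the analog for $\fc = \fb$ obtained by conjugation with $\shift\reflection$ as in Remark~\ref{R:dualssystem}), we conclude that $\tilde{x}_\hgamma = 0$ for every $\hgamma$. Hence $\tilde{x} = 0$ in the inverse limit and $x = 0$ in $\overline{\Frg}^{\fc,w}_\bullet$.

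The argument relies entirely on the chain-level injectivity of $\Thl$; no finite-dimensionality or Mittag-Leffler hypothesis enters. In this sense, the only real obstacle has already been overcome in Lemma~\ref{E:injective}, where injectivity was deduced from the vanishing pattern of $H_{\ff}^\bullet$ and $H_{\ff'}^\bullet$ recorded in Corollary~\ref{C:Ionedegree}. The deeper question of whether $\kappa$ is an isomorphism (which the proposition does \emph{not} claim) would require the uniform weight-space bound of Proposition~\ref{P:importantprop}(\ref{I:Pimportantprop}) together with the weight-stabilized surjectivity of Proposition~\ref{P:surjectivity9}, presumably via a Mittag-Leffler argument on each weight component.
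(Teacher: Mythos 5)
Your argument is correct and is essentially the paper's proof unpacked: the paper cites Lemma~\ref{E:injective} (degree-wise injectivity of $\Thl$) and then invokes the general bidirect-system fact Proposition~\ref{P:kappaisomorphism}(1), whose proof is exactly the diagram chase you carry out explicitly. Your closing remarks about $\fc=\fb$ via $\shift\reflection$-conjugation and about why isomorphism would require the finiteness bounds are also consistent with what the paper does elsewhere.
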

\begin{proof}
By Lemma \ref{E:injective} conditions of Proposition \ref{P:kappaisomorphism} on the structure maps of the bidirect system  are satisfied.
\end{proof}

\begin{definition}
Introduce notations for   limits:
\begin{equation}
\begin{split}
& \overline{\Frg}^{\fc,w}_{\bullet}:=\underset{\underset{\hdelta}\longrightarrow}{\lim}\underset{\underset{\hgamma}\longleftarrow}{\lim}\Frg_{\bullet}^{\fc}[\hdelta,\hgamma]^w,\\
&\underline{\Frg}^{\fc,w}_{\bullet}:=\underset{\underset{\hgamma}\longleftarrow}{\lim}\underset{\underset{\hdelta}\longrightarrow}{\lim}\Frg_{\bullet}^{\fc}[\hdelta,\hgamma]^w.
\end{split}
\end{equation}
\end{definition}

\subsection{The definition of semi-infinite local cohomology 
 }\label{SS:definitionh}
 Our definition of semi-infinite local cohomology uses  bidirect systems of linear spaces.
The reader can  consult \cite{EckmannHilton},\cite{FreiMacdonald}, \cite{CieliebakFrauenfelder}, for details on this topic.

Let $w$ be a weight of the group $\Aut$.
By Remark \ref{R:weightcompatibility} maps in (\ref{E:diagramindprolocal}) commute with the renormalized  $\Aut$ action. This is why weight spaces $H_{\fc}^{i}[\hdelta,\hdelta']^w$ form a bidirect system labelled by $\hdelta\in \setA^+:= \rM_1^{+}\sqcup \rM_3^{+}$ and $\hdelta'\in   \setA^-:=\rM_1^{-}\sqcup \rM_3^{-}$. The sets of indices have a partial order induced from $\sethE$.

There are several types of semi-infinite local cohomology groups.
\begin{definition}\label{D:defloccoh}{\em
 The $\Aut$ weight subspace of  semi-infinite local cohomology ($\fc=\fa,\fb$) of the directed system of algebras $\hA[\hdelta,\hdelta']$ (Definition \ref{D:algAdelta})  corresponding to the weight $w$ are the limits
\begin{equation}\label{E:semiinfinitecohdef}
\begin{split}
&  \overline{H}_{\fc}^{i+\itwo,w}:=\begin{cases}&\underset{\underset{\hdelta\in  \setA^+ }{\longrightarrow}}{\lim}\underset{\underset{\hdelta'\in  \setA^-}{\longleftarrow}}{\lim}H_{\fa}^{i+s'(\fa)}[\hdelta,\hdelta']^w, \fc=\fa\\ 
&\underset{\underset{\hdelta\in  \setA^-}{\longrightarrow}}{\lim}\underset{\underset{\hdelta'\in  \setA^+}{\longleftarrow}}{\lim}H_{\fb}^{i+s'(\fb)}[\hdelta,\hdelta']^w,\fc=\fb
\end{cases}
,\quad  \underline{H}_{\fc}^{i+\itwo,w}:=\begin{cases}&\underset{\underset{\hdelta'\in  \setA^-}{\longleftarrow}}{\lim}\underset{\underset{\hdelta\in  \setA^+}{\longrightarrow}}{\lim}H_{\fa}^{i+s'(\fa)}[\hdelta,\hdelta']^w,\fc=\fa\\
&\underset{\underset{\hdelta'\in  \setA^+}{\longleftarrow}}{\lim}\underset{\underset{\hdelta\in  \setA^-}{\longrightarrow}}{\lim}H_{\fb}^{i+s'(\fb)}[\hdelta,\hdelta']^w,\fc=\fb.
\end{cases}
\end{split}
\end{equation}

The structure maps of the bidirect system $\hdelta\in  \rM_1^{+}\sqcup \rM_3^{+}$,$\hdelta'\in  \rM_1^{-}\sqcup \rM_3^{-}$ come from the diagram (\ref{E:diagramindprolocal}). The function $s'$ is taken from (\ref{E:sschiftdef}). 
Similar definition can be made using  subsets of indices  $\setB^+:=\{(0)^u|u\in \Z\}\subset \rM_3^{+}$,$\setB^-:=\{(1)^u|u\in \Z\}\subset \rM_3^{-}$.

}\end{definition}
There is a canonical homomorphism 
\begin{equation}\label{E:kappaiso}
\kappa^w_{\fc}:\overline{H}_{\fc}^{i+\itwo,w}\to \underline{H}_{\fc}^{i+\itwo,w}, \fc=\fa,\fb,
\end{equation} 
(see (\ref{E:fourterm}) definition). For  more substantial discussion of $\kappa$ see  \cite{EckmannHilton} Section 3.2. Direct and inverse limits usually do not commute. See Example \ref{EX:dirinve} for discussion of this phenomenon. This is why  I can't take for granted that  $\kappa_{\fc}$ is  an isomorphism. Still it is true:
\begin{proposition}\label{E:kappaisomorphism}
\begin{enumerate}
\item The map (\ref{E:kappaiso}) is an isomorphism. 
\item \label{I:kappaisomorphism} The limits taken over the sets $\setA^+,\setA^-$ and $\setB^+,\setB^-$ coincide.
\item By by virtue of the previous item the following   notations make sense
 \[\begin{split}
 &H_{\fc}^{i+\itwo,w}:= \overline{H}_{\fc}^{i+\itwo,w}= \underline{H}_{\fc}^{i+\itwo,w}\\
 &H_{\fc}^{i+\itwo}:=\bigoplus_{w} H_{\fc}^{i+\itwo,w},\fc=\fa,\fb
 \end{split}\]
 The space $H_{\fc}^{i+\itwo}$ is equipped with the $\Spin(10)$-action.
\end{enumerate}
\end{proposition}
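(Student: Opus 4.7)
Fix a weight $w=(a,u,r)$ of $\Aut$. The strategy is to show that, for each fixed $w$, both the inverse system (in $\hdelta'$) and the direct system (in $\hdelta$) of finite-dimensional weight spaces $H_{\fc}^{i+s'(\fc)}[\hdelta,\hdelta']^{w}$ stabilize in the weight-space category. Once this is established, the iterated limits in either order collapse to the same finite-dimensional stable value, which gives both $\kappa_{\fc}^{w}$ being an isomorphism and the independence of the indexing set; only then does one need to verify the $\Spin(10)$-equivariance.

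First I would handle the inverse limit. By Remark \ref{R:weightcompatibility} all the structure maps in (\ref{E:diagramindprolocal}) preserve the renormalized $\Aut$-grading, so they restrict to linear maps between the weight spaces $H_{\fc}^{i+s'(\fc)}[\hdelta,\hdelta']^{w}$, which by Proposition \ref{P:importantprop}(\ref{I:Pimportantprop}) satisfy the uniform bound $\dim \le C_{w}$. For $\fc=\fa$ Proposition \ref{P:surjectivity9} tells me that the restriction $\sfp\colon H_{\fa}^{i}[\hdelta,\hbeta']^{w}\to H_{\fa}^{i}[\hdelta,\hbeta]^{w}$ is surjective as soon as $u(\hbeta)>u$. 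A descending chain of surjections between spaces of uniformly bounded dimension must stabilize, so the inner inverse limit over $\hdelta'\in \setA^{-}$ (or $\setB^{-}$) is attained at any sufficiently large $\hdelta'$. The case $\fc=\fb$ is symmetric.

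Next I would handle the direct limit by dualizing. By Proposition \ref{P:adjointdiagran} the diagram defining the direct system for $\fa$ is, weight-space by weight-space, the graded $\C$-dual of the corresponding diagram for $\fb$; equivalently, Corollary \ref{C:adjoint} identifies $\Th_{\sfq}$ as the $\C$-adjoint of $\sfp$ under the nondegenerate pairing (\ref{E:fundpairingdef}). Applying Proposition \ref{P:surjectivity9} to the $\fb$-side with the appropriate weight bound ($u(\hdelta)$ small in the renormalized grading), the dual restriction maps are surjective for $\hdelta$ small enough, hence $\Th_{\sfq}$ is injective on the weight component under consideration. Combined with the bound $\dim\le C_{w}$ from Proposition \ref{P:importantprop}(\ref{I:Pimportantprop}), the direct system stabilizes as well. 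Hence both iterated limits coincide with the common stable value, giving $\overline{H}_{\fc}^{i+\itwo,w}\cong \underline{H}_{\fc}^{i+\itwo,w}$.

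For item (\ref{I:kappaisomorphism}), I would argue that $\setB^{\pm}$ is cofinal in $\setA^{\pm}$: inspection of the periodic Hasse diagram (\ref{P:kxccvgw13}) together with the $\shift$-invariance of $\rM$ (Lemma \ref{L:multstatM}) shows that for every $\hdelta\in \setA^{+}$ there is an $N$ with $(0)^{N}\leq \hdelta$ in $\sethE$, and symmetrically for $\setA^{-}$. Since the stabilization argument above applies verbatim to any cofinal subsystem, the limits over $\setB^{\pm}$ agree with those over $\setA^{\pm}$. Finally, for the $\Spin(10)$-action, Remark \ref{R:groupactionloccoh} and Proposition \ref{P:pairingdegree}(\ref{I:pairingdegree3}) furnish a $\Spin(10)$-action on each $H_{\fc}^{i}[(0)^{N},(1)^{N'}]^{w}$, and the structure maps $\Th$ and $\sfp$ for the $\setB^{\pm}$-indexed subsystem are induced from $\Spin(10)$-equivariant algebra homomorphisms $\hA[(0)^{N},(1)^{N'}]\to \hA[(0)^{M},(1)^{M'}]$; hence they commute with the $\Spin(10)$-action. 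The main technical obstacle in this argument is the interplay between the shift in cohomological degree by $s'(\fa)=3-\rho(\hdelta)$ and the codimension shift produced by $\Th_{\sfq}$; the design of the renormalization in Definition \ref{D:defloccoh} and Remark \ref{R:degreeshift1} is precisely arranged so these cancel, which is what allows the stabilization argument to run weight by weight.
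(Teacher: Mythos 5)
Your argument has a genuine gap precisely at the point the paper itself flags as the delicate one.  You establish, correctly, that for each fixed $\hdelta$ the inverse system in $\hdelta'$ stabilizes (surjections plus the uniform dimension bound $C_w$), and that for each fixed $\hdelta'$ the direct system in $\hdelta$ stabilizes (injections plus the same bound).  But from these two one-variable stabilization statements you conclude ``Hence both iterated limits coincide with the common stable value.''  That inference does not hold for a general bidirect system: the stabilization threshold in the $\hdelta'$ direction may grow without bound as $\hdelta\to-\infty$, and the stabilization threshold in the $\hdelta$ direction may drift to $-\infty$ as $\hdelta'\to\infty$.  The paper's Example \ref{EX:dirinve}(3) is exactly this situation — every row and every column of the bidirect system eventually becomes constant, both iterated limits are nonzero, yet the comparison map $\kappa$ is zero.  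What the Proposition asserts is that $\kappa$ \emph{itself} (not merely some abstract isomorphism) is an isomorphism, and that requires producing a \emph{single} rectangular region of indices in which both the $\Th$-maps and the $\sfp$-maps are simultaneously bijections, so that Proposition \ref{P:kappaisomorphism} (or the cited Theorem 5.6 of \cite{FreiMacdonald}) applies.

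The paper's proof supplies exactly this missing uniformity: it introduces the integer $k(\hbeta')$ recording the threshold in $\hdelta$ below which $\Th$ becomes an isomorphism, and proves by contradiction (using the dimension bound along an alternating chain of strict injections and surjections) that $\inf_{\hbeta'}k(\hbeta')>-\infty$.  A second application of the bound then produces an $l$ such that on the region $\rho(\hdelta)<k$, $\rho(\hbeta)>l$ all structure maps are isomorphisms, after which items (1)–(3) fall out.  To repair your argument you would need to insert this uniformity step — or some equivalent — between ``each system stabilizes'' and ``the iterated limits coincide.''  Your treatment of item (2) via cofinality of $\setB^\pm$ in $\setA^\pm$, and of the $\Spin(10)$-equivariance via Remark \ref{R:groupactionloccoh}, is sound once item (1) is in place.
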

\begin{proof}
Fix the weight $w=(a,u,r)$. By Proposition \ref{P:surjectivity9} and $*$-duality (Proposition \ref{C:adjoint})  the map 
\begin{equation}\label{E:proj23}
\sfp:H_{\fa}^i[\hdelta,\hbeta']^w\to H_{\fa}^i[\hdelta,\hbeta]^w,u(\hbeta)>u
\end{equation} is onto and the map 
\begin{equation}\label{E:thinjection}
\Th_{\sfp}:H_{\fa}^i[\hdelta,\hbeta']^w\to H_{\fa}^{i+\codim}[\hdelta',\hbeta']^w, \codim=\rho(\hdelta)-\rho(\hdelta'), u<-u(\hdelta)
\end{equation} is an injection.

Let $k(\hbeta')$ be the maximal integer such that for any $\hdelta$ with $\rho(\hdelta)\leq k(\hbeta')$ the map (\ref{E:thinjection}) is an isomorphism. Such $k$ exists  because of the bound  (\ref{E:dimbound}). I claim that $\inf_{\hbeta'>(0)^0} k(\hbeta')>-\infty$. Let us assume otherwise. Then there is a sequence
\begin{equation}\label{E:assumption}
 \hbeta_1<\hbeta_2<\cdots\text{ such that } k(\hbeta_i)>k(\hbeta_{i+1}).
 \end{equation} I can assume that $u(\hbeta_i)>u$ for all $i$. I pick a sequence $\hdelta_i$ such that $\rho(\hdelta_i)=k(\hbeta_i)$. By the assumption (\ref{E:assumption}) I have a sequence of maps
\[H_{\fa}[\hdelta_1,\hbeta_1]^w \twoheadleftarrow  H_{\fa}[\hdelta_1,\hbeta_2]^w\rightarrowtail H_{\fa}[\hdelta_2,\hbeta_2]^w \twoheadleftarrow H_{\fa}[\hdelta_2,\hbeta_3]^w\rightarrowtail \cdots\]
In the above formula I omitted cohomological degrees for simplicity.
As $k(\hbeta_{i+1})<\rho(\hdelta_i)$ in the  diagram inclusions, defined by $\Th$,  are {\it never} isomorphisms. This diagram contradicts to the bound (\ref{E:dimbound}).

I conclude that there is a constant $k=\inf_{\hbeta'>(0)^0} k(\hbeta')$ such that  the maps (\ref{E:thinjection}) are  isomorphisms if  
\begin{equation}\label{E:condisomorphism10}
u(\hdelta)<-u\text{ and } \rho(\hdelta)<k.
\end{equation}  In these range of indices inverse systems $H^i_{\fa}[\hdelta_i,\hbeta]^w$ ($i=1,2$) are isomorphic. The structure maps of the systems are surjections. By using the bound  (\ref{E:dimbound}) one more time I conclude that there is an integer $l$ such that if 
\begin{equation}\label{E:conditionbeta}
\rho(\hbeta)>l, u(\hbeta)>u
\end{equation}
 and (\ref{E:condisomorphism10}) hold then (\ref{E:proj23}) and (\ref{E:thinjection})are isomorphisms. It is clear that the limits of the bidirect system $H_{\fc}^{i+s'(\fa)}[\hdelta,\hbeta]^w$ won't change if I restrict the range of indices that satisfy (\ref{E:condisomorphism10},\ref{E:conditionbeta}). Such  bidirect system satisfies conditions of the  Theorem 5.6 \cite{FreiMacdonald}, from which the first statement follows.
 
 I just constructed a range of indices $\setA^{'\pm}$ such that all the structure maps for the bidirect system $H^i_{\fa}[\hdelta,\hbeta]^w$ are isomorphisms. The double limit is isomorphic to  $H^i_{\fa}[\hdelta_0,\hbeta_0]^w$ $\hdelta_0\in \setA^{'+}\cap \rM_3^+$, $\hbeta_0\in \setA^{'-}\cap \rM_3^0$. Later space is isomorphic  to the double limit along $\setB^{'\pm}:=\setA^{'\pm}\cap \setB^{\pm}$. Limits over the sets $\setB^{'\pm}$ and $\setB^{\pm}$ obviously coincide. This proves the second statement of the theorem. 
 
 By Remark \ref{R:groupactionloccoh} the group acts on $H^{i}_{\fa}[(0)^N,(1)^{N'}]$. It is evident that all the structure maps of  the system labelled by $\setB^{\pm}$ are compatible with $\Spin(10)$-action. This proves the last assertion.

 \end{proof}
 \begin{definition}\label{E:semiinfdefsum}
 By by virtue of Proposition \ref{E:kappaisomorphism} the following   notations make sense
 \[\begin{split}
 &H_{\fc}^{i+\itwo,w}:= \overline{H}_{\fc}^{i+\itwo,w}= \underline{H}_{\fc}^{i+\itwo,w}\\
 &H_{\fc}^{i+\itwo}:=\bigoplus_{w} H_{\fc}^{i+\itwo,w},\fc=\fa,\fb
 \end{split}\]
 \end{definition}
\begin{corollary}\label{C:limitchardim}
Let $w=(a,u,r)$ be an $\Aut$-weight.
\begin{enumerate}
\item \label{I:infinityweightpositivity} $H_{\fa}^{i+\itwo,w}=\{0\}$ for $u<0$, $H_{\fb}^{i+\itwo,w}=\{0\}$ for $u>0$.  (Proposition \ref{P:importantprop} item \ref{I:lowerandupperbounds})
\item \label{I:infinityweightfinitness} $\dim_{\C}H_{\fc}^{i+\itwo,w}<\infty$.
\item Let $Z_{\fc}[\hdelta,\hdelta'],Z_{\fc}(t,q,z)$ be the renormalized virtual character (\ref{E:renormchar}) of $H^i_{\fc}[\hdelta,\hdelta']$ and $H_{\fc}^{i+\itwo}$ respectively. Then
 \[\lim_{\hdelta\to-\infty,\hdelta'\to \infty} Z_{\fc}[\hdelta,\hdelta'](t,q,z)=Z_{\fc}(t,q,z),\fc=\fa,\fb\]
The limit is understood in the sense of coefficient-wise convergence in  $\Z[\widetilde{\bT}]((t))((q))$. $\Z[\widetilde{\bT}]$ is a group algebra of the group of $\widetilde{\bT}$-characters.
\end{enumerate}
\end{corollary}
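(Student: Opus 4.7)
My plan is to prove all three items by first using Proposition~\ref{E:kappaisomorphism} to reduce each semi-infinite weight space to a single finite cohomology group. Item~\ref{I:kappaisomorphism} of that proposition provides, for every weight $w=(a,u,r)$, an interval $[\hdelta_{w},\hdelta'_{w}]$ with $\hdelta_{w}\in\rM_{1}^{+}\sqcup\rM_{3}^{+}$ and $\hdelta'_{w}\in\rM_{1}^{-}\sqcup\rM_{3}^{-}$ in whose stable range the canonical map
\[
H_{\fc}^{i+s'(\fc)}[\hdelta,\hdelta']^{w}\ \longrightarrow\ H_{\fc}^{i+\itwo,w}
\]
is an isomorphism for every further enlargement $\hdelta\leq\hdelta_{w},\ \hdelta'\geq\hdelta'_{w}$. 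Once this reduction is in place each item becomes a direct consequence of the finite estimates in Proposition~\ref{P:importantprop}.

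For item~\ref{I:infinityweightpositivity}, the Gorenstein refinement proved inside item~\ref{I:lowerandupperbounds} of Proposition~\ref{P:importantprop} gives the sharp lower bound $\lu(H_{\fa}^{3-\rho(\hdelta)-j}[\hdelta,\hdelta'])\geq -r(j)$ on the renormalized $\bT$-action. At the top semi-infinite degree $i=0$ this already reads $\lu\geq 0$, so $H_{\fa}^{0+\itwo,w}=0$ for $u<0$. For the three lower nonvanishing degrees $i\in\{-1,-2,-3\}$ (the nonzero range is determined by Proposition~\ref{P:vanishingmain}), where the direct bound degrades to $-1,-1,-2$, I would instead invoke Corollary~\ref{C:adjoint}: the $*$-duality pairing identifies $H_{\fa}^{i+\itwo,w}$ with the linear dual of $H_{\fb}^{-i+\itwo,w'}$ at the weight $w'=(-4-a,\,2-u,\,-r)$ conjugate to $w$ by the pairing weight $t^{-4}q^{2}$. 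The symmetric upper-bound half of item~\ref{I:lowerandupperbounds} then gives $\uu\leq 0$ uniformly on each of the four $\fb$-degrees in the Gorenstein range, so $H_{\fb}^{-i+\itwo,w'}$ already vanishes as soon as $2-u>0$, which covers $u<0$ with room to spare. The $\fb$-half of item~\ref{I:infinityweightpositivity} is symmetric.

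Item~\ref{I:infinityweightfinitness} drops out of the same reduction together with the uniform dimension bound $\dim H_{\fc}^{i}[\hdelta,\hdelta']^{w}\leq C_{w}$ supplied by item~\ref{I:Pimportantprop} of Proposition~\ref{P:importantprop}, since the semi-infinite weight space is isomorphic to exactly one of these finite pieces.

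For the character statement, the coefficient of $t^{a}q^{u}z^{r}$ in $Z_{\fc}[\hdelta,\hdelta'](t,q,z)$ equals the alternating sum $\sum_{j}(-1)^{j+s'(\fc)}\dim H_{\fc}^{j+s'(\fc)}[\hdelta,\hdelta']^{w}$ with $w=(a,u,r)$. Each summand stabilizes to $\dim H_{\fc}^{j+\itwo,w}$ by the first paragraph and is finite by item~\ref{I:infinityweightfinitness}, so every coefficient of $Z_{\fc}[\hdelta,\hdelta']$ eventually agrees with the corresponding coefficient of $Z_{\fc}$; this is precisely coefficient-wise convergence in $\Z[\widetilde{\bT}]((t))((q))$. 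The delicate point throughout is that the stabilizing interval $[\hdelta_{w},\hdelta'_{w}]$ depends on $w$, so every statement is read weight-by-weight; the hardest part is therefore item~\ref{I:infinityweightpositivity}, which requires combining the direct lower bound at the top degree with Poincar\'e duality at the three lower degrees.
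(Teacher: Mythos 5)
Your treatment of items~\ref{I:infinityweightfinitness} and the character statement is correct and matches the paper's intent: reduce each weight space to a stable finite piece via Proposition~\ref{E:kappaisomorphism}, then read off $\dim H_{\fc}^{i+\itwo,w}<\infty$ from Proposition~\ref{P:importantprop} item~\ref{I:Pimportantprop}, and obtain coefficient-wise convergence of $Z_{\fc}$ from the fact that each alternating sum eventually stabilizes. Nothing to object to there.

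For item~\ref{I:infinityweightpositivity} you have spotted a genuine gap in the paper's parenthetical citation: Proposition~\ref{P:importantprop} item~\ref{I:lowerandupperbounds} (restated in Proposition~\ref{E:renormboundes}) gives $\lu^{\fa}_{0}\geq-2,\ \lu^{\fa}_{1}\geq-1,\ \lu^{\fa}_{2}\geq-1,\ \lu^{\fa}_{3}\geq0$, which yields $u\geq 0$ at only one of the four nonvanishing degrees, whereas $\uu^{\fb}_{i}\leq 0$ holds at all four degrees and hence the $\fb$-half does follow directly. Bringing in the $*$-duality to push the $\fa$-side bounds up is therefore the only route available. Two cautions, however. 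First, the correct semi-infinite pairing degree from Proposition~\ref{C:pairingdegree} is $H_{\fa}^{i+\itwo}\otimes H_{\fb}^{3-i+\itwo}$, not $H_{\fb}^{-i+\itwo}$; you appear to have followed the raw index shifts $s'(\fc)$ from~(\ref{E:sschiftdef}), which place the $\fa$-degrees in $\{-3,\ldots,0\}$ and then pair $i$ with $-i$, but this conflicts with Proposition~\ref{P:main} and Proposition~\ref{E:renormboundes} which take $i\in\{0,\ldots,3\}$; the two normalizations differ by an overall shift of $3$ and you should pick one consistent with the propositions you invoke. Second, and more seriously, your duality argument with pairing weight $t^{-4}q^{2}$ in fact gives $\lu^{\fa}_{i}\geq 2$ for every degree, which overshoots the claimed $u\geq 0$ by a margin that is \emph{inconsistent} with Proposition~\ref{E:renormboundes} ($\lu^{\fa}_{0}\geq-2$) and with the explicit $q$-expansion~(\ref{E:qexpansion}), which visibly contains $q^{-2}$ and $q^{-1}$ terms. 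If you re-derive the bound that the author actually extracts inside the proof of Proposition~\ref{P:importantprop} (the final sentence there uses duality to get $\lu(H_{\fa}^{-\rho(\hdelta)})\geq-2$ from $\uu(H_{\fb}^{\rho(\hdelta')+1})\leq 0$) you will find the $q$-exponent of the pairing weight must effectively be $-2$, not $+2$; with that sign, duality only returns $u\geq-2$ and item~\ref{I:infinityweightpositivity} as literally stated (threshold $0$) does not follow. In short, your extra duality step is the right idea and is genuinely needed, but the exact threshold $u\geq 0$ cannot be cleanly extracted from the paper's stated facts without first resolving the sign discrepancy between Remark~\ref{R:degreeshift1}/Proposition~\ref{C:pairingdegree} on one hand and Proposition~\ref{E:renormboundes}/equation~(\ref{E:qexpansion}) on the other; you should flag this tension rather than assert the literal inequality.
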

\begin{remark}
It would be interesting to explore point-wise convergence of 
\[Z_{\fa}(t,q,z)=\lim_{N\to-\infty,N\to \infty} {Z_{\fa}}_N^{N'}(t,q,z)\]
In particular It it true that $Z_{\fa}(t,q,z)$ is a meromorphic function on $\Aut$ with poles only at $\hat{v}_{\halpha}(t,q,z)=1$? 
\end{remark}
\begin{proposition}\label{C:complexcohiso}
\[H_i\underline{\Frg}^{\fc}\cong H_i\overline{\Frg}^{\fc} \cong H_{\fc}^{3-i+\itwo},\fc=\fa,\fb\]
\end{proposition}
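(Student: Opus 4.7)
The plan is to identify each side of the stated isomorphism with the local-cohomology limit by combining three ingredients. First, Proposition \ref{P:complexreduction} gives a natural isomorphism $H_i(\Frg^{\fc}[\hdelta,\hdelta'])\cong H_{\fc}^{s'(\fc)-i}[\hdelta,\hdelta']$ at every finite level. Second, Proposition \ref{P:colimitmaps} shows that under this identification the chain-level structure maps $\Thl$ and $\pl$ of Definition \ref{D:thldef} coincide with the maps $\Th$ and $\sfp$ of the bidirect system (\ref{E:diagramindprolocal}) whose iterated limits define $H_{\fc}^{*+\itwo,w}$. Third, Proposition \ref{E:kappaisomorphism} establishes that the two iterated local-cohomology limits agree, $\overline{H}_{\fc}^{*+\itwo,w}=\underline{H}_{\fc}^{*+\itwo,w}=H_{\fc}^{*+\itwo,w}$.

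Fix an $\Aut$-weight $w$ and work in the corresponding weight subspaces throughout. Since homology commutes with filtered direct limits of complexes of vector spaces,
\[
H_i(\overline{\Frg}^{\fc,w})\;=\;\underset{\underset{\hdelta}{\longrightarrow}}{\lim}\;H_i\Bigl(\underset{\underset{\hdelta'}{\longleftarrow}}{\lim}\,\Frg^{\fc}[\hdelta,\hdelta']^w\Bigr).
\]
For the inner inverse limit, chain-level surjectivity of $\pl$ (Lemma \ref{E:injective}) combined with the finite-dimensional stabilization extracted from the proof of Proposition \ref{E:kappaisomorphism}---namely, $\dim_{\C}H_{\fc}^{s'(\fc)-i}[\hdelta,\hdelta']^{w}$ is bounded independently of the interval by Corollary \ref{C:limitchardim}(\ref{I:infinityweightfinitness}), and the tower of cohomology weight spaces becomes eventually constant as $\hdelta'$ grows---makes the tower of cohomology groups eventually constant and hence trivially Mittag--Leffler, so that $H_i$ commutes with the inner inverse limit. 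Applying Propositions \ref{P:complexreduction} and \ref{P:colimitmaps} at each finite stage then gives
\[
H_i(\overline{\Frg}^{\fc,w})\;\cong\;\underset{\underset{\hdelta}{\longrightarrow}}{\lim}\;\underset{\underset{\hdelta'}{\longleftarrow}}{\lim}\,H_{\fc}^{s'(\fc)-i}[\hdelta,\hdelta']^w\;=\;\overline{H}_{\fc}^{3-i+\itwo,w}.
\]
The symmetric argument, in which the direct limit over $\hdelta$ is the inner one and the inverse limit over $\hdelta'$ is the outer (for which the same stabilization on weight spaces applies), yields $H_i(\underline{\Frg}^{\fc,w})\cong\underline{H}_{\fc}^{3-i+\itwo,w}$. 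Summing over $w$ and applying Proposition \ref{E:kappaisomorphism} finishes the proof.

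The principal difficulty is the interchange of $H_i$ with an inverse limit: chain-level surjectivity of $\pl$ does not automatically transport to cohomology, and general inverse limits do not commute with cohomology. I would resolve this purely on weight subspaces, leveraging the eventual-stabilization analysis from Proposition \ref{E:kappaisomorphism} to reduce the outer inverse limit on each weight space to a cofinal constant inverse system of isomorphisms between finite-dimensional vector spaces, where the interchange is automatic.
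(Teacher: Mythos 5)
Your proposal is correct and follows essentially the same route as the paper: identify finite-level homology with local cohomology via Propositions \ref{P:complexreduction} and \ref{P:colimitmaps}, interchange $H_i$ with the inverse limits using chain-level surjectivity (Lemma \ref{E:injective}) together with Mittag--Leffler for cohomology (which the paper packages as a degenerating Milnor exact sequence, citing Proposition \ref{P:surjectivity9} and the dimension bound), and close with Proposition \ref{E:kappaisomorphism}. One slip: the uniform dimension bound on the finite approximations $\dim H_{\fc}^{i}[\hdelta,\hdelta']^{w}\leq C_w$ is Proposition \ref{P:importantprop} item \ref{I:Pimportantprop}, not Corollary \ref{C:limitchardim}\,(\ref{I:infinityweightfinitness}), which concerns the limiting spaces rather than the finite-level ones.
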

\begin{proof}
I will give a proof for $\fc=\fa$. By Proposition \ref{P:complexreduction}  $H_i(\Frg^{\fa}[\hdelta,\hgamma]^w)\cong H_{\fa}^{s'(\fa)-i}[\hdelta,\hgamma]^w$. By Lemma \ref{E:injective} $\Frg^{\fa}[\hdelta,\hgamma]^w$ with fixed $\hdelta$ satisfies Mittag-Leffler condition. 
Denote temporarily \[F_{\gamma,\bullet}:=\underrightarrow{\lim}\Frg^{\fa}[\hdelta,\hgamma]^w,\quad  G^i_{\hgamma}:=\underrightarrow{\lim}H_{\fa}^{s'(\fa)-i}[\hdelta,\hgamma]^w.\]
The Lemma \ref{E:injective}  also implies Mittag-Leffler condition for $F_{\hgamma,\bullet}$.

  {\it Milnor's exact sequence} (see e.g. \cite{Weibel}) gives 
\[\{0\}\to \underleftarrow{\lim}^1H_{\fa}^{s'(\fa)-i-1}[\hdelta,\hgamma]^w \to H_i(\underleftarrow{\lim}\Frg^{\fa}[\hdelta,\hgamma]^w)\overset{\sfq}\to \underleftarrow{\lim}H_{\fa}^{s'(\fa)-i}[\hdelta,\hgamma]^w \to \{0\}\]
By Proposition \ref{P:surjectivity9} $H_{\fa}^{s'(\fa)-i}[\hdelta,\hgamma]^w$ satisfies Mittag-Leffler condition and $\underleftarrow{\lim}^1=\{0\}$. This implies that $\sfq$ is an isomorphism. Direct limits in the category of linear spaces commute with homology (\cite{Weibel} Theorem 2.6.15). From this I deduce  isomorphisms
\begin{equation}\label{E:firstiso}
H_i(\overline{\Frg}^{\fc,w)})=H_i(\underrightarrow{\lim}\underleftarrow{\lim}\Frg^{\fa}[\hdelta,\hgamma]^w)\cong \underrightarrow{\lim}\underleftarrow{\lim}H_{\fa}^{s'(\fa)-i}[\hdelta,\hgamma]^w=\overline{H}_{\fa}^{3-i+\itwo,w} \text{ and }
\end{equation}
\[H_i(F_{\gamma})\cong G^i_{\hgamma}.\]

Again by Proposition \ref{P:importantprop} $\dim_{\C} G^i_{\hgamma}<C_w$. This automatically implies Mittag-Leffler condition for $G^i_{\hgamma}$.  By using Milnor sequence one more time I get an isomorphism

\begin{equation}\label{E:secondiso}
H_i(\underline{\Frg}^{\fc,w)})=H_i(\underleftarrow{\lim}F_{\gamma})\cong \underleftarrow{\lim}G^i_{\hgamma}=\underline{H}_{\fa}^{3-i+\itwo,w}
\end{equation}

The proof follows from isomorphisms (\ref{E:kappaf}), (\ref{E:firstiso}), and (\ref{E:secondiso}). 
\end{proof}

From now on
\[H_{\fa}^{i+\itwo}:=\overline{H}_{\fa}^{i+\itwo}\cong \underline{H}_{\fa}^{i+\itwo}\]
The pairing 
on $H_{\fa}^{i+\itwo}$ 
will be introduced presently.
\begin{proposition}\label{C:pairingdegree}
There is a nondegenerate pairing
\[H^{i+\itwo}_{\fa}\otimes H^{3-i+\itwo}_{\fb}\to \C.\]
Its $\Aut$-renormalized weight 
is $(-4,2,0)$. The pairing is compatible with $\Spin(10)$-action.
\end{proposition}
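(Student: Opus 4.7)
The strategy is to pass to the bidirect limit the family of perfect finite pairings
\[(\,,\,)_{[\hdelta,\hdelta']} \;:\; H_{\fa}^i[\hdelta,\hdelta'] \otimes H_{\fb}^{\rk(\hdelta,\hdelta')+1-i}[\hdelta,\hdelta'] \;\to\; \C\]
supplied by Proposition \ref{P:pairingdegree}. After the renormalization of the $\Aut$-action of Remark \ref{R:degreeshift1}, each of these finite pairings acquires the uniform weight $t^{-4}q^{2}$, which will become the claimed renormalized weight $(-4,2,0)$ of the limit pairing. Under the renormalized grading, the pairing couples the weight $w = (a,u,r)$ on the first factor to the dual weight $w^{*} = (-4-a,\, 2-u,\, -r)$ on the second, and both weight spaces are finite-dimensional by Corollary \ref{C:limitchardim}.

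The first step is to verify that $\{(\,,\,)_{[\hdelta,\hdelta']}\}$ is compatible with the structure maps of the two bidirect systems defining $H^{i+\itwo}_{\fa}$ and $H^{3-i+\itwo}_{\fb}$ (Definition \ref{D:defloccoh}). This is the content of Corollary \ref{C:adjoint}, whose adjunction $(\Th_{\sfq}(a),\,b) = (a,\,\sfp(b))$, together with its analogue in the $\sfp$-direction, is precisely the statement of Proposition \ref{P:adjointdiagran}: the $\Aut$-weight-graded dual of the diagram (\ref{E:diagramindprolocal}) for $H^i_{\fa}$ at weight $w$ is the diagram (\ref{E:diagramindprodual}) for $H^{3-i}_{\fb}$ at weight $w^{*}$, with colimit and limit roles interchanged. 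Consequently the finite pairings assemble into a single bilinear form on the limits.

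For nondegeneracy I would invoke the stabilization phenomenon established in the proof of Proposition \ref{E:kappaisomorphism}: for any fixed weight $w$ there exists $[\hdelta_0, \hdelta_0'] \in \setB^{+} \times \setB^{-}$ past which every structure map of the system $\{H^i_{\fa}[\hdelta,\hdelta']^w\}$ is an isomorphism, so that $H^{i+\itwo,w}_{\fa} \cong H^i_{\fa}[\hdelta_0,\hdelta_0']^w$. Enlarging the interval if necessary, I can arrange the analogous stabilization for $H^{3-i}_{\fb}[\cdot]^{w^{*}}$ on the same $[\hdelta_0,\hdelta_0']$. The limit pairing is then identified via these isomorphisms with the finite Poincar\'e pairing of Proposition \ref{P:pairingdegree}, which is already nondegenerate. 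Since the stabilizing interval may be chosen inside $\setB^{+} \times \setB^{-}$, where the finite pairing is $\Spin(10)$-equivariant by Proposition \ref{P:pairingdegree}(\ref{I:pairingdegree3}), the limit pairing inherits this equivariance.

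The main technical obstacle is the simultaneous stabilization of both sides on a common interval and the independence of the resulting pairing from that choice; both points rest on the finite-dimensionality of weight spaces (Corollary \ref{C:limitchardim}) combined with the adjunction of Corollary \ref{C:adjoint}, which forces stabilization on one factor to propagate to the other through the nondegeneracy of the finite pairings.
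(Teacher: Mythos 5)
Your proposal is correct and follows essentially the same route as the paper: finite perfect pairings from Proposition \ref{P:pairingdegree}, compatibility with the bidirect systems via the adjunction of Corollary \ref{C:adjoint} (Proposition \ref{P:adjointdiagran}), finite-dimensionality of weight spaces, and identification of the limit with a stabilized finite group. The only presentational difference is that the paper packages the nondegeneracy-of-the-limit step through the abstract bidirect-pairing lemma (Proposition \ref{P:abstrpairing}, whose hypotheses are supplied by Proposition \ref{P:importantprop} item \ref{I:Pimportantprop} and isomorphism (\ref{E:kappaiso})), whereas you unfold that lemma concretely via the stabilization established in the proof of Proposition \ref{E:kappaisomorphism}; both rest on the same ingredients and reach the same conclusion.
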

\begin{proof}
I define the pairing on the level of weight components. The fact that $H_{\fa}^{s'(\fa)-i}[\hdelta,\hgamma]^w$ and $H_{\fb}^{s'(\fb)-j}[\hdelta,\hgamma]^{w'}, \hdelta\in \rM_1^+\sqcup  \rM_3^+,\hgamma\in \rM_1^-\sqcup  \rM_3^-$ form two bidirect systems with a pairing in the sense of Appendix \ref{S:pairingbidirect} follows from Corollary \ref{C:adjoint}. Nondegeneracy of the pairing between components as well as the degree of the pairing  follows from Proposition \ref{P:pairingdegree} and Remark \ref{R:degreeshift1}. 

The first statement follows from  Proposition \ref{P:abstrpairing} and an isomorphism (\ref{E:kappaiso}). Conditions of Proposition \ref{P:abstrpairing} follow  from Proposition \ref{P:importantprop} item \ref{I:Pimportantprop}. 

The statement about $\Spin(10)$ action follows from Proposition \ref{P:pairingdegree} item \ref{I:pairingdegree3} and Proposition \ref{E:kappaisomorphism} item \ref{I:kappaisomorphism}. 

\end{proof}

Another structure on the limiting groups is an isomorphism from the next proposition. 
\begin{proposition}\label{P:shiftcong2}
There is an isomorphism
\[\shift^{-1}\reflection:{H}^{i+\itwo}_{\fb}\cong {H}^{i+\itwo}_{\fa}
.\]
The isomorphisms are compatible with the grading shift described in Remark \ref{R:degreeshift1}. A composition of this map with the pairing from Proposition \ref{C:pairingdegree} give a nondegenerate pairing

\[H^{i+\itwo}_{\fa}\otimes H^{3-i+\itwo}_{\fa}\to \C.\]
Its $\Aut$-renormalized weight is $(-4,2,0)$. The pairing is compatible with the $\Spin(10)$-action.
\end{proposition}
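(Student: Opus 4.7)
The plan is to construct the isomorphism by passing the finite-level identity (\ref{E:sigmaTcoh}) to the bidirect limit defining semi-infinite cohomology, and then compose with the pairing of Proposition \ref{C:pairingdegree}. At each Gorenstein stage $[\hdelta,\hdelta']$, (\ref{E:sigmaTcoh}) already supplies an isomorphism
\[\shift^{-1}\reflection\colon H^{i}_{\fb}[\hdelta,\hdelta']\xrightarrow{\sim}H^{i}_{\fa}[\shift^{-1}\reflection(\hdelta'),\shift^{-1}\reflection(\hdelta)].\]
Since $\reflection$ interchanges $\setA^+$ with $\setA^-$ while $\shift^{-1}$ preserves each subset, the assignment $(\hdelta,\hdelta')\mapsto(\shift^{-1}\reflection(\hdelta'),\shift^{-1}\reflection(\hdelta))$ is a bijection between the index set of the bidirect system defining $H^{i+\itwo}_{\fb}$ and that of $H^{i+\itwo}_{\fa}$.

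Second, I upgrade the collection of finite-stage isomorphisms to an isomorphism of limits. By Proposition \ref{E:morecompatibility}, $\reflection$ conjugates the Thom-class maps $\Th_{\sfp}$ to $\Th_{\reflection\circ \sfp\circ\reflection}$, while $\shift$ commutes strictly with $\Th_{\sfp}$; the analogous identities for the restriction pullbacks $\sfp$ are immediate. Thus the finite-level isomorphisms form a morphism of bidirect systems, intertwining inductive maps with inductive maps and projective maps with projective maps. Invoking Proposition \ref{E:kappaisomorphism}, which identifies $\overline{H}^{i+\itwo}_{\fc}$ with $\underline{H}^{i+\itwo}_{\fc}=H^{i+\itwo}_{\fc}$ and allows me to restrict both systems to the common cofinal subsystem indexed by $\setB^{\pm}$, I obtain the desired
\[\shift^{-1}\reflection\colon H^{i+\itwo}_{\fb}\xrightarrow{\sim}H^{i+\itwo}_{\fa}.\]

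Composing this isomorphism on the second factor with the pairing of Proposition \ref{C:pairingdegree} yields a bilinear form $H^{i+\itwo}_{\fa}\otimes H^{3-i+\itwo}_{\fa}\to\C$ that is nondegenerate because the original pairing is. For the weight, Proposition \ref{P:funequation}(\ref{I:chareq3}) records the $\reflection$- and $\shift$-covariance of the characters $\chi[\hdelta,\hdelta']$; substituting into the renormalization of Remark \ref{R:degreeshift1} shows that $\shift^{-1}\reflection$ has renormalized weight zero, so the composite pairing retains the weight $(-4,2,0)$ of Proposition \ref{C:pairingdegree}. For $\Spin(10)$-equivariance, I restrict the limits to the cofinal family of intervals $[(0)^{N},(1)^{N'}]$ (Proposition \ref{E:kappaisomorphism}(\ref{I:kappaisomorphism})); on this subsystem $\reflection$ is realized by conjugation by an element $S\in\Spin(10,\C)$ (cf.\ the proof of Proposition \ref{P:eqderivation0}), so combined with the equivariance of the original pairing (Proposition \ref{P:pairingdegree}(\ref{I:pairingdegree3})) this gives equivariance of the composite. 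The main technical point requiring care is the renormalized-weight bookkeeping, since the twists used to renormalize the $\fa$- and $\fb$-actions of Remark \ref{R:degreeshift1} differ; however this is controlled directly by Proposition \ref{P:funequation}(\ref{I:chareq3}).
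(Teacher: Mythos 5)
Your proposal is correct and follows the paper's own argument in essentially the same form: start from the finite-level isomorphism $\shift^{-1}\reflection$ of (\ref{E:tsigmaiso})/(\ref{E:sigmaTcoh}), invoke Proposition \ref{E:morecompatibility} to promote it to a morphism of bidirect systems, pass to the limit, and compose with the pairing of Proposition \ref{C:pairingdegree}, with the weight bookkeeping resolved via Proposition \ref{P:funequation}(\ref{I:chareq3}) exactly as the paper does through its reference to the end of the proof of Proposition \ref{P:eqderivation}. Your added remarks on the cofinal subsystem from Proposition \ref{E:kappaisomorphism} and the inner automorphism $S$ for $\Spin(10)$-equivariance merely make explicit what the paper leaves implicit in its appeal to Proposition \ref{C:pairingdegree}.
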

This verifies Conjecture \ref{CON:main} item \ref{CON:main4} for $\tspace=\cone$.

\begin{proof}
The isomorphism is induced by (\ref{E:tsigmaiso}). Its compatibility with the bidirect system on $H^{i}_{\fa}[\hdelta,\hdelta']$ and $H^{i}_{\fb}[\hdelta,\hdelta']$ follows from Proposition \ref{E:morecompatibility}. The statement about compatibility with the shifts copies the end of the proof of Proposition \ref{P:eqderivation}.
The rest follows from Proposition \ref{C:pairingdegree}.
\end{proof}
\begin{corollary}\label{C:equationZ}
The virtual character $Z_{\fa}(t,q,z)$ of $\{H^{i+\itwo}_{\fa}\}$ in the sense of Section \ref{S:virtchar}  satisfies $Z_{\fa}(t,q,z)=-t^{-4}q^{2}Z_{\fa}(qt^{-1},q,z^{-1})$. Equation  (\ref{E:stardual}) follows from it. The claim follows from the first item of Corollary \ref{C:limitchardim} and Proposition \ref{P:shiftcong2}. 
\end{corollary}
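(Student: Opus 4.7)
The plan is to unwind Proposition~\ref{P:shiftcong2} into two character identities and combine them, with the well-definedness of $Z_{\fa}(t,q,z)$ supplied by Corollary~\ref{C:limitchardim}. First I would record that the virtual character
$Z_{\fa}(t,q,z)=\sum_{i}(-1)^{i}\chi_{H^{i+\itwo}_{\fa}}(t,q,z)$
is a well-defined element of $\Z[\widetilde{\bT}]((t))[[q]]$: item (\ref{I:infinityweightpositivity}) of Corollary~\ref{C:limitchardim} forces only $\bT$-weights $u\ge 0$ to appear in $H^{i+\itwo}_{\fa}$, and item (\ref{I:infinityweightfinitness}) guarantees finiteness of each weight multiplicity, so the coefficient-wise alternating sum makes sense.

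From Proposition~\ref{P:shiftcong2} I extract two ingredients. The nondegenerate pairing $H^{i+\itwo}_{\fa}\otimes H^{3-i+\itwo}_{\fb}\to\C$ of renormalized $\Aut$-weight $(-4,2,0)$ matches $\bT$-weight spaces and yields
\[
\chi_{H^{i+\itwo}_{\fa}}(t,q,z)=t^{-4}q^{2}\,\chi_{H^{3-i+\itwo}_{\fb}}(t^{-1},q^{-1},z^{-1}).
\]
Separately, the isomorphism $\shift^{-1}\reflection\colon H^{i+\itwo}_{\fb}\cong H^{i+\itwo}_{\fa}$, compatible with the renormalized grading, intertwines the $\Aut$-actions up to the substitution $(t,q,z)\mapsto(tq^{-1},q^{-1},Sz)$ dictated by the commutation rules for $\reflection$ and $\shift$ from the proof of Proposition~\ref{P:eqderivation0}. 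This translates into
\[
\chi_{H^{3-i+\itwo}_{\fb}}(t,q,z)=\chi_{H^{3-i+\itwo}_{\fa}}(tq^{-1},q^{-1},Sz).
\]
Substituting the latter into the former at $(t^{-1},q^{-1},z^{-1})$ produces
\[
\chi_{H^{i+\itwo}_{\fa}}(t,q,z)=t^{-4}q^{2}\,\chi_{H^{3-i+\itwo}_{\fa}}(qt^{-1},q,Sz^{-1}).
\]

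Summing over $i$ with signs and reindexing $j=3-i$ picks up a factor $(-1)^{3}=-1$; then I would invoke the last clause of Proposition~\ref{E:kappaisomorphism} to observe that $H^{i+\itwo}_{\fa}$ is a $\Spin(10)$-representation, so $S$ acts by inner conjugation and disappears from the character. This gives
\[
Z_{\fa}(t,q,z)=-t^{-4}q^{2}\,Z_{\fa}(qt^{-1},q,z^{-1}),
\]
and specializing $z=1$ reduces to (\ref{E:stardual}). The main technical point will be the bookkeeping of the substitution $\sigma(t,q,z)=(tq^{-1},q^{-1},Sz)$ arising from $\shift^{-1}\reflection$: it is precisely the composition of $\sigma$ with the pairing inversion that produces the substitution $(qt^{-1},q,z^{-1})$ rather than the naive $(t^{-1},q^{-1},z^{-1})$ one would get from the pairing alone. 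A cleaner (and essentially equivalent) alternative would be to pass directly to the limit in the finite-interval identity (\ref{E:frenorm}), using item~(3) of Corollary~\ref{C:limitchardim} for coefficient-wise convergence and noting that the intervals $[\shift^{-1}\reflection(\hdelta'),\shift^{-1}\reflection(\hdelta)]$ also exhaust $\sethE$ in the limit.
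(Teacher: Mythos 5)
Your proof is correct and takes the same approach the paper intends: it unwinds Proposition~\ref{P:shiftcong2} into the $*$-duality pairing of weight $(-4,2,0)$ and the isomorphism $\shift^{-1}\reflection$, translates each into a character identity (keeping careful track of the substitution $(t,q,z)\mapsto(tq^{-1},q^{-1},Sz)$), composes them, reindexes $j=3-i$ to pick up the sign $(-1)^3$, and kills $S$ by $\Spin(10)$-equivariance — exactly the ingredients the paper's one-line citation points to. Your alternative of passing to the limit in the finite-interval equation~(\ref{E:frenorm}) via item~(3) of Corollary~\ref{C:limitchardim} is indeed an equivalent route and is the mechanism the author verbally alludes to elsewhere.
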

Finally there is a vanishing result:
\begin{proposition}\label{P:main}
The groups ${H}^{i+\itwo}_{\fc},\fc=\fa,\fb$
are zero for $i\neq 0,\dots,3$.
\end{proposition}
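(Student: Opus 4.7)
The plan is to deduce Proposition \ref{P:main} as an immediate corollary of Corollary \ref{C:complexcohiso}, which already identifies the limiting space of states with the homology of the (bi)direct limit of the four-term complexes $\Frg^{\fc}_{\bullet}[\hdelta,\hdelta']$. Essentially no new work is required beyond unpacking what the small complex looks like.

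First I would recall from (\ref{E:Rdef}) that $\Frg^{\fc}_{\bullet}[\hdelta,\hdelta'] = B_{\bullet}(\Mrg_{\fc},\{\underline{\lambda}^{\halpha}\mid \halpha\in \rA^{\fc}\})$ is the Koszul complex on three elements (since $|\rA^{\fc}|=3$ by the definition of $\rA_1,\rA_2$ immediately preceding (\ref{E:Rdef})). Hence the space of chains $\Frg^{\fc}_i[\hdelta,\hdelta']$ is zero for $i\notin\{0,1,2,3\}$, and in particular $H_i(\Frg^{\fc}[\hdelta,\hdelta'])=0$ outside this range, uniformly in $[\hdelta,\hdelta']$.

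Next I would observe that the structure maps $\Thl$ (Definition \ref{D:thldef}) and $\pl$ (\ref{E:resdef}) of the bidirect system respect the Koszul grading, so both iterated limits $\overline{\Frg}^{\fc}_{\bullet}$ and $\underline{\Frg}^{\fc}_{\bullet}$ have $i$th chain group equal to zero for $i\notin\{0,1,2,3\}$. Therefore $H_i(\overline{\Frg}^{\fc})=H_i(\underline{\Frg}^{\fc})=0$ in those degrees as well. Applying Corollary \ref{C:complexcohiso} yields $H^{3-i+\itwo}_{\fc}=0$ for $i\notin\{0,1,2,3\}$, which after the substitution $j=3-i$ is exactly the statement that $H^{j+\itwo}_{\fc}=0$ for $j\notin\{0,1,2,3\}$, for $\fc=\fa,\fb$.

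There is no real obstacle here: the only step meriting attention is ensuring that the vanishing of chain groups outside the Koszul range really passes to the bidirect limit. Since taking direct and inverse limits of the zero functor returns zero, this is automatic, and no $\lim^1$ or Mittag-Leffler issue can intervene in the vanishing degrees (these issues appear in the proof of Corollary \ref{C:complexcohiso} itself, but only for the nontrivial degrees $i\in\{0,1,2,3\}$ where they have already been addressed). Thus the proof is a one-line appeal to Corollary \ref{C:complexcohiso} combined with the length of the Koszul complex $\Frg^{\fc}_{\bullet}$.
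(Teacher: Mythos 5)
Your proof is correct. The paper disposes of this proposition by the single citation ``Follows from Proposition \ref{P:vanishingmain}'', which records the vanishing of $H^i_{\fa}[\hdelta,\hdelta']$ and $H^i_{\fb}[\hdelta,\hdelta']$ outside a window of four consecutive degrees; since $H^{i+\itwo}_{\fc}$ is defined in Definition \ref{D:defloccoh} as a bidirect limit of these finite-dimensional weight spaces (with the degree shift $s'$), and limits of zero objects are zero, the claim follows directly. You instead invoke Corollary \ref{C:complexcohiso}, which identifies $H^{3-i+\itwo}_{\fc}$ with the homology of the limiting four-term complex $\Frg^{\fc}_{\bullet}$, and then read off the vanishing from the Koszul length. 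Both arguments reduce to the same source, namely that $\Frg^{\fc}_{\bullet}$ is the Koszul complex on the three elements of $\rA^{\fc}$, so there is no genuine difference of idea. What your route buys is conceptual transparency: the vanishing is visible already at the level of chains of a single short complex, before any cohomology is taken, and hence is manifestly insensitive to any $\varprojlim^1$ or Mittag-Leffler subtleties. What it costs is that Corollary \ref{C:complexcohiso} is itself a heavier theorem than Proposition \ref{P:vanishingmain} (its proof uses the Milnor exact sequence and the finiteness bounds of Proposition \ref{P:importantprop}); the paper's route avoids that machinery entirely, since it only needs the termwise vanishing, not the commutation of homology with the bidirect limit. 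Both are correct, and your observation that the structure maps $\Thl$ and $\pl$ preserve the Koszul degree is exactly the point that makes the chain-level argument go through.
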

\begin{proof}
Follows from Proposition \ref{P:vanishingmain}.
\end{proof}
\begin{remark}
By Remark \ref{E:indexa} and Proposition \ref{P:cheleminterval}, the index $\mathrm{ind}\, \OGr^{+}(5,10)$ is equal to $8$. By (\ref{E:hilbertini}), $\dim \OGr^{+}(5,10)=10$. 
I conclude that $\mathrm{coind}\, \OGr^{+}(5,10) +1=10-8+1=3$. It matches with the degree of the pairing (Proposition \ref{C:pairingdegree}) 
 as promised in Conjecture \ref{CON:main} item \ref{CON:main4}.
\end{remark}
In the next proposition I continue to use notations of Definition \ref{D:upperlower}.
\begin{proposition}\label{E:renormboundes}
Define \[\lu^{\fa}_i:=\lu\left(H^{i+\itwo}_{\fa}(\hA)\right),i=0,\dots,3\] for renormalized $\bT$-action (Remark \ref{R:degreeshift1} ). It follows from Proposition \ref{P:importantprop} item \ref{I:lowerandupperbounds} that 
 \[\lu^{\fa}_0\geq-2,\lu^{\fa}_1\geq-1,\lu^{\fa}_2\geq-1, \lu^{\fa}_3\geq 0.\]
The constants $\uu^{\fb}_i$ satisfy
\[\uu^{\fb}_0\leq-2,\uu^{\fb}_1\leq -1,\uu^{\fb}_2\leq-1,\uu^{\fb}_3\leq 0.\]
\end{proposition}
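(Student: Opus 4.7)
The plan is to obtain the bounds on $\lu^{\fa}_i$ and $\uu^{\fb}_i$ directly from the finite-level bounds already established in Proposition \ref{P:importantprop} item \ref{I:lowerandupperbounds}, by passing to the bidirect limit in Definition \ref{D:defloccoh}. The key observation is that the constants appearing in Proposition \ref{P:importantprop} item \ref{I:lowerandupperbounds} are \emph{uniform} in the endpoints $\hdelta,\hdelta'$ of the interval, so they survive the limiting procedure.

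First I would match the indices. Recall from Definition \ref{D:defloccoh} that
\[
H^{i+\itwo,w}_{\fa}=\underset{\underset{\hdelta}{\longrightarrow}}{\lim}\;\underset{\underset{\hdelta'}{\longleftarrow}}{\lim}\;H^{i+s'(\fa)}_{\fa}[\hdelta,\hdelta']^w,\qquad s'(\fa)=3-\rho(\hdelta),
\]
and that Proposition \ref{C:complexcohiso} identifies $H^{3-j+\itwo}_{\fa}$ with $H_j(\Frg^{\fa})$, so the semi-infinite index $i=3,2,1,0$ corresponds to the Proposition \ref{P:importantprop} parameter $j=0,1,2,3$ respectively, with the bound $\lu(H^{3-\rho(\hdelta)-j}_{\fa}[\hdelta,\hdelta'])\geq -r(j)$, where $r(0)=0,\ r(1)=r(2)=1,\ r(3)=2$.

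Next, for a fixed renormalized $\Aut$-weight $w=(a,u,r)$ with $u<-r(j)$, the uniform bound of Proposition \ref{P:importantprop} item \ref{I:lowerandupperbounds} forces
\[
H^{3-\rho(\hdelta)-j,w}_{\fa}[\hdelta,\hdelta']=0\qquad\text{for every }\hdelta\in\setA^{+},\ \hdelta'\in\setA^{-}.
\]
Since the bidirect limit of a system of zero spaces is zero, this yields $H^{3-j+\itwo,w}_{\fa}=0$, hence $\lu^{\fa}_{3-j}\geq -r(j)$. Substituting $j=0,1,2,3$ gives the four inequalities $\lu^{\fa}_{3}\geq 0$, $\lu^{\fa}_{2}\geq -1$, $\lu^{\fa}_{1}\geq -1$, $\lu^{\fa}_{0}\geq -2$ claimed in the statement.

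The bounds on $\uu^{\fb}_i$ follow by exactly the same argument using the companion bound $\uu(H^{\rho(\hdelta')+1-j}_{\fb}[\hdelta,\hdelta'])\leq r(j)$ from Proposition \ref{P:importantprop} item \ref{I:lowerandupperbounds} (alternatively, one may deduce them from the $\fa$-case by invoking the isomorphism $\shift^{-1}\reflection:H^{i+\itwo}_{\fb}\cong H^{i+\itwo}_{\fa}$ of Proposition \ref{P:shiftcong2}, which is compatible with the renormalized grading and converts positive-energy bounds into negative-energy ones). There is no substantial obstacle here; the only thing that must be checked carefully is that the shift $s'(\fa)$ interacts correctly with the renormalized $\bT$-action of Remark \ref{R:degreeshift1}, which is immediate from the definitions, and that the universal constants in Proposition \ref{P:importantprop} item \ref{I:lowerandupperbounds} genuinely do not depend on $[\hdelta,\hdelta']$, which is asserted there and used in the proof of Corollary \ref{C:limitchardim}.
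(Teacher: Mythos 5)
Your proof is correct and follows the same approach as the paper, which simply cites Proposition~\ref{P:importantprop} item~\ref{I:lowerandupperbounds} without elaboration; you have filled in the details (the index matching $i\leftrightarrow 3-i$, the uniformity of the finite-level bounds, and passage to the bidirect limit) that the paper leaves implicit. One small observation: you wisely anchor the index matching to Proposition~\ref{C:complexcohiso} rather than to the exponent $i+s'(\fa)$ appearing literally in Definition~\ref{D:defloccoh}; these two conventions in the paper differ by an additive offset of $3$, and only the convention of Proposition~\ref{C:complexcohiso} (equivalently, the one consistent with Proposition~\ref{P:main}) reproduces the bounds asserted in the statement, so your choice is the intended one.
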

\begin{remark}\label{R:formalseries9}
The  pairing in Proposition \ref{P:shiftcong2} determines a symmetry of the series coefficients $Z_{\fa}=\sum_{u\geq0} c_{a,u}t^{a}q^{u}$. Constraint on  $u$ follows from Proposition \ref{E:renormboundes}. Functional equation on $Z_{\fa}$ Corollary \ref{C:equationZ} implies that $c_{a,u}=-c_{-4-a,2+a+u}$ and $c_{a,u}=0$ for $a<-u-2$. This implies that $Z_{\fa}(t,q)\in \Z((t))[[q]]$. A similar argument also works for $\chi_{H^{i+\itwo}_{\fa}}(t,q,z)$.
\end{remark}
\paragraph{Explicit description of $H^{3+\itwo}(\hA)$}
Fix the algebra $D$ generated by $\lambda^{\ralpha^r}, w_{\rbeta^l}, \ralpha^r,\rbeta^l\in \sethE$ subject to commutation relations
\[\begin{split}
&[\lambda^{\ralpha^r},\lambda^{{\ralpha'}^{r'}}]=[w_{\rbeta^l},w_{{\rbeta'}^{l'}}]=0,\\
&[\lambda^{\ralpha^r},w_{\rbeta^l}]=\delta^{\ralpha}_{\rbeta}\delta_{r+l,0}.
\end{split}\]
The set $\rho^{-1}(i)\subset \sethE$ consists of two elements, which I denote by $\halpha_{\pm}$.
I choose in $D$ a new set of generators:
\begin{equation}
\begin{split}
&\lambda_+^i=\lambda^{\halpha_+}+\lambda^{\halpha_-},\quad w^+_i=w_{\halpha_+}+w_{\halpha_-},\\
&\lambda_-^i=\lambda^{\halpha_+}-\lambda^{\halpha_-},\quad w^-_i=w_{\halpha_+}-w_{\halpha_-},\\
& \rho(\halpha_{\pm})=i.
\end{split}
 \end{equation} 
Note that $+$ and $-$ labelled generators mutually commute. 
The $D$-submodule $U$ of $\moduleS_{\fa}[\sethE]$ (\ref{E:FIdef}) cyclicly generated by
\[vac=\frac{1}{\lambda^{(3)^{-1}}}\frac{1}{\lambda^{(2)^{-1}}}\frac{1}{\lambda^{(1)^{-1}}}\prod_{i<0,i\equiv 1 \mod 2} \lambda^i_{-}\prod_{i<0}
 \frac{1}{\lambda_+^i}\]
can be characterized by relations
\begin{equation}\label{E:defrel2}
\begin{split}
&\lambda_+^ivac=0,\quad i<0,\\
&\lambda^{3^{-1}}vac=\lambda^{2^{-1}}vac=\lambda^{1^{-1}}vac=0,\\
&w_{\ralpha^i}vac=0,\quad i<0,\\
&(w^-_i)^2vac=0,\quad i>0,\quad i\equiv 1 \mod2,\\
&w^-_ivac=0,\quad i>0,\quad i\equiv 0 \mod2.\\
\end{split}
\end{equation} 
The term "submodule" is not quite appropriate because of the infinite product in the numerator that defined $vac$. The defining relations (\ref{E:defrel2}) contain only finite expression in generators and could be used as the definition of the $\C[\sethE]$-module without any references to (\ref{E:defrel2}).
Here the promised elementary description of one of the semi-infinite local cohomology groups.
 \begin{proposition}\label{P:tensorproduct1}
 Denote by $N$ the tensor product $\hA[-\infty,\infty]\underset{P}\otimes U$. There is a map
\begin{equation}\label{E:tensorproduct}
\begin{split}
H^{3+\itwo}_{\fa}\cong \widehat{N}
, \quad P=\C[\sethE]
\end{split}
\end{equation} 
Completion $\widehat{N}$ of $N$ is taken with respect to the system of submodules $\fri(-k,\hdelta)N\subset N$ defined by  ideals $\fri(-k,\hdelta):=\fri[{\lambda^i_-|i\leq -k}]+\fri[\sethE\backslash \sethE^{\leq \hdelta}]\subset P$.
Conjecturally this map is an isomorphism.
\end{proposition}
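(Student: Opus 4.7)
The plan is to identify $H^{3+\itwo}_{\fa}$ with the zeroth homology of the limit of the Koszul complexes $\Frg^{\fa}_{\bullet}[\hdelta,\hdelta']$, and then to trace explicitly how the relations \eqref{E:defrel2} defining $U$ emerge in that limit.

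By Proposition \ref{C:complexcohiso} combined with Proposition \ref{P:complexreduction}, one has $H^{3+\itwo}_{\fa}\cong H_{0}(\overline{\Frg}^{\fa}_{\bullet})$, and for each pure interval $[\hdelta,\hdelta']$ the group $H_{0}(\Frg^{\fa}_{\bullet}[\hdelta,\hdelta'])$ equals the cyclic $D[\hdelta,\hdelta']$-module
\[
V[\hdelta,\hdelta']\;:=\;\hA[\hdelta,\hdelta']\underset{R^{\fa}}{\otimes}\bigl(\moduleS_{\fa}[\hdelta,\hdelta']/(\underline{\lambda}^{(3)^{-1}},\underline{\lambda}^{(2)^{-1}},\underline{\lambda}^{(1)^{-1}})\bigr).
\]
In the ``strictly negative monomials'' convention used throughout Section \ref{E:dmod} (and visible from the generating function for $\moduleT_{\fa}$), the quotient collapses the factor $\C[\rA^{\fa}]^{-1}$ to the one-dimensional subspace spanned by the top element $(\lambda^{(3)^{-1}}\lambda^{(2)^{-1}}\lambda^{(1)^{-1}})^{-1}$, while the remaining tensor structure of $\moduleS_{\fa}$ survives intact.

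The next step is to trace the behaviour of $V[\hdelta,\hdelta']$ under the transition maps $\Thl$ and $\pl$ of Definition \ref{D:thldef}. In the regular case \eqref{E:regmap}, $\Thl_{\sfq}$ multiplies the cyclic generator by $(\lambda^{\hdelta})^{-1}$; in the $\alt/\rReg$ decomposition this reads $(\lambda_{+}^{\rho(\hdelta)})^{-1}$ since the relevant $\hdelta$ lies in a singleton $\rho$-fibre. In the irregular case \eqref{E:irregmap}, encountered at vertices of $\rM_{2}^{-}\subset \rM_{1}^{+}$, the map combines the inclusion $\inclusiono$ of \eqref{E:functorialpr} with multiplication by $\lambda^{\hdelta'}$; expanded in $\alt/\rReg$ coordinates this inserts one factor of $\lambda_{-}^{i}$ at the corresponding $\rho$-level together with one further $(\lambda_{+}^{i})^{-1}$. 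Iterating along a cofinal chain in $\rM_{1}^{+}\sqcup \rM_{3}^{+}$ (constructed using Proposition \ref{P:seqint} and Lemma \ref{L:saturate}) produces exactly the relations \eqref{E:defrel2} characterizing $U$: an inverse $(\lambda_{+}^{i})^{-1}$ for every $i<0$, a single power of $\lambda_{-}^{i}$ at every odd $i<0$ and none at even $i<0$, together with the corresponding vanishing conditions on $w_{\halpha}$ and $w_{i}^{\pm}$ coming from the dual side of $\inclusiont$.

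The inverse limit in $\hdelta'$ is easier to handle: by Lemma \ref{E:injective} the projections $\pl$ from \eqref{E:resdef} are surjective, and on the tensor factor $\C[[(0)^{0},\hdelta']]$ of $\moduleS_{\fa}$ they merely forget the generators above $\hdelta'$. After tensoring over $P$ with $\hA[-\infty,\infty]$, the two limits together convert the bidirect system $\{V[\hdelta,\hdelta']\}$ into the adic completion of $N=\hA[-\infty,\infty]\otimes_{P}U$ with respect to the two-sided ideals $\fri(-k,\hdelta)N$; this is precisely the space $\widehat{N}$ of the statement, and composition yields the desired natural map $\widehat{N}\to H^{3+\itwo}_{\fa}$.

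The main obstacle I foresee is the combinatorial bookkeeping in the third paragraph: showing that the alternation between regular and irregular $\Thl$-steps along any cofinal chain in $\rM_{1}^{+}\sqcup \rM_{3}^{+}$ produces precisely the odd-versus-even parity asymmetry in the $\lambda_{-}^{i}$ appearing in \eqref{E:defrel2}, uniformly across chains. This reduces to a local check at each vertex type using Lemma \ref{L:multstatM} and the factorisation \eqref{E:proceduregamma}, together with direct inspection of how the operators $w_{\halpha}$ transform under $\inclusiont$; the computation is tedious but purely mechanical. The harder conceptual point, which the proposition leaves conjectural and the present argument does not settle, is surjectivity of $\widehat{N}\to H^{3+\itwo}_{\fa}$: this would require vanishing of $\lim^{1}$-terms in the weight-graded Milnor sequences for $\{V[\hdelta,\hdelta']\}$, beyond what the estimates of Section \ref{S:lowerbound} currently supply.
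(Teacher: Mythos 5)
Your argument has the right starting point (using Propositions \ref{C:complexcohiso} and \ref{P:complexreduction} to realize $H^{3+\itwo}_{\fa}$ as the degree-zero cohomology of the bidirect limit of the $\Frg^{\fa}$-complexes), and the idea of tracing the cyclic generator through the transition maps $\Thl$ and $\pl$ is indeed what drives the proof. But there are two serious problems.

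First, your central claim --- that ``the two limits together convert the bidirect system $\{V[\hdelta,\hdelta']\}$ into the adic completion of $N$'' --- cannot be right as stated. Since that bidirect limit is, by Proposition \ref{C:complexcohiso}, precisely $H^{3+\itwo}_{\fa}$, an actual identification of it with $\widehat N$ would immediately yield the isomorphism. But the proposition explicitly leaves the isomorphism conjectural, and the paper's proof only constructs a \emph{map}; moreover the map runs $H^{3+\itwo}_{\fa}\to\widehat N$, not $\widehat N\to H^{3+\itwo}_{\fa}$ as in your final sentence. Concretely, the paper builds a coherent family of maps $V[\hdelta,\hdelta']\to N(-k,\hdelta')$ (the key equation \eqref{E:comparemap}), where the targets $N(-k,\hdelta')$ are genuine finite quotients of $N$ that are \emph{not} the same objects as the $V$'s, and then takes limits. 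Your description collapses ``a coherent family of maps into $\{N(-k,\hdelta')\}$'' into ``an identification,'' which is exactly the step that would have to be proved and is not.

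Second, you dismiss as ``tedious but purely mechanical'' the combinatorics of how the regular and irregular $\Thl$-steps reproduce the parity pattern of $\lambda^i_-$ in \eqref{E:defrel2}. That part is indeed routine. The genuinely delicate step of the paper's proof is not there; it is the vanishing lemma $\fri^A\underset{\C[-\infty,\hdelta']}{\otimes}\moduleS_{\fa}(-k,\hdelta')=\{0\}$ (equation \eqref{E:idealvanishing}), which is required for the composite $\sfg\circ\sff$ of \eqref{E:comparemap} to be well defined. Its proof uses the free decomposition \eqref{E:freemod1}, the identity $\lambda^{\halpha}a=-\lambda^{\halpha'}a$ forced by the annihilation of $\lambda_+^i$, an inspection of the diagram \eqref{P:kxccvgw13} to locate a clutter, and the straightening relations \eqref{E:reluniv} applied in a descending induction on $\rho$. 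Nothing in your sketch replaces that computation, so the well-definedness of your map (in whichever direction you intend it) is not established. Until that lemma (or a substitute) is proved, you do not even have a candidate map between $H^{3+\itwo}_{\fa}$ and $\widehat N$, let alone an isomorphism.
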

\begin{proof}
To simplify notations I denote $\widehat{N}/\widehat{\fri(-k,\hdelta)N}=N/\fri(-k,\hdelta)N$ by $N(-k,\hdelta)$.

I will   outline the proof first. By Proposition \ref{P:complexreduction} the group $H_{\fa}^{3-\rho(\hdelta)}[\hdelta,\hdelta']$ is the tensor product $\moduleS_{\fa}[\hdelta,\hdelta']\underset{\C[\hdelta,\hdelta']}{\otimes} \hA[\hdelta,\hdelta']$. I will fix $k$ and $\hdelta'$ and construct a coherent family of maps 
\begin{equation}\label{E:comparemap}
\moduleS_{\fa}[\hdelta,\hdelta']\underset{\C[\hdelta,\hdelta']}{\otimes} \hA[\hdelta,\hdelta']\to N(-k,\hdelta')
\end{equation}
This will enable me to define a map
\[\underset{\underset{\hdelta}{\longrightarrow}}{\lim}  H_{\fa}^{3-\rho(\hdelta)}[\hdelta,\hdelta']\to  \underset{\underset{k}{\longleftarrow}}{\lim}  N(-k,\hdelta')\]
In order to define the desired map  I  pass to inverse limit with respect to  $\hdelta'$.

In the rest of the proof I will concern myself with a construction of the map (\ref{E:comparemap}).

I will use the following notations 
\[U[-\infty,\hdelta']:=U/\fri[\sethE\backslash \sethE^{\leq \hdelta'}]U,\quad U(-k,\hdelta'):=U[-\infty,\hdelta']/\fri[{\lambda^i_-|i< -k}]U[-\infty,\hdelta']\] 
$U[-\infty,\hdelta']$ is a $D[-\infty,\hdelta']$-module. Inclusion of Weyl algebras $D[\hdelta,\hdelta']\to D[-\infty,\hdelta']$ turns $U[-\infty,\hdelta']$ into a $D[\hdelta,\hdelta']$-module. I define an inclusion  of $D[\hdelta,\hdelta']$-modules $\sff:\moduleS_{\fa}[\hdelta,\hdelta']\to U[-\infty,\hdelta']$ on the generator by 
\[\varpi_{\fa}[\hdelta,\hdelta']\to \prod_{\rho(\hdelta)\leq i<0,i\equiv 1 \mod 2}w_i^- vac.\]
By restriction of scalars from $\C[-\infty,\infty]$ to $\C[\hdelta,\hdelta']$ $U[-\infty,\hdelta']$ and $U(-k,\hdelta')$
become $\C[\hdelta,\hdelta']$-modules.
By abuse of notations I denote by $\sff$  the composition of maps of $\C[\hdelta,\hdelta']$-modules
\[\moduleS_{\fa}[\hdelta,\hdelta']\to U[-\infty,\hdelta']\to U(-k,\hdelta')\]
I denote by $\moduleS^{-k}_{\fa}[\hdelta,\hdelta']$ the image $\Imm\sff\subset U(-k,\hdelta')$.
The map $\sff$ induces a map of tensor products
\[ \hA[\hdelta,\hdelta'] \underset{\C[\hdelta,\hdelta']}{\otimes} \moduleS_{\fa}[\hdelta,\hdelta']\to \hA[\hdelta,\hdelta'] \underset{\C[\hdelta,\hdelta']}{\otimes} \moduleS^{-k}_{\fa}[\hdelta,\hdelta']\]
I denote by $\moduleS_{\fa}(-k,\hdelta')$ a $\C[-\infty,\hdelta']$-submodule in $ U(-k,\hdelta')$ generated by $\moduleS^{-k}_{\fa}[\hdelta,\hdelta']$. 
\begin{equation}\label{E:actionfactorization}
\text{The action of $\C[-\infty,\hdelta']$ on $\moduleS_{\fa}(-k,\hdelta')$ factors through $Q:=\C[\lambda_{-}^{-k},\dots,\lambda_{-}^{\rho(\hdelta)}]\otimes \C[\hdelta,\hdelta']$.}
\end{equation}
 Moreover 
 \begin{equation}\label{E:freemod1}
 \moduleS_{\fa}(-k,\hdelta')\cong \C[\lambda_{-}^{-k},\dots,\lambda_{-}^{\rho(\hdelta)}]\otimes \moduleS^{-k}_{\fa}[\hdelta,\hdelta'].
 \end{equation}
The nontrivial part of the construction is the map
\[\sfg:\hA[\hdelta,\hdelta'] \underset{\C[\hdelta,\hdelta']}{\otimes} \moduleS^{-k}_{\fa}[\hdelta,\hdelta'] \to \hA[-\infty,\hdelta'] \underset{\C[-\infty,\hdelta']}{\otimes} \moduleS_{\fa}(-k,\hdelta')\subset \hA\underset{P}\otimes U(-k,\hdelta')=N(-k,\hdelta')\]

The map (\ref{E:comparemap}) is the composition $\sfg\circ \sff$. 

Let $\fri^A:=\fri^A[\sethE\backslash \sethE^{\geq \hdelta}]$ be an ideal in $\hA[-\infty,\hdelta']$. Obviously $\hA[-\infty,\hdelta']/\fri^A\cong \hA[\hdelta,\hdelta']$. If I manage to prove that 
\begin{equation}\label{E:idealvanishing}
\fri^A \underset{\C[-\infty,\hdelta']}{\otimes} \moduleS_{\fa}(-k,\hdelta')=\{0\}
\end{equation} 
it would mean that 
\[\hA[-\infty,\hdelta'] \underset{\C[-\infty,\hdelta']}{\otimes} \moduleS_{\fa}(-k,\hdelta')\cong \hA[\hdelta,\hdelta']\underset{Q}{\otimes} \moduleS_{\fa}(-k,\hdelta')\cong \hA[\hdelta,\hdelta']\underset{\C[\hdelta,\hdelta']}{\otimes} \moduleS^{-k}_{\fa}[\hdelta,\hdelta'] \]
I the last identification I used isomorphism (\ref{E:freemod1}). Then I would take $\sfg$ to be the composition of the above isomorphisms.
\begin{lemma}
The isomorphism (\ref{E:idealvanishing}) holds true.
\end{lemma}
\begin{proof}
By (\ref{E:actionfactorization}) I can replace $\hA[-\infty,\hdelta']$ and $\C[-\infty,\hdelta']$ in $B(-k,\hdelta'):=\hA[-\infty,\hdelta'] \underset{\C[-\infty,\hdelta']}{\otimes} \moduleS_{\fa}(-k,\hdelta')$ by $\hA[-\infty,\hdelta']^{\geq -k}$ and $\C[-\infty,\hdelta']^{\geq -k}$. By definition $\lambda^i_+, i<0$ act trivially in $\moduleS_{\fa}(-k,\hdelta')$. This is why 
\begin{equation}\label{E:simpleidentity}
\lambda^{\halpha}a=-\lambda^{\halpha'}a,a\in \moduleS_{\fa}(-k,\hdelta')
\end{equation} when  $\rho(\halpha)=\rho(\halpha')<0$ and $\halpha\neq \halpha'$. By using isomorphism (\ref{E:freemod1}) any element in $B(-k,\hdelta')$ is a sum of the elements of the form 
\[\begin{split}
&a s=a \prod_{-k\leq i\leq \rho(\hdelta),i\equiv 1 \mod 2}\lambda^i_-w_i^- s'=a \prod_{\halpha\in \setX}\lambda^{\halpha}t, \\
&t=\prod_{-k\leq i\leq \rho(\hdelta),i\equiv 1 \mod 2}w_i^- s''\in U(-k,\hdelta')\\
&a\in \hA[-\infty,\hdelta']^{\geq -k}, s,s',s''\in \moduleS^{-k}_{\fa}[\hdelta,\hdelta']
\end{split}\]
$\setX$ is any subset $\sethE$ such that $\rho:\setX\to [-k,\rho(\hdelta)]\cap(1+ 2\Z)$ is a bijection. The last equality follows from (\ref{E:simpleidentity}). It is a matter of direct inspection of the diagram (\ref{P:kxccvgw13}) to verify that for any $\halpha$ such that $-k\leq \rho(\halpha)\leq \rho(\hdelta)$ there is $\setX$ as above and $\hbeta\in \setX$ such that $(\halpha,\hbeta)$ is a clutter. Let us pick $\halpha$ such that $\rho(\halpha)=-k$. Then by straightening relations (\ref{E:reluniv}) 
\[\lambda^{\halpha}s=\lambda^{\halpha}\lambda^{\hbeta}\prod_{\hbeta'\in \setX\backslash\{\hbeta\}}\lambda^{\hbeta'}t=0\]
It is true because if  $\rho(\hgamma)<-k$, then  $\lambda^{\hgamma}t=0$. The same argument works when $-k<\rho(\halpha)<\rho(\hdelta)$. The only difference is that straightening rules and (\ref{E:simpleidentity}) has to be repeated several times.
\end{proof}

\end{proof}

\section{ The field - antifield pairing}\label{S:faf}
The field - antifield pairing is a companion structure to the $*$-duality pairing. Thought their construction are formally distinct
, the framework of the proofs in both cases is very similar. This is why I leave some of the verifications to the reader.
In this section all the intervals satisfy purity condition (\ref{E:purity}).
\subsection{ The field - antifield pairing for a finite interval}
Let $\hA$ be an algebra based on the interval $[\hdelta,\hdelta']$. The field - antifield pairing is a pairing between cohomology of Koszul complexes
$H_{i}(B(H^j_{\fa}(A)))$ and $H_{i'}(B(H^{j'}_{\fa'}(A)))$. The pair of indices suggests that these cohomologies are pages of spectral sequences. Recall that by Proposition \ref{P:identification}, $H^i_{\fc}(A)$ can be computed by using the complex $\moduleT_{\bullet}(\fc)$. I define $\fT_{\bullet}(\fc)$ to be the diagonal complex of the Koszul bicomplex \begin{equation}
\begin{split}
&\fT_{i,j}(\fc):=B_i(\moduleT_{j}(\fc),\{\lambda^{\halpha}|\halpha\in \setN(\fc)\}),\fc\neq \fp, \setN(\fc) \text{ as in \ref{E:Ndef}} , \\
&\fT_{i,j}(\fp):=B_i(\moduleT_{j}(\fp),\{\lambda^{\rbeta}(z)|\rbeta\in \setE\}).
\end{split}
\end{equation}
There are also closely related complexes $\fT_{Kos}^{\bullet}(\fc)$, the diagonal complex of
\begin{equation}\label{E:bvkos}
\begin{split}
&\fT_{Kos}^{i,j}(\fc):=\underset{\underset{n}{\longrightarrow}}{\lim}
B_{-i}(K^{j}({\hA},\{\left(\lambda^{\halpha}\right)^n\}),\{\lambda^{\hbeta}\}), \halpha,\hbeta\in \setN(\fc) ,\fc\neq \fp \\
&\fT_{Kos}^{i,j}(\fp):=\underset{\underset{n}{\longrightarrow}}{\lim}B_{-i}(K^{j}({\hA},\{\left(\lambda^{\ralpha}(z)\right)^n\}),\{\lambda^{\rbeta}(z)\}), \ralpha,\rbeta\in \setE.
\end{split}
\end{equation}

It follows from Proposition \ref{P:identification} that 

\begin{equation}\label{E:BV}
\BV^i_{\fc}:=H^i(\fT_{Kos}(\fc))=H_{s(\fc)-i}(\fT(\fc)).
\end{equation} 
The following result gives the rough idea of the size of the groups (\ref{E:BV}) and their relations to bi-indexed groups $H_{i}(B(H^j_{\fa}(A)))$.
 \begin{proposition}\label{P:fT}
\begin{enumerate}
\item The first spectral sequence of the bicomplex $\fT_{i,j}(\fc)$ has the first page $E^1_{ij}:=B_i(\Tor_j^P(\hA,\moduleT_{\fc}))\cong B_i(H_{\fc}^{s(\fc)-j}(\hA))$.
\item \label{I:fTtwo}The second spectral sequence of the same bicomplex has the first page isomorphic to $\delta_{s(\fc),i}\otimes V_j$ (\ref{E:minres}). The rank one $\C$-linear space $\delta_{s(\fc),i}$ (or rank one $\C[z,z^{-1}]$-modules in $\fp$-case) is concentrated in cohomological degree $s(\fc)$.
\item $H_{i}(\fT(\fc))\cong V_{i-s(\fc)}$.
\end{enumerate}
\end{proposition}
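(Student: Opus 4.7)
The plan is to study the bicomplex $\fT_{\bullet,\bullet}(\fc)$ via its two standard spectral sequences, whose two differentials are the Koszul one (acting on $B_i$) and the $F_\bullet$-resolution differential (acting on $\moduleT_j$).

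For part (1), I would filter by $j$-degree and take the vertical differential first. Each column is of the form $\Lambda^i[\theta^\halpha \colon \halpha \in \setN(\fc)] \otimes F_\bullet \otimes_P \moduleT_\fc$, whose $j$-cohomology is $\Lambda^i[\theta] \otimes \Tor_j^P(\hA, \moduleT_\fc) = B_i(\Tor_j^P(\hA, \moduleT_\fc))$. Proposition~\ref{P:identification} identifies $\Tor_j^P(\hA, \moduleT_\fc)$ with $H_\fc^{s(\fc)-j}(\hA)$, and the induced $E^1$-differential becomes the Koszul differential on these local cohomology groups, giving the stated first page.

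For part (2), I would filter by $i$-degree and take the Koszul differential first. Since $F_j = P \otimes V_j$ is free over $P$, the row at level $j$ factors as $V_j \otimes B_\bullet(\moduleT_\fc, \{\lambda^\halpha \colon \halpha \in \setN(\fc)\})$. The Koszul cohomology of $\moduleT_\fc$ decomposes via K\"unneth along $\setN(\fc)$ into factors, reducing to the one-variable model $H_\bullet(B(\C[x]^{-1}, x))$. Interpreting $\C[x]^{-1}$ as $\C[x,x^{-1}]/\C[x]$, multiplication by $x$ is surjective with one-dimensional kernel spanned by $[x^{-1}]$, so this cohomology is $\C$ concentrated in top degree. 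Iterating over $\setN(\fc)$ yields a rank-one class at $i = s(\fc)$ (and a rank-one $\C[z,z^{-1}]$-module in the $\fp$-case, since there the Koszul variables are the Laurent series $\lambda^\rbeta(z)$), which gives $E^1_{ij} = \delta_{s(\fc), i} \otimes V_j$.

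For part (3), I would argue that the second spectral sequence degenerates at $E^1$, so $H_n(\fT(\fc)) \cong V_{n - s(\fc)}$ emerges from the sole surviving bidegree $(s(\fc), j)$ on the diagonal $i + j = n$. Degeneration is enforced by the minimality of the $P$-resolution: the differential $d \colon F_{j+1} \to F_j$ has image in $P_+ \otimes V_j$, and $P_+$ acts trivially on the rank-one Koszul generator $\prod_{\halpha \in \setN(\fc)} (\lambda^\halpha)^{-1}$, which is annihilated by every degree-one element of $P$ that survives in the Koszul cohomology. The main technical obstacle I foresee is part~(2) for $\fc \in \{\fa, \fa', \fb\}$, where $\moduleT_\fc$ carries additional polynomial tensor factors indexed by $\setN(\fm) \setminus \setN(\fc)$: these naively contribute extra components to the Koszul cohomology beyond the rank-one term, so I expect to need a careful analysis of the $P$-module structure on the Koszul generator, or a direct comparison with the first spectral sequence of (1), to justify that only the rank-one piece survives.
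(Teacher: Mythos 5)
Your approach mirrors the paper's terse two-step argument (K\"unneth for the Koszul cohomology of $\moduleT_\fc$, then standard spectral-sequence considerations), and the difficulty you flag at the end is real rather than a gap to be finessed: the K\"unneth computation does \emph{not} produce a rank-one space when $\fc\neq\fm$. The Koszul differential in $\fT_{\bullet,\bullet}(\fc)$ only involves $\lambda^\halpha$ for $\halpha\in\setN(\fc)$, so the polynomial tensor factor $\C[\setN(\fm)\setminus\setN(\fc)]$ of $\moduleT_\fc$ is a spectator, and the Koszul cohomology is $\delta_{s(\fc),i}\otimes\C[\setN(\fm)\setminus\setN(\fc)]$ (for $\fp$, tensored further with $\C[z,z^{-1}]$). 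This extra factor does not wash out at later pages. The $d_1$ of the second spectral sequence is the resolution differential reduced modulo $\fri[\setN(\fc)]$, which is nonzero because $\varpi_\fc$ is annihilated only by $\lambda^\halpha$ with $\halpha\in\setN(\fc)$, not by all of $P_+$ — so the minimality argument you propose for degeneration at $E^1$ also breaks down for $\fc\neq\fm$. Concretely, for $\fc=\fa$ the $E^2$ page is $\Tor_\bullet^P(\hA,\C[\setN(\fb)])$, and already $\Tor_0^P(\hA,\C[\setN(\fb)])=\hA[(0)^0,\hdelta']$ is infinite-dimensional, whereas $V_0=\C$. So items (2) and (3) as stated are literally correct only for $\fc=\fm$, and no amount of "careful analysis of the $P$-module structure" will make the rank-one claim true for $\fa,\fa',\fb,\fp$.

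This defect is in the paper's own one-line K\"unneth proof as well, so your instinct to flag it was sound. The correct statement of part (2) should carry the factor $\C[\setN(\fm)\setminus\setN(\fc)]$. The reason the downstream nondegeneracy argument (Proposition~\ref{E:nondegeneracyf}) nevertheless goes through is that the pairing passes through the triple product $\moduleT_\fa\otimes\moduleT_\fp\otimes\moduleT_{\fa'}\to\moduleT_\fa\otimes_P\moduleT_\fp\otimes_P\moduleT_{\fa'}\cong\moduleT_\fm$ (Proposition~\ref{P:tensorproduct}), which re-identifies the spectator polynomial factors of the three sources with each other and collapses them; on first pages the pairing then reduces to the rank-one nondegenerate pairing $V_j\otimes V_{d-j}\to V_d$ of Proposition~\ref{C:PoincareGen}. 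If you want to repair the argument rather than just flag it, you should replace the claimed $E^1\cong\delta_{s(\fc),i}\otimes V_j$ with $E^1\cong\delta_{s(\fc),i}\otimes V_j\otimes\C[\setN(\fm)\setminus\setN(\fc)]$ and verify that the pairing on $E^1$ factors through the rank-one $V_\bullet$-part after the $\otimes_P$-identification.
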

\begin{proof}
An easy computation with the K\"{u}nneth's formula shows that 
\begin{equation}
H_i(B(\moduleT_{\fc},\{\lambda^{\halpha}|\halpha\in \setN(\fc)\}))=\delta_{s(\fc),i}.
\end{equation}
The remaining proof, which uses standard spectral sequence arguments, is left to the reader. 
\end{proof}

I will construct a pairing 
\[\begin{split}
&\fT_{i}(\fa){\otimes}\fT_{k-16-i}(\fa')\to \C[z,z^{-1}],\\
&k=d+|[\hdelta,\hdelta']|,d=\rk(\hdelta,\hdelta')+1
\end{split}\] which will descend to a field-anti-field prefect pairing on cohomology. Existence of such pairing is not surprising knowing that there is a pairing on $V_i$ (Proposition \ref{C:PoincareGen}). My construction will use three ingredients:
\begin{enumerate}
\item A $\C[z,z^{-1}]$-linear map of complexes
 \begin{equation}\label{E:Upsilon}
 \begin{split}
&\sfU: \fT_{\bullet}(\fa)\otimes\fT_{\bullet}(\fp)\otimes \fT_{\bullet}(\fa'){\to} \fT_{\bullet}(\fm) \\
&\sfU_{Kos}: \fT_{Kos}^{\bullet}(\fa)\otimes\fT_{Kos}^{\bullet}(\fp)\otimes \fT_{Kos}^{\bullet}(\fa'){\to} \fT_{Kos}^{\bullet}(\fm). 
\end{split}
\end{equation}
\item A $d$-closed $\C[z,z^{-1}]$ linear map 
\begin{equation}\label{E:tfunct}
\begin{split}
&\sfres_{\fT}:\fT_{k}(\fm)\to \C[z,z^{-1}].
\end{split}
\end{equation}
\item
A cocycle 
\begin{equation}\label{E:tcoc}
\Theta(z)\in H_{16}(\fT(\fp))\cong H^{0}(\fT_{Kos}(\fp)).
\end{equation}
\end{enumerate}
The pairing will be the composition
\begin{equation}\label{E:threecomp}
(a,b)_{\sfU}=\sfres_{\fT}\circ \sfU(a,\Theta(z),b).
\end{equation}
 \paragraph{The map $\sfU$ }
 In my construction of $\sfU$, I use that $\fT_{\bullet}(\fc)$ is a tensor product $F_{\bullet}(A)\underset{P}{\otimes} \moduleT_{\fc}\otimes \Lambda[\setN(\fc)]$.
To construct $\sfU$ one has to extend scalars from $\C$ to $\C[z,z^{-1}]$ and replace $P$ by $P[z,z^{-1}]$. The map $\sfU$ (\ref{E:Upsilon}) is the tensor product of three three-linear maps. The first map is the triple product of resolutions (\ref{E:resprod}) $\cdot \times(\cdot \times\cdot)$.
 The second is 
\[ \moduleT_{\fa}{\otimes}\moduleT_{\fp}{\otimes} \moduleT_{\fa'}\to \moduleT_{\fa}\underset{P}{\otimes}\moduleT_{\fp}\underset{P}{\otimes} \moduleT_{\fa'}=\moduleT_{\fm}.\]
It uses an isomorphism (\ref{E:tensorproduct1}).
 The third is the tautological product \[\Lambda[\setN(\fp)] \otimes \Lambda[\setN(\fa)] \otimes \Lambda[\setN(\fa')]\to \Lambda[\hdelta,\hdelta']=\Lambda[\setN(\fm)].\]

\paragraph{The functional $\sfres_{\fT}$} 
 By definition, the groups $\fT_{i}(\fm)$ are nontrivial in the range $0\leq i\leq k,$ (\ref{E:tfunct}). The top degree group 
 \begin{equation}\label{E:thetasub}
 \begin{split}
& \fT_{k}(\fm):=\moduleT_{\fm}\otimes \Span{\theta \otimes c},\\
&\theta_{\setX}:=\bigwedge_{\halpha\in \setX} \theta^{\halpha}, \theta:=\theta_{[\hdelta,\hdelta']}\\
&\Span{ c}= V_{d}
 \end{split}
\end{equation}
 contains a one-dimensional cohomology group $H_{k}(\fT(\fm))\cong \C$ generated by the cocycle 
 \begin{equation}\label{E:topelementf}
 \varpi_{\fm}\otimes \theta \otimes c.
\end{equation}
 The map $\sfres_{\fT}$ is defined as
\[ \sfres_{\fT}:\fT_{k}(\fm)\to \C[z,z^{-1}],\quad p(\lambda)\otimes \theta \otimes c\to \sfres_{\moduleT_{\fm}}(p(\lambda)),\quad p(\lambda)\in \moduleT_{\fm} \]
is nonzero on $\varpi_{\fm}\otimes \theta \otimes c$.
It is $d$-closed by trivial reasons.

 \subsection{The cocycle $\Theta(z)$}
Let $\hA$ be an algebra based on the interval $[\hdelta,\hdelta']$. The cocycle $\Theta(z)$ (\ref{E:tcoc}) is the essential part of the pairing (\ref{E:threecomp}).
Its construction 
will depend on the choice of an element $\bar{c}$ in cohomology of $B_{\bullet}(\sA)$ (c.f. formula 5.19 in \cite{AABN}). I have a two-step construction of $\Theta(z)$. First, I define a complex $\sfT_{\bullet}(\fm)$ whose cohomology pairs perfectly with the cohomology of $B_{\bullet}(\sA)$. I define $\Theta$ as cocycle in complementary cohomology group such that 
\begin{equation}\label{E:Thetadef2}
\langle \bar{c},\Theta\rangle=1
\end{equation}
where $\bar{c}$ is some distinguished element in $B_{\bullet}(\sA)$.
 Second, I devise a map 
\[\sf\sfq_{*}:\sfT_{\bullet}(\fm)\to \fT_{\bullet}(\fp)\]
which I will use to produce $\Theta(z)$:
\begin{equation}\label{E:Thetazdef2}
\Theta(z):=\sfq_{*}\Theta\in \BV^0_{\fp}=H^0(\fT_{Kos}(\fp))=H_{16}(\fT(\fp)).
\end{equation}

\paragraph{Cohomology of $B_{\bullet}(\sA)$}
I will use the notations: $\sP$ will stand for $\C[(0)^0,(1)^0]$, $\sLambda$ for $\Lambda[(0)^0,(1)^0]$ and $\sD$ for $D[(0)^0,(1)^0]$. 

Cohomology of the Koszul complex $B(\sA)=\sA\otimes \sLambda$ has been studied extensively (\cite{CNT},\cite{BerNek},\cite{CortiReid}).
By Propositions \ref{C:PoincareGen} and Corollary \ref{C:dimalg},
cohomology $\sV_i:=H_i(B(\sA))$ has a perfect pairing $\sV_i\otimes \sV_{5-i}\to \C$. The generator $\bar{c}$ of $\sV_{5}$ is represented by the cocycle
\[\Gamma_{\rbeta \rbeta}^m\lambda^{\rbeta}\theta^{\rbeta} \Gamma_{\rgamma \rdelta}^n\lambda^{\rgamma}\theta^{\rdelta} \Gamma_{\repsilon \rvarepsilon}^k\lambda^{\repsilon}\theta^{\rvarepsilon} \Gamma_{\rmu \rnu}^{mnk}\theta^{\rmu}\theta^{\rnu}.\]
By Proposition \ref{C:PoincareGen}, linear spaces $\sV_i$ are the generating spaces of the minimal free $\sP$-resolution $F_{\bullet}(\sA)$ of $\sA$. 
\paragraph{The pairing $H_{21-i}(\sfT(\fm))\otimes \sV_i\to \C$}.
Here is the notations that I will use for some $\sD$-modules
\[\smoduleT_{\fm}:=\moduleT_{\fm}[(0)^0,(1)^0],\quad \smoduleT_{(0)}:=\sP\]
and for some complexes
\[\begin{split}
&\sfT_{\bullet}(0):=B_{\bullet}(F_{\bullet}(\sA)),\\
&\sfT_{\bullet}(\fm):=B_{\bullet}(\smoduleT_{\bullet}(\fm)),\quad \smoduleT_{\bullet}(\fm):=F_{\bullet}(\sA)\underset{\sP}{\otimes}\smoduleT_{\fm}.
\end{split}\]
The pairing that I am about to construct will be used in (\ref{E:Thetadef2}) for definition of $\Theta$.
\begin{proposition}
\begin{enumerate}
\item Cohomology of $\sfT_{\bullet}(0)$ coincide with cohomology of $B(\sA)$.
\item There is a nondegenerate pairing $H_{21-i}(\sfT(\fm))\otimes H_i(B(\sA))\to \C$.
\end{enumerate}
\end{proposition}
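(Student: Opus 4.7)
The plan is to read both statements through the bicomplex $\fT_{\bullet,\bullet}$ and to identify $\sfT_\bullet(\fm)$ with $F_\bullet(\sA)\underset{\sP}\otimes \smoduleT_\fm\otimes \sLambda$ so that the standard two spectral sequences line up with Proposition~\ref{P:fT}. The first spectral sequence will degenerate to $B_\bullet(\sA)$ using that $F_\bullet(\sA)\to\sA$ is a quasi-isomorphism, while the second will collapse thanks to the Koszul acyclicity already exploited in Proposition~\ref{P:fT} item~\ref{I:fTtwo}; Poincar\'e duality on the cohomology of $B(\sA)$ (Proposition~\ref{C:PoincareGen}) will then supply the perfect pairing.

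For the first item I would filter the diagonal complex $\sfT_\bullet(0)=B_\bullet(F_\bullet(\sA))$ by the homological degree of $F_\bullet$. On the $E^1$-page, freeness of each $F_i$ over $\sP$ makes $B_\bullet(F_i)$ a Koszul complex with coefficients in a free module, so the only surviving row computes $F_i\underset{\sP}\otimes\C$; the spectral sequence then collapses to the cohomology of $F_\bullet\underset{\sP}\otimes\C\wedge\sLambda$, which, since $F_\bullet\to\sA$ is a resolution, is quasi-isomorphic to $B_\bullet(\sA)$. Essentially the same diagonal-bicomplex argument is used in Proposition~\ref{P:fT}, and I would just transplant it with $\fc=0$.

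For the second item I would specialize Proposition~\ref{P:fT} to the interval $[(0)^0,(1)^0]$, where $s(\fm)=|\setN(\fm)|=16$. Item~\ref{I:fTtwo} of that proposition gives
\[
H_i(\sfT(\fm))\;\cong\;V_{i-16}\;=\;\sV_{i-16},
\]
because the minimal free $\sP$-resolution of $\sA$ has generating spaces $V_j=\sV_j=H_j(B(\sA))$ (Proposition~\ref{C:PoincareGen}). Substituting $i\mapsto 21-i$ and using $d=\dim_{\C}\sP_1-\dim\sA=16-11=5$, this yields $H_{21-i}(\sfT(\fm))\cong\sV_{5-i}$. Combined with the pairing $\sV_{5-i}\otimes \sV_i\to\sV_5\cong\C$ from the Poincar\'e duality half of Proposition~\ref{C:PoincareGen}, one obtains the desired nondegenerate pairing $H_{21-i}(\sfT(\fm))\otimes H_i(B(\sA))\to\C$.

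The main obstacle I anticipate is not the collapse of the spectral sequences, which is routine, but verifying that the pairing one \emph{produces} this way is the one that will eventually be glued to $\sfres_{\fT}$ and $\Theta(z)$ to build the field--antifield pairing (\ref{E:threecomp}). Concretely, I would make the identification $H_{21-i}(\sfT(\fm))\cong\sV_{5-i}$ explicit by picking a cycle representative of the form $\varpi_{\fm}\otimes\theta\otimes c$ (cf.\ (\ref{E:topelementf})) and tracing the Poincar\'e product $\sV_{5-i}\otimes\sV_i\to\sV_5$ through the quasi-isomorphism $B(F(\sA))\simeq B(\sA)$ of step one. Once the pairing is written in that normalization, it will agree with $\sfres_{\moduleT_\fm}\circ\mu$ up to a nonzero constant, exactly as needed to define $\Theta$ via (\ref{E:Thetadef2}).
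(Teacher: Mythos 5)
Your argument is correct but takes a genuinely different route from the paper's. For the second item, the paper constructs a perfect pairing of complexes $\sfT_{21-i}(\fm)\otimes\sfT_i(0)\to\C$ at the chain level, as the tensor product of the $\smoduleT$-pairing from Lemma \ref{L:pairinge} and the determinant pairing $\sLambda^l\otimes\sLambda^{16-l}\to\sLambda^{16}\cong\C$; the graded duality of complexes then directly forces the cohomology pairing to be nondegenerate. You instead first compute the two cohomology groups — $H_{21-i}(\sfT(\fm))\cong\sV_{5-i}$ via the collapse of the second spectral sequence in Proposition \ref{P:fT}, and $H_i(B(\sA))\cong\sV_i$ from Proposition \ref{C:PoincareGen} — and then pair them abstractly by Avramov--Golod duality $\sV_{5-i}\otimes\sV_i\to\sV_5\cong\C$. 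Both establish nondegeneracy; the paper's chain-level construction is what feeds naturally into the normalization of $\Theta$ in (\ref{E:Thetadef2}), while your route makes the arithmetic of the index shift ($s(\fm)=16$, $d=5$, $21=16+5$) and the reason for nondegeneracy more transparent. Your closing paragraph correctly identifies the one thing your argument does not supply for free — that the abstractly produced pairing agrees with the chain-level one up to a scalar — and that check is indeed what the paper's construction makes automatic. (Minor quibble on item 1: your filtration by the homological degree of $F_\bullet$ gives $E^1$ concentrated in Koszul degree $0$ equal to $V_j=F_j\underset{\sP}\otimes\C$, so the $E^2$-page is $\sV_\bullet$, not ``the cohomology of $F_\bullet\underset{\sP}\otimes\C\wedge\sLambda$'' as you wrote; the other filtration, using that $B$ preserves quasi-isomorphisms applied to $F_\bullet\to\sA$, reaches $H_\bullet(B(\sA))$ in one step, which is why the paper calls the first item obvious.)
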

\begin{proof}
The first statement is obvious. 

The complex $F_{\bullet}(\sA)$ has length $5$. This is why 
\begin{equation}\label{E:completensordecompositon}
\sfT_{i}(\fc)=\bigoplus_{k+l=i}\smoduleT_{k}(\fc)\otimes \sLambda^l,\fc=(0),\fm 
\end{equation}
have length $21$. 
It is easy to construct a nondegenerate pairing between complexes 
\begin{equation}\label{E:pairingsmall}
\sfT_{21-i}(\fm) \otimes \sfT_{i}(0) \to \C.
\end{equation}
The complex \ref{E:completensordecompositon} is a tensor product. The $\smoduleT$-tensor factors are equipped with the pairing from Lemma \ref{L:pairinge}. Exterior algebra has the inner product 
\[\sLambda^i\otimes \sLambda^{16-i}\to \sLambda^{16} \cong \C.\]
The tensor product of the last two inner products define (\ref{E:pairingsmall}).
The pairing that I just constructed indices the pairing in cohomologies stated in the second item of the proposition.
\end{proof}

\begin{definition}{\em
I define $\Theta$ to be a nontrivial cocycle in $\sfT_{16}(\fm)$ that satisfies (\ref{E:Thetadef2}).
}\end{definition}

\paragraph{Definition of the map $\sfq_{*}$}
The map 
\begin{equation}\label{E:mapqdef}
\begin{split}
&\sfq:\sP\to P[z,z^{-1}]\\
&\sfq(\lambda^{\rbeta}):= \lambda_{[\hdelta,\hdelta']}^{\rbeta}(z) (\ref{E:lambdaz})
\end{split}
\end{equation}
descends to a homomorphism of algebras
\begin{equation}\label{E:mapofalgebras}
\sA\to \hA[z,z^{-1}].
\end{equation}

$\sfq$ induces a map of complexes $\sfT_{\bullet}(\fm)\to \fT_{\bullet}(\fp)$.
Here are the details.
The minimal free resolution $F_{\bullet}(\hA)[z,z^{-1}]$ of $\hA[z,z^{-1}]$ is a module over $P[z,z^{-1}]$. After restriction of scalars via $\sfq$ it becomes a complex of $\sP[z,z^{-1}]$-modules. Freeness of the resolution allows to lift the map (\ref{E:mapofalgebras}) to a map of the complexes
\begin{equation}\label{E:resmapc}
F_{\bullet}(\sA)[z,z^{-1}]\to F_{\bullet}(\hA)[z,z^{-1}]
\end{equation}
of the $\sP[z,z^{-1}]$-modules.

The map \ref{E:mapqdef} induces a map of $\sP[z,z^{-1}]$-modules:
\begin{equation}\label{E:fmfpmap}
\begin{split}
&\smoduleT(\fm)[z,z^{-1}]\to \moduleT(\fp) \\
& \prod_{\rbeta\in \setE }\left(\lambda^{\rbeta}\right)^{-1-k_{\rbeta}} \to \prod_{\rbeta\in \setE }\left(\lambda^{\rbeta}(z)\right)^{-1-k_{\rbeta}} 
\prod_{\halpha\in \setN(\fa)\cup \setN(\fa')}\left(\lambda^{\halpha}\right)^{-1}. 
\end{split}
\end{equation}
I define the map of complexes
\[\sfq_{*}:\sfT_{\bullet}(\fm)=F(\sA)\underset{\sP}{\otimes} \smoduleT(\fm)\otimes \sLambda\to F(\hA)\underset{P[z,z^{-1}]}{\otimes} \moduleT(\fp)\otimes \sLambda=\fT_{\bullet}(\fp)\]
as a tensor product of (\ref{E:resmapc}), (\ref{E:fmfpmap}) and the identity map on $\sLambda$.
I define $\Theta(z)$ by the formula (\ref{E:Thetazdef2}).
Before we can move on we have to check nontrivially of $\Theta(z)$.
\begin{proposition}
\[\Theta(z)\neq 0\in H_{16}(\fT(\fp)).\]
\end{proposition}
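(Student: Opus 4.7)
The plan is to track the class of $\Theta(z)$ through the second spectral sequence of the bicomplex $\fT_{i,j}(\fp)$, using the naturality of $\sfq_*$ with respect to the spectral sequence filtrations. By Proposition \ref{P:fT}(ii)--(iii) applied to $\fp$, after first taking Koszul cohomology with respect to $\{\lambda^{\rbeta}(z)\}$, we obtain a row concentrated in degree $s(\fp)=|\setE|=16$, and a further application of the minimal resolution differential yields the identification $H_{16}(\fT(\fp))\cong V_{0}\otimes \C[z,z^{-1}]$. Since $V_{0}\cong \C$ is the degree-zero generating space of $F_{\bullet}(\hA)$, the target is the free rank-one $\C[z,z^{-1}]$-module, so to show $\Theta(z)\neq 0$ it suffices to verify that its class corresponds to a nonzero element of $\C[z,z^{-1}]$ under this identification.

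Running the same spectral sequence argument in the $\sA$ setting gives $H_{16}(\sfT(\fm))\cong \sV_{0}\cong\C$, and the construction $\langle \bar{c},\Theta\rangle=1$ exhibits $\Theta$ as the (nonzero) generator of this one-dimensional space, dual under the pairing (\ref{E:pairingsmall}) to the generator $\bar{c}\in\sV_{5}$. The map $\sfq_{*}$ is by construction the tensor product of the lift (\ref{E:resmapc}) of $\sfq:\sA\to\hA[z,z^{-1}]$ to minimal resolutions, the map (\ref{E:fmfpmap}) on the $\smoduleT_{\fm}$-part, and the identity on the exterior algebra factor. Each tensor factor is compatible with the filtration defining the second spectral sequence: the map of resolutions intertwines the $F_{\bullet}$-filtration, and (\ref{E:fmfpmap}) sends $\prod_{\rbeta}(\lambda^{\rbeta})^{-1}$ to $\prod_{\rbeta}(\lambda^{\rbeta}(z))^{-1}$ times a unit in the remaining directions, hence intertwines the Koszul filtration.

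Therefore $\sfq_{*}$ induces a map on the $E_{2}$-pages which in degree zero is the map $\sV_{0}\to V_{0}\otimes\C[z,z^{-1}]$ induced by functoriality from the unital algebra map $\sfq$, sending $1\in\sA$ to $1\in\hA[z,z^{-1}]$. Consequently the image of the generator $\Theta\in\sV_{0}$ is precisely the generator $1\otimes 1\in V_{0}\otimes\C[z,z^{-1}]$, which is nonzero. Tracking this back through the spectral sequence identification of $H_{16}(\fT(\fp))$ with $V_{0}\otimes\C[z,z^{-1}]$ yields $\Theta(z)\neq 0$. The main obstacle is the bookkeeping: making precise that $\sfq_{*}$ is a morphism of filtered complexes whose induced map on the second page is the naive functorial map between generating spaces of minimal resolutions. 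Once this is established, the nontriviality is automatic from the fact that $\sfq$ sends the unit to the unit.
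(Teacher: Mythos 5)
Your approach is correct in outline, but it takes a genuinely different route from the paper's, and the paper's is considerably more economical. The paper observes that the algebra homomorphism $\sfq:\sA\to\hA[z,z^{-1}]$ of (\ref{E:mapofalgebras}) admits a left inverse: composing $\sfq$ with the natural projection $\hA[z,z^{-1}]\to\sA[z,z^{-1}]$ (which kills all $\lambda^{\rbeta^k}$ with $k\neq 0$) gives the identity on $\sA[z,z^{-1}]$. Building a map $\sfp_{*}:\fT_{\bullet}(\fp)\to\sfT_{\bullet}(\fm)$ from this projection in the same way $\sfq_{*}$ was built from $\sfq$, one gets $\sfp_{*}\circ\sfq_{*}\simeq\id$ by uniqueness-up-to-homotopy of lifts to projective resolutions; hence $\sfq_{*}$ is a split monomorphism on all of $H_{\bullet}$, and in particular it carries the nontrivial class $\Theta$ to a nontrivial class. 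This buys injectivity of $\sfq_{*}$ in every degree with no spectral-sequence bookkeeping whatsoever, and it makes no use of the collapse of the second spectral sequence or the explicit form of the $E_1$-page. Your argument, by contrast, is the same machinery the paper deploys for the harder nondegeneracy statement in Proposition \ref{E:nondegeneracyf}, but applied here it has to track a single class through the filtered map, identify the induced map on the first page, and then invoke degeneration; that works, but it is more fragile and more to check.

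Two small imprecisions in your write-up that you would need to clean up. First, the relevant page is $E_{1}$, not $E_{2}$: the first page of the second spectral sequence is already concentrated in the single Koszul degree $i=s(\fc)$ (Proposition \ref{P:fT}(ii)), so $d_{1}$ vanishes for degree reasons and $E_{1}=E_{\infty}$; there is no "further application of the minimal resolution differential." Second, the map (\ref{E:resmapc}) is a map of complexes over \emph{different} base rings ($\sP[z,z^{-1}]$ on the source, $P[z,z^{-1}]$ on the target, via restriction of scalars through $\sfq$), so calling the induced map $\sV_{j}\to V_{j}$ "the naive functorial map between generating spaces of minimal resolutions" needs the extra remark that in homological degree $0$ both resolutions begin with the polynomial ring itself and the lift is $\sfq$, which sends $1$ to $1$; this is exactly the only input your argument really uses, and it is also the algebraic fact the paper's retraction argument exploits more directly.
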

\begin{proof}
Composition of (\ref{E:mapofalgebras}) with projection $\sfp:\hA[z,z^{-1}]\to \sA[z,z^{-1}]$ is the identity on $\sA[z,z^{-1}]$. I use this to define a map $\sfp_{*}:\fT_{\bullet}(\fp)\to \sfT_{\bullet}(\fm)$ such that $\sfp_{*}\circ \sfq_{*}$ is homotopic to identity. From this I conclude that $\sfq_{*}\Theta \neq 0$
\end{proof}

 \subsection{Nondegeneracy of the field-antifield pairing}
 Now we are in position to put together all the ingredients, define $(a,b)_{\sfU}$ and verify nondegeneracy of $(a,b)_{\sfU}$.
In the notations  (\ref{E:weights}) I define 
\begin{equation}
\begin{split}
&u(\fm):=\sum_{\halpha\in \setN(\fm)}u(\halpha)\\
&r'(\fm)\left| z^{r'(\fm)}:=\prod_{\halpha\in \setN(\fm)}v_{\halpha}(z) \right. .
\end{split}
\end{equation}
where $\setN$ is defined in (\ref{E:Ndef}).
Here is the main statement of this section.
\begin{proposition}\label{E:nondegeneracyf}
\begin{enumerate}
\item Bilinear form (\ref{E:threecomp}) defines nondegenerate pairings 
 \begin{equation}\label{E:fafpairingcoh}
 \begin{split}
& H_{i}(\fT(\fa)){\otimes}H_{k-16-i}(\fT(\fa'))\overset{(a,b)_{\sfU}}{\longrightarrow} \C[z,z^{-1}]\\
& BV^{j}_{\fa}{\otimes}BV^{k'-j}_{\fa'}\overset{(a,b)_{\sfU}}{\longrightarrow} \C[z,z^{-1}]\\
&k=s(\fm)+s'(\fm),\quad k'=s'(\fm)-s(\fm).
\end{split}\end{equation}
\item The non-renormalized $\Aut$-weight   of the pairing is equal to 
\[(a[\hdelta,\hdelta']-s(\fm),u[\hdelta,\hdelta']-u(\fm), r[\hdelta,\hdelta']-r(\fm))\quad (\text{see }\ref{E:functionsdef},\ref{E:sigmaalt}\text{ for notations}).\]

\end{enumerate}
\end{proposition}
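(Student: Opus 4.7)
The plan is to split the pairing $(a,b)_{\sfU} = \sfres_{\fT} \circ \sfU(a,\Theta(z),b)$ into its three tensor factors, verify nondegeneracy of each on chains, and then descend to cohomology using the second spectral sequence of Proposition \ref{P:fT}(\ref{I:fTtwo}). Recall that $\fT_{\bullet}(\fc) = F_{\bullet}(\hA) \underset{P}{\otimes} \moduleT_{\fc} \otimes \Lambda[\setN(\fc)]$ and that by construction $\sfU$ is the tensor product of the triple product of resolutions from (\ref{E:resprod}), the triple product $\moduleT_{\fa} \otimes \moduleT_{\fp} \otimes \moduleT_{\fa'} \to \moduleT_{\fm}$ obtained from Proposition \ref{P:tensorproduct}, and the exterior multiplication on the $\Lambda$-factors. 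The functional $\sfres_{\fT}$ is nonzero only on the one-dimensional summand spanned by the cocycle $\varpi_{\fm} \otimes \theta \otimes c$ of (\ref{E:topelementf}).

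First I would verify chain-level nondegeneracy by decomposing $(a,b)_{\sfU}$ along the three tensor slots. The $\moduleT$-factor reduces to the residue pairing on $\moduleT_{\fa} \otimes \moduleT_{\fa'}$ obtained by inserting the $\moduleT_{\fp}$-component of $\Theta(z)$ and composing with $\sfres_{\moduleT_{\fm}}$, which is nondegenerate by Proposition \ref{P:Tpairingnondeg}. The $\Lambda$-factor reduces to the standard top-form wedge pairing on $\Lambda[\hdelta,\hdelta']$. The $F$-factor reduces to the Gorenstein--Koszul chain pairing $V_i \otimes V_{d-i} \to V_d \cong \C$ of (\ref{E:pairingchains}) after inserting the $F$-component of $\Theta$ at the top class; the normalization (\ref{E:Thetadef2}) guarantees that this top component is nonzero. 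The index bookkeeping confirms compatibility: using $k = s(\fm) + s'(\fm)$ and $s(\fa) + s(\fa') + 16 = s(\fm)$, the total $F$-degree in the triple product is $(i - s(\fa)) + (k - 16 - i - s(\fa')) = s'(\fm) = d$, landing exactly in $V_d$.

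Second I would pass to cohomology. By Proposition \ref{P:fT}(\ref{I:fTtwo}), the second spectral sequence of the bicomplex $\fT_{\bullet,\bullet}(\fc)$ collapses at the first page to $\delta_{s(\fc),i} \otimes V_j$, giving isomorphisms $H_{i}(\fT(\fc)) \cong V_{i-s(\fc)}$ and, via (\ref{E:BV}), $BV^{j}_{\fc} \cong V_{s(\fc)+s'(\fc)-j}$. Because $\sfU$ respects the tensor decomposition, these isomorphisms identify the cohomological pairing with the tensor of the three factor pairings. After the collapse only the top-degree representatives survive in the $\moduleT$- and $\Lambda$-slots, so the surviving pairing on cohomology is exactly the nondegenerate Gorenstein--Koszul pairing $V_i \otimes V_{d-i} \to V_d \cong \C$ from Proposition \ref{C:PoincareGen}. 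Both variants in item (1) follow immediately, with the index-matching $j \leftrightarrow k'-j$ dictated by $k' = s'(\fm) - s(\fm)$.

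For the weight computation in item (2), I would add up the weights of the three tensor factors. The Gorenstein--Koszul pairing on $V_{\bullet}$ composed with $\mu:V_d \to \C$ carries the weight of the generator of the dualizing module $\omega_{\hA}$, which by Proposition \ref{P:charmatch} contributes $(a[\hdelta,\hdelta'], u[\hdelta,\hdelta'], r[\hdelta,\hdelta'])$. Direct inspection of (\ref{E:TIJdef}) shows that $\sfres_{\moduleT_{\fm}}$ evaluated at $\varpi_{\fm}$ supplies the correction $(-s(\fm), -u(\fm), -r'(\fm))$, and the wedge factor is scalar and contributes trivially, so summation yields the claimed weight. The main obstacle I anticipate is tracking the insertion of $\Theta(z)$ cleanly through the collapse of the spectral sequence: one must verify that $\Theta(z) = \sfq_{*}\Theta$ survives as a top-class representative simultaneously in the $F$- and $\moduleT_{\fp}$-slots, and this is precisely why the two-step construction of $\Theta(z)$ via the auxiliary complex $\sfT_{\bullet}(\fm)$ and the comparison map $\sfq_{*}$ was carried out in the preceding subsection, culminating in the nontriviality of $\sfq_{*}\Theta$ established just above the statement.
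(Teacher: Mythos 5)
Your argument follows essentially the same route as the paper: both pass to the spectral sequence of the filtration by $\moduleT$-degree (item \ref{I:fTtwo} of Proposition \ref{P:fT}), both exploit that the only surviving first-page group of $\fT(\fp)$ in total degree $16$ is $\delta_{s(\fp),16}\otimes V_0\cong\C$ (so $\Theta(z)$ reduces to $\varpi_\fp\theta$ up to a nonzero scalar, the scalar being nonzero by the nontriviality of $[\Theta(z)]$), and both identify the resulting first-page pairing with the tensor of the Gorenstein--Koszul pairing $V_i\otimes V_{d-i}\to V_d\cong\C$ from Proposition \ref{C:PoincareGen}, the $\varpi$-pairing of Proposition \ref{P:Tpairingnondeg} via $\varpi_\fa\varpi_\fp\varpi_{\fa'}=\varpi_\fm$, and the top wedge pairing.

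Two points should be tightened. Your opening ``chain-level nondegeneracy'' paragraph is not quite a valid step: $\Theta(z)=\sum_{i=0}^{16}\Theta_i(z)$ spreads across all bidegrees $\moduleT_{16-i}(\fp)\otimes\sLambda^i$, so it has no single $\moduleT_\fp$-component to insert, and $(a,b)_\sfU$ does \emph{not} factor into three slot-pairings on chains; the factorization only appears on the associated graded, which is precisely why the spectral-sequence step in your second paragraph is the real argument (so your proof stands, but the first paragraph is a distraction). For the weight, the paper first establishes Lemma \ref{L:weightth} ($\Theta(z)$ has $\Aut$-weight zero, computed from its leading term $\Theta_{16}(z)$) before reading off the weight of $\sfres_\fT$ applied to $\varpi_\fm\otimes\theta\otimes c$; your accounting via the $V_\bullet$-slot and $\sfres_{\moduleT_\fm}$ reaches the same total, but it still tacitly needs that lemma to justify discarding a possible contribution from $\Theta(z)$ itself, and that observation deserves to be made explicit.
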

\begin{proof}
The two pairings are related by the isomorphism (\ref{E:BV}).
The spectral sequence from item \ref{I:fTtwo}, Proposition \ref{P:fT} is induced by the filtration
\[F_j(\fT_i(\fc))=\bigoplus_{k+l=i, k\leq j}\moduleT_{k}(\fc)\otimes \Lambda^l, \fc\neq \fp,\]
\[F_j(\fT_i(\fp))=\bigoplus_{k+l=i, k\leq j}\moduleT_{k}(\fp)\otimes \sLambda^l.\]
Filtrations $F_j(\fT_i(\fp)),F_j(\fT_i(\fa)),F_j(\fT_i(\fa'))$ are compatible with the triple product (\ref{E:Upsilon}). That is why $\sfU$ induces a map of the first pages:
\begin{equation}\label{E:Upsilonred}
\left( \delta_{s(\fa),i}\otimes V_{j}\right)\otimes \left(\delta_{s(\fp),i'}\otimes V_{j'}\right)\otimes \left(\delta_{s(\fa'),{i''}}\otimes V_{j''}\right)\to \delta_{s(\fm),{i+i'+i''}}\otimes V_{j+j'+j''}.
\end{equation}
Only one group in the sum $\bigoplus_{i+j=16}\delta_{s(\fp),i}\otimes V_j$ is nontrivial. It is $\delta_{s(\fp),16}\otimes V_{0}\cong\C$. The top component of \[\Theta(z)=\sum_{i=0}^{16} \Theta_i(z), \Theta_i(z)\in \moduleT_{16-i}(\fp)\otimes \sLambda^i \] with respect to filtration must be equal to (up to a nonzero factor) 
\begin{equation}\label{E:Thetazdef3}
 \Theta_{16}(z)=\varpi_{\fp} \theta\in \moduleT_{\fp}\otimes \sLambda^{16}= \moduleT_0(\fp)\otimes \sLambda^{16}.
\end{equation}

The inner product (\ref{E:threecomp}) induces a pairing between first pages of spectral sequences. If I prove that this derived pairing is not degenerate, then it will automatically imply that (\ref{E:threecomp}) is not degenerate.

To define the pairing between first pages, I use the same scheme as for (\ref{E:threecomp}) but replace (\ref{E:Upsilon}) by (\ref{E:Upsilonred}), (\ref{E:Thetazdef2}) by (\ref{E:Thetazdef3}). 
The resulting pairing 
\[ \varpi_{\fa} \theta_{\setN(\fa)}\otimes V_{d-j}\otimes \varpi_{\fp} \theta_{\setE}\otimes V_0\otimes \varpi_{\fa'} \theta_{\setN(\fa')}\otimes V_{j}\to \varpi_{\fm}\theta_{[\hdelta,\hdelta']} \otimes V_{d}\cong \C\]
is not degenerate because it is constructed from nondegenerate pairing on $\bigoplus_i V_i$ (Proposition \ref{C:PoincareGen}) and because $\varpi_{\fa}\varpi_{\fp}\varpi_{\fa'}=\varpi_{\fm}$ (see the proof of Proposition \ref{P:tensorproduct}).
\begin{lemma}\label{L:weightth}
The $\Aut$-weight of $\Theta(z)$ is equal to $(0,0,0)$. 
\end{lemma}
\begin{proof}
 The easiest way to determine this is to observe that the spectral sequence from Proposition \ref{E:nondegeneracyf} is compatible with $\Aut$-action. Thus it is safe to compute the weight of $\Theta(z)$ by calculating the weight of its leading component $\Theta_{16}(z)$(\ref{E:Thetazdef3}) for which the answer is obvious.
\end{proof}

Lemma \ref{L:weightth} implies that $\Theta(z)$ doesn't contribute to the $\Aut$-weight of the pairing. I conclude that the weight in question is equal to the weight of the element (\ref{E:topelementf}), which is the sum of $(-a[\hdelta,\hdelta'],-u[\hdelta,\hdelta'],-r[\hdelta,\hdelta'])$-the weight of $\varpi_{\fm}\otimes c$ (see (\ref{E:sigmaalt}) and Proposition \ref{P:pairingdegree}) and of
\[\left(s[\hdelta,\hdelta'],u[\hdelta,\hdelta'],r'[\hdelta,\hdelta']\right) ,\]
 the weight of $\theta_{[\hdelta,\hdelta']}$. 
\end{proof}
\begin{remark}{\rm (c.f. \ref{R:degreeshift1})
I define a renormalized $\Aut$-action on $\fT_{\bullet}(\fa)$ by twisting on the character $\chi'[\hdelta,(1)^{-1}]^{-1} =\chi^{-1}[\hdelta,(1)^{-1}]\det \C[\hdelta,(1)^{-1}]$ (Proposition \ref{P:funequation}, equation (\ref{E:trivmod})) and $\fT_{\bullet}(\fa')$ by $\chi'[(0)^1, \hdelta']^{-1}$
}\end{remark}
\begin{remark}
Proposition \ref{P:fT} implies equality of virtual characters

\begin{equation}\label{E:zbvbareeq}
 ZBV^{bare}_{\fc}(t,q,z):=\sum_{i}(-1)^i \chi_{BV^i_{\fc}}(t,q,z)=Z^{bare}_{\fc}(t,q,z)\Lambda \spinor (-t,q,z).
\end{equation}

The pairing (\ref{E:fafpairingcoh}) implies equality of the virtual characters:

\begin{equation}\label{E:BVdualitychar}
\begin{split}
& ZBV^{bare}_{\fa}(t,q,z)=\\
&=(-1)^{s'(\fm)-s(\fm)}
t^{s(\fm)-a(\fm)}q^{u(\fm)-u(\fm)}z^{r'(\fm)-r(\fm)}ZBV^{bare}_{\fa'}(t^{-1},q^{-1},z^{-1}).
\end{split}
\end{equation}
\[\begin{split}
&ZBV^{bare}_{\fa}[\hdelta,\hdelta'](t,q,z)= ZBV^{bare}_{\fa'}[\reflection(\hdelta'),\reflection(\hdelta)](t,q^{-1},Sz)
\end{split}\]
the last equality is induced by the isomorphism $\reflection$ (\ref{E:mapsintr}).
\end{remark}
 Equality 
\[\Lambda \spinor (t,q,z)=
t^{s(\fm) }q^{u(\fm) }z^{r'(\fm) }\Lambda \spinor (t^{-1},q^{-1},z^{-1})\]
comes from the pairing 
\[\Lambda^{i} \spinor\otimes \Lambda^{|[\hdelta,\hdelta']|-i} \spinor\to \Lambda^{|[\hdelta,\hdelta']|} \spinor.\]
\begin{proposition}\label{P:maintwosf}
The virtual character $Z_{\fa}[\hdelta,\hdelta']$ (\ref{E:renormchar}) satisfies 
\begin{equation}\label{E:feqfieldandifield}
 \begin{split}
&Z_{\fa}[\hdelta,\hdelta'](t,q,z)=\\
&=-t^{-8}Z_{\fa}[\reflection(\hdelta'),\reflection(\hdelta)](t^{-1},q,Sz^{-1}).
\end{split}
\end{equation}
\end{proposition}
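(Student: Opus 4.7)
The plan is to derive the functional equation for $Z_{\fa}$ by combining the three ingredients already available: the field-antifield duality (\ref{E:BVdualitychar}), the Koszul relation (\ref{E:zbvbareeq}) between $ZBV^{bare}_{\fc}$ and $Z^{bare}_{\fc}$, and the reflection isomorphism $\reflection$ of (\ref{E:mapsintr}) which sends $\fa$ on $[\hdelta,\hdelta']$ to $\fa'$ on $[\reflection(\hdelta'),\reflection(\hdelta)]$. The idea is to use reflection to eliminate $\fa'$ everywhere in favour of $\fa$ on the reflected interval, then pass from the bare characters to the renormalized ones by keeping careful track of the normalization factor $(-1)^{s'(\fa)}t^{a(\fa)}q^{u(\fa)}z^{r(\fa)}$ of (\ref{E:renormchar}).

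First I would rewrite the two equations at the end of Remark (\ref{E:BVdualitychar}) as follows. Substituting the reflection identity
\[ZBV^{bare}_{\fa'}[\reflection(\hdelta'),\reflection(\hdelta)](t,q^{-1},Sz)=ZBV^{bare}_{\fa}[\hdelta,\hdelta'](t,q,z)\]
into the first duality relation yields an identity purely about $ZBV^{bare}_{\fa}$:
\[ZBV^{bare}_{\fa}[\hdelta,\hdelta'](t,q,z)=C\cdot ZBV^{bare}_{\fa}[\reflection(\hdelta'),\reflection(\hdelta)](t^{-1},q,Sz^{-1}),\]
where the prefactor $C$ is a monomial in $t,q,z$ whose exponents I read off from Proposition \ref{E:nondegeneracyf}(2). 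Dividing both sides by $\Lambda\spinor(-t,q,z)$ in the light of (\ref{E:zbvbareeq}) and applying the character identity $\Lambda\spinor(-t,q,z)=(-t)^{s(\fm)}q^{u(\fm)}z^{r'(\fm)}\Lambda\spinor(-t^{-1},q^{-1},z^{-1})$ (combined with $q$-inversion that is absorbed by the reflection on $\widetilde{\bT}^5$), I obtain an analogous identity for the bare character $Z^{bare}_{\fa}$ on the two intervals.

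Next I pass from $Z^{bare}_{\fa}$ to $Z_{\fa}$ via the normalization factor in (\ref{E:renormchar}). Using item (\ref{I:chareq3}) of Proposition \ref{P:funequation}, which gives $a[\hdelta,\hdelta']=a[\reflection(\hdelta'),\reflection(\hdelta)]$, $u[\hdelta,\hdelta']=-u[\reflection(\hdelta'),\reflection(\hdelta)]$, $r[\hdelta,\hdelta']=S(r[\reflection(\hdelta'),\reflection(\hdelta)])$, together with the obvious behavior of $s'(\fa)$, $s(\fc)$, $u(\fc)$, $r(\fc)$, $r'(\fc)$ under $\reflection$, all of the $q$- and $z$-monomial factors will cancel against the corresponding ones generated by the change of variables $(t,q,z)\mapsto(t^{-1},q,Sz^{-1})$ in $Z_{\fa}[\reflection(\hdelta'),\reflection(\hdelta)]$. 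What remains is purely a power of $t$ together with the sign $-1$, and the proposition asserts that this residual factor is exactly $-t^{-8}$.

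The only substantive step, and the main obstacle, is the arithmetic that produces the specific exponent $-8$ and sign $-1$. Concretely, one must verify that
\[s(\fm)-a(\fm)-2a(\fa)+a[\hdelta,\hdelta']+a[\reflection(\hdelta'),\reflection(\hdelta)]-(\text{power of }t\text{ from }\Lambda\spinor)=-8,\]
using the closed-form expressions of Proposition \ref{P:charmaincomp} and of the functions $a,u,r,s,s'$ from Section \ref{S:virtchar}. The sign is likewise a combinatorial count: it comes from $(-1)^{s'(\fm)-s(\fm)}\cdot(-1)^{s(\fm)}\cdot(-1)^{s'(\fa)+s'(\fa')}$, which will collapse to $-1$ after using $s'(\fa)+s'(\fa')=s'(\fm)-1$ (the $-1$ accounts for the pairing being compatible with the dimension shift of the generating line of $\omega_{\hA}$). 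Since the proposition is $[\hdelta,\hdelta']$-invariant and already verified in the limiting case (cf.\ (\ref{E:antifield})), a convenient sanity check is to specialize to $[\hdelta,\hdelta']=[(0)^N,(1)^{N'}]$ and compare with the explicit formula (\ref{E:explicitform}); this both confirms the exponent $-8$ and pins down the sign.
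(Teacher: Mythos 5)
Your overall strategy matches the paper's: derive a functional equation for $Z^{bare}_{\fa}$ by combining (\ref{E:BVdualitychar}), (\ref{E:zbvbareeq}) and the reflection identity $ZBV^{bare}_{\fa}[\hdelta,\hdelta'](t,q,z)=ZBV^{bare}_{\fa'}[\reflection(\hdelta'),\reflection(\hdelta)](t,q^{-1},Sz)$, then pass to the renormalized character using the normalization in (\ref{E:renormchar}) and the reflection formulas in Proposition \ref{P:funequation}. This is exactly the paper's route, only with reflection applied before (rather than after) dividing by $\Lambda\spinor$.

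There is however a genuine gap: you label the arithmetic producing the factor $-t^{-8}$ as ``the main obstacle'' and do not actually carry it out, and the bookkeeping formulas you sketch in its place are not correct. Your expression for the $t$-exponent conflates two distinct things (note $a(\fm)=a[\hdelta,\hdelta']$, so two of your terms are the same quantity; and the renormalization of $Z_{\fa}$ on $J=[\hdelta,\hdelta']$ and on $J'=[\reflection(\hdelta'),\reflection(\hdelta)]$ contribute the \emph{different} quantities $a_J(\fa)=a[\hdelta,(1)^{-1}]$ and $a_{J'}(\fa)=a[\reflection(\hdelta'),(1)^{-1}]$, not ``$2a(\fa)$''). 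Your claimed identity $s'(\fa)+s'(\fa')=s'(\fm)-1$ also fails: from (\ref{E:sschiftdef}) one has $s'(\fa)+s'(\fa')=(3-\rho(\hdelta))+(\rho(\hdelta')-8)=s'(\fm)-6$. The sign in fact comes out right because what actually enters is $(-1)^{s'(\fm)+s'_J(\fa)+s'_{J'}(\fa)}$ with $s'_{J'}(\fa)=\rho(\hdelta')-7\neq s'_J(\fa')$, and the exponent is $2(\rho(\hdelta')-\rho(\hdelta))-3$, an odd number. The paper resolves the exponent bookkeeping via the summary $a(\fm)=a(\fa)+a(\fa')$, $u(\fm)=u(\fa)+u(\fa')$, $r(\fm)=r(\fa)+r(\fa')$ (derived from Proposition \ref{P:funequation}, items \ref{I:chareq2}, \ref{I:chareq3}, and (\ref{E:htandsw})); you would need to establish those identities and track carefully the difference between the normalization constants on $J$ versus $J'$ before the exponent $-8$ falls out. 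As written, the proposal sets up the right framework but omits its substance.
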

\begin{proof}
From (\ref{E:zbvbareeq}) and (\ref{E:BVdualitychar}), I deduce that
\[
\begin{split}
&Z^{bare}_{\fa}(t,q,z)=(-1)^{s'(\fm)}
t^{-a(\fm)}q^{-u(\fm)}z^{-r(\fm)}\times\\
&\times Z^{bare}_{\fa'}(t^{-1},q^{-1},z^{-1}).
\end{split}
\]

This implies equation (\ref{E:feqfieldandifield}) for $Z_{\fa}[\hdelta,\hdelta']$.
Derivation of (\ref{E:feqfieldandifield}) uses formula (\ref{E:fbare}), items (\ref{I:chareq2}, \ref{I:chareq3}) from Proposition \ref{P:funequation} and equation (\ref{E:htandsw}), which summarize to
\[
\begin{split}
 &a(\fm)=a(\fa)+a(\fa'),\\
 &u(\fm)=u(\fa)+u(\fa'),\\
 &r(\fm)=r(\fa)+r(\fa').\\
\end{split}
\]
\end{proof}

Equation (\ref{E:feqfieldandifield}) simplifies if I set $[\hdelta,\hdelta']=[(0)^{-N},(1)^N]$. Denote $Z_{\fa}[(0)^{-N},(1)^N]$ by $Z'[N]$. Then (\ref{E:feqfieldandifield}) becomes (\ref{E:ZprimeN}).
Operator $S$ disappears in $Sz^{-1}$ because representation of $\widetilde{\bT}^5$ in $H_{\fa}^i(A[(0)^{-N},(1)^N])$ is a restriction of $\Spin(10,\C)$-representation and $S(z)=SzS^{-1}, S\in \Spin(10,\C)$.

\subsection{The definition of $\BV^{i+\itwo}(\fc)$}\label{S:bvdef}
Exposition of this section follows closely Section \ref{SS:definitionh}.

Fix $[\hdelta_2,\hdelta_4]\subset [\hdelta_1,\hdelta_4]$. The map $\Th_{\sfp}$, corresponding to $\sfp:\hA[\hdelta_1,\hdelta_4]\to \hA[\hdelta_2,\hdelta_4]$ induces the map of complexes
\[\begin{split}
&\fT_{Kos}^{\bullet}(\fa)[\hdelta_2,\hdelta_4]\to \fT_{Kos}^{\bullet}(\fa)[\hdelta_1,\hdelta_4][\codim], \\
&\codim=s'[\hdelta_1,\hdelta_4]-s'[\hdelta_2,\hdelta_4]. 
\end{split}\]
I modify the map by multiplying the image of $\Th$ on 
$\theta_{[\hdelta_1,\hdelta_4]\backslash [\hdelta_2,\hdelta_4]}$ (\ref{E:thetasub}). 
As a result, I get the map
\begin{equation}\label{E:thmodalt}
 \begin{split}
&\Th'_{\sfp}:\fT_{Kos}^{\bullet}(\fa)[\hdelta_2,\hdelta_4]\to \fT_{Kos}^{\bullet}(\fa)[\hdelta_1,\hdelta_4][\hcodim], \\
&\hcodim=s'[\hdelta_1,\hdelta_4]-s'[\hdelta_2,\hdelta_4] -(s[\hdelta_1,\hdelta_4]-s[\hdelta_2,\hdelta_4]).
\end{split}
\end{equation}

I leave to the reader verification that $\fT_{Kos}^{\bullet}(\fa)$ (\ref{E:bvkos}) and $\fT_{P}^{k}(\fa,M),M=\hA$ from (\ref{E:ferm}) compute the same cohomology. I also won't prove a simple statement that map (\ref{E:thmodalt}) coincides with (\ref{E:thprime}) when $\hRo=\hA[\hdelta_1,\hdelta_4]$,$\hRt=\hA[\hdelta_2,\hdelta_4]$.

\begin{proposition}
Fix $\hdelta_1<\hdelta_2<(0)^{-1},(1)^{-1}<\hdelta_3<\hdelta_4$ and $\fc=\fa,\fa',\fp,\fm,\fb$. 

There is a commutative diagram 
\begin{equation}\label{E:diagramindpro2}
\begin{CD}
\BV_{\fc}^i[\hdelta_2,\hdelta_4] @>\sfp'>> \BV_{\fc}^i[\hdelta_2,\hdelta_3] \\
@VV\Th'_{\sfq} V @VV\Th'_{\sfq'} V \\
\BV_{\fc}^{i+\hcodim}[\hdelta_1,\hdelta_4] @>\sfp>> \BV_{\fc}^{i+\hcodim}[\hdelta_1,\hdelta_3]. 
\end{CD}
\end{equation}
The maps are induced from the commutative diagram of algebras (\ref{E:diagramalg})
 and the maps $\Th'_{\sfq},\Th'_{\sfq'}$ (\ref{E:thmodalt}). \[\hcodim=\rho(\hdelta_2)-\rho(\hdelta_1)-|[\hdelta_1,(1)^{-1}]\backslash [\hdelta_2,(1)^{-1}] |.\]

\end{proposition}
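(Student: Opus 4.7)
\begin{pf}[Proposal of proof]
The strategy is to imitate the proof of Proposition~\ref{P:commutativity}, lifted to the level of the double complexes $\fT_{Kos}^{\bullet}(\fc)$ in (\ref{E:bvkos}), and then to track the additional exterior factor that distinguishes $\Th'$ from $\Th$. Concretely, the groups $\BV_{\fc}^i$ are the cohomology of a bicomplex whose $P$-variables are the Koszul variables and whose exterior variables $\theta^{\halpha}$ come from the Koszul complex $B_{\bullet}$. By Proposition~\ref{P:identification} together with the identification of (\ref{E:bvkos}) with the complex (\ref{E:ferm}), the statement of the proposition is equivalent to commutativity of the corresponding square obtained from applying $?\underset{L}{\otimes}\C$ to the square of Proposition~\ref{P:commutativity} and then twisting the vertical maps by the exterior factor $\theta_{[\hdelta_1,\hdelta_4]\backslash [\hdelta_2,\hdelta_4]}$.

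First I would factor $\sfq$ (and simultaneously $\sfq'=\sfp\circ\sfq\circ(\sfp')^{-1}$ viewed diagrammatically) into elementary surjections using Lemma~\ref{L:saturate}, so that it suffices to prove the analogous diagram for each elementary step $\sfq_i$ of type (\ref{E:group1}), (\ref{E:group2}) or (\ref{E:group3}). In each elementary case one has at the level of the bicomplex (\ref{E:ferm}) a short exact sequence of $P=\C[\hdelta_1,\hdelta_4]$-modules, exactly as in Proposition~\ref{P:classofextreg} or Proposition~\ref{P:extmaintheorem}, whose associated boundary map (respectively inclusion $\lambda^{\hdelta}\times?$) represents $\Th_{\sfq_i}$. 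Applying the exact functor $?\underset{L}{\otimes}\C$ (i.e.\ completing the Koszul bar-construction in the $\lambda^{\halpha}$'s) and pushing through the bicomplex produces $\Th'_{\sfq_i}$ up to the extra wedge with $\theta^{\halpha}$, where $\halpha$ is the element of $[\hdelta_1,\hdelta_4]\setminus\{\hdelta_1,\hdelta_4\}$ being removed. Commutativity of the square for the elementary step then reduces to commutativity of the square of Proposition~\ref{P:commutativity} combined with the trivial fact that multiplication by $\theta^{\halpha}$ commutes with the horizontal restriction map $\sfp$, since $\sfp$ acts only on the coefficient algebras $\hA[\hdelta_i,\hdelta_j]$ and is the identity on the exterior variables (and kills no $\theta^{\halpha}$ for $\halpha\in[\hdelta_1,\hdelta_4]\setminus[\hdelta_2,\hdelta_4]$, because such indices already lie outside the range acted on by $\sfp$).

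With the elementary commutativity in hand, the general statement follows by composing along the factorization of $\sfq$ provided by Lemma~\ref{L:saturate}, using Proposition~\ref{P:composition} for the compatibility of $\Th_{\sfq\circ\sfp}$ with the Yoneda product of the elementary $\Th_{\sfq_i}$, and observing that the extra exterior twists accumulate correctly because $\theta_{[\hdelta_1,\hdelta_4]\setminus[\hdelta_2,\hdelta_4]}$ factors as the wedge product of the $\theta^{\halpha_i}$ corresponding to the successive $\sfq_i$. Finally one checks that the shift $\hcodim=\rho(\hdelta_2)-\rho(\hdelta_1)-|[\hdelta_1,(1)^{-1}]\setminus[\hdelta_2,(1)^{-1}]|$ records exactly the difference between the cohomological shift $\codim$ of Proposition~\ref{P:commutativity} and the number of exterior variables by which $\Th'$ twists, in agreement with (\ref{E:thmodalt}).

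The main obstacle is the bookkeeping at elementary steps of type (\ref{E:group3}) (the irregular case), where $\Th_{\sfq_i}$ is represented not by a boundary map but by multiplication by $\lambda^{\hdelta_2}$ in the exact sequence (\ref{E:Binclusionexact}); there one must verify that restricting this inclusion along $\sfp$ lands inside the corresponding inclusion for $[\hdelta_1,\hdelta_3]$, which is precisely the content of Proposition~\ref{P:extmaintheorem} applied functorially in the pair $(\hdelta_1,\hbeta)$ for $\hbeta=\hdelta_3,\hdelta_4$. Once this irregular step is checked, the rest of the argument is a routine application of functoriality of the Koszul bicomplex and of the Yoneda product, essentially parallel to the proof of Proposition~\ref{P:pairingcomp}.
\end{pf}
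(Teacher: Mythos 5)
The paper's own proof of this proposition is the single sentence "The proof is similar to the proof of Proposition \ref{P:commutativity} and will be omitted." Your proposal fleshes out exactly that strategy — factorization into elementary steps via Lemma \ref{L:saturate}, handling regular steps (\ref{E:group1}), (\ref{E:group2}) by the boundary map of the short exact sequence, handling the irregular step (\ref{E:group3}) via the inclusion $\lambda^{\hdelta}\times?$, and then tracking how the extra exterior twist $\theta_{[\hdelta_1,\hdelta_4]\setminus[\hdelta_2,\hdelta_4]}$ of (\ref{E:thmodalt}) accumulates under composition and commutes trivially with the horizontal restriction maps. Your accounting of the shift $\hcodim$ as the difference $(s'[\hdelta_1,\hdelta_4]-s'[\hdelta_2,\hdelta_4])-(s[\hdelta_1,\hdelta_4]-s[\hdelta_2,\hdelta_4])$ also matches the paper's (\ref{E:thmodalt}). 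This is the argument the author intended but chose not to write out; your proposal is a faithful and somewhat more explicit rendering of it.
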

\begin{proof}
The proof is similar to the proof of Proposition \ref{P:commutativity} and will be omitted.
\end{proof}

Here is formal definition of the limits group.
\begin{equation}\label{E:semiinfinitecohdef1}
\begin{split}
& BV_{\fc}^{i+\itwo}(\hA):=\bigoplus_{w}\underset{\underset{\hdelta}{\longrightarrow}}{\lim}\underset{\underset{\hdelta'}{\longleftarrow}}{\lim}BV_{\fc}^{i+\hd(\fc)}[\hdelta,\hdelta']^w, \fc=\fa,\fa',\hd(\fc)=s'(\fc)-s(\fc)
\end{split}
\end{equation}
where the structure maps come from the diagram (\ref{E:diagramindpro2}) and the function $s'$ from (\ref{E:sschiftdef}).
I give only statements of the next three proposition. The interested reader will recover the proofs without a difficulty if he will use the proofs of the analogous statements from Section \ref{SS:definitionh} as a template.
\begin{proposition}\label{P:pairingcomp2}
Suppose $[\hdelta_2,\hdelta_4]\subset [\hdelta_1,\hdelta_4]$ and $\hdelta_2<(1)^{-1},(0)^0<\hdelta_4$, and that $\hA[\hdelta_i,\hdelta_4],i=1,2$ are Gorenstein. Then there are commutative diagrams
\[\begin{CD}
\BV_{\fa}^i[\hdelta_2,\hdelta_4]\otimes \BV_{\fp}^j[\hdelta_1,\hdelta_4]\otimes \BV_{\fa'}^k[\hdelta_1,\hdelta_4] @>\sfU_{Kos}\circ (\id\otimes\, \sfp\otimes\, \sfp)>> \BV_{\fm}^{i+j+k}[\hdelta_2,\hdelta_4] \\
@VV\Th_{\sfp}\otimes\, \id\otimes\, \id V @VV\Th_{\sfp} V \\
 \BV_{\fa}^{i+\hcodim}[\hdelta_1,\hdelta_4]\otimes \BV_{\fp}^j[\hdelta_1,\hdelta_4]\otimes \BV_{\fa'}^k[\hdelta_1,\hdelta_4] @>\sfU_{Kos}>> \BV_{\fm}^{i+j+k+\hcodim}[\hdelta_1,\hdelta_4] 
\end{CD}\]

\[\begin{CD}
\BV_{\fa}^i[\hdelta_1,\hdelta_4]\otimes \BV_{\fp}^j[\hdelta_1,\hdelta_4]\otimes \BV_{\fa'}^k[\hdelta_2,\hdelta_4] @>\sfU_{Kos}\circ (p\otimes\, \sfp\otimes\,\id )>> \BV_{\fm}^{i+j+k}[\hdelta_2,\hdelta_4]\\
@VV\id\otimes\, \id\otimes\, \Th_{\sfp} V @VV\Th_{\sfp} V \\
 \BV_{\fa}^{i}[\hdelta_1,\hdelta_4]\otimes \BV_{\fp}^j[\hdelta_1,\hdelta_4]\otimes \BV_{\fa'}^{k+\hcodim}[\hdelta_1,\hdelta_4] @>\sfU_{Kos}>> \BV_{\fm}^{i+j+k+\hcodim}[\hdelta_1,\hdelta_4]
\end{CD}\]
where $\sfp$ is the map induced by the restriction homomorphism $\hA[\hdelta_1,\hdelta_4]\to \hA[\hdelta_2,\hdelta_4]$, $\Th'_{\sfp}$
 is the modified Thom map (\ref{E:thmodalt}),
 $\sfU_{Kos}$ is the product map (\ref{E:Upsilon}).
\end{proposition}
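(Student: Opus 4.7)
The plan is to mimic the strategy of Proposition \ref{P:pairingcomp}, replacing the two-factor product $\sfm$ by the three-factor product $\sfU_{Kos}$ from (\ref{E:Upsilon}) and the class $\Th_{\sfp}$ by its modified version $\Th'_{\sfp}$ from (\ref{E:thmodalt}). I will prove only the first diagram (in which the Thom map is applied on the $\fa$-factor); the second diagram, where $\Th'_{\sfp}$ acts on the $\fa'$-factor, is treated identically by symmetry of the roles of $\fa$ and $\fa'$ in the construction of $\sfU_{Kos}$.

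First I reduce to elementary projections. By Lemma \ref{L:saturate} applied to $[\hdelta_2,\hdelta_4]\subset [\hdelta_1,\hdelta_4]$, the homomorphism $\sfp:\hA[\hdelta_1,\hdelta_4]\to\hA[\hdelta_2,\hdelta_4]$ factors into a chain of elementary surjections $\sfp_n,\ldots,\sfp_1$ of the three types (\ref{E:group1}),(\ref{E:group2}),(\ref{E:group3}), and by Proposition \ref{P:composition} the class $\Th_{\sfp}$ is the Yoneda product of the classes $\Th_{\sfp_i}$. The modification $\Th\leadsto\Th'$ affects each elementary step by the factor $\theta^{\hdelta_i}$ associated with the generator of $\Lambda$ that is killed by $\sfp_i$; since these factors are disjoint in $\Lambda[\hdelta_1,\hdelta_4]$, the products telescope correctly and it suffices to treat one elementary step $\sfp_i$ at a time. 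Using associativity of $\sfU_{Kos}$, once the elementary case is verified a straightforward induction in the number of factors $\sfp_i$ yields the general statement, exactly as in the last paragraph of the proof of Proposition \ref{P:pairingcomp}.

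For an elementary regular projection ($\sfp_i$ of type (\ref{E:group1}) or (\ref{E:group2})), the class $\Th_{\sfp_i}$ is, by Proposition \ref{P:classofextreg}, the boundary homomorphism of the short exact sequence (\ref{E:extsimple}); the modifier $\theta^{\hdelta_i}$ in $\Th'_{\sfp_i}$ records the insertion of the extra exterior generator. I will invoke the compatibility of the triple product $\sfU_{Kos}$ with boundary homomorphisms, which is the three-factor analogue of \cite{CE} Chap.~XI, Prop.~2.5 used in the proof of Proposition \ref{P:pairingcomp}. Concretely, tensoring the extension (\ref{E:extsimple}) for $\sfp_i$ with the unchanged factors $\hA[\hdelta_1,\hdelta_4]\otimes\moduleT_{\fp}\otimes\moduleT_{\fa'}$ and then contracting by (\ref{E:tensorproduct1}) gives the diagram
\[
\begin{CD}
\fT_{Kos}^{\bullet}(\fa)[\hdelta_2,\hdelta_4]\otimes\fT_{Kos}^{\bullet}(\fp)\otimes\fT_{Kos}^{\bullet}(\fa') @>\sfU_{Kos}>> \fT_{Kos}^{\bullet}(\fm)[\hdelta_2,\hdelta_4] \\
@VV\Th'_{\sfp_i}\otimes\id\otimes\id V @VV\Th'_{\sfp_i} V \\
\fT_{Kos}^{\bullet}(\fa)[\hdelta_1,\hdelta_4]\otimes\fT_{Kos}^{\bullet}(\fp)\otimes\fT_{Kos}^{\bullet}(\fa') @>\sfU_{Kos}>> \fT_{Kos}^{\bullet}(\fm)[\hdelta_1,\hdelta_4]
\end{CD}
\]
whose commutativity follows because the boundary map is natural and the exterior-algebra component of $\sfU_{Kos}$ is tautological multiplication, so appending $\theta^{\hdelta_i}$ on the left of the middle column and on the target of the right column matches. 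For an elementary irregular projection (type (\ref{E:group3})), the class $\Th_{\sfp_i}$ is the multiplication map $\sfs$ of (\ref{E:inclusionofA}) from Proposition \ref{P:extmaintheorem}; the verification of commutativity with $\sfU_{Kos}$ in this case reduces to the assertion that $\sfU_{Kos}$ is a map of $P$-modules (with appropriate restriction of scalars), so commutation with multiplication by $\lambda^{\hdelta_i}$ is automatic, and again the extra $\theta$-factor is placed consistently on both sides.

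The main obstacle that will need careful bookkeeping is the third factor in $\sfU_{Kos}$, namely the isomorphism $\moduleT_{\fa}\otimes\moduleT_{\fp}\otimes\moduleT_{\fa'}\cong\moduleT_{\fm}$ built from (\ref{E:tensorproduct1}): under the change of interval $[\hdelta_1,\hdelta_4]\leadsto[\hdelta_2,\hdelta_4]$, the decomposition (\ref{E:decomposition}) changes, and one must verify that the associated maps of $\moduleT_{\fp}$-factors are compatible with restriction along $\sfp_i$. This is essentially a check that the direct-sum decomposition of Lemma \ref{L:sumdecomposition} is functorial in the interval, which amounts to unwinding the definitions on decomposable tensors. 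Modulo this bookkeeping, and the sign tracking coming from the Koszul-type product in the exterior factor, the proof follows the same pattern as Proposition \ref{P:pairingcomp} and no new ingredient is needed.
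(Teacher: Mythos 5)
The paper explicitly omits the proof of this proposition, stating that the reader should reconstruct it from the template of the analogous $*$-duality statement (Proposition \ref{P:pairingcomp}); your proposal does exactly that — decomposition into elementary projections via Lemma \ref{L:saturate}, the Cartan--Eilenberg boundary-map compatibility for the regular cases, $P$-linearity for the irregular case, and induction via associativity — plus the correct extra observation that the exterior $\theta$-modifier distinguishing $\Th'$ from $\Th$ telescopes through the decomposition. This matches the intended proof.
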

\begin{proposition}
In the assumptions of Proposition \ref{P:pairingcomp2}, the pairing (\ref{E:threecomp}) satisfies
\[(\Th'_{\sfq} (a),b)_{\sfU}=(a,q(b))_{\sfU}\]
where $a\in BV_{\fa}^{i}[\hdelta_2,\hdelta_4]$,$b\in BV_{\fa'}^{j}[\hdelta_1,\hdelta_4]$, $i+j+\rho(\hdelta_2)-\rho(\hdelta_1)-(|\hdelta_1,\hdelta_4|-|\hdelta_2,\hdelta_4|)=\rk(\hdelta_1,\hdelta_4)+1-|\hdelta_1,\hdelta_4|$. 
\end{proposition}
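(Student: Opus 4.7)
The plan is to mirror the proof of Corollary \ref{C:adjoint}, replacing each $\ast$-duality ingredient by its field--antifield counterpart. By the definition (\ref{E:threecomp}),
\[
(\Th'_{\sfq}(a),b)_{\sfU}=\sfres_{\fT[\hdelta_1,\hdelta_4]}\circ\sfU_{Kos}\bigl(\Th'_{\sfq}(a),\Theta(z),b\bigr),\qquad (a,\sfp(b))_{\sfU}=\sfres_{\fT[\hdelta_2,\hdelta_4]}\circ\sfU_{Kos}\bigl(a,\Theta(z),\sfp(b)\bigr),
\]
so I have to push $\Th'_{\sfq}$ across $\sfU_{Kos}$ and then absorb the resulting $\Th'_{\sfp}$ into the residue functional. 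First I would apply the first commutative diagram of Proposition \ref{P:pairingcomp2} to the triple $a\otimes\Theta(z)\otimes b$, obtaining
\[
\sfU_{Kos}\bigl(\Th'_{\sfq}(a),\sfp(\Theta(z)),\sfp(b)\bigr)=\Th'_{\sfp}\circ\sfU_{Kos}(a,\Theta(z),b).
\]
Here I use that the cocycle $\Theta(z)$ is defined uniformly in the interval through the pushforward $\sfq_{\ast}$ of (\ref{E:Thetazdef2}) coming from the fixed homomorphism $\sA\to \hA[z,z^{-1}]$; since $\sfp$ intertwines the maps $\sfq$ for the two intervals, the image $\sfp(\Theta(z))$ is identified with $\Theta(z)$ for $[\hdelta_2,\hdelta_4]$, and this identification is pinned down by the explicit leading term (\ref{E:Thetazdef3}).

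Second, I would normalize the residue functionals compatibly. The map $\sfres_{\fT}$ of (\ref{E:tfunct}) is determined up to a scalar by its nonvanishing on the top class $\varpi_{\fm}\otimes\theta\otimes c$ of (\ref{E:topelementf}). The $BV$-analogue of Proposition \ref{P;rightchoice}, applied to the modified Thom map $\Th'_{\sfp}$ of (\ref{E:thmodalt}), gives a unique normalization making the diagram
\[
\begin{CD}
\fT_{k_2}(\fm)[\hdelta_2,\hdelta_4] @>\Th'_{\sfp}>> \fT_{k_1}(\fm)[\hdelta_1,\hdelta_4]\\
@VV\sfres_{\fT[\hdelta_2,\hdelta_4]}V @VV\sfres_{\fT[\hdelta_1,\hdelta_4]}V\\
\C[z,z^{-1}] @= \C[z,z^{-1}]
\end{CD}
\]
commute, where $k_i$ is the top degree of $\fT_{\bullet}(\fm)$ for $[\hdelta_i,\hdelta_4]$. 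This rests on the fact that $\Th'_{\sfp}$ is injective on the one-dimensional top homology group (the field--antifield version of Proposition \ref{P:injecttop}), which follows because the top homology is generated by (\ref{E:topelementf}) and both $\Th_{\sfp}$ on $\moduleT_{\bullet}(\fm)$ and the Poincar\'{e} pairing on $V_{\bullet}$ coming from Proposition \ref{C:PoincareGen} are injective on the top piece. Chaining the two steps gives
\[
(\Th'_{\sfq}(a),b)_{\sfU}=\sfres_{\fT[\hdelta_1,\hdelta_4]}\circ\Th'_{\sfp}\circ\sfU_{Kos}\bigl(a,\Theta(z),\sfp(b)\bigr)=\sfres_{\fT[\hdelta_2,\hdelta_4]}\circ\sfU_{Kos}\bigl(a,\Theta(z),\sfp(b)\bigr)=(a,\sfp(b))_{\sfU}.
\]
The degree accounting $i+j+\rho(\hdelta_2)-\rho(\hdelta_1)-(|\hdelta_1,\hdelta_4|-|\hdelta_2,\hdelta_4|)=\rk(\hdelta_1,\hdelta_4)+1-|\hdelta_1,\hdelta_4|$ is forced, once one recalls that $\hcodim$ in (\ref{E:thmodalt}) differs from $\codim$ by the cardinality difference and that $\sfres_{\fT}$ is nonzero precisely in the top degree identified in Proposition \ref{P:fT}.

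The main obstacle I expect is not the commutation square from Proposition \ref{P:pairingcomp2}, which is given, but the self-consistency of the residue normalization. Since $\Theta(z)$ is only defined up to the choice of generator $\bar{c}$ of $\sV_5$ in (\ref{E:Thetadef2}), and $\sfres_{\fT}$ is only defined up to a scalar, making a single choice on both intervals that is coherent under all elementary regular and irregular factors of $\sfq$ supplied by Lemma \ref{L:saturate} requires essentially the same inductive bookkeeping as in Corollary \ref{C:adjoint} and Proposition \ref{P;rightchoice}; I would carry it out by checking it in each of the two elementary types separately and invoking associativity of the Yoneda composition of the elementary Thom classes to propagate the normalization along the full factorization.
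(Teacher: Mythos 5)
The paper omits this proof and explicitly tells the reader to transcribe it from the $\ast$-duality case (Corollary \ref{C:adjoint} plus Propositions \ref{P:pairingcomp}, \ref{P:injecttop}, \ref{P;rightchoice}); your proposal does exactly that, and the ingredients you identify — the commutative square of Proposition \ref{P:pairingcomp2}, the identification $\sfp(\Theta(z))=\Theta(z)$ for the smaller interval via the uniformity of the construction (\ref{E:Thetazdef2})--(\ref{E:Thetazdef3}), and the normalization $\sfres_{\fT[\hdelta_2,\hdelta_4]}=\sfres_{\fT[\hdelta_1,\hdelta_4]}\circ\Th'_{\sfp}$ justified by a field--antifield analogue of Proposition \ref{P:injecttop} — are precisely the field--antifield counterparts the template calls for. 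One small correction: in your first displayed equation the $\sfp$'s are placed on the wrong side; the square from Proposition \ref{P:pairingcomp2} actually yields $\sfU_{Kos}\bigl(\Th'_{\sfp}(a),\Theta(z),b\bigr)=\Th'_{\sfp}\circ\sfU_{Kos}\bigl(a,\sfp(\Theta(z)),\sfp(b)\bigr)$, which is what your subsequent chain of equalities in fact uses, so the slip is purely typographical.
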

There is an isomorphism
$\reflection:BV_{\fa}^{i+\itwo}(\hA)\to BV_{\fa'}^{i+\itwo}(\hA)$
induced by $\reflection$ (\ref{E:mapsintr})
The above information enables
 me to define the pairing
\begin{equation}\label{E:pairingferm}
 BV_{\fa}^{i+\itwo}(\hA)\otimes BV_{\fa}^{3-i+\itwo}(\hA)\to \C.
\end{equation}
\begin{corollary}
\begin{equation}\label{E:ZprimeN}
Z_{\fa}(t,q,z)=-t^{-8}Z_{\fa}(t^{-1},q,z^{-1}). 
\end{equation}
\end{corollary}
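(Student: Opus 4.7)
The plan is to deduce the field--antifield symmetry (\ref{E:ZprimeN}) by specializing the finite-interval functional equation (\ref{E:feqfieldandifield}) of Proposition \ref{P:maintwosf} to intervals invariant under the involution $\reflection$, and then passing to the semi-infinite limit.

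First I would restrict to the symmetric intervals $[\hdelta,\hdelta']=[(0)^{-N},(1)^N]$, which are fixed set-wise by $\reflection$ (equivalently, $\reflection((1)^N)=(0)^{-N}$ and $\reflection((0)^{-N})=(1)^N$, by the description of $\reflection$ around (\ref{E:involution})). For these intervals the right-hand side of (\ref{E:feqfieldandifield}) becomes $Z_{\fa}[(0)^{-N},(1)^N]$ evaluated at $(t^{-1},q,Sz^{-1})$, so (\ref{E:feqfieldandifield}) collapses to
\[
Z'[N](t,q,z)=-t^{-8}\,Z'[N](t^{-1},q,Sz^{-1}),\qquad Z'[N]:=Z_{\fa}[(0)^{-N},(1)^N].
\]

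Next I would eliminate the outer action $S$ on the torus $\widetilde{\bT}^5$. By Remark \ref{R:groupactionloccoh} the group $\Spin(10)$ acts on $H_{\fa}^i[(0)^{-N},(1)^{N}]$, and the $\widetilde{\bT}^5$-action used to form $Z'[N]$ is the restriction of this $\Spin(10)$-representation. Since $S$ is the restriction of conjugation by an element of $\Spin(10,\C)$, it is an inner automorphism of the representation, so $\chi_{H_{\fa}^i[(0)^{-N},(1)^{N}]}(t,q,Sz^{-1})=\chi_{H_{\fa}^i[(0)^{-N},(1)^{N}]}(t,q,z^{-1})$, just as in the remark following Proposition \ref{P:maintwosf}. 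Consequently
\[
Z'[N](t,q,z)=-t^{-8}\,Z'[N](t^{-1},q,z^{-1}).
\]

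Finally I would pass to the limit $N\to\infty$. By Corollary \ref{C:limitchardim}(3) and Proposition \ref{E:kappaisomorphism}(\ref{I:kappaisomorphism}), $Z'[N]\to Z_{\fa}$ coefficient-wise in $\Z[\widetilde{\bT}]((t))((q))$ along the directed family $\setB^\pm$; by Remark \ref{R:formalseries9} each weight component of $Z_{\fa}$ is finite and stabilizes, so the substitutions $t\mapsto t^{-1}$ and $z\mapsto z^{-1}$ commute with taking the limit term-by-term. Therefore the identity passes through, yielding (\ref{E:ZprimeN}). The main (and essentially only) nontrivial point is the interchange of the limit with the $t\mapsto t^{-1}$ substitution: this is guaranteed by the two-sided weight bounds of Proposition \ref{E:renormboundes} together with the stabilization of weight multiplicities established in the proof of Proposition \ref{E:kappaisomorphism}, which ensure that every coefficient of the Laurent expansion is eventually constant in $N$.
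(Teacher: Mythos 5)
Your proof is correct and takes essentially the same route the paper uses: specialize Proposition~\ref{P:maintwosf} to the $\reflection$-symmetric intervals $[(0)^{-N},(1)^{N}]$, remove the automorphism $S$ by the inner-automorphism argument, and pass to the semi-infinite limit via the coefficient-wise convergence of Corollary~\ref{C:limitchardim}. One minor citation slip: the stabilization of weight multiplicities you invoke at the end is supplied by Corollary~\ref{C:limitchardim} and the mechanism inside the proof of Proposition~\ref{E:kappaisomorphism}, whereas Remark~\ref{R:formalseries9} only records that the limit $Z_{\fa}$ lies in $\Z((t))[[q]]$; this does not affect the argument.
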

\begin{remark}Spectral sequence arguments shows that
\[ZBV_{\fa}(t,q)=\sum_{u\geq 0,a}b_{a,u}t^aq^u=Z_{\fa}(t,q)Z_{S}(t,q),\quad Z_{S}(t,q)=\prod_{u>0} (1-t^{-1}q^u)^{16} \prod_{u\geq 0} (1-tq^u)^{16}\]
$Z_{S}(t,q)\in \Z[t,t^{-1}][[q]]$ is the renormalized virtual character of  spinors. The above equality and Remark \ref{R:formalseries9} implies that $ZBV_{\fa}(t,q)\in \Z((t))[[q]]$. The symmetry of the coefficients $b_{a,u}=-b_{-8-a,u}$ of $ZBV_{\fa}(t,q)$  that is induced by the pairing  \ref{E:pairingferm} implies that in fact  $ZBV_{\fa}(t,q)\in \Z[t,t^{-1}][[q]]$. From this I conclude that $Z_{\fa}(t,q)\in \Z((t))[[q]]\cap \Q(t)[[q]]$.

\end{remark}

\section{ $H^i_{\fa}({\hA})$ and localization}\label{S:localization}
Let $\hA$ be the algebra based on $[\hdelta,\hdelta']$. In addition, all the intervals in this section satisfy (\ref{E:purity}).
In this section, I will address the problem of denominators outlined in the introduction.

The linear space $H^i_{\fa}({\hA})$ is by construction a $P$-module. In this section, I assume that $[(0)^0,(1)^0]\subset [\hdelta,\hdelta']$. The linear space $H^i_{\fa}({\hA})$ becomes $\sP$-module. In practice, it is often convenient to 
localize elements of $H^i_{\fa}({\hA})$ by inverting some of the generators $\lambda^{\rbeta}:=\lambda^{\rbeta^0}$ without significantly changing the size of group $H^i_{\fa}({\hA})$. A possible way to do this is in the formalism of the \v{C}ech complex.

Recall that the extended \v{C}ech complex of an $\hRo$-module $M$ is the complex
\[M \to \prod\nolimits_{i_0} M_{f_{i_0}} \to
\prod\nolimits_{i_0 < i_1} M_{f_{i_0}f_{i_1}} \to
\ldots \to M_{f_1\ldots f_r}=:\Cech_e^{\bullet}(\mathfrak{U},M)\]
where $M_{f_1\ldots f_r}$ is a localization with respect to $f_1\ldots f_r\in \hRo$. $\mathfrak{U}$ is the open cover of $Spec\, \hRo$ by 
$U_i=\{x\in Spec\, \hRo|f_i(x)\neq 0\}$. 
A chain from $\Cech_e^m(\mathfrak{U},\F)$ is a family of elements
$\phi = \{ \phi_{\alpha_0,\dots,\alpha_m}\} \in \bigoplus M_{f_{\alpha_0} \cdots f_{\alpha_m} }$.
The differential is defined by the formula 
\[(d\phi)_{\alpha_0\dots\alpha_{m+1}}=\sum_{i=0}^{m+1}(-1)^i \res( \phi_{\alpha_0\dots \hat{\alpha}_{i}\dots \alpha_{m+1}}).\]
There is a short exact sequence of complexes

\begin{equation}\label{E:cechexact}
0\to \Cech^{\bullet}(\mathfrak{U},M)\to \Cech_e^{\bullet}(\mathfrak{U},M)\to M\to 0
\end{equation}
where $\Cech^{\bullet}(\mathfrak{U},M)$ is the classical \v{C}ech complex.

In our application I fix an open covering $\mathfrak{U}=\{U_{\ralpha}|\lambda^{\ralpha}\neq 0,\ralpha\in \setE \}$ of $\C^{16}\backslash \{0\}\subset \C^{16}$.
 I use order 
 \begin{equation}\label{E:orderaf}
\begin{split}&\dots <(4)^{r-1}< (0)^r <(3)^{r-1}<(12)^r<(2)^{r-1}<(13)^r<(1)^{r-1}<(14)^r<(23)^r<\\
&<(15)^r<(24)^r<(25)^r<(34)^r<(35)^r<(5)^r<(45)^r<(4)^r<(0)^{r+1}<\dots\end{split}\end{equation}
 on $\setE\subset \sethE$ to order indices. The algebra $R$ in our case is $\sP$ and sequence $\{f_i\}$ is $\{\lambda^{\ralpha}\}$, $M$ is some $\sP$-module.

$\Cech_{e, \leq n}^{\bullet}(\mathfrak{U},M)$ is a sub complex of $\Cech_e^{\bullet}(\mathfrak{U},M)$ spanned by chains whose denominators have degrees $\leq n$. It is explained in the Appendix A4.1 \cite{ComEisenbud} that the complex $\Cech_{e, \leq n}^{\bullet}(\mathfrak{U},M)$ is isomorphic to Koszul complex $K(M, \{\lambda^{\rbeta}\},n)$ and 

\begin{equation}\label{cechkoszul}
\Cech_e^{\bullet}(\mathfrak{U},M)\cong \underset{\overset{n}{\longrightarrow} }{\lim} K(M, \{\lambda^{\rbeta}\},n).
\end{equation}

After I apply to exact sequence (\ref{E:cechexact}) with $M={\hA[\hdelta,\hdelta']}$ the functor $\underset{\overset{n}{\longrightarrow} }{\lim} K(?, [\hdelta,(1)^{-1}],n)$ and use identification (\ref{cechkoszul}), I will get the long exact sequence
\begin{equation}\label{E:exactcechC}
0\to H^0(C)\to H^0_{\fa'}({\hA})\to H^0_{\fa}({\hA})\to H^1(C)\to\cdots 
\end{equation}
The ideal $\fa'$ is equal $\fri[\hdelta,(1)^0]$, $C^{\bullet}$ is the total complex of the bicomplex $ \underset{\overset{n}{\longrightarrow} }{\lim} K^{\bullet}(\Cech^{\bullet}(\mathfrak{U},\hA), [\hdelta,(1)^{-1}],n)$.

The cohomology of $C^{\bullet}$ have a simple structure.
\begin{proposition}\label{E:isomorphism1}
\[H^{i+1}(C)=H^i_{\fa}({\hA})\text{ for }-\rho(\hdelta)\leq i\leq -\rho(\hdelta)+3\] and \[H^{i}(C)=H^i_{\fa'}({\hA})\text{ for }-\rho(\hdelta)+8\leq i\leq -\rho(\hdelta)+11.\]
For other values of $i$ $H^i(C)=0$.

There is also an obvious modifications of these isomorphisms when $\hdelta\to-\infty,\hdelta'\to\infty$.
\end{proposition}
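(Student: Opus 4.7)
My plan is to extract the proposition from the long exact sequence (\ref{E:exactcechC}) via a disjointness-of-support argument. Applying $\underset{\overset{n}{\longrightarrow}}{\lim} K(?,[\hdelta,(1)^{-1}],n)$ to the short exact sequence (\ref{E:cechexact}) with $M=\hA$ and passing to cohomology yields the long exact sequence
\begin{equation}\label{E:LESproof}
\cdots\to H^{i-1}_{\fa}(\hA)\to H^i(C)\to H^i_{\fa'}(\hA)\to H^i_{\fa}(\hA)\to H^{i+1}(C)\to\cdots.
\end{equation}
The right-hand term of (\ref{E:cechexact}) produces the columns labelled $H^*_{\fa}(\hA)$ by (\ref{E:localviaKoszul}). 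For the middle term I use that $\Cech_e^{\bullet}(\mathfrak{U},\hA)$ represents $\mathrm{R}\Gamma_{\fp_0}(\hA)$ for $\fp_0:=\fri[(0)^0,(1)^0]$, so since the Koszul limit realizes $\mathrm{R}\Gamma_{\fa}$ one obtains $\mathrm{R}\Gamma_{\fa}\mathrm{R}\Gamma_{\fp_0}(\hA)=\mathrm{R}\Gamma_{\fa+\fp_0}(\hA)=\mathrm{R}\Gamma_{\fa'}(\hA)$, using $\fa+\fp_0=\fri[\hdelta,(1)^0]=\fa'$.

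The heart of the proof is the pair of vanishing ranges
\begin{equation}\label{E:twovanishings}
H^i_{\fa}(\hA)=0\ \text{for}\ i\notin[-\rho(\hdelta),-\rho(\hdelta)+3],\qquad H^i_{\fa'}(\hA)=0\ \text{for}\ i\notin[-\rho(\hdelta)+8,-\rho(\hdelta)+11].
\end{equation}
The first is Proposition \ref{P:vanishingmain}. For the second I will adapt the four-term-complex argument of Section \ref{S:fourterm} to $\fa'$. By Lemma \ref{L:regularity} the set $\rReg[\hdelta,(1)^0]$ is a regular sequence in $\hA$; by Proposition \ref{P:parameters} it has the same radical as $\fa'$, so by Proposition \ref{P:radeq} and Proposition \ref{P:localtorisom} I may compute $H^i_{\fa'}(\hA)$ by the Koszul complex on the $11-\rho(\hdelta)$ elements of $\rReg[\hdelta,(1)^0]$. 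Singling out the three top-$\rho$-level elements of $[\hdelta,(1)^0]$ as the analogue of $\rA^{\fa}$ and repeating the bicomplex collapse in the proof of Proposition \ref{P:complexreduction}, I obtain a four-term Koszul complex $\Frg^{\fa'}_{\bullet}$ with $H_i(\Frg^{\fa'}_{\bullet})\cong H^{s'(\fa')-i}_{\fa'}(\hA)$ for $s'(\fa'):=11-\rho(\hdelta)$, giving the range $[s'(\fa')-3,s'(\fa')]=[-\rho(\hdelta)+8,-\rho(\hdelta)+11]$.

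Granted (\ref{E:twovanishings}), the two nonvanishing ranges are disjoint, so the arrow $H^i_{\fa'}(\hA)\to H^i_{\fa}(\hA)$ in (\ref{E:LESproof}) vanishes in every degree, and the long exact sequence breaks into short exact sequences
\[0\to H^{i-1}_{\fa}(\hA)\to H^i(C)\to H^i_{\fa'}(\hA)\to 0.\]
In each degree $i$ at most one of $H^{i-1}_{\fa}(\hA)$ and $H^i_{\fa'}(\hA)$ is nonzero, so I read off $H^{i+1}(C)\cong H^i_{\fa}(\hA)$ for $i\in[-\rho(\hdelta),-\rho(\hdelta)+3]$, $H^i(C)\cong H^i_{\fa'}(\hA)$ for $i\in[-\rho(\hdelta)+8,-\rho(\hdelta)+11]$, and $H^i(C)=0$ elsewhere. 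The $\hdelta\to-\infty,\hdelta'\to\infty$ version follows by passing to the limit of the finite-interval exact sequences, using the weight-wise finiteness and surjectivity established in Section \ref{S:lowerbound}.

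The main technical obstacle is the construction of $\Frg^{\fa'}_{\bullet}$: I must verify that selecting the three top-$\rho$-level generators of $[\hdelta,(1)^0]$ as the Koszul variables still produces the bicomplex collapse of Proposition \ref{P:complexreduction}, which reduces to checking that the complement supports a regular sequence generating $\hA[\hdelta,(1)^0]$ over a polynomial subring exactly as $\rReg[\hdelta,\hdelta']\setminus\rA$ does for $\hA$ in Section \ref{S:fourterm}. This is guaranteed by Lemma \ref{L:regularity} together with the Gorenstein classification of Proposition \ref{P:GorensteinCL}, but requires careful bookkeeping of the graded poset indices.
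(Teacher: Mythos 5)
Your proof is correct in outline and reaches the same structural conclusion as the paper: the long exact sequence (\ref{E:exactcechC}) breaks into short exact sequences because the nonvanishing ranges of $H^{\bullet}_{\fa}(\hA)$ and $H^{\bullet}_{\fa'}(\hA)$ are disjoint. Your derived-functor identification of the middle column as $\mathrm{R}\Gamma_{\fa}\mathrm{R}\Gamma_{\fp_0}=\mathrm{R}\Gamma_{\fa+\fp_0}$ is a useful gloss that the paper leaves implicit.

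Where you diverge is the proof of the vanishing of $H^i_{\fa'}(\hA)$ outside $[-\rho(\hdelta)+8,-\rho(\hdelta)+11]$. You propose re-running the bicomplex collapse of Proposition \ref{P:complexreduction} to build a new four-term complex $\Frg^{\fa'}_{\bullet}$, and you flag this as "the main technical obstacle" requiring careful bookkeeping. The paper avoids this entirely: it cites (\ref{E:sigmaTcoh}), which records that the shift $\shift$ is an automorphism of the whole lattice $\sethE$ (equation (\ref{E:shift})) inducing an isomorphism of graded algebras $\shift:\hA[\hdelta,\hdelta']\to\hA[\shift\hdelta,\shift\hdelta']$ (equation (\ref{E:tisomorphism})). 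Since $\fa'=\fri[\hdelta,(1)^0]=\shift\bigl(\fri[\shift^{-1}\hdelta,(1)^{-1}]\bigr)$, one gets $H^i_{\fa'}(\hA[\hdelta,\hdelta'])\cong H^i_{\fa}(\hA[\shift^{-1}\hdelta,\shift^{-1}\hdelta'])$, and then Proposition \ref{P:vanishingmain} together with $\rho(\shift^{-1}\hdelta)=\rho(\hdelta)-8$ gives the range in one line. Your route is not wrong, but the shift isomorphism replaces the entire technical verification you were worried about with a change of variables; you should use it rather than reproving Proposition \ref{P:complexreduction} for a translated interval.
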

\begin{proof}
I know (Proposition \ref{P:vanishingmain}) that $H^i_{\fa}({\hA})\neq 0$ for $-\rho(\hdelta)\leq i\leq -\rho(\hdelta)+3$ and $H^i_{\fa'}({\hA})\neq 0$
for $-\rho(\hdelta)+8\leq i\leq -\rho(\hdelta)+11$ ( use (\ref{E:sigmaTcoh}) and Proposition \ref{P:vanishingmain}). The result follows from the exact sequence (\ref{E:exactcechC}).
\end{proof}

Let $\widetilde{M}$ be the coherent sheaf on $\C^{16}$ associated with the module $M$. I will use the same notation for the restriction of $\widetilde{M}$ on $\C^{16}\backslash \{0\}$. I denote the \v{C}ech cohomology $H^i\Cech^{\bullet}(\mathfrak{U},M)$ of $\widetilde{M}$ by $\rH^i(\C^{16}\backslash \{0\}, \widetilde{M})$. With the module $M=H^i_{\fa}({\hA})$ I associate the coherent sheaf $\widetilde{H}^i_{\fa}({\hA})$ on $\C^{16}\backslash \{0\}$. 
\begin{proposition}\label{E:cechspectral}
\begin{enumerate}
\item There is a spectral sequence
\begin{equation}\label{E:spectralincreas}
\rH^i(\C^{16}\backslash \{0\}, \widetilde{H}^j_{\fa}({\hA}))\Rightarrow H^{i+j+1}(C).
\end{equation}
\item There is a similar spectral sequence
\[\rH^i(\C^{16}\backslash \{0\}, \widetilde{H}^{j+\itwo}_{\fa}(\hA))\Rightarrow H^{i+j+1+\itwo}(C[-\infty,\infty]).\]
\end{enumerate}
\end{proposition}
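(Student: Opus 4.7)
The plan is to realize $C^\bullet$ as the total complex of a double complex and exploit one of its standard spectral sequences. Set
\[ X^{p,q} := \lim_n K^p\left(\Cech^q(\mathfrak{U}, \hA),\, [\hdelta,(1)^{-1}],\, n\right), \]
so that by construction $C^\bullet = \mathrm{Tot}\,X^{\bullet,\bullet}$. The short exact sequence (\ref{E:cechexact}) places the classical \v{C}ech complex $\Cech^\bullet$ in degrees $\geq 1$ inside the extended complex $\Cech_e^\bullet$, which shifts the relationship between total-complex degree and \v{C}ech cohomological degree by $+1$; this is the source of the $+1$ in the convergence statement.

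First I would filter $X^{p,q}$ by the \v{C}ech index $q$ and take Koszul ($p$-direction) cohomology first. For every fixed $q$, the module $\Cech^q(\mathfrak{U},\hA)$ is a finite product of flat localizations $\hA_{\lambda^{\ralpha_0}\cdots\lambda^{\ralpha_q}}$, $\ralpha_i\in\setE$. Since $\lim_n K(\,\cdot\,,\bar\lambda,n)$ is exact under flat base change and computes local cohomology by (\ref{E:localviaKoszul}), we obtain
\[ E_1^{q,p} \;=\; H^p_{\fa}\left(\Cech^q(\mathfrak{U},\hA)\right) \;=\; \Cech^q\left(\mathfrak{U},\, H^p_{\fa}(\hA)\right). \]
Taking cohomology in the $q$-direction then gives
\[ E_2^{q,p} \;=\; \rH^{\,q-1}\left(\C^{16}\backslash\{0\},\, \widetilde{H^p_{\fa}(\hA)}\right), \]
the $q \mapsto q-1$ shift coming from the \v{C}ech indexing convention above. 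Reindexing $i := q-1,\; j := p$ and noting that the spectral sequence converges to $H^{p+q}(C) = H^{i+j+1}(C)$ yields the first claim.

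For the second statement I would run the same argument on the weight-graded bidirect system of local cohomology groups $H^j_{\fa}[\hdelta,\hdelta']^w$ and then pass to the double limit $\hdelta \to -\infty,\;\hdelta' \to \infty$ defining $H^{j+\itwo}_{\fa}$. The \v{C}ech functor is a finite direct sum of flat localizations of $P$-modules and so commutes with direct and inverse limits weight-by-weight. Proposition \ref{P:importantprop}, item \ref{I:Pimportantprop}, bounds each weight multiplicity of $H^j_{\fa}[\hdelta,\hdelta']$ by a constant depending only on $w$, which forces the Mittag--Leffler condition on each weight space and, together with Proposition \ref{E:kappaisomorphism}, item \ref{I:kappaisomorphism}, allows the interchange of the bidirect limit with the spectral sequence.

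The main obstacle is the bookkeeping: tracking the $+1$ degree shift consistently through both filtrations of the bicomplex and verifying compatibility of the limits in the semi-infinite setting. The shift is forced by the indexing convention of (\ref{E:cechexact}). The more delicate point is that direct and inverse limits generally fail to commute with each other, so that the limit of the spectral sequences need not equal the spectral sequence of the limit; the weight-wise finiteness from Proposition \ref{P:importantprop}, item \ref{I:Pimportantprop}, is precisely what lets this obstruction vanish, while the flatness of localization handles the compatibility of $\Cech^q$ with $H^p_{\fa}$ purely at the chain level.
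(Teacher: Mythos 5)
Your proof is correct and takes essentially the same approach as the paper: realizing $C^\bullet$ as the total complex of the Koszul--\v{C}ech double complex (which is exactly what Construction \ref{C:bicomplex} applied to $K(\hA,[\hdelta,(1)^0],n)$ with the decomposition $[\hdelta,(1)^0]=[\hdelta,(1)^{-1}]\cup[(0)^0,(1)^0]$ produces, once the \v{C}ech-degree-zero row is dropped) and running the spectral sequence that computes Koszul cohomology first, with the degree shift $q\mapsto q-1$ tracking the indexing of (\ref{E:cechexact}). For the second item the paper's stated justification is only that $\Th$ is an $\sP$-module homomorphism; you supply the additional point---weight-wise finiteness from Proposition \ref{P:importantprop}, item \ref{I:Pimportantprop}, giving Mittag--Leffler and letting the bidirect limit pass through the spectral sequence---which is implicit in the paper but worth making explicit.
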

\begin{proof}
Decomposition $[\hdelta,(1)^0]=[\hdelta,(1)^{-1}]\cup [(0)^0,(1)^0]$ enables me to apply Construction \ref{C:bicomplex} to Koszul complex $K(\hA,[\hdelta,(1)^0],n)$. By the above discussion the corresponding spectral sequence is isomorphic to (\ref{E:spectralincreas}).

The second statement follows from the first because $\Th$ is a homomorphism of $\sP$-modules.
\end{proof}

\section{Spaces of formal maps}\label{S:formalmaps}
There is an informal way to interpret elements of the space $\cZ^{poly}_{\tspace}$ as probes of the target $\tspace$. Elements of $\cZ^{poly}_{\tspace}$, which are represented by Laurent polynomials, are the true maps from $\C^{\times}$ to $\tspace$. Alternatively, one can take maps from the infinitesimal punctured disk to $\tspace$ as the probes. Intuitively it is clear that infinitesimal maps do not penetrate far into $\tspace$. In particular, they should be less sensitive to the fundamental group of $\tspace$ than Laurent maps. 
In this section I propose some conjectures about the relations of the groups $H^{i+\itwo}$ based on spaces of maps of these two types.

I formalize these descriptions of these spaces by using the language of algebra.
There are three versions of the algebra ${\hA[\hdelta,\hdelta']}$ when $\hdelta\to -\infty, \hdelta'\to \infty$, which I will use in this section. My exposition is less formal than in previous sections. The reason is that this section contains mostly conjectures.

Inclusions of intervals $[\hdelta,\hdelta']\subset [\hgamma,\gamma']$ define 
an embedding $\C[\hdelta,\hdelta']\to \C[\hgamma,\gamma']$ which is tautological on the generators. I define $\C(-\infty,\infty)=\C[\sethE]$ to be the direct limit $\underset{\overset{\hdelta,\hdelta'}{\longrightarrow}}{\lim} \C[\hdelta,\hdelta']$.
The polynomial algebra $\C[\sethE]$
have three particularly interesting linear topologies. The bases of these topologies are generated by the system of ideals:
\begin{enumerate}
\item $\fri_{\pm}(n)=\fri((-\infty,(1)^{-n}]\cup[(0)^n,\infty))\subset $,
\item $\fri_{+}(n)=\fri([(1)^{n},\infty)$,
\item $\fri_{-}(n)=\fri((-\infty,(1)^{-n}])$.
\end{enumerate}
By construction $\fri_{\pm}(n+1)\subset \fri_{\pm}(n)$,$\fri_{+}(n+1)\subset \fri_{+}(n)$,$\fri_{-}(n+1)\subset \fri_{-}(n)$. Define the completions
\begin{equation}\label{E:polalg}
\begin{split}
&\C[-\infty,\infty]=\underset{\overset{n}{\longleftarrow}}{\lim} \C(-\infty,\infty)/\fri_{\pm}(n),\\
&\C(-\infty,\infty]=\underset{\overset{n}{\longleftarrow}}{\lim} \C(-\infty,\infty)/\fri_{+}(n),\\
&\C[-\infty,\infty)=\underset{\overset{n}{\longleftarrow}}{\lim} \C(-\infty,\infty)/\fri_{-}(n).
\end{split}
\end{equation}
 
Maps which are identity on the generators defines continuous homomorphisms 
 \[m_+:\C(-\infty,\infty]\to \C[-\infty,\infty],\quad m_-:\C[-\infty,\infty)\to \C[-\infty,\infty].\] 
 
Relations $\Gamma^{\bm{s}^k}$ (\ref{E:equationsaf0}) ($N=-\infty, N'=\infty$) are the elements of all the three algebras (\ref{E:polalg}). The ideals $\fr\subset \C[-\infty,\infty]$, $\fq_{+}\subset \C(-\infty,\infty],\fq_{-}\subset \C[-\infty,\infty)$ are the topological closure of the ideals generated by $\Gamma^{\bm{s}^k}$. Introduce the algebras
\[\hA[-\infty,\infty]=\C[-\infty,\infty]/\fr,\]
\[\hA(-\infty,\infty]=\C[-\infty,\infty)/\fq_{+},\]
\[\hA[-\infty,\infty)=\C[-\infty,\infty)/\fq_{-}.\]
The homomorphisms $m_+$, $m_-$ define the maps
\begin{equation}\label{E:mdef}
m_+:\hA(-\infty,\infty]\to \hA[-\infty,\infty], \quad m_-:\hA[-\infty,\infty)\to \hA[-\infty,\infty].
\end{equation}

\begin{question}
What is the kernels of the maps (\ref{E:mdef})?
\end{question}

The algebra $\hA[-\infty,\infty]$ admits a continuous homomorphism $\hA[-\infty,\infty]\to {\hA[\hdelta,\hdelta']}$, where $ {\hA[\hdelta,\hdelta']}$ is equipped with the discrete topology. I leave 
without the proof of the following pair of simple propositions.
\begin{proposition}
$\hA[-\infty,\infty]=\underset{\overset{\hdelta,\hdelta'}{\longleftarrow}}{\lim}{\hA[\hdelta,\hdelta']}$.
\end{proposition}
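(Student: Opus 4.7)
I would identify both sides with the space of formal $\C$-linear combinations of standard monomials on $\sethE$ subject to a finiteness condition, using the standard monomial basis of Proposition~\ref{E:degreenull}, and conclude by a cofinality argument.

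Set $\setX_n:=\sethE\setminus\bigl((-\infty,(1)^{-n}]\cup[(0)^n,\infty)\bigr)$, a finite subset of $\sethE$. By construction $\C[-\infty,\infty]=\underset{\overset{n}{\longleftarrow}}{\lim}\C[\setX_n]$, and since $\fr$ is the topological closure of the ideal $I=(\Gamma^{\bm{s}^k})$ while the image of $I$ in $\C[\setX_n]$ is precisely the defining ideal of $\hA[\setX_n]$, one obtains
\[\hA[-\infty,\infty]=\underset{\overset{n}{\longleftarrow}}{\lim}\hA[\setX_n].\]
On the other hand, for each interval $[\hdelta,\hdelta']$ the surjection $\C[\sethE]\to\C[\hdelta,\hdelta']$ killing $\lambda^{\halpha}$ for $\halpha\notin[\hdelta,\hdelta']$ factors through $\C[\setX_n]$ whenever $[\hdelta,\hdelta']\subseteq\setX_n$, and it carries each generator $\Gamma^{\bm{s}^k}$ to the corresponding defining relation of $\hA[\hdelta,\hdelta']$; the resulting continuous homomorphisms $\hA[-\infty,\infty]\to\hA[\hdelta,\hdelta']$ are compatible with restriction and assemble into the canonical map $\psi:\hA[-\infty,\infty]\to\underset{\overset{\hdelta,\hdelta'}{\longleftarrow}}{\lim}\hA[\hdelta,\hdelta']$.

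To prove $\psi$ is an isomorphism, I would invoke the standard monomial basis. Proposition~\ref{E:degreenull} extends from intervals to an arbitrary finite $\setX\subset\sethE$ by the Gr\"obner argument of Proposition~\ref{R:stbasis}: the clutter in each straightening relation (\ref{E:reluniv}) involves only the four corners of a rectangle, so after setting all $\lambda^{\halpha}$ with $\halpha\notin\setX$ to zero the remaining relations still have a clutter as leading term, and Gr\"obner reduction produces standard-monomial normal forms supported in $\setX$. The restriction map $\hA[\setY]\to\hA[\setX]$ preserves a standard monomial if all of its factors lie in $\setX$ and sends it to zero otherwise. Consequently, each of the two inverse limits is canonically identified with the space of families $\{c_M\}_M$ of complex numbers, indexed by all standard monomials $M$ on $\sethE$, subject to the condition that for every subset in the given indexing system---each $\setX_n$ on the left, each $[\hdelta,\hdelta']$ on the right---only finitely many $M$ supported in that subset satisfy $c_M\neq 0$.

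These two finiteness conditions are equivalent by cofinality. Since $\rho((1)^{-n})\to-\infty$ and $\rho((0)^n)\to+\infty$, any fixed interval $[\hdelta,\hdelta']$ is eventually contained in $\setX_n$; conversely each finite $\setX_n$ sits inside the interval $[\inf\setX_n,\sup\setX_n]$ taken in the lattice $\sethE$. Hence both inverse limits are the same space of formal standard-monomial series and $\psi$ is an isomorphism. The principal technical obstacle is the extension of the standard monomial basis to non-interval finite subsets $\setX\subset\sethE$; this should follow by imitating the proof of Proposition~\ref{R:stbasis}, the key point being that the straightening relations (\ref{E:reluniv}) never introduce generators outside the support of the monomial being reduced, so they remain well-defined in $\hA[\setX]$ regardless of whether $\setX$ is order-convex.
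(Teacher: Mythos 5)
The paper offers no proof of this proposition (it is in the group the author says ``I leave without the proof''), so there is no in-paper argument to compare against; here is an assessment of your proposal on its own terms.

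The overall strategy works, but one justification you give is false. To extend the standard monomial basis from intervals to arbitrary finite $\setX\subset\sethE$ you claim that straightening ``never introduce[s] generators outside the support of the monomial being reduced.'' It does: straightening the clutter $\lambda^{\halpha}\lambda^{\hbeta}$ via (\ref{E:reluniv}) produces $\lambda^{\halpha\vee\hbeta}\lambda^{\halpha\wedge\hbeta}$ together with terms $\lambda^{\hgamma}\lambda^{\hdelta}$ with $\hgamma<\halpha\wedge\hbeta$ and $\hdelta>\halpha\vee\hbeta$, none of whose indices belong to $\{\halpha,\hbeta\}$. What actually saves the extension for the specific sets $\setX_n$ you use is that $\sethE\setminus\setX_n=(-\infty,(1)^{-n}]\cup[(0)^n,\infty)$ is the union of a poset ideal and a poset filter, and killing the variables indexed by such a set carries an algebra with straightening law to one on the residual poset; this is the mechanism actually at work in Proposition~\ref{R:stbasis}, where the removed element is the minimum or maximum of the interval. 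Your claimed key point does not capture this, and as written it would just as well assert the standard monomial basis for arbitrary finite $\setX$, where the conclusion is not obvious.

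The larger point, though, is that the standard-monomial detour is unnecessary. Your opening step --- $\hA[-\infty,\infty]=\underset{\overset{n}{\longleftarrow}}{\lim}\hA[\setX_n]$ --- is exactly what the definition of $\fr$ as a topological closure gives, after a Mittag--Leffler check which holds because the transition maps $\C[\setX_{n+1}]\to\C[\setX_n]$ are surjective. From there the proposition is pure cofinality: the assignment $\setX\Rightarrow\hA[\setX]$ of Definition~\ref{D:algAdelta} is a contravariant functor on finite subsets of $\sethE$ ordered by inclusion, and the index families $\{\setX_n\}$ and $\{[\hdelta,\hdelta']\}$ are mutually cofinal there (given an interval, choose $n$ with $\rho(\hdelta)>\rho((1)^{-n})$ and $\rho(\hdelta')<\rho((0)^{n})$ so that $[\hdelta,\hdelta']\subset\setX_n$; given $\setX_n$, it is contained in $[\bigwedge\setX_n,\bigvee\setX_n]$ since $\sethE$ is a lattice). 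Mutually cofinal index families yield the same inverse limit, which is the proposition. Your formal-series-of-standard-monomials description is a useful concrete picture of $\hA[-\infty,\infty]$, but in this argument it does no work once cofinality has been observed.
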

The quotient of $\hA[-\infty,\infty)$ by the ideal $\fri([-\infty,\infty)\backslash [\hdelta,\infty))$ gives me the algebra $\hA[\hdelta,\infty)$.
The shift (\ref{E:shift})
 identifies $\hA[\hdelta,\infty)$ with $\hA[\shift(\hdelta),\infty)$.
\begin{proposition}
\[\hA[-\infty,\infty)=\underset{\overset{\hdelta}{\longleftarrow}}{\lim}\hA[\hdelta,\infty)=\underset{\overset{n}{\longleftarrow}}{\lim}\hA[(0)^{-n},\infty).\]
\end{proposition}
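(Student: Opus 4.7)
The plan is to establish the two equalities separately. The second equality, $\underset{\overset{\hdelta}{\longleftarrow}}{\lim}\hA[\hdelta,\infty) = \underset{\overset{n}{\longleftarrow}}{\lim}\hA[(0)^{-n},\infty)$, is a cofinality statement: for every $\hdelta \in \sethE$, iterating the inverse shift $\shift^{-1}$ from (\ref{E:shift}) yields some $n \geq 0$ with $(0)^{-n} \leq \hdelta$, so the subsystem $\{(0)^{-n}\}_{n \geq 0}$ is cofinal in the directed set of $\hdelta$'s indexing the inverse limit.

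For the first equality I would appeal to the general topological-algebra principle that a Hausdorff complete topological ring is canonically the inverse limit of its quotients by any fundamental system of neighborhoods of zero. The ring $\C[-\infty,\infty)$ is complete by its construction as an inverse limit in (\ref{E:polalg}); the ideal $\fq_-$ is closed by definition (being the topological closure of the ideal generated by the defining relations $\Gamma^{\bm{s}^k}$). Hence $\hA[-\infty,\infty) = \C[-\infty,\infty)/\fq_-$ inherits a complete Hausdorff quotient topology. Moreover, the images in $\hA[-\infty,\infty)$ of $\{\fri([-\infty,\hdelta))\}_\hdelta$ are mutually cofinal with the images of $\{\fri_-(n)\}_n$: the inclusion $\fri_-(n) \subseteq \fri([-\infty,\hdelta))$ holds whenever $(1)^{-n} < \hdelta$, and the reverse inclusion $\fri([-\infty,\hdelta)) \subseteq \fri_-(n)$ holds whenever $\hdelta \leq (1)^{-n}$. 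They therefore form a fundamental system of neighborhoods of zero, and completeness delivers
\[
\hA[-\infty,\infty) \;\cong\; \underset{\overset{\hdelta}{\longleftarrow}}{\lim}\hA[-\infty,\infty)/\overline{\fri}([-\infty,\hdelta)).
\]

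The main obstacle is reconciling the algebraic definition $\hA[\hdelta,\infty) := \hA[-\infty,\infty)/\fri([-\infty,\hdelta))$ from the paper with the topological quotient $\hA[-\infty,\infty)/\overline{\fri}([-\infty,\hdelta))$ that the completeness argument produces. I would resolve this via the standard monomial basis (Propositions \ref{E:degreenull} and \ref{R:stbasis}): any element of $\hA[-\infty,\infty)$ admits a well-defined ``standard-monomial expansion'' $\sum_M c_M M$, and membership in $\fri([-\infty,\hdelta))$ is equivalent to the vanishing of $c_M$ for every standard $M$ supported entirely in $[\hdelta,\infty)$. Indeed, if $\halpha < \hdelta$ and $\hbeta \in \sethE$ is arbitrary, then the straightening relation (\ref{E:reluniv}) applied to $\lambda^{\halpha}\lambda^{\hbeta}$ produces only monomials containing at least one generator $\leq \halpha\wedge\hbeta \leq \halpha < \hdelta$, which forces every product $\lambda^{\halpha}\cdot a$ with $\halpha < \hdelta$ to have vanishing $[\hdelta,\infty)$-coefficients. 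This coefficient criterion is manifestly a closed condition, so $\fri([-\infty,\hdelta))$ is already closed in $\hA[-\infty,\infty)$; the two definitions of $\hA[\hdelta,\infty)$ then coincide, and the proposition follows.
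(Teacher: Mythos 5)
The paper does not give a proof of this proposition: immediately before the two propositions in Section \ref{S:formalmaps} the author writes ``I leave without the proof of the following pair of simple propositions,'' so there is no official argument to compare against. Your approach — cofinality plus completeness of linearly topologized rings — is the natural one and is essentially correct, but two points deserve a second look.

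First, the ideal defining $\hA[\hdelta,\infty)$ is $\fri\bigl([-\infty,\infty)\setminus[\hdelta,\infty)\bigr)$, not $\fri([-\infty,\hdelta))$. Since $\sethE$ is not totally ordered, the complement of the up-set $[\hdelta,\infty)$ contains not just $\{\halpha < \hdelta\}$ but also every $\halpha$ incomparable to $\hdelta$. With the correct ideal, your claimed inclusion conditions $(1)^{-n}<\hdelta$ and $\hdelta\le(1)^{-n}$ are no longer quite enough for the two cofinality inclusions. For example, the reverse inclusion $\fri\bigl([-\infty,\infty)\setminus[\hdelta,\infty)\bigr)\subseteq\fri_-(n)$ requires that every $\halpha\not\geq\hdelta$ (including the incomparable ones at rank $\rho(\hdelta)$ and slightly above) lie in $(-\infty,(1)^{-n}]$; this forces a gap of a few extra ranks between $\rho(\hdelta)$ and $\rho((1)^{-n})$, using the fact (visible from the Hasse diagram \ref{P:kxccvgw13}) that the up-set $[\hdelta,\infty)$ eventually contains all elements of rank $\geq \rho(\hdelta)+c$ for a small universal constant $c$, and dually for down-sets. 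The cofinality of the two systems of ideals is still true, it just needs this rank-buffer to be built in.

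Second, the standard-monomial computation at the end can be short-circuited. Once the forward inclusion $\fri_-(n)\subseteq\fri\bigl([-\infty,\infty)\setminus[\hdelta,\infty)\bigr)$ is established for $n$ large enough, the latter ideal contains an open ideal of the linear topology on $\hA[-\infty,\infty)$, hence is open, hence closed; thus the algebraic quotient already coincides with the topological one, with no need to inspect coefficients of a standard-monomial expansion. Your coefficient criterion is not wrong, but it also quietly needs the extension of the straightening observation from $\halpha<\hdelta$ to $\halpha\not\geq\hdelta$ (which does go through, since $\hgamma\leq\halpha\not\geq\hdelta$ implies $\hgamma\not\geq\hdelta$). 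The openness argument avoids that work.
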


The anti-involution $\reflection$ of $\sethE$ (\ref{E:involution}) comes from an automorphism of the linear space $\spinor[z,z^{-1}]$. \\$\reflection(-\infty,(1)^{-n}]=[(0)^{n},\infty)\supset [(1)^{n-2},\infty)$. The action of $\reflection$ on $\sspinor[z,z^{-1}]$ is compatible with the relations $\Gamma^{\bm{s}^k}$. It implies that 
there is a continuous isomorphism
\[\reflection:\hA(-\infty,\infty]\to \hA[-\infty,\infty).\]

$\hA[(0)^0,\infty)$ is the algebra of function on the space of arcs 
\[\Arc(0)=\left\{\theta(z)=\sum_{\rbeta\in \setE}\lambda^{\rbeta}(z)\theta_{\rbeta}=\sum_{k\geq 0}\sum_{\rbeta\in \setE}\lambda^{\rbeta^k}\theta_{\rbeta}z^k \text{ that satisfy }(\ref{E:pure})\right\}.\]
In the above formula $z$ is treated as a formal variable and no convergence of the series is expected.
The space related to $\hA(-\infty, (1)^0]$ is
\[\Arc(\infty)=\left\{\theta(z)=\sum_{\rbeta\in \setE}\lambda^{\rbeta}(z)\theta_{\rbeta}=\sum_{k\geq 0}\sum_{\rbeta\in \setE}\lambda^{\rbeta^k}\theta_{\rbeta}z^{-k} \text{ that satisfy }(\ref{E:pure})\right\}.\] 

The spaces of arcs or spaces of jets are well studied in the literature. See e.g. \cite{DenefLoeser}. These spaces are the inverse limits of the spaces of finite jets. In case of the cone $\cone$, these spaces are 
 \[\Arc^{N}=\Maps(Spec\, \C[z]/z^{N+1},\cone)= \left\{\theta(z)=\sum_{k= 0}^N\sum_{\rbeta\in \setE}\lambda^{\rbeta^k}\theta_{\rbeta}z^k \text{ that satisfy }(\ref{E:pure}) \mod z^{N+1}.\right\},N\geq 0\]
 \[\Arc_{N}=\left\{\theta(z)=\sum_{k= N}^0\sum_{\rbeta\in \setE}\lambda^{\rbeta^k}\theta_{\rbeta}z^{k} \text{ that satisfy }(\ref{E:pure}) \mod z^{N-1}.\right\}, N\leq 0.\]
The projections are 
\[\projectiono^N:\Arc^{N}\rightarrow \Arc^{N-1},\quad \projectiono^N:\theta(z) \mod z^{N+1} \to \theta(z) \mod z^{N}.\]
As the equations (\ref{E:pure}) are homogeneous, the map
\[\inclusionth^{N-1}(\theta(z))= z\theta(z),\quad \inclusionth^{N-1}:\Arc^{N-1}\rightarrow \Arc^{N}\]
is well defined. Similar structures exist on $\Arc_{N}$
 The algebra of polynomial functions \[\tjA^N=\jA[(0)^0,(1)^N]=\O[\Arc^{N}]\]
 shares its generators \[\{\lambda^{\rbeta^l}|\rbeta\in \setE, 0\leq l\leq N\}\] with the algebra $\hA[(0)^0,(1)^N]$. The main difference with $\hA[(0)^0,(1)^N]$ is that it has as half as many relations. More precisely, the algebra $\tjA^{N}$ is the quotient of the polynomial algebra $\mathbb{C}[(0)^0,(1)^N]$ by the ideal $\fq[(0)^0,(1)^N]$ of relations generated by 
$\Gamma^{\bm{s}^k}$ $ 0 \leq l,l',k\leq N \quad \bm{s}\in \rG$ (\ref{E:equationsaf0}). 
More generally, define 
\[\begin{split}
&\jA[\hdelta,(1)^N]=\jA[(0)^0,(1)^N]/\fri([(0)^0,(1)^N]\backslash [\hdelta,(1)^N])\\
&\text{ for } [\hdelta,(1)^N]\subset [(0)^0,(1)^N].
\end{split}\] 
I postulate that automorphism $\shift$ (\ref{E:shift}) induces an isomorphism $\jA[\hdelta,(1)^N]\cong \jA[\shift(\hdelta),(1)^{N+1}]$. This way I define $\jA[\hdelta,(1)^N]$ for the general $\hdelta< (1)^N$.
The algebras $\bjA_{N}=\jA[(0)^N,\hdelta]=\O[\Arc_{N}]$ are defined along the same lines.
Here is the characterization of the algebra of functions on the space of formal loops.
\begin{proposition}
\begin{enumerate}
\item The homomorphisms $\projectiono^{N*}$,$\projectiono_N^{*}$ induce a direct system of algebras $\projectiono^{N*}:\tjA^{N-1}\to \tjA^{N}$ $\projectiono_N^{*}:\bjA^{N-1}\to \bjA^{N}$ such that
\[\underset{\longrightarrow}{\lim} \tjA^{N}=\hA[(0)^0,\infty)\quad \underset{\longrightarrow}{\lim} \bjA^{N}=\hA(-\infty, (1)^0]\]
and
\begin{equation}\label{E:dirdef}
\underset{\longrightarrow}{\lim} \jA[\hdelta,(1)^N]=\hA[\hdelta,\infty)\quad \underset{\longrightarrow}{\lim} \jA[(0)^N,\hdelta]=\hA(-\infty, \hdelta].
\end{equation}
\item The homomorphisms $\inclusionth_N^*$ induce an inverse system of algebras $\inclusionth^{N-1*}:\tjA^{N}\to \tjA^{N-1}$, $\inclusionth_{N-1}^*:\bjA^{N}\to \bjA^{N-1}$ such that
\[\underset{\longleftarrow}{\lim} \tjA^{N}=\hA[-\infty,(1)^0]\quad \underset{\longleftarrow}{\lim} \bjA^{N}=\hA[(0)^0,\infty].\]
\item \[\underset{\overset{\projectiono^*}{\longrightarrow}}{\lim} \underset{\overset{\inclusionth^*}{\longleftarrow}}{\lim} \tjA^{N+N'}=\hA[-\infty,\infty)\quad \underset{\overset{\projectiono^*}{\longrightarrow}}{\lim} \underset{\overset{\inclusionth^*}{\longleftarrow}}{\lim} \bjA_{N+N'}=\hA(-\infty,\infty].\]
\end{enumerate}
\end{proposition}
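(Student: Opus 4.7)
The plan is to establish each of the three items in turn by combining a geometric description of the structure maps with the explicit generators-and-relations presentations of the algebras involved. All three items reduce to checking that the relevant limits (direct or inverse) of polynomial rings pass through the quotient by the defining ideals.

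For item (1), I would first verify that $\projectiono^{N*}\colon\tjA^{N-1}\to\tjA^{N}$ is the inclusion of subalgebras sending each generator $\lambda^{\rbeta^{l}}$ with $0\le l\le N-1$ to the like-named generator in $\tjA^{N}$. This is clear geometrically: $\projectiono^{N}\colon\Arc^{N}\to\Arc^{N-1}$ is the surjective truncation $\theta(z)\bmod z^{N+1}\mapsto\theta(z)\bmod z^{N}$, and at the level of defining ideals the relations $\Gamma^{\bm{s}^{k}}$ of $\tjA^{N-1}$ (for $0\le k\le 2(N-1)$) are literally a subset of those for $\tjA^{N}$. Hence the direct limit is the algebra generated by all $\lambda^{\rbeta^{l}}$ with $l\ge 0$ modulo all relations $\Gamma^{\bm{s}^{k}}$ with $k\ge 0$, which is $\hA[(0)^{0},\infty)$. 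The argument for the $\bjA^{N}$ side is the mirror image, and the extension to intervals $[\hdelta,(1)^{N}]$ with $\hdelta<(0)^{0}$ follows from the $\shift$-equivariance of (\ref{E:tisomorphism}), which commutes with $\projectiono^{*}$ and lets me reduce to the case $\hdelta=(0)^{0}$.

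For item (2), the key input is the pullback formula induced by $\inclusionth_{N-1}(\theta(z))=z\theta(z)$; on functions this shifts coordinates, $\inclusionth^{N-1*}(\lambda^{\rbeta^{l}})=\lambda^{\rbeta^{l-1}}$ for $l\ge 1$ and $\inclusionth^{N-1*}(\lambda^{\rbeta^{0}})=0$. Using the poset automorphism $\shift$ of (\ref{E:shift}), I would re-index the system $(\tjA^{N},\inclusionth^{N-1*})$ as $(\jA[(0)^{-N},(1)^{0}],\sfp^{*})$, in which the structure maps become the ordinary quotient projections induced by the inclusions $[(0)^{-N+1},(1)^{0}]\subset[(0)^{-N},(1)^{0}]$. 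The inverse limit of this tower is, by the definition of the $\fri_{-}$-adic topology in (\ref{E:polalg}), exactly the completion $\hA[-\infty,(1)^{0}]$. The $\bjA^{N}$ statement follows by applying $\reflection$ from (\ref{E:involution}).

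Item (3) is then a formal combination of the previous two constructions. After the shift re-indexing used in item (2), I rewrite
\[\underset{\overset{\projectiono^{*}}{\longrightarrow}}{\lim}\ \underset{\overset{\inclusionth^{*}}{\longleftarrow}}{\lim}\tjA^{N+N'}\;=\;\underset{\overset{N'}{\longrightarrow}}{\lim}\ \underset{\overset{N}{\longleftarrow}}{\lim}\jA[(0)^{-N},(1)^{N'}].\]
By item (2) the inner inverse limit (with $N'$ fixed) produces the $\fri_{-}$-completion $\hA[-\infty,(1)^{N'}]$; by item (1) the outer direct limit over $N'$ adjoins freely the generators $\lambda^{\rbeta^{l}}$ with $l>0$ and imposes the remaining relations $\Gamma^{\bm{s}^{k}}$. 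The resulting algebra is polynomial to the right and $\fri_{-}$-complete to the left, which is the definition of $\hA[-\infty,\infty)$. The $\bjA_{N+N'}$ case is symmetric under $\reflection$.

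The main technical obstacle I anticipate is verifying that the inverse limit in item (2) commutes with passing to the quotient by the $\Gamma^{\bm{s}^{k}}$-ideal: concretely, one must exclude spurious elements appearing in the limit outside the topological closure $\fq_{-}$. This reduces to separatedness of the $\fri_{-}$-adic topology on the relation ideal, which in turn follows because each $\Gamma^{\bm{s}^{k}}$ is a finite sum supported in the bounded range $\{l:2N\le l\le 2N'\}$ and hence already vanishes in $\tjA^{N}$ once $N$ is sufficiently large in the appropriate direction. Once this separatedness is in hand the rest of the argument is a routine assembly of the two building blocks above.
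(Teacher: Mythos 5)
The paper does not prove this proposition at all — Section~\ref{S:formalmaps} is, as the author himself warns, ``less formal'' and ``contains mostly conjectures'' — so there is no authorial argument to compare yours against, and the statement itself is not fully pinned down by what the paper defines. With that caveat, your treatment of item~(1) is sound: $\projectiono^{N*}$ is an inclusion of generators, the jet relation ideals are nested and stabilize coefficient\nobreakdash-wise (for $k\le N$ the sum defining $\Gamma^{\bm{s}^{k}}$ is already the full sum and is unchanged at all higher levels), and $\shift$\nobreakdash-equivariance reduces general $\hdelta$ to $\hdelta=(0)^{0}$.

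Item~(2), however, has a genuine gap that your closing remark does not close. Your re-indexing produces the inverse tower $\jA[(0)^{-N},(1)^{0}]$, and you then assert its limit is ``exactly $\hA[-\infty,(1)^{0}]$.'' But after applying $\shift^{-N}$ the relation ideal of $\jA[(0)^{-N},(1)^{0}]$ is generated by $\Gamma^{\bm{s}^{j}}$ only for $j\in[-2N,-N]$: the index window recedes to $-\infty$ and never covers $0$. In particular $\Gamma^{\bm{s}^{0}}=\Gamma^{\bm{s}}_{\ralpha\rbeta}\lambda^{\ralpha^{0}}\lambda^{\rbeta^{0}}$ is \emph{not} in the relation ideal of $\jA[(0)^{-N},(1)^{0}]$ for any $N\ge 1$ — it has $\bT$\nobreakdash-weight $0$, while every quadratic element of that ideal has $\bT$\nobreakdash-weight $\le -N$. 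The constant sequence $(\Gamma^{\bm{s}^{0}})_{N}$ is compatible, so it survives as a nonzero element of $\underset{\longleftarrow}{\lim}\jA[(0)^{-N},(1)^{0}]$. By contrast, in $\hA[-\infty,(1)^{0}]$ — read, as the paper's conventions dictate, as the quotient of $\hA[-\infty,\infty]$ killing $\lambda^{\halpha}$ for $\halpha>(1)^{0}$ — one has $\Gamma^{\bm{s}^{0}}=0$. So the canonical map from your inverse limit onto $\underset{\longleftarrow}{\lim}\hA[(0)^{-N},(1)^{0}]$ is a proper surjection, and the identification you assert fails. Your ``technical obstacle'' paragraph is aimed at a different issue and in fact reasons in the wrong direction: as $N$ grows $\tjA^{N}$ does acquire more relations, but once you re-index as item~(2) requires those new relations sit ever further from the fixed window around weight $0$, so they never supply the missing generator. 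Item~(3) is built on item~(2) and inherits the same uncertainty. A correct treatment would either accept the statement as a \emph{definition} of a jet-type completion not separately named in the paper, or would have to show — which I doubt — that the closure of the jet ideal in the limit actually contains the full $\hA$\nobreakdash-relation ideal; as it stands the step is simply asserted.
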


The map $\jA[\hdelta,(1)^N]\to \hA[\hdelta,\infty)$ is the part of the structure maps defining the direct limit (\ref{E:dirdef}). 
The composition
\[\jA[\hdelta,(1)^N]\to \jA[\hdelta,\infty)\overset{}{\to} \hA[\hdelta,\infty]\to \hA[\hdelta,(1)^N]\]
is nothing else but the projection
\[\C[\hdelta,(1)^N]/\fq[\hdelta,(1)^N]\to \C[\hdelta,(1)^N]/\fr[\hdelta,(1)^N].\]
\begin{question}
Is $\rReg[\hdelta,(1)^N]$ (\ref{E:regconstr}) regular in $\jA[\hdelta,(1)^N]$? What is the simplest maximal regular sequence in the augmentation ideal.
\end{question}
Note that the map $\reflection$ induces an isomorphism \[\reflection:\jA[\hdelta,(1)^N]\cong {\jA}[(0)^{-N},\reflection(\hdelta)].\]

\begin{question}
Is $Spec\, \jA[\hdelta,(1)^N]$ reduced and irreducible?
\end{question}

The algebra $\C[z]/z^{N+1}$ admits a deformation $\C[z,q]/(z^{N+1}-q^{N+1})$. Since $\Arc^N$ is\\ $\Maps(Spec\,\C[z]/z^{N+1},\cone)$, the space $\Arc^N$ admits a "deformation" in the form \[\Arc_q^N=\Maps(Spec\,\C[z,q]/(z^{N+1}-q^{N+1}),\cone).\] Note that $Spec\, \C[z,q,q^{-1}]/(z^{N+1}-q^{N+1})$ is a union of $N$ components. This is why \[\Maps(Spec\, \C[z,q,q^{-1}]/(z^{N+1}-q^{N+1}),\cone)=\cone^N\times \C^{\times}.\]

The algebra $\tjA_q^{N}=\O[\Arc_q^{N}]$ is isomorphic to the quotient of 
$\mathbb{C} [(0)^0,(1)^N]\otimes \C[q]$ by the ideal of relations $\fq_q[(0)^0,(1)^N]$ generated by 
\[\Gamma^{\bm{s}^k}+q^{N+1}\Gamma^{\bm{s}^{N+1+k}} 0 \leq k\leq N-1 \text{ and } \Gamma^{\bm{s}^N}\quad \bm{s}\in G.\]
From the geometric considerations I know that the algebra $\tjA_q^{N}\otimes \C[q,q^{-1}]$ is isomorphic to $\O[\cone]^{\otimes N}\otimes \C[q,q^{-1}]$. 
\begin{conjecture}
\begin{enumerate}
\item $\tjA_q^{N}$ is flat over $\C[q]$.
\item $\tjA_q^{N}$ is a Cohen-Macaulay integral domain over $\C[q]$.
\end{enumerate}
An immediate corollary from the previous conjecture is the formula for the Hilbert series of $\O[\Arc_0^{N}(\cone)]$:
\end{conjecture}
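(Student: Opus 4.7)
The plan is to reduce both claims to flatness over $\C[q]$ and then invoke known properties of the fibers. My strategy mirrors the two-step degeneration used in Section \ref{S:singularities}: first I will exhibit a $q$-independent monomial basis for $\tjA_q^N$, and then, if needed, I will further toric-degenerate in an auxiliary parameter $h$ to push Cohen--Macaulayness down from a Hibi-type fiber.

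First I would set up a Gr\"obner-basis argument on the ambient polynomial ring $P_q := \C[\lambda^{\rbeta^k} \mid \rbeta\in \setE,\, 0\le k\le N]\otimes\C[q]$. Fix the term order (\ref{E:orderaf}) on the $\lambda$'s, extend it to $P_q$ by declaring $q$ to be lex-smaller than every $\lambda^{\rbeta^k}$, and observe that in each generator
\[
\Gamma^{\bm{s}^k}+q^{N+1}\Gamma^{\bm{s}^{N+1+k}},\qquad 0\le k\le N-1,
\]
of $\fq_q$, the $q^0$-summand $\Gamma^{\bm{s}^k}$ contains a unique clutter $\lambda^{\halpha}\lambda^{\hbeta}$ (Proposition \ref{E:degreenull}), which dominates the $q^{N+1}$-correction in our order. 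Thus the initial ideal of $\fq_q$ coincides with the initial ideal of the undeformed jet relations. Since the initial ideal is generated by a set of squarefree quadratic monomials (the clutters), it defines a Stanley--Reisner ring, and the set of standard monomials is independent of $q$. Running Buchberger reduction over $\C[q]$ shows that these standard monomials form a free $\C[q]$-basis of $\tjA_q^N$, which is exactly flatness over $\C[q]$. This step also recovers the familiar straightening law for $\tjA_q^N$: every defining relation can be rewritten as a deformed straightening identity of the form (\ref{E:reluniv}) with $q^{N+1}$-corrections of strictly higher order in the term order.

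Granted flatness, the domain claim follows formally. The localization $\tjA_q^N\otimes_{\C[q]}\C[q,q^{-1}]$ is, by the geometric description, isomorphic to $\O[\cone]^{\otimes(N+1)}\otimes\C[q,q^{-1}]$; each tensor factor is a domain by Lemma \ref{P:integraldomain} applied to $[(0)^0,(1)^0]$, and tensor products of domains over the algebraically closed field $\C$ remain domains, so the generic fiber is a domain. Flatness over $\C[q]$ means $\tjA_q^N$ is $q$-torsion-free, so the localization map $\tjA_q^N\hookrightarrow \tjA_q^N[q^{-1}]$ is injective, forcing $\tjA_q^N$ itself to be a domain over $\C[q]$.

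For the Cohen--Macaulay assertion the main obstacle is the special fiber $\tjA_q^N/(q)=\tjA^N=\O[\Arc^N(\cone)]$: the generic fibers $\O[\cone]^{\otimes(N+1)}$ are Cohen--Macaulay because $\O[\cone]$ is (Proposition \ref{P:GorensteinCL}), and by standard results (e.g.\ Matsumura 23.3) flatness with all fibers Cohen--Macaulay implies Cohen--Macaulayness of the total algebra. So it suffices to prove $\tjA^N$ is Cohen--Macaulay. My plan is to iterate the toric degeneration of Section \ref{S:singularities}: introduce a second parameter $h$ and replace each deformed relation by a $\C[q,h]$-flat family whose $h=0$ specialization is a Hibi-type algebra $\cH_q^N$ built from a (now cyclic-in-$k$) Hasse diagram. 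The explicit combinatorial check is that this Hibi algebra fits the hypotheses of \cite{THibi}, so it is Cohen--Macaulay and a domain. Because $h$-flatness descends Cohen--Macaulayness from $h=0$ to $h\neq 0$ (and similarly for the domain property), we recover Cohen--Macaulayness of $\tjA_q^N$, and in particular of the special fiber $\tjA^N$. The genuinely new work sits in verifying that the cyclic lattice underlying $\cH_q^N$ still satisfies the Gorenstein/equidimensionality conditions analogous to Proposition \ref{P:GorensteinCL} and that the function $\functionl$ of Section \ref{S:singularities} admits a cyclic analogue on the loop diagram; this is where I expect the main technical obstacle.
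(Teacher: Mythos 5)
You are attempting to prove a statement that the paper itself leaves as a conjecture (supported only by Macaulay2 checks for small $N$), so there is no proof of record to compare with; judged on its own terms, your proposal has a genuine gap at its first and load-bearing step. The flatness argument rests on the claim that the deformed generators $\Gamma^{\bm{s}^k}+q^{N+1}\Gamma^{\bm{s}^{N+1+k}}$ have a $q$-independent initial ideal generated by the clutters, so that standard monomials give a free $\C[q]$-basis. But the paper explicitly records, in the final Remark of Section \ref{S:formalmaps}, that in the order (\ref{E:orderaf}) the defining relations of $\fq[\hdelta,(1)^N]$ do \emph{not} form a quadratic Gr\"obner basis. The reason is structural: the jet ideal contains only the relations $\Gamma^{\bm{s}^k}$ with $0\le k\le N$, i.e.\ half of the straightening relations of $\hA[(0)^0,(1)^N]$, so clutters $\lambda^{\halpha}\lambda^{\hbeta}$ with $u(\halpha)+u(\hbeta)>N$ are not leading terms of any quadratic generator, S-polynomials no longer reduce against the given set, and Buchberger's algorithm produces higher-degree elements whose interaction with the $q$-deformation is precisely what one would have to control. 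Moreover the "domination" claim is backwards for the order you fix: with $q$ lex-smallest, monomials are compared by their $\lambda$-parts first, and the monomials of $\Gamma^{\bm{s}^{N+1+k}}$ involve variables of strictly larger decoration than those of $\Gamma^{\bm{s}^k}$, hence are larger in (\ref{E:orderaf}); the leading term of the deformed generator therefore comes from the $q^{N+1}$-correction, not from the $q^0$-part, and your initial-ideal computation does not go through.

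The reductions in the second half are sound but carry no new content: flatness plus the identification of the generic fiber with $\O[\cone]^{\otimes(N+1)}\otimes\C[q,q^{-1}]$ (which does hold on the nose, by a Vandermonde change of generators using $z^{N+1}=q^{N+1}$) gives the domain statement, and flatness with Cohen--Macaulay fibers gives Cohen--Macaulayness of the total ring; but both reductions presuppose the two hard facts — flatness itself and Cohen--Macaulayness of the special fiber $\tjA^N$ — and these are exactly what remain unproved. For the special fiber you propose a "cyclic Hibi" degeneration, yet the mechanism of Section \ref{S:singularities} cannot be transplanted: it uses that every relation has the straightened form (\ref{E:reluniv}) with a unique clutter, and that $\functionl(\halpha)=\|\functionf(\halpha)\|^2$ satisfies the strict inequality (\ref{E:inequlity}); a $\Z/(N+1)$-periodic analogue of $\functionl$ on a loop diagram would be bounded, which is incompatible with a norm-square-type function and with the convexity argument of Proposition \ref{P:functionL}, and, more basically, the straightening relations one would need to present a Hibi-type quotient are absent from $\fq_q$. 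As it stands, the proposal defers all of the genuinely new work to the steps you flag as "the main technical obstacle," so the conjecture remains open.
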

 \begin{corollary}{\ \rm
\begin{enumerate}
\item \[\O[\Arc_0^{N}(\cone)](t)=\frac{(1+5t+5t^2+t^3)^N}{(1-t)^{11N}}.\]
\item $\O[\Arc_0^{N}(\cone)]$ is Gorenstein.
\item The index of the dualizing line bundle on $Proj\, \O[\Arc_0^{N}(\cone)]$ is equal to $-(11N-3N)=-8N$.
\end{enumerate}
}\end{corollary}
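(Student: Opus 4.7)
The strategy is to deduce all three assertions directly from the hypothesized flatness and CM property of $\tjA_q^N$ together with Stanley's Gorenstein criterion, Proposition \ref{P:stanley}. Everything reduces to a Hilbert-series computation once the generic fiber is identified.

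First I would compute the Hilbert series of $\O[\Arc_0^N(\cone)]=\tjA_q^N/(q)$. By $\C[q]$-flatness, each graded piece of $\tjA_q^N$ is a free $\C[q]$-module of constant rank, so the Hilbert series of the special fiber at $q=0$ coincides with the Hilbert series of the generic fiber $\tjA_q^N\underset{\C[q]}{\otimes}\C(q)$. Geometrically, $\Spec\,\C[z,q,q^{-1}]/(z^N-q^N)$ splits into $N$ disjoint sections indexed by the $N$-th roots of unity, hence
\[
\tjA_q^N\underset{\C[q]}{\otimes}\C(q)\cong \O[\cone]^{\otimes N}\otimes\C(q)
\]
as graded algebras, each factor inheriting the standard grading on $\O[\cone]$. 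Since the Hilbert series of $\sA=\O[\cone]$ is $\hA_0^0(t)=(1+5t+5t^2+t^3)/(1-t)^{11}$ by (\ref{E:hilbertini}), the Hilbert series multiplies and yields (i).

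For (ii) I would apply Proposition \ref{P:stanley} directly to $R:=\O[\Arc_0^N(\cone)]$. By the conjecture, $\tjA_q^N$ is a Cohen-Macaulay integral domain over $\C[q]$; flatness together with the regularity of $q$ then forces $R=\tjA_q^N/(q)$ to remain a Cohen-Macaulay integral domain. The Krull dimension $d=11N$ is the order of the pole of $R(t)$ at $t=1$. Writing $n(t)=1+5t+5t^2+t^3$, the palindromicity $t^3 n(1/t)=n(t)$ gives
\[
R(1/t)=\frac{t^{-3N}n(t)^N}{\bigl(-t^{-1}(1-t)\bigr)^{11N}}=(-1)^{11N}\,t^{8N}\,R(t),
\]
which is exactly Stanley's criterion with $p=8N$, so $R$ is Gorenstein. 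For (iii) the constant $p=8N$ is the $a$-invariant of $R$ in the sense of Proposition \ref{E:diality}, i.e. $\omega_R\cong R(-8N)$ as a graded $R$-module. Under the standard correspondence between graded canonical modules of a Gorenstein section ring and canonical sheaves of the associated $\mathrm{Proj}$ (cf.\ Remark \ref{E:indexa}, where the $a$-invariant $8$ recovers the Fano index $8=11-3$ of $\OGr^+(5,10)$), this translates into $K_{\Proj\,R}\cong \mathcal{L}^{-8N}$ for $\mathcal{L}=\mathcal{O}(1)$ the tautological line bundle. By the definition of index used in Conjecture \ref{CON:main}, the resulting index is $-8N=-(11N-3N)$.

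The only genuine obstacle is the conjecture itself, namely both the $\C[q]$-flatness of $\tjA_q^N$ and the identification of the generic fiber with $\O[\cone]^{\otimes N}\otimes\C(q)$ as \emph{graded} algebras. Granted these, the crucial numerical input $n(t)=1+5t+5t^2+t^3$ is palindromic of degree $3$ precisely because $\sA$ is itself Gorenstein of $a$-invariant $8$, and this fact is exactly what propagates Gorensteinness and the index value $-8N$ from $N=1$ to arbitrary $N$.
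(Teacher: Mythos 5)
Your overall strategy is the natural one given that the paper declares this to be an ``immediate corollary'' and offers no argument of its own: use flatness to identify the Hilbert series of the special fiber with that of $\O[\cone]^{\otimes N}$, then invoke Stanley's criterion (Proposition \ref{P:stanley}) to get Gorensteinness, and read off the $a$-invariant to get the index of the dualizing bundle. The Hilbert-series manipulation, the palindromicity check for $n(t)=1+5t+5t^2+t^3$, and the translation of the $a$-invariant $8N$ into the index $-8N$ of $K_{\Proj R}$ are all correct.

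There is, however, one genuine gap. Proposition \ref{P:stanley} requires $R$ to be a Cohen--Macaulay \emph{integral domain}, and you justify this hypothesis by asserting that ``flatness together with the regularity of $q$ forces $R=\tjA_q^N/(q)$ to remain a Cohen--Macaulay integral domain.'' Cohen--Macaulayness does pass to the quotient by a regular element, but integral-domain-ness does not: a standard counterexample is $\C[x,y,q]/(xy-q)$, a CM domain flat over $\C[q]$ whose fiber at $q=0$ is $\C[x,y]/(xy)$. Nor can one salvage the argument by invoking openness of the Gorenstein locus, since a flat CM family with Gorenstein generic fiber can degenerate to a non-Gorenstein special fiber (e.g.\ three distinct points degenerating to $\C[x,y]/(x^2,xy,y^2)$). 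Hilbert-series symmetry alone does not imply Gorenstein for CM rings that are not domains, so the reducedness/irreducibility of $\tjA^N=\tjA_q^N/(q)$ must either be read as part of the conjecture being assumed (a plausible reading of ``Cohen--Macaulay integral domain over $\C[q]$''), or supplied by a separate argument --- it is not a formal consequence of the two bullets as you parsed them. You should make that hypothesis explicit rather than deriving it; otherwise the application of Stanley's criterion is unjustified.
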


I have only a small number of computations with {\it Macaulay2} to support the following conjecture.
\begin{conjecture}
\begin{enumerate}
\item More generally $\jA[\hdelta,(1)^N]$ is Cohen-Macaulay.
\item $\jA[\hdelta,(1)^N]$ is Gorenstein iff $\hA[\hdelta,(1)^N]$ is.
\end{enumerate}
\end{conjecture}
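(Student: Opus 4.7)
The plan is to adapt the toric degeneration strategy of Section~\ref{S:singularities} to the jet algebra $\jA[\hdelta,(1)^N]$. The key observation is that $\jA[\hdelta,(1)^N]$ differs from $\hA[\hdelta,(1)^N]$ only by having fewer defining relations -- the equations $\Gamma^{\bm{s}^k}$ outside the restricted range of indices are dropped. Consequently, the same straightening-based machinery should apply, but on a modified underlying combinatorial structure.

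First, I would construct a flat deformation $\jA_h[\hdelta,(1)^N]$ over $\C[[h]]$ analogous to $\hA_h$ of (\ref{E:relconvfam}). Using the function $\functionl$ from Section~\ref{S:singularities}, I would replace the surviving defining relations of $\jA[\hdelta,(1)^N]$ by their $h$-deformed versions, so that the special fiber $JHibi[\hdelta,(1)^N] := \jA_h/(h)$ becomes a toric algebra. The crucial step is to verify that the analogue of Proposition~\ref{E:degreenull} persists under the reduced set of relations; this requires checking that standard monomials still descend to a $\C[[h]]$-basis and that the Gr\"{o}bner basis argument of Proposition~\ref{R:stbasis} still produces only clutter-based $S$-polynomials.

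Second, I would analyze $JHibi[\hdelta,(1)^N]$ using the combinatorial criteria recalled in Appendix~\ref{A:hibi}. Cohen-Macaulayness would follow from distributivity of the underlying lattice, and Gorensteinness would reduce to equidimensionality of its subposet of join-irreducible elements, paralleling Corollary~\ref{C:Hgorenstein}. Once established for the toric fiber, flatness of $\jA_h$ transfers both properties to $\jA[\hdelta,(1)^N]$ exactly as in Proposition~\ref{C:dimalg}. For part (ii), I would compare the resulting combinatorial Gorenstein criterion with the endpoint criterion from Proposition~\ref{P:GorensteinCL}; the heuristic is that omitting the ``outward-pointing'' relations of $\hA$ does not alter the end-behavior controlling the Gorenstein property, yielding the iff statement.

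The main obstacle lies in constructing the deformation $\jA_h$ itself: the relations of $\jA$ are asymmetric (only forward-looking equations survive), so it is not evident that applying $\functionl$ naively produces a flat family, nor that the special fiber is a genuine Hibi algebra rather than some more general toric algebra requiring fresh combinatorial analysis. As a complementary strategy, one could leverage the $q$-deformation $\tjA_q^{N}$ from Section~\ref{S:formalmaps}: since $\tjA_q^{N} \otimes \C[q,q^{-1}] \cong \O[\cone]^{\otimes N} \otimes \C[q,q^{-1}]$ is Gorenstein (as a tensor product of the Gorenstein algebra $\O[\cone] = \hA[(0)^0,(1)^0]$), flatness of $\tjA_q^N$ over $\C[q]$ would transfer Gorensteinness to $\jA^N$ at $q = 0$. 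Extending this $q$-deformation approach to arbitrary intervals $[\hdelta,(1)^N]$ would require an iterated deformation matching the end-surgery of Proposition~\ref{P:seqint}; proving flatness in either route is the heart of the problem.
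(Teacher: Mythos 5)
This statement is labeled as a \emph{Conjecture} in Section~\ref{S:formalmaps}, and the paper provides no proof of it; the only supporting evidence offered is a Macaulay2 verification for $N=0,1,2,3$ (see the Remark immediately following). So there is no paper proof to compare against, and your proposal is best evaluated on its own as a possible attack.

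That said, the paper itself records a concrete obstruction to your primary strategy. The Remark states that in the order (\ref{E:orderaf}) the defining relations of $\fq[\hdelta,(1)^N]$ do \emph{not} form a quadratic Gr\"obner basis, and the author takes this as evidence that $\jA[\hdelta,(1)^N]$ is likely not even Koszul. Your first route relies on adapting Proposition~\ref{E:degreenull} and Proposition~\ref{R:stbasis}, i.e.\ on producing a straightening law via a quadratic Gr\"obner basis and then degenerating to a toric (Hibi) algebra. The failure of the Gr\"obner basis property means the standard-monomial basis argument breaks down, so the toric special fiber $JHibi$ would not be the quotient by pure clutter relations (\ref{E:relunivred}); the degeneration, if it exists, lands somewhere outside the Hibi framework and Remark~\ref{R:hibi} no longer applies. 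This is precisely the ``main obstacle'' you flagged in your last paragraph, but you should treat it as fatal to route one rather than as a loose end, since the paper has already checked and excluded the natural term order.

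Your fallback via $\tjA_q^N$ tracks what the paper actually suggests, but note that it is not an independent proof strategy: the flatness and Cohen--Macaulayness of $\tjA_q^N$ over $\C[q]$ are themselves the content of the conjecture immediately preceding this one, and the Gorenstein conclusion for $\O[\Arc_0^N(\cone)]$ appears only as a conditional corollary of that conjecture. Invoking it is a reduction, not a proof. Moreover, this $q$-deformation only deals with $[\hdelta,(1)^N]=[(0)^0,(1)^N]$; for general $\hdelta$ you would need, as you correctly observe, an iterated deformation compatible with the end-surgery of Proposition~\ref{P:seqint}, and the generic-fiber identification $\tjA_q^N\otimes\C[q,q^{-1}]\cong\O[\cone]^{\otimes N}\otimes\C[q,q^{-1}]$ has no obvious analogue for semi-intervals with a nontrivial left end. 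In short: your proposal is a reasonable sketch of the two candidate routes, but neither is carried through, and the first route as stated is blocked by the paper's own observation.
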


\begin{remark}
\begin{enumerate}
\item The generating series from the conjecture have been checked with {\it Macaulay2} for $N=0,1,2,3$.
\item In the order (\ref{E:orderaf}), defining relations of the ideal $\fq[\hdelta,(1)^N]$ do not define a quadratic Gr\"{o}bner basis. It means that most likely $\jA[\hdelta,(1)^N]$ is not Koszul.
\end{enumerate}
\end{remark}

The map of algebras
\[\jA[0^{N},(1)^{N'}]\to \hA[0^{N},(1)^{N'}]\]
induces a map of the minimal $P=\C[0^{N},(1)^{N'}]$-resolutions
\[{G}_{\bullet}[0^{-N},\hdelta]\to F_{\bullet}[(0)^{-N},\hdelta].\]
Dualizing the map and by using the conjectural Gorenstein property of $\jA$, I get an element
\[\Th_{\hA\leftarrow {\jA}}\in \Ext_P^{5(N'-N)}({\hA},\jA).\]
The number $5(N'-N)$ is the codimension of $Spec\,\hA[0^{N},(1)^{N'}]$ in $Spec\,{\jA}[0^{N},(1)^{N'}]$, which conjecturally has dimension $11(N'-N+1)$.
If $N<0<N'$, I can use $\Th_{\hA\leftarrow \jA}$ to construct the map of local cohomology
\begin{equation}\label{E:mapslochilb}
H_{\fa}^i[0^{N},(1)^{N'}]\to H_{\fa}^{i+5(N'-N)}({\jA}[0^{N},(1)^{N'}]).
\end{equation}
As usual, $\fa=\fri[0^{N},(1)^{-1}]$.

The next three conjectures are the central statements of this section.
\begin{conjecture}
\begin{enumerate}
\item The cohomology groups $H_{\fa}^{i}({\jA}[0^{N},(1)^{N'}])$ for different $N,N'$ are related by homomorphisms $\Th$ as in Proposition \ref{P:commutativity}.
\item They satisfy an analogue of the compatibility relation (\ref{E:diagramindprob}) that would enable me to define the limiting groups
$H_{\fa}^{i+\frac{\infty}{2}}(\hA_f)$.
\item The map (\ref{E:mapslochilb}) could be extended to 
\[H_{\fa}^{i+\frac{\infty}{2}}(\hA)\to H_{\fa}^{i+\frac{\infty}{2}}(\hA_f).\]
\end{enumerate}
\end{conjecture}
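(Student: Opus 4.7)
The whole plan is predicated on the (conjectured) Gorenstein property of $\jA[\hdelta,(1)^{N'}]$, which I take as a hypothesis throughout. Once this is granted, the strategy is to replay, almost verbatim, the machinery developed in Sections \ref{S:CM} and \ref{S:limiting} for $\hA$ in the jet setting, and then to show that the comparison class $\Th_{\hA\leftarrow\jA}$ is natural with respect to the resulting bidirect systems.

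For item (1), the plan is as follows. Under the Gorenstein hypothesis, any surjection $\sfp\colon \jA[\hdelta',\hgamma]\twoheadrightarrow \jA[\hdelta,\hgamma]$ of graded Gorenstein quotients of the common polynomial algebra $P=\C[\hdelta',\hgamma]$ yields, via the dual of the minimal free $P$-resolutions, a canonical element
\[
\Th_{\sfp}\in \Ext_P^{\codim}\bigl(\jA[\hdelta,\hgamma],\jA[\hdelta',\hgamma]\bigr),\qquad \codim=\dim\jA[\hdelta',\hgamma]-\dim\jA[\hdelta,\hgamma],
\]
exactly as in \eqref{E:classt}. The dimension increment needs to be computed from the conjectural Hilbert series; for a clutter-free increment one end point at a time, the jump is the same as for $\hA$ (this is why I expect the pair of indices $N,N'$ to control the codimensions uniformly). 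Composition with this class produces the wrong-direction maps $\Th_{\sfp}\colon H_{\fa}^i(\jA[\hdelta,\hgamma])\to H_{\fa}^{i+\codim}(\jA[\hdelta',\hgamma])$ of \eqref{E:THdef}.

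For item (2), the central step is to prove a jet analogue of Lemma \ref{L:saturate}. Using Proposition \ref{P:seqint}, an inclusion $[\hdelta',\hgamma]\supset[\hdelta,\hgamma]$ is refined into a chain of one-step covers $\hdelta_i\lessdot\hdelta_{i+1}$, each belonging to one of the three elementary types (\ref{E:group1})--(\ref{E:group3}). I would verify that, for $\jA$, the same trichotomy applies: for the regular cases one checks that $\lambda^{\hdelta_i}$ is a non-zero-divisor in $\jA(\hdelta_i,\hgamma]$ (so that Proposition \ref{P:classofextreg} applies), while for the irregular case one shows that the annihilator of $\lambda^{\hdelta_{i+1}}$ in $\jA(\hdelta_i,\hgamma]$ is generated by the two elements predicted by Lemma \ref{L:degrel} (so that Proposition \ref{P:extmaintheorem} applies). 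Granted these two regularity statements, Proposition \ref{P:composition} factors $\Th_{\sfp}$ into elementary pieces, and the proof of Proposition \ref{P:commutativity} transports word-for-word: the boundary-map description (\ref{E:extsimple}) and the inclusion \eqref{E:inclusionofA} commute with restriction along $\hgamma\to\hgamma'$ by trivial naturality, giving the commutative square (\ref{E:diagramindprob}) for $\jA$. Once this compatibility is established, the limiting group
\[
H_{\fa}^{i+\itwo}(\hA_f):=\underset{\underset{\hdelta}{\longrightarrow}}{\lim}\underset{\underset{\hgamma}{\longleftarrow}}{\lim} H_{\fa}^{i+s'(\fa)}(\jA[\hdelta,\hgamma])
\]
is defined exactly as in Definition \ref{D:defloccoh}, and the stabilization arguments of Proposition \ref{E:kappaisomorphism} carry over provided one also establishes the jet analogue of the uniform weight bound in Proposition \ref{P:importantprop}, item \ref{I:Pimportantprop}. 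I expect the latter to follow from a Koszul-type non-minimal resolution of $\jA$ by the free commutative algebra on $\spinor[\hdelta,\hgamma]$ modulo half of the quadrics.

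For item (3), the key observation is that the class $\Th_{\hA\leftarrow\jA}\in \Ext_P^{5(N'-N)}(\hA,\jA)$ is functorial in the pair $(\hdelta,\hdelta')$: a commutative square of surjections between $\hA$'s (resp.\ $\jA$'s) lifts to a commutative square of minimal $P$-resolutions, and dualization gives naturality of the comparison class. Concretely, the plan is to verify the Yoneda identity $\Th_{\sfq^{\jA}}\circ \Th_{\hA\leftarrow\jA}=\Th_{\hA\leftarrow\jA}\circ\Th_{\sfq^{\hA}}$ (and the analogous compatibility with restrictions $\sfp$) by exhibiting a common element of a mixed $\Ext$ group in the spirit of Remark \ref{R:mainremark}, reducing to the elementary cases (\ref{E:group1})--(\ref{E:group3}) via Proposition \ref{P:composition}. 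This compatibility means the maps \eqref{E:mapslochilb} assemble into a morphism of the bidirect systems constructed in (1)--(2), and passing to the limit yields the desired $H_{\fa}^{i+\itwo}(\hA)\to H_{\fa}^{i+\itwo}(\hA_f)$.

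The main obstacle I anticipate is the jet analogue of Lemma \ref{L:saturate0}, specifically the irregular case. For $\hA$ this relied crucially on the straightening law of Proposition \ref{E:degreenull} and the explicit standard monomial basis of Proposition \ref{R:stbasis}; for $\jA$ the defining ideal has only \emph{half} of the quadrics, so the Gr\"obner basis in the order \eqref{E:orderaf} almost certainly fails to be quadratic (as noted in the paper's Remark right after the Gorenstein conjecture for $\jA$). Without a straightening law one cannot read off the annihilator of $\lambda^{\hdelta'}$ by the simple combinatorial argument used for $\hA$, and verifying that ${\jA}(\hdelta,\hgamma]/(\lambda^{\hdelta'})\cong \jA[\hgamma',\hgamma]$ in the irregular case will require a separate argument, most likely a degeneration of $\jA$ to a toric variety parallel to the Hibi degeneration of Section \ref{S:singularities}. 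Producing such a degeneration, and checking that the Cohen--Macaulay and integral-domain properties survive it, is what I expect to consume most of the work.
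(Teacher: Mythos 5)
The statement you are addressing is one of the conjectures that the paper explicitly leaves open (it is announced as one of ``the central statements'' of Section \ref{S:formalmaps}, supported only by a handful of \emph{Macaulay2} computations for the neighbouring conjectures), so there is no proof in the paper to compare your argument with. Judged on its own terms, what you have written is a research programme rather than a proof: every step is conditional on the Gorenstein property of $\jA[\hdelta,(1)^{N}]$, which is itself only conjectured in the paper, and even granting that hypothesis two essential ingredients of the $\hA$-machinery have no jet counterpart yet. The first you identify yourself: the factorization into elementary homomorphisms (Lemma \ref{L:saturate0}, above all the irregular case (\ref{E:group3})) rests on the straightening law and the standard monomial basis (Propositions \ref{E:degreenull}, \ref{R:stbasis}), and since the defining quadrics of $\jA$ do not form a quadratic Gr\"obner basis in the order (\ref{E:orderaf}), the description of $\Ann(\lambda^{\hdelta'})$ and of the quotient $\jA(\hdelta,\hgamma]/(\lambda^{\hdelta'})$ needed to invoke Proposition \ref{P:extmaintheorem} is genuinely missing, not merely technical.

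The second gap you underestimate. Defining $H_{\fa}^{i+\itwo}(\hA_f)$ is not just a matter of having the commutative squares (\ref{E:diagramindprob}): the proof that the double limits exist, commute, and are computed by the maps $\Th$ and $\sfp$ (Proposition \ref{E:kappaisomorphism}) uses in an essential way the uniform weight and dimension bounds of Proposition \ref{P:importantprop} and the surjectivity on weight spaces of Proposition \ref{P:surjectivity9}. For $\hA$ those bounds are extracted from Koszulness ($\hA^{!}=U(L)$, Proposition \ref{P:Koszul}), the Stanley--Reisner degeneration and the path-algebra estimates (Propositions \ref{P:injection}, \ref{E:pathalg}, \ref{P:pathestimates}, \ref{E:random}). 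The paper itself remarks that $\jA$ is most likely not Koszul, so the Chevalley--Eilenberg-type resolution and the $SR^{!}$ path algebra are unavailable, and your suggestion of ``a Koszul-type non-minimal resolution by the free algebra modulo half of the quadrics'' has no justification at present. Consequently items (2) and (3) of the conjecture remain open even under your Gorenstein hypothesis; the naturality of $\Th_{\hA\leftarrow\jA}$ in item (3) is plausible but, as in Remark \ref{R:mainremark}, the existence of the required mixed $\Ext$ element is exactly the kind of statement the paper could not establish even for $\hA$ without the elementary factorization, which here is missing.
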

\begin{conjecture}
In a series of works (\cite{GMSch},\cite{MalikovSchechtmanVaintrob},\cite{MalikovSchechtman}, \cite{GerbGMsch}) the authors have studied geometrically defined vertex operator algebras. These are cohomology of certain sheaves of vertex operator algebras. I conjecture that their groups defined for the smooth part of the cone $\cone$ coincide with $H_{\fa}^{i+\frac{\infty}{2}}(\hA_f)$.
\end{conjecture}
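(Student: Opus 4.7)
The plan is to bridge the two constructions through an intermediate object, namely the \v{C}ech complex of the MSV chiral de Rham sheaf $\Omega^{ch}_{U}$ on the smooth locus $U=\cone\setminus\{0\}$, and match it term-by-term with the limit of local cohomology groups on the jet side. First, I would cover $U$ by the affine charts $\{U_{\ralpha}|\ralpha\in\setE\}$ introduced in the denominator discussion of Section \ref{S:localization}. On each $U_{\ralpha}$ the variety is smooth affine, so by the local computations of \cite{MalikovSchechtmanVaintrob} the global sections $\Gamma(U_{\ralpha},\Omega^{ch})$ are naturally identified with a localized jet module, i.e.\ with something built out of $\jA$ inverted at $\lambda^{\ralpha}$. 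Comparing with the $P$-module structure on $H^{i+\itwo}_{\fa}(\hA_f)$ already discussed, one expects an isomorphism of the resulting \v{C}ech complexes; this should provide a spectral sequence converging to the MSV cohomology whose $E_2$-page is exactly the \v{C}ech cohomology of the sheaves associated with $H^{i+\itwo}_{\fa}(\hA_f)$.

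Second, I would invoke Proposition \ref{E:cechspectral} together with the weak version of the denominator conjecture (degeneration of the spectral sequence in the second page on the smooth locus) to collapse this $E_2$-page. On the smooth part the relevant modules are the localizations of the jet scheme functions, which are Cohen--Macaulay by the conjectural Gorenstein property of $\jA[\hdelta,(1)^N]$ on the complement of the apex; thus the higher \v{C}ech cohomology on $U$ of the local cohomology sheaves should vanish, leaving only $\rH^{0}$ and $\rH^{\dim U -1}$ terms that pair exactly with the two nontrivial extremes of the $H^{i+\itwo}$ range (Proposition \ref{P:main}).

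Third, I would use the map $\Th_{\hA\leftarrow\jA}\in\Ext_{P}^{5(N'-N)}(\hA,\jA)$ together with its compatibility with the limiting procedure (which itself requires proving the first part of the previous conjecture about the existence of $H^{i+\itwo}_{\fa}(\hA_f)$) to produce a comparison map from MSV cohomology to $H^{i+\itwo}_{\fa}(\hA_f)$. The degree shift $5(N'-N)$ is exactly the codimension that matches the expected homological degree of the chiral de Rham complex on $\cone\setminus\{0\}$, which is a good consistency check. Functoriality of the \v{C}ech construction combined with Propositions \ref{P:colimitmaps} and \ref{C:complexcohiso} would then upgrade this to a map at the level of bidirect limits, and a dimension count via the virtual character $Z_{\fa}(t,q,z)$ (which on the MSV side computes the equivariant Euler characteristic of $\Omega^{ch}$ on $U$) would force this map to be an isomorphism after establishing injectivity on each weight space.

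The main obstacle will be the second step: proving the degeneration of the \v{C}ech-to-local-cohomology spectral sequence in the second page, which is precisely the strong form of the denominator conjecture that is still open in Section \ref{S:localization}. A secondary and perhaps equally serious difficulty is putting the jet-algebra version on a firm foundation in the first place, since several of the prerequisite structural results (Gorenstein and Cohen--Macaulay properties of $\jA[\hdelta,(1)^{N}]$, the existence and Poincar\'e duality of the limit $H^{i+\itwo}_{\fa}(\hA_f)$) are themselves conjectural in the preceding paragraphs; the comparison with MSV is therefore contingent upon those foundational results, and any genuine proof would have to address them first.
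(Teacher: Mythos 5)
This statement is a conjecture in the paper, with no proof supplied; the whole of Section~\ref{S:formalmaps} is explicitly presented as speculative, and the conjecture you are addressing is contingent on two other unproven conjectures immediately preceding it (Cohen--Macaulay and Gorenstein properties of $\jA[\hdelta,(1)^N]$, and the existence and duality of $H_{\fa}^{i+\frac{\infty}{2}}(\hA_f)$). So there is no ``paper's own proof'' to compare against.

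Your proposal is not a proof either but a research program, and you are candid about that. The comparison strategy itself --- covering the smooth locus by the charts $U_{\ralpha}$, matching local sections of the MSV chiral de Rham sheaf on each chart with localized jet modules, and assembling the comparison through the \v{C}ech spectral sequence of Proposition~\ref{E:cechspectral} --- is a plausible route and consistent with the way the paper sets up the localization machinery. But two gaps you name are genuine and would have to be closed before any of this becomes a proof: (i) degeneration of the \v{C}ech-to-local-cohomology spectral sequence in the second page, which the paper itself leaves open in Section~\ref{S:localization}; and (ii) the foundational jet-algebra statements (flatness, CM, Gorenstein, existence of the limit), on which the whole jet-side object $H_{\fa}^{i+\frac{\infty}{2}}(\hA_f)$ depends. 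Beyond these, your third step quietly assumes that the virtual character $Z_{\fa}$ controls the comparison map tightly enough to force an isomorphism from injectivity, but the characters of the MSV cohomology on $U=\cone\setminus\{0\}$ are not computed anywhere in the paper, so this final step is also an additional conjecture, not a consequence. None of this is a failure of your proposal relative to the paper --- the paper offers nothing stronger --- but it should be clearly framed as a conditional program rather than a proof.
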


Let $\fri_k$ be the ideal generated by $\lambda^{\rbeta^s}, s=0,\dots,k-1, \rbeta\in \setE$. It defines a closed subscheme of $
\Arc^{N}$ isomorphic to $\Arc^{N-k}$. 

Let $\fj_k$ be the ideal generated by $\lambda^{\rbeta^s}, s=k,\dots,N, \rbeta\in \setE$. It defines a closed subscheme of $
\Arc^{N}$ isomorphic to $\cZ(0,k-1)
$ when $k<N/2$. 

\begin{conjecture}
There is a perfect pairing
\[ H^{i}_{\fri_k}(\tjA^N)\otimes H^{-i+8 N+16}_{\fj_k}(\tjA^N)\to \C.\]
\end{conjecture}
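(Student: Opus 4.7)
Since the generators of $\fri_k$ together with those of $\fj_k$ exhaust the degree-one generators of $\tjA^N$, we have $\fri_k+\fj_k=\fm$, the maximal homogeneous ideal of $\tjA^N$. This yields a cup-product
\[\sfm:H^{i}_{\fri_k}(\tjA^N)\otimes H^{j}_{\fj_k}(\tjA^N)\to H^{i+j}_{\fm}(\tjA^N),\]
in complete analogy with (\ref{E:multmap}). Granting the Gorenstein conjecture for $\tjA^N$ stated just above, Proposition \ref{E:diality} gives $H^{d}_{\fm}(\tjA^N)\neq 0$ only for $d=\dim\tjA^N$ and identifies it with $\bigoplus_{i\geq 0}\tjA^{N*}_i$; the top-degree component supplies a residue functional $\sfres:H^{d}_{\fm}(\tjA^N)\to\C$. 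The pairing is then the composition $\sfres\circ\sfm$. The cohomological degree of the target, $-i+8N+16$, should equal the complementary degree $d-i$; I expect this to amount to the identification $\dim\tjA^N=8N+16$, which needs to be reconciled against the conjectural Hilbert series of $\O[\Arc_0^N(\cone)]$ displayed just above (possibly via a convention adjustment in the indexing of $\Arc^N$).

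For non-degeneracy I will translate to $\Tor$-functors via Proposition \ref{P:localtorisom}. The tensor factorization
\[P\cong\C[\{\lambda^{\rbeta^s}\mid s<k,\ \rbeta\in\setE\}]\otimes\C[\{\lambda^{\rbeta^s}\mid s\geq k,\ \rbeta\in\setE\}]\]
of the polynomial algebra into the two complementary groups of generators allows me to introduce auxiliary $D$-modules of type $\moduleT$, analogues of those in (\ref{E:TIJdef}), by localizing at the respective generator sets. They satisfy a tensor-product identity $\moduleT_{\fri_k}\underset{P}{\otimes}\moduleT_{\fj_k}\cong\moduleT_{\fm}$ paralleling Proposition \ref{P:tensorproduct}. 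The cup product then lifts on the chain level to the tensor product of the Poincar\'{e} pairing on the Koszul complex $B(\tjA^N)$ (non-degenerate by Proposition \ref{C:PoincareGen} under the Gorenstein assumption) with the elementary module pairing $\moduleT_{\fri_k}\otimes\moduleT_{\fj_k}\to\C$ (non-degenerate via explicit monomial dual bases, cf.\ Proposition \ref{P:Tpairingnondeg}). This mirrors Lemma \ref{L:pairinge}, and passing to cohomology yields the asserted non-degeneracy.

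The main obstacle is the Gorenstein conjecture for $\tjA^N$ itself. Unlike $\hA[\hdelta,\hdelta']$, which admits the transparent Hibi toric degeneration of Section \ref{S:singularities}, no comparable combinatorial structure is presently available for the jet algebra. The natural route is via the flat family $\tjA_q^N$ whose generic fibre $\O[\cone]^{\otimes(N+1)}\otimes\C[q,q^{-1}]$ is tautologically Gorenstein as a tensor power of Gorenstein algebras; $\C[q]$-flatness of $\tjA_q^N$ would propagate Cohen-Macaulayness to the special fibre $\tjA^N=\tjA_q^N/(q)$ by semi-continuity, and Gorensteinness would then follow by comparison of Hilbert series. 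A secondary technical point is to verify reducedness of $V(\fri_k)$ and $V(\fj_k)$ consistent with the identifications $V(\fri_k)\cong\Arc^{N-k}$ and $V(\fj_k)\cong\cZ(0,k-1)$ stated just before the conjecture.
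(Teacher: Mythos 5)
The statement you were asked to prove is a conjecture in the paper; no proof is offered there, so there is nothing of the paper's to compare against. Your strategy — cup product into $H_{\fm}$, residue functional from Gorensteinness, translation to $\Tor$ against a tensor-factorized family of $\moduleT$-modules — faithfully transplants the machinery the paper deploys for $\hA[\hdelta,\hdelta']$ (Propositions \ref{P:pairingdegree}, \ref{P:pairing}, \ref{P:tensorproduct}, Lemma \ref{L:pairinge}) to the jet algebra, and it is exactly what one would reach for here, granting the Gorenstein conjecture for $\tjA^N$.

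The item you flag only in passing deserves a much harder look: the cohomological degree $8N+16$ cannot be recovered as $\dim\tjA^N$ by any convention adjustment. With $\Arc^N=\Maps(Spec\,\C[z]/z^{N+1},\cone)$, $\dim\cone=11$, and flatness of $\tjA_q^N$ over $\C[q]$ with generic fibre $\O[\cone]^{\otimes(N+1)}\otimes\C[q,q^{-1}]$ as the preceding conjecture posits, the special fibre satisfies $\dim\tjA^N=11(N+1)$. A Gorenstein residue pairing $H^i_{\fri_k}\otimes H^j_{\fj_k}\to H^{i+j}_{\fm}\to\C$ is nondegenerate only in total degree $i+j=\dim\tjA^N=11(N+1)$; the $\Tor$ count confirms this, since $\dim_{\C}P_1-\dim\tjA^N=16(N+1)-11(N+1)=5(N+1)$ together with $s(\fri_k)=16k$, $s(\fj_k)=16(N-k+1)$ and the shift of Proposition \ref{P:localtorisom} again gives $i+j=16(N+1)-5(N+1)=11(N+1)$. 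The discrepancy $11(N+1)-(8N+16)=3N-5$ depends on $N$, so it is not an off-by-one in the $\Arc^N$ indexing. Already at $N=0$, $k=0$ (so $\tjA^0=\sA$, $\fri_0=(0)$, $\fj_0=\fm$) the conjectural pairing puts $H^0_{(0)}(\sA)=\sA$ against $H^{16}_{\fm}(\sA)=\{0\}$ — the latter is nonzero only in degree $11$ — and so is identically zero. Thus unless some other cohomology theory (e.g.\ the $\BV$ groups carrying extra Koszul factors) is secretly intended, the constant $8N+16$ is an error and should read $11(N+1)$. You should pin this down before investing in the genuinely hard part, which is the Gorenstein property of $\tjA^N$; your flatness/Hilbert-series route to that is the natural one but is itself still conjectural.
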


\section{Final comments}
In this paper, I outlined one of the approaches to local cohomology of ind scheme $\Maps_{poly}(\C^{\times},\cone)$. There were other attempts to define local cohomology along a different route. Let us consider pair of Lie algebras $M=L(-\infty,\infty), K=L((0)^0,\infty)$ form Section \ref{S:Kosdual}. One can try to define local cohomology $H_{\fa}^{i+\itwo}$ as the semi-infinite cohomology \cite{Feigin} of the pair $K\subset M$. Indeed, this approach demonstrated its utility in the case when $\tspace$ is a quadric \cite{AA},\cite{Movq}. The drawback of the method is the potential problem with the differential in the semi-infinite complex. Feigin's definition heavily uses the fact his Lie algebra has a finite dimension over $\C[z,z^{-1}]$. This condition is satisfied for the pair $K,M$ corresponding to a quadric or any complete intersection. The space $\cone$ is not a complete intersection. Because of that there are serious difficulties with convergence in the formulas that define the differential in the semi-infinite complex attached to $\cone$. It worthwhile to point out that the methods of this paper work for quadrics giving the answer consistent with \cite{AA},\cite{Movq}.

\bigskip
 \bigskip
 {\bf \Huge Appendix}
 \appendix
\section{Algebraic and sheaf-theoretic definition of local cohomology}\label{S:local}
\paragraph{Review of definitions of local cohomology}If $\hRo$ is a commutative algebra over $\C$, $\fa$ an ideal of $\hRo$, and $N$ an $\hRo$-module, then following \cite{ComEisenbud} \cite{Twenty} I define the zeroeth local cohomology $H^0_{\fa}(N)$ module of $M$ with
supports in $\fa$ to be the set of all elements of $N$ which are annihilated
by some power of $\fa$:
\[\Gamma_{\fa}(N)= \underset{n\rightarrow \infty}\lim \Hom({\hRo}/\fa^n, N).\]
The higher local cohomology $H_{\fa}^i(N)$ 
 is the $i$-th cohomology module of the complex obtained
by applying $\Gamma_{\fa}(N)$ to an injective resolution of $I^{\bullet}(N)$:
\[H_{\fa}^i(N):=H^i(\Gamma_{\fa}(I(N))).\]

There is a natural isomorphism:
\begin{equation}\label{E:loccohextdef}
H_{\fa}^i(N)=\underset{n\rightarrow \infty}\lim \Ext^i({\hRo}/\fa^n, N).
\end{equation}

According to \cite{Twenty} p.84 (see also \cite{ComEisenbud} Th A4.1,\cite{BrodmannSharp} Section 20.3), if $Y$ is an affine scheme of finite type with the algebra of functions $\O[Y]$ and $Z$ is a closed subscheme with the defining ideal $\fa$, then 
there is a natural exact sequence
\[0 \rightarrow H^{0}_{\fa}( \O[Y] ) \rightarrow \O[Y] \rightarrow \rH^{0}(Y\backslash Z,\O)
 \rightarrow H^1_{\fa}(\O[Y])\rightarrow 0\]
 and isomorphisms
\[H_{\fa}^i(\O[Y])=\rH^{i-1}(Y\backslash Z,\O), i\geq 2.\]
$\rH^{i}$ stands for the sheaf-theoretic cohomology.

\begin{remark}
By functoriality the symmetries of the pair $Z\subset Y$ act on $\rH^{i}(Y\backslash Z,\O)$. In addition to this, Lie algebra $\g$ of the connected component of the group of symmetries of $ Y$ act on $\rH^{i}(Y\backslash Z,\O)$ \cite{Kempf}. Note that exponents of some $l\in \g$ in this case don't have to preserve $Z$.
\end{remark}

 There is yet another equivalent definition of local cohomology that uses Koszul complexes. 
Before I give this definition I will describe two essentially isomorphic versions of the Koszul complex. They differ by multiplicative structure and gradation, which explains why I don't identify them.
The first, which will be denoted by $K$, is used in computation of local cohomology. The second, which will be denoted by $B$, is used constructing standard resolutions and computing $\Ext$ and $\Tor$ functors.

Let $N$ be a module over the ring $\hRo$. Let $\bx=\{x_1,\dots,x_s\}\subset \hRo$ be a sequence of elements. Koszul complex $K(N,\bx)$ is the tensor product 
\[K(N,\bx)=N\otimes \Lambda[\xi^1,\dots,\xi^s]=\bigoplus _{i=0}^sN\otimes \Lambda^i.\]
 The differential 
is defined as a multiplication on the element
\[\sum_{i=1}^sx_i \xi^i.\]

The other, the homological version of the Koszul complex is
\begin{equation}\label{E:kber}
(B(N,\bx),\sd)=(N\otimes \Lambda[\eta_1,\dots,\eta_s],\sd).
\end{equation} Its differential can be concisely written by using superalgebra notations
\[\sd=\sum_{i=1}^sx_i \frac{\sd}{\sd \eta_i}.\]
It is obvious that 
\[H^i(K)=H_{s-i}(B),\]
provided we made an identification of the linear spaces spanned by $\xi^i$ and $\eta_i$. 
\begin{convention}\label{C:simpl} {\rm
Let $M$ be a graded module over a graded algebra {\hRo}.
We will encounter a situation when a standard basis $\bx$ for ${\hRo}_1$ is labeled by a subset $\mathrm{X}\subset \sethE$. In this case, $B(M,\mathrm{X})$ and $K(M,\mathrm{X})$ will be abbreviations for $B(M,\{x_i|i\in \mathrm{X} \})$, $K(M,\{x_i|i\in \mathrm{X} \})$ respectively.
 To reduce clutter in notations , I will denote occasionally $B(M,\bx)$ and $K(M,\bx)$ by $B(M)$ and $K(M)$.
}\end{convention} 

There is a natural map of complexes
\begin{equation}\label{E:directlimitkoszul}
K(N,\bx^n):=K(N,x^n_1,\dots, x^n_s)\overset{s_n}{\longrightarrow} K(N,x^{n+1}_1,\dots, x^{n+1}_s)=:K(N,\bx^{n+1}).
\end{equation}
In degree $1$ it is given by the map $f: R^s \rightarrow R^s$ multiplying the $i$-th
component by $x_i$. In degree $d$ it is exterior power of $f$ $\wedge^df$, which acts by multiplying a
basis vector $\xi_{i_1}\wedge\cdots \wedge \xi_{i_d}$ by $x_{i_1}\cdots x_{i_d}$. Thus I may take the limit in each of the Koszul homology groups. 

Here is the description of the precise relation of local cohomology groups and Koszul cohomology.
\begin{proposition}\label{P:locKoszul} (\cite{ComEisenbud} Appendix A4 and in \cite{Twenty} p.81 )
Suppose $\fa\subset R$ is an ideal generated by $x_1,\dots,x_s$, then
\begin{equation}\label{E:locKoszul}
H_{\fa}^i(N)=\lim_{\overset{n}{\longrightarrow}}H^i(K(N,\bx^n)).
\end{equation}
\end{proposition}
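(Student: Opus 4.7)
\textbf{Proof proposal for Proposition \ref{P:locKoszul}.}

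The plan is to identify $\underset{n\to\infty}{\lim}H^\bullet(K(N,\bx^n))$ with the right derived functors of $\Gamma_\fa$ by exhibiting it as a universal cohomological $\delta$-functor agreeing with $\Gamma_\fa$ in degree zero. First I would fix the transition maps $s_n:K(N,\bx^n)\to K(N,\bx^{n+1})$ described in (\ref{E:directlimitkoszul}) and form the telescope complex $\check{C}^\bullet(N,\bx):=\underset{n}{\varinjlim}\,K(N,\bx^n)$. Unwinding the definition of $s_n$ in each fixed degree $p$, the colimit in degree $p$ decomposes as a direct sum of localizations $\bigoplus_{i_1<\cdots<i_p} N_{x_{i_1}\cdots x_{i_p}}$, so $\check{C}^\bullet(N,\bx)$ is nothing but the extended Čech complex on $\bx$. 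Since direct limits are exact over $\hRo$, passing to cohomology commutes with the colimit, and so \[H^i\bigl(\check{C}^\bullet(N,\bx)\bigr)\cong \underset{n}{\varinjlim}\,H^i(K(N,\bx^n)).\]

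Next I would verify the three axioms making the right-hand side a universal $\delta$-functor isomorphic to $\{H^i_\fa(?)\}$. \emph{Step 1 (degree zero agreement).} In degree zero one directly computes $H^0(K(N,\bx^n))=(0:_N(x_1^n,\dots,x_s^n))$, and since the ideals $(x_1^n,\dots,x_s^n)$ and $\fa^{sn}$ are cofinal inside one another, the colimit equals $\Gamma_\fa(N)$. \emph{Step 2 ($\delta$-functoriality).} Each complex $K(?,\bx^n)$ is $N\otimes_R\Lambda[\xi^1,\dots,\xi^s]$ with differential $\sum x_i^n\xi^i$, hence tensoring a short exact sequence $0\to N'\to N\to N''\to 0$ with the exterior algebra (a free $R$-module) yields a short exact sequence of complexes. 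The associated long exact sequences in cohomology are compatible with $s_n$, and applying the exact functor $\varinjlim$ produces a long exact sequence for $\{\varinjlim_n H^i(K(?,\bx^n))\}$. \emph{Step 3 (effaceability).} One must check that $\varinjlim_n H^i(K(I,\bx^n))=0$ for every injective $\hRo$-module $I$ and every $i\geq 1$.

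The main obstacle is Step 3; the remainder consists of formalities. Following the standard argument (Eisenbud, Appendix A4, or \cite{BrodmannSharp}), I would reduce to the case $I=E(\hRo/\fp)$, the injective hull of a residue field, using the Matlis decomposition of injectives over the Noetherian ring $\hRo$ and commutation of the construction with arbitrary direct sums. On $E(\hRo/\fp)$ one splits into two cases: if some $x_i\notin\fp$ then multiplication by $x_i$ is a bijection on $E(\hRo/\fp)$, which forces the telescope in the $x_i$-direction to be acyclic and hence annihilates the Čech cohomology by a K\"unneth-style argument; if all $x_i\in\fp$, then $\fa\subseteq\fp$, every element of $E(\hRo/\fp)$ is annihilated by a power of $\fp$ and thus a power of $\fa$, so $H^0=E(\hRo/\fp)$ and all higher $H^i$ vanish because the stable Koszul complex is then quasi-isomorphic to its degree-zero term. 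Once Steps 1--3 are in place, the universality of right-derived functors of $\Gamma_\fa$ yields a unique isomorphism of $\delta$-functors extending the identity in degree zero, which is precisely (\ref{E:locKoszul}).
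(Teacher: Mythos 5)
Your proof is correct and follows the standard argument from the references the paper cites (Eisenbud, Appendix A4; Iyengar et al., Chapter 7); the paper itself gives no proof and simply refers to those sources. One small remark: the Matlis-theoretic reduction of injectives to hulls $E(\hRo/\fp)$ in your Step 3 requires $\hRo$ to be Noetherian, which is implicit throughout the paper (all rings in play are finitely generated $\C$-algebras) but is worth flagging since the proposition as stated in the paper omits the hypothesis.
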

\begin{remark}\label{E:costrdef}
Formula (\ref{E:locKoszul}) can be used as a constructive definition of local cohomology.
\end{remark}
\paragraph{Elementary examples of computations of local cohomology}
\begin{example}\label{E:simplestexample}
$K(\C[x],x^n)$ coincides with the two term complex $\C[x]\to\C[x], a\to x^na$. Obviously the only nontrivial cohomology
is in degree one: $H^1=\C[x]/(x^n).$ The explicit form of the maps in the direct limit (\ref{E:locKoszul}) is \[\C[x]/(x^n)\to \C[x]/(x^{n+1}),\quad a\to xa.\] The inclusion of $\C[x]/(x^n)$ into $x^{-1}\C[x^{-1}]:=\C[x,x^{-1}]/\C[x] ,$ which is given by the formula
\begin{equation}\label{E:quasi1}
a\mod x^n\to \frac{a}{x^n}\in\C[x,x^{-1}]/\C[x],
\end{equation}
identifies the later quotient with the direct limit $\underset{\to}{\lim}\C[x]/(x^n)$.
\end{example}
\begin{example}\label{E:polyloc}{\rm Let $P=\C[x_1,\dots,x_s]$, $N=P$ and $\fm=(x_1,\dots,x_s)$. By the K\"{u}nneth theorem, 
 the only nonzero local cohomology group 
is 
\begin{equation}\label{E:loccohaffine}
H_{\fm}^s(P)\cong \bigotimes_{i=1}^s x_i^{-1}\C[x_i^{-1}] = x_1^{-1}\cdots x_s^{-1} \C[x_1^{-1},\dots, x_s^{-1}].
\end{equation}
}
\end{example}

\paragraph{Koszul bicomplexes} One of the standard techniques of computing local cohomology with Koszul complexes relies on spectral sequences.
 Koszul complex can be broken down onto a bicomplex. Calculations with the spectral sequence of the bicomplex could simplify significantly computations of the cohomology of the original complex. Here are some notations used in the description of this method.
\begin{construction}\label{C:bicomplex}{\rm 
Fix a decomposition of the set 
\[\{x_i\}=\{x_{1},\dots,x_{k}\}\cup \{x_{k+1},\dots,x_{s}\}\]
of generators of an ideal $\fc\subset R$.
Define bicomplexes 
\[\begin{split}
&K^{i,j}=N\otimes \Lambda^i[\xi^1,\dots,\xi^k]\otimes \Lambda^j[\xi^{k+1},\dots,\xi^s],
\quad d_1(a)=\sum_{i=1}^kx_i \xi^ia, d_2(a)=\sum_{i=k+1}^sx_i \xi^ia
\end{split},\]
\[\begin{split}
&B_{i,j}=N\otimes \Lambda^i[\eta_1,\dots,\eta_k]\otimes \Lambda^j[\eta_{k+1},\dots,\eta_s], \quad \sd_1(a)=\sum_{i=1}^kx_i \frac{\sd a}{\sd \eta_i}, d_2(a)=\sum_{i=k+1}^sx_i \frac{\sd a}{\sd \eta_i}
\end{split}.\]
Their diagonal complexes coincide with $K^{\bullet}$ and $B_{\bullet}$ respectively.
}
\end{construction}

\paragraph{Some free $\C[x^1,\dots,x^n]$-modules}
 
In this paragraph will construct some free $\C[x^1,\dots,x^n]$-modules. 
A free $\C[x^1,\dots,x^n]$-modules $N$ has a property that the sequence $\{x^1,\dots,x^n\}$ is $N$-regular, which makes verification of regularity a trivial task.
We will use this observation in the main text. 
 
A polynomial algebra $H=\C[x^1,\dots,x^n]$ is a Hopf algebra. The diagonal $\Delta:H\to H\otimes H$ and the antipode $\iota:H\to H$ are
\[\Delta(x_i)=x^i\otimes 1+1\otimes x^i, \iota(x^i)=-x^i.\]
If $M,N$ are $H$-modules, then $M\otimes N$ is also an $H$-module with respect to three module structures: $T_0(a)(m\otimes n):=\sum a'_km\otimes a''_kn,\Delta(a)=\sum a'_k\otimes a''_k $ or $T_1(a)(m\otimes n):=\sum \iota(a'_k)m\otimes a''_kn$ or $T_2(a)(m\otimes n):=\sum a'_km\otimes \iota(a''_k)n$.
\begin{lemma}\label{L:stopiat}
Fix two $H$-modules $M$ and $N$.
\begin{enumerate}
\item If $M$ is free, then $M\otimes N$ is also free with respect to all the three structures.
\item $B(N\otimes H,(\underline{x}^i))$ is a free $H$ resolution of $N$. 
\begin{equation}\label{E:underlinenotation}
\begin{split}
&\underline{x}^i=x^i\otimes 1-1\otimes x^i,\\
&\ushortdw{x}^i=x^i\otimes 1+1\otimes x^i.\\
\end{split}
\end{equation}
\item Suppose $H_1=\C[x^1,\dots,x^n], H_2= \C[y_1,\dots,y_{n'}]$ and $H=H_1\otimes H_2$. If graded $H$-modules $M, N$ have the property
that $M=H_1\otimes M_2, M_2$ is an $H_2$-module and $\left.N\right|_{H_2}$ is free. Then $M\otimes N$ is free over $H$.
In particular, $({\underline{x}}_i,{\underline{y}}_j)$ is regular.
\end{enumerate}
\end{lemma}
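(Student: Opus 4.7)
\textbf{Proof plan for Lemma \ref{L:stopiat}.}

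For part (1), my plan is to exploit the Hopf-algebra structure of $H = \C[x^1,\dots,x^n]$ via the standard ``untwisting'' isomorphism. For any $H$-modules $M, N$, the map
\[
\Phi_0 \colon M \otimes N \longrightarrow M \otimes N, \qquad m \otimes n \;\longmapsto\; \sum m_{(1)}\otimes \iota(m_{(2)})n
\]
(with inverse $m \otimes n \mapsto \sum m_{(1)}\otimes m_{(2)}n$) intertwines the diagonal action $T_0$ on the source with the action by left multiplication on the first factor only on the target. An analogous isomorphism built from $\iota$ handles the twisted diagonals $T_1, T_2$. Since the target carries an action only on $M$, freeness of $M$ forces the tensor product to be free over $H$. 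The main thing to check is that $\Phi_0$ is well-defined at the level of $\C$-linear maps and really does intertwine the claimed actions; this is a short calculation using the Hopf axioms.

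For part (2), I will perform the change of variables $u^i = x^i \otimes 1$, $v^i = 1 \otimes x^i$ in $H \otimes H \cong \C[u^i, v^i]$, so that $\underline{x}^i = u^i - v^i$. These are $n$ linear forms cutting out the diagonal $\Delta \subset \mathrm{Spec}(H \otimes H)$, hence form a regular sequence in the polynomial ring $H \otimes H$, with quotient $H \otimes H / (\underline{x}^i) \cong H$ (the diagonal). Therefore the Koszul complex $B(H \otimes H, (\underline{x}^i))$ is a free resolution of $H$ as an $H \otimes H$-module. Applying $N \otimes_H (-)$ (with $H$ acting on $H \otimes H$ via the first factor $u$) gives a complex whose terms are $(N \otimes H) \otimes \Lambda^\bullet$, free over $H$ via the second factor, whose differential matches $\sum \underline{x}^i \sd/\sd \eta_i$, and whose homology is $\mathrm{Tor}^H_{\bullet}(N, H) = N$ in degree zero.

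For part (3), I combine (1) and (2). Since $N|_{H_2}$ is free, part (1) applied over $H_2$ shows that $M_2 \otimes N$ (with the diagonal $H_2$-action inherited from $M$ and $N$) is free over $H_2$, say $M_2 \otimes N \cong H_2 \otimes W$. Then
\[
M \otimes N \;=\; H_1 \otimes (M_2 \otimes N) \;\cong\; H_1 \otimes H_2 \otimes W \;=\; H \otimes W,
\]
where $H_1$ acts by left multiplication on itself. This exhibits $M \otimes N$ as a free $H$-module. Regularity of $(\underline{x}_i, \underline{y}_j)$ in $H \otimes H = (H_1 \otimes H_2) \otimes (H_1 \otimes H_2)$ then follows from part (2) applied separately to the two polynomial factors: the $\underline{x}_i$'s and $\underline{y}_j$'s are two disjoint diagonal linear systems of parameters in the polynomial ring on $4(\dim H_1 + \dim H_2)$ generators, and together they cut out the total diagonal, so freeness of the Koszul resolution from part (2), combined in a K\"unneth manner, produces the regularity.

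The main technical obstacle is bookkeeping in part (1): one must verify that the untwisting isomorphism genuinely converts the diagonal action into single-factor multiplication in each of the three cases $T_0, T_1, T_2$, being careful about which factor of $\Delta(x^i) = x^i \otimes 1 + 1 \otimes x^i$ is composed with $\iota$ in which variant. Once this is nailed down for $T_0$, the $T_1$ and $T_2$ variants follow by symmetry.
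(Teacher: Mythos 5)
Your approach to part (1) is exactly the paper's: the untwisting isomorphism $\Phi_0(m\otimes n) = \sum m_{(1)}\otimes \iota(m_{(2)})n$ is precisely the map $\sff_H$ in the paper's argument, reducing to $M=H$ and converting the diagonal to the free first-factor action. Part (2) is a valid but slightly more indirect variant: the paper directly observes that $N\otimes H$ is $H$-free (by part (1)) and $(\underline{x}^i)$ is regular on it, hence the Koszul complex is exact with zeroth homology $N$; you instead first build the Koszul resolution of the diagonal $H$ inside $H\otimes H$ and then apply $N\otimes_H(-)$. Both work.

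For part (3) you take a genuinely different route from the paper. The paper forms $B(N\otimes M,(\underline{x}^i,\underline{y}^j))$, observes via part (2) that it computes $\Tor_i^H(M,N)$, and then applies base change $\Tor_i^H(H\otimes_{H_2}M_2,N)\cong\Tor_i^{H_2}(M_2,N)=0$ using flatness of $H$ over $H_2$ and $H_2$-freeness of $N$. You try instead to directly exhibit $M\otimes N$ as $H\otimes W$ by untwisting factor by factor. The idea works, but as written it has a gap in the step
\[
M\otimes N = H_1\otimes(M_2\otimes N)\cong H_1\otimes H_2\otimes W,\quad\text{``where $H_1$ acts by left multiplication on itself''}.
\]
The diagonal $H_1$-action on $M\otimes N$ is \emph{not} left multiplication on the $H_1$ factor: $H_1\subset H$ also acts on $N$ (only $N|_{H_2}$ is assumed free, and $N$ is a full $H$-module — in the paper's application $N=\hA$ has nontrivial $H_1$-action), so $H_1$ acts diagonally on $H_1\otimes N$ with $M_2$ passive. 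You need a second application of part (1) over $H_1$ (legitimate, since $H_1$ is $H_1$-free) to untwist this diagonal action, and you must note that the untwisting $\sff_{H_1}$ is $H_2$-equivariant (which holds because $\iota_{H_1}$ commutes with the $H_2$-action). Once that is inserted your argument closes. The concluding remark about regularity of $(\underline{x}_i,\underline{y}_j)$ is also muddled — the claim is regularity on $N\otimes M$, not in $H\otimes H$; after freeness is established it follows because any $H$-regular sequence is regular on a free module, and $(\underline{x}_i,\underline{y}_j)$ corresponds under the untwist to the coordinate sequence, which is regular on $H$.
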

\begin{proof}
\begin{enumerate}
\item I prove for the structure $T_0$ only. It suffice to prove the statement when $M=H$. The linear space $L=H\otimes N$ has two $H$-module structures. The diagonal structure $L_{\Delta}$ was described above. The other structure $L_{free}$ is $a(h\otimes n):=(ah)\otimes n$. $L_{free}$ is a free module. The isomorphism $\sff_H:L_{\Delta}\to L_{free}$ between $L_{\Delta}$ and $L_{free}$ maps $h\otimes n$ to
$\sum h_k\otimes \iota(h'_k) n$. The inverse is $\sff^{-1}_H(h\otimes n)=\sum h_k\otimes h'_k n$. 
\item By the first item $N\otimes H$ is $H$-free and $(\underline{x}^i)$ is regular. Thus $B(N\otimes H,(\underline{x}^i))$ has no higher cohomology and the zero cohomology is equal to $N$.
\item By the second item $B(N\otimes M,(\underline{x}^i,{\underline{y}}^j))$ computes $\Tor_i^H(M,N)$. Note that $H$ is flat over $H_2$ and 
$M=H\underset{H_2}{\otimes}M_1$. Thus
 \[\Tor_i^H(M,N)=\Tor_i^H(H\underset{H_2}{\otimes}M_2,N)\cong \Tor_i^{H_2}(M_2,N).\]
 $N$ is $H_2$-free. $\Tor_i^{H_2}(M_2,N)=0,i\geq 1$. This implies that $N\otimes M$ is $H$-free and $(\underline{x}^i,{\underline{y}}^j)$ is regular.
 \end{enumerate}
\end{proof}

\section{Generalities about Hibi rings}\label{A:hibi}
For more details on Hibi rings the reader can consult the survey \cite{VEne} or the original article \cite{THibi}.

Let $\setP$ be a poset. Element $y$ covers $x$ if $y > x$ and there is no
$z \in P$ with $y > z > x$. In this case we write $y\gtrdot x$.

The definition of Hasse diagram uses only $y\gtrdot x$ relation (see \cite{Birkhoff} p.5 for details). Here is an example of the Hasse diagram of a poset $\setP$ with $5$ elements, $x,y,z,t,u$ 
with $z\gtrdot x, z\gtrdot y, t\gtrdot y, t\gtrdot x, u\gtrdot t.$

\begin{center}
\begin{tikzpicture}
	\begin{pgfonlayer}{nodelayer}
		\node [circle, fill=black, draw, style=none,label=180:$z$ ] (0) at (-3, 1) {.};
		\node [circle, fill=black, draw, style=none,label=180:$x$] (1) at (-3, 0) {.};
		\node [circle, fill=black, draw, style=none,label=0:$y$] (2) at (-1, 0) {.};
		\node [circle, fill=black, draw, style=none,label=0:$t$] (3) at (-1, 1) {.};
		\node [circle, fill=black, draw, style=none,label=0:$u$] (4) at (-1, 2) {.};
	\end{pgfonlayer}
	\begin{pgfonlayer}{edgelayer}
		\draw[thick] (0) to (1);
		\draw[thick] (0) to (2);
		\draw[thick] (3) to (2);
		\draw[thick] (4) to (3);
		\draw[thick] (1) to (3);
	\end{pgfonlayer}
\end{tikzpicture}
\end{center}

\begin{definition}\label{D:gradedlattice}{\em
A graded poset is a partially ordered set (poset) $\setP$ equipped with a rank function $\rho$ from $\setP$ to $\Z_{\geq0}$ satisfying the following two properties:
\begin{enumerate}
\item The rank function is compatible with the ordering, meaning that for every $x$ and y in the order with $x < y$, it must be the case that $\rho(x) < \rho(y)$, and
\item 
If $x \lessdot y$, then $\rho(y) = \rho(x) + 1$.
\end{enumerate}
}\end{definition}
\begin{example}\label{E:Egrposet}
$\sethE$ is a graded poset:

 the function $\rho:\sethE\rightarrow \Z$ in this case is defined by the rule
\begin{equation}\label{E:LdefE}
\begin{split}
&\rho((0)^r)=\rho((3)^{r-1})=8r\\
&\rho((12)^r)=\rho((2)^{r-1})=8r+1 \\
&\rho((13)^r)=\rho((1)^{r-1})=8r+2 \\
&\rho((14)^r)=\rho((23)^r)=8r+3 \\
&\rho((15)^r)=\rho((24)^r)=8r+4 \\
&\rho((25)^r)=\rho((34)^r)=8r+5\\
&\rho((35)^r)=\rho((5)^r)=8r+6 \\
&\rho((45)^r)=\rho((4)^r)=8r+7.
\end{split}
\end{equation}
\end{example}

\begin{example}\label{E:Bgraded}
$\rB[-\infty,\infty]$ (see page \pageref{E:goreneq}) is a graded poset:

the function $\rho_{\rB[-\infty,\infty]}:\rB[-\infty,\infty]\rightarrow \Z$ is 
\[\begin{split}
&\rho((14)^r)=\rho((1)^{r-1})=4r+1 \\
&\rho((0)^r)=\rho((2)^{r-1})=4r\\
&\rho((45)^{r-1})=\rho((5)^{r-1})=4r-1 \\
&\rho((15)^{r-1})=\rho((34)^{r-1})=4r-2.
\end{split}\]
\end{example}

 Let
\[\setC : x'=x_1 < \dots< x_t=x\]
be a chain in $\setP$ (i.e. a totally ordered subset of $\setP$). 
I say that $\setC$ is a chain descending from $x'$
. The length of $\setC$ will be 
\begin{equation}\label{E:lenght}
|\setC|-1.
\end{equation} The rank of a poset $\setP$, denoted by $\rk(\setP)$, is the supremum of the lengths of all chains
contained in $\setP$. A poset is called pure if all maximal chains have the same length. The {\it height}
of an element $\alpha\in \setP$, denoted $\rho_{\setP}(x)$ is:
\[\rho_{\setP}(x) = \sup\{ \mbox{ length of chains descending from } x\}.\]

 Let $\setP$ be a poset and $x,y\in \setP.$ An {\em upper bound} of $x,y$ is an element $z\in \setP$ such that 
$z\geq x$ and $z\geq y$. If the set $\{z\in \setP: z \text{ is an upper bound of } x \text{ and }y\}$ has the least element, is called the {\em join} of $x$ and $y$, and it is denoted $x\vee y.$ By duality, one defines 
the {\em meet} $x\wedge y$ of two elements $x,y$ in a poset.

\begin{definition}
{\em
Let $\setL$ be a lattice. $\setL$ is called {\em distributive} if satisfies one of the equivalent conditions:
\begin{itemize}
	\item [(i)] for any $x,y,z\in \setL,$ $x\vee(y\wedge z)=(x\vee y)\wedge(x\vee z)$;
	\item [(ii)] for any $x,y,z\in \setL,$ $x\wedge(y\vee z)=(x\wedge y)\vee(x\wedge z).$
\end{itemize}
}
\end{definition} 

A subset $\alpha$ of a poset $\setP$ is called an {\em order ideal} or {\em poset ideal} if it satisfies the following condition: 
for any $x\in \alpha$ and $y\in \setP,$ if $y\leq x,$, then $y\in \alpha.$ The set of all order ideals of $\setP$ is denoted $\MI(\setP).$
The union and intersection of two order ideals are obviously order ideals. Therefore, $\MI(\setP)$ is a distributive lattice with the union and intersection. 

Given a lattice $\setL,$ an element $x\in \setL$ is called {\em join-irreducible} if $x\neq \min \setL$ and whenever $x=y\vee z$ for some 
$y,z\in \setL$, we have either $x=y$ or $x=z.$

\begin{theorem}[Birkhoff]
Let $\setL$ be a distributive lattice and $\setP=\rB(\setL)$ its subposet of join-irreducible elements. Then $\setL$ is isomorphic to $\MI(\setP).$
\end{theorem}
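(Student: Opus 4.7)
The plan is to construct an explicit order-preserving bijection $\phi:\setL\to \MI(\setP)$ with $\setP=\rB(\setL)$, and verify that distributivity forces it to be a lattice isomorphism. First I would define
\[
\phi(x):=\{p\in \setP : p\leq x\},
\]
and note that $\phi(x)$ is automatically an order ideal of $\setP$ because $\leq$ is transitive. Monotonicity of $\phi$ is immediate. In the reverse direction I would define
\[
\psi:\MI(\setP)\to \setL,\qquad \psi(\alpha):=\bigvee_{p\in \alpha} p,
\]
(assuming as usual the finite, or more generally Noetherian, setting so the join exists). Monotonicity of $\psi$ is clear from the definition.

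The substantive content is verifying $\psi\circ\phi=\id_{\setL}$ and $\phi\circ\psi=\id_{\MI(\setP)}$. For $\psi\circ\phi=\id$, the key claim is that every $x\in \setL$ is the join of the join-irreducibles below it. I would prove this by Noetherian induction on $x$: if $x$ is join-irreducible, we are done; otherwise $x=y\vee z$ with $y,z<x$, so by induction both $y$ and $z$ are joins of join-irreducibles below themselves, hence below $x$. For $\phi\circ\psi=\id$, I need to show that for any order ideal $\alpha\in \MI(\setP)$, the join-irreducibles $\leq \psi(\alpha)=\bigvee_{p\in\alpha}p$ are exactly those in $\alpha$. One inclusion is trivial; for the other, given a join-irreducible $q$ with $q\leq \bigvee_{p\in\alpha}p$, I must show $q\in \alpha$.

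This last step is precisely where the distributive hypothesis bites, and I expect it to be the main obstacle. The idea is: by the distributive law,
\[
q = q\wedge \Big(\bigvee_{p\in\alpha} p\Big) = \bigvee_{p\in\alpha}(q\wedge p).
\]
Since $q$ is join-irreducible, one of the meets $q\wedge p$ must equal $q$, i.e.\ $q\leq p$ for some $p\in\alpha$; since $\alpha$ is an order ideal, $q\in\alpha$. Making this rigorous in a possibly infinite lattice requires either finiteness of $\alpha$ or the extension of distributivity to arbitrary joins; in the finite setting treated by Birkhoff this is automatic, and the earlier excerpt only uses the result for finite posets such as $[\hdelta,\hdelta']$, so I would state and prove it in the finite case. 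Finally, $\phi$ preserves meets and joins because $\MI(\setP)$ has intersection and union as its lattice operations and $\phi(x\wedge y)=\phi(x)\cap\phi(y)$, $\phi(x\vee y)=\phi(x)\cup\phi(y)$ follow from the previous bijection argument (the nontrivial inclusion in the second equality again uses distributivity in the form just proved). This yields the claimed isomorphism $\setL\cong \MI(\setP)$.
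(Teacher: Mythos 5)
The paper offers no proof of this statement at all: it is quoted as Birkhoff's classical representation theorem with a citation to \cite{Birkhoff}, so there is nothing internal to compare your argument against. Your proposal is the standard textbook proof and it is correct in the finite setting, which is exactly what the paper needs: the Hibi/Gorenstein arguments only invoke the theorem for the finite distributive lattices attached to intervals $[\hdelta,\hdelta']$, and the infinite diagram $\rB[-\infty,\infty]$ is only used through its finite pieces. Two small points worth tightening. First, in the step $\psi\circ\phi=\id$ your induction ("if $x$ is join-irreducible we are done; otherwise $x=y\vee z$ with $y,z<x$") silently skips $x=\min\setL$, which by the paper's convention is \emph{not} join-irreducible and cannot be split as a join of strictly smaller elements; it is handled by declaring the empty join to be $\min\setL$, matching $\phi(\min\setL)=\emptyset\in\MI(\setP)$, and this convention should be stated. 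Second, once $\phi$ is shown to be an order-preserving bijection with order-preserving inverse, it is automatically a lattice isomorphism, so the separate verification that $\phi$ preserves $\vee$ and $\wedge$ (where you correctly reuse distributivity for $\phi(x\vee y)\subseteq\phi(x)\cup\phi(y)$) is a legitimate but optional extra check. With these remarks your argument is a complete and self-contained proof of the statement as it is used in the paper.
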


Let $\setL$ be a lattice. The generators $\lambda^{\alpha}$ of the algebra $Hibi(\setL)$ are labeled by the vertices of the lattice $\setL$ and the straightening
laws are the so called Hibi relations:
\begin{equation}\label{E:Hibi}
\lambda^{\alpha}\lambda^{\beta} = \lambda^{\alpha\wedge \beta}\lambda^{\alpha \vee \beta}, \forall \alpha,\beta \in \setL\left| \alpha \nless \beta \text{ and } \alpha \ngtr \beta\right. .
\end{equation}

\begin{remark}\label{R:hibi}{\rm
Fix a distributed lattice $\setL=\MI(\setP)$.
\begin{enumerate}
\item (\cite{THibi}) Hibi rings are toric algebras with straightening laws, thus normal Cohen-Macaulay domains.
\item Krull dimension $\dim( Hibi(\setL)) $ is equal to $ \rk \setL + 1$.
\item (\cite{THibi}) $Hibi(\setL)=Hibi(\MI(\setP))$ is Gorenstein if and only if $\setP$ is pure.
\end{enumerate}
}\end{remark}

\section{General constructions with D-modules}\label{S:Dmodulenoteconstr}
\paragraph{Weyl algebras} Recall that the Weyl algebra $W(U, \omega)$ on a finite-dimensional symplectic linear space $(U,\omega)$ is the quotient 
\[\bigoplus_{i\geq 0}U^{\otimes i}/([u,u']-\omega(u,u')).\]
When an orthogonal sum decomposition of the symplectic space $U=U_1\oplus U_2$ is given, there is a canonical isomorphism of algebras
$W(U)\cong W(U_1)\otimes W(U_2).$

{\it Symplectization} of a finite-dimensional vector space $V$ is a direct sum $U(V)=V+V^*.$ It carries a symplectic form $\omega(v+v^*,v'+{v^*}')=<{v^*}',v>-<v^*,v'>.$
$D(V):=W(U(V))$ is the algebra of (polynomial) differential operators. The direct sum decomposition $V=V_1+V_2$ leads to orthogonal decomposition of the $U(V)$ and an isomorphism $D(V)\cong D(V_1)\otimes D(V_2).$ In particular, $D(V_1)$ becomes a subalgebra of $D(V)$. Let $V$ be a linear space equipped with a nondegenerate symmetric inner product $(.,.)$. If $V'\subset V$ is not degenerate, i.e. contains no isotropic subspaces, then $V=V'+{V'}^{\perp}$ and $D(V')$ is a subalgebra in $D(V)$. 

 The algebra $D(V)$ is isomorphic as a linear space to the tensor product of two symmetric algebras 
 \begin{equation}\label{E:decomposition1}
 \Sym[V]\otimes \Sym[V^*]\cong \Sym[V^*]\otimes \Sym[V].
 \end{equation} I interpret the factor $\Sym[V^*]=\C[V]= \O[V]$ as the space of polynomial function on $V$. The factor $\Sym[V]=\C[V^*]$ corresponds to differential operators with constant coefficients.
\paragraph{Modules over the Weyl algebra}
There are two standard $D(V)$-modules. The first is the tautological representations of $D(V)$ in the polynomial algebra $\C[V]$.
The unit element $\varpi_{V}=1\in \C[V]$ is a cyclic generator that satisfies 
\[
v\varpi_{V}=0, \quad v\in V.
\]
Because of the tensor decomposition (\ref{E:decomposition1}), these are defining relations for $\C[V]$.

The module which I denote by $\C[V]^{-1}$ has a cyclic generator $\varpi_{V^*}$ (it is usually called Dirac $\delta$-function) that satisfies 
\begin{equation}\label{E:inverse}
v^*\varpi_{V^*}=0,v^*\in V^*.
\end{equation}
$\C[V]^{-1}$ can be interpreted as a space of algebraic distributions on $V$ with support at $0$.
This opens a way for constructing numerous examples of $D(V)$-modules. 

Fix a pair of embedding $\sfa:V\to V'\cong \Imm\, \sfa \oplus \Imm\, \sfa^{\perp}$, $\sfb:U\to U'\cong \Imm\, \sfb \oplus \Imm\, \sfb^{\perp}$.
In the presence of dot product, they define inclusions
\begin{equation}\label{E:elemmaps}
\begin{split}
& \sfa:\C[V]\to \C[V'],\quad x\to \sfa(x)\otimes 1,\\
&\sfb:\C[U]^{-1}\to \C[U']^{-1},\quad y\to \sfb(y)\otimes \varpi_{\Imm\, \sfb^{\perp}} \text{ and }
\end{split}
\end{equation}
\begin{equation}\label{E:inclusion1}
\sfa\otimes \sfb: \C[V]\otimes \C[U]^{-1}\to \C[V']\otimes \C[U']^{-1}.
\end{equation}
 
\paragraph{A model for $\C[V]^{-1}$}
 Suppose that $V$ is a one-dimensional space with a coordinate $x$.
 \[0\to \C[x]\to\C[x^{\pm 1}]\to \C[x^{\pm 1}]/\C[x]\to 0\] is a short exact sequence of $D[V]$-modules. I denote $\C[x^{\pm 1}]/\C[x]$ by $x^{-1}\C[x^{-1}]$. It is easy to see that $x^{-1}\C[x^{-1}]$ is cyclicly generated over $D[V]$ by $\frac{1}{x}$ and as linear spaces \[x^{-1}\C[x^{-1}]\cong \C[V]^{-1}.\] 

 More generally, if $V$ is spanned by the set $\setA=\{x_1,\dots,x_n\}$, then the module 
 \begin{equation}\label{E:algdistrnot}
 \C[\setA^{-1}]:=\bigotimes_{x_i\in A} x_i^{-1}\C[x_i^{-1}]\cong \C[V]^{-1}
 \end{equation}
 is cyclicly generated over $D[V]$ by $\varpi_{V}=\prod _{x_i\in \setA} x_i^{-1}$. The module $ \C[\setA^{-1}]$ has a $\C$ basis \\$\{x_{1}^{-1-k_1}\cdots x_{n}^{-1-k_n}|k_i\geq 0\}$. There is a product 
 
\begin{equation}\label{E:prodelem}
\begin{split}
& \C[\setA]\otimes \C[\setA^{-1}]\to \C[\setA^{-1}] \text{ and more generally}\\
& \left(\C[\setA\cup \setB]\otimes \C[\setC^{-1}]\right)\otimes \left(\C[\setA\cup \setC]\otimes \C[\setB^{-1}]\right)\cong \\
&\cong \C[\setA]\otimes \C[\setA]\otimes \C[\setB]\otimes \C[\setB^{-1}]\otimes \C[\setC]\otimes \C[\setC^{-1}]\to\\
&\to \C[\setA]\otimes \C[\setB^{-1}]\otimes \C[\setC^{-1}]\cong \C[\setA]\otimes \C[(\setB\cup \setC)^{-1}].
\end{split}
\end{equation}
 
 The Lie algebra of differential operators $D[V]$ contains a copy of $\gl_n$ generated by $x_i\sd_j$. It acts on the generator $x_{1}^{-1}\cdots x_{n}^{-1}$ of $\C[V]^{-1}$ by 
 \begin{equation}\label{E:trace}
 \begin{split}
 &a x_{1}^{-1}\cdots x_{n}^{-1}=-\tr(a)x_{1}^{-1}\cdots x_{n}^{-1}\\
 &a=\sum_{i,j=1}^na^i_jx_i\sd_j, \tr(a)=\sum_{i=1}^na^i_i.
 \end{split}
 \end{equation}
 
\section{Pairing between  limits} 
\paragraph{Pairing between direct and inverse systems}

Let $\rA$ be a partially ordered set.  I will be interested in a pairing between the direct system of complex linear spaces $\iota^{\halpha',\halpha}:F^{\halpha}\to F^{\halpha'}, \halpha\leq \halpha'$ and the  inverse system $\pi_{\halpha,\halpha'}:G_{\halpha'}\to G_{\halpha}, \halpha\leq \halpha'$ (see \cite{CieliebakFrauenfelder}, \cite{Weibel} and \cite{Boardman} for terminology and definitions).
I set \[F:=\underrightarrow{\lim} F^{\halpha},\quad G:=\underleftarrow{\lim} G_{\halpha}.\]
I assume that the pairing
\begin{equation}\label{E:pairingindiv}
F^{\halpha}\otimes G_{\halpha}\overset{(\cdot,\cdot)}{\longrightarrow} \C
\end{equation}
is compatible with $\pi$ and $\iota$:
\begin{equation}\label{E:innerproductcomp}
(\iota^{\halpha',\halpha}(f^{\halpha}),g_{\halpha'})=(f^{\halpha},\pi_{\halpha,\halpha'}g_{\halpha'}).
\end{equation}
\begin{lemma}\label{L:pairing1}
The pairing (\ref{E:pairingindiv}) between the direct  system $\{F^{\halpha}\}$ and  the inverse system $\{G_{\halpha}\}$ induces a pairing between limits
\[F\otimes G\to \C.\] 
\end{lemma}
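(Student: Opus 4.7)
The plan is to define the pairing on representatives and then verify it is well-defined using the compatibility condition (\ref{E:innerproductcomp}). Given $f \in F = \varinjlim F^{\halpha}$, by definition of the direct limit there exists some index $\halpha \in \rA$ and a representative $f^{\halpha} \in F^{\halpha}$ mapping to $f$. Given $g \in G = \varprojlim G_{\halpha}$, we have a coherent family $\{g_{\halpha}\}_{\halpha \in \rA}$ with $g_{\halpha} = \pi_{\halpha,\halpha'}(g_{\halpha'})$ for $\halpha \leq \halpha'$. I would set
\[
(f, g) := (f^{\halpha}, g_{\halpha}),
\]
where the right-hand side uses (\ref{E:pairingindiv}).

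The first step is to check that this value does not depend on the choice of representative $f^{\halpha}$. If $f^{\halpha'} \in F^{\halpha'}$ is another representative with $\halpha \leq \halpha'$ and $\iota^{\halpha',\halpha}(f^{\halpha}) = f^{\halpha'}$, then by (\ref{E:innerproductcomp}) and the compatibility of $g$ with the inverse system,
\[
(f^{\halpha'}, g_{\halpha'}) = (\iota^{\halpha',\halpha}(f^{\halpha}), g_{\halpha'}) = (f^{\halpha}, \pi_{\halpha,\halpha'}(g_{\halpha'})) = (f^{\halpha}, g_{\halpha}).
\]
For two arbitrary representatives $f^{\halpha} \in F^{\halpha}$ and $f^{\halpha'} \in F^{\halpha'}$ of the same class in $F$, the definition of direct limit provides an index $\halpha'' \geq \halpha, \halpha'$ at which the images coincide (here one uses that $\rA$ is directed, which I take to be implicit in the setup of a direct system); applying the previous computation twice gives the desired equality.

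The second step is to verify bilinearity, which is automatic: scalar multiplication and addition in $F$ are inherited from the $F^{\halpha}$, scalar multiplication and addition in $G$ are defined coordinatewise on the inverse system, and (\ref{E:pairingindiv}) is bilinear at each level. I anticipate no obstacle here; the only substantive point is the well-definedness argument above, which is essentially a one-line diagram chase relying entirely on (\ref{E:innerproductcomp}) and on the fact that $g$'s components are compatible under $\pi$. No finiteness, exactness, or Mittag-Leffler hypothesis is needed because we are only producing the pairing, not asserting non-degeneracy (the latter is the content of the subsequent Proposition \ref{P:abstrpairing} invoked in Proposition \ref{C:pairingdegree}).
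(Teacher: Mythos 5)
Your proof is correct and takes essentially the same approach as the paper: define $(f,g)=(f^{\halpha},g_{\halpha})$ on a representative and a component of the coherent family, then use the compatibility relation (\ref{E:innerproductcomp}) together with coherence of $\{g_{\halpha}\}$ to verify well-definedness. Your treatment of two arbitrary representatives via directedness of $\rA$ is slightly more careful than the paper's one-line remark, but the argument is the same.
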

\begin{proof}
By definition of the inverse limit $g\in G$ is represented by a sequence $g_{\halpha}\in G_{\halpha}$  which satisfies coherence condition  $\pi_{\halpha,\halpha'}g_{\halpha'}=g_{\halpha}$. The direct limit (see e.g. \cite{CieliebakFrauenfelder} p.11 for details) is equipped with   maps $\psi^{\alpha} :F^{\halpha}\to F$. For any $f\in F$ there is $\halpha$ and $f^{\halpha}$ such that $\psi^{\alpha}f^{\halpha}=f$. 
I define \[(f,g):=(f^{\halpha},g_{\halpha}).\] 
By universality of the direct limit 
\[f=\psi^{\halpha}f^{\halpha}=\psi^{\halpha'}f^{\halpha'}, \halpha\leq \halpha'\Rightarrow f^{\halpha'}=\iota^{\halpha'\halpha}f^{\halpha}\]
This  together with (\ref{E:innerproductcomp}) verifies correctness of the definition.
\end{proof}
\begin{proposition}\label{P:nondegeneratepairing}
Let us suppose that in addition to  assumptions  of Lemma \ref{L:pairing1} the pairings (\ref{E:pairingindiv}) are nondegenerate and $G_{\halpha}$ satisfies Mittag-Leffler condition and  the set $\rA$ is totally ordered.
Then the pairing (\ref{L:pairing1}) has trivial right and left kernels. 
\end{proposition}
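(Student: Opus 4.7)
The ``right kernel is trivial'' direction is formal and I would dispose of it first. If $g\in G$ satisfies $(f,g)=0$ for every $f\in F$, then for any $\halpha\in\rA$ and any $f^{\halpha}\in F^{\halpha}$, set $f:=\psi^{\halpha}(f^{\halpha})$; the definition in the proof of Lemma~\ref{L:pairing1} gives $(f^{\halpha},g_{\halpha})=(f,g)=0$. As $f^{\halpha}$ ranges over all of $F^{\halpha}$, nondegeneracy of the level-$\halpha$ pairing forces $g_{\halpha}=0$, and since this holds for every $\halpha$ we get $g=0$. No Mittag--Leffler hypothesis is needed here.

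The substantive half is the ``left kernel is trivial'' direction. Assume $f\in F$ pairs trivially with every $g\in G$ and fix a representative $f=\psi^{\halpha}(f^{\halpha})$. I aim to produce $\halpha^*\geq\halpha$ with $\iota^{\halpha^*,\halpha}(f^{\halpha})=0$, which forces $f=0$. The key intermediate claim I would establish is that every element of the stabilized image
\[
G'_{\halpha}:=\bigcap_{\halpha'\geq\halpha}\pi_{\halpha,\halpha'}(G_{\halpha'})\subset G_{\halpha}
\]
can be realized as $\pi_{\halpha}(g)$ for some $g\in G$. The total ordering on $\rA$ lets me extract a cofinal chain $\halpha=\halpha_0\leq\halpha_1\leq\cdots$; then, starting from a chosen $g_{\halpha_0}\in G'_{\halpha}$, I would construct a coherent family $(g_{\halpha_n})$ inductively, each step of the lift being possible precisely because Mittag--Leffler provides, for every level, a stabilization index beyond which the images of the transition maps no longer shrink. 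This is the standard argument for $\underleftarrow{\lim}{}^1=0$ under Mittag--Leffler along a cofinal chain.

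Granted the lifting claim, pick a Mittag--Leffler stabilization index $\halpha^*\geq\halpha$ with $G'_{\halpha}=\pi_{\halpha,\halpha^*}(G_{\halpha^*})$. For every $h\in G_{\halpha^*}$, the element $\pi_{\halpha,\halpha^*}(h)$ lies in $G'_{\halpha}$ and hence lifts to some $g\in G$; by hypothesis and the compatibility~(\ref{E:innerproductcomp}),
\[
0=(f,g)=(f^{\halpha},\pi_{\halpha,\halpha^*}(h))=(\iota^{\halpha^*,\halpha}(f^{\halpha}),h).
\]
Since $h\in G_{\halpha^*}$ was arbitrary, nondegeneracy at level $\halpha^*$ gives $\iota^{\halpha^*,\halpha}(f^{\halpha})=0$, and therefore $f=\psi^{\halpha^*}(\iota^{\halpha^*,\halpha}(f^{\halpha}))=0$, as desired.

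The main obstacle is the successive-lifting step: reducing the surjectivity of $G\to G'_{\halpha}$ to a sequence of one-step liftings and verifying that the resulting coherent family indeed exists. This is routine once a countable cofinal chain is chosen inside $\rA$, so the only real subtlety is to make sure the ``totally ordered'' hypothesis supplies such a chain in the setting where the proposition will be applied; if not, one has to run the same construction transfinitely along a well-ordered cofinal subset, which is standard but slightly more bookkeeping-heavy.
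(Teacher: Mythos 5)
Your proof is correct, and it takes a genuinely different route from the paper's. The paper's argument is cohomological: from the nonzero representative $f^{\halpha}$ it builds a morphism of inverse systems $G_{\halpha'}\to\C_{\halpha'}$ (a constant system), obtains a short exact sequence $0\to\Ker_{\halpha'}\to G_{\halpha'}\to\C_{\halpha'}\to0$ of inverse systems, and then passes to the six-term $\lim$--$\lim^1$ exact sequence; the crux is showing that $\{\Ker_{\halpha'}\}$ inherits the Mittag--Leffler property from $\{G_{\halpha'}\}$ (via the same compatibility identity you use), so that $\underleftarrow{\lim}{}^1\Ker=0$ and the induced functional $G\to\C$ is surjective, contradicting $(f,g)=0$ for all $g$. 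You avoid derived functors entirely and instead directly establish the surjectivity of $G\to G'_{\halpha}$ onto the stabilized image, then exploit a single stabilization index $\halpha^*$ together with nondegeneracy at level $\halpha^*$ to kill $\iota^{\halpha^*,\halpha}(f^{\halpha})$. These are two packagings of the same underlying phenomenon --- Mittag--Leffler forces the projection $G\to G'_{\halpha}$ to be surjective --- but yours is more hands-on and self-contained, while the paper's is shorter at the cost of invoking the $\lim^1$ machinery (citing Weibel). You also explicitly treat the right-kernel direction, which the paper's proof silently omits, and you correctly flag that the cofinality/countability subtlety in the lifting step is shared by both arguments (the paper's use of $\lim^1$-vanishing under Mittag--Leffler implicitly assumes the same). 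In the paper's application $\rA=\Z$, so this is harmless in both cases.
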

\begin{proof}
Fix $f\in F$ such that $(f,g)=0$ for all $g\in G$. 
Choose $f^{\alpha}\in F^{\halpha}, \psi^{\halpha}=f$. I use $f^{\alpha}$ to define a map $f:G_{\halpha'}\to \C_{\halpha'}, \halpha\leq \halpha'$, where $\C_{\halpha}=\C$ is a constant inverse system. The map is defined by the formula
\[g_{\halpha'}\to (f^{\alpha},\pi_{\halpha,\halpha'}g_{\halpha'})\]
As $\rA$ is totally ordered \[\lim_{\substack{\longleftarrow\\ \rA^{\geq \halpha}}}G_{\hbeta}=\lim_{\substack{\longleftarrow\\ \rA}}G_{\hbeta}\]
The short exact sequence of inverse systems $\{0\}\to \Ker_{\halpha}\to G_{\halpha}\to \C_{\halpha}\to \{0\}$ induces a long exact sequence of limits:
\begin{equation}\label{E:seqlimits}
\{0\}\to \underleftarrow{\lim} \Ker_{\halpha}\to \underleftarrow{\lim}G_{\halpha}\overset{f}{\to} \C\to \underleftarrow{\lim}^1\Ker_{\halpha}\to \cdots
\end{equation}
Fix $\beta\geq \halpha$. Let $\hbeta'\geq \beta$ be the index such that $\Imm \pi_{\beta,\hgamma}=\Imm \pi_{\beta,\hgamma'}$ for $\hgamma,\hgamma'\geq \hbeta'$. Mittag-Leffler condition postulates existence of such $\hbeta'$. Pick $g_{\hgamma}\in \pi^{-1}_{\beta,\hgamma}(\Imm \pi_{\beta,\hgamma}\cap \Ker_{\beta})$. 
Equality (\ref{E:innerproductcomp})
 implies that $g_{\hgamma}\in \Ker_{\hgamma}$. I conclude that $\{\Ker_{\hgamma}\}$ satisfies Mittag-Leffler condition. It implies (see e.g.\cite{Weibel} Proposition 3.5.7 ) that $ \underleftarrow{\lim}^1\Ker_{\halpha}=\{0\}$ and the map $f$ (\ref{E:seqlimits}) is nonzero.
\end{proof} 
\subsection{Pairing between bidirect systems}\label{S:pairingbidirect}
I  consider a poset 
$\rA$ and
a double indexed family of linear spaces $G_{\halpha}^\hbeta$ with
$\halpha,\hbeta \in \rA$ . I suppose that for every $\hbeta_0 \in \rA$ $G_{\halpha}^{\hbeta_0}$ is an inverse system. Symmetrically $G_{\halpha_0}^{\hbeta}$ is a direct system for every $\halpha_0 \in \rA$. Maps $\iota$ and $\pi$ satisfy compatibility relation which says that  for every
$\halpha' \leq \halpha$ and $\hbeta \leq \hbeta'$ the square
\begin{equation}\label{pii}
\begin{xy}
 \xymatrix{
  G_{\halpha}^{\hbeta} \ar[r]^{\pi^{\hbeta}_{\halpha',\halpha}}
  \ar[d]_{\iota^{\hbeta,\hbeta'}_{\halpha}} & G^{\hbeta}_{\halpha'}
  \ar[d]^{\iota_{\halpha'}^{\hbeta',\hbeta}}\\
  G^{\hbeta'}_{\halpha} \ar[r]_{\pi_{\halpha',\halpha}^{\hbeta'}} & G^{\hbeta'}_{\halpha'}
 }
\end{xy}
\end{equation}
is commutative. Such double indexed family will be called a {\it bidirect system}.
I will use the following abbreviations: 
\[G^{\hbeta}=\lim_{\substack{\longleftarrow\\ \halpha}} G^{\hbeta}_{\halpha},\quad 
G_{\halpha}=\lim_{\substack{\longrightarrow\\ \hbeta}} G^{\hbeta}_{\halpha}\]
$G^{\hbeta}$ has a structure of a direct system  and $G_{\halpha}$ of an inverse  system. I will refer to  $G^{\hbeta}$ ($G_{\halpha}$) as a contraction of $G^{\hbeta}_{\halpha}$ over lower(upper) index.
Occasionally it will be convenient to think about $G^{\hbeta}$ as of bidirect system whose $\pi$-maps are identities. Likewise $G_{\halpha}$ is a bidirect system whose $\iota$ maps are identities.

 A pairing between two bidirect system $\big(F,\pi,\iota\big)$, $\big(G,\pi,\iota\big)$ is a linear map 
 \begin{equation}\label{E:pairinfbidirect}
 F_{\halpha}^{\hbeta}\otimes G_{\hbeta}^{\halpha}\to \C
 \end{equation}
  which satisfies
\begin{equation}\label{E:pairingcomp}
\begin{split}
&(\pi^\hbeta_{\halpha',\halpha}f_\halpha^\hbeta,g_\hbeta^{\halpha'})=(f_\halpha^\hbeta,\iota_\hbeta^{\halpha,\halpha'}g_\hbeta^{\halpha'}),\quad \halpha'\leq \halpha\\
&(\iota_\halpha^{\hbeta',\hbeta}f_\halpha^\hbeta,g_\hbeta^{\halpha'})=(f_\halpha^\hbeta,\pi^{\halpha'}_{\hbeta',\hbeta}g_\hbeta^{\halpha'}),\quad \hbeta\leq \hbeta'
\end{split}
\end{equation}

Construction from Lemma \ref{L:pairing1}  defines  pairings  
\[
F_\halpha\otimes G^\halpha\to \C,\quad F^\halpha\otimes G_\halpha\to \C
\]
 between  pairs of  systems
 $\big(F_\halpha,\pi\big)$,$\big(G^\halpha,\iota\big)$ and $\big(F^\halpha,\iota\big)$,$\big(G_\halpha,\pi\big)$. Applying Lemma \ref{L:pairing1}  one more time I get  pairings
 
 \begin{equation}\label{E:bilimitpairings}
 \begin{split}
& \underleftarrow{\lim}\underrightarrow{\lim}F\otimes \underrightarrow{\lim}\underleftarrow{\lim}G\overset{(\cdot,\cdot)_L}{\longrightarrow}\C,\quad
 \underrightarrow{\lim}\underleftarrow{\lim}F\otimes 
\underleftarrow{\lim}\underrightarrow{\lim}G\overset{(\cdot,\cdot)_R}{\longrightarrow}\C\\
 \end{split}
 \end{equation}
 
 There is a four-term exact sequence
 \begin{equation}\label{E:fourterm}
 \{0\}\to \Ker_{\hbeta}^{\halpha}\to G_{\hbeta}^{\halpha}\overset{\kappa}{\to} G_{\beta}\to \Coker_{\hbeta}^{\halpha}\to \{0\}
 \end{equation}
 Applying $\underleftarrow{\lim}$ to $\kappa$ I get  a map $\kappa:G^{\alpha}\to \underleftarrow{\lim}\underrightarrow{\lim}G$. If now I use  $\underrightarrow{\lim}$ I get a map
 \[\kappa: \underrightarrow{\lim} \underleftarrow{\lim}G\to \underleftarrow{\lim}\underrightarrow{\lim}G\]
 \begin{remark}
 All the above considerations go through if we reduce the range of the bi-index  in $G_{\hbeta}^{\halpha}$ to a subset $\{(\halpha,\hbeta)|\halpha\geq \hbeta\}\subset \rA\times \rA$.
 \end{remark}
 \begin{example}\label{EX:dirinve}
 A bidirect system of finite-dimensional linear spaces  is called {\it generic} if the ranks of all the structure maps are maximal. Suppose $\rA=\Z$.
 \begin{enumerate}
 \item 
 Let 
 \[H_{\hbeta}^{\halpha}=
 \begin{cases}
 \C& \hbeta\geq \halpha\\
 \{0\}&\hbeta<\halpha
 \end{cases}\]
 be a generic bidirect system.
  In this case $\underleftarrow{\lim}\underrightarrow{\lim}H\cong\{0\}, \underrightarrow{\lim}\underleftarrow{\lim}H\cong\C$.
  
  \item 
 Let  
 \[F_{\hbeta}^{\halpha}=
 \begin{cases}
 \{0\}& \hbeta\geq \halpha\\
 \C&\hbeta<\halpha
 \end{cases}\]
 be a generic bidirect system. In this case $\underleftarrow{\lim}\underrightarrow{\lim}H\cong \C, \underrightarrow{\lim}\underleftarrow{\lim}F\cong\{0\}$.
  
 \item
Let  
 \[G_{\hbeta}^{\halpha}=
 \begin{cases}
 \C& \hbeta\neq \halpha\\
 \{0\}&\hbeta=\halpha
 \end{cases}\]
 be a generic bidirect system.
 In this case $G_{\hbeta}\cong G^{\halpha}\cong \underleftarrow{\lim}\underrightarrow{\lim}G\cong \underrightarrow{\lim}\underleftarrow{\lim}G\cong\C$ but $\kappa=0$.
 \end{enumerate}
 \end{example}
 In my application I am mostly interested in systems such that 
 \begin{equation}\label{E:bound}
 \dim_{\C} G_{\beta}^{\alpha}\leq C
 \end{equation}
  for some constant $C$.
  \begin{proposition}
  Suppose that the bidirect system $G_{\beta}^{\alpha}$ satisfies (\ref{E:bound}). Then there is a short exact sequence
   \begin{equation}\label{E:fourtermhom}
 \{0\}\to  \underrightarrow{\lim} \underleftarrow{\lim} \Ker_{\beta}^{\alpha}\to \underrightarrow{\lim} \underleftarrow{\lim}G\ \overset{\kappa}{\to} \underleftarrow{\lim}\underrightarrow{\lim}G\ \to  \underrightarrow{\lim} \underleftarrow{\lim} \Coker_{\beta}^{\alpha}\to \{0\}
 \end{equation}
  \end{proposition}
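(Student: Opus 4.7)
The plan is to split the four-term exact sequence (\ref{E:fourterm}) into two short exact sequences, apply $\varprojlim_{\halpha}$ and then $\varinjlim_{\hbeta}$ to each, and finally splice the results. The whole argument rides on the observation that the dimension bound (\ref{E:bound}) is powerful enough to kill every obstruction to exactness that could arise.

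First I would introduce the image $I_{\hbeta}^{\halpha}:=\Imm(\kappa_{\hbeta}^{\halpha}:G_{\hbeta}^{\halpha}\to G_{\halpha})$ and break (\ref{E:fourterm}) into the two short exact sequences
\[0\to \Ker_{\hbeta}^{\halpha}\to G_{\hbeta}^{\halpha}\to I_{\hbeta}^{\halpha}\to 0, \qquad 0\to I_{\hbeta}^{\halpha}\to G_{\halpha}\to \Coker_{\hbeta}^{\halpha}\to 0.\]
The central preliminary observation is that the dimension bound propagates: $\Ker_{\hbeta}^{\halpha}$ and $I_{\hbeta}^{\halpha}$ are sub/quotient of $G_{\hbeta}^{\halpha}$, so they have dimension at most $C$; and $G_{\halpha}=\varinjlim_{\hbeta}G_{\hbeta}^{\halpha}$, being a direct limit of finite-dimensional spaces whose images stabilize (any direct system of vector spaces with uniformly bounded dimensions has the property that the canonical map $G_{\hbeta}^{\halpha}\to G_{\halpha}$ is surjective for $\hbeta$ large enough). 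Hence $\dim G_{\halpha}\leq C$ and $\dim \Coker_{\hbeta}^{\halpha}\leq C$ as well.

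The second step is to apply $\varprojlim_{\halpha}$ to both short exact sequences. Because every inverse system in sight is made up of spaces of dimension at most $C$, the transition maps have images that stabilize, so the Mittag--Leffler condition holds for each; consequently $\varprojlim_{\halpha}^{1}$ vanishes on all of them and the resulting sequences
\[0\to \varprojlim_{\halpha}\Ker_{\hbeta}^{\halpha}\to \varprojlim_{\halpha}G_{\hbeta}^{\halpha}\to \varprojlim_{\halpha}I_{\hbeta}^{\halpha}\to 0,\]
\[0\to \varprojlim_{\halpha}I_{\hbeta}^{\halpha}\to \varprojlim_{\halpha}G_{\halpha}=\underleftarrow{\lim}\underrightarrow{\lim}G\to \varprojlim_{\halpha}\Coker_{\hbeta}^{\halpha}\to 0\]
remain exact.

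The third step is to apply $\varinjlim_{\hbeta}$. Since filtered colimits of vector spaces are exact, both sequences remain short exact. Using $\varinjlim_{\hbeta}\varprojlim_{\halpha}G_{\hbeta}^{\halpha}=\underrightarrow{\lim}\underleftarrow{\lim}G$ and the fact that $\varprojlim_{\halpha}G_{\halpha}$ is constant in $\hbeta$ (so $\varinjlim_{\hbeta}$ of it is itself), splicing the two sequences along the common term $\varinjlim_{\hbeta}\varprojlim_{\halpha}I_{\hbeta}^{\halpha}$ yields precisely (\ref{E:fourtermhom}). A final cosmetic check verifies that the middle map obtained from the splice agrees with the $\kappa$ defined after (\ref{E:fourterm}): on chains $G_{\hbeta}^{\halpha}\twoheadrightarrow I_{\hbeta}^{\halpha}\hookrightarrow G_{\halpha}$ composes to the canonical map of $G_{\hbeta}^{\halpha}$ into its direct limit, which is exactly what defines $\kappa$. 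The only place where something could fail is the interchange $\varinjlim\varprojlim\neq\varprojlim\varinjlim$, and that failure is measured precisely by the two end terms; the main obstacle -- the possible appearance of $\varprojlim^{1}$ contributions -- is neutralized at the outset by the uniform dimension bound.
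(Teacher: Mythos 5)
Your proof is correct and follows essentially the paper's own route: the uniform dimension bound (\ref{E:bound}) forces the Mittag--Leffler condition on every inverse system appearing in (\ref{E:fourterm}), so the inverse limit preserves exactness, and the direct limit is exact for filtered systems of vector spaces, giving (\ref{E:fourtermhom}). The only difference is that you make explicit two small points the paper compresses into one line, namely the splitting of the four-term sequence into two short exact sequences before applying the inverse limit, and the stabilization argument showing that $\dim G_{\halpha}\leq C$ (hence also $\dim \Coker_{\hbeta}^{\halpha}\leq C$), so that ML really does hold for \emph{all} the inverse systems in (\ref{E:fourterm}).
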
 
  \begin{proof}
Fix $\halpha$ in (\ref{E:fourterm}).   Mittag-Leffler condition for all inverse systems in (\ref{E:fourterm}) follows trivially from (\ref{E:bound}). If we use now $\underleftarrow{\lim}$ we will get an exact four term sequence 
\[\{0\}\to \Ker^{\alpha}\to G^{\alpha}\overset{\kappa}{\to} \underleftarrow{\lim}\underrightarrow{\lim}G \to \Coker^{\alpha}\to \{0\} \]
$\underrightarrow{\lim}$ is an exact functor in our context (\cite{Weibel} Theorem  2.6.15 ). This  verifies (\ref{E:fourtermhom})
  \end{proof}
  \begin{proposition}\label{P:kappaisomorphism}
  Let $G_{\hbeta}^{\halpha},\hbeta,\halpha\in \Z$ is  a bidirect system.
  \begin{enumerate}
  \item If $\iota_{\hbeta}^{\halpha',\halpha}$ are injective, then the map $\kappa$ is also injective.
   \item If $\iota_{\hbeta}^{\halpha',\halpha}$ $\pi^{\halpha}_{\hbeta,\hbeta'}$ are bijections, then the map $\kappa$ is an isomorphism.
  \end{enumerate}
  \end{proposition}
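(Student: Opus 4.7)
The plan is to unpack $\kappa$ on representatives and then treat the two items separately. Recall that $\kappa$ is obtained by first applying $\underleftarrow{\lim}_{\halpha}$ to the canonical maps $G^{\hbeta}_{\halpha} \to G_{\halpha} = \underrightarrow{\lim}_{\hbeta'} G^{\hbeta'}_{\halpha}$, yielding maps $G^{\hbeta} = \underleftarrow{\lim}_{\halpha} G^{\hbeta}_{\halpha} \to \underleftarrow{\lim}_{\halpha} G_{\halpha}$ that are natural in $\hbeta$, and then applying $\underrightarrow{\lim}_{\hbeta}$ to the source. Since $\rA = \Z$ is totally ordered, any element of $\underrightarrow{\lim}_{\hbeta} G^{\hbeta}$ admits a representative $g = (g_{\halpha})_{\halpha} \in G^{\hbeta_0}$ (a $\pi$-compatible family at some fixed level $\hbeta_0$), and $\kappa(g)$ is the compatible family in $\underleftarrow{\lim}_{\halpha} G_{\halpha}$ whose $\halpha$-component is the class of $g_{\halpha}$ in $G_{\halpha}$.

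For item 1, I would proceed as follows. Suppose $\kappa(g) = 0$. Then for each $\halpha$ there exists $\hbeta_{\halpha} \geq \hbeta_0$ such that $\iota^{\hbeta_{\halpha}, \hbeta_0}_{\halpha}(g_{\halpha}) = 0$. Injectivity of every $\iota^{\hbeta', \hbeta}_{\halpha}$ then forces $g_{\halpha} = 0$ for all $\halpha$, so already $g = 0$ in $G^{\hbeta_0}$, and a fortiori its class in $\underrightarrow{\lim}_{\hbeta} G^{\hbeta}$ vanishes. This proves that $\kappa$ is injective.

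For item 2, I would observe that bijectivity of every $\pi^{\hbeta}_{\halpha', \halpha}$ makes each inverse system $\{G^{\hbeta}_{\halpha}\}_{\halpha}$ essentially constant, so for any fixed $\halpha_0$ the projection $G^{\hbeta} \to G^{\hbeta}_{\halpha_0}$ is an isomorphism; symmetrically, bijectivity of every $\iota^{\hbeta', \hbeta}_{\halpha}$ makes each direct system $\{G^{\hbeta}_{\halpha}\}_{\hbeta}$ essentially constant, so the canonical map $G^{\hbeta_0}_{\halpha} \to G_{\halpha}$ is an isomorphism. Combining these identifications via the commutative squares (\ref{pii}), both $\underrightarrow{\lim}\underleftarrow{\lim} G$ and $\underleftarrow{\lim}\underrightarrow{\lim} G$ are canonically identified with the single space $G^{\hbeta_0}_{\halpha_0}$, and a direct check shows that under these identifications $\kappa$ becomes the identity.

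The argument is purely formal and I do not expect any serious obstacle. The only subtlety worth tracking is that in item 1 the index $\hbeta_{\halpha}$ is allowed to depend on $\halpha$; this is precisely the phenomenon displayed by Example \ref{EX:dirinve}(3), where all structure maps are surjective (but not injective) and $\kappa = 0$ despite both iterated limits being one-dimensional, so the injectivity hypothesis on $\iota$ is exactly what rules this pathology out. For item 2 the only point requiring care is compatibility of the two identifications with $G^{\hbeta_0}_{\halpha_0}$ under $\kappa$, which follows immediately from naturality of the maps into and out of the limits together with (\ref{pii}).
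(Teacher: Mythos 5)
Your argument for item 1 is in substance the same as the paper's: represent an element of $\underrightarrow{\lim}\underleftarrow{\lim}G$ by a $\pi$-compatible family at a fixed level, observe that $\kappa$ sends it to zero iff each component dies under some $\iota$, and invoke injectivity of $\iota$ to conclude the representative itself is already zero. (The paper phrases this via a ``staircase'' function $A$ cutting out where the family must vanish, but unpacking that gives exactly your argument; the only discrepancy is cosmetic — your index names follow the preamble of the appendix, not the proposition statement, which happen to use opposite conventions for which index $\iota$ and $\pi$ act on.) For item 2 you diverge a bit from the paper: you give a direct argument by observing that when all structure maps are bijections both iterated limits collapse to any single term $G^{\hbeta_0}_{\halpha_0}$ and $\kappa$ becomes the identity under these identifications, whereas the paper simply cites Theorem 5.6 of Frei--Macdonald. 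Your route is more self-contained and arguably preferable in an appendix meant to be elementary; the citation is terser but outsources the (easy) verification. Both are correct.
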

\begin{proof}
Let $g_{\hbeta}^{\halpha}$ be a coherent system of element $\halpha\geq \halpha_0$ that represents $g\in \underrightarrow{\lim}\underleftarrow{\lim}G$. I extend it to a system that is defined for all $\hbeta,\halpha$ such that $\halpha\geq \halpha_0$. As $\iota_{\hbeta}^{\halpha',\halpha}$ are embeddings the system is determined by a  subsystem defined for  $\{\halpha,\hbeta|\halpha\geq A(\hbeta)\}$. The function $A:\Z\to\Z$ satisfies  $A(\hbeta')>A(\hbeta)$ for $\hbeta'>\hbeta$. Such subsystem coincides with $\kappa g$.  In particular if such a subsystem  is zero then $g=0$.

The second statement is a special case \cite{FreiMacdonald} Theorem 5.6.
\end{proof}
 \begin{proposition}
 Suppose $\rA=\Z$. Pairing $(\kappa a,b)_L$ and $( a,\kappa b)_R$ coincide. They define a bilinear map
 \[\underrightarrow{\lim}\underleftarrow{\lim}F\otimes \underrightarrow{\lim}\underleftarrow{\lim}G\to\C\]
 \end{proposition}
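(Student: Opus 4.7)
The plan is to evaluate both pairings on finite-level representatives and show they reduce to the same number $(a^{\beta_0}_{\beta'_0},\,b^{\beta'_0}_{\beta_0})$ coming from the original pairing (\ref{E:pairinfbidirect}). Pick $a\in\underrightarrow{\lim}_\beta F^\beta$ and $b\in\underrightarrow{\lim}_\beta G^\beta$, and choose representatives $a^{\beta_0}\in F^{\beta_0}=\underleftarrow{\lim}_\alpha F^{\beta_0}_\alpha$ and $b^{\beta'_0}\in G^{\beta'_0}=\underleftarrow{\lim}_\alpha G^{\beta'_0}_\alpha$; these are coherent families $\{a^{\beta_0}_\alpha\}_\alpha$ and $\{b^{\beta'_0}_\alpha\}_\alpha$. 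The element $\kappa a\in\underleftarrow{\lim}_\alpha F_\alpha$ has $\alpha$-component equal to the class $[a^{\beta_0}_\alpha]\in F_\alpha=\underrightarrow{\lim}_\beta F^\beta_\alpha$ (and similarly for $\kappa b$).

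First I would unwind $(\kappa a,b)_L$. Recall that $(\cdot,\cdot)_L$ is obtained by applying Lemma \ref{L:pairing1} twice, first to each level-$\alpha$ pairing $F^\beta_\alpha\otimes G^\alpha_\beta\to\C$ (direct in $\beta$ on the $F$-side, inverse in $\beta$ on the $G$-side) to get $F_\alpha\otimes G^\alpha\to\C$, and then to the resulting system (inverse in $\alpha$ on the $F$-side, direct in $\alpha$ on the $G$-side). Following the recipe in Lemma \ref{L:pairing1}: represent $b$ in the outer direct system at stage $\alpha=\beta'_0$ by $b^{\beta'_0}\in G^{\beta'_0}$ and pair it with the $\beta'_0$-component $(\kappa a)_{\beta'_0}=[a^{\beta_0}_{\beta'_0}]\in F_{\beta'_0}$; then for the inner pairing, represent $[a^{\beta_0}_{\beta'_0}]$ in the direct system at stage $\beta=\beta_0$ by $a^{\beta_0}_{\beta'_0}$, and evaluate against the $\beta_0$-component of $b^{\beta'_0}$, namely $b^{\beta'_0}_{\beta_0}$. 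This gives
\[
(\kappa a,\,b)_L \;=\; \bigl(a^{\beta_0}_{\beta'_0},\,b^{\beta'_0}_{\beta_0}\bigr)
\]
with the right-hand pairing taken at level (\ref{E:pairinfbidirect}) for indices $(\alpha,\beta)=(\beta'_0,\beta_0)$.

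Next I would run the parallel computation for $(a,\kappa b)_R$. The pairing $(\cdot,\cdot)_R$ is built by applying Lemma \ref{L:pairing1} first at each fixed $\beta$ to $F^\beta_\alpha\otimes G^\alpha_\beta\to\C$ (inverse in $\alpha$ on the $F$-side, direct in $\alpha$ on the $G$-side) to get $F^\beta\otimes G_\beta\to\C$, then to the system in $\beta$. Represent $a$ in the outer direct system at stage $\beta=\beta_0$ by $a^{\beta_0}\in F^{\beta_0}$ and pair with $(\kappa b)_{\beta_0}=[b^{\beta'_0}_{\beta_0}]\in G_{\beta_0}$; for the inner pairing, represent $[b^{\beta'_0}_{\beta_0}]$ at stage $\alpha=\beta'_0$ by $b^{\beta'_0}_{\beta_0}$, and evaluate against the $\beta'_0$-component of $a^{\beta_0}$, namely $a^{\beta_0}_{\beta'_0}$. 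This again yields $(a^{\beta_0}_{\beta'_0},\,b^{\beta'_0}_{\beta_0})$, so $(\kappa a,b)_L=(a,\kappa b)_R$.

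The only routine verifications concealed in the chase above are (i) that the common value is independent of the representatives $a^{\beta_0}$, $b^{\beta'_0}$ and of the auxiliary stages chosen in the two applications of Lemma \ref{L:pairing1}, and (ii) that the $L$- and $R$-pairings are themselves well defined—both are already guaranteed by Lemma \ref{L:pairing1}, whose hypothesis (\ref{E:innerproductcomp}) is supplied at each step by the two compatibility identities (\ref{E:pairingcomp}) together with the commutation of $\pi$ and $\iota$ in (\ref{pii}). The only potential subtlety, which I expect to be the main thing to check carefully, is that when one enlarges $\beta_0$ to some $\beta_0''\geq\beta_0$ (and correspondingly replaces $a^{\beta_0}_{\beta'_0}$ by $\iota^{\beta_0,\beta_0''}_{\beta'_0}(a^{\beta_0}_{\beta'_0})$), the value of the pairing does not change; this is immediate from the second line of (\ref{E:pairingcomp}) after observing that the $\beta_0''$-component of the coherent system $b^{\beta'_0}$ equals $\pi^{\beta'_0}_{\beta_0,\beta_0''}(b^{\beta'_0}_{\beta_0''})=b^{\beta'_0}_{\beta_0}$ at the matching level. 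The analogous check for $\beta'_0$ uses the first line of (\ref{E:pairingcomp}). This establishes both the coincidence of the two pairings and the fact that they descend to a well-defined bilinear map $\underrightarrow{\lim}\underleftarrow{\lim}F\otimes\underrightarrow{\lim}\underleftarrow{\lim}G\to\C$.
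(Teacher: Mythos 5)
Your proof is correct and takes essentially the same route as the paper's: both arguments reduce $(\kappa a,b)_L$ and $(a,\kappa b)_R$ to a single evaluation of the level pairing (\ref{E:pairinfbidirect}) on the same pair of components, with the compatibility relations (\ref{E:pairingcomp}) (through Lemma \ref{L:pairing1}) guaranteeing independence of the chosen representatives. The paper phrases the reduction via supports of coherent collections in $\Z\times\Z$ and the extension procedure realizing $\kappa$, whereas you unwind the recipe of Lemma \ref{L:pairing1} directly, but the content is the same.
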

\begin{proof}
The element $f\in \underrightarrow{\lim}\underleftarrow{\lim}F$ and $f'\in \underleftarrow{\lim}\underrightarrow{\lim}F$ is represented by coherent collections $\{f^\halpha_\hbeta\}$ and $\{f^{'\halpha}_\hbeta\}$ such that the  range of indices $\{(\halpha,\hbeta)\}$ belong to subsets $\supp f, \supp f'\subset\Z\times\Z$. 
\[\supp f =\{{\halpha,\hbeta}|\hbeta\geq B_f(\halpha),\halpha\geq \halpha_0\},\quad  B_f(\halpha')\geq B_f(\halpha) \text{ if } \halpha'\geq \halpha\] 
\[\supp f' =\{{\halpha,\hbeta}|\halpha\geq A_{f'}(\hbeta),\hbeta\geq \hbeta_0\},\quad  A_{f'}(\hbeta+1)\geq A_{f'}(\hbeta)\text{ if } \hbeta'\geq \hbeta\]
$A:\Z\to\Z,B:\Z\to\Z$ are some functions. One can use structure maps of the bidirect system to unambiguously extend  support of the collection. In particular support of $f$ can be extended to $\{{\halpha,\hbeta}|\halpha\geq A(\hbeta)= \halpha_0\}$. The procedure of extension coincides with the map $\kappa$. The pairing $(\kappa f,g)$ $( f,\kappa g)$ are computed by performing  the extension procedure on $\{f^\halpha_\hbeta\}$ or $\{g^\halpha_\hbeta\}$ respectively followed by coupling  $(f^\halpha_\hbeta,g^\hbeta_\halpha)$. Equations (\ref{E:pairingcomp}) imply that 
$(f^\halpha_\hbeta,g^\hbeta_\halpha)=(f^{\halpha'}_{\hbeta'},g^{\hbeta'}_{\halpha'})$ for $\halpha>\hbeta$ and $\halpha'<\hbeta'$.This is  the assertion we are trying to prove.
\end{proof} 
\begin{proposition}\label{P:abstrpairing}
  Suppose that the bidirect system $F^{\beta}_{\alpha}, G_{\beta}^{\alpha}$ satisfies (\ref{E:bound}), the pairing (\ref{E:pairinfbidirect}) is not degenerate, and $\rA$ is totally ordered. Then the pairing (\ref{E:bilimitpairings}) are not degenerate.
  \end{proposition}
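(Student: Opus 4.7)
The plan is to apply Proposition~\ref{P:nondegeneratepairing} twice in succession, once at each index level. The crucial preliminary observation is that the dimension bound propagates to all contracted spaces: non-degeneracy of the finite-level pairing $F^{\beta}_{\alpha}\otimes G^{\alpha}_{\beta}\to \C$ combined with $\dim_{\C}G^{\alpha}_{\beta}\leq C$ forces $\dim_{\C}F^{\beta}_{\alpha}\leq C$. Moreover, each of the once-contracted spaces $F_{\alpha}$, $F^{\beta}$, $G^{\alpha}$, $G_{\beta}$ is itself finite-dimensional of dimension $\leq C$, since a direct limit of finite-dimensional spaces with uniformly bounded dimensions equals the union of an increasing chain of subspaces each of dimension $\leq C$ (the images $\psi^{\beta}(F^{\beta}_{\alpha})\subset F_{\alpha}$ form such a chain), and an inverse limit of finite-dimensional spaces of bounded dimension is likewise bounded. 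A consequence is that every inverse system appearing in the argument is automatically Mittag-Leffler.

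First, I establish non-degeneracy at the intermediate level. For each fixed $\alpha\in \rA$, the finite-level pairing, viewed as pairing the direct system $\{F^{\beta}_{\alpha}\}_{\beta}$ against the inverse system $\{G^{\alpha}_{\beta}\}_{\beta}$ over the totally ordered set $\rA$, satisfies the compatibility (\ref{E:innerproductcomp}) by virtue of (\ref{E:pairingcomp}). Mittag-Leffler holds on the inverse side by finite-dimensionality. Proposition~\ref{P:nondegeneratepairing} then produces a pairing $F_{\alpha}\otimes G^{\alpha}\to \C$ with trivial left and right kernels; since both factors are finite-dimensional, this is equivalent to non-degeneracy. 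The analogous argument with $\alpha$ and $\beta$ interchanged yields, for each fixed $\beta$, a non-degenerate pairing $F^{\beta}\otimes G_{\beta}\to \C$.

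Second, I pass to the outer limit. The family $\{F_{\alpha}\otimes G^{\alpha}\to\C\}_{\alpha}$ assembles into a pairing between the inverse system $\{F_{\alpha}\}_{\alpha}$ and the direct system $\{G^{\alpha}\}_{\alpha}$; the compatibility relation at this level is obtained by passing to $\beta$-limits in (\ref{E:pairingcomp}), with the commutativity of the square (\ref{pii}) guaranteeing that the $\pi$- and $\iota$-maps at the $\alpha$-level are well defined on the $\beta$-limits. The roles of direct and inverse are swapped relative to how Proposition~\ref{P:nondegeneratepairing} is stated, but its proof is symmetric in the two sides (triviality of left kernel for one side is triviality of right kernel for the other), and the needed Mittag-Leffler condition now holds for $\{F_{\alpha}\}_{\alpha}$ again by finite-dimensionality. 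A second application of the proposition yields the sought non-degeneracy of $(\cdot,\cdot)_{L}$ on $\underleftarrow{\lim}_{\alpha}F_{\alpha}\otimes \underrightarrow{\lim}_{\alpha}G^{\alpha}$. The pairing $(\cdot,\cdot)_{R}$ is handled identically with the two limits taken in the opposite order.

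The main delicate steps are not deep but must be checked carefully: verifying that all contracted spaces are finite-dimensional (the chain-of-images argument for direct limits), and checking that the compatibility condition (\ref{E:innerproductcomp}) at the outer index level is genuinely inherited after taking limits in the inner index. Once these are in hand, each application of Proposition~\ref{P:nondegeneratepairing} goes through, and finite-dimensionality of the four double limits (which is itself a corollary of the bound $\dim \leq C$) upgrades the ``trivial kernels'' conclusion of that proposition to honest non-degeneracy.
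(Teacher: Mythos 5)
Your proof follows essentially the same two-step contraction strategy as the paper: contract over $\beta$ first, apply Proposition~\ref{P:nondegeneratepairing} using Mittag--Leffler from the bound, then repeat over $\alpha$. You supply some useful extra detail the paper leaves implicit — in particular, that the once-contracted and doubly-contracted spaces are all finite-dimensional of dimension $\leq C$ (so that the ``trivial left and right kernels'' conclusion of Proposition~\ref{P:nondegeneratepairing} genuinely upgrades to non-degeneracy), and that the inverse/direct roles get swapped on the second pass; the opening observation that the bound on $G$ propagates to $F$ is correct but redundant, since the hypothesis already imposes (\ref{E:bound}) on both systems.
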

\begin{proof}
I contract  $F^{\beta}_{\alpha},G_{\beta}^{\halpha}$ over index $\hbeta$. The result 
is a pair of systems with a pairing. It follows from finite-dimensionality (\ref{E:bound})  that inverse systems $G_{\beta}^{\alpha}$ satisfy Mittag-Leffler condition.  By Proposition \ref{P:nondegeneratepairing} the induced pairing $ F_{\halpha}\otimes G^{\halpha}\to\C$  is not degenerate. It follows from  (\ref{E:bound}) that $\dim^{\C}G_{\halpha}, \dim_{\C}F_{\halpha}\leq C$. I repeat the previous arguments with contraction one more time but now  with the index $\halpha$. This way I get nondegeneracy of the first  pairing  (\ref{E:bilimitpairings}).

Nondegeneracy of the other pairing in (\ref{E:bilimitpairings}) is proved similarly.
\end{proof}
\section{Some technical lemmas}\label{S:techlemmas}
\begin{definition}
{\em
For $\halpha\in \sethE$ define
\[\begin{split}
&\setCL^{\pm}(\halpha):=\{\hbeta \in \sethE| (\halpha,\hbeta)\text{ is a clutter and } \pm\rho (\hbeta)\leq \pm\rho (\halpha)\}.
\end{split}\]
}\end{definition}
\begin{definition}\label{D:Mpmidef0}{\em
Define the subsets of $\sethE$
\[\rM_1^{-}:=\left\{ \left.\halpha \in \sethE\right| |\setCL^{-}(\halpha)|=1\right\}=
\left\{ (13)^{r},(14)^{r},(24)^r,(34)^r,(35)^r,(45)^r,(3)^r,(2)^r| r\in \Z\right\},
\]
\[ \rM_2^{-}:=\left\{\left.\halpha \in \sethE\right| |\setCL^{-}(\halpha)|=2\right\}=\left\{(4)^r,(12)^r,(23)^r,(25)^r\right\},\]
\[ \rM_3^{-}:=\left\{\left.\halpha \in \sethE\right| |\setCL^{-}(\halpha)|=3\right\}=\left\{(0)^r,(1)^r,(15)^r,(5)^r\right\},\]
\[ \rM_1^{+}:=\left\{ \left.\halpha \in \sethE\right| |\setCL^{+}(\halpha)|=1\right\}=
\left\{(12)^r, (13)^r,(23)^r,(24)^{r},(25)^r,(35)^{r},(4)^{r}, (3)^{r}| r\in \Z\right\},
\]
\[ \rM_2^{+}:=\left\{\left.\halpha \in \sethE\right| |\setCL^{+}(\halpha)|=2\right\}=\left\{(14)^r,(2)^r,(45)^r,(34)^r\right\},\]
\[\rM_3^{+}:=\left\{\left.\halpha \in \sethE\right| |\setCL^{+}(\halpha)|=3\right\}=\left\{(0)^r,(1)^r,(15)^r,(5)^r\right\}.\]
}\end{definition}

\begin{lemma}\label{L:multstatM}

\begin{enumerate}
\item \label{I:first} $1\leq |\setCL^{\pm}(\halpha)|\leq 3,\forall \halpha\in \sethE$. The sets $\setCL^{\pm}(\halpha)$ are totally ordered.

\item \label{I:second} Let $\halpha:=\hgamma\wedge \hdelta'$. Then $\halpha$ doesn't depend on $\hgamma\in \setCL^{-}(\hdelta')$. In addition, $\halpha$ satisfies
$\halpha\lessdot \hdelta'$.

\item \label{I:thirdw} $\rho(\hgamma)\leq \rho(\setCL^{-}(\hgamma))$, $\rho(\hgamma)\geq \rho(\setCL^{+}(\hgamma))$. Equality is achieved on some element of $\setCL^{\pm}(\hgamma)$.

\item\label{I:sixmultstatM}
\[\sethE=\rM_1^{-}\sqcup\rM_2^{-}\sqcup\rM_3^{-}=\rM_1^{+}\sqcup\rM_2^{+}\sqcup\rM_3^{+}, \]
\[\rM:=\rM^{\pm}_1\cap \rM_1^{\mp}=\left\{(3)^r,(13)^r,(24)^r,(35)^r\right\} \neq\emptyset,\]
\[\rM\sqcup \rM^{\pm}_2= \rM^{\mp}_1,\]
\[ \rM_2^{-}\cap \rM_2^{+}=\emptyset,\]
\[\rM_3^{+}= \rM_3^{-}.\]
\item \label{I:m3}
\[\begin{split}
&\forall \halpha \in \rM_2^{-}\cup \rM_3^{-}\quad \exists!\hbeta \in \rM_1^{+}\text{ such that } \halpha\lessdot\hbeta,\\
&\forall \halpha \in \rM_2^{+}\cup \rM_3^{+}\quad \exists!\hbeta \in \rM_1^{-}\text{ such that } \halpha\gtrdot\hbeta.\\
\end{split}\]
\item \label{I:nine} If $\rho(\hgamma)=\rho(\hgamma'), \hgamma\neq \hgamma'$, then one of the element belongs to $\rM_1^{-}$ and the other to $\rM_2^{-}$ or one of the element belongs to $\rM_1^{-}$ and the other to $\rM_3^{-}$.
\item 
If $\halpha,\hbeta\in \rM_1^{+}$ and $\rho(\halpha)\equiv \rho(\hbeta)\mod 2$, then $|\setCL^{-}(\halpha)|=|\setCL^{-}(\hbeta)|$. 
\end{enumerate}
\end{lemma}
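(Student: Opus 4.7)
The overall strategy is to reduce every assertion to a finite combinatorial check on a fundamental domain of $\sethE$. Since the shift $\shift$ (\ref{E:shift}) is an automorphism of the graded poset $\sethE$ that increases $\rho$ by $8$, and since each of the definitions of $\setCL^{\pm}$, $\rM_i^{\pm}$, $\rM$ only references the local combinatorics of clutters and the values of $\rho$ modulo $8$, it suffices to tabulate $\setCL^{-}(\halpha)$ and $\setCL^{+}(\halpha)$ for the $16$ vertex types $\halpha\in\sethE/\shift$ indexed by the labels $(0),(12),(13),(14),(23),(15),(24),(25),(34),(35),(45),(5),(4),(1),(2),(3)$.

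First I would use Proposition \ref{P:functionL}, item 3, to identify the clutters explicitly: a pair $(\halpha,\hbeta)$ is a clutter if and only if it is the pair of incomparable corners of some rectangle in the embedding $\functionf$. Reading this off the diagram (\ref{P:kxccvgw13}), I compile the complete list of $\setCL^{\pm}(\halpha)$ for each vertex type in a single table. This table immediately verifies item \ref{I:first} (bounds $1\leq|\setCL^{\pm}|\leq 3$ and that $\setCL^{\pm}(\halpha)$ lies along a chain, because in each case the clutter partners of $\halpha$ in the rectangles based at $\halpha$ are totally ordered by $\rho$), and item \ref{I:thirdw} (the inequalities $\rho(\hgamma)\lessgtr\rho(\setCL^{\mp}(\hgamma))$ with equality attained, by the very definition of $\setCL^{\pm}$ and the fact that some clutter partner shares the rank of $\halpha$). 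For item \ref{I:second}, once the table shows that all elements of $\setCL^{-}(\hdelta')$ sit inside a single chain of rectangles sharing the lower corner $\halpha=\hdelta'\wedge\hgamma$ with $\rho(\halpha)=\rho(\hdelta')-1$, the independence of $\hgamma$ and the covering relation $\halpha\lessdot\hdelta'$ follow at once.

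The four sets $\rM_i^{\pm}$ listed in item \ref{I:sixmultstatM} are then read directly off the table: $\rM_i^{\pm}$ collects those vertex types with $|\setCL^{\pm}|=i$, and the explicit lists in Definition \ref{D:Mpmidef0} (which give the answer the table must produce) are confirmed. The disjoint decompositions $\sethE=\rM_1^{\pm}\sqcup\rM_2^{\pm}\sqcup\rM_3^{\pm}$ are tautological from item \ref{I:first}; the identities $\rM=\{(3)^r,(13)^r,(24)^r,(35)^r\}$, $\rM\sqcup\rM_2^{\pm}=\rM_1^{\mp}$, $\rM_2^{-}\cap\rM_2^{+}=\emptyset$, and $\rM_3^{+}=\rM_3^{-}$ are then intersections of four explicit subsets of a $16$-element set and require nothing more than bookkeeping. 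Item \ref{I:m3} is verified by inspecting each $\halpha\in\rM_2^{-}\cup\rM_3^{-}$ in the table and pointing to the unique $\hbeta\in\rM_1^{+}$ immediately above $\halpha$ in (\ref{P:kxccvgw13}) (and dually for the $+$ case, using the involution $\reflection$ of (\ref{E:involution}) which exchanges $\rM_i^{-}$ with $\rM_i^{+}$). Item \ref{I:nine} follows because for each rank $\rho$ value in a fundamental domain, the two vertices sharing that rank can be tabulated from $\rho$-formulas in Example \ref{E:Egrposet}, and in every case one lies in $\rM_1^{-}$ and the other in $\rM_2^{-}\cup\rM_3^{-}$. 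The final item is then a one-line check: within each residue class mod $2$, the members of $\rM_1^{+}$ fall into the pattern dictated by the table.

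The only real obstacle is organizing the tabulation of $\setCL^{\pm}(\halpha)$ for the sixteen vertex types cleanly, since one must be careful to include clutter partners straddling adjacent periods of $\shift$ (for instance, the clutter $(45)^{r-1},(0)^r$ visible in the diagram). I would address this by producing a single table indexed by the vertex labels, listing for each $\halpha$ the rectangles $[\halpha\wedge\hbeta,\halpha\vee\hbeta]$ in which $\halpha$ appears as an upper (respectively lower) non-extremal corner, and reading off $\setCL^{+}(\halpha)$ and $\setCL^{-}(\halpha)$ from the opposite corners; after that, every assertion in the lemma is confirmed by direct reference to this table and the listed sets in Definition \ref{D:Mpmidef0}.
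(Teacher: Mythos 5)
Your proposal matches the paper's proof, which is literally the one-liner ``straightforward verification with the diagram (\ref{P:kxccvgw13})''; you have simply spelled out the organizing principle (periodicity under $\shift$ reduces everything to a finite table of $\setCL^{\pm}(\halpha)$ over the sixteen vertex types) that makes that verification tractable. The plan is sound and no genuinely different ideas are involved.
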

\begin{proof}
The proof is a straightforward verification with the diagram (\ref{P:kxccvgw13}).
\end{proof}

The next lemma is very technical but straightforward.
\begin{lemma}\label{L:degrel}
Fix 
$ \hdelta<\hbeta$ such that $\hdelta\in \rM=\rM_1^{-}\cap \rM_1^{+}$. 
\begin{enumerate}
\item There is a unique element $\hdelta'\in \rM_1^{+}$ such that $\hdelta\lessdot \hdelta'$.
\item $|\setCL^{-}(\hdelta')|=2$.
\item $\lambda^{\hdelta'}$ is nonzero in ${\hA(\hdelta,\hbeta]}$.
\item \label{I:four} There are $\hgamma<\hgamma'$ such that $\lambda^{\hgamma},\lambda^{\hgamma'}$ are nonzero in ${\hA(\hdelta,\hbeta]}$ and 
\[\lambda^{\hgamma}\lambda^{\hdelta'}=\lambda^{\hgamma}\lambda^{\hdelta'}=0. \]
\item \label{I:five} Elements $\lambda^{\hgamma},\lambda^{\hgamma'}$ belong to the kernel of the projection ${\hA(\hdelta,\hbeta]}\to {\hA[\hdelta',\hbeta]}$ which corresponds to inclusion $[\hdelta',\hbeta]\subset (\hdelta,\hbeta]$.
\item \label{I:six}$(\hdelta,\hbeta]=\setCL^{-}(\hdelta')\sqcup [\hdelta',\hbeta]$.
\item \label{I:seven}$(\hdelta,\hbeta]=\setCL^{-}(\hgamma)\sqcup [\hgamma,\hbeta]$, $\setCL^{-}(\hgamma)=\{\hdelta'\}$.
\end{enumerate} 
\end{lemma}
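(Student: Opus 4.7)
The proof is a finite case-check on the Hasse diagram \ref{P:kxccvgw13}, combined with the standard monomial basis (Propositions \ref{E:degreenull} and \ref{R:stbasis}) and the combinatorial classification in Lemma \ref{L:multstatM}. Since $\shift$ is a lattice automorphism (\ref{E:shift}) under which both $\rM$ and all sets appearing in the statement are equivariant, it suffices to verify the lemma on a single representative from each of the four $\shift$-orbits in $\rM=\{(3)^r,(13)^r,(24)^r,(35)^r\}$.

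For items 1 and 2 I read the two covers of $\hdelta$ directly from \ref{P:kxccvgw13}. By Lemma \ref{L:multstatM} item \ref{I:sixmultstatM}, one cover lies in $\rM_3^+$ and the other in $\rM_1^+$; the latter is $\hdelta'$. A second glance shows $\hdelta'\notin \rM$, and since $\rM_1^+ = \rM\sqcup \rM_2^-$, we get $\hdelta'\in \rM_2^-$ and hence $|\setCL^-(\hdelta')|=2$. For item 3, $\lambda^{\hdelta'}$ is a standard monomial on the semi-open interval $(\hdelta,\hbeta]$ and is therefore nonzero by Proposition \ref{R:stbasis}. For item 6, the two elements of $\setCL^-(\hdelta')$ appear as the two opposite corners of the rectangle spanning a clutter $(\hgamma,\hdelta')$ (Proposition \ref{P:functionL}(iii)); a short diagram inspection then confirms that $(\hdelta,\hbeta]$ is exhausted disjointly by $\setCL^-(\hdelta')$ and $[\hdelta',\hbeta]$.

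For items 4, 5 and 7 take $\hgamma<\hgamma'$ to be the two elements of $\setCL^-(\hdelta')$. Nonvanishing of $\lambda^{\hgamma},\lambda^{\hgamma'}$ in $\hA(\hdelta,\hbeta]$ is again the standard monomial basis, and item 5 is then automatic, since $\hgamma,\hgamma'$ are incomparable with $\hdelta'$ and so do not lie in $[\hdelta',\hbeta]$. The core computation is the vanishing $\lambda^{\hgamma}\lambda^{\hdelta'}=0=\lambda^{\hgamma'}\lambda^{\hdelta'}$ in $\hA(\hdelta,\hbeta]$. Applying the straightening relation (\ref{E:reluniv}) to the clutter $(\hgamma,\hdelta')$ gives
\[
\lambda^{\hgamma}\lambda^{\hdelta'}\;=\;\pm\,\lambda^{\hgamma\vee\hdelta'}\lambda^{\hgamma\wedge\hdelta'}\;-\;\sum_{\halpha<\hgamma\wedge\hdelta',\,\hbeta'>\hgamma\vee\hdelta'}\pm\,\lambda^{\halpha}\lambda^{\hbeta'}.
\]
By Lemma \ref{L:multstatM} item \ref{I:second}, $\hgamma\wedge\hdelta'$ is a fixed element covered by $\hdelta'$, independent of the choice of $\hgamma\in\setCL^-(\hdelta')$. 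The crux of the argument is that this fixed element equals $\hdelta$: this follows from the fact that $\hdelta$ is distinguished among the covers of $\hdelta'$ as the unique one lying in $\rM=\rM_1^+\cap\rM_1^-$, a diagrammatic property I will verify in each of the four orbits. Once $\hgamma\wedge\hdelta'=\hdelta$, the cluttering term has $\lambda^{\hdelta}=0$ as a factor in $\hA(\hdelta,\hbeta]$, and every $\halpha$ in the sum satisfies $\halpha<\hdelta$, so $\halpha\notin(\hdelta,\hbeta]$ and $\lambda^{\halpha}=0$ there as well. Item 7 is the dual decomposition of item 6, read off at the level of $\hgamma$ rather than of $\hdelta'$, and is again a direct inspection of the same four local configurations.

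The main obstacle is the identification $\hgamma\wedge\hdelta'=\hdelta$ in all four $\shift$-orbits; this rules out the other element covered by $\hdelta'$ and rests on the combinatorial property that singles out $\rM$ inside $\rM_1^+$. Everything else reduces either to an immediate application of the standard monomial basis or to a bounded inspection of the local neighborhood of $\hdelta$ in \ref{P:kxccvgw13}.
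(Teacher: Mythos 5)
Your proof takes essentially the same route as the paper's: reduce via Lemma \ref{L:multstatM}, use the standard-monomial basis of Proposition \ref{R:stbasis}, and pinpoint $\lambda^{\hgamma}\lambda^{\hdelta'}=0$ as the core computation via the straightening relation (\ref{E:reluniv}) on the clutter $(\hgamma,\hdelta')$, with the crux being $\hgamma\wedge\hdelta'=\hdelta$. The overall structure and the appeal to Lemma \ref{L:multstatM} item \ref{I:second} for the constancy of $\hgamma\wedge\hdelta'$ are the same.

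Three places where your route diverges, for better or worse. First, to pin down $\hgamma\wedge\hdelta'=\hdelta$ you defer to a case-by-case inspection over the four $\shift$-orbits of $\rM$, justified by the observation that $\hdelta$ is the unique lower cover of $\hdelta'$ in $\rM$; this is valid, but the paper's argument is cleaner and doesn't need the orbit check: since $\hdelta$ is a lower bound of both $\hgamma$ and $\hdelta'$, one has $\hdelta\leq\hgamma\wedge\hdelta'\lessdot\hdelta'$, and combined with $\hdelta\lessdot\hdelta'$ and gradedness this forces equality of ranks, hence $\hgamma\wedge\hdelta'=\hdelta$. Second, your proof of item \ref{I:five} is a genuine simplification over the paper's: you simply observe $\hgamma,\hgamma'\notin[\hdelta',\hbeta]$ so the corresponding generators are killed by the projection, whereas the paper instead invokes item \ref{I:four} together with $\hA[\hdelta',\hbeta]$ being a domain; yours is more elementary and does not depend on item \ref{I:four}. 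Third, your justification of item \ref{I:six} via ``opposite corners of the rectangle'' in Proposition \ref{P:functionL}(iii) is imprecise — in the orbit $\hdelta'=(12)^r$ one has $\setCL^-(\hdelta')=\{(2)^{r-1},(1)^{r-1}\}$, and $(1)^{r-1}$ is not a corner of the rectangle $[\hdelta,\hgamma\vee\hdelta']$ spanned by the clutter $((2)^{r-1},(12)^r)$ — but since you fall back on diagram inspection this does not create a gap. The paper's argument for item \ref{I:six} is tidier: any $\halpha\in(\hdelta,\hbeta]\setminus[\hdelta',\hbeta]$ is necessarily incomparable with $\hdelta'$ (comparability would contradict $\hdelta\lessdot\hdelta'$), and $\rho(\halpha)>\rho(\hdelta)=\rho(\hdelta')-1$ places it in $\setCL^-(\hdelta')$.
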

\begin{proof}
\begin{enumerate}
\item $\hdelta'\in \rM^{+}_1= \rM\sqcup \rM^{-}_2 $ (item (\ref{I:sixmultstatM}) Lemma \ref{L:multstatM}) exists because $\left(\rM_1^{+}\right)_{\hdelta<}$ is a well ordered set (see (\ref{E:Mdef})). 
\item The set of all such $\hdelta'$ (see the diagram (\ref{P:kxccvgw13})) is $\{(25)^r,(4)^r,(12)^r,(23)^r\}= \rM_2^{-}$.
\item $\lambda^{\hdelta'}$ is one of the standard monomials in ${\hA(\hdelta,\hbeta]}$ (see Proposition \ref{R:stbasis}).
\item 
 I choose $\hgamma,\hgamma'\in \setCL^{-}(\hdelta')$. By the item (\ref{I:first}) of Lemma \ref{L:multstatM}, $\setCL^{-}(\hdelta')$ is totally ordered. I can assume $\hgamma<\hgamma'$. By item (\ref{I:second}) of the same lemma, $\hdelta'\wedge\hgamma=\hdelta'\wedge \hgamma'=\halpha, \hdelta\leq \halpha\lessdot \hdelta'$. Since $\hdelta\lessdot \hdelta'$
\begin{equation}\label{E:mindg}
\hdelta'\wedge\hgamma=\hdelta'\wedge \hgamma'=\hdelta.
\end{equation}

Relation (\ref{E:reluniv}) in ${\hA[\hdelta,\hbeta]}$ associated with the clutters $(\hdelta',\hgamma)$ $(\hdelta',\hgamma')$ contains no terms under summation sign because 
\[\{\halpha\in [\hdelta,\hbeta] |\halpha < \hdelta'\wedge\hgamma=\hdelta\}=\{\halpha\in [\hdelta,\hbeta] | \halpha < \hdelta'\wedge\hgamma'=\hdelta\}=\emptyset.\] In ${\hA(\hdelta,\hbeta]}$ (\ref{E:semiinterval})
\[\lambda^{\hgamma}\lambda^{\hdelta'}=\pm \lambda^{\hdelta}\lambda^{\hdelta'\vee\hgamma}=0.\]
By the same reasons, $\lambda^{\hgamma'}\lambda^{\hdelta'}=0$.
\item ${\hA[\hdelta',\hbeta]}$ has no zero divisors \cite{MovStr}.
\item From (\ref{E:mindg}), I conclude that $\setCL^{-}(\hdelta')$ and $[\hdelta',\hbeta]$ are subsets of $ (\hdelta,\hbeta]$. If $\halpha\in (\hdelta,\hbeta]\backslash [\hdelta',\hbeta]$, then, by definition of $\setCL$, $\halpha\in \setCL^{-}(\hdelta')$.
\item By construction $\hdelta< \hgamma$. By item (\ref{I:thirdw}) Lemma \ref{L:multstatM}, $\rho(\hdelta')=\rho(\hgamma)$. By item (\ref{I:nine} ) Lemma \ref{L:multstatM}, $|\setCL^{-} (\hgamma)|=1$ and therefore $\setCL^{-}(\hgamma)=\{\hdelta'\}$ and $(\hdelta,\hbeta]=\setCL^{-}(\hgamma)\sqcup[\hgamma,\hbeta]=\{\hdelta'\}\sqcup[\hgamma,\hbeta]$.
\end{enumerate}
\end{proof}
\paragraph{The list of intervals $[\hdelta,\hdelta']$ with $\rCap[\hdelta,\hdelta']=2$}
\begin{equation}\label{E:twointervalhigh}
\begin{split}
&[\hdelta,(12)^r], \hdelta\in \{(35)^{r-1},(25)^{r-1},(15)^{r-1}\},\\
&[\hdelta,(4)^r], \hdelta\in \{(24)^{r},(23)^{r},(1)^{r-1}\},\\
&[\hdelta,(25)^r], \hdelta\in \{(13)^{r},(12)^{r},(0)^{r}\},\\
&[\hdelta,(23)^r], \hdelta\in \{(3)^{r-1},(4)^{r-1},(5)^{r-1}\},\\
&\text{In this group of intervals }\{[\hdelta,\hdelta']\}\quad \hdelta'\in \rM_2^{-}.
\end{split}
\end{equation}
\begin{equation}\label{E:twointervalslow}
\begin{split}
&[(2)^r,\hdelta'], \hdelta'\in \{(24)^{r+1},(34)^{r+1},(5)^{r+1}\},\\
&[(45)^r,\hdelta'], \hdelta'\in \{(13)^{r+1},(14)^{r+1},(15)^{r+1}\},\\
&[(34)^r,\hdelta'], \hdelta'\in \{(3)^{r},(2)^{r},(1)^{r}\},\\
&[(14)^r,\hdelta'], \hdelta'\in \{(35)^{r},(45)^{r},(0)^{r+1}\},\\
&\text{In this group of intervals }\{[\hdelta,\hdelta']\}\quad \hdelta\in \rM_2^{+}.\\
\end{split}
\end{equation}
\paragraph{Dualizing modules for non Gorenstein $\hA[\hdelta,\delta']$}
In some rare cases I have to deal with Cohen-Macaulay algebras $\hA[\hdelta,\hbeta]$, which are not Gorenstein. In particular I am interested in the cases 
\label{I:leftirreg}
\begin{equation}\label{I:leftirreg}
\hdelta\in \rM_2^+, \hbeta\in \rM_1^-\sqcup\rM_3^-,
\end{equation} 
\begin{equation} \label{I:rightirreg} 
 \hdelta\in \rM_1^+\sqcup\rM_3^+, \hbeta \in \rM_2^-.
\end{equation}
 Here is an explicit description of the dualizing modules $\omega[\hdelta,\hbeta]$:
\begin{proposition}\label{P:dualizingmoduledescr}
\begin{enumerate}
\item In case (\ref{I:leftirreg}) $\omega[\hdelta,\hbeta]\cong (\lambda^{\hgamma},\lambda^{\hgamma'})\subset \hA[\hdelta,\hbeta]$, where $\hgamma=\hdelta$. 

$\hgamma'$ is characterized by the condition $ \hdelta \lessdot \hgamma'\in \rM_3^+$
\item In case (\ref{I:rightirreg}) $\omega[\hdelta,\hbeta]\cong (\lambda^{\hgamma},\lambda^{\hgamma'})\subset \hA[\hdelta,\hbeta]$, where $\hgamma=\hbeta$. $\hgamma'$ is characterized by the condition $ \hbeta \gtrdot  \hgamma'\in \rM_3^-$
\end{enumerate}
\end{proposition}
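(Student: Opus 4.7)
The two cases (\ref{I:leftirreg}) and (\ref{I:rightirreg}) are interchanged by the anti-involution $\reflection$ of $\sethE$ (\ref{E:involution}), which induces an isomorphism $\hA[\hdelta,\hbeta] \cong \hA[\reflection(\hbeta),\reflection(\hdelta)]$ carrying $\lambda^{\halpha}$ to $\lambda^{\reflection(\halpha)}$ and hence preserving the dualizing module. Under $\reflection$, the sets $\rM_2^+$ and $\rM_2^-$ are exchanged, and the ``distinguished neighbour'' $\hgamma'\in \rM_3^-$ of $\hbeta$ in case (\ref{I:rightirreg}) corresponds to the distinguished neighbour of $\hdelta$ in $\rM_3^+=\rM_3^-$ (Lemma \ref{L:multstatM}(\ref{I:sixmultstatM})) in case (\ref{I:leftirreg}). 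So I would treat only case (\ref{I:leftirreg}).

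The first step is to degenerate to the Hibi ring via the flat family $\hA_h$ of Section \ref{S:singularities}. Both $\hA[\hdelta,\hbeta]$ and $Hibi[\hdelta,\hbeta]$ are Cohen-Macaulay (the former by Proposition \ref{E:degreenull}, the latter by Remark \ref{R:hibi}), and the family $\hA_h$ has the Cohen-Macaulay property fibre by fibre with the standard-monomial basis parallel across the family. Hence $\omega_{\hA_h}$ exists as a rank-one reflexive $\hA_h$-module that specialises to $\omega_{Hibi[\hdelta,\hbeta]}$ at $h=0$ and to $\omega_{\hA[\hdelta,\hbeta]}$ at $h=1$. Moreover, because each $\lambda^{\halpha}$ lifts uniquely through the deformation (\ref{E:relconvfam}), any monomial ideal identified with $\omega_{Hibi}$ lifts to the same-named ideal in $\hA[\hdelta,\hbeta]$ with equal rank and Hilbert function.

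The second step is the combinatorial computation of $\omega_{Hibi[\hdelta,\hbeta]}$. By Birkhoff, $Hibi[\hdelta,\hbeta]\cong\MI(\rB[\hdelta,\hbeta])$, and Hibi's classical description (see Appendix \ref{A:hibi}) expresses its canonical module as the monomial ideal generated by the elements whose depth in $\rB[\hdelta,\hbeta]$ realises the maximal-chain length near the non-pure end. When $\hdelta\in \rM_2^+$, direct inspection of the Hasse diagrams catalogued in the proof of Proposition \ref{P:GorensteinCL} (Figures 5 or 7, corresponding to the four possibilities $\hdelta\in\{(14)^r,(2)^r,(45)^r,(34)^r\}$) shows that exactly two initial chains of $\rB[\hdelta,\hbeta]$ realise the extremal length, one passing through $\hdelta$ and the other through the unique cover $\hgamma'\gtrdot \hdelta$ with $\hgamma'\in \rM_3^+$; existence and uniqueness of $\hgamma'$ is forced by the geometry of diagram (\ref{P:kxccvgw13}) together with Lemma \ref{L:multstatM}(\ref{I:m3}) applied with the roles of $\pm$ swapped. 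This identifies $\omega_{Hibi[\hdelta,\hbeta]}$ with the monomial ideal $(\overline{\lambda}^{\hdelta},\overline{\lambda}^{\hgamma'})$, and lifting via step one yields $\omega_{\hA[\hdelta,\hbeta]}\cong (\lambda^{\hdelta},\lambda^{\hgamma'})$.

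\textbf{Main obstacle.} The hard step is the combinatorial identification in step two: showing, for each of the four non-Gorenstein left-end configurations, that the canonical ideal of $Hibi[\hdelta,\hbeta]$ is generated by exactly the two monomials $\overline{\lambda}^{\hdelta}$ and $\overline{\lambda}^{\hgamma'}$ and no others. This is a finite check, but one must match the combinatorial definition of the Hibi canonical module with the geometric choice of $\hgamma'$. An alternative, which may be technically cleaner, is to bypass Hibi entirely by factoring the Gorenstein projection $\hA[\hgamma',\hbeta]\to \hA[\hdelta,\hbeta]/(\lambda^{\hdelta})$ and invoking Proposition \ref{P:extmaintheorem} with $x=\lambda^{\hgamma'}$ (or an analogous element); the annihilator then recovers $\hA[\hdelta,\hbeta]$, and the proposition produces both the extension sequence and the identification $\omega_{\hA[\hdelta,\hbeta]}\cong (\lambda^{\hdelta},\lambda^{\hgamma'})$ directly.
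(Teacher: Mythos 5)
Your main route through the Hibi degeneration diverges from the paper's proof, and it leaves a real gap at the transfer step. Knowing that $\omega_{Hibi[\hdelta,\hbeta]}$ is a particular monomial ideal and that the ideal $(\lambda^{\hdelta},\lambda^{\hgamma'})\subset \hA_h$ forms a flat family over $\C[[h]]$ with constant Hilbert function is not enough to conclude that this family coincides with the flat family of canonical modules: over a non-Gorenstein Cohen--Macaulay domain, two rank-one reflexive modules can share a Hilbert function without being isomorphic, so agreement at $h=0$ need not propagate to $h=1$. One would need an additional rigidity argument, or an explicit computation of $\Ext_P^{\bullet}(\hA_h,P)$ across the family, to close this; the paper's argument never faces this issue.

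Your alternative sketch is closer in spirit to what the paper actually does, but the concrete input is off. There is no surjection $\hA[\hgamma',\hbeta]\to \hA[\hdelta,\hbeta]/(\lambda^{\hdelta})=\hA(\hdelta,\hbeta]$, because $\hdelta\in \rM_2^{+}$ has two covers, so $[\hgamma',\hbeta]$ is a proper subinterval of $(\hdelta,\hbeta]$ and the restriction map runs the other way. The paper's device is different: it takes the unique $\hdelta'\neq \hdelta$ with $\rho(\hdelta')=\rho(\hdelta)$ (which lies in $\rM_2^{-}$ by Lemma \ref{L:multstatM}), sets $\hdelta''=\hdelta\wedge\hdelta'$, and works inside the Gorenstein algebra $\hA(\hdelta'',\hbeta]$. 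From the straightening relations it reads off a short exact sequence of $\C(\hdelta'',\hbeta]$-modules with middle term $\hA(\hdelta'',\hbeta]$, kernel $\hA[\hdelta',\hbeta]$ embedded by multiplication by $\lambda^{\hdelta'}$, and cokernel $\hA[\hdelta,\hbeta]$; Proposition \ref{P:extmaintheorem} then identifies $\omega[\hdelta,\hbeta]$ with an annihilator ideal, and the two generators $\lambda^{\hdelta},\lambda^{\hgamma'}$ of that annihilator are produced by the clutter combinatorics packaged in the proof of Lemma \ref{L:saturate0}. This keeps the argument entirely inside the Gorenstein framework of Section \ref{S:CM} and sidesteps the flat-family subtlety altogether.
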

\begin{proof}
I will only the first statement. By Lemma \ref{L:multstatM} item \ref{I:nine}
there is a unique $\hdelta'\in \rM_1^-, \rho(\hdelta')=\rho(\hdelta)$. I define $\hdelta'':=\hdelta\wedge \hdelta'\in \rM_1^+$. By Proposition \ref{C:dimalg}, and (\ref{E:gorimplication}) $\hA[\hdelta,\hbeta]$ and $\hA(\hdelta'',\hbeta]$ are Gorenstein. Straightening relations in $\hA(\hdelta'',\hbeta]$ imply that there is a short exact sequence 
\begin{equation}\label{E:shortexactCM}
\{0\}\to \hA[\hdelta',\hbeta]\overset{\lambda^{\hdelta}\times?}{\longrightarrow} \hA(\hdelta'',\hbeta]\to \hA[\hdelta,\hbeta]\to \{0\}
\end{equation}
$\C(\hdelta'',\hbeta]$-modules. it is explained in the proof of the second part of Lemma \ref{L:saturate0} that $\lambda^{\hgamma},\lambda^{\hgamma'}$ generate $\Ann(\hdelta')$.
The assertion follows from  Proposition \ref{P:extmaintheorem}.
\end{proof}
\begin{proposition}\label{E:CMcohcomp}

Suppose that  $[\hdelta,\hbeta]$ satisfies conditions (\ref{I:leftirreg}) or  (\ref{I:rightirreg}). Then
\begin{enumerate}
\item \label{I:CMcohcomp1} $H^i_{\fm}[\hdelta,\hbeta]=\{0\},i\neq \dim=\rho(\hbeta)-\rho(\hdelta)+1$. If $i= \dim$, then there is a short exact sequence
\begin{equation}\label{E:exactseqcoh3}
\{0\}\to H^{\dim}_{\fm}\hA[\hdelta',\hbeta]\overset{\lambda^{\hdelta}\times?}{\longrightarrow} H^{\dim}_{\fm}\hA(\hdelta'',\hbeta]\to H^{\dim}_{\fm}\hA[\hdelta,\hbeta]\to \{0\}
\end{equation}
\item \label{I:CMcohcomp2} $H^{i}_{\fm}\omega[\hdelta,\hbeta]=\{0\}, i\neq 3,\dim$. If $i=\dim$ then $H^{\dim}_{\fm}\omega[\hdelta,\hbeta]=H^{\dim}_{\fm}[\hdelta,\hbeta]$. If $i=3$, then
$H^{3}_{\fm}\omega[\hdelta,\hbeta]=H^{2}_{\fm}\C[\hgamma,\hgamma']$.
\item \label{I:CMcohcomp3} If (\ref{I:rightirreg} ) is satisfied, then $H^{i}_{\fa}((\hgamma,\hgamma'))\cong H^{i}_{\fa}\omega[\hdelta,\hbeta]\cong H^{i}_{\fa}[\hdelta,\hbeta], i\neq 0$.$H^{3}_{\fa}\omega[\hdelta,\hbeta]\cong H^{2}_{\fm}\C[\hgamma,\hgamma']$
\item \label{E:idealloccohcomp} If (\ref{I:leftirreg}) is satisfied, then $H^{i}_{\fb}((\hgamma,\hgamma'))\cong  H^{i}_{\fb}\omega[\hdelta,\hbeta]\cong H^{i}_{\fb}[\hdelta,\hbeta], i\neq 0$.$H^{3}_{\fb}\omega[\hdelta,\hbeta]\cong H^{2}_{\fm}\C[\hgamma,\hgamma']$
\end{enumerate}
\end{proposition}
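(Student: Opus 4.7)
The proof follows a uniform template based on the long exact sequence of local cohomology applied to two short exact sequences provided by Proposition~\ref{P:extmaintheorem}: the sequence \eqref{E:shortexactCM} and the companion sequence of canonical modules
\[
0\to \omega[\hdelta,\hbeta]\to R\to S\to 0,\qquad R:=\hA(\hdelta'',\hbeta],\ S:=\hA[\hdelta',\hbeta].
\]
The key preliminary observation, to be verified at the outset, is that both $R$ and $S$ are Gorenstein of Krull dimension $\dim=\rho(\hbeta)-\rho(\hdelta)+1$: for $S$ this follows from Lemma~\ref{L:multstatM}(\ref{I:nine}) (which places the unique $\hdelta'$ of rank $\rho(\hdelta)$ distinct from $\hdelta$ in $\rM_1^+\sqcup\rM_3^+$) combined with Theorem~\ref{P:GorensteinCL}; for $R$ it follows from the second part of Proposition~\ref{P:classofextreg}, applied to the Gorenstein algebra $\hA[\hdelta'',\hbeta]$ and the regular element $\lambda^{\hdelta''}$. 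Consequently $H^i_\fm(R)=H^i_\fm(S)=0$ for $i\neq\dim$.

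Item~\ref{I:CMcohcomp1} now follows at once: $\hA[\hdelta,\hbeta]$ is Cohen-Macaulay by Propositions~\ref{E:degreenull} and~\ref{C:dimalg}, so $H^i_\fm[\hdelta,\hbeta]$ vanishes outside degree $\dim$; the long exact sequence attached to \eqref{E:shortexactCM} degenerates in top degree to the short exact sequence \eqref{E:exactseqcoh3}, since the lower terms are zero. For item~\ref{I:CMcohcomp2}, I would run the long exact sequence on the second SES: vanishing of $H^i_\fm(R)$ and $H^i_\fm(S)$ for $i<\dim$ forces $H^i_\fm(\omega)=0$ in that range outside boundary contributions, and the top-degree piece reads
\[
0\to H^\dim_\fm(\omega)\to H^\dim_\fm(R)\to H^\dim_\fm(S)\to 0,
\]
which, upon comparison with the graded dual of \eqref{E:exactseqcoh3} via local duality for the Gorenstein algebras $R,S$, identifies $H^\dim_\fm(\omega)\cong H^\dim_\fm(\hA[\hdelta,\hbeta])$. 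The $H^3$ contribution is extracted from the complementary sequence
\[
0\to \omega\to \hA[\hdelta,\hbeta]\to \hA[\hdelta,\hbeta]/(\lambda^{\hgamma},\lambda^{\hgamma'})\to 0:
\]
the straightening relations \eqref{E:reluniv} exhibit a canonical retraction of the cokernel onto the two-variable polynomial subring $\C[\hgamma,\hgamma']$, and the boundary map in the long exact sequence produces a copy of $H^2_\fm(\C[\hgamma,\hgamma'])$ inside $H^3_\fm(\omega)$.

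Items~\ref{I:CMcohcomp3} and~\ref{E:idealloccohcomp} follow from the same templates with the maximal ideal $\fm$ replaced by $\fa$ (in case~\ref{I:rightirreg}) or $\fb$ (in case~\ref{I:leftirreg}). Proposition~\ref{P:radeq} permits the switch to the regular systems of parameters furnished by Proposition~\ref{P:parameters}, after which the arguments of items~\ref{I:CMcohcomp1}--\ref{I:CMcohcomp2} carry over verbatim; the first isomorphism in each item is just the identification of the ideal $(\lambda^\hgamma,\lambda^{\hgamma'})$ with $\omega$ from Proposition~\ref{P:dualizingmoduledescr}, and the cokernel $\hA/(\lambda^\hgamma,\lambda^{\hgamma'})$ continues to admit the retraction onto $\C[\hgamma,\hgamma']$ because its generators lie in the relevant support locus.

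The main obstacle throughout is the $H^3_\fm(\omega)\cong H^2_\fm(\C[\hgamma,\hgamma'])$ identification: one must verify not only that the straightening retraction of the cokernel onto $\C[\hgamma,\hgamma']$ is well-defined, but that it induces an actual \emph{isomorphism} (rather than merely a nonzero map) on the appropriate local cohomology, and that the complementary summand of the cokernel contributes nothing to these degrees. This is a finite but delicate combinatorial check, concentrated at the non-Gorenstein endpoint of $[\hdelta,\hbeta]$, which must be carried out by listing the relations \eqref{E:reluniv} involving $\lambda^\hgamma$ and $\lambda^{\hgamma'}$ and tracking their effect on the standard monomial basis.
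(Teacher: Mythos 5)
Your proof of item~\ref{I:CMcohcomp1} tracks the paper's: the vanishing comes from the Cohen--Macaulay property (you cite Propositions~\ref{E:degreenull} and~\ref{C:dimalg}, the paper cites Lemma~\ref{L:regularity} together with Matsumura Th.~16.6 --- equivalent routes), and the short exact sequence~\eqref{E:exactseqcoh3} is extracted in both arguments as the surviving top-degree segment of the long exact sequence attached to~\eqref{E:shortexactCM}. Your reduction of items~\ref{I:CMcohcomp3} and~\ref{E:idealloccohcomp} to the $\fm$-case via Propositions~\ref{P:radeq} and~\ref{P:parameters} is likewise consistent with the paper's terse ``proved similarly.''

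Your treatment of item~\ref{I:CMcohcomp2}, however, contains a genuine gap that you have yourself half-diagnosed in your closing paragraph. The paper works with the single short exact sequence
\[
\{0\}\to (\lambda^{\hgamma},\lambda^{\hgamma'})\to \hA[\hdelta,\hbeta]\to \C[\hgamma,\hgamma']\to \{0\}
\]
and reads everything off its long exact sequence; the detour through the canonical-module sequence and local duality in your argument is avoidable. More importantly, your claim that ``the straightening relations~\eqref{E:reluniv} exhibit a canonical retraction of the cokernel onto the two-variable polynomial subring $\C[\hgamma,\hgamma']$'' is incoherent as stated: in $\hA[\hdelta,\hbeta]/(\lambda^{\hgamma},\lambda^{\hgamma'})$ both $\lambda^{\hgamma}$ and $\lambda^{\hgamma'}$ are zero, so $\C[\hgamma,\hgamma']=\C[\lambda^{\hgamma},\lambda^{\hgamma'}]$ cannot occur as a subring of the cokernel, let alone admit a retraction from it. Even setting that aside, a retraction would only produce $H^2_\fm(\C[\hgamma,\hgamma'])$ as a \emph{direct summand} of $H^3_\fm\omega[\hdelta,\hbeta]$, whereas the proposition asserts an equality. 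What the argument requires --- and what the paper simply asserts --- is the identification of the cokernel with $\C[\hgamma,\hgamma']$ on the nose; you flag exactly this as the ``main obstacle'' but do not resolve it, so the proof as written is incomplete precisely at the point where it most needs an argument.
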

\begin{proof}

The vanishing result from item \ref{I:CMcohcomp1} follows from Lemma \ref{L:regularity} and Th 16.6 \cite{Matsumura}. With a help of Theorem 7.11 \cite{Twenty} I identify (\ref{E:exactseqcoh3}) with  a segment of the long exact sequence of local cohomology corresponding to (\ref{E:shortexactCM}). Its exactness  follows from the vanishing result.

Item \ref{I:CMcohcomp2} follows from consideration of the short exact sequence 
\[\{0\}\to (\lambda^{\hgamma},\lambda^{\hgamma'})\to \hA[\hdelta,\hbeta]\to \C[\hgamma,\hgamma']\to \{0\}\]
and associated long exact sequence of local cohomology.

Last two items are proved similarly.
\end{proof}
\begin{proposition}
In the assumptions of Proposition \ref{P:dualizingmoduledescr}
\begin{enumerate}
\item  $H^{\dim}_{\fm}\omega[\hdelta,(1)^{-1}]=H^{\dim}_{\fm}[\hdelta,(1)^{-1}]$ is a positive energy $\bT$-space with weights bounded from below by  $u[\hdelta'',(1)^{-1}]$ (\ref{E:sigmaalt})
\item $H^{\dim}_{\fm}\omega[(0)^0, \hbeta]=H^{\dim}_{\fm}[(0)^0, \hbeta ]$ is a negative energy $\bT$-space with weights bounded from above by  $u[(0)^{0}, \hbeta'']$
\end{enumerate}
\end{proposition}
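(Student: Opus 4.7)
The plan is to reduce the non-Gorenstein claim to a Gorenstein one using the short exact sequence
\[\{0\}\to H^{\dim}_{\fm}\hA[\hdelta',(1)^{-1}] \to H^{\dim}_{\fm}\hA(\hdelta'',(1)^{-1}] \to H^{\dim}_{\fm}\hA[\hdelta,(1)^{-1}] \to \{0\}\]
furnished by Proposition \ref{E:CMcohcomp} item \ref{I:CMcohcomp1}, where $\hdelta''=\hdelta\wedge\hdelta'\in \rM_1^+$ is as in the proof of Proposition \ref{P:dualizingmoduledescr}. The identification $H^{\dim}_{\fm}\omega[\hdelta,(1)^{-1}]=H^{\dim}_{\fm}\hA[\hdelta,(1)^{-1}]$ is immediate from item \ref{I:CMcohcomp2} of the same proposition, so what remains is to bound the $\bT$-weights of the rightmost group in the sequence, which is a quotient of the middle one.

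Since $\hdelta''\in \rM_1^+$ and $(1)^{-1}\in \rM_1^-$, Proposition \ref{P:GorensteinCL} guarantees that $\hA[\hdelta'',(1)^{-1}]$ is Gorenstein, and (\ref{E:gorimplication}) then yields the Gorenstein property for $\hA(\hdelta'',(1)^{-1}]$ (the element $\lambda^{\hdelta''}$ being regular by Lemma \ref{L:regularity}). I will then invoke the straightforward extension of Lemma \ref{L:weights} item \ref{I:weights3} from closed to semi-open intervals; the proof of that lemma (which relies on Proposition \ref{E:diality} and Proposition \ref{P:charmatch}) uses only the Gorenstein property together with local duality and so transfers verbatim. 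This identifies the lowest $\bT$-weight of $H^{\dim}_{\fm}\hA(\hdelta'',(1)^{-1}]$ with the $q$-exponent of the character $\chi(\hdelta'',(1)^{-1}]$.

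The final step is to compare $u(\hdelta'',(1)^{-1}]$ with the target quantity $u[\hdelta'',(1)^{-1}]$. Relation (\ref{E:chtthcompatibility2}) gives $u[\hdelta'',(1)^{-1}]=u(\hdelta'')+u(\hdelta'',(1)^{-1}]$, and since $\hdelta''<(1)^{-1}$ in $\sethE$ forces $u(\hdelta'')\leq -1$, I obtain $u(\hdelta'',(1)^{-1}]\geq u[\hdelta'',(1)^{-1}]$. As a surjection cannot shrink the lower bound on the set of $\bT$-weights, the bound propagates to $H^{\dim}_{\fm}\hA[\hdelta,(1)^{-1}]$, proving the first statement. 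The second statement follows by the symmetric argument; alternatively, one can apply the involution $\reflection$ of (\ref{E:involution}) and (\ref{E:mapsintr}) to swap the roles of the left and right endpoints and of the $\fa$- and $\fb$-type ideals.

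The main obstacle is the extension of Lemma \ref{L:weights} item \ref{I:weights3} to semi-open intervals: although I expect this to be essentially a bookkeeping exercise using the short exact sequence of dualizing modules from Proposition \ref{P:classofextreg} item 1 applied to the regular element $\lambda^{\hdelta''}$, some care is needed to confirm that the character of the generator of $\omega_{\hA(\hdelta'',(1)^{-1}]}$ is indeed tracked to $\chi(\hdelta'',(1)^{-1}]$ as expected and that local duality delivers the weight inversion in the right grading convention.
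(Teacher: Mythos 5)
Your overall strategy — reduce to the Gorenstein semi-open interval $(\hdelta'',(1)^{-1}]$ via the short exact sequence from Proposition \ref{E:CMcohcomp}, then compare the bound there with the asserted one — is the right one (and matches the intent of the paper's terse citation of Lemma \ref{L:weights} and Proposition \ref{E:CMcohcomp}). However, there is a sign error in the comparison step, and it reverses the inequality you need.

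From (\ref{E:chtthcompatibility2}) one has $\chi[\hdelta'',(1)^{-1}]=\hat{v}_{\hdelta''}\,\chi(\hdelta'',(1)^{-1}]$. With the conventions $\chi=t^{-a}q^{-u}z^{-r}$ of (\ref{E:sigmaalt}) and $\hat{v}_{\hdelta''}=tq^{u(\hdelta'')}v_{\rbeta}(z)$ from (\ref{E:weights}), equating $q$-exponents gives $-u[\hdelta'',(1)^{-1}]=u(\hdelta'')-u(\hdelta'',(1)^{-1}]$, i.e.
\[
u[\hdelta'',(1)^{-1}]=u(\hdelta'',(1)^{-1}]-u(\hdelta''),
\]
not $u(\hdelta'')+u(\hdelta'',(1)^{-1}]$ as you wrote. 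Since $u(\hdelta'')\leq -1$, this means $u[\hdelta'',(1)^{-1}]\geq u(\hdelta'',(1)^{-1}]+1>u(\hdelta'',(1)^{-1}]$, which is the \emph{opposite} of the inequality you derived. In particular, the lowest $\bT$-weight of the Gorenstein group $H^{\dim}_{\fm}\hA(\hdelta'',(1)^{-1}]$ is $u(\hdelta'',(1)^{-1}]$, which is \emph{strictly smaller} than the target bound $u[\hdelta'',(1)^{-1}]$. The observation that ``a surjection cannot shrink the lower bound'' therefore only gives $\lu\geq u(\hdelta'',(1)^{-1}]$, a weaker statement than the proposition claims.

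To close the gap, one must actually use the left half of the short exact sequence (\ref{E:exactseqcoh3}): the injection from $H^{\dim}_{\fm}\hA[\hdelta',(1)^{-1}]$ (lowest weight $u[\hdelta',(1)^{-1}]$, shifted by the $\bT$-weight of the multiplier) hits the low-weight part of $H^{\dim}_{\fm}\hA(\hdelta'',(1)^{-1}]$, and so the extremal low weights are killed in the cokernel $H^{\dim}_{\fm}\hA[\hdelta,(1)^{-1}]$. You would then verify, using (\ref{E:chtthcompatibility3}) applied to $(\hdelta'',(1)^{-1}]\supset[\hdelta',(1)^{-1}]$, that the weight $u(\hdelta'',(1)^{-1}]$ of the image coincides with the lowest weight of the middle term, so the corresponding graded pieces cancel, and keep track of how much cancels before the cokernel can be non-zero. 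Your proof as written skips this entirely and, with the sign corrected, does not yield the stated bound. (Your acknowledged gap — extending Lemma \ref{L:weights} item \ref{I:weights3} to semi-open intervals — is comparatively minor, as it indeed transfers through Proposition \ref{E:diality} and Proposition \ref{P:charmatch}; the sign error is the real issue.)
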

\begin{proof}
The proof follows from Lemma \ref{L:weights} item \ref{I:weights3} and Proposition \ref{E:CMcohcomp}.
\end{proof}

%\bibliographystyle{plain.bst}
%\bibliography{HilbertSpace}

\end{document}